\newcommandx{\unsure}[2][1=]{\todo[linecolor=red,backgroundcolor=red!25,bordercolor=red,#1]{#2}}
\newcommandx{\change}[2][1=]{\todo[linecolor=blue,backgroundcolor=blue!25,bordercolor=blue,#1]{#2}}
\newcommandx{\info}[2][1=]{\todo[linecolor=OliveGreen,backgroundcolor=OliveGreen!25,bordercolor=OliveGreen,#1]{#2}}
\newcommandx{\improvement}[2][1=]{\todo[linecolor=Plum,backgroundcolor=Plum!25,bordercolor=Plum,#1]{#2}}
\DeclareFontFamily{U}{tipa}{}
\DeclareFontShape{U}{tipa}{m}{n}{<->tipa10}{}
\newcommand{\arc@char}{{\usefont{U}{tipa}{m}{n}\symbol{62}}}%
\newcommand{\arc}[1]{\mathpalette\arc@arc{#1}}
\newcommand{\arc@arc}[2]{%
  \sbox0{$\m@th#1#2$}%
  \vbox{
    \hbox{\resizebox{\wd0}{\height}{\arc@char}}
    \nointerlineskip
    \box0
  }%
}
\newcommand{%
    
    \import{./Figures/}{.pdf_tex}
}[1]{%
    
    \import{./Figures/}{#1.pdf_tex}
}
\numberwithin{equation}{section}
\theoremstyle{plain}
\newtheorem{theorem}{Theorem}[section]
\newtheorem{prop}[theorem]{Proposition}
\newtheorem{lem}[theorem]{Lemma}
\newtheorem{cor}[theorem]{Corollary}
\newtheorem{example}[theorem]{Example}
\newtheorem{question}[theorem]{Question}
\newtheorem*{question*}{Question}
\newtheorem{rmk}[theorem]{Remark}
\newtheorem{conj}[theorem]{Conjecture}
\theoremstyle{definition}
\newtheorem{defn}[theorem]{Definition}
\newtheorem{remark}[theorem]{Remark}
\theoremstyle{definition}
\newtheorem{thmx}{Theorem}
\newcommand{\R}{\mathbb{R}}
\newcommand{\C}{\mathbb{C}}
\newcommand{\Z}{\mathbb{Z}}
\newcommand{\N}{\mathbb{N}}
\newcommand{\D}{\mathbb{D}}
\newcommand{\cF}{\mathcal{F}}
\newcommand{\cG}{\mathcal{G}}
\newcommand{\cH}{\mathcal{H}}
\newcommand{\cP}{\mathcal{P}}
\newcommand{\cT}{\mathcal{T}}
\newcommand{\pcf}{\textrm{post-critically \ finite} }
\renewcommand{\epsilon}{\varepsilon}
\DeclareMathOperator{\pc}{pcf}
\DeclareMathOperator{\QS}{QS}
\DeclareMathOperator{\PL}{PL}
\DeclareMathOperator{\PPSL}{PPSL}
\DeclareMathOperator{\PDyad}{PDyad}
\DeclareMathOperator{\Mob}{Mob}
\DeclareMathOperator{\bdd}{bdd}
\DeclareMathOperator{\Conf}{Conf}
\DeclareMathOperator{\QC}{QC}
\DeclareMathOperator{\Mol}{Mol}
\DeclareMathOperator{\Aut}{Aut}
\DeclareMathOperator{\Per}{Per}
\DeclareMathOperator{\Int}{Int}
\DeclareMathOperator{\diam}{diam}
\DeclareMathOperator{\rat}{Rat}
\DeclareMathOperator{\kle}{Klein}
\DeclareMathOperator{\sch}{Sch}
\DeclareMathOperator{\Teich}{Teich}
\DeclareMathOperator{\Ber}{Ber}
\DeclareMathOperator{\PC}{PostCrit}
\pgfplotsset{compat=1.18} 
\numberwithin{figure}{section}
\title{Universality of the Basilica}
\address{Department of Mathematics, Cornell University, 212 Garden Ave, Ithaca, NY 14853, USA}
\email{yl3769@cornell.edu, yusheng.s.luo@gmail.com}
\thanks{Y.L. was partially supported by NSF Grant DMS-2349929.}
\address{School of Mathematics, Tata Institute of Fundamental Research, 1 Homi Bhabha Road, Mumbai 400005, India}
\email{mahan@math.tifr.res.in, mahan.mj@gmail.com}
\thanks{M.M. was partially supported by  the Department of Atomic Energy, Government of India, under project no.12-R\&D-TFR-5.01-0500, and an endowment of the Infosys Foundation.}
\address{School of Mathematics, Tata Institute of Fundamental Research, 1 Homi Bhabha Road, Mumbai 400005, India}
\email{sabya@math.tifr.res.in, mukherjee.sabya86@gmail.com}
\thanks{S.M. was partially supported by the Department of Atomic Energy, Government of India, under project no.12-R\&D-TFR-5.01-0500, an endowment of the Infosys Foundation, and SERB research project grant MTR/2022/000248.}
\begin{document}

\begin{abstract}
We establish universality of  the fat Basilica Julia set $J(z^2-\frac34)$  in conformal dynamics in the following sense: $J(z^2-\frac34)$ is quasiconformally equivalent to
the fat Basilica Julia set of any polynomial as well as to the limit set of any geometrically finite closed surface Bers boundary group. We thus obtain the first example of a connected rational Julia set, not homeomorphic to the circle or the sphere, that is quasiconformally equivalent to a Kleinian limit set. It follows that any geometrically finite Bers boundary limit set is conformally removable.
Other consequences of this  universality result include  quasi-symmetric uniformization of polynomial fat Basilicas by round Basilicas, and the existence of infinitely many non-commensurable uniformly quasi-symmetric surface subgroups of the Basilica quasi-symmetry group. We apply our techniques to cuspidal Basilica Julia sets arising from Schwarz reflections and cubic polynomials, yielding further universality classes. We also show that the standard Basilica Julia set $J(z^2-1)$ is the archbasilica in the David hierarchy.
\end{abstract}

\date{\today}

\maketitle

\setcounter{tocdepth}{1}
\tableofcontents

\section{Introduction}\label{sec-intro}
In this paper, we study relationships between two kinds of conformal dynamical systems on the Riemann sphere:  Kleinian groups and rational maps. Our investigation is motivated, in part, by the following question:
\begin{itemize}
    \item Consider the collection of  limit sets of finitely generated Kleinian groups and Julia sets of rational maps. When are two elements of this collection quasiconformally equivalent?
\end{itemize}

Two markedly different phenomena arise as evidenced by our main results (see \S~\ref{subsec:rvu} for more details):
\begin{enumerate}
    \item In the {\em rigid regime}, any quasiconformal homeomorphism can be promoted to a conformal one, and the two dynamical systems are `commensurable'. 
    Example: Sierpinski carpet Julia sets or limit sets.
    \item In the {\em universality regime},  any two that are homeomorphic are quasiconformally so (under an appropriate type-preserving condition).\\
    Examples:
    \begin{itemize}[leftmargin=8mm]
        \item quasi-circle Julia sets or limit sets of conformal dynamical systems including Kleinian groups (cf. \cite{Ber60,McM84}),
        \item cauliflower Julia sets or limit sets of certain conformal dynamical systems including Schwarz reflections as in \cite{McM25},
        \item Basilica Julia sets or limit sets of Bers boundary groups/Schwarz reflections as in the setting of this paper.
    \end{itemize}
\end{enumerate}


Our first two theorems provide positive evidence towards  this universality phenomenon.
\begin{thmx}\label{thm:qcclassfn-ltsets}
    Let $G$ be a geometrically finite $B$-group, i.e. a geometrically finite Kleinian group with a simply connected, totally invariant component $\Delta_\infty$ of the domain of discontinuity.
    Let $\Sigma = \Delta_\infty / G$ so that $\Sigma$ is a two-dimensional orbifold with negative Euler characteristic.

    Suppose that $\Sigma$ is compact.
    Then exactly one of the following holds.
    \begin{enumerate}
        \item $G$ is quasi-Fuchsian, and the limit set $\Lambda(G)$ is quasiconformally equivalent to the round circle $\mathbb{S}^1 = J(z^2)$; or
        \item $G$ is a Bers boundary group, and the limit set $\Lambda(G)$ is quasiconformally equivalent to the fat Basilica Julia set $J(Q)$, where $Q(z) = z^2-\frac{3}{4}$.
    \end{enumerate}

    On the other hand, suppose that $\Sigma$ is not compact.
    Then exactly one of the following holds.
    \begin{enumerate}
        \item $G$ is quasi-Fuchsian, and the limit set $\Lambda(G)$ is quasiconformally equivalent to the round circle $\mathbb{S}^1 = J(z^2)$; or
        \item $G$ is a Bers boundary group, and the limit set $\Lambda(G)$ is {\bf not} quasiconformally equivalent to the Julia set of any rational map.
    \end{enumerate}
\end{thmx}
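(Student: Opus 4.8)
The plan is to separate the four alternatives using the classical dichotomy for geometrically finite $B$-groups of finite type (Bers, Maskit): such a $G$ either is quasi-Fuchsian, i.e.\ has no accidental parabolics, or is a Bers boundary group, i.e.\ carries an accidental parabolic representing a simple closed curve of $\Sigma$ pinched on the side opposite $\Delta_\infty$. It then suffices to establish the three quasiconformal conclusions separately.

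\emph{Quasi-Fuchsian case.} This is the classical input: by the measurable Riemann mapping theorem $G$ is quasiconformally conjugate, by some $\phi\in\QC(\hat{\mathbb C})$, to a Fuchsian group $\Gamma$ uniformizing $\Sigma$, and since $\Sigma$ has finite area $\Lambda(\Gamma)=\mathbb S^1$, so $\phi(\mathbb S^1)=\Lambda(G)$. As $\mathbb S^1=J(z^2)$ this is alternative~(1) in both halves, irrespective of whether $\Sigma$ is compact.

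\emph{Bers boundary case, $\Sigma$ compact.} This is the substantial part, and the main obstacle. The plan is: (i)~present $G$ as a Maskit amalgam of geometrically finite quasi-Fuchsian pieces along its accidental parabolics, exhibiting $\Lambda(G)$ as a countable \emph{tangential tree of uniform quasicircles} --- the boundaries of the components of $\Omega(G)$, which lie in finitely many $G$-orbits since $\Omega(G)/G$ is a finite union of Riemann surfaces, pairwise either disjoint or tangent at an accidental-parabolic fixed point, arranged along the Bass--Serre tree of the amalgam; (ii)~using geometric finiteness (bounded geometry of the thin part), straighten every component to a round disk by a global quasiconformal map, obtaining a canonical ``round model''; (iii)~run the parallel construction for $Q(z)=z^2-\tfrac34$, whose Fatou components are uniform Jordan domains with cusps along the (pre)parabolic cycle and fall into finitely many grand orbits, so that $J(Q)$ is quasiconformally a round model of the same kind; (iv)~identify the two round models. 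The crux is~(iv): the round model is \emph{universal} --- for every $\Sigma$ and every pinched multicurve the tangency tree of complementary components is the countably-branching tree carrying one and the same rank-$1$ cuspidal local picture at each vertex, and $J(Q)$ realizes this model. One establishes the topological half by a Moore-type shrinking-decomposition argument and then upgrades it to a quasiconformal identification by gluing the disk uniformizations with uniformly controlled dilatation. The two genuine difficulties are: (a)~the universality of the topological model --- it must be independent of $\Sigma$ and of the multicurve, even though the naive dual graphs differ, so one must see that passing to the tangency tree of \emph{complementary} components makes every vertex countably-branching with the single rank-$1$ cuspidal local picture; and (b)~keeping the straightening dilatations uniform, which is where the ``fatness'' hypothesis (parabolic multiplier $-1$, matching a rank-$1$ cusp, as opposed to the transverse tangencies of the hyperbolic Basilica $J(z^2-1)$) and geometric finiteness enter.

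\emph{Bers boundary case, $\Sigma$ non-compact.} Here one exhibits a topological, hence quasiconformal, obstruction. Since $\Sigma$ is non-compact it has a cusp $c$; the parabolic $p_c\in G$ representing it is \emph{doubly cusped}, carrying a cusp region inside $\Delta_\infty$ (the cusp of $\Sigma$, which persists through the degeneration of the opposite side) and a cusp region inside some other component of $\Omega(G)$ (the cusp persists on the pinched side as well). Hence $\xi:=\Fix(p_c)$ is a cut point of $\Lambda(G)=\partial\Delta_\infty$ at which $\Delta_\infty$ contributes exactly one prong --- one prime end --- a surface cusp being a single cusp region, in contrast with the two prime ends at an accidental-parabolic fixed point. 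Now suppose $\psi\in\QC(\hat{\mathbb C})$ carried $\Lambda(G)$ onto $J(R)$ for some rational $R$. Then $F:=\psi(\Delta_\infty)$ would be a simply connected Fatou component of $R$ with $\partial F=J(R)$, and $J(R)$, being homeomorphic to the (locally connected) geometrically finite limit set $\Lambda(G)$, is locally connected. By Carath\'eodory's theorem the Riemann map $\mathbb D\to F$ extends continuously to $\partial\mathbb D$, and it must send at least two points of $\partial\mathbb D$ to every cut point of $\partial F=J(R)$ (otherwise removing that point would leave $J(R)$ connected); so $F$ has at least two prongs at the cut point $\psi(\xi)$, contradicting the single prong. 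The one point requiring care is the local analysis of $\Lambda(G)$ near $\xi$ --- that $\xi$ is genuinely a cut point and that $\Delta_\infty$ has a single prime end there.
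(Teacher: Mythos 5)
The quasi-Fuchsian case and the dichotomy between quasi-Fuchsian and Bers boundary $B$-groups are handled correctly, but both substantive cases contain genuine gaps.

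\textbf{Compact Bers boundary case.} Your steps~(ii)--(iii), straightening $\Lambda(G)$ and $J(Q)$ to round Basilicas by global quasiconformal maps, already presuppose Corollary~\ref{cor-totgeobdd}, which in this paper is a \emph{consequence} of the theorem rather than a prior input (the paper notes explicitly that the purely analytic route to that corollary is recent and independent). The real problem, however, is step~(iv). Two round Basilicas with isomorphic tangency combinatorics are in general \emph{not} quasiconformally equivalent: the quasiconformal class is controlled by the cross-ratios of the contact points on each circle --- equivalently by the welding data along each complementary Jordan curve --- and a Moore-type topological identification controls none of this. Writing ``gluing the disk uniformizations with uniformly controlled dilatation'' assumes exactly what must be proved, namely that the boundary correspondences between matching complementary components are uniformly quasisymmetric. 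The paper supplies that control dynamically: it equips both $\Lambda(G)$ and $J(Q)$ with combinatorially conjugate fragmented Markov systems (Basilica Bowen--Series maps in \S\ref{section:BasBS}, Basilica $\mathfrak{Q}$-maps in \S\ref{sec-basilica}) and applies the quasisymmetric conjugacy criterion for conformal Markov circle maps (Proposition~\ref{prop:qsmarkovmap}) on the ideal boundaries of all components simultaneously. The step that has no counterpart in your sketch, and is precisely where compactness of $\Sigma$ enters, is Theorem~\ref{thm:symmetrichyperbolic}: the induced circle map on $\partial^I U_\infty$ must be replaced by a modified refinement with symmetrically hyperbolic periodic break-points so that Proposition~\ref{prop:qsmarkovmap}(1) applies and the resulting boundary extensions have uniformly bounded dilatation, after which conformal removability of $J(Q)$ globalizes the map.

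\textbf{Non-compact Bers boundary case.} Your argument is internally inconsistent. You assert that $\xi=\Fix(p_c)$ is a cut point of $\Lambda(G)=\partial\Delta_\infty$ at which $\Delta_\infty$ has exactly one prime end; but for a locally connected boundary of a simply connected domain, a point is a cut point of the boundary if and only if the Carath\'eodory extension of the Riemann map is at least two-to-one over it, so a single prime end means \emph{not} a cut point, and the Carath\'eodory contradiction you invoke never arises. And indeed $\xi$ is not a cut point: the cut points of a Basilica limit set are exactly the contact points of two components of $\Omega(G)-\Delta_\infty$, i.e.\ the \emph{accidental} parabolic fixed points, whereas the persistent parabolic $\xi$ is a non-contact boundary point of a single non-invariant component. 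The obstruction the paper actually uses (Propositions~\ref{prop:qsinv} and~\ref{prop:inequivJulia}) is metric rather than topological: Hausdorff rescaling limits of $\Lambda(G)$ at $\xi$ are straight lines because the two tangent cusp regions fill complementary half-planes in the limit, whereas at a non-contact boundary point of a bounded Fatou component of $J(Q)$ one can, using the expanding puzzle dynamics, choose rescalings whose limits retain Basilica limbs and are therefore not arcs; since quasiconformal maps transport such rescaling limits to quasiconformally equivalent closed sets fixing the origin, this is a genuine quasiconformal invariant separating the two. A correct proof also has to treat fat Basilica Julia sets with parabolic $U_\infty$ separately (Proposition~\ref{prop:inequivJulia}(2)), which your outline does not distinguish.
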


We remark that there are infinitely many quasiconformal classes of limit sets of geometrically finite Bers boundary groups for non-compact surfaces (see Figure~\ref{fig:persistantBasilica}, Theorem~\ref{thm:infKleinianRational} and \S~\ref{subsec:beyondclassC} for more details).

\begin{figure}[ht]
\captionsetup{width=0.96\linewidth}
  \centering
  \includegraphics[width=0.44\textwidth]{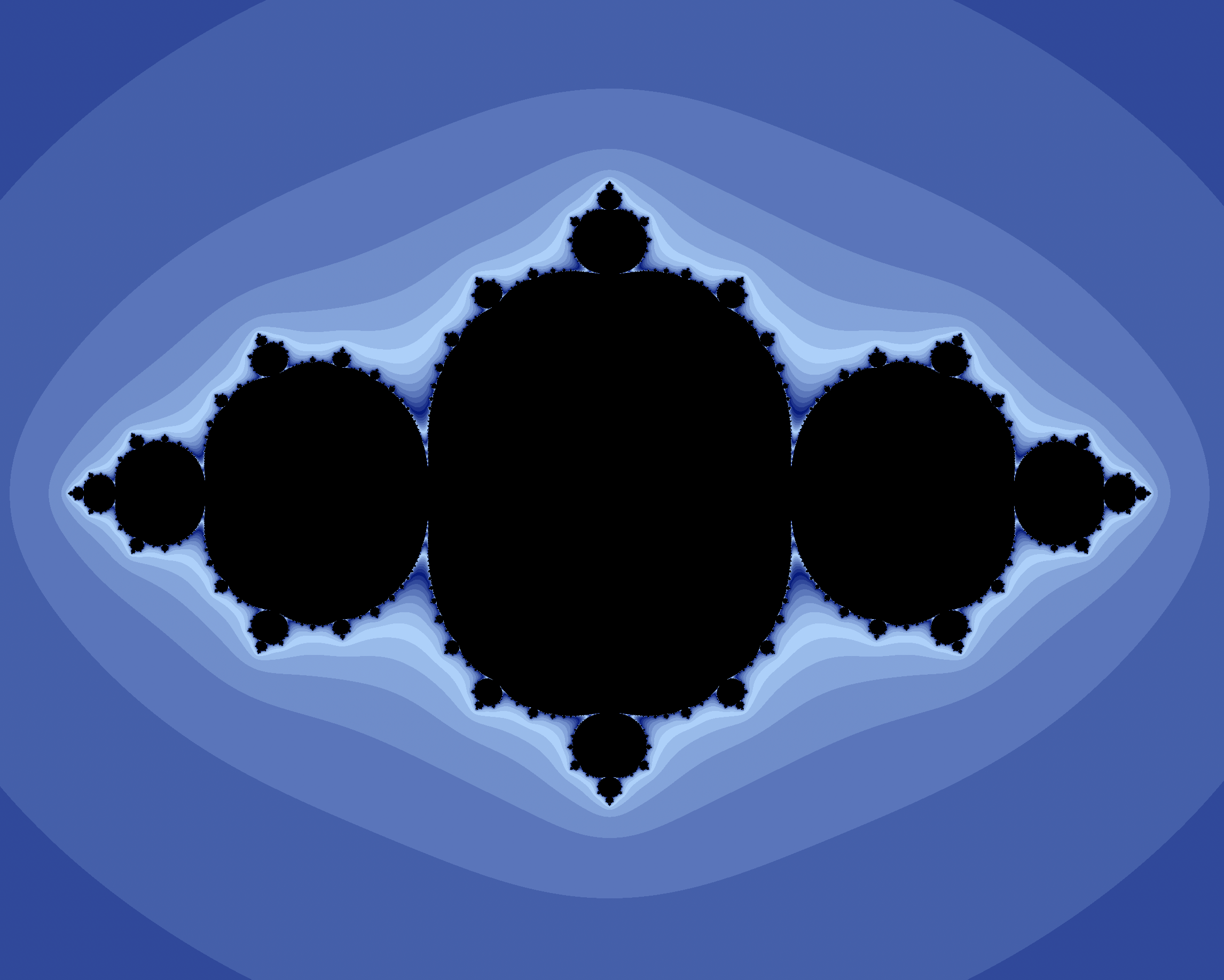}
  \includegraphics[width=0.366\textwidth, angle=90]{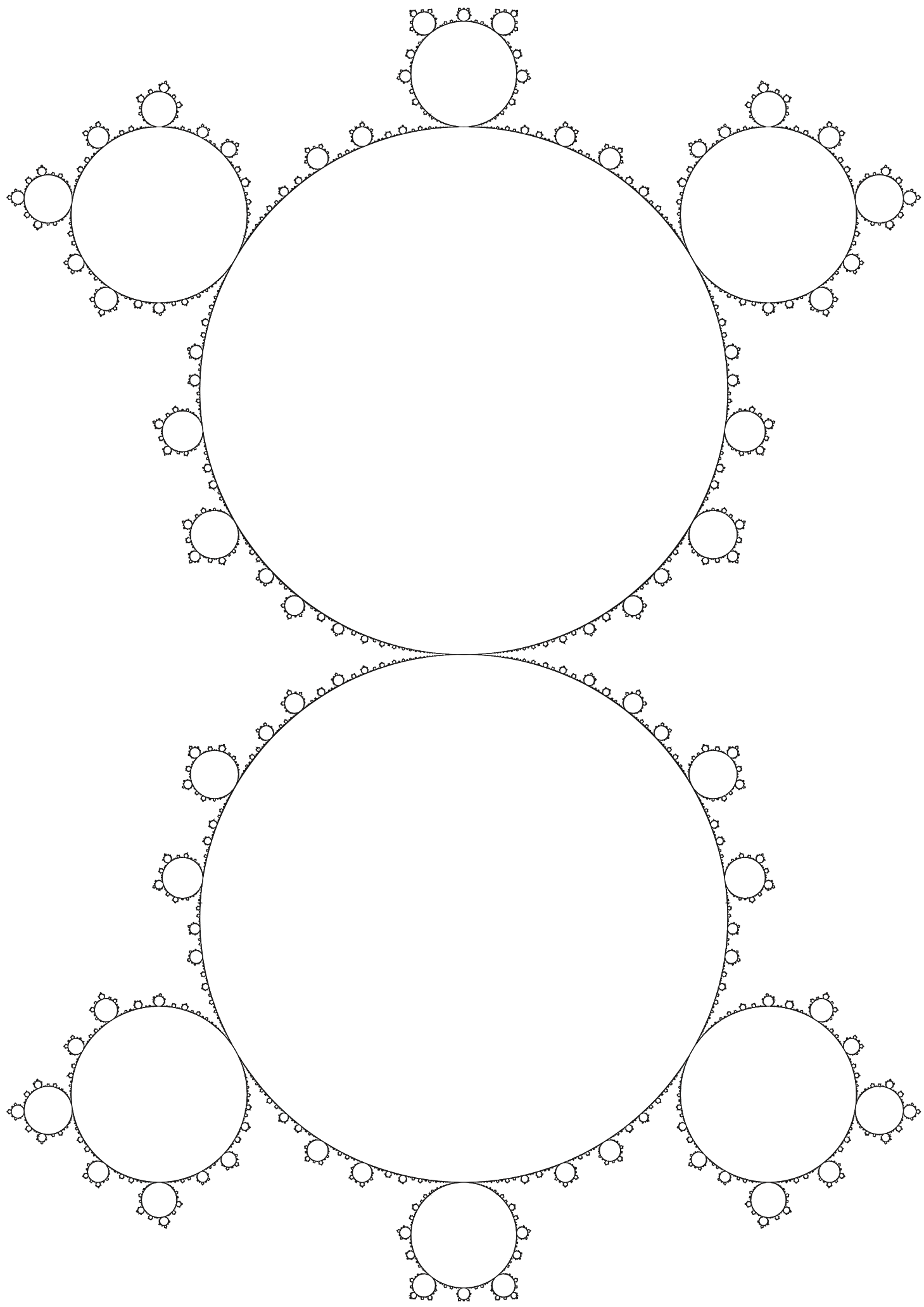}
  \caption{A fat Basilica Julia set (for the polynomial $Q(z) = z^2-\frac{3}{4}$) on the left and a Basilica limit set of a genus $2$ closed surface group on the right. These two sets are quasiconformally homeomorphic.}
  \label{fig:Basilica}
\end{figure}

More generally, a closed set $K \subseteq \C$ is called a {\em Basilica} if it is homeomorphic to $J(Q)$, where $Q(z) = z^2-\frac{3}{4}$ (see Figure~\ref{fig:Basilica}).
A Basilica $K$ is {\em fat} if
\begin{enumerate}
    \item\label{fatbasi:2intro} each bounded component of $\C - K$ is a quasi-disk; and
    \item\label{fatbasi:3intro} if any two bounded components of $\C - K$ touch, they touch tangentially.
\end{enumerate}
(See \S~\ref{subsubsec:topmodelpoly} for a precise definition of two Jordan domains touching tangentially.)
We remark that any geometrically finite Bers boundary group has a fat Basilica limit set. Conversely, any limit set of a finitely generated Kleinian group that is homeomorphic to a Basilica comes from a Bers boundary group.

Our next theorem is a universality statement for fat Basilica Julia sets.

\begin{thmx}\label{thm-basilica-stdpolymodel}
    Let $f$ be a rational map with a fat Basilica Julia set $J(f)$. Then $J(f)$ is quasiconformally equivalent to $J(Q)$ if and only if the unique non-Jordan disk Fatou component is attracting/super-attracting.

    In particular, any fat Basilica Julia set of a polynomial is quasiconformally equivalent to $J(Q)$.
\end{thmx}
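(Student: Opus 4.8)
The plan is to prove the characterization; the ``in particular'' clause then follows at once, because if $f$ is a polynomial whose connected Julia set is homeomorphic to a Basilica, the basin of infinity $\mathcal{A}(\infty)$ is the unique Fatou component with $\partial\mathcal{A}(\infty)=J(f)$ -- hence not a Jordan disk, whereas by local connectivity of $J(f)$ every bounded Fatou component is -- and $\infty\in\mathcal{A}(\infty)$ is super-attracting. So it suffices, given a rational map $f$ with fat Basilica Julia set whose unique non-Jordan-disk component $V$ is attracting or super-attracting, to build a quasiconformal homeomorphism $\Phi$ of $\hat{\C}$ with $\Phi(J(f))=J(Q)$. \textbf{Step 1 (common skeleton).} First I would record, from the topological model of \S\ref{subsubsec:topmodelpoly}, that $J(f)$ is the closure of a countable family of Jordan curves $\{\partial U_v\}_v$ indexed by a fixed rooted tree $T$: the root is $V$, the other $U_v$ are the bounded Fatou components (quasidisks, by \eqref{fatbasi:2intro}), adjacency in $T$ records tangential touching \eqref{fatbasi:3intro}, and $f$ induces an automorphism of $T$ intertwined with the dynamics. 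The same picture, with the same $T$, holds for $Q$; a Moore-type argument for the decomposition of $\hat{\C}$ into closed Fatou components and Julia points (using the quasidisk and tangency hypotheses) realizes the combinatorial identification by a homeomorphism of $\hat{\C}$. The work is to upgrade this homeomorphism to a quasiconformal one.

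\textbf{Step 2 (dynamical uniformization and bounded geometry).} Next, normalize so the (super-)attracting cycle carrying $V$ lies at $\infty$ and use a B\"ottcher or K\oe nigs coordinate to linearize the return map there; this provides a conformal model of $V$ agreeing with that of $\mathcal{A}(\infty,Q)$ up to a quasiconformal interpolation on one fundamental annulus. Pulling back by $f$, and using that the postcritical set of $f$ meets $J(f)$ only along the parabolic cycle responsible for the tangencies, one obtains the standard subhyperbolic-type control: all but finitely many $U_v$ are univalent pullbacks of finitely many reference components, so they form a family of quasidisks with uniformly bounded dilatation, and consecutive $U_v$'s are glued in a uniformly controlled manner away from the grand orbit of the parabolic cycle. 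Near that grand orbit the touchings are genuine cusps and the geometry degenerates, but in a scale-invariant, self-similar way whose order is governed by the number of petals, which the Basilica combinatorics pins to exactly two for both $f$ and $Q$.

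\textbf{Step 3 (assembly, and the main obstacle).} Then I would build $K$-quasiconformal homeomorphisms $\Phi_n$ of $\hat{\C}$, with $K$ independent of $n$, such that $\Phi_n$ realizes the identification of Step~1 exactly on the Fatou components and touchings of $T$ of depth at most $n$ and is a controlled interpolation beyond depth $n$. The uniformity of $K$ is the heart of the argument: on the deep pieces one pushes forward the reference quasiconformal models by the dynamics, losing only bounded dilatation by Step~2; the delicate region is the parabolic grand orbit, where the pieces degenerate to cusps, but since the degeneration has the \emph{same} order (two petals) for $f$ as for $Q$, the comparison between corresponding cuspidal pieces remains uniformly quasiconformal and the would-be David distortions cancel. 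A subsequential limit of the $\Phi_n$, normalized at three points, is then a $K$-quasiconformal homeomorphism $\Phi$ of $\hat{\C}$ with $\Phi(J(f))=J(Q)$. I expect the uniform quasiconformal control near the cusps -- together with checking that the two cuspidal local models really are of the same order -- to be the main technical obstacle.

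\textbf{Converse.} For the only-if direction, if $V$ is a parabolic basin, then $\partial V=J(f)$ contains a parabolic periodic point $p$ at which $V$ itself has a petal, so near $p$ the component $V$ occupies a region of positive opening rather than only the cuspidal channels between bounded components. The local combinatorics at $p$ -- the cyclic arrangement and prime-end multiplicity of the components abutting it -- is a quasiconformal invariant not realized anywhere on $J(Q)$, so $J(f)$ and $J(Q)$ are not quasiconformally equivalent.
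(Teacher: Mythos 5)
There are two genuine gaps, one in each direction.

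\textbf{The ``if'' direction.} You correctly identify that the degeneration near the parabolic grand orbit is the main obstacle, but the resolution you propose does not work as stated. Your plan is to push forward a finite collection of reference models ``by the dynamics,'' losing only bounded dilatation. The problem is that $f$ and $Q$ are \emph{not} topologically conjugate: they have different degrees, possibly different numbers of periodic cycles of bounded Fatou components, different return-time combinatorics, and so on. There is therefore no meaningful ``pushing forward by the dynamics'' available for interpolating across deep tiles. The paper resolves this by not comparing $f$ and $Q$ directly; it instead builds a \emph{piecewise} conformal Markov map $\cF:\mathfrak{P}^1\to\mathfrak{P}$ on $J(Q)$, with pieces of the form $Q^{-n}\circ Q^m$, that is topologically conjugate to $f$ on the Julia sets (Proposition~\ref{prop:topmodelJulia}), and then conjugates the two \emph{fragmented} systems quasiconformally. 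Even granting this setup, your claim that the ``David distortions cancel'' because both cusps have two petals addresses only the bounded-component side. The essential subtlety is on the $U_\infty$ side: the induced Markov map on $\partial^I U_\infty$ coming from the naive $\mathfrak{Q}$-map may have \emph{asymmetric or parabolic} periodic break-points (the left and right multipliers $2^{N(A_{ij})}$ around a periodic break-point need not agree), and without symmetric hyperbolicity one cannot invoke the circle-conjugacy machinery (Proposition~\ref{prop:qsmarkovmap}). The paper spends considerable effort (Lemmas~\ref{lem:2k}--\ref{lem:LargeM}, Theorem~\ref{thm:symmetrichyperbolic}) modifying the Markov partition, by manipulating dyadic decompositions, precisely to force these multipliers to match. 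Your outline does not contain this step, and without it the $\Phi_n$ cannot be made uniformly quasiconformal near the grand orbit of the parabolic cycle.

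\textbf{The converse.} Your claimed invariant — ``the cyclic arrangement and prime-end multiplicity of the components abutting $p$'' — is not actually a distinguishing invariant here. By Lemma~\ref{lem:noJuliaCrit} and conditions~\eqref{fatbasi:2intro},~\eqref{fatbasi:3intro}, the parabolic fixed point $p$ of $U_\infty(f)$ cannot lie on any bounded Fatou component, so it is a tip point: only $U_\infty$ abuts it, with one prime end. That is exactly the local combinatorics at any tip point of $J(Q)$. So the obstruction cannot be purely combinatorial. The correct invariant (Proposition~\ref{prop:inequivJulia}) is metric: Hausdorff limits of rescalings at $p$ are topological rays (since $J(f)$ near $p$ is confined to cuspidal channels), whereas a tip point $b\in J(Q)$ admits a sequence of rescalings whose Hausdorff limit contains copies of limbs of the Basilica and hence is not a ray; quasiconformal maps respect this distinction by the usual compactness argument. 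You would need to replace the prime-end claim with this blow-up argument.
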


\noindent (See \S~\ref{proof_three_main_thm_subsec} for proofs of Theorems~\ref{thm:qcclassfn-ltsets} and~\ref{thm-basilica-stdpolymodel}.)
\smallskip

    Let $\mathfrak{C}$ denote the class consisting of
    \begin{itemize}
        \item Kleinian groups $G$ with Basilica limit set, and
        \item rational maps $R$ with fat Basilica  Julia set,
    \end{itemize} 
    such that the induced dynamics on the ideal boundary $\partial^I U_\infty$ of the unique non-Jordan disk component $U_\infty \subseteq \widehat{\C} - K$ is hyperbolic. This is equivalent to requiring that $U_\infty/G$ is compact or $R\vert_{U_\infty}$ is (super-)attracting.
    It will be convenient to treat the two kinds of dynamical systems above on a common footing in the spirit of the Sullivan dictionary. 
    Then Theorem~\ref{thm:qcclassfn-ltsets} and Theorem~\ref{thm-basilica-stdpolymodel} show the universality of $J(Q)$ in the class $\mathfrak{C}$ (c.f. Conjecture~\ref{conj:qcuniversalityBasilica} and Conjecture~\ref{conj:confdim1univer} for universality beyond class $\mathfrak{C}$).
    
    \begin{cor}[Quasiconformal Universality]\label{cor:quasiconformaluniversality}
        Let $K_i, i\in\{1,2\}$, be either a fat Basilica limit set or a fat Basilica Julia set for a conformal dynamical system in $\mathfrak{C}$. Then $K_1, K_2$ are quasiconformally equivalent.
    \end{cor}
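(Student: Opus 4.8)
The plan is to route every $K_i$ through the single reference object $J(Q)$, with $Q(z)=z^2-\tfrac{3}{4}$, and then to conclude by transitivity. The starting point is the elementary fact that quasiconformal equivalence is an equivalence relation on compact subsets of $\widehat{\C}$: the composition of two quasiconformal homeomorphisms of $\widehat{\C}$ is quasiconformal (with dilatation at most the product of the two), and the inverse of a quasiconformal homeomorphism is quasiconformal. Hence it suffices to exhibit, for each $i\in\{1,2\}$, a quasiconformal homeomorphism of $\widehat{\C}$ carrying $K_i$ onto $J(Q)$; the corollary then follows by composing the map produced for $K_1$ with the inverse of the one produced for $K_2$.

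So fix $i$ and split into two cases according to the type of the underlying conformal dynamical system. Suppose first that $K_i=\Lambda(G)$ is the Basilica limit set of a Kleinian group $G\in\mathfrak{C}$. By the discussion preceding the statement, $G$ is a geometrically finite Bers boundary group: a finitely generated Kleinian group with Basilica limit set is a Bers boundary group, and geometric finiteness is forced since a degenerate end would either make the complement of the limit set connected (in the totally degenerate case) or produce a space-filling portion of the limit set bounding no complementary component (in the partially degenerate case); neither is compatible with the topology of a Basilica. Moreover $U_\infty$ is the simply connected totally invariant component $\Delta_\infty$, and the hyperbolicity clause in the definition of $\mathfrak{C}$ is precisely the assertion that $\Sigma=\Delta_\infty/G$ is compact; and $G$ is not quasi-Fuchsian because $\Lambda(G)$ is not a Jordan curve. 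We are therefore in alternative (2) of the compact case of Theorem~\ref{thm:qcclassfn-ltsets}, which yields a quasiconformal homeomorphism of $\widehat{\C}$ taking $\Lambda(G)$ onto $J(Q)$. Suppose instead that $K_i=J(f)$ is a fat Basilica Julia set of a rational map $f\in\mathfrak{C}$; then by definition of $\mathfrak{C}$ the dynamics of $f$ on $\partial^I U_\infty$ is hyperbolic, that is, $f$ restricted to the unique non-Jordan-disk Fatou component is attracting or super-attracting. This is exactly the hypothesis of Theorem~\ref{thm-basilica-stdpolymodel}, and that theorem produces a quasiconformal homeomorphism of $\widehat{\C}$ taking $J(f)$ onto $J(Q)$.

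Combining the two cases yields quasiconformal homeomorphisms $\varphi_1,\varphi_2\colon\widehat{\C}\to\widehat{\C}$ with $\varphi_i(K_i)=J(Q)$, so $\varphi_2^{-1}\circ\varphi_1$ is a quasiconformal homeomorphism of $\widehat{\C}$ carrying $K_1$ onto $K_2$. The corollary is thus a formal consequence of Theorems~\ref{thm:qcclassfn-ltsets} and~\ref{thm-basilica-stdpolymodel}, and there is no substantive obstacle; the only point requiring care is the bookkeeping above, namely that the single hyperbolicity condition built into the definition of $\mathfrak{C}$ specializes, on the two sides of the Sullivan dictionary, to the compactness hypothesis of Theorem~\ref{thm:qcclassfn-ltsets} and to the (super-)attracting hypothesis of Theorem~\ref{thm-basilica-stdpolymodel}, a translation already indicated in the paragraph that introduces $\mathfrak{C}$.
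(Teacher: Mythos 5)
Your proposal is correct and matches the paper's intended argument exactly: the corollary is presented as an immediate consequence of Theorems~\ref{thm:qcclassfn-ltsets} and~\ref{thm-basilica-stdpolymodel}, obtained by routing each $K_i$ through the common model $J(Q)$ and using that quasiconformal equivalence is transitive. Your added bookkeeping (ruling out quasi-Fuchsian and geometrically infinite groups, and matching the hyperbolicity clause of $\mathfrak{C}$ to the hypotheses of the two theorems on each side of the dictionary) is accurate and makes explicit what the paper leaves implicit.
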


\begin{remark}
    We remark that Theorem~\ref{thm-basilica-stdpolymodel} extends immediately to any $p/q-$rabbit quadratic polynomial or fat $p/q-$rabbit with appropriate modifications. We focus on the case of $Q$ for concreteness, and because it is the only Julia set in this family that is quasiconformally equivalent to the limit set of a Kleinian group.

    In class $\mathfrak{C}$, it is convenient to use the quadratic polynomial $Q(z) = z^2-\frac{3}{4}$ as the model. The important feature of $Q$ that we rely on is  {\em transitivity} of the dynamics: all bounded Fatou components
    lie in the same grand orbit of $Q$. The same is true for every contact point between two bounded Fatou components. 

    Our theorems fit naturally in a broader framework of classifying dynamical fractal sets up to quasiconformal equivalence. See \eqref{eqn:reformmain}, \S~\ref{subsec:beyondclassC}, and \S~\ref{subsec:rvu} for further discussion on Basilica limit sets beyond the class $\mathfrak{C}$.
    \end{remark}

\subsection{Corollaries of quasiconformal universality}
Before discussing the background and related results, let us list some immediate consequences of Theorem~\ref{thm:qcclassfn-ltsets} and Theorem~\ref{thm-basilica-stdpolymodel}.

\subsection*{Quasiconformal equivalence of Julia set and limit set}

It was conjectured in \cite{LLMM23b} that  if a Julia set $J$ and a limit set $\Lambda$ are 
\begin{enumerate}
    \item connected,
    \item not homeomorphic to the circle or the 2-sphere,
\end{enumerate}  then they are quasiconformally nonequivalent. As an immediate corollary of Corollary~\ref{cor:quasiconformaluniversality}, we give a negative answer to the above conjecture.
\begin{cor}\label{cor-basilica}
    There exists a connected limit set of a Kleinian group, not homeomorphic to a circle or a $2-$sphere, that is quasiconformally equivalent to the Julia set of a rational map.
\end{cor}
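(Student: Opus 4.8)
The plan is to extract Corollary~\ref{cor-basilica} directly from Corollary~\ref{cor:quasiconformaluniversality} by exhibiting a single pair of concrete dynamical systems in the class $\mathfrak{C}$ — one a Kleinian limit set, the other a rational Julia set — and verifying the two negative hypotheses (connectedness, and non-homeomorphism to $\mathbb{S}^1$ or $\widehat{\C}$). For the rational side I would take $Q(z) = z^2 - \frac34$ itself: $J(Q)$ is the (fat) Basilica, it is connected since $Q$ has connected Julia set, and $Q$ lies in $\mathfrak{C}$ because its unique non-Jordan-disk Fatou component (the immediate basin of the attracting $2$-cycle) is attracting, so $R\vert_{U_\infty}$ is (super-)attracting as required. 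For the Kleinian side I would invoke Theorem~\ref{thm:qcclassfn-ltsets}(2) in the compact case: pick any geometrically finite Bers boundary $B$-group $G$ whose conformal boundary orbifold $\Sigma = \Delta_\infty/G$ is a closed surface of genus $\geq 2$ (such groups exist — they are exactly the cusps on the Bers boundary of quasi-Fuchsian space of $\Sigma$, e.g.\ a maximal-cusp degeneration pinching a separating curve on a genus-$2$ surface). By Theorem~\ref{thm:qcclassfn-ltsets} its limit set $\Lambda(G)$ is quasiconformally equivalent to $J(Q)$, and being a limit set of a finitely generated Kleinian group it is connected (the $B$-group has a simply connected invariant component $\Delta_\infty$, so $\Lambda(G) = \partial \Delta_\infty$ is connected).

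The one genuine content point is the topological claim that the fat Basilica $J(Q)$ is \emph{not} homeomorphic to the circle or the $2$-sphere; once this is in hand, the corollary follows by composing the quasiconformal equivalence $\Lambda(G) \to J(Q)$. That $J(Q) \not\cong \mathbb{S}^1$ is immediate because $J(Q)$ is not a simple closed curve: it has cut points, or more concretely it contains infinitely many disjoint Jordan curves bounding the bounded Fatou components, so removing the contact point between two adjacent bounded Fatou components does not disconnect a small neighborhood the way it would for a circle — in fact $J(Q)$ has local cut points while $\mathbb{S}^1$ does not, since every point of $\mathbb{S}^1$ has arbitrarily small connected neighborhoods whose complement in $\mathbb{S}^1$ is connected, which fails at the $\beta$-fixed point (the root of the main component) of $J(Q)$. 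That $J(Q) \not\cong \widehat{\C}$ is clear since $J(Q)$ has empty interior (it is the Julia set of a hyperbolic polynomial with a totally invariant attracting basin, hence nowhere dense), whereas $\widehat{\C}$ does not.

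I would then simply record the conclusion: taking $K_1 = \Lambda(G)$ and $K_2 = J(Q)$ in Corollary~\ref{cor:quasiconformaluniversality} gives a quasiconformal homeomorphism $\widehat{\C} \to \widehat{\C}$ carrying $\Lambda(G)$ onto $J(Q)$, and both are connected and homeomorphic to neither the circle nor the sphere. I do not anticipate a serious obstacle here — the entire weight of the argument has been front-loaded into Theorems~\ref{thm:qcclassfn-ltsets} and~\ref{thm-basilica-stdpolymodel} and their Corollary — the only care needed is (i) to name an \emph{actual} geometrically finite Bers boundary group with closed conformal boundary, so that the compact case of Theorem~\ref{thm:qcclassfn-ltsets} applies and delivers a \emph{Kleinian} (not merely Schwarz-reflection) example, and (ii) to state cleanly why the fat Basilica is topologically distinct from the two excluded models. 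If one wanted to be maximally economical, the remark following Theorem~\ref{thm-basilica-stdpolymodel} that $Q$ is "the only Julia set in this family that is quasiconformally equivalent to the limit set of a Kleinian group" already signals that this existence is genuine, so the proof can be compressed to two or three sentences citing Corollary~\ref{cor:quasiconformaluniversality}.
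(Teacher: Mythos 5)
Your approach is the same as the paper's: the paper states that Corollary~\ref{cor-basilica} is an immediate consequence of Corollary~\ref{cor:quasiconformaluniversality} (equivalently, of the compact case of Theorem~\ref{thm:qcclassfn-ltsets} together with the observation that $J(Q)$ itself is the Julia set of a rational map in class $\mathfrak{C}$), and you flesh out the same deduction by naming a concrete Bers boundary group and checking that $J(Q)$ is connected but not a circle or a sphere. That is exactly what the paper intends.

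One small factual slip worth correcting: the unique non-Jordan-disk Fatou component $U_\infty$ of $Q(z)=z^2-\frac34$ is the basin of infinity, not ``the immediate basin of the attracting $2$-cycle.'' The two periodic bounded Fatou components of $Q$ form the immediate basin of the \emph{parabolic} fixed point $-\tfrac12$ (multiplier $-1$), and they \emph{are} Jordan disks. The reason $Q\in\mathfrak{C}$ is that $\infty$ is a super-attracting fixed point, so $Q\vert_{U_\infty}$ is super-attracting with $U_\infty$ the basin of infinity; your conclusion is right, but the parenthetical identification is wrong on both counts (wrong component, and the $2$-cycle is parabolic rather than attracting). The rest — connectedness of $\Lambda(G)$ for a $B$-group, local cut points ruling out $\mathbb{S}^1$, empty interior ruling out $\widehat{\C}$ — is correct and complete.
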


We remark that it is proved in \cite{LN24} that a gasket Julia set and a gasket limit set can be locally quasiconformally homeomorphic. We expect that globally, a gasket Julia set is not quasiconformally homeomorphic to a gasket limit set, see \S~\ref{subsubsec:gasketJL} for more discussions.

\subsection*{Quasiconformal uniformization}
A Basilica $K$ is called {\em round} if the boundary of each bounded component of $\C - K$ is a round circle.
Since the limit set of any geometrically finite Bers boundary group is quasiconformally equivalent to a round Basilica, we immediately obtain the following quasiconformal uniformization result for the fat Basilica (c.f. \cite{LN24, Nta25} for fat gaskets).
\begin{cor}\label{cor-totgeobdd}
    Let $g$ be a polynomial with a fat Basilica Julia set $J(g)$.
    Then $J(g)$ can be quasiconformally uniformized to a round Basilica, i.e., there exists a quasiconformal map $\Phi: \C \longrightarrow \C$ so that $\Phi(J(g))$ is a round Basilica.
\end{cor}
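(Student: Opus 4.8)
The plan is to chain together the two universality theorems already established. By Theorem~\ref{thm-basilica-stdpolymodel}, since $g$ is a polynomial whose Julia set is a fat Basilica, there is a quasiconformal homeomorphism $\varphi\colon\widehat{\C}\to\widehat{\C}$ with $\varphi(J(g))=J(Q)$, where $Q(z)=z^2-\tfrac34$. It therefore suffices to produce \emph{one} round Basilica $K_0\subseteq\C$ together with a quasiconformal homeomorphism of $\widehat{\C}$ carrying $J(Q)$ onto $K_0$; composing with $\varphi$ (and postcomposing with a Möbius transformation to fix $\infty$, which preserves round circles and tangencies) then gives the desired $\Phi\colon\C\to\C$.

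To produce such a $K_0$ I would realize it as a Kleinian limit set and feed it into Theorem~\ref{thm:qcclassfn-ltsets}. Fix a closed surface $S$ of genus $\ge 2$, let $\Gamma$ be a Fuchsian uniformization, and let $G_0$ be a maximal cusp on the Bers boundary of $\mathrm{QF}(S)$, i.e. the geometrically finite $B$-group obtained by totally pinching a pants decomposition of the bottom conformal structure. Then $G_0$ is a geometrically finite Bers boundary group with $\Delta_\infty(G_0)/G_0=S$ compact, and every component of $\Omega(G_0)\setminus\Delta_\infty(G_0)$ is a translate of a round disk: its stabilizer uniformizes a pair of pants all of whose boundary curves have been pinched, i.e. a thrice-punctured sphere, hence is conjugate into $\PSL(2,\R)$ and acts on a round disk. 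By the structural remark recorded in the excerpt (every geometrically finite Bers boundary group has a fat Basilica limit set), $\Lambda(G_0)$ is a Basilica; since its bounded complementary components are round disks, which are quasidisks meeting tangentially, $\Lambda(G_0)$ is in fact a round fat Basilica. Now Theorem~\ref{thm:qcclassfn-ltsets}, applied to the Bers boundary group $G_0$ over the compact surface $S$ (case (2), as $G_0$ has accidental parabolics and is not quasi-Fuchsian), provides a quasiconformal homeomorphism of $\widehat{\C}$ taking $J(Q)$ onto $K_0:=\Lambda(G_0)$.

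Putting these together, $\Phi:=m\circ\psi\circ\varphi$, with $\psi$ the map from the previous paragraph and $m$ a suitable Möbius normalization, is a quasiconformal homeomorphism of $\widehat{\C}$ (hence, after the normalization, of $\C$) with $\Phi(J(g))$ equal to the round Basilica $m(K_0)$, which is the assertion of the corollary.

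The main obstacle is the construction of $G_0$: once one knows that the limit set of such a maximal cusp has Basilica topology — which is the content of the structural remark invoked above — the roundness of its bounded complementary components is immediate from the triangle-group description. If one prefers to avoid Kleinian groups entirely, an alternative route is a direct quasiconformal surgery on $J(Q)$ that simultaneously straightens every bounded Fatou component (all of which lie in a single grand orbit under $Q$) to a round disk while preserving all tangency points; there the difficulty shifts to showing that the interpolated map glues up to a globally quasiconformal homeomorphism, which requires conformal removability of the fat Basilica.
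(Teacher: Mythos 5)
Your proof is correct and follows the paper's intended route: Corollary~\ref{cor-totgeobdd} is obtained by composing the quasiconformal maps of Theorem~\ref{thm-basilica-stdpolymodel} and Theorem~\ref{thm:qcclassfn-ltsets} with a single further quasiconformal map onto a round Basilica, the last step coming from the (stated, unproved) fact that geometrically finite Bers boundary limit sets over compact surfaces are quasiconformally equivalent to round Basilicas. Where the paper leaves this last ingredient implicit, you make it concrete by choosing $G_0$ to be a maximal cusp; the only point worth spelling out slightly more carefully is the justification of roundness: the stabilizer $G_\Delta$ of a bounded component $\Delta$ is a finitely generated $B$-group with Jordan-curve limit set $\partial\Delta$, hence quasi-Fuchsian, uniformizing a pair of thrice-punctured spheres, and the triviality of $\Teich(S_{0,3})$ forces it to be M\"obius-conjugate into $\PSL(2,\R)$, so that $\Delta$ is a round disk. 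Your closing remark about the normalization (post-composing with a M\"obius map that fixes $\infty$, which preserves roundness of the bounded complementary circles since none passes through $\infty$) is also handled correctly.
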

\noindent The above corollary has been obtained independently by Dimitrios Ntalampekos using purely analytic tools.

\subsection*{Uniform quasi-symmetry groups}
It is known that the quasi-symmetry group of the Basilica Julia set $J(z^2-1)$ is big (see \cite{LM18, BF25}). The same construction also works for the fat Basilica Julia set.
A subgroup  $H \subseteq \QS(J)$ is {\em uniformly quasi-symmetric} if there exists a constant $K$ so that the quasi-symmetry constant of any element $h \in H$ is bounded by $K$.
Theorem~\ref{thm:qcclassfn-ltsets} implies that there is an abundance of  subgroups of $\QS(J)$ that are uniformly quasi-symmetric. We should point out here, for context,  that Sullivan  \cite{Sul78} proved that a group of uniformly quasiconformal automorphisms of the unit disk or the Riemann sphere $\widehat{\C}$ is quasiconformally conjugate to a group of M\"obius automorphisms.
Markovic \cite{Mar06} extended this to a 
group of uniformly quasi-symmetric homeomorphisms of the unit circle. 
\begin{cor}\label{cor-surfgpinbasilica}
    Let $\Sigma$ be a non-rigid, compact two-dimensional orbifold with negative Euler characteristic. 
    There exists a faithful representation of $\pi_1(\Sigma)$ into the quasi-symmetry group $\QS(J(Q))$, so that the image is a uniformly quasi-symmetric subgroup.
    In particular, $\QS(J(Q))$ contains infinitely many uniformly quasi-symmetric surface subgroups.
\end{cor}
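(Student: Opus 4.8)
The plan is to realize $\pi_1(\Sigma)$ as a group of M\"obius transformations acting on a Basilica limit set and then push this action over to $J(Q)$ using the quasiconformal equivalence supplied by Theorem~\ref{thm:qcclassfn-ltsets}; uniform quasi-symmetry will then be automatic, since M\"obius maps are conformal. First I would fix a Fuchsian uniformization of the non-rigid compact orbifold $\Sigma$. Since $\Sigma$ has negative Euler characteristic and is non-rigid, $\Teich(\Sigma)$ is positive-dimensional and its Bers boundary is nonempty; pinching, say, a single simple closed geodesic on $\Sigma$ produces a geometrically finite $B$-group $G$ on the Bers boundary with $\Delta_\infty/G\cong\Sigma$ that is not quasi-Fuchsian. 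Such a $G$ is isomorphic to $\pi_1(\Sigma)$ via a marking-compatible isomorphism $\rho_0\colon\pi_1(\Sigma)\to G\subset\mathrm{PSL}_2(\C)$, and by Theorem~\ref{thm:qcclassfn-ltsets}(2) there is a quasiconformal homeomorphism $\Phi\colon\widehat{\C}\to\widehat{\C}$ with $\Phi(\Lambda(G))=J(Q)$.

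Next I would set $\rho(\gamma):=(\Phi\circ\rho_0(\gamma)\circ\Phi^{-1})\vert_{J(Q)}$. This is a well-defined homomorphism into $\QS(J(Q))$: each $\rho_0(\gamma)$ preserves $\Lambda(G)$, so $\rho(\gamma)$ is a self-homeomorphism of $J(Q)$, and since any global quasiconformal self-map of $\widehat{\C}$ is quasi-symmetric for the chordal metric and a restriction of a quasi-symmetric map to any subset is again quasi-symmetric, $\rho(\gamma)\in\QS(J(Q))$ (the chordal and Euclidean metrics being bi-Lipschitz equivalent on the compact set $J(Q)\subset\C$). Faithfulness is immediate: if $\rho(\gamma)=\mathrm{id}_{J(Q)}$, then $\rho_0(\gamma)$ fixes the infinite set $\Lambda(G)$ pointwise and is M\"obius, hence equals the identity, hence $\gamma=1$ as $\rho_0$ is injective. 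This is, up to the choice of Kleinian model, the construction of \cite{LM18} for $J(z^2-1)$.

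The uniform bound is the only step carrying content, and this is precisely where the Kleinian side is used: every element of $G$ is M\"obius, hence $1$-quasiconformal, hence $\eta_0$-quasi-symmetric for the chordal metric with one and the same universal distortion function $\eta_0$, because the quasi-symmetry distortion of a $K$-quasiconformal self-map of $\widehat{\C}$ depends only on $K$. Conjugating by the fixed map $\Phi$ changes this only through the ($\gamma$-independent) distortion functions of $\Phi$ and $\Phi^{-1}$, and since a composition of quasi-symmetric maps is quasi-symmetric with control depending only on the distortion functions of the factors, all maps $\rho(\gamma)$ are $\eta$-quasi-symmetric for a single $\eta$. Hence $\rho(\pi_1(\Sigma))$ is a uniformly quasi-symmetric surface subgroup of $\QS(J(Q))$, which is the first assertion.

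For the last sentence I would apply the construction to the closed surfaces $\Sigma_g$ of genus $g=2,3,4,\dots$, each of which is non-rigid, obtaining subgroups $H_g:=\rho_g(\pi_1(\Sigma_g))\subseteq\QS(J(Q))$ that are uniformly quasi-symmetric and pairwise non-isomorphic as abstract groups (their abelianizations have rank $2g$), hence infinitely many. I do not expect a genuine obstacle: the entire substance is supplied by Theorem~\ref{thm:qcclassfn-ltsets}, and the only care needed is the routine Kleinian bookkeeping --- verifying that the degenerate group chosen on the Bers boundary is geometrically finite (so that Theorem~\ref{thm:qcclassfn-ltsets} applies) and is marked correctly --- which holds for the standard cusp groups obtained by pinching a multicurve on $\Sigma$.
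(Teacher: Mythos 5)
Your proposal is correct and takes essentially the same approach the paper intends: pinch an essential curve on the non-rigid compact orbifold $\Sigma$ to get a geometrically finite Bers boundary $B$-group $G$ with $\Delta_\infty/G\cong\Sigma$, invoke Theorem~\ref{thm:qcclassfn-ltsets} to get a global quasiconformal map $\Phi$ with $\Phi(\Lambda(G))=J(Q)$, and conjugate the M\"obius action of $G$ (which is uniformly quasi-symmetric with a universal distortion function since every element is $1$-quasiconformal on $\widehat{\C}$) by $\Phi$; the two fixed distortion functions of $\Phi,\Phi^{-1}$ then give a single $\eta$ working for all of $\rho(\pi_1(\Sigma))$, and running this over the closed surfaces $\Sigma_g$ yields the infinitely-many clause.
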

Here, non-rigid means that the surface $\Sigma$ has a non-trivial Teichm{\"u}ller space; i.e., $\Sigma$ is quasiconformally deformable. This means that $\Sigma$ is not a genus $0$ surface with three marked points.

We remark that there are infinitely many non-commensurable uniformly quasi-symmetric subgroups in $\QS(J(Q))$. Indeed, consider a genus $g\geq 2$ closed surface $\Sigma_g$ with a separating curve $\mathcal{C}_g$ that divides $\Sigma_g$ into a genus $1$ and a genus $g-1$ surface with boundary. Then by pinching the curve $\mathcal{C}_g$, $(\Sigma_g, \mathcal{C}_g)$ induces a Bers boundary group, and thus by Theorem~\ref{thm:qcclassfn-ltsets}, a uniformly quasi-symmetric subgroup $H_g\subseteq \QS(J(Q))$. It is easy to verify explicitly that $H_g, H_{g'}$ are not commensurable in $\QS(J(Q))$ if $g \neq g'$.

\subsection*{Conformal removability}
A subset $K \subseteq \widehat{\C}$ is called {\em conformally removable} if \emph{any} homeomorphism $\Phi: \widehat{\C} \longrightarrow \widehat{\C}$ which is conformal on $\widehat{\C} - K$ is a M\"obius map.
Certain Julia sets and limits sets are known to be conformally removable by the work in \cite{Jon95, Kah98, JS00, LMMN, LN24a}. 
As a corollary to Theorems~\ref{thm:qcclassfn-ltsets},~\ref{thm-basilica-stdpolymodel}, we have a new list of conformally removable limit sets. 
\begin{cor}\label{cor-confremovableltset}
    Let $\Lambda$ be the limit set of a finitely generated Kleinian group that is homeomorphic to a Basilica.
    Then $\Lambda$ is conformally removable.
\end{cor}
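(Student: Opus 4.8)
The plan is to derive Corollary~\ref{cor-confremovableltset} from Theorem~\ref{thm:qcclassfn-ltsets} by combining it with two facts that lie outside the quasiconformal classification itself: that conformal removability is invariant under quasiconformal homeomorphisms of $\widehat{\C}$, and that a suitable \emph{model} Basilica in each relevant quasiconformal class is conformally removable. The first is standard: conformal removability of a compact set $E\subseteq\widehat{\C}$ is equivalent to its removability for the Sobolev class $W^{1,2}$, and $W^{1,2}$-removability is preserved by quasiconformal homeomorphisms of $\widehat{\C}$, because precomposition with a $K$-quasiconformal map carries $W^{1,2}$ into $W^{1,2}$ with norm distorted by a factor depending only on $K$ --- in essence the near-conformal invariance of the Dirichlet integral; see~\cite{JS00}. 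As for the group $G$: by the remark recorded above (any Basilica limit set of a finitely generated Kleinian group comes from a Bers boundary group), if $\Lambda=\Lambda(G)$ is homeomorphic to a Basilica then $G$ is a Bers boundary group, and it must in fact be geometrically finite --- a degenerate end would force $\Lambda$ to contain the limit set of a degenerate subgroup, a non-separating ``blob'' that cannot embed in a Basilica, whose complementary components are Jordan domains meeting pairwise at a single point. Thus Theorem~\ref{thm:qcclassfn-ltsets} applies to $G$.

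Suppose first that $\Sigma=\Delta_\infty/G$ is compact. Then Theorem~\ref{thm:qcclassfn-ltsets} provides a quasiconformal homeomorphism of $\widehat{\C}$ carrying $J(Q)$, $Q(z)=z^2-\frac{3}{4}$, onto $\Lambda$, so it is enough to know that $J(Q)$ is conformally removable. One sees this directly: $Q$ is a parabolic --- hence geometrically finite --- quadratic polynomial, $J(Q)$ is locally connected and of zero area, and each Fatou component is a H\"older domain, the parabolic contact points contributing only H\"older-controlled distortion; removability then follows from the H\"older-domain criterion of~\cite{JS00}. Alternatively one may invoke~\cite{Jon95, Kah98, LMMN, LN24a}, or note that by Corollary~\ref{cor-totgeobdd} the set $J(Q)$ is quasiconformally equivalent to a round Basilica and appeal to removability of the latter. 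By the quasiconformal invariance recorded above, $\Lambda$ is conformally removable.

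The remaining case is $\Sigma$ not compact, and this is where the real work lies: Theorem~\ref{thm:qcclassfn-ltsets} now tells us that $\Lambda$ is \emph{not} quasiconformally equivalent to any rational Julia set, so there is no polynomial model to transport removability from. The approach I would take is to observe that $\Lambda$ is still quasiconformally equivalent to a round Basilica carrying cusps --- a countable tangent union of round circles whose complementary round disks have uniformly controlled geometry at every scale, the cusps included --- and to show that such round sets are conformally removable; equivalently, one may appeal to a direct removability theorem for limit sets of geometrically finite Kleinian groups as in~\cite{Jon95, LMMN, LN24a}. Granting this, the quasiconformal invariance of conformal removability again passes removability to $\Lambda$, completing the proof. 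I expect this non-compact case --- establishing conformal removability of a round Basilica, or of a general geometrically finite limit set, across its parabolic cusps --- to be the only genuine obstacle; the compact case and the transport along quasiconformal maps are routine once Theorem~\ref{thm:qcclassfn-ltsets} is in hand.
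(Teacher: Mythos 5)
Your compact case matches the paper's argument: transport conformal removability from $J(Q)$ to $\Lambda$ along the quasiconformal homeomorphism from Theorem~\ref{thm:qcclassfn-ltsets}, using removability of $J(Q)$ (the paper cites \cite[Theorem C]{LMMN}). Your preliminary observation that geometric finiteness is forced by the topology is also reasonable, though the paper leaves it implicit.

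The non-compact case is a genuine gap, and you identify it as such yourself. The paper does \emph{not} try to find a quasiconformal model for $\Lambda$ at all in this case --- indeed Theorem~\ref{thm:qcclassfn-ltsets} rules out any rational Julia set as a model, exactly as you say. Instead the paper drops down from quasiconformal to \emph{David} regularity and invokes Theorem~\ref{thm:davidhi}: $\Lambda$ is the image of the postcritically finite Basilica $J(z^2-1)$ under a David homeomorphism of $\widehat{\C}$. Since $J(z^2-1)$ is the boundary of a John domain (by \cite{CJY94}), and David images of boundaries of John domains are conformally removable (\cite[\S~2.4]{LMMN}), removability of $\Lambda$ follows. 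This is precisely the step your two proposed alternatives fail to replace. Appealing to a ``direct removability theorem for limit sets of geometrically finite Kleinian groups'' would be circular, since removability of this class of limit sets is exactly what the corollary establishes as new (the paper emphasizes that only a specific subclass coming from reflection groups was previously known, \cite[Theorem 9.1]{LMMN}). And asserting that the round Basilica with persistent cusps is removable is an unproven claim requiring essentially the same control through the parabolic points that the David/John machinery provides --- you never carry it out, and it is not obviously easier than what you are trying to prove. The missing idea is the David hierarchy of Theorem~\ref{thm:davidhi}, which is what lets the argument descend below quasiconformal equivalence while still transporting removability across the persistently parabolic points.
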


We remark that a specific class of Basilica limit sets (coming from reflection groups) is shown to be conformally removable in \cite[Theorem 9.1]{LMMN}.

Some dynamical precursors to Corollary~\ref{cor-confremovableltset} in the context of Kleinian groups are worth mentioning here. In \cite{BCM12}, Brock-Canary-Minsky showed that if $\rho, \rho'$ are Kleinian surface groups such that their actions on the Riemann sphere are topologically  conjugate, then $\rho, \rho'$ are quasiconformally conjugate. In \cite[\S 4.5]{Mj14} it was shown that it suffices to assume that the actions of the Kleinian surface groups on their limit sets are  topologically  conjugate. The key difference with Corollary~\ref{cor-confremovableltset} lies in the fact that these theorems are dynamical in their nature, and the group action on the limit set is an  essential part of the hypothesis.

To prove the corollary,
we note that if $G$ corresponds to a compact surface, then $\Lambda(G)$ is conformally removable as it is the quasiconformal image of $J(Q)$, and we know $J(Q)$ is conformally removable (see \cite[Theorem C]{LMMN}).
If $G$ corresponds to a non-compact surface, then $\Lambda(G)$ is the image of $J(z^2-1)$ under a David homeomorphism (see Theorem~\ref{thm:davidhi}). Since $J(z^2-1)$ is a John domain (see \cite{CJY94}) and the David image of a John domain is conformally removable (see \cite[\S~2.4.]{LMMN}), the corollary follows.


\subsection{Basilica limit sets beyond class $\mathfrak{C}$}\label{subsec:beyondclassC}
Motivated by the results of \cite{McM25}, our discussion can be placed in a broader context of {\em meta-Teichm\"uller theory} for fractal sets: the classification of topological objects up to quasiconformal equivalence.

Let $\mathfrak{B}$ denote the space of all Basilicas $K \subseteq \widehat{\C}$ subject to the equivalence relation $[K_1] = [K_2]$ if there exists a quasiconformal map $\phi: \widehat{\C} \longrightarrow \widehat{\C}$ so that $\phi(K_1) = K_2$.

Each Basilica Julia set or limit set determines a point in $\mathfrak{B}$.
We say that a point in $\mathfrak{B}$ is {\em rational} or {\em Kleinian} if it arises as the Julia set of a geometrically finite rational map or the limit set of a geometrically finite Kleinian group, respectively.
Denote by $\mathfrak{U}_{\rat}, \mathfrak{U}_{\kle} \subseteq \mathfrak{B}$ the set of rational and Kleinian points, respectively.

From this perspective, Theorem~\ref{thm:qcclassfn-ltsets} implies that 
\begin{equation}\label{eqn:reformmain}
    \mathfrak{U}_{\rat}\cap \mathfrak{U}_{\kle} = \{[J(Q)]\} \subseteq \mathfrak{B},
\end{equation}
i.e., $[J(Q)]$ is the only point that is both rational and Kleinian.
On the other hand, both sets $\mathfrak{U}_{\rat}, \mathfrak{U}_{\kle} \subseteq \mathfrak{B}$ are infinite.
\begin{thmx}\label{thm:infKleinianRational}
    The set $\mathfrak{B}$ contains infinitely many rational points and infinitely many Kleinian points.
\end{thmx}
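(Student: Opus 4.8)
The plan is to prove the two halves separately, in each case exhibiting an explicit infinite family of geometrically finite examples and separating its members by a quasiconformal invariant read off the \emph{cusp combinatorics} of a Basilica. For a Basilica $K\subseteq\widehat{\C}$, every component of $\widehat{\C}-K$ is a topological disk, exactly one of which, $U_\infty$, has non-Jordan boundary. Call a contact point of $K$ a \emph{cusp} if, near it, the local conformal geometry of $K$ is that of a parabolic cusp (two mutually tangent arcs bounding a horoball-type region) rather than that of two quasidisks in tangential contact. The data we use is the contact tree of $K$ --- vertices the complementary components, edges the contact points --- decorated by the cusp/tangential type of each edge; on the fat model $[J(Q)]$ this tree carries no cusp edges, while the examples below produce cusp edges in prescribed patterns. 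Granting that this decorated tree is a quasiconformal invariant (the crux, discussed last), it suffices to realize infinitely many pairwise non-isomorphic decorated trees, once by Bers boundary groups and once by rational maps.

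For the Kleinian points we work in the setting of \S\ref{subsec:beyondclassC} (cf.\ Figure~\ref{fig:persistantBasilica}). Given a non-compact finite-area hyperbolic orbifold $S$ and a simple closed geodesic $\mathcal{C}\subseteq S$ chosen so that pinching $\mathcal{C}$ leaves the limit set a Basilica, the resulting geometrically finite Bers boundary group $G$ has $\Delta_\infty/G\cong S$ and a non-fat Basilica limit set: the accidental parabolic carried by $\mathcal{C}$ and the rank-one parabolics of $S$ deposit cusp contact points on $\Lambda(G)$, decorating its contact tree in a way dictated by the topology of $(S,\mathcal{C})$. Letting $(S,\mathcal{C})$ range over an infinite list whose configurations are pairwise distinct --- e.g.\ increasing the number of cusps of $S$ lying on one side of $\mathcal{C}$, or the nesting depth of the associated parabolic horoballs --- yields infinitely many non-isomorphic decorated trees, hence infinitely many Kleinian points of $\mathfrak{B}$; this is also the promised statement that there are infinitely many quasiconformal classes of Bers boundary limit sets over non-compact surfaces.

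For the rational points we use Julia sets that are topologically Basilicas but not fat --- equivalently, by Theorem~\ref{thm-basilica-stdpolymodel}, not in the quasiconformal class $[J(Q)]$. Concretely, take a geometrically finite rational map $g$ with Basilica Julia set carrying one or more parabolic cycles (such Julia sets arise already among cubic polynomials, and more flexibly for non-polynomial maps where $U_\infty$ itself is a parabolic basin); then some contact point of $J(g)$ is a cusp, so its decorated contact tree has a cusp edge whereas that of $J(Q)$ has none, and Theorem~\ref{thm-basilica-stdpolymodel} (or its contrapositive, when $g\notin\mathfrak{C}$) gives $[J(g)]\neq[J(Q)]$. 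Running over an infinite list of such maps whose parabolic combinatorics --- number, period, and placement in the contact tree of the parabolic cycles --- are pairwise distinct, produced by quasiconformal surgery replacing attracting petals by parabolic ones or by locating geometrically finite parameters in slices of higher-degree families, yields infinitely many non-isomorphic decorated trees, hence infinitely many rational points of $\mathfrak{B}$.

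The main obstacle, and the reason cuspidal Basilicas split into infinitely many classes rather than being universal like the fat ones, is the quasiconformal invariance of the decorated contact tree. Locally, a parabolic cusp and a tangential quasidisk contact are distinguished by conformal geometry: near a cusp, $K$ carries a self-similar nest of annuli whose moduli grow comparably to $-\log$ of the scale, whereas near a tangential contact the corresponding moduli stay uniformly bounded; since a $K$-quasiconformal map distorts moduli by at most a $K$-dependent factor, it cannot interchange the two types, so cusp contact points map to cusp contact points. Globally, any quasiconformal $\phi$ with $\phi(K_1)=K_2$ carries complementary components to complementary components and contact points to contact points, hence induces an isomorphism of the underlying contact trees respecting this decoration; the self-similar structure of cuspidal Basilicas together with rigidity inputs such as conformal removability of the fat model (Corollary~\ref{cor-confremovableltset}) is then used to show the combinatorial matching is the only constraint, so non-isomorphic decorated trees really do give distinct points of $\mathfrak{B}$. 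The remaining ingredients --- existence of the Bers boundary groups and cuspidal rational maps realizing the advertised configurations, and the identification of their decorated trees --- are routine given the pinching constructions of \S\ref{subsec:beyondclassC} and standard quasiconformal surgery.
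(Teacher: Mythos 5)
Your overall strategy — associate to each Basilica a decorated contact tree, show the decoration is a quasiconformal invariant, then realize infinitely many decorated trees by Bers boundary groups and by rational maps — is exactly the strategy of the paper (\S~\ref{subsubsec:bicolct}, \S~\ref{subsubsec:bicolctrat}). However, your specific choice of decoration and your argument for its quasiconformal invariance both have real gaps.

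First, you decorate \emph{edges} (contact points) by a ``cusp vs.\ tangential'' dichotomy, but in the examples that matter this dichotomy is empty: every contact point of a geometrically finite Bers boundary limit set is the fixed point of an accidental parabolic and hence is a tangential contact, and every contact point in the rational examples of \S~\ref{subsubsec:bicolctrat} is likewise a tangential contact (cf.\ Lemma~\ref{lem:cutpnteventuallypara}). The geometry that actually distinguishes these Basilicas lives at \emph{non-contact} points on component boundaries: a persistently parabolic component $\Delta$ carries a parabolic fixed point on $\partial\Delta$ that is not a contact point, and likewise the non-quasidisk Fatou component in the rational example has its parabolic fixed point at a non-contact point. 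Accordingly, the paper's invariant is a coloring of \emph{vertices} (complementary components) --- purely accidental parabolic versus persistently parabolic for groups, quasidisk versus not for maps --- rather than of edges. Your edge decoration would assign the same (trivial) label to every edge and hence cannot tell the examples apart.

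Second, your local distinguishing argument --- moduli of nested separating annuli growing like $-\log r$ near a cusp but staying bounded near a tangential contact --- does not hold: near \emph{any} tangential contact of two Jordan domains the moduli of the separating annuli grow without bound as the scale goes to zero, so this quantity cannot separate the two local types you posit, and in any case, as noted above, the right comparison is not between two kinds of contact points. The paper's Proposition~\ref{prop:qsinv} instead compares Hausdorff limits of rescalings: at a persistently parabolic non-contact point every blowup converges to a straight line, while at a non-contact point on a purely accidental parabolic component one can choose a blowup sequence converging to a set containing a Basilica limb, hence not a topological arc; compactness of $K$-quasiconformal maps then rules out a quasiconformal homeomorphism exchanging the two. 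Finally, your realization step ("increasing the number of cusps … or the nesting depth …", "running over an infinite list of such maps whose parabolic combinatorics … are pairwise distinct") is left at the level of intent; the paper produces explicit chain constructions (genus-$g$ two-punctured surfaces pinched along a chain of separating curves for the Kleinian case, and critically fixed polynomials with parabolics introduced at the two ends of a chain for the rational case) together with a concrete numerical invariant of the bi-colored tree --- the minimal odd distance between white vertices --- that visibly takes infinitely many values.
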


Let us mention the main ideas of the proof in the Kleinian case.
One can associate a {\em bi-colored contact tree} to a Basilica limit set: the vertices correspond to components of $\Omega(G)-\Delta_\infty$ (where $\Delta_\infty$ is the unique non-Jordan disk component of $\Omega(G)$), and there is an edge between two vertices if the corresponding components touch. The vertex is colored white if the stabilizer of the corresponding component contains a persistent (not accidental) parabolic, and is colored black otherwise (see Figure~\ref{fig:persistantBasilica}).
We show that any quasi-symmetry between Basilica limit sets induces a color preserving isomorphism between the associated bi-colored contact trees, concluding that the  bi-colored contact tree is a quasiconformal invariant. We prove the infinitude of Kleinian points in $\mathfrak{B}$ by constructing infinitely many Kleinian Basilica limit sets with non-isomorphic bi-colored contact trees. The case of rational points is similar. 
We refer the reader to \S~\ref{subsubsec:bicolct} and \S~\ref{subsubsec:bicolctrat} for further discussion and a complete proof of Theorem~\ref{thm:infKleinianRational}.

We propose in Conjecture~\ref{conj:comqcinv} that the bi-colored contact tree is a complete quasiconformal invariant.
This is a special case of Conjecture~\ref{conj:qcuniversalityBasilica} below.

\begin{figure}[ht]
\captionsetup{width=0.96\linewidth}
  \centering
  \includegraphics[width=0.55\textwidth]{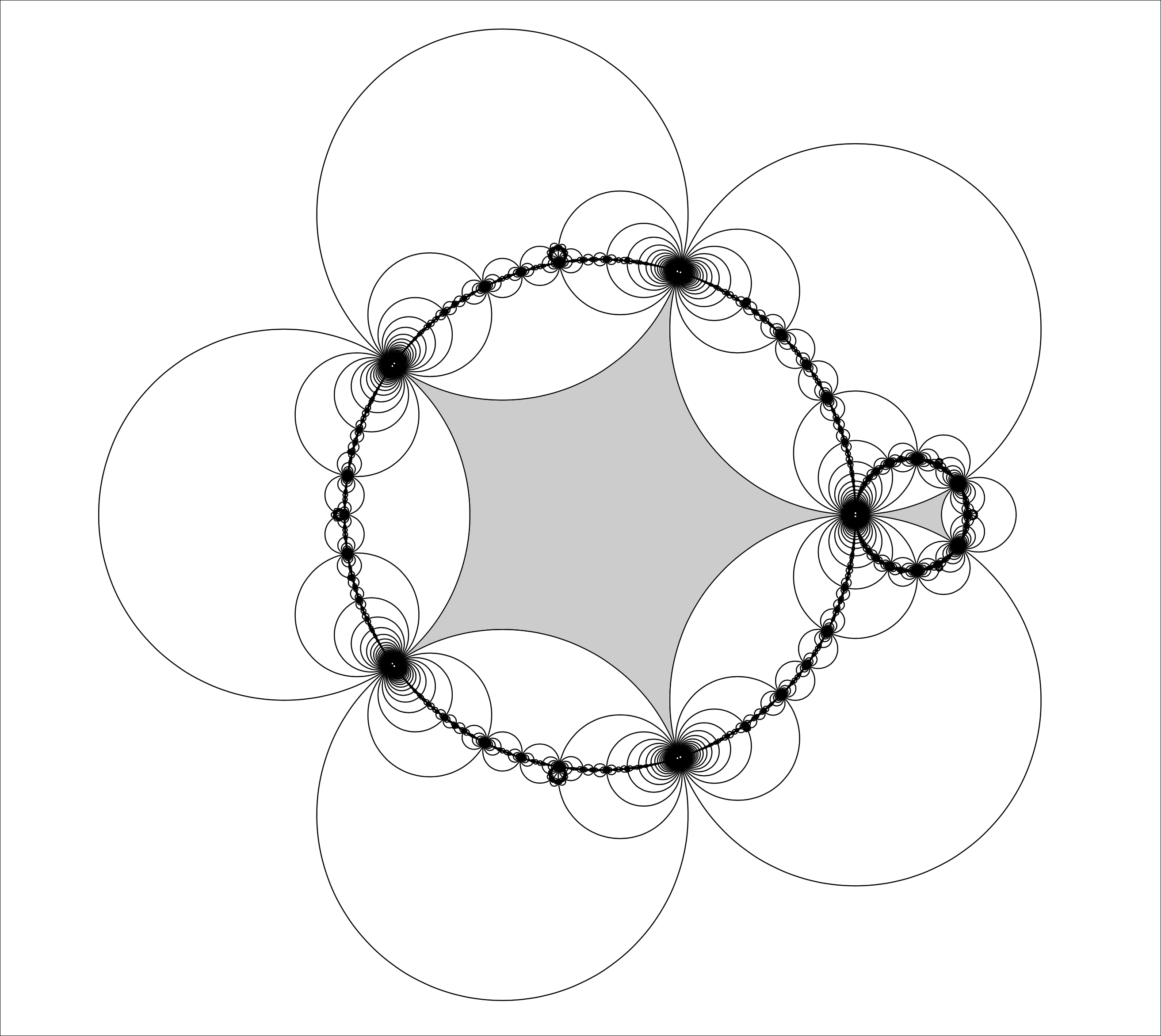}
  \caption{A Basilica limit set with persistently parabolic elements. Note that as we zoom in at a persistently parabolic point, the limit set converges to a line in the Hausdorff topology. On the other hand, at any non-contact point of the fat Basilica limit set in Figure~\ref{fig:Basilica}, we can choose a specific sequence of zoom-ins so that the limit set converges to a closed set that is not topologically a line. This shows that these two Basilica limit sets are quasiconformally different (see Proposition~\ref{prop:qsinv} for more details).}
  \label{fig:persistantBasilica}
\end{figure}

\subsection*{Quasiconformal universality for Basilica limit sets}
Let $K$ be a Basilica Julia set or a limit set, and let $U$ be a component of $\widehat{\C}- K$. 
    For a map, we say that a point $x \in \partial^I U$ on the ideal boundary of $U$ is {\em parabolic} if it is in the grand orbit of a parabolic point under the induced map on the ideal boundaries of the Fatou components.
    Similarly, for a group, $x\in \partial^I U$ is {\em parabolic} if it is the fixed point of a parabolic element for the induced Fuchsian group $G_U=\phi\ \mathrm{Stab}_G(U)\ \phi^{-1}$, where $\mathrm{Stab}_G(U)$ is the stabilizer of $U$ in $G$, and $\phi:U\to\D$ is the Riemann map of $U$.
    We say that a point on the ideal boundary is {\em hyperbolic} otherwise.
     
    Let $K_i,\ i=1, 2$, be either a Basilica Julia set or a limit set. We say that a homeomorphism $\Phi: \widehat{\C} \longrightarrow \widehat{\C}$ with $\Phi(K_1) = K_2$ is {\em type-preserving} if for each component $U$ of $\widehat{\C} - K_1$, the induced map on the ideal boundary $\partial^I U$ sends parabolic points {\em onto} parabolic points (i.e., it induces a bijection between parabolic points). 
    It is {\em weakly type-preserving} if it sends parabolic points to parabolic points (but hyperbolics can go to parabolics). 

    Note that for a fat Basilica Julia set, the contact point between two bounded Fatou components is parabolic on the ideal boundary of each component. The same is true for any Basilica limit set. 
    Thus, Corollary~\ref{cor:quasiconformaluniversality} is a special case of the following conjecture.
    \begin{conj}\label{conj:qcuniversalityBasilica}
        Let $K_i, i \in\{1,2\}$, be the Basilica limit set of a geometrically finite conformal dynamical system (such as the limit set of a geometrically finite Kleinian group or the Julia set of a geometric finite rational map).
    
        Suppose that $K_1, K_2$ are homeomorphic via some type-preserving map. Then $K_1$ is quasi-symmetric to $K_2$.
    \end{conj}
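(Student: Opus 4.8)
\medskip
\noindent\textbf{Proof strategy.}
The plan is to upgrade the bi-colored contact tree of \S~\ref{subsubsec:bicolct} to a \emph{decorated contact tree}, to show that it is a complete quasi-symmetric invariant of geometrically finite Basilica dynamical systems, and to realize any isomorphism of decorated contact trees by a global quasiconformal homeomorphism of $\widehat{\C}$ built by a tree-adapted quasiconformal surgery. To each such system one attaches the rooted tree whose vertices are the complementary components of $K$, whose root is the unique non-Jordan disk component $U_\infty$, whose edges record tangential contacts, and whose label at a vertex $U$ records the topological type of the induced boundary dynamics on $\partial^I U$ together with the cyclic arrangement on $\partial^I U$ of the contact points of the incident edges and of the additional parabolic (cusp) points of the induced Fuchsian group $G_U$ (resp.\ of the induced boundary map). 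Geometric finiteness makes this object quasi-periodic: there are finitely many labelled vertex types and finitely many local contact configurations, and the dynamics acts on the tree with finitely many orbits of edges.

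\emph{Step 1 (necessity).} A type-preserving homeomorphism $\Phi$ carries components to components, root to root and contact points to contact points, and on each $\partial^I U$ induces a homeomorphism sending parabolic points bijectively onto parabolic points. Since for a geometrically finite system only finitely many topological types of boundary dynamics occur, and the number and cyclic position of the cusps of $G_U$ relative to the contact points are determined by the topology of $K$ together with this marking, $\Phi$ induces an isomorphism of decorated contact trees. This strengthens Proposition~\ref{prop:qsinv} and is closely related to Conjecture~\ref{conj:comqcinv}; I expect it to be a careful but routine argument.

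\emph{Step 2 (realization).} Using the quasiconformal deformation theory of geometrically finite Kleinian groups and of geometrically finite rational maps, together with the normal forms supplied by Theorem~\ref{thm:qcclassfn-ltsets} and Theorem~\ref{thm-basilica-stdpolymodel}, one first replaces each $K_i$ by a quasiconformally equivalent \emph{round model} in which $U_\infty$ carries a preferred conformal structure, every Jordan complementary component is a round disk, and the induced dynamics near every boundary fixed point is one of finitely many explicit models --- the hyperbolic model $z\mapsto z^{d}$ and the standard parabolic (cauliflower) model, with all parabolic rates normalized. One then builds the quasiconformal homeomorphism $\widehat{\C}\to\widehat{\C}$ by induction over the now-isomorphic decorated trees: starting from a quasiconformal identification of the two root pieces $\overline{U_\infty}$ that respects the (identical) boundary model and matches marked parabolics, and proceeding outward by attaching on each child component one of the finitely many fixed quasiconformal germs associated to its contact configuration, matching the boundary values already defined at the contact point.

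\emph{Step 3 (the main obstacle).} It remains to show that the dilatation of the map so constructed stays bounded all the way down the tree. Away from the cusp regions the components shrink exponentially with uniformly bounded shape, boundedly many surgery germs suffice, and the estimate is standard. Along a cusp region the components shrink only polynomially and accumulate at a parabolic point where the Basilica degenerates to a line (Figure~\ref{fig:persistantBasilica}); there one must verify that the chosen germ, being modeled on the standard parabolic tangency and hence compatible with the parabolic renormalization $z\mapsto z/(1+z)$, has dilatation that does not blow up along the cusp. This parabolic bounded-geometry estimate is the crux of the argument; it is the quasiconformal analogue of the estimates behind Theorem~\ref{thm:davidhi}, the point being that the type-preserving hypothesis matches the parabolic data on both sides and so keeps the construction inside the quasiconformal --- rather than the David --- category. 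A secondary difficulty is that the normalization of Step 2 must be carried out simultaneously on all of $\widehat{\C}$, so that the nesting of complementary components is respected; this is where one feeds in Ahlfors finiteness and the full force of geometric finiteness. Granting these, the resulting global quasiconformal homeomorphism sends $K_1$ onto $K_2$ and restricts to the desired quasi-symmetric homeomorphism; together with Step 1 this exhibits the decorated contact tree as a complete quasi-symmetric invariant, and it specializes, for systems in the class $\mathfrak{C}$, to Corollary~\ref{cor:quasiconformaluniversality}.
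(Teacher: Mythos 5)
The statement you are attempting is stated as a \emph{conjecture} in the paper, with only a brief remark that the paper's Markov-map techniques should suffice ``with appropriate combinatorial modifications.'' So there is no official proof to compare against; what you have written is a genuinely different route from the one the authors suggest, and it is worth assessing on its own terms.

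Your Step~1 is sound as an outline: recording, at each vertex of the contact tree, the cyclic arrangement of contact points and persistent cusps on $\partial^I U$ produces a combinatorial object that any type-preserving homeomorphism must preserve; this is the natural refinement of the bi-colored contact tree of \S~\ref{subsubsec:bicolct} and is consistent with Proposition~\ref{prop:qsinv}. Your Step~3 also correctly identifies that the type-preserving hypothesis is precisely what keeps the argument in the $(P,P)$/$(H,H)$ regime of Proposition~\ref{prop:qsmarkovmap} (quasi-symmetric) rather than the $(H,P)$ regime (David), so the analogy with Theorem~\ref{thm:davidhi} is apt.

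However, Step~2 as written contains a genuine error. You claim one can pass to a ``round model in which \ldots every Jordan complementary component is a round disk.'' That is false whenever a persistently parabolic component is present: Proposition~\ref{prop:boundarySch} shows that the boundary of such a component is quasiconformally a cauliflower $J(z^2+\tfrac14)$, hence not a quasicircle, hence not the quasiconformal image of a round circle. Any ``canonical model'' must therefore carry cauliflower (or worse, multi-cusp) components at the white vertices, and the entire normalization of the germ library must be redone with those components as gluing targets. Once you correct this, you run into the deeper obstruction that your germ-gluing scheme does not, by itself, supply. The induced boundary dynamics on $\partial^I U$ for the two systems can be of fundamentally different kinds---a covering map of the circle in the rational case, a Fuchsian group action in the Kleinian case, a discontinuous piecewise-M\"obius map in the Schwarz case---and the components of $\widehat\C-K_i$ on the two sides shrink at rates governed by these incompatible dynamics. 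Matching boundary values at a contact point with a fixed germ does not control the aspect ratio of the $n$-th generation piece; that control is exactly what the Markov-map elevator argument of Proposition~\ref{prop:qsmarkovmap} (and the diameter estimates it invokes from \cite{LN24a}) delivers, and exactly what Theorem~\ref{thm:symmetrichyperbolic}-type modified refinements are engineered to make possible. Your proposal names this step (``the crux of the argument''), but ``finitely many fixed quasiconformal germs'' plus ``parabolic bounded-geometry'' does not substitute for it; without the fragmented-dynamics machinery (or an explicit combinatorial renormalization in the spirit of \cite{LZ23,LN24}) you have no mechanism to make the dilatation uniform along the tree, and in particular no way to reconcile a group action with a covering map. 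This is not a small technical point---it is the reason the paper introduces $(X,\mathscr{C})$-maps in the first place.
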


\noindent (See \cite{McS98} for a general definition of conformal dynamical systems.)

\begin{rmk}
    We believe that with appropriate combinatorial modifications, the methods introduced in this paper are sufficient to prove the conjecture. See \S~\ref{subsec:zoo} for some additional cases of Conjecture~\ref{conj:qcuniversalityBasilica}.
\end{rmk}

\subsection*{David hierarchy}
Let $[K_1], [K_2] \in \mathfrak{B}$. We write $[K_1] \succeq [K_2]$ if there exists a David homeomorphism $\phi: \widehat{\C} \longrightarrow \widehat{\C}$ (also known as homeomorphisms of \emph{exponentially integrable distortion}, see \cite[Chapter~20]{AIM09}) so that $\phi(K_1) = K_2$.
We expect that the rational and Kleinian points in $\mathfrak{B}$ admit a natural David hierarchy, and we show that the standard Basilica Julia set $J(z^2-1)$ is the archbasilica in this David hierarchy in the  sense that it dominates all others.
\begin{thmx}\label{thm:davidhi}
    Let $Q_{\pc}(z) = z^2-1$ be the quadratic \pcf polynomial with Basilica Julia set. 
    Let $[K] \in \mathfrak{B}$ be Kleinian or rational.
    Then 
    $$
    [J(Q_{\pc})] \succeq [K].
    $$
    Moreover, $[K] = [J(Q_{\pc})]$ if $K$ is the Basilica Julia set of a geometrically finite rational map without parabolic cycles.
\end{thmx}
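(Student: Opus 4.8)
The statement has two halves: (i) the domination $[J(Q_{\pc})]\succeq[K]$ for every Kleinian or rational point $[K]\in\mathfrak{B}$, and (ii) the rigid statement $[K]=[J(Q_{\pc})]$ when $K=J(f)$ for a geometrically finite rational map $f$ without parabolic cycles. The guiding principle is that $J(Q_{\pc})=J(z^2-1)$ carries no parabolic data at all --- on the ideal boundary of each Fatou component the induced circle map is conjugate to angle doubling, whose periodic points are all repelling --- whereas every other geometrically finite Basilica (every Bers boundary limit set, and $J(z^2-\frac{3}{4})$ itself) has parabolic points on some ideal boundary. A David homeomorphism can open parabolic petals out of repelling linearizable germs, but this operation is not invertible in the David category; this dissymmetry is exactly what should put $J(z^2-1)$ on top.

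I would prove (ii) first. A geometrically finite rational map without parabolic cycles whose Julia set is a Basilica is hyperbolic on $J(f)$ --- a critical point on $J(f)$ would, by geometric finiteness, be strictly preperiodic to a repelling cycle and would be incompatible with the local branching structure of a Basilica (and in any case the subhyperbolic situation could be run through the orbifold metric). Hence each bounded Fatou component is a quasidisk with a uniform dilatation bound (finitely many up to the expanding dynamics), and each contact point between bounded Fatou components is eventually a repelling periodic point, so it has a linearizable germ with multiplier of modulus $>1$; since all such germs are quasiconformally equivalent, the local picture of $J(f)$ at every contact point is quasiconformally equivalent to that of $J(z^2-1)$ at its central contact point. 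Starting from the topological identification $J(f)\cong J(z^2-1)$, I would upgrade it to a quasisymmetric homeomorphism by matching Fatou component boundaries through uniformly quasisymmetric circle maps and matching contact germs, and assembling the countably many pieces with uniform control (they shrink geometrically by hyperbolicity); this is the same quasiconformal combination machinery that proves Theorem~\ref{thm-basilica-stdpolymodel}, run with the hyperbolic model $z^2-1$ in place of the parabolic model $z^2-\frac{3}{4}$. Extending the quasisymmetry across $J(f)$ (a set of zero area, with uniformly quasidisk complementary components) yields a quasiconformal self-homeomorphism of $\widehat{\C}$ taking $J(z^2-1)$ to $K$, i.e.\ $[K]=[J(z^2-1)]$.

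For (i): if $f$ is hyperbolic, (ii) gives $K\sim_{qc}J(z^2-1)$ and quasiconformal maps are David. If $f$ is geometrically finite with parabolic cycles, I would apply parabolic surgery (Ha\"issinsky), realized by a David homeomorphism: there is a hyperbolic rational map $\widehat{f}$ with $J(\widehat{f})$ a Basilica and a David homeomorphism $\psi:\widehat{\C}\to\widehat{\C}$ conjugating $\widehat{f}$ to $f$, so $\psi(J(\widehat{f}))=K$; composing with the quasiconformal map $\phi_0:\widehat{\C}\to\widehat{\C}$ from (ii) with $\phi_0(J(z^2-1))=J(\widehat{f})$, and using that David $\circ$ quasiconformal is David, gives the required David map. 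For Kleinian $[K]=[\Lambda(G)]$, being a Basilica forces $G$ to be a Bers boundary group. If $\Sigma=\Delta_\infty/G$ is compact, Theorem~\ref{thm:qcclassfn-ltsets} gives $\Lambda(G)\sim_{qc}J(z^2-\frac{3}{4})$, and since $z^2-\frac{3}{4}$ is a geometrically finite rational map with a parabolic cycle and Basilica Julia set, the previous case gives $[J(z^2-1)]\succeq[J(z^2-\frac{3}{4})]=[\Lambda(G)]$. If $\Sigma$ is non-compact I would realize $\Lambda(G)$ directly as the image of $J(z^2-1)$ under a single David homeomorphism that is quasiconformal away from the ideal boundary parabolic orbit of $U_\infty$ and the accidental parabolic orbits of the component stabilizers, and opens a petal at each of these out of the corresponding repelling germ of $J(z^2-1)$; this is the Kleinian counterpart of Ha\"issinsky's surgery, built by the same construction that proves Theorem~\ref{thm:qcclassfn-ltsets}, and since the grand orbits of the petals opened at successive stages are pairwise disjoint the composition stays in the David class.

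The hard part is the uniform David regularity of the petal-opening surgeries, and in particular the non-compact Kleinian case, where one must simultaneously open infinitely many petals --- coming both from the dynamics on $\partial^I U_\infty$ and from the accidental parabolics of the Fuchsian component stabilizers --- under a single exponential-distortion bound; this is the technical core, and it is exactly why the non-compact Kleinian statement cannot be obtained by naively composing David maps (David $\circ$ David need not be David; here it works only because the bad sets of the successive factors have disjoint grand orbits). A secondary issue is the globalization in (ii): the componentwise quasisymmetry bounds together with the quasi-self-similarity of the hyperbolic Basilica $J(f)$ must be combined with explicit control to produce a genuinely quasiconformal self-map of $\widehat{\C}$, rather than merely a homeomorphism that is quasiconformal on each complementary component.
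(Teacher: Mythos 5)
Your overall framework — David surgery opens petals out of repelling germs, quasiconformal upgrades on the $\mathfrak{C}$-like subcase, composition with removability — is the right shape, and your Kleinian compact-case route (via $[J(z^2-1)]\succeq[J(z^2-\tfrac34)]$ as a rational-parabolic instance) is a valid alternative to the paper's direct construction. But there is a genuine gap in your treatment of the ``moreover'' clause, and it is not a side issue: you assert that a geometrically finite rational map $f$ without parabolic cycles whose Julia set is a Basilica must be hyperbolic on $J(f)$, because ``a critical point on $J(f)$ would \ldots be incompatible with the local branching structure of a Basilica.'' This is false. The polynomial $z^3+\tfrac32 z^2$ on $\partial\cH$ (Figure~\ref{fig:cubicBasilica}, right; Theorem~\ref{thm:cubicclass}) has a strictly preperiodic simple critical point sitting \emph{at a contact point} of its Basilica Julia set, and the theorem you are proving asserts $[J(z^3+\tfrac32 z^2)]=[J(z^2-1)]$. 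The local branching obstruction you invoke (Lemma~\ref{lem:noJuliaCrit}) only applies to \emph{fat} Basilicas, where touching is tangential; for $J(z^2-1)$ the contact is not tangential, and a degree-two local map at a contact point simply sends the $2$-petal/$2$-fjord picture to a $1$-petal/$1$-fjord picture, landing on a non-contact Julia point. Your fallback remark that ``the subhyperbolic situation could be run through the orbifold metric'' does not repair this, because the obstruction is not metric but combinatorial: a Basilica $\mathfrak{Q}_{\pc}$-map necessarily preserves contact points, while $f$ does not. Your plan to ``match contact germs'' and assemble therefore has nothing to attach to at those critical contact points.

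The paper's proof of (ii) exists precisely to circumvent this. Proposition~\ref{prop:davidJulia} does not try to build a Basilica $\mathfrak{Q}_{\pc}$-map conjugate to $f$ itself; it first replaces $f\vert_{J(f)}$ by an auxiliary topologically expanding $(J(f),\mathfrak{G})$-map $\cG$ whose Markov break-points on $\bigcup\partial^I V_i$ are chosen, via Lemma~\ref{lem:modifiedcontact}, to \emph{avoid} the grand orbit of the critical/non-contact points while still being dynamically coherent (the trick is to send a boundary point back to itself by a high iterate $\sigma_d^{-n_a}\circ\sigma_d^{n_a+N}$, making every periodic break-point symmetrically hyperbolic). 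Only then does one construct a Basilica $\mathfrak{Q}_{\pc}$-map combinatorially conjugate to $\cG$ and run the quasiconformal surgery. Your proposal skips this construction, and without it, ``upgrade the topological conjugacy at contact germs'' is not available when a contact germ maps to a non-contact germ. This is the one concrete missing idea; the rest of your outline (Lemma~\ref{david_pcf_to_gf_lem}-style David surgery to reach the parabolic rational case, Proposition~\ref{prop:qsmarkovmapFiniteCircle}(2)-style David circle extensions plus $W^{1,1}$-removability of $J(Q_{\pc})$ to assemble the non-compact Kleinian case) tracks the paper's architecture closely enough in spirit, even if you leave the David-regularity bookkeeping as a flagged concern rather than resolving it via the generalized dyadic colorings of \S\ref{gen_dyad_int_subsec}.
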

\noindent See \S~\ref{sec:davidHierarchy} for a proof of Theorem~\ref{thm:davidhi}.
The above theorem also motivates the following conjecture for general Basilica Julia sets and limit sets.
\begin{conj}[David hierarchy]\label{conj:davidhier}
    Let $K_i,\ i \in\{1,2\}$, be the Basilica limit set of a geometrically finite conformal dynamical system.
    Suppose that there is a weakly type-preserving map from $K_1$ to $K_2$. Then there exists a David homeomorphism $\phi: \widehat{\C} \longrightarrow \widehat{\C}$ with $\phi(K_1)=K_2$.
\end{conj}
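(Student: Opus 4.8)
The plan is to promote the given weakly type-preserving homeomorphism $h$ to a David homeomorphism by performing, inductively along the contact trees of $K_1$ and $K_2$, the David analogue of the quasiconformal gluing that proves Theorems~\ref{thm:qcclassfn-ltsets} and~\ref{thm-basilica-stdpolymodel} --- that is, by replacing the quasiconformal extension across each complementary component by a David extension whose non-quasiconformality is concentrated near parabolic ideal boundary points. The first step is to extract the combinatorial input: arguing as in the proof that the bi-colored contact tree is a quasiconformal invariant (Proposition~\ref{prop:qsinv} and \S~\ref{subsubsec:bicolct}), a weakly type-preserving $h$ induces an isomorphism of the underlying contact trees that is one-directionally compatible with the parabolic structure --- every parabolic point of each $\partial^I U$ goes to a parabolic point, but a hyperbolic point of $\partial^I U$ may go to a parabolic point of the image component. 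Equivalently, $K_2$ is obtained from $K_1$ by a (possibly infinite) family of \emph{pinchings}, one at each ideal boundary orbit where $h$ fails to be type-preserving; the task is to realize each such pinching by a David homeomorphism and to control the result of gluing all of them.

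Next I would run the construction generation by generation. At the zeroth generation one matches the non-Jordan-disk components $U_\infty^{(1)}$ and $U_\infty^{(2)}$: if the induced ideal boundary dynamics are both hyperbolic this is a quasiconformal map (this is the class $\mathfrak{C}$ situation of Theorem~\ref{thm:qcclassfn-ltsets}), while if $h$ pinches some ideal boundary orbit of $U_\infty^{(1)}$ one uses instead the Bers-boundary (respectively Schwarz-reflection or rational) pinching deformation, which --- by the same parabolic surgery that underlies Theorem~\ref{thm:davidhi}, realizing geometrically finite systems with parabolics as David deformations of hyperbolic ones --- is a homeomorphism of exponentially integrable distortion whose distortion is supported in arbitrarily thin horoball-type collars of the parabolic points. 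The same dichotomy applies to every bounded Jordan-disk component $U$: its boundary circle carries a Fuchsian-type ideal boundary map, and one extends the already-determined boundary correspondence inward by a quasiconformal map where $h$ is type-preserving on $\partial^I U$, and by a suitably scaled copy of the local David pinching model otherwise (here ``type-preserving'' includes the hyperbolic $\to$ hyperbolic case, handled by the rigidity of uniformly quasisymmetric actions of Sullivan and Markovic \cite{Sul78, Mar06}). Geometric finiteness ensures that the complementary components of $K_1$ have uniformly bounded geometry --- $K_1$ is Ahlfors regular and linearly locally connected, the components are uniformly John, and their diameters decay geometrically along the contact tree --- so these local pieces assemble into a single homeomorphism $\phi$ with $\phi(K_1)=K_2$.

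The crux, and the reason this remains conjectural, is the global estimate that $\phi$ satisfies David's condition $\lvert\{z: K_\phi(z)>t\}\rvert \le C e^{-\alpha t}$. Two effects must be controlled together. First, the countably many local surgeries must contribute a summable amount: since the generation-$n$ components have total area $\sum_n N_n r_n^2<\infty$ and each normalized David pinching model contributes $\int e^{p K_\phi}\asymp (\text{component size})^2$ after scaling, this reduces to a packing estimate that geometric finiteness should supply. Second --- and this is where the difficulty lies --- each parabolic point of $K_2$ is an accumulation point of components from \emph{all} generations, so near it the distortion is a superposition of infinitely many nested surgery collars; one must arrange the models along a single parabolic grand orbit to be genuinely the same model up to conformal scaling (using transitivity of the dynamics, the feature of $Q$ emphasized in the Introduction), so that the nested collars are comparably placed and the total distortion grows only logarithmically in the distance to the parabolic point --- the borderline rate that is exponentially integrable but not bounded. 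Making this ``David calculus for accumulating surgeries'' precise and uniform over the infinitely many parabolic orbits, and simultaneously across the Kleinian, rational, and Schwarz-reflection settings, is the main obstacle; I expect it to require, as in the proof of Theorem~\ref{thm:davidhi}, a canonical normalization of each component together with distortion bounds that are additive rather than multiplicative along the tree. Finally I would note that routing through the archbasilica of Theorem~\ref{thm:davidhi} cannot produce $\phi$ in general: when $K_1$ itself carries parabolic ideal boundary points there is no David homeomorphism from $K_1$ onto $J(Q_{\pc})$, since David surgery can create parabolics but not remove them, so a direct construction of the kind above seems unavoidable.
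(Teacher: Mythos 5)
This statement is a \emph{conjecture} in the paper --- there is no proof to compare against, and the authors signal the obstruction themselves in Remark~\ref{david_hierarchy_rem}. Your sketch is in the right spirit (generation-by-generation David surgery along the contact tree, guided by a modified $\mathfrak{Q}_{\pc}$- or $\mathfrak{R}$-type Markov map), and your closing observation that one cannot detour through the archbasilica when $K_1$ itself has parabolic ideal boundary points is correct and essentially reproduces the paper's Remark~\ref{david_hierarchy_rem}. But your diagnosis of ``the crux'' is not quite the one the paper points to, and the step it concerns is where your outline actually breaks.

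You frame the remaining difficulty as a distortion-summability problem: pack infinitely many David surgery collars, control the superposition near parabolic accumulation points, and verify the exponential tail of $K_\phi$. That estimate would be needed, but it is downstream of a more basic gap. The paper's David conjugacy machine (Proposition~\ref{prop:qsmarkovmap}(\ref{HP}), Lemma~\ref{lem:eccentricity}(2), Proposition~\ref{prop:quasisurggenpcf}) rests on two inputs specific to the hyperbolic source $J(Q_{\pc})$: its basin of infinity is a \emph{John domain}, so composing the boundary extension with Riemann maps stays David (\cite[Proposition~2.5]{LMMN}), and $J(Q_{\pc})$ is \emph{$W^{1,1}$-removable}, so a homeomorphism that is David off the Julia set is automatically David on the sphere. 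When $K_1$ already carries parabolics (which it must in the generality of the conjecture), both inputs fail: the John property degrades at cusps, and removability of the source set is precisely what you no longer know. Your ``assemble the local pieces into a homeomorphism $\phi$ and then verify David's condition'' therefore skips the point: without removability of $K_1$, a map that is David on every complementary component need not even lie in $W^{1,1}_{\mathrm{loc}}(\widehat{\C})$, so David's condition on the Beltrami coefficient is not well posed for $\phi$ on all of $\widehat{\C}$. The authors also flag a second obstruction you do not address at all: David maps are not closed under composition or inversion, so the inductive/gluing bookkeeping that is harmless in the quasiconformal case needs a genuinely new argument here. Any honest attack on the conjecture has to confront the removability and closure problems head-on, not only the packing estimate.
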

\begin{remark}
1)    We remark that unlike quasiconformal maps, the composition of two David maps and the inverse of a David map are not necessarily David. Thus, compared to Conjecture~\ref{conj:qcuniversalityBasilica}, obtaining a complete picture of the David hierarchy is fraught with additional technical challenges (see Remark~\ref{david_hierarchy_rem} for more details). 
    We also point out that Conjecture~\ref{conj:davidhier} would imply that $\succeq$ is a partial ordering on $\mathfrak{U}_{\rat} \cup \mathfrak{U}_{\kle}~\subseteq~\mathfrak{B}$.
\smallskip

\noindent 2) We point out that the Basilica Julia set of a geometrically finite rational map with a parabolic cycle (of multiplicity $3$) can also be quasiconformally equivalent to $J(Q_{\pc})$. 
    
\end{remark}

\subsection{A zoo of Basilica limit sets}\label{subsec:zoo}
We now illustrate some additional limit sets of conformal dynamical systems that fit into Conjecture~\ref{conj:qcuniversalityBasilica}.
\subsection*{Schwarz reflection Basilicas}
Schwarz reflections provide a class of anti-conformal dynamical systems which often arise as matings of antiholomorphic polynomials and reflection groups. 
They have been extensively explored in  recent years (see \cite{LM25a,LM25b} for surveys of this mating phenomenon both in the holomorphic and antiholomorphic worlds).

The Schwarz reflection of the deltoid is the simplest example of this mating framework: it combines the antiholomorphic polynomial $\bar z^2$ with the ideal triangle reflection group (see \cite{LLMM23}).
More generally, for each $d \geq 2$, matings of $\bar z^d$ and ideal $(d+1)-$gon reflection groups can be realized as Schwarz reflections. These Schwarz reflections arise from univalent restrictions of the following family of degree $d+1$ rational maps:
$$ 
\Sigma_d^* := \left\{ f(z)= z+\frac{a_1}{z} + \cdots -\frac{1}{d z^d} : f\vert_{\widehat{\C}-\overline{\D}} \textrm{ is univalent}\right\}.
$$
Further, there exists a canonical bijection between $\Sigma_d^*$ and the Bers compactification of the Teichm{\"u}ller space of ideal $(d+1)-$gon reflection groups, such that the Schwarz reflection coming from $f\in\Sigma_d^*$ is the mating of $\overline{z}^d$ with the corresponding reflection group (see \cite{LMM22}).  Abusing notation, we will identify the space $\Sigma_d^*$ of rational maps with the corresponding family of Schwarz reflection maps. With this identification, $\Sigma_d^*$ can be regarded as a Bers compactification of the Teichm{\"u}ller space of ideal $(d+1)-$gon reflection groups.
Let us also denote $\Sigma^* := \bigsqcup_{d\geq 2} \Sigma_d^*$.

When $S \in \Int{\Sigma^*}$, \cite[Theorem 1.2]{McM25} implies that the limit set $\Lambda(S)$ is quasiconformally equivalent to the cauliflower Julia set $J(z^2+\frac14)$.
When $S \in \partial \Sigma^*=\bigsqcup_{d\geq 2} \partial\Sigma_d^*$, the limit set is a Basilica. Let us use $\mathfrak{U}_{\partial \Sigma^*} \subseteq \mathfrak{B}$ to denote the set of quasiconformal classes of Basilicas arising as the limit sets of Schwarz reflections in $\partial\Sigma^*$.

Our theory applies to the Schwarz reflection setting, and we have an analog of Theorem~\ref{thm:qcclassfn-ltsets} and Theorem~\ref{thm-basilica-stdpolymodel} for Schwarz reflections. To model the limit sets of certain Schwarz reflections on $\partial\Sigma^*$, we consider the polynomial
$$
R(z) = az^4+\frac{1-4a}{3}z^3+\frac{2+a}{3}.
$$ 
Here, $a=\frac{1}{12}+\frac{i\sqrt{2}}{24}$ is chosen so that the critical point $1-\frac{1}{4a}$ is mapped to the parabolic fixed point $1$.
Its Julia set is a {\em cuspidal Basilica}, see Figure~\ref{fig:CuspidalBasilica}.

\begin{thmx}\label{thm:schwarz}
The set $\mathfrak{U}_{\partial \Sigma^*} \subseteq \mathfrak{B}$ is infinite.
Moreover,
    \begin{align*}
    \mathfrak{U}_{\partial \Sigma^*} \cap \mathfrak{U}_{\rat} &= \{[J(R)]\}; \text{ and }\\
    \mathfrak{U}_{\partial \Sigma^*} \cap \mathfrak{U}_{\kle} &= \emptyset.
    \end{align*}
\end{thmx}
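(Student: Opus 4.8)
The plan is to mirror the strategy behind Theorems~\ref{thm:qcclassfn-ltsets} and~\ref{thm-basilica-stdpolymodel}, transported into the Schwarz reflection setting. The starting structural observation is that for $S \in \partial\Sigma^*$, the non-Jordan disk component $U_\infty$ of $\widehat{\C} - \Lambda(S)$ carries an induced dynamics on its ideal boundary that is \emph{parabolic}: the univalence degeneration defining $\partial\Sigma_d^*$ forces a parabolic cusp, so $S|_{U_\infty}$ is never (super-)attracting and $U_\infty/G$ is never compact. This is precisely the obstruction to being in class $\mathfrak{C}$, and it is what immediately rules out the Kleinian intersection: a Kleinian Basilica limit set whose distinguished component is a genuine limit set of a cocompact Fuchsian group lies in $\mathfrak{C}$, while in the non-cocompact Kleinian case the relevant parabolics are \emph{persistent}, which (via the bi-colored contact tree of Theorem~\ref{thm:infKleinianRational}, or directly via the Hausdorff blow-up invariant of Figure~\ref{fig:persistantBasilica} and Proposition~\ref{prop:qsinv}) produces a quasi-symmetry obstruction against the Schwarz reflection limit sets, all of whose contact points and ideal-boundary parabolics come from the single accidental (non-persistent) parabolic in the grand orbit. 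Hence $\mathfrak{U}_{\partial\Sigma^*}\cap\mathfrak{U}_{\kle}=\emptyset$; this direction I expect to be short once the parabolic/persistence bookkeeping is in place.

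For the rational intersection, first I would pin down the combinatorial model. The polynomial $R$ is engineered so that its free critical point lands on the parabolic fixed point $1$, giving a cuspidal Basilica Julia set in which (i) every bounded Fatou component is a Jordan quasi-disk except the immediate parabolic basin $U_\infty$, (ii) all bounded Fatou components lie in one grand orbit and all contact points lie in one grand orbit, and (iii) the induced dynamics on $\partial^I U_\infty$ is parabolic with a single cusp. This is the exact mirror of the model role $Q$ plays in class $\mathfrak{C}$, with ``attracting'' replaced by ``parabolic.'' I would then show: if $f$ is a rational map whose Julia set is a cuspidal Basilica of this combinatorial type — equivalently, a fat Basilica Julia set whose unique non-Jordan disk Fatou component is a parabolic basin with the matching cusp data — then $J(f)$ is quasiconformally equivalent to $J(R)$. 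The proof is the usual gluing argument: build quasiconformal identifications on the distinguished component using the parabolic linearizing/Fatou coordinates (which match because the ideal-boundary dynamics agree up to the common parabolic model), spread this over the grand orbit of bounded components by pulling back, use the quasi-disk hypothesis and tangential-touching hypothesis to control the maps near contact points, and invoke conformal removability of the Basilica (from Corollary~\ref{cor-confremovableltset} / \cite[Theorem C]{LMMN}, adapted) to upgrade the boundary-compatible pieces to a global quasiconformal homeomorphism. Finally I would check that the Schwarz reflection limit sets in $\partial\Sigma^*$ that happen to be quasiconformal to a rational Julia set are exactly those of this combinatorial type, so $\mathfrak{U}_{\partial\Sigma^*}\cap\mathfrak{U}_{\rat}=\{[J(R)]\}$, and conversely that $[J(R)]$ is realized in $\partial\Sigma^*$ (e.g.\ by the mating with $\bar z^2$ at the appropriate boundary point of $\Sigma_2^*$).

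For infinitude of $\mathfrak{U}_{\partial\Sigma^*}$, I would reuse the bi-colored contact tree invariant: within $\partial\Sigma^*=\bigsqcup_{d}\partial\Sigma_d^*$ one can produce, by choosing different pinching patterns on the ideal $(d+1)$-gon reflection groups (varying $d$ and the combinatorics of which ideal vertices collide), Schwarz reflection limit sets whose contact trees have non-isomorphic bi-colored structure — e.g.\ different local valences at contact points, or different arrangements of persistently-parabolic versus accidentally-parabolic decorations inherited from the group side of the mating. Since Proposition~\ref{prop:qsinv} makes this tree a quasi-symmetry invariant, these give infinitely many classes in $\mathfrak{B}$, and by construction only one of them (the model $R$) can be rational. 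The main obstacle throughout is the analytic core of the gluing step: matching the quasiconformal maps across the parabolic cusp of $U_\infty$ with uniform dilatation, since parabolic Fatou coordinates are only quasiconformally (not conformally) comparable and one must be careful that the interpolation near the cusp, combined with the infinitely many scales of bounded components accumulating there, does not blow up the dilatation — this is where the tangential-touching hypothesis and the precise normalization of $R$ (critical point exactly on the parabolic point) are essential, exactly as in the analogous step for $Q$ in class $\mathfrak{C}$.
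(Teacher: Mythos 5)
Your overall skeleton---bi-colored contact tree for infinitude, a quasiconformal invariant to exclude the Kleinian classes, and a quasiconformal surgery to identify the rational ones---is the right shape, and the infinitude argument matches the paper's (Lemma~\ref{lem:infinite_schwarz_basilica}). But the structural picture you set up for both the model polynomial $R$ and for the Schwarz limit sets is wrong in a way that reverses the logic of the other two claims.

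First, $R$ is \emph{not} the ``parabolic-at-infinity'' analog of $Q$. The map $R$ is a polynomial, so its unique non-Jordan-disk Fatou component $U_\infty$ is the super-attracting basin of infinity, and the induced dynamics on $\partial^I U_\infty$ is hyperbolic, not parabolic with a cusp as you claim in (iii). Moreover, your claim (i) that ``every bounded Fatou component is a Jordan quasi-disk except the immediate parabolic basin $U_\infty$'' is the opposite of the truth: $R$ has a free critical point $1-\tfrac{1}{4a}$ lying on the Julia set (mapping to the parabolic fixed point $1$), so by Lemma~\ref{lem:noJuliaCrit} the Julia set $J(R)$ is \emph{not} a fat Basilica, and by Proposition~\ref{prop:boundarySch}-type reasoning the bounded Fatou components of $R$ are cauliflower-type domains with conformal cusps, not quasidisks. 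This is the whole point of calling it a ``cuspidal Basilica.''

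Second, and symmetrically, the Schwarz limit sets $\Lambda(S)$ for $S\in\partial\Sigma^*$ do \emph{not} have ``only accidental parabolics.'' The droplet boundary $\partial T(S)$ has conformal cusps, and these give rise to persistently parabolic components of $\cT_\infty(S)$ whose boundaries are cauliflowers (quasiconformally $J(z^2+\tfrac14)$), not quasicircles; this is Proposition~\ref{prop:boundarySch}(3). At least one such persistently parabolic component always exists for $S\in\partial\Sigma^*$.

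These two misidentifications make your Kleinian argument run backwards. The correct reason $\mathfrak{U}_{\partial\Sigma^*}\cap\mathfrak{U}_{\kle}=\emptyset$ (Corollary~\ref{cor:schwarz_klein_disjoint}) is that a Kleinian Basilica limit set has \emph{every} Jordan complementary domain a quasidisk, while every $\Lambda(S)$ in $\partial\Sigma^*$ has at least one complementary domain that is a cauliflower, so they are never quasiconformally equivalent. You do not need any cocompact/non-cocompact case split, and the cocompact case is not even addressed by your argument (you would still have to rule out $[J(Q)]\in\mathfrak{U}_{\partial\Sigma^*}$, which follows from the same cauliflower observation). For the rational identity, the paper's mechanism is the dichotomy of Lemma~\ref{lem:strucCuspidalB}: if a geometrically finite rational Basilica Julia set has one quasidisk bounded Fatou component, then all of them are (fat Basilica); so a rational Julia set quasiconformal to some $\Lambda(S)$ must have \emph{all} bounded Fatou components non-quasidisk. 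Combined with the requirement that all components of $\cT_\infty(S)$ be persistently parabolic (Corollary~\ref{cor:onlypossintersect}), this pins down the model, and Proposition~\ref{prop:Rmodel} then uses the $R$-interval machinery (red/blue coloring of preimages of the parabolic point, marked points, alternating decompositions) to produce the quasisymmetric conjugacy with $J(R)$. Your proposed surgery path, built on matching a parabolic $U_\infty$ with parabolic Fatou coordinates, would be attempting to conjugate a superattracting basin of infinity with a parabolic one, which is exactly the type-preserving obstruction the paper's framework is designed to respect.
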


\noindent See Appendix~\ref{sec:schwarz} for a proof of Theorem~\ref{thm:schwarz}. We point out that a key ingredient in the proof is a quasiconformal universality result stated in Proposition~\ref{prop:Rmodel}.

\begin{figure}[ht]
\captionsetup{width=0.96\linewidth}
  \centering
  \includegraphics[width=0.545\textwidth]{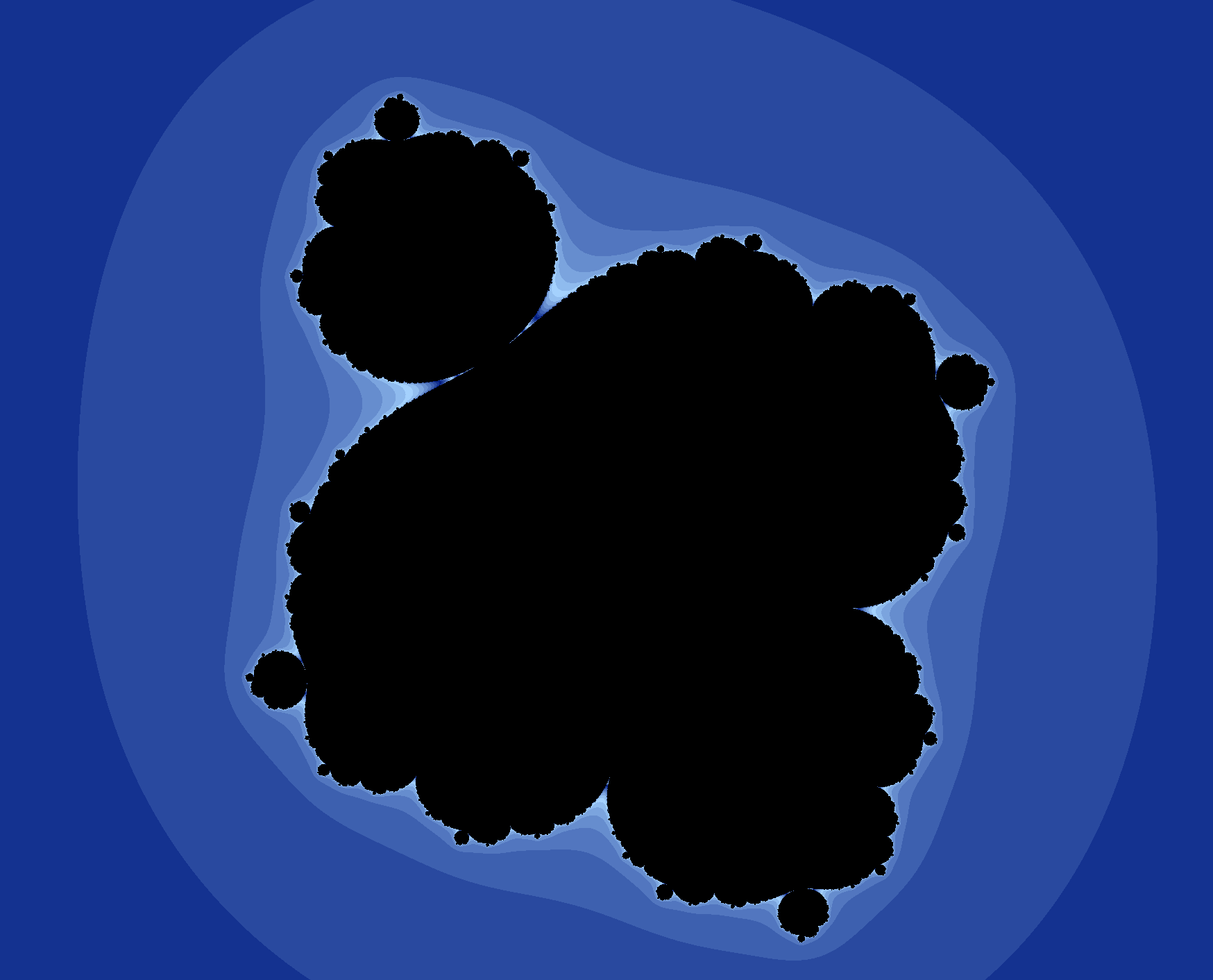}
\includegraphics[width=0.44\textwidth, angle=90]{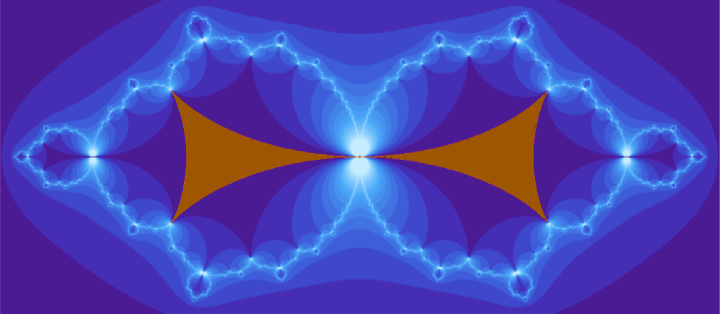}
  \caption{Left: The cuspidal Basilica Julia set for the polynomial $R(z)$ is shown. The large black bounded Fatou component has the parabolic fixed point $1$ and the simple critical point $1-\frac{1}{4a}$ on its boundary. Right: The limit set of a Schwarz reflection in $\Sigma_3^*$ arising from the rational map $f(z)=z+\frac{2}{3z}-\frac{1}{3z^3}$ is displayed. These two sets are quasiconformally homeomorphic.}
  \label{fig:CuspidalBasilica}
\end{figure}

\begin{remark}
    We remark that any Schwarz reflection $S \in  \partial \Sigma_d^*$ is a mating of $\bar z^d$ with a Bers boundary group of an ideal polygon reflection group, which is necessarily geometrically finite in the reflection setting. 

There are other constructions of Schwarz reflections (or their holomorphic counterparts, B-involutions) whose limit sets are Basilicas. For example, one may consider the mating of a Fuchsian group with a geometrically finite polynomial (or a geometrically finite rational map with a connected, simply connected parabolic basin) having a Basilica limit set (cf. \cite{LMM24,LLM24}), or the mating of a parabolic Blaschke product with a Bers boundary group \cite{LMMN}. In fact, using quasiconformal surgery, one can show that both  $\mathfrak{U}_{\textrm{anti-rat}}$ (Basilica Julia sets of geometrically finite anti-rational maps) and $\mathfrak{U}_{\textrm{reflect}}$ (Basilica limit sets of Kleinian reflection groups) naturally embed in the space $\mathfrak{U}_{\sch}$ of {\em Schwarzian Basilicas}. However, Theorem~\ref{thm:schwarz} shows that Schwarzian Basilicas arising from $\partial\Sigma^*$ only intersects $\mathfrak{U}_{\rat}$ in a singleton, different from $\left[J(Q)\right]$.
    
    In the general setting, Conjecture~\ref{conj:qcuniversalityBasilica} provides a conjectural quasiconformal classification of all such Basilicas. However,     it is more subtle to state the type-preserving condition (of Conjecture~\ref{conj:qcuniversalityBasilica}) explicitly in this case. 
\end{remark}

\subsection*{Cubic polynomials}
Let $\Per_1(0) := \{z^3+\frac{3a}{2}z^2:a\in \C\}$ be Milnor's $\Per_1(0)$ curve, and let $\mathcal{H} \subseteq \Per_1(0)$ be the {\em main hyperbolic component}, i.e., the hyperbolic component that contains $z^3$ (see Figure~\ref{fig:cubicpara}, cf. \cite{Mil92}).
Note that in this family, $0$ is a fixed critical point and we have one free critical point $-a$.
The topology of the boundary $\partial \mathcal{H}$ is studied extensively in \cite{Roe07}, where it is shown that $\partial \mathcal{H}$ is a Jordan curve.
Any geometrically finite polynomial $f_a \in \partial \mathcal{H}$ has a Basilica Julia set, and is of exactly one of the following two types (see Figure~\ref{fig:cubicBasilica}):
\begin{itemize}[leftmargin=8mm]
    \item (parabolic type) the free critical point $-a$ is in the parabolic basin, in which case $a \in \partial \mathcal{H}$ corresponds to a cusp;
    \item (Julia type) the free critical point $-a$ is on the Julia set.
\end{itemize}
These geometrically finite parameters on $\partial \mathcal{H}$ are the landing points of rational internal rays in $\mathcal{H}$ with angle periodic or strictly pre-periodic under the map $\sigma_2(t) = 2t$ (see \cite{Roe07} for more details).
Our methods yield a classification of the geometrically finite polynomials on $\partial\cH$.
\begin{thmx}\label{thm:cubicclass}
    Let $f_a$ be a geometrically finite polynomial on $\partial \mathcal{H}$. Then exactly one of the following holds.
    \begin{enumerate}[leftmargin=8mm]
        \item $f_a$ is of parabolic type, and $[J(f_a)] = [J(z^3+2iz^2)]$.
        \item $f_a$ is of Julia type, and $[J(f_a)] = [J(z^2-1)]$.
    \end{enumerate}
\end{thmx}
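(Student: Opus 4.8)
The plan is to reduce the classification on $\partial\mathcal{H}$ to the universality statements already established (Theorem~\ref{thm:qcclassfn-ltsets}, Theorem~\ref{thm-basilica-stdpolymodel}, and Theorem~\ref{thm:davidhi}), using the fact that the class $\mathfrak{C}$ is governed by the dynamics on the ideal boundary of the unique non-Jordan disk component. First I would analyze the Fatou/Julia structure of $f_a$ for $a\in\partial\mathcal{H}$: the fixed critical point $0$ lies in a totally invariant immediate basin $U_\infty$ (the "outer" component in the sense of the Basilica), which plays the role of $\Delta_\infty$. For a geometrically finite $f_a\in\partial\mathcal{H}$, the multiplier at the fixed point governing $U_\infty$ has either become parabolic (degenerating the interior dynamics of $\mathcal{H}$) or stays super-attracting; one needs Roesch's description of $\partial\mathcal{H}$ to pin down which. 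The key structural claim is: in both the parabolic-type and Julia-type cases, $J(f_a)$ is a fat (resp. cuspidal) Basilica, the free critical point $-a$ governs the "inner" bounded Fatou behavior, and the induced dynamics on $\partial^I U_\infty$ is hyperbolic — equivalently, $f_a\vert_{U_\infty}$ is super-attracting (being a polynomial basin at $\infty$ glued with the $0$-basin, hence of "polynomial type"). I would then verify that $J(f_a)$ meets the definition of a fat Basilica: bounded Fatou components are quasidisks (geometric finiteness plus the hyperbolic/parabolic dichotomy on their boundaries, via John/quasicircle criteria as in \cite{CJY94}), and mutually touching bounded components touch tangentially (local model at contact points is dictated by the parabolic-type versus Julia-type structure of the contact cycle).

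Case (2), Julia type: here $-a$ is on the Julia set and $f_a$ has no parabolic cycles, so $J(f_a)$ is the Julia set of a hyperbolic, or at worst semi-hyperbolic, geometrically finite rational map with connected Julia set homeomorphic to a Basilica; all bounded Fatou components are super-attracting basins in the grand orbit of the $U_\infty$-companion (the immediate basin of the critical $2$-cycle, as in $z^2-1$). By Theorem~\ref{thm-basilica-stdpolymodel} (fat Basilica Julia set with attracting non-Jordan-disk component) $J(f_a)$ is quasiconformally equivalent to $J(Q)=J(z^2-\tfrac34)$; but more precisely, since there are no parabolic cycles, the "moreover" clause of Theorem~\ref{thm:davidhi} applies and forces $[J(f_a)]=[J(Q_{\pc})]=[J(z^2-1)]$. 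One must check the transitivity/type-matching hypothesis — all bounded Fatou components in one grand orbit, all contact points in one grand orbit — which holds because $\partial\mathcal{H}$ is a Jordan curve and the internal-ray combinatorics force the $\sigma_2$-dynamics on angles to be transitive on the relevant orbit.

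Case (1), parabolic type: here $-a$ lies in a parabolic basin, $a$ is a cusp on $\partial\mathcal{H}$, and $J(f_a)$ is a cuspidal Basilica. The relevant model is exactly $R_{\textrm{cubic}}(z)=z^3+2iz^2$, which is itself the cubic polynomial on $\partial\mathcal{H}$ with the simplest such parabolic combinatorics (a fixed parabolic point absorbing the free critical point, with the rotation number making the contact cycle of multiplicity matching the cusp). I would invoke the analogue of Theorem~\ref{thm-basilica-stdpolymodel} / Proposition~\ref{prop:Rmodel} for cuspidal Basilicas: any geometrically finite rational map whose Julia set is a cuspidal Basilica of this parabolic type, with attracting non-Jordan-disk component and with a type-preserving topological conjugacy on ideal boundaries, is quasiconformally equivalent to the model; and for cubic $f_a\in\partial\mathcal{H}$ of parabolic type the combinatorial type is rigid (again by Roesch's parametrization of $\partial\mathcal{H}$ by $\sigma_2$-angles, which shows the parabolic-type parameters form a single combinatorial class). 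Hence $[J(f_a)]=[J(z^3+2iz^2)]$.

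The main obstacle I anticipate is the combinatorial rigidity step: proving that all Julia-type parameters on $\partial\mathcal{H}$ give the same type-preserving topological model (so Theorem~\ref{thm-basilica-stdpolymodel}/\ref{thm:davidhi} applies uniformly), and likewise that all parabolic-type parameters give a single model. This requires unpacking Roesch's internal-ray description of $\partial\mathcal{H}$ to show that, although the landing points of different periodic/pre-periodic internal rays carry a priori different itineraries for the free critical point, the induced dynamics on the bi-colored contact tree (in the sense of \S\ref{subsubsec:bicolct}) and on the ideal boundary of $U_\infty$ are always isomorphic — i.e., the apparent combinatorial variation lives entirely inside bounded Fatou components and is absorbed by quasiconformal (or David, in the parabolic-boundary case) surgery. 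A secondary, more technical obstacle is checking the tangential-touching condition at contact cycles containing the free critical point in the Julia-type case, where the local degree is higher; this should follow from a local normal-form computation but must be done with care, or deduced from the transitivity of contact points in the model together with Theorem~\ref{thm-basilica-stdpolymodel}.
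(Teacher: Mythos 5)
Your overall strategy — reduce to the universality machinery of the paper — is the right one, but several steps are incorrect or missing, and the concerns you raise at the end point in the wrong direction.

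For the Julia-type case, your opening claim that $J(f_a)$ is a fat Basilica is false. When $-a$ lies on the Julia set it is a (strictly pre-periodic) Julia critical point, and by Lemma~\ref{lem:noJuliaCrit} a fat Basilica Julia set contains no critical points; so Theorem~\ref{thm-basilica-stdpolymodel} does not apply and $[J(f_a)]\ne[J(Q)]$. You then pivot to the `moreover' clause of Theorem~\ref{thm:davidhi}, which does give $[J(f_a)]=[J(Q_{\pc})]$; but note that this clause is exactly a restatement of Proposition~\ref{prop:davidJulia}, and the paper applies that proposition directly in one line, with no detour. Your remaining worry — that one must unpack Roesch's parametrization to show all Julia-type parameters share a type-preserving topological model — is unnecessary. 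Proposition~\ref{prop:davidJulia} applies to \emph{any} subhyperbolic rational map with Basilica Julia set; the varying combinatorics of the contact tree (including the mechanism by which a strictly pre-periodic Julia critical point creates new contact points) are absorbed by Lemma~\ref{lem:modifiedcontact} and the flexibility built into the construction of the $(J(\widetilde g),\widetilde{\mathfrak{G}})$-map. No combinatorial rigidity statement about $\partial\mathcal{H}$ is needed.

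For the parabolic-type case, you correctly anticipate that an analogue of Proposition~\ref{prop:Rmodel} for the model $F=z^3+2iz^2$ is what is required, but you stop at invoking it; the paper's proof \emph{is} that construction, and its key observation is specific and elementary: $F$ has a super-attracting fixed component $U_0$ (containing the critical point $0$) and a parabolic fixed component $U_1$ (capturing the free critical point $-a$), and $F\vert_{\partial U_i}$ is topologically conjugate to $\sigma_2$ for \emph{both} $i=0,1$, because each of $U_0,U_1$ contains exactly one simple critical point. Consequently the dyadic internal-puzzle machinery of \S~\ref{sec-basilica} (designed for $Q(z)=z^2-\tfrac34$) transfers verbatim to $F$, producing Basilica $\mathfrak{F}$-maps, and the modified-refinement plus quasiconformal-surgery argument of Proposition~\ref{prop:quasisurg} then yields $[J(f_a)]=[J(F)]$. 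One does \emph{not} need the more elaborate $R$-interval calculus of Appendix~\ref{sec:schwarz}, which was designed to handle persistent cusps arising from Schwarz reflection; here the degree-2 boundary dynamics make the simpler dyadic framework suffice. Without this observation the case is not actually settled, so this is the genuine gap.

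Finally, a structural misstatement in your opening paragraph: the fixed critical point $0$ does \emph{not} lie in $U_\infty$. For a cubic polynomial, $U_\infty$ is the basin of infinity, which is the unique non-Jordan Fatou component and plays the role of $\Delta_\infty$; the point $0$ lies in a \emph{bounded} super-attracting Fatou component, which is one of the two distinguished Fatou components on the `inside' of the Basilica.
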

\noindent See Appendix~\ref{sec:cubic_poly} for a proof of Theorem~\ref{thm:cubicclass}.

\begin{figure}[ht]
\captionsetup{width=0.96\linewidth}
  \centering
  \includegraphics[width=0.471\textwidth]{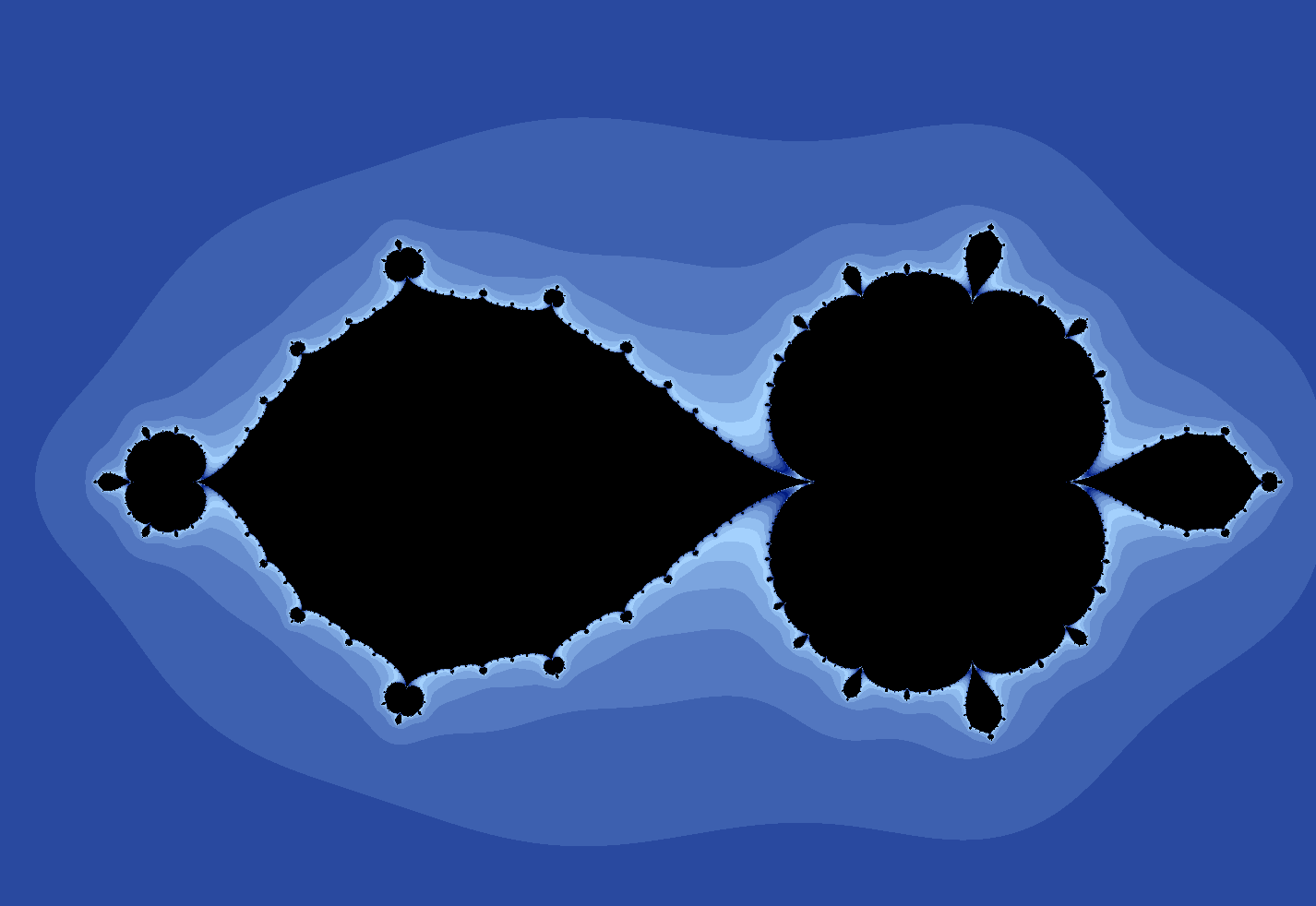}
  \includegraphics[width=0.44\textwidth]{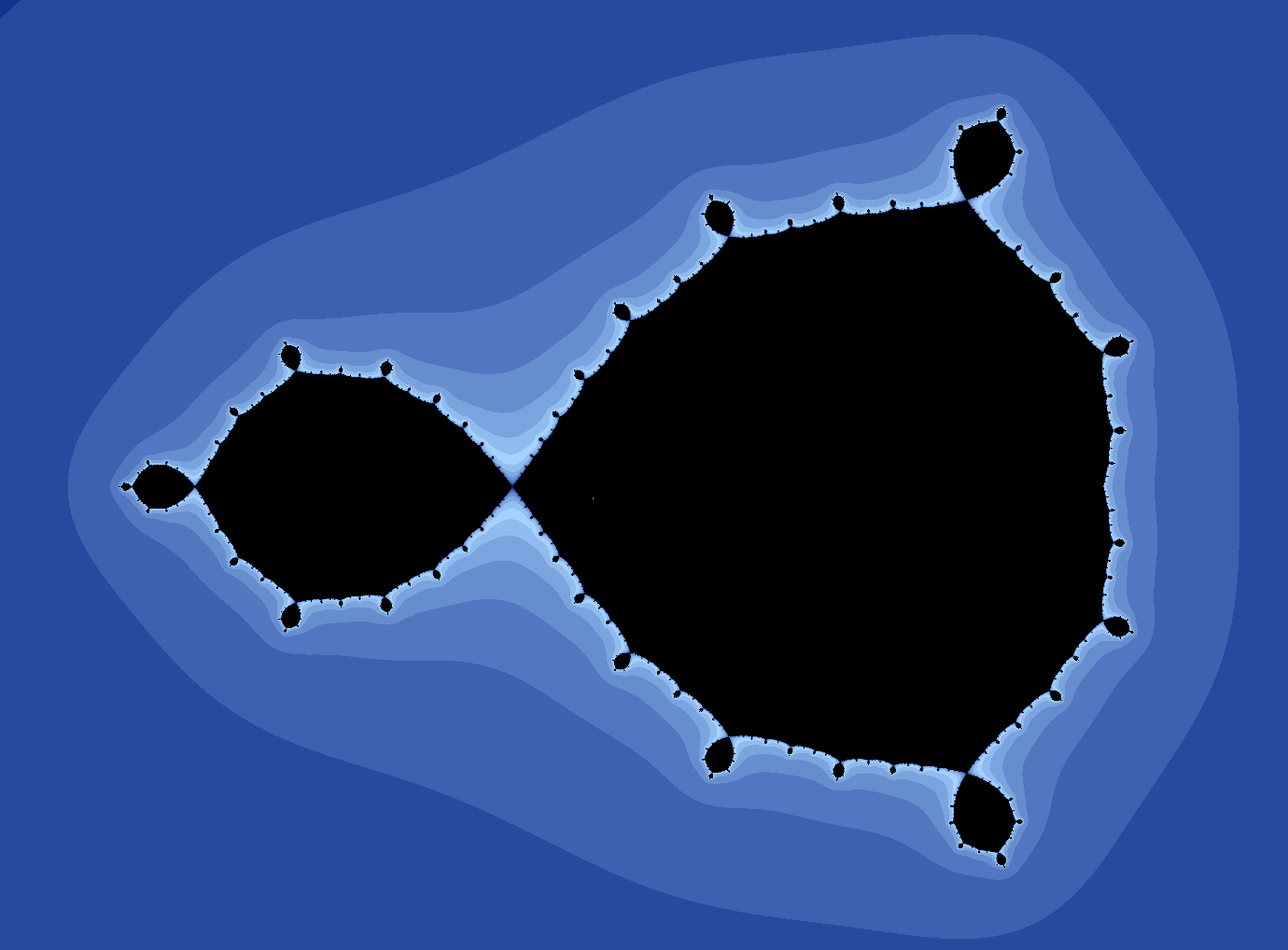}
  \caption{Two cubic Basilica Julia sets on the boundary of the main hyperbolic component $\mathcal{H}$ of Milnor's $\Per_1(0)$ curve.  The left polynomial $z^3+2iz^2$ has a parabolic fixed point and hence a critical point in the parabolic basin. The right polynomial $z^3+\frac{3}{2}z^2$ has a strictly pre-periodic Julia critical point as a contact point, and its Julia set is quasiconformally equivalent to the standard Basilica $J(Q_{\pc})$. Any geometrically finite polynomial on $\partial\cH$ has a Basilica Julia set, and is quasiconformal to one of these two models.}
  \label{fig:cubicBasilica}
\end{figure}

\begin{figure}[ht]
\captionsetup{width=0.96\linewidth}
  \centering
  \includegraphics[width=0.45\textwidth, angle=90]{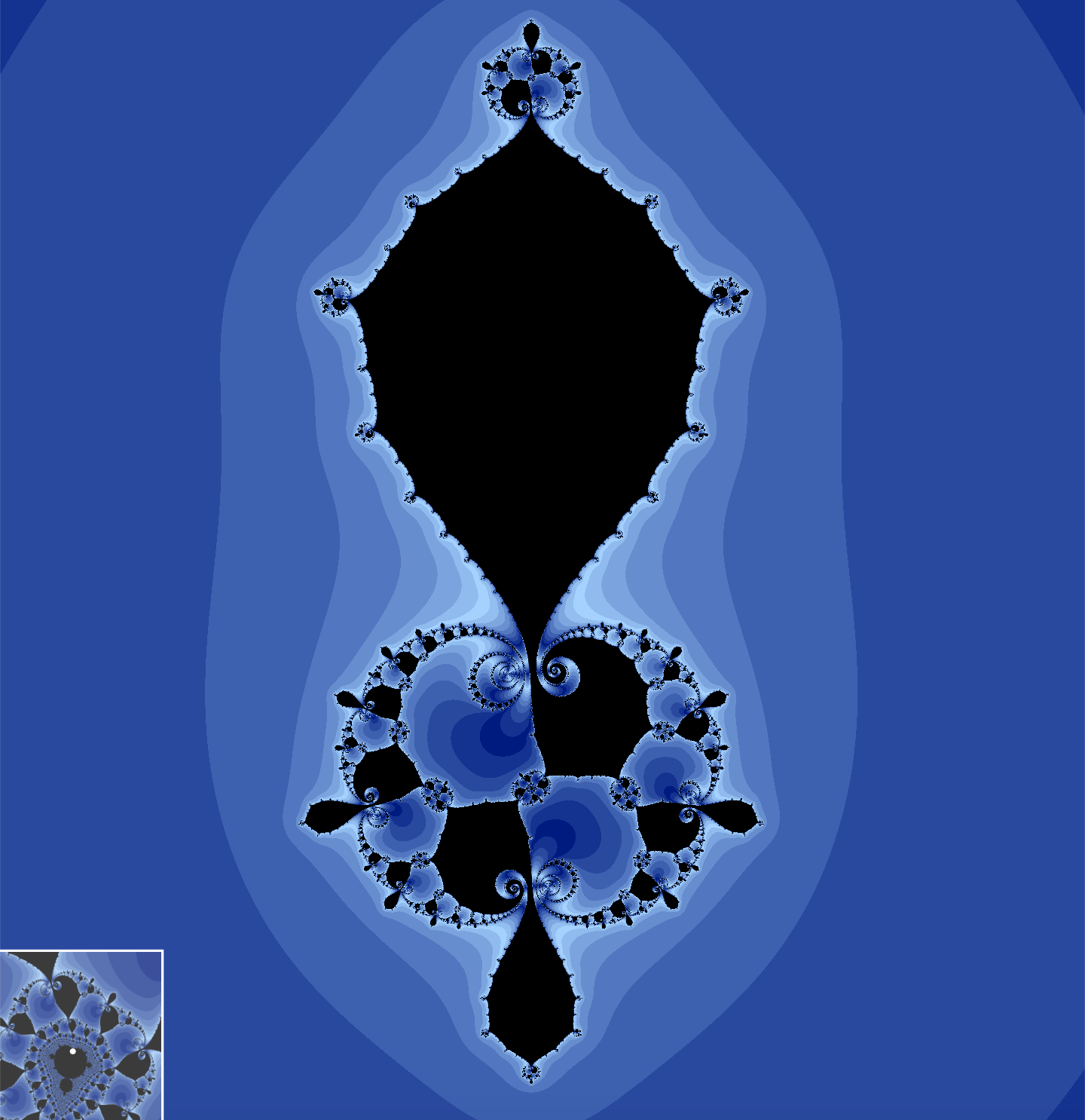}
  \includegraphics[width=0.45\textwidth, angle=90]{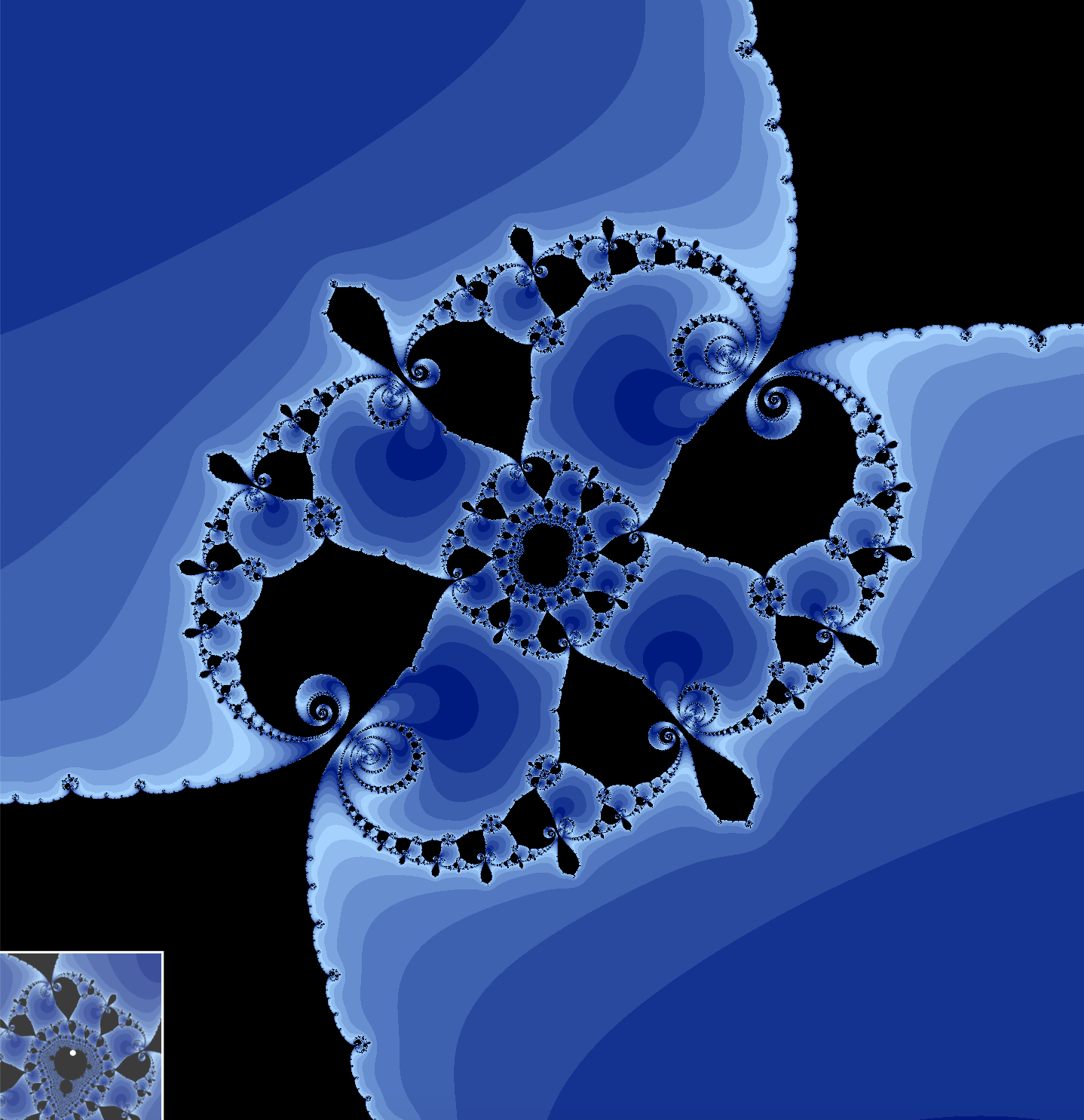}
  \caption{A geometrically finite Basilica Julia set on $\partial \mathcal{H}$ and its magnification (zoom) on the right. It follows from Theorem~\ref{thm:cubicclass} that it is quasiconformally equivalent to $J(z^3+2iz^2)$ in Figure~\ref{fig:cubicBasilica} left. The quasiconformal homeomorphism sends the small cauliflower Fatou component in the center of the zoomed figure to the most prominent cauliflower Fatou component in Figure ~\ref{fig:cubicBasilica} left. This illustrates the flexibility of the quasiconformal map obtained in Theorem~\ref{thm:cubicclass}.}
  \label{fig:wildcubicBasilica}
\end{figure}

\begin{figure}[ht]
\captionsetup{width=0.96\linewidth}
  \centering
  \includegraphics[width=0.45\textwidth]{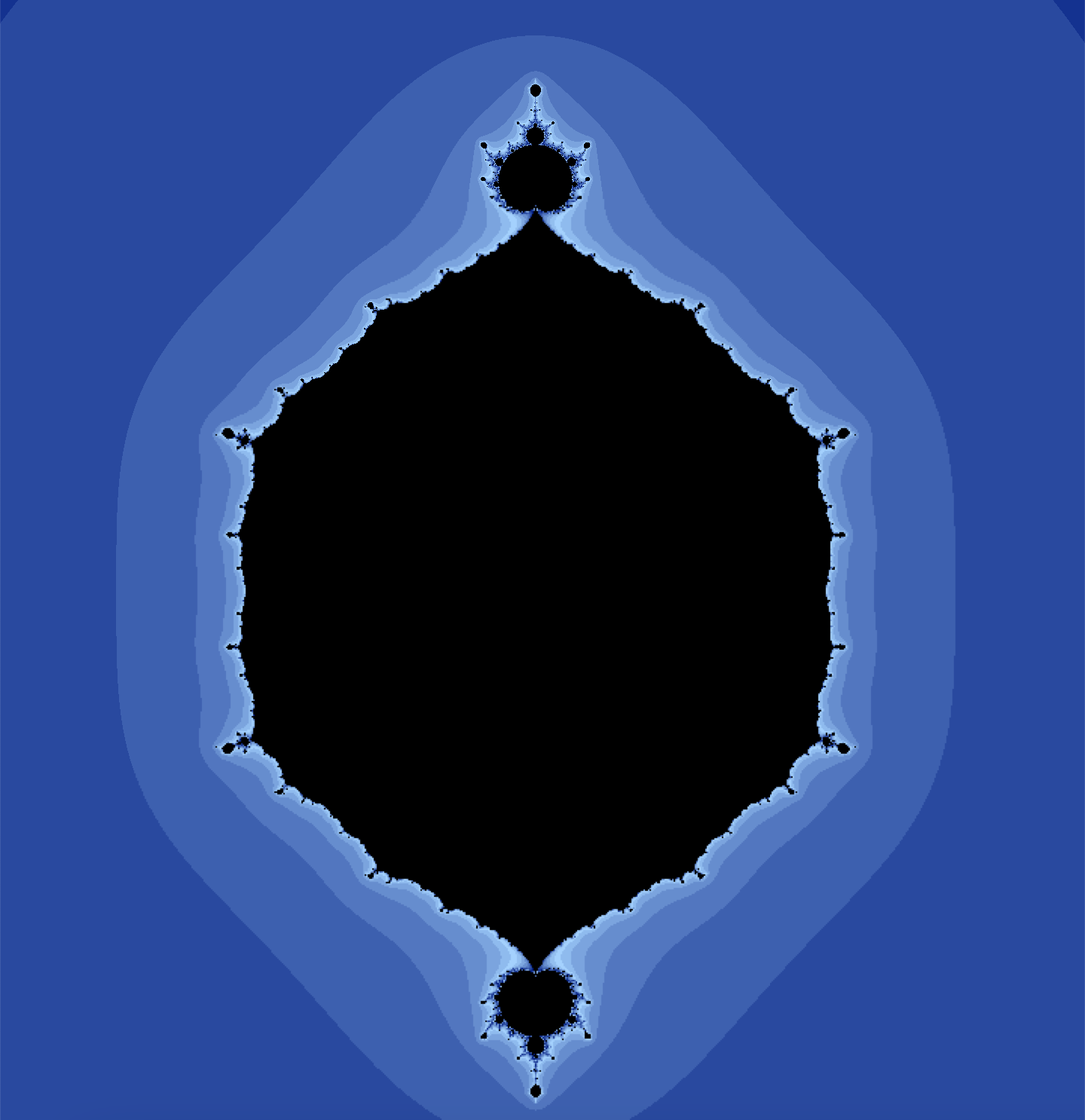}
  \includegraphics[width=0.45\textwidth]{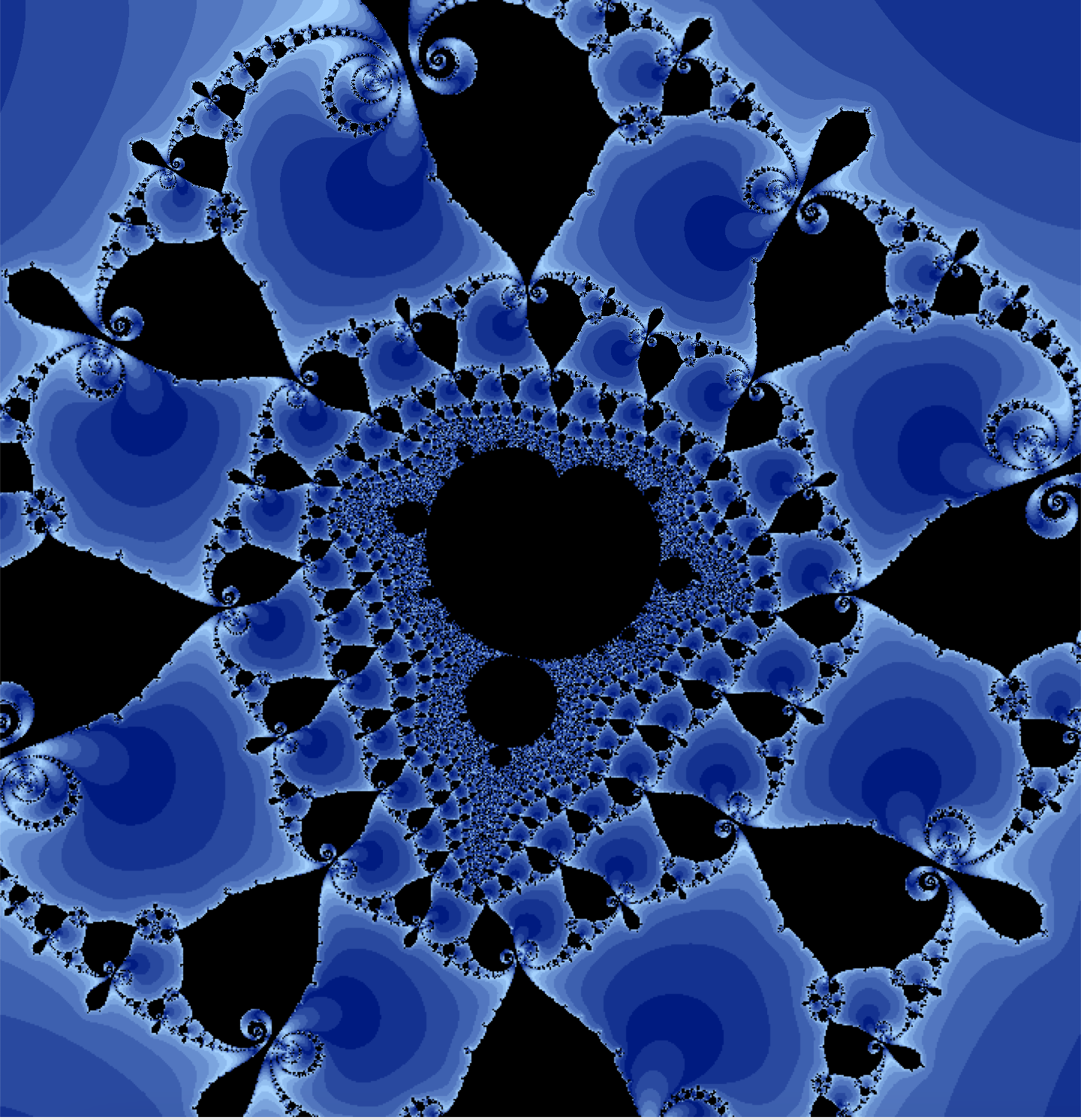}
  \caption{The main hyperbolic component $\mathcal{H}\subset \Per_1(0)$ and a magnification (zoom) near its boundary. The root of the most prominent Mandelbrot set is the geometrically finite polynomial for Figure~\ref{fig:wildcubicBasilica}.}
  \label{fig:cubicpara}
\end{figure}

\subsection*{Beyond geometric finiteness}
Many geometrically infinite rational maps have Basilica Julia sets; e.g., any quadratic polynomial of the form $z^2+e^{2\pi i \theta}z$ where $\theta$ is an irrational number of bounded type (see Figure~\ref{fig:mainMol}).
It is easy to see by zooming in at the critical value that its Julia set is not quasiconformally equivalent to Basilica Julia sets of geometrically finite maps. From renormalization theory, we expect the tail of the continued fraction expansion of the rotation number $\theta$ to play a role in identifying the quasiconformal classes.
Similarly, any polynomial on the boundary of the main hyperbolic component of $\Per_1(0)$ has a Basilica Julia set.
\begin{question}
    Can one classify the Basilica Julia sets on the boundary of the main hyperbolic component of the Mandelbrot set or of $\Per_1(0)$?
\end{question}

\subsection*{Basilicas in moduli space}
Basilicas also appear naturally in various bifurcation loci for rational maps. The following conjecture is equivalent to the Molecule conjecture in \cite{DLS20} (see Figure~\ref{fig:mainMol}).
\begin{conj}
    Let $\Mol$ be the main molecule in the quadratic family $z^2+cz$. Then $[\Mol] = [J(Q)]$.
\end{conj}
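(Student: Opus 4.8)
A natural route to this conjecture, given its stated equivalence with the Molecule Conjecture of \cite{DLS20}, is to take the latter as input and feed it into the fat-Basilica machinery of the present paper. In the $z^2+cz$ parametrization the main cardioid is the round disk $\D$, and $\Mol$ is the closure of the tree of satellite hyperbolic components obtained from $\D$ by iterated $p/q$-bifurcations. The Molecule Conjecture supplies precisely the missing topology and regularity: combinatorial rigidity together with (i) $\partial W$ is a Jordan curve for every hyperbolic component $W\subseteq\Mol$, (ii) $\Mol=\overline{\bigcup_W \overline W}$ with no residual two-dimensional part and with the gluing pattern recorded by the bifurcation (Farey) combinatorics, and (iii) local connectivity of $\partial\Mol$. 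The first step is then purely topological: (i)--(iii) identify $\Mol$ with the abstract molecule model, a planar tree of closed topological disks, each carrying a countable dense, cyclically ordered set of boundary marked points (its children, together with, unless it is the root, one parent), each marked point shared by exactly two disks, and self-similar in the sense that the subtree hanging off any marked point is isomorphic to the whole. One checks that this coincides with the combinatorial model of the fat Basilica $J(Q)=J(z^2-\tfrac34)$: there the disks are the bounded Fatou components, the marked points are the contact points --- the grand orbit of the parabolic fixed point, each shared by exactly two bounded Fatou components --- and the self-similar substitution governing the contact tree matches the bifurcation combinatorics of $\Mol$. A careful but essentially routine back-and-forth argument --- which in particular shows that the main cardioid is not topologically distinguished among the disks of $\Mol$ --- then yields a homeomorphism $\Mol\to J(Q)$ respecting the disk/marked-point structure.

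To promote this to a quasiconformal equivalence, the plan is to verify that $\Mol$ is a fat Basilica in the sense of the Introduction, and then to run the welding construction underlying Corollary~\ref{cor:quasiconformaluniversality} and Corollary~\ref{cor-totgeobdd}. Two things must be checked. First, each closure $\overline W$ of a hyperbolic component is a quasidisk, \emph{uniformly over all depths} in the molecule: the main cardioid is a round disk, a satellite $W$ is bounded by a quasicircle because its multiplier map $\overline W\to\overline{\D}$ is quasiconformal (with the square-root-type behaviour at the root controlled as in the fat Basilica case), and the quasidisk constants stay bounded as one descends the molecule. Second, two touching components touch tangentially --- the root of a $p/q$-satellite is a cusp in the multiplier coordinate of its parent, so the tangency follows from the parabolic-bifurcation asymptotics exactly as in the corresponding statement for $J(Q)$ (cf.\ \S\ref{subsubsec:topmodelpoly}). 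Granting these, one constructs a quasiconformal homeomorphism $\Phi : \C \to \C$ carrying a round Basilica (furnished for $J(Q)$ by Corollary~\ref{cor-totgeobdd}) onto $\Mol$ by an inductive welding: match round disks to the components $\overline W$ of $\Mol$ one at a time, glue along the finitely many contact points present at each finite stage by a uniformly quasiconformal welding, and pass to the limit. The uniform quasidisk bounds force the diameters of the depth-$n$ components to shrink, so by local connectivity the map extends continuously across the residual set $\partial\Mol\setminus\bigcup_W\partial W$; and the resulting homeomorphism is globally quasiconformal because it is quasiconformal off a conformally removable set (the round Basilica is a John domain in the sense of \cite{CJY94}, hence removable, exactly as invoked in the proof of Corollary~\ref{cor-confremovableltset}). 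Composing with a quasiconformal map from the round Basilica to $J(Q)$ gives $[\Mol]=[J(Q)]$.

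The main obstacle --- and the reason the statement is as deep as the Molecule Conjecture --- is the uniformity in the quasidisk bounds above, i.e.\ controlling the shapes of hyperbolic components arbitrarily deep inside $\Mol$. This is governed by \emph{a priori} bounds for satellite (near-parabolic) renormalization, which form the analytic heart of the Molecule Conjecture itself. I expect that any quantitative form of the Molecule Conjecture strong enough to yield local connectivity of $\partial\Mol$ also delivers the uniform quasiconformal geometry needed in the welding, so that --- consistently with the stated equivalence --- establishing $[\Mol]=[J(Q)]$ should require no new hard analysis beyond the Molecule Conjecture, only its combination with the fat-Basilica technology developed here (cf.\ Conjecture~\ref{conj:qcuniversalityBasilica}).
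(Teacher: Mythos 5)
This statement is labeled as a \emph{conjecture} in the paper: the authors do not prove it, remarking only (without proof) that it is equivalent to the Molecule conjecture of \cite{DLS20}. There is therefore no argument in the paper to compare yours against, and what you have sketched is one direction of a reduction to another open problem. That is legitimate as far as it goes --- and you correctly identify where the analytic difficulty lives --- but the welding step conceals a structural gap relative to how the paper actually produces quasiconformal equivalences of Basilicas.

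The quasiconformal conjugacies in this paper (Proposition~\ref{prop:quasisurg}, and its generalizations in \S\ref{sec:davidHierarchy} and the appendices) are not built by welding countably many uniform quasidisks together scale by scale. They are built by (i) constructing a topologically expanding, Markov, \emph{conformal} fragmented dynamical system with a puzzle structure on each of the two sets, (ii) conjugating the induced circle Markov maps on the ideal boundaries of \emph{all} complementary components --- crucially including the non--Jordan-disk one $U_\infty$ --- quasi-symmetrically via Proposition~\ref{prop:qsmarkovmapFiniteCircle}, (iii) pulling these extensions back through the conformal branches so that the dilatation stays bounded across infinitely many generations, and (iv) invoking removability. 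Uniform quasidisk constants for the bounded components only handle step (iii) on those components; they do not produce the quasi-symmetric conjugacy on $\partial^I U_\infty$ required in step (ii), and that conjugacy is what controls the shapes of the limbs hanging off every component at every depth. For $J(Q)$ the Markov map on $\partial^I U_\infty$ comes from the Basilica $\mathfrak{Q}$-map and is arranged to be symmetrically hyperbolic by Theorem~\ref{thm:symmetrichyperbolic}. Your proposal supplies no analogue for $\Mol$: one would need to realize satellite (near-parabolic) renormalization as a suitable expanding Markov $(X,\mathscr{C})$-map on $\Mol$ in the sense of \S\ref{sec:XC} --- with enough conformality, presumably via the hyperbolicity asserted by the Molecule conjecture, to run Proposition~\ref{prop:qsmarkovmap} --- and that is exactly the step that would turn the reduction from a heuristic into a proof. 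You flag "uniform quasidisk bounds" as the obstacle, but the missing ingredient is really a fragmented dynamical system on $\Mol$ in the spirit of \S\ref{methods_subsec}, not just a bound on component shapes. Finally, since the paper asserts an \emph{equivalence} with the Molecule conjecture, a complete treatment would also need the converse implication, which your sketch does not address.
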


\subsection*{Basilica as a welding?}
In \cite{McM25}, the welding of the Minkowski ? function is used as the universal object for the Cauliflower class $[J(z^2+\frac14)]$. The ? function can be defined purely in number theoretic terms. It will be interesting to know whether there is a function that plays an analogous role for the universality class $[J(z^2-\frac34)]$.

\begin{figure}[ht]
\captionsetup{width=0.96\linewidth}
  \centering
  \includegraphics[width=0.45\textwidth]{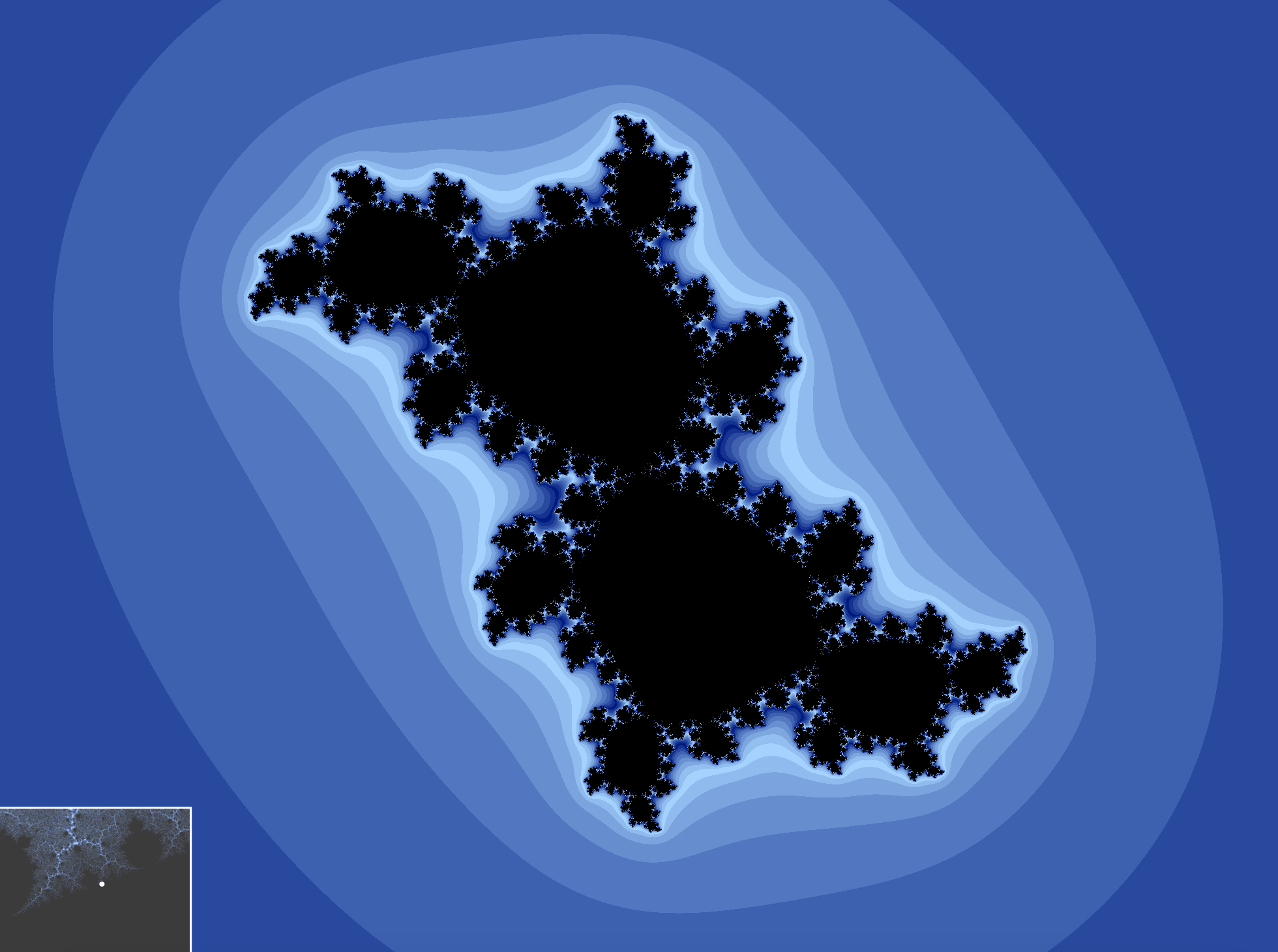}
  \includegraphics[width=0.45\textwidth]{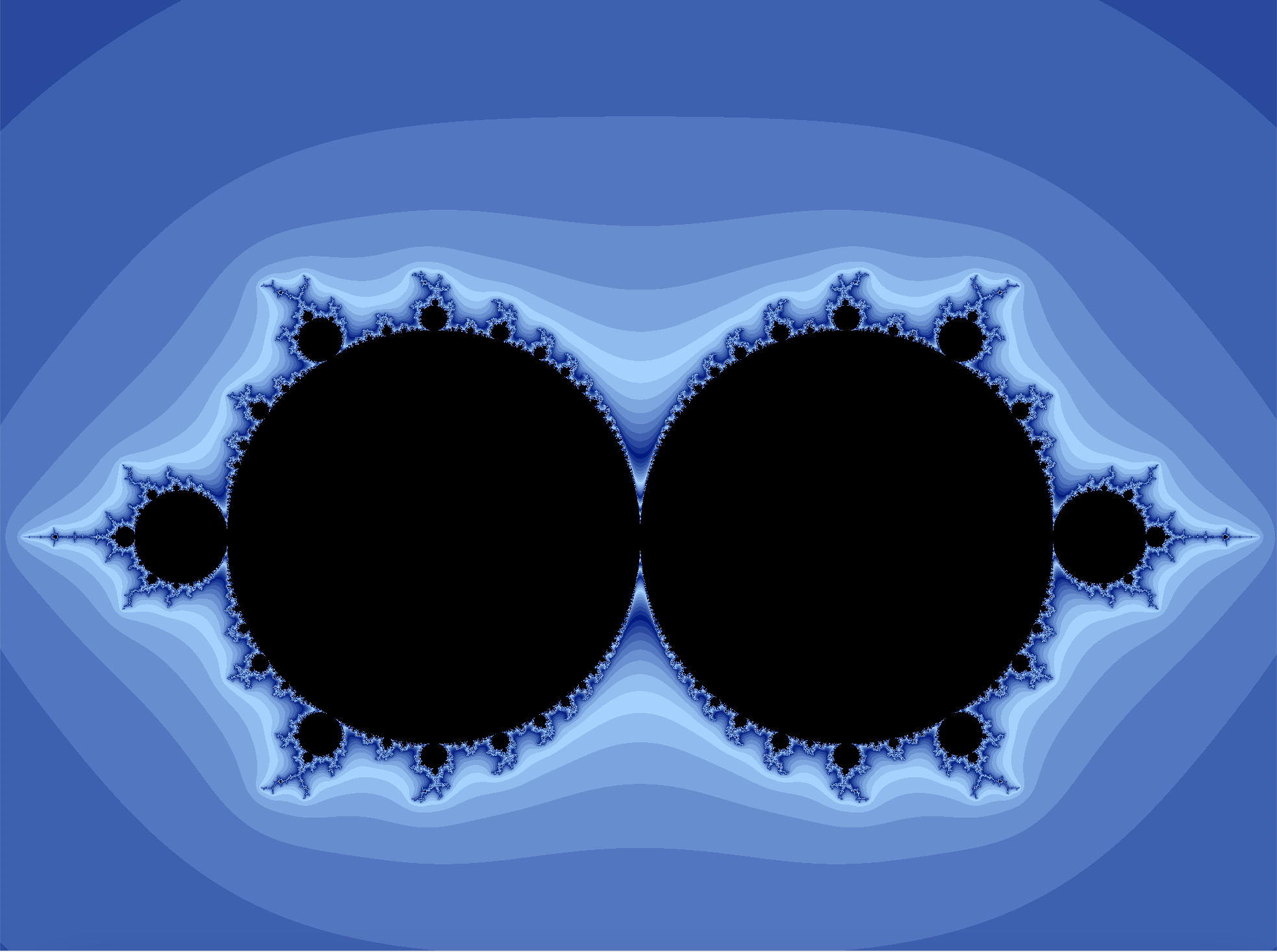}
  \caption{A Basilica Julia set of a Siegel polynomial on the left. The bifurcation locus of the quadratic family $z^2+cz$ on the right. The main molecule is the closure of the union of the hyperbolic components obtained via finite sequences of bifurcations from the main hyperbolic components containing $z^2$ and $z^2+2z$.}
  \label{fig:mainMol}
\end{figure}

\subsection{Rigidity vs Universality}\label{subsec:rvu}
Motivated by our universality results, we expect that there are two different regimes for quasi-symmetries between limit sets or Julia~sets.

Rigidity of quasi-symmetries of a Sierpi\'nski carpet are extensively studied in \cite{BKM09, BM13, Mer14, BLM16}.
Let $K_i, i \in\{1,2\}$, be either the carpet Julia set of a \pcf rational map or the carpet limit set of a convex co-compact Kleinian group. If $K_1$ and $K_2$ are quasi-symmetrically equivalent, then they are in fact M\"obius equivalent: see \cite[Theorem 1.1]{Mer14} when both $K_i$ are limits sets, and \cite[Corollary 1.3, Theorem 1.4]{BLM16} when one of the $K_i$ is a Julia set.
M\"obius equivalence implies that the corresponding dynamics must be {\em commensurable} (see \cite[Theorem B]{LP97}). 
In particular, no carpet Julia set is quasiconformally equivalent to a limit set (see \cite[Corollary 1.3]{BLM16}), and any quasi-symmetry between carpet sets necessarily arises from the underlying dynamics.

On the other hand, when the Julia sets or limit sets are Jordan curves, quasi-symmetries are very rich.
This is well known for quasi-circles since the work of Ahlfors and Bers. Recently, McMullen \cite{McM25} showed that the welding of the Minkowski $?-$map, or equivalently, the cauliflower Julia set $J(z^2+\frac14)$, is universal (in an appropriate sense stated more generally in the following conjecture).
Our main results are inspired, in part, by this, and give first examples of universality phenomenon beyond Jordan curves.

Our theorems and methods motivate the following conjecture for general conformal dimension $1$ limit sets.
\begin{conj}[Quasiconformal Universality]\label{conj:confdim1univer}
    Let $K_i, i \in\{1,2\}$, be the limit set of a hyperbolic conformal dynamical system (such as the limit set of a convex co-compact Kleinian group or the Julia set of a hyperbolic rational map).
    Suppose that $K_1, K_2$ have conformal dimension $1$ and that they are homeomorphic. Then $K_1$ is quasi-symmetric to $K_2$.

    More generally, let $K_i, i \in\{1,2\}$, be the limit set of a geometrically finite conformal dynamical system (such as the limit set of a geometrically finite Kleinian group or the Julia set of a geometric finite rational map).
    Suppose that $K_1, K_2$ have conformal dimension $1$ and that they are homeomorphic via some type-preserving map. Then $K_1$ is quasi-symmetric to $K_2$.
\end{conj}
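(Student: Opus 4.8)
The plan is to reduce the quasi-symmetric classification to a combinatorial one, in the spirit of the bi-colored contact tree of \S~\ref{subsec:beyondclassC}, and then to upgrade a combinatorial isomorphism to a quasi-symmetry using the hypothesis that the conformal dimension equals $1$. \emph{Step 1 (a decorated contact model).} To each limit set $K$ of a geometrically finite conformal dynamical system one attaches a decorated tree $\mathcal{T}(K)$: its vertices are the ``pieces'' of $K$ (for a Basilica, the components of the complement of the unique non-Jordan-disk component; in general, the maximal subcontinua carrying no global branching, glued along the branch set), its edges record contacts, and each vertex and edge is labelled by the local topological model at the corresponding branch or contact point, together with the colour \emph{parabolic} or \emph{hyperbolic} as in Figure~\ref{fig:persistantBasilica}. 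I would first show that a (type-preserving) homeomorphism $\Phi\colon K_1\to K_2$ induces a colour- and label-preserving isomorphism $\mathcal{T}(K_1)\cong\mathcal{T}(K_2)$; this is the soft half, since branch points, local valence and the combinatorics of contacts are topological invariants, and the parabolic/hyperbolic dichotomy is exactly what the type-preserving hypothesis is designed to transport.

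\emph{Step 2 (from combinatorial isomorphism to quasi-symmetry).} This is the analytic core. Using that the conformal dimension of each $K_i$ equals $1$, I would first replace the visual metrics by quasi-symmetrically equivalent ones in which every piece becomes a \emph{uniform} quasi-arc of bounded turning and is close to $1$-regular -- invoking a Carrasco-Piaggio-type description of the conformal dimension, combined with the bounded-geometry/self-similar structure supplied by the dynamics so that the constants are uniform over all pieces. Each piece-to-piece map is then provided by the classical quasi-symmetric uniformization of quasi-arcs, arranged so as to realise the prescribed local models at the two endpoints. Finally I would glue these maps over the tree $\mathcal{T}$, using uniform expansion of the dynamics (hyperbolic case) or the parabolic normal form (geometrically finite case) to force the pieces to shrink at an exponential rate along $\mathcal{T}$, whence the glued map is globally quasi-symmetric. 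This is the same mechanism that powers Theorems~\ref{thm:qcclassfn-ltsets} and~\ref{thm-basilica-stdpolymodel}, with the single model $Q$ replaced by the family of local models parametrised by $\mathcal{T}$.

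\emph{Main obstacle.} I expect the crux to lie in Step~2, and more precisely in passing from the \emph{qualitative} hypothesis ``conformal dimension $=1$'' to the \emph{quantitative, uniform} regularity the gluing needs. Two points stand out. First, conformal dimension $1$ need not be attained, and the metric-improvement step may only yield a David homeomorphism rather than a quasi-symmetry -- which is precisely why Conjecture~\ref{conj:davidhier} is a separate statement; excluding the non-attained behaviour inside a quasi-symmetric conclusion would require extra dynamical input. Second, outside the Basilica there is no single self-similar model: the local models at branch points form an infinite family that must be uniformized simultaneously with common constants, and for geometrically finite systems one must in addition organise a Whitney/horoball decomposition adapted to cusps located at infinitely many vertices of $\mathcal{T}$ and match it across $K_1$ and $K_2$. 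A sensible way to stage the attack is to settle first the case of finitely many circle-types (a ``dendrite of uniform quasicircles'', where Step~2 is essentially the argument of the present paper) and then to pass to the general case by a compactness argument along $\mathcal{T}$, controlling the limit of the piecewise constructions.
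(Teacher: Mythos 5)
This is a \emph{conjecture} in the paper, not a proven theorem, so there is no proof in the paper to compare against; the authors only verify special cases (quasicircles, Basilicas in class $\mathfrak{C}$, gaskets, books of $I$-bundles). You are proposing a strategy, and the two-step plan --- extract a combinatorial invariant, then upgrade a combinatorial isomorphism to a quasi-symmetry piece by piece --- is indeed the shape of what the paper does in the Basilica case, so the overall architecture is right. But the paper's mechanism for Step~2 is entirely dynamical (fragmented Markov maps $\to$ induced circle maps on ideal boundaries $\to$ the distortion estimate of Propositions~\ref{prop:qsmarkovmap}/\ref{prop:qsmarkovmapFiniteCircle} $\to$ gluing via removability), whereas your Step~2 leans on a purely metric-geometric input (``conformal dimension $=1$ forces uniform quasi-arc geometry''). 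These are genuinely different routes, and the difference matters, because conformal dimension $1$ is never invoked quantitatively anywhere in the paper's proofs.

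There are three concrete gaps in your Step~2 that go beyond what you flag yourself. First, ``improve the metric so every piece becomes a uniform quasi-arc close to $1$-regular'' cannot be right as stated in the geometrically finite case: a piece boundary carrying a non-accidental parabolic has a genuine cusp (this is the entire content of Proposition~\ref{prop:qsinv} and Figure~\ref{fig:persistantBasilica}), and no quasi-symmetric change of metric makes a cusped arc into a quasi-arc. The paper handles this by matching parabolic break-points to parabolic break-points and hyperbolic to hyperbolic in the induced circle maps, using the Lyapunov-exponent bookkeeping of \S~\ref{sec:cmm}; your Step~2 never says how the type-preserving hypothesis enters the uniformization of individual pieces, only that it feeds into the tree colouring in Step~1. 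Second, the gluing: you assert that exponential decay of piece diameters along $\mathcal{T}$ makes the glued map globally quasi-symmetric, but exponential decay alone does not give this --- in the paper the glued map is a priori only quasiconformal off the Julia/limit set, and the crucial extra ingredient is conformal/$W^{1,1}$-removability of that set (Lemma~\ref{lem:eccentricity}), which you never mention and which is highly nontrivial for general geometrically finite fractals of conformal dimension $1$. Third, and most structurally: you propose to replace the paper's single self-similar model $Q$ by ``a family of local models parametrised by $\mathcal{T}$ uniformized simultaneously with common constants,'' but the paper never uniformizes pieces independently --- it builds a single global conjugacy by pulling back one fragmented dynamical system to another (Proposition~\ref{prop-combconjimpliestopconj}), and the uniform constants come from the Markov/puzzle structure, not from a compactness argument. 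Your proposed compactness-along-$\mathcal{T}$ argument would need a precompactness statement for the local models that your hypotheses do not obviously provide when the conformal dimension is attained in the limit but not realized by any metric on $K$ --- which is exactly the David-versus-quasiconformal dichotomy you correctly identify as a risk.
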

\begin{rmk}
    We list some known cases of Conjecture~\ref{conj:confdim1univer}.
    \begin{itemize}[leftmargin=8mm]
        \item For a Jordan curve limit set $\Lambda$ of a geometrically finite conformal dynamical system, it is well-known that if the induced dynamics is hyperbolic on the two ideal boundaries, then $[\Lambda] = [\mathbb{S}^1]$.
        It is recently shown in \cite{McM25} that if the dynamics is hyperbolic on one ideal boundary but has a parabolic on the other, then $[\Lambda] = [J(z^2+\frac14)]$. For more general cases, see the discussion in \S~\ref{subsubsec:JC}.
        \item If the limit sets of two convex co-compact Kleinian groups corresponding to books of I-bundles $3$-manifolds are homeomorphic, then they are quasiconformally homeomorphic (see \cite{CM17}).
        \item If two gasket limit sets are homeomorphic, then they are quasiconformally homeomorphic (see \cite{LZ23} and \S~\ref{subsubsec:gasketJL} for more discussion).
        \item The Branner--Fagella homeomorphism between limbs of the Mandelbrot set also induces various quasiconformal homeomorphisms between Julia sets of different quadratic polynomials (see \cite{BF99}).
    \end{itemize}
\end{rmk}


\subsection{Methods, proof ingredients, and discussions}\label{methods_subsec}
    \subsubsection{Fragmented dynamics from Kleinian groups, rational maps, and Schwarz reflections}\label{subsubsec:fragdyn}
    All the fractals considered in this paper arise from dynamics, but the underlying dynamical systems are not conjugate. Our strategy to prove quasiconformal equivalence between these fractals is to exploit the dynamics nonetheless. This is done by associating a \emph{fragmented dynamical system} to the original one; more precisely, we cook up a piecewise conformal Markov (typically discontinuous) map on the limit/Julia set of a Kleinian group/rational map/Schwarz reflection. It turns out that even though the original dynamical systems are markedly different, the fragmented dynamical systems associated with them can be conjugate on the limit sets, and in fact this can be done quasi-symmetrically. This gives a unified approach to address the dynamical incompatibility between various unrelated conformal dynamical systems and to construct quasiconformal maps between their limit sets. Examples include the following. 
    \begin{enumerate}[leftmargin=8mm]
        \item Julia sets of rational maps and limit sets of Kleinian groups (Theorem~\ref{thm:qcclassfn-ltsets}, Theorem~\ref{thm:davidhi}),
        \item limit sets of non-isomorphic Kleinian groups (Theorem~\ref{thm:qcclassfn-ltsets}),
        \item Julia sets of rational maps of different degrees and/or different dynamical behavior (Theorem~\ref{thm-basilica-stdpolymodel}, Theorem~\ref{thm:cubicclass}), 
        \item Julia sets of (global) rational maps and limit sets of (semi-global) Schwarz reflections or B-involutions (Theorem~\ref{thm:schwarz}), etc.
    \end{enumerate}
    This is similar in spirit to the example of a quasi-symmetric map $\phi: \mathbb{S}^1 \cong \R/\Z \longrightarrow \mathbb{S}^1$ that sends the set of dyadic rationals $\{\frac {p}{2^n}\}$ onto the set of $d-$adic rationals $\{\frac{q}{d^m}\}$ (see Proposition~\ref{prop:2tod}). Special examples of fragmented dynamical systems associated with Fuchsian groups, historically used for the study of geodesic flow on surfaces \cite{BS79}, have been recently used to combine (anti-)polynomials with Fuchsian genus zero groups/reflection groups \cite{LLMM23,LMM22,LMM24,MM1,LLM24,MM2,BLLM,MV25}, and to construct (non-quasi-symmetric) homeomorphisms between certain limit and Julia sets \cite{LLMM23b,LLM22}. See also \cite{McM98, MU03} where conformal Markov maps are constructed from rational maps and Kleinian groups to compute Hausdorff dimensions of Julia/limit sets, and see \cite{LZ23, LN24} where fragmented dynamical systems are obtained via renormalization theory on circle packings and used to prove quasiconformal uniformization of gasket Julia sets.

\subsubsection{Basilica Bowen-Series maps}\label{basi_bs_subsubsec}

For a Kleinian group $G$ with a Basilica limit set, the associated fragmented dynamical system on $\Lambda(G)$ has its pieces in $G$. Prominent examples of such piecewise M{\"o}bius maps are given by \emph{Bowen-Series maps}, which are defined on the limit set $\mathbb{S}^1$ of Fuchsian groups \cite{BS79}. We define analogs of Bowen-Series maps on Basilica limit sets, and call them \emph{Basilica Bowen-Series maps}. On the one hand, the restriction of a Basilica Bowen-Series map to the limit set $\Lambda(G)$ yields a `real $1-$dimensional' Markov dynamical system, and on the other hand, their conformal extensions to `puzzle pieces' (i.e., suitable pinched neighborhoods of the Markov pieces, see \S~\ref{piecing_puz_subsec}) guide the construction of fragmented dynamical systems on Basilica Julia sets of rational~maps.

\subsubsection{Piecing together puzzles from two worlds}\label{piecing_puz_subsec}
The domains of definition of the fragmented dynamical systems mentioned above naturally flow from a number of precursors in the literature.
\begin{enumerate}[leftmargin=8mm]
    \item\label{yoc_puz} In standard polynomial dynamics, such domains are given by Yoccoz puzzle pieces (cut out by rays and equipotentials).
    \item\label{geod_puz} For Kleinian reflection groups \cite{LLM22} and certain Kleinian punctured sphere groups \cite[\S~3, \S~7.3]{MM1}, domains of conformal extensions of Nielsen maps or Bowen-Series maps (viewed as fragmented dynamical systems) are cut out by hyperbolic geodesics in components of the domain of discontinuity. Note that due to the presence of additional symmetry, such domains become round disks for reflection groups.
\end{enumerate}
The techniques of the present paper require a hybrid construction for domains of  conformal extensions involving the various types of `puzzles' mentioned above. 
\begin{enumerate}[leftmargin=8mm]
\item For a Basilica Bowen-Series map, the domains of conformal extensions are cut out by hyperbolic geodesics, generalizing type~\eqref{geod_puz} above (see Figure~\ref{fig:PinchedPolygon}). 
\item For fragmented dynamical systems associated with Basilica Julia sets, the domains of conformal extensions are cut out by rays and equipotentials in the basin of infinity and analogs of hyperbolic geodesics in bounded Fatou components, thus bringing elements from item~\eqref{yoc_puz} and item~\eqref{geod_puz} together (see Figure~\ref{fig:BasPuz}).
\item For fragmented dynamical systems associated with Basilica limit sets of matings between groups and polynomials (e.g., Schwarz reflections), the domains of conformal extensions are cut out by rays and equipotentials on the `polynomial side' of the limit set  and by genuine hyperbolic geodesics on the `group side' (see Figure~\ref{fig:AltDecSvsR}). This construction literally pieces together puzzles from the rational dynamics world and the Kleinian group world  in a common dynamical plane. (See \cite[\S 4.2]{LLMM23} for a precursor to such hybrid puzzle constructions in the context of Schwarz reflections.)
\end{enumerate}

\subsubsection{Jordan curve Julia set and limit set}\label{subsubsec:JC}
    For a Jordan curve limit set $\Lambda$ of a geometrically finite conformal dynamical system, a point $x \in \Lambda$ has four possible types: $(H,H), (H,P), (P,H)$ and $(P,P)$, depending on whether the induced dynamics at the corresponding points on the two sides is hyperbolic or parabolic.
    Let $S \subseteq \{(H,H), (H,P), (P,H), (P,P)\}$, and we say $\Lambda$ has type $S$ if each point of $\Lambda$ has type in $S$ and for each type $s\in S$, there exists a point $x\in \Lambda$ of type $s$. It is easy to see that if $\Lambda_1, \Lambda_2$ have the same type, then there exists a type-preserving homeomorphism between them.

    We can realize all possible types of $\Lambda$ via topologically expanding piecewise M\"obius maps (see \S~\ref{sec:XC}), and obtain exactly five different quasiconformal types that Conjecture~\ref{conj:confdim1univer} predicts (see Figure~\ref{fig:JC} and \cite[Figure~1.1, Figure 11.9]{LMMN}):
    \begin{itemize}[leftmargin=8mm]
        \item Quasicircle: $\{(H,H)\}$, $\{(H, H), (P,P)\}$;
        \item Cauliflower: $\{(H,H), (H,P)\}$, $\{(H,H), (P,H)\}$;
        \item Double Parabolic Cauliflower: $\{(H,H), (H,P), (P,P)\}$, $\{(H,H), (P,H), (P, P)\}$;
        \item Pine tree: $\{(H,H), (H,P), (P, H)\}$;
        \item Double Parabolic Pine tree: $\{(H,H), (H,P), (P, H), (P, P)\}$.
    \end{itemize}
    The methods developed in this paper should pave the way to prove the conjecture in this setting. This will be investigated in a future work.
    



    \begin{figure}[ht]
\captionsetup{width=0.96\linewidth}
  \centering
  \includegraphics[width=0.3\textwidth]{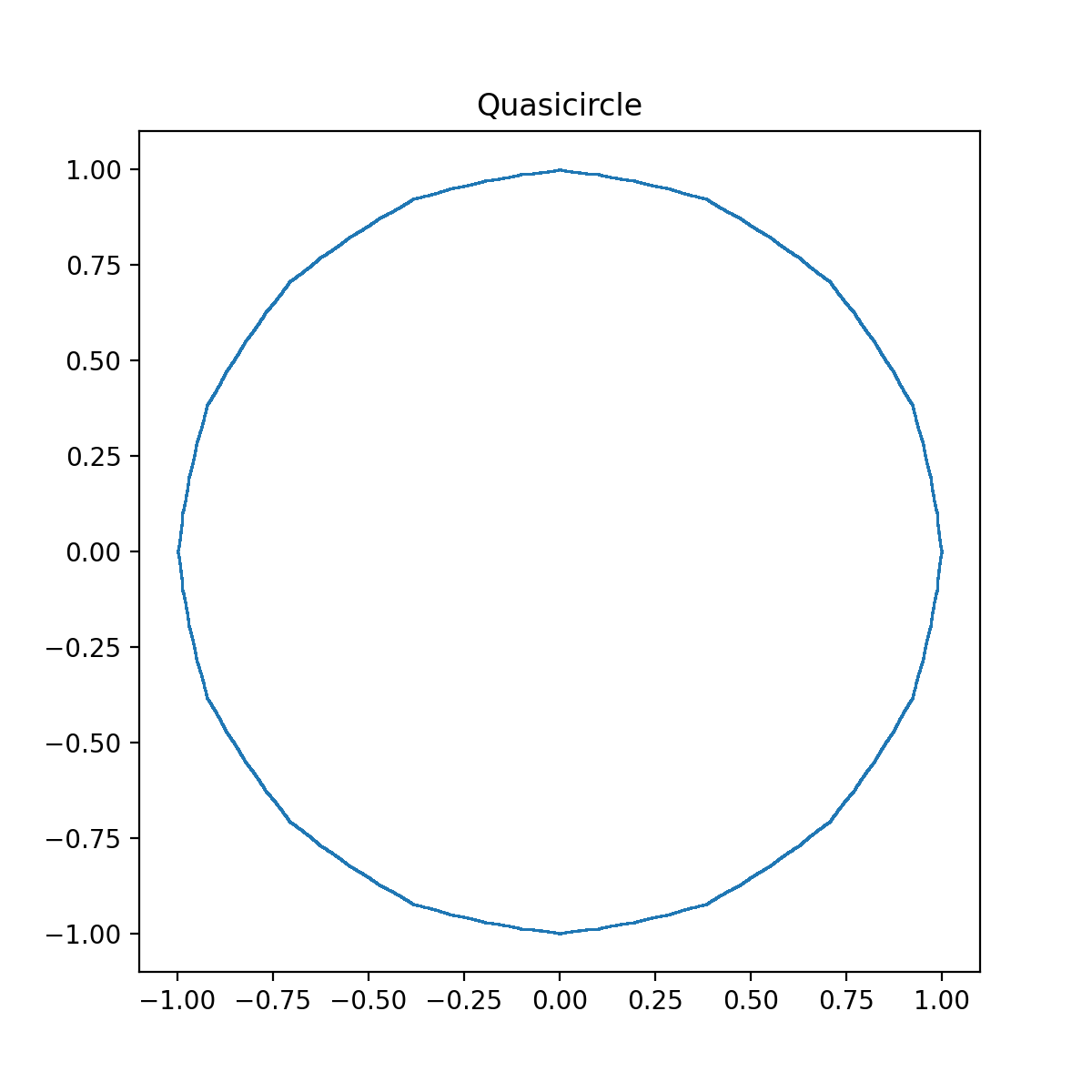}
  \includegraphics[width=0.3\textwidth]{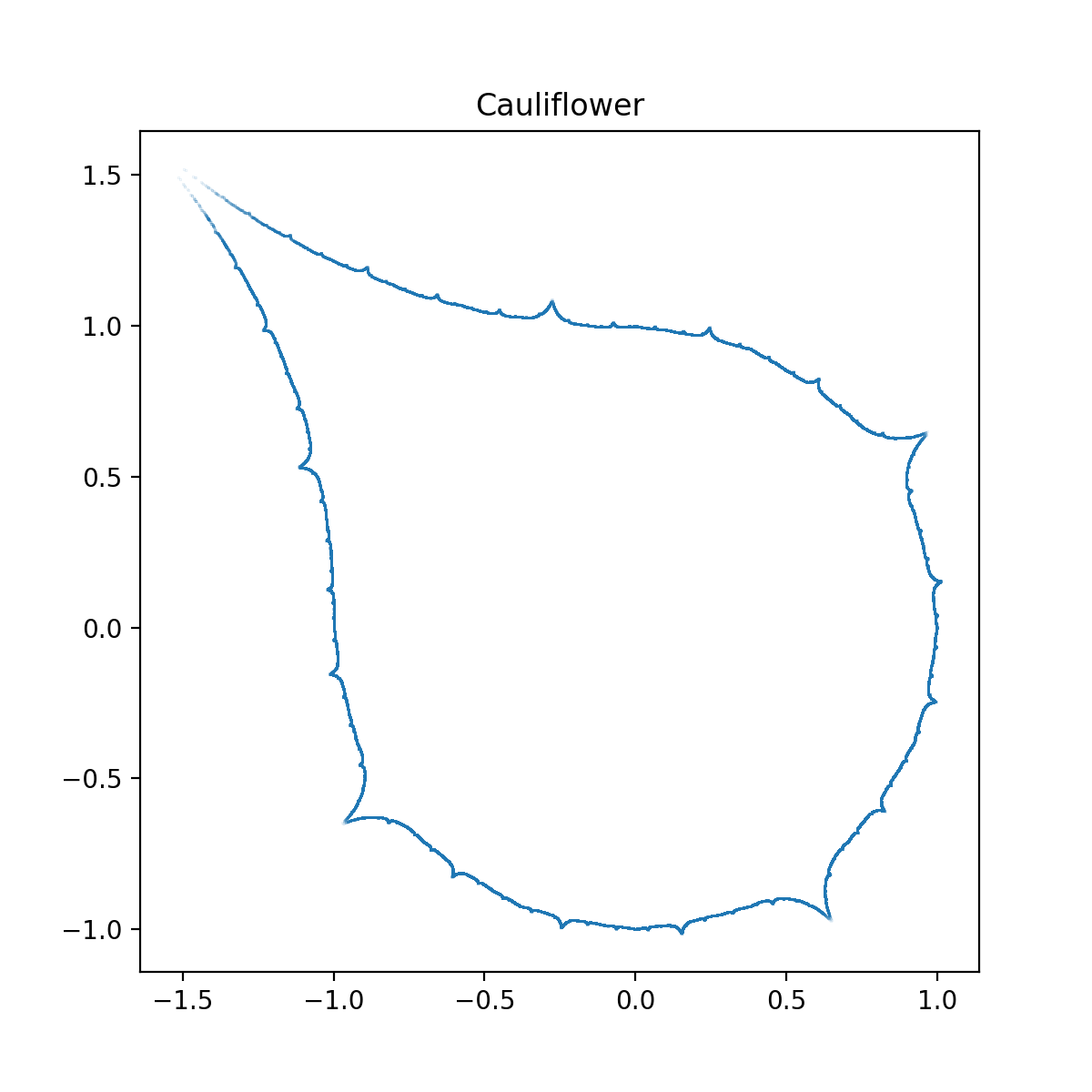}
  \includegraphics[width=0.3\textwidth]{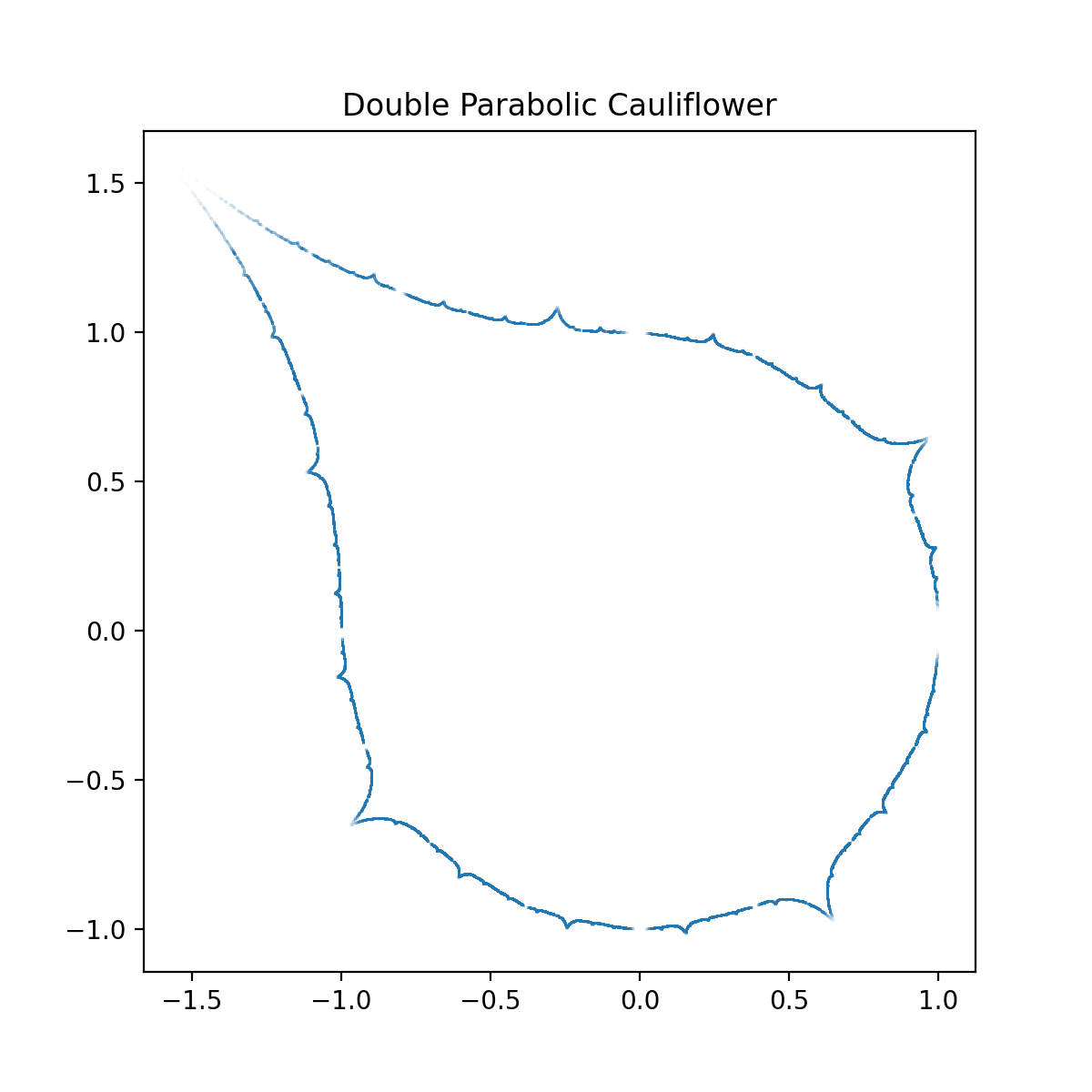}
  \includegraphics[width=0.3\textwidth]{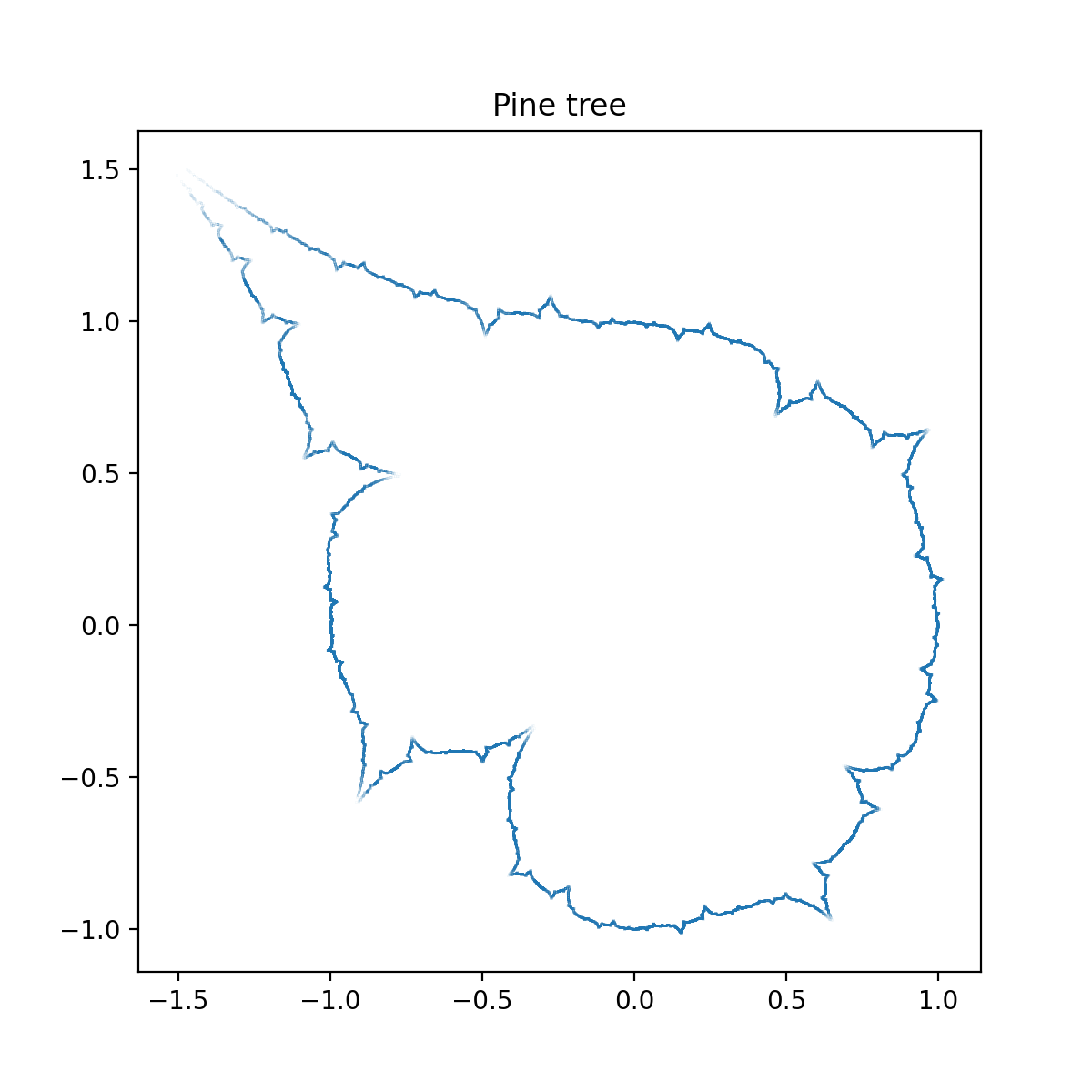}
  \includegraphics[width=0.3\textwidth]{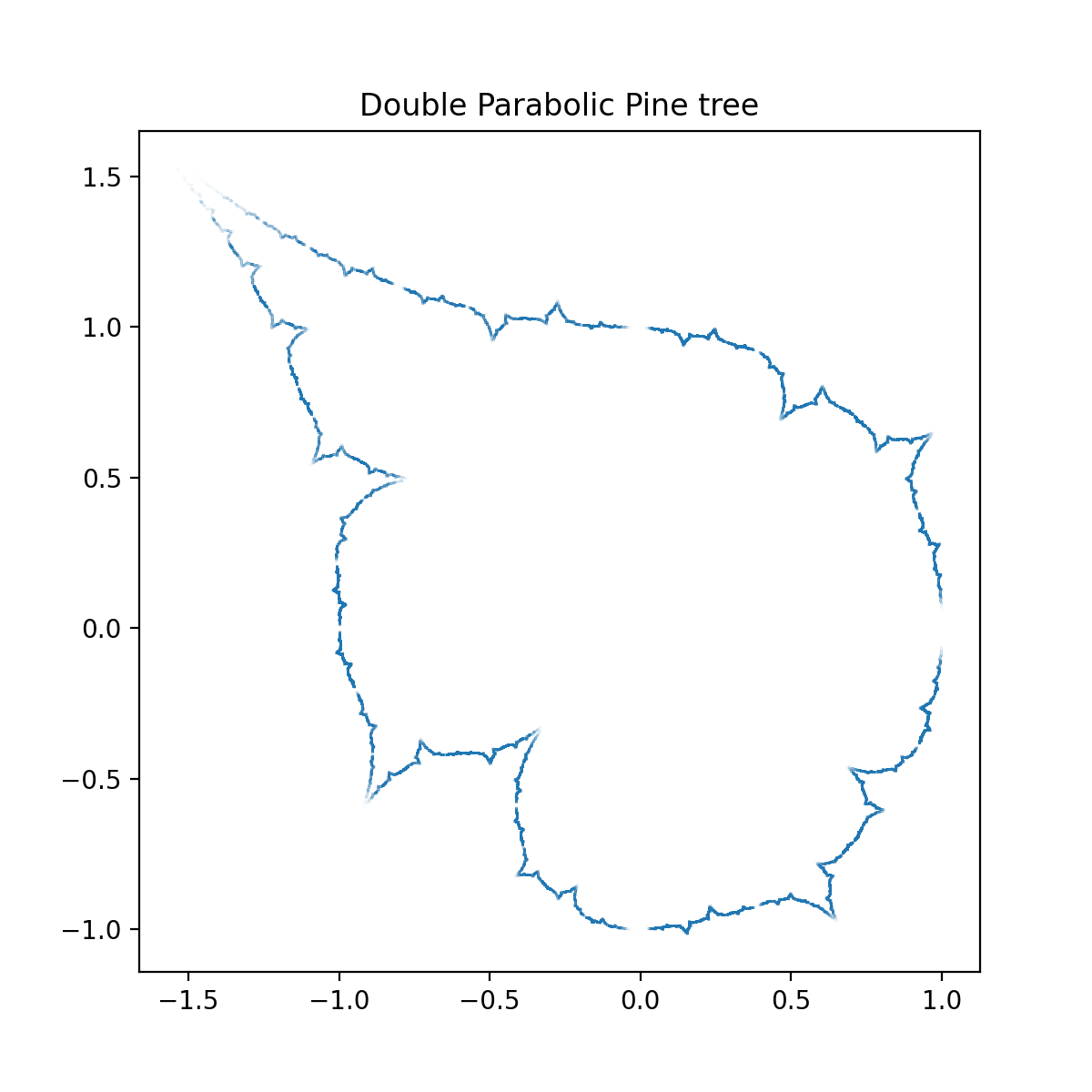}
  \caption{Five different quasiconformal types of Jordan curve limit sets. The existence of inward/outward cusps allows one to distinguish the quasicircle, the cauliflower and the pine tree. Note that as we zoom in at a $(P,P)$ point of the double parabolic cauliflower or the double parabolic pine tree, the limit set converges to a line in the Hausdorff topology. On the other hand, at any non-cusp point of the cauliflower or the pine tree, we can choose a specific sequence of zoom-ins so that the limit set converges to an arc that contains cusps. This shows that the cauliflower or the pine tree are quasiconformally different from their double parabolic counterparts.}
  \label{fig:JC}
\end{figure}

\subsubsection{Gasket Julia set and limit set}\label{subsubsec:gasketJL}
{\em Gaskets} or {\em circle packings} are another important type of fractal sets with conformal dimension $1$. Here, a {\em gasket} is a closed subset of $\widehat\C$ so that
\begin{enumerate}
    \item each complementary component is a Jordan domain;
    \item any two complementary components touch at most at $1$ point;
    \item no three complementary components share a boundary point; and
    \item the {\em nerve}, obtained by assigning a vertex to each complementary component and an edge if two components touch, is connected.
\end{enumerate}
Let us discuss some recent results for gasket Julia sets and limit sets in the framework of the current paper.

If two geometrically finite gasket limit sets are homeomorphic, then the homeomorphism is automatically type-preserving as a rank one parabolic corresponds precisely to the contact point between two components of the domain of discontinuity (see \cite[Proposition B.2]{LZ23} or \cite[Lemma~11.2]{BO22}). It follows from \cite[Theorem I]{LZ23} and \cite{McM90} that homeomorphic geometrically finite gasket limit sets are quasiconformally equivalent.
This proves  Conjecture~\ref{conj:confdim1univer} for gasket limit sets.

To compare gasket limit sets with Julia sets, in light of Conjecture~\ref{conj:confdim1univer}, the main obstruction to quasiconformal equivalence for gaskets arises from the {\em type-preserving condition}. This condition on  local dynamics actually imposes global constraints on the nerve of the gasket. 
The global combinatorial constraint is exploited in \cite{LZ25} to show that gasket Julia sets are quasiconformally nonequivalent to gasket limit sets in various settings.  
On the other hand, it is proved in \cite{LN24} that there are no local obstructions to  quasiconformal equivalence between gasket Julia sets and limit sets, again furnishing supporting evidence for Conjecture~\ref{conj:confdim1univer}.

\subsubsection{Applications to mating}

Let us also mention a feature of Basilica Bowen-Series maps (see \S~\ref{basi_bs_subsubsec}) that will play a central role in a sequel to this paper \cite{LMM26}. Although Basilica Bowen-Series maps are defined for Bers boundary groups, they give rise to (possibly discontinuous) piecewise M{\"o}bius Markov maps of the circle, called \emph{pullback Bowen-Series maps} (see \S~\ref{pullback_bs_subsec}). However, pullback Bowen-Series maps can often be turned into covering maps of the circle using acceleration techniques (defined and elaborated upon in \cite{LMM26}), and such coverings largely remember the grand orbit structure of the associated Fuchsian groups. This has applications to the problem of mating polynomial dynamics with Fuchsian groups (which was hitherto only achieved for Fuchsian genus zero groups, see \cite{MM1,LLM24,MM2,LM25b}). In summary, a detour to the boundary of the Teichm{\"u}ller space proves fruitful for mating a Fuchsian closed surface group with polynomials.

\subsection{Outline of the paper}
In \S~\ref{sec:XC}, we introduce the notion of \emph{fragmented dynamical systems}, or more formally $(X, \mathscr{C})-$maps. These are piecewise homeomorphisms on a compact metric space $X$ with  pieces belonging to a given class $\mathscr{C}$. The definition is motivated to an extent by $(G,X)$ structures on a manifold.
In \S~\ref{sec:cmm}, we study $(\mathbb{S}^1, \Conf)-$maps (piecewise conformal maps on $\mathbb{S}^1$) as one of the simplest instances of $(X, \mathscr{C})-$maps, and provide criteria for two such maps to be quasisymmetrically conjugate. 

In \S~\ref{sec-basilica} and \S~\ref{section:BasBS}, we construct further special classes of $(X, \mathscr{C})-$maps: $(J, \mathfrak{Q})-$maps associated to the model map $Q(z) = z^2-\frac34$, and $(\Lambda, G)-$maps associated to a geometrically finite Bers boundary group $G$. These maps arise as fragmented dynamics of rational maps and Kleinian groups, as discussed in \S~\ref{subsubsec:fragdyn}.

The main theorems (Theorem~\ref{thm:qcclassfn-ltsets}, Theorem~\ref{thm-basilica-stdpolymodel} and Theorem~\ref{thm:infKleinianRational}) are proved in \S~\ref{sec:qc_conj}, by applying quasiconformal surgeries to the corresponding fragmented dynamics.

To extend the above techniques to other universality classes of Basilicas, we note two key subtleties:
\begin{enumerate}[leftmargin=8mm]
    \item the presence of non-contact parabolic points, and
    \item the possibility that the dynamics may send contact to non-contact points.
\end{enumerate}
To address these two issues, we introduce suitable modifications by associating appropriately modified fragmented dynamics to the rational map or Kleinian group in question. With such modifications, we prove Theorem~\ref{thm:davidhi}, Theorem~\ref{thm:schwarz} and Theorem~\ref{thm:cubicclass} in \S~\ref{sec:davidHierarchy}, Appendix~\ref{sec:cubic_poly} and Appendix~\ref{sec:schwarz}.

\subsection*{Acknowledgement} 
We thank Curt McMullen for many insightful discussions.

\section{$(X, \mathscr{C})-$maps}\label{sec:XC}
In this section, we introduce a general class of 
maps defined piecewise  on a compact metrizable space $X$. Our goal is to formulate the definition in as broad a setting as possible. In \S\ref{sec:cmm}, \S\ref{sec-basilica}, and \S\ref{section:BasBS}, we will examine in detail three specific families of such maps. 

Definition~\ref{XCstrs} below is partly motivated by the notion of $(G,X)$ structures occurring in geometry:
 $X$ is a manifold, and $G$ is a group of diffeomorphisms of $X$. A manifold $M$ has a $(G,X)$ structure if it admits an open cover $\{U_\alpha\}$, where each $U_\alpha$ is homeomorphic to an open subset of $X$ and transition functions come from~$G$.

\begin{defn}\label{XCstrs}
    Let $X$ be a compact metrizable space. A finite collection $\mathcal{A} = \{A_1,..., A_m\}$ of subsets of $X$ is called a {\em partition} if
    \begin{itemize}
        \item $A_i$ is  closed and connected;
        \item $A_i \cap A_j$ is a finite set for $i \neq j$;
        \item $\bigcup A_i = X$.
    \end{itemize}
    We define the set of {\em break-points} as 
    $$
    \mathrm{bk}(\mathcal{A}):= \{x \in X: x \in A_i\cap A_j, i \neq j\}.
    $$
    Note that by construction, $\mathrm{bk}(\mathcal{A})$ is a finite set.

    Let $\mathscr{C}$ be a collection of homeomorphisms $f: A \longrightarrow B$, where $A, B$ belong to a family of closed and connected subsets of $X$.
    A finite collection of maps $\mathcal{F} = \{f_1,..,. f_m\}$ is called an {\em $(X, \mathscr{C})-$map} if there exists a partition $\mathcal{A} =  \{A_1,..., A_m\}$ of $X$ so that for each $i$, the map $f_i: A_i \longrightarrow B_i = f(A_i)$ is in $\mathscr{C}$.
\end{defn}

The second condition in Definition~\ref{XCstrs} 
implicitly indicates that we are  interested in the case where $X$ has topological dimension one. This condition could be relaxed if one was interested in dynamics in higher dimensions.

\begin{remark}
    We do not require the collection $\mathcal{F}$ to induce a continuous map on $X$. In other words, if $x \in A_i \cap A_{j} \subseteq \mathrm{bk}(\mathcal{A})$, then $f_i(x)$ may be different from $f_j(x)$.
    It is convenient to regard $\mathcal{F}$ as a multi-valued map. More precisely, if $x \in \bigcap_{i\in \mathcal{I}} A_i$ for some subset $\mathcal{I} \subseteq \{1,..., m\}$, then $\mathcal{F}(x) = \bigcup_{\mathcal{I}}\{f_i(x)\}$ can be multi-valued.
\end{remark}

\begin{example}\label{exm:xcmaps}
We give some important and well-known examples of $(X, \mathscr{C})-$maps.
    \begin{enumerate}
        \item Let $X = I = [0,1]$ and let
        $$
        \PL=\{(f:I_1 \longrightarrow I_2): f(x) = ax+b\}.
        $$
        Then $(I, \PL)-$maps are piecewise linear maps on $[0,1]$.
        \item Let $X = \mathbb{S}^1 = \R /\Z$, and let
        $$
        \PPSL_2(\Z)=\{(f:I \longrightarrow J): f(x) = (ax+b)/(cx+d), ad-bc = 1, a,b, c, d \in \Z\}.
        $$
        Then $(\mathbb{S}^1, \PPSL_2(\Z))-$maps are  piecewise integral projective maps.
        \item Let $X = \mathbb{S}^1 = \R /\Z$, and let
        $$
        \PDyad=\{(f:I \longrightarrow J): f(x) = 2^ax+ b, a\in \Z, b \in \Z[1/2]\}.
        $$
        Then $(\mathbb{S}^1, \PDyad)-$maps are dyadic piecewise linear maps. These are related to the Thompson's group $T$.
        \item Let $X = J(F)$ be the Julia set of a rational map $F$, 
        and let 
        $$
        \mathfrak{F}=\{(f:A \longrightarrow B): f(x) = F^{-n} \circ F^m(x)\},
        $$
        where $F^{-n} \circ F^m$ is a homeomorphism on $A$ given by the composition of some iterate $F^m$ and the inverse branch of some iterate $F^n$.
        We call such maps $(J, \mathfrak{F})-$maps. They  will be discussed in detail in \S~\ref{sec-basilica}. Analogous constructions for Schwarz reflection maps will be treated in Appendix~\ref{sec:schwarz}.
        \item Let $X = \Lambda(G)$ be the limit set of a Kleinian group, and let 
        $$
        \mathscr{C}=\{(f:A \longrightarrow B): f(x) = gx, \, \textrm{for\, some\, } g \in G\}.
        $$
        We call such maps $(\Lambda, G)-$maps. They will be discussed in detail in \S~\ref{section:BasBS}.
    \end{enumerate}
\end{example}

\subsection{Markov property}\label{subsec:markovp}
\begin{defn}\label{markov_map_def}
    Let $\mathcal{F} = \{f_1,..,. f_m\}$ be an $(X, \mathscr{C})-$map. We say that $\mathcal{F}$ is {\em Markov} if for each $i$, there exists an index set $\mathcal{J}_i \subseteq \{1,..., m\}$ so that
    $$
    f_i(A_i) = \bigcup_{j \in \mathcal{J}_i} A_j, \text{ and } f_i(A_i \cap \mathrm{bk}(\mathcal{A})) \subseteq \mathrm{bk}(\mathcal{A}).
    $$
\end{defn}

A partition $\mathcal{B}$ is a {\em refinement} of $\mathcal{A}$ if for each $B \in \mathcal{B}$, there exists $A \in \mathcal{A}$ so that $B \subseteq A$.
Given a Markov $(X, \mathscr{C})-$map $\mathcal{F} = \{f_1,..,. f_m\}$, one can define the pull-back partition 
$$
\mathcal{A}^1 = \mathcal{F}^*(\mathcal{A}) = \{A_{ij} :=f_i^{-1}(A_j): f_i(A_i) \cap A_j \neq \emptyset\}.
$$
We call $\mathcal{A}^0 = \mathcal{A}$ (resp.\ $\mathcal{A}^1$) the {\em level $0$} (resp.\ {\em level $1$}) Markov partitions.
By construction, 
\begin{itemize}
    \item(inclusion) $A_{ij} \subseteq A_i$, i.e., $\mathcal{A}^1$ is a refinement of $\mathcal{A}^0$; and
    \item(dynamics) $f_i: A_{ij} \longrightarrow A_j$ is a homeomorphism.
\end{itemize}
We define the level $n$ Markov partitions inductively, and denote them by $\mathcal{A}^n = \{A_{w}\}$, where $w$ is a word of length $n$ consisting of letters in $\{1,..., m\}$. We will call a word $w$ of length $n$ {\em admissible} if $A_w \in \mathcal{A}^n$.

It is convenient to write a Markov $(X, \mathscr{C})-$map as
$$
\mathcal{F} : \mathcal{A}^1 \longrightarrow \mathcal{A}^0.
$$
Note that if the collection of maps $\mathscr{C}$ is closed under restrictions, then $\mathcal{F}$ is an $(X, \mathscr{C})-$map with respect to the partition $\mathcal{A}^n$ for any $n$.

\subsection{Combinatorial and topological conjugacy}
Let $\mathcal{F} : \mathcal{A}^1 \longrightarrow \mathcal{A}^0$ be a Markov $(X, \mathscr{C})-$map.
One can record the combinatorics of the Markov map by an $m \times m$ matrix, called the {\em Markov matrix} $M = M_\mathcal{F}$. More precisely, 
$$
M_{ij} = \begin{cases}
    1 & \text{ if }A_i \subseteq f_j(A_j)\\
    0 & \text{ otherwise.}
\end{cases}.
$$
Note that the Markov matrix requires a choice of an ordering of the elements in the partition $\mathcal{A}^0$.

\begin{defn}[Topologically expanding]\label{def:top_expand}
    Let $\mathcal{F}$ be a Markov $(X, \mathscr{C})-$map with respect to the partition $\mathcal{A} = \{A_1,..., A_m\}$. We say that it is {\em topologically expanding} if for each $x \in X$, the collection
    $\{\Int\left(\bigcup_{x\in A_w \in \mathcal{A}^n} A_w\right): n \in \N\}$ forms a local basis of open neighborhood at $x$.
    
    Note that this implies that 
    \begin{enumerate}[leftmargin=8mm]
        \item\label{topexp:cond1} $\bigcap_{w_n} A_{w_n}$ is a singleton set for each nested infinite sequence of level $n$ Markov pieces;
        \item\label{topexp:cond2} for every open set $U \subseteq X$, there exists $n$ so that $\mathcal{F}^n (U)$ is not contained in $A_j$ for any $j\in \{1,..., m\}$.
    \end{enumerate} 
     We also remark that Condition~\eqref{topexp:cond1} is equivalent to topological expansion under the following assumption: 
    \medskip
    
      \noindent  There exists $N$ such that for each $n \in \N$, any point $x\in X$ is contained in at most $N$ level $n$ Markov pieces; in other words, more than $N$ Markov pieces of any given level have trivial intersection.
\end{defn}

\begin{defn}[Topological and Combinatorial conjugacy]\label{defn:topcom}
    Let $\mathcal{F}$ and  $\cG$ be two Markov $(X, \mathscr{C})-$maps with respect to partitions $\mathcal{A} = \{A_1,..., A_m\}$ and $\mathcal{B} = \{B_1,..., B_m\}$ respectively. 
    We say that they are {\em topologically conjugate} if there exists a homeomorphism
    $$
    H: X \longrightarrow X, \text{ so that}
    $$
    \begin{equation}\label{eqn:conj}
        H\circ f_i(x) = g_i \circ H(x) \text{ for all $x \in A_i$ and $i \in \{1,..., m\}$}.
    \end{equation}
    Abusing notation, we write the equation~\eqref{eqn:conj} as simply
    $$
    H\circ \mathcal{F} = \cG \circ H.    $$

    We say they are {\em combinatorially conjugate} if there exists a homeomorphism
    $$
    H: X \longrightarrow X, \text{ so that}
    $$
    \begin{itemize}
        \item $H(A_i) = B_i$ for all $i = 1,..., m$;
        \item for each break-point $x \in A_i \cap \mathrm{bk}(\mathcal{A})$,
        $$
            H \circ f_i(x) = g_i \circ H(x); \text{ and }
        $$
        \item $M_\mathcal{F} = M_\cG$.
    \end{itemize} 
\end{defn}

By the standard pull-back argument, we have the following.
\begin{prop}\label{prop-combconjimpliestopconj}
    Let $\mathcal{F}: \mathcal{A}^1 \longrightarrow \mathcal{A}^0$ and  $\cG: \mathcal{B}^1 \longrightarrow \mathcal{B}^0$ be two Markov $(X, \mathscr{C})-$maps.
    Suppose that they are topologically expanding and combinatorially conjugate. Then they are topologically conjugate.
\end{prop}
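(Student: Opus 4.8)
The plan is to run the standard pull-back (telescoping) argument, using combinatorial conjugacy to supply the base case on break-points and topological expansion to pass from a coherent family of piecewise-defined bijections to an honest homeomorphism. First I would observe that combinatorial conjugacy gives a homeomorphism $H = H_0 \colon X \to X$ with $H_0(A_i) = B_i$ for all $i$, with $H_0$ intertwining $\mathcal F$ and $\cG$ on $\mathrm{bk}(\mathcal A)$, and with $M_\mathcal F = M_\cG$. The last condition guarantees that $\mathcal F$ and $\cG$ have exactly the same admissible words at every level, and moreover that for each admissible $w$ the bijection $f_w \colon A_w \to A_{\sigma w}$ (where $\sigma$ drops the first letter) matches the combinatorial structure of $g_w \colon B_w \to B_{\sigma w}$; in particular $B_w$ is nonempty for an admissible $w$ iff $A_w$ is, and the level-$n$ partitions $\mathcal A^n$ and $\mathcal B^n$ are abstractly isomorphic via the indexing by admissible words.

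Next I would build a decreasing sequence of homeomorphisms $H_n \colon X \to X$ with $H_n(A_w) = B_w$ for every admissible word $w$ of length $\le n$, and with $H_{n+1}$ agreeing with $H_n$ on $\mathrm{bk}(\mathcal A^n)$ (the set of break-points of the level-$n$ partition). The inductive step is the usual cut-and-paste: given $H_n$, on each level-$n$ piece $A_w$ we already have $H_n(A_w) = B_w$ and $H_n$ restricted to $\partial A_w$ (a finite set of break-points, since $X$ is one-dimensional in the relevant sense) is determined; I then redefine $H_{n+1}$ on the interior of $A_w$ so that it carries each level-$(n{+}1)$ sub-piece $A_{wj} \subseteq A_w$ onto $B_{wj} \subseteq B_w$, which is possible because $f_w \colon A_w \to A_{\sigma w}$ and $g_w \colon B_w \to B_{\sigma w}$ are homeomorphisms realizing the same combinatorics, so one can set $H_{n+1}|_{A_w} := g_w^{-1} \circ H_n|_{A_{\sigma w}} \circ f_w$ using the already-constructed value of $H_n$ one combinatorial level down and checking this is consistent on overlaps (break-points), where it agrees with $H_n$ by the inductive hypothesis. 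This yields the required conjugacy relation $H_{n+1} \circ f_i = g_i \circ H_{n+1}$ on each $A_i$ at the combinatorial level of break-points of $\mathcal A^{n}$, and the family $(H_n)$ is Cauchy on break-points of all levels.

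Then I would take the limit: by topological expansion, condition~\eqref{topexp:cond1} says each nested sequence of level-$n$ pieces shrinks to a point, so the sets $\bigcup_{x \in A_w \in \mathcal A^n} A_w$ form a local basis at $x$; this forces the sequence $H_n$ to converge uniformly (the diameter of $H_n(A_w)=B_w$ over admissible $w$ of length $n$ tends to $0$ uniformly, again by topological expansion applied to $\cG$) to a continuous map $H_\infty \colon X \to X$. By the same argument applied to $H_n^{-1}$ — which are homeomorphisms with $H_n^{-1}(B_w) = A_w$ and satisfy the analogous expansion estimate for $\mathcal F$ — the inverses converge to a continuous $G_\infty$, and $G_\infty = H_\infty^{-1}$, so $H_\infty$ is a homeomorphism. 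Finally, since $H_\infty$ agrees with $H_n$ on $\mathrm{bk}(\mathcal A^{n-1})$ and the union of all break-point sets is dense in $X$ (another consequence of topological expansion), and since the conjugacy relation $H_\infty \circ f_i = g_i \circ H_\infty$ holds on the dense set $\bigcup_n \mathrm{bk}(\mathcal A^n) \cap A_i$ and both sides are continuous on $A_i$, the relation holds on all of $A_i$; that is, $H_\infty \circ \mathcal F = \cG \circ H_\infty$.

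The main obstacle I anticipate is the bookkeeping in the inductive step, specifically verifying that the piecewise definition $H_{n+1}|_{A_w} = g_w^{-1} \circ H_n \circ f_w$ glues to a well-defined homeomorphism across shared break-points of adjacent level-$n$ pieces — one must check that the value prescribed from $A_w$ and from an adjacent $A_{w'}$ at a common break-point $x \in A_w \cap A_{w'}$ coincide, which comes down to the hypothesis that $H_n$ already intertwines the dynamics on break-points together with the Markov condition $f_i(A_i \cap \mathrm{bk}(\mathcal A)) \subseteq \mathrm{bk}(\mathcal A)$, and that $M_\mathcal F = M_\cG$ makes the two combinatorial pictures identical. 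A secondary technical point is making the uniform-convergence estimate quantitative: topological expansion as stated gives shrinking but not an a priori modulus, so one argues by contradiction using compactness of $X$ (if $\operatorname{diam} B_w \not\to 0$ uniformly, extract a sequence of admissible words with pieces of definite size, pass to a convergent nested subsequence, and contradict~\eqref{topexp:cond1}). Neither of these is deep, but both need to be handled carefully to make the pull-back argument rigorous.
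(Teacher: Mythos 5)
Your proposal is correct and follows essentially the same pull-back argument as the paper's proof: set $H_0=H$, define $H_{n+1}=\cG^{-1}\circ H_n\circ\mathcal F$ piecewise, verify consistency at break-points using the combinatorial conjugacy and the Markov condition, and pass to the limit using topological expansion, with the inverse handled symmetrically. The only cosmetic difference is that the paper obtains pointwise convergence directly from the nested-intersection/local-basis property in Definition~\ref{def:top_expand} (and then checks continuity of the limit separately), whereas you argue for uniform convergence via a compactness/K\"onig-type extraction; both are valid, and the paper's route avoids the extraction step you flag as a technical point.
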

\begin{proof}
    We give the standard pull-back argument here for completeness. Since the two Markov matrices are the same, we have 
    $$
    H \circ f_i(A_i) = H(\bigcup_{j \in \mathcal{J}_i}A_j) = \bigcup_{j \in \mathcal{J}_i} H(A_j) = \bigcup_{j \in \mathcal{J}_i} B_j = g_i(B_i).
    $$
    Thus, for each $i \in \{1,..., m\}$, we define the homeomorphism $H^1: A_i \longrightarrow B_i$ by $H^1 = g_i^{-1} \circ H \circ f_i$.
    
    We verify that $H^1: X \longrightarrow X$ is a well-defined homeomorphism.
    To see this, let $x \in A_i \cap A_j$.
    Note that $H \circ f_i(x) = g_i \circ H(x)$ and $H\circ f_j(x) = g_j\circ H(x)$.
    We conclude that
    $$
    g_i^{-1} \circ H \circ f_i(x) = H(x) = g_j^{-1} \circ H \circ f_j(x).
    $$
    Therefore, $H^1$ is well-defined. By the pasting lemma, $H^1$ is a homeomorphism.
    Note that by construction, we have
    $$
    H \circ \mathcal{F} = \cG \circ H^1.
    $$
    Therefore, $H^1(A_w) = B_w$ for any admissible word of length $2$.

    Now inductively, we can construct a sequence of homeomorphisms $H^n: X \longrightarrow X$ so that
    $$
    H^{n-1} \circ \mathcal{F} = \cG \circ H^{n}.
    $$
    Note that by construction, we conclude that $H^n(A_w) = B_w$ for each admissible word of length $\leq n+1$.

    We first show $H^n$ converges.
    Let $x \in X$, and let $U_n = \Int\left(\bigcup_{x\in A_w \in \mathcal{A}^n} A_w\right)$ and $V_n = \Int\left(\bigcup_{x\in A_w \in \mathcal{A}^n} H^n(A_w)\right)$. Since the maps are topologically expanding, the nested intersection $\bigcap V_n$ is a singleton set. Thus the sequence $H^n(x)$ converges to the unique point in $\bigcap V_n$. We denote this point by $H^\infty(x)$. This defines a map $H^\infty:X\to X$ that is the pointwise limit of $H^n$.

    We now show that $H^\infty$ is continuous.
    Let $x,\ U_n,\ V_n$ be as above.
    By construction, we have $H^\infty(U_n) = V_n$.
    Since the maps are topologically expanding, $U_n$ and $V_n$ form local bases of open neighborhoods of $x$ and $H^\infty(x)$, respectively. We conclude that $H^\infty$ is continuous at $x$.
    Therefore, $H^\infty$ is continuous.
    
    By a symmetric argument, one can construct a continuous pointwise limit of the homeomorphisms $(H^n)^{-1}$, and show that it is the inverse of $H^\infty$. Thus, $H^\infty$ is the desired topological conjugacy.
\end{proof}

\subsection{Conformal and quasiconformal maps}\label{subsubsec:conf}
We now restrict ourselves to the case where $X$ is a closed subset of $\widehat{\C}$. This will be the standing assumption for the remainder of the paper.

We say that a map $f: A \longrightarrow B$ between two closed and connected subsets of $X$ is {\em conformal} (or {\em M\"obius} or {\em quasiconformal}) if there exist open neighborhoods $N_A$ and $N_B$ of $A, B$ in $\widehat{\C}$ (respectively) so that $f$ extends to a conformal map (or a M\"obius map or a quasiconformal map) between $N_A$ and $N_B$.
We denote the collection of such maps by 
\begin{align*}
    \Conf&:= \{(f:A \longrightarrow B): f \text{ is conformal}\};\\
    \Mob&:= \{(f:A \longrightarrow B): f \text{ is M\"obius}\};\\
    \QC&:= \{(f:A \longrightarrow B): f \text{ is quasiconformal}\}.
\end{align*}
Note that the $(J, \mathfrak{F})-$maps and $(\Lambda, G)-$maps of Example~\ref{exm:xcmaps} are natural examples of $(X, \Conf)-$maps. 

In our setting,  Markov partitions typically arise as the intersections of \emph{puzzle pieces} with $X$. These puzzle pieces are `pinched complex neighborhoods' of Markov partition pieces; they come from dynamics, and satisfy natural nesting properties (see Figure~\ref{fig:BasPuz}). We introduce the relevant definitions below.

Let $X$ be a closed set in $\widehat{\C}$. A subset $\widehat{X}$ of $\widehat{\C}$ is said to be a \emph{pinched neighborhood} of $X$, pinched at finitely many points $x_1,\cdots,x_n\in\partial X$, if $X\subset\widehat{X}$, each point of $X-\{x_1,\cdots,x_n\}$ is an interior point of $\widehat{X}$, but no $x_i$ is an interior point of $\widehat{X}$.

\begin{defn}[Puzzle structure]
    Let $X \subseteq \widehat{\C}$ be closed, and let $\mathcal{F}: \mathcal{A}^1 \longrightarrow \mathcal{A}^0$ be a Markov $(X,\Conf)-$map.
    We say that it admits a {\em puzzle structure} if there exist closed pinched neighborhoods $P_i$ of $A_i$, pinched at the points in $A_i\cap\mathrm{bk}(\mathcal{A})$
    (where $\mathrm{bk}(\mathcal{A})$ is the set of break-points), so that for each level $1$ Markov piece $A_{ij} \subseteq A_i$, we have
    $$
        f_i: \Int P_{ij} = f_i^{-1}(\Int P_j) \longrightarrow \Int P_j
    $$
    is conformal, and $\Int P_{ij} \subseteq \Int P_{i}$.
\end{defn}

\begin{remark}
    Conformal Markov maps have been introduced and studied in other settings in \cite{McM98, MU03, LMMN}. Quasiconformal Markov maps on the circle were investigated in \cite{LN24a}, and their analogs for circle packings were studied in \cite{LZ23, LN24}.
\end{remark}

\section{Conformal Markov maps on $\mathbb{S}^1$}\label{sec:cmm}
In this section, following the notation of \S~\ref{sec:XC}, we study $(\mathbb{S}^1, \Conf)-$maps satisfying the Markov property.
We will call such maps {\em conformal Markov maps} on $\mathbb{S}^1$.
The main conclusion is stated in Proposition~\ref{prop:qsmarkovmap} and Proposition~\ref{prop:qsmarkovmapFiniteCircle}.

Let $\mathcal{F} = \{f_1,..., f_m\}$ be a conformal Markov map on $\mathbb{S}^1$ with respect to a partition $\mathcal{A}= \{A_1,\ldots, A_m\}$, where we assume that $A_1,\ldots, A_m$ are ordered and oriented counterclockwise on the circle.
Thus, we have the following.
\begin{itemize}
    \item $A_i$ is a closed interval;
    \item $\Int A_i \cap \Int A_j = \emptyset$;
    \item $\bigcup A_i = \mathbb{S}^1$;
    \item for each $i$, there exists $j_i$ and $l_i$ so that 
    $$
    f_i: A_i \longrightarrow A_{j_i} \cup A_{j_i+1} \cup \ldots \cup A_{j_i+l_i}
    $$
    is a homeomorphism;
    \item $f_i$ extends to a conformal map on a neighborhood of $A_i$.
\end{itemize}
Recall that we do not require the collection $\mathcal{F}$ to induce a continuous global map on $\mathbb{S}^1$, and we regard $\mathcal{F}$ as a potentially multi-valued map.

In the setting of conformal Markov map on $\mathbb{S}^1$, we also use the following notation throughout the paper.
$$
    \mathcal{F}(x^+) = \lim_{t \to x^+} \mathcal{F}(t), \text{ and } \mathcal{F}(x^-) = \lim_{t \to x^-} \mathcal{F}(t).
$$
Similarly, we denote $\mathcal{F}^n(x^\pm) := \lim_{t \to x^\pm} \mathcal{F}^n(t)$.
With this notation, two conformal Markov maps $\mathcal{F}, \cG$ are topologically conjugate (see Definition~\ref{defn:topcom}) if there exists a homeomorphism $h: \mathbb{S}^1 \longrightarrow \mathbb{S}^1$ so that
    $$
    \mathcal{F}(h(x)^\pm) = h(\cG(x^\pm).
    $$

\subsection*{Break points and Lyapunov exponent}
Let $\mathcal{F}$ be a topologically expanding conformal Markov map with respect to the partition $\mathcal{A}$. 
Let $x \in A_i \cap A_{i+1}$ be a break-point for the dynamics of $\mathcal{F}$.
Denote by $x_n^\pm$ the right and left orbit of $x$, i.e., 
$$
x_n^\pm = \mathcal{F}^n(x^\pm).
$$
Since the partition is finite, we note that both right and left orbits are pre-periodic. Note that they may be eventually mapped to different periodic orbits.
Let us denote the right and left {\em Lyapunov exponent} by $\lambda(x^+)$ and $\lambda(x^-)$ respectively, i.e.
$$
\lambda(x^\pm) = \lim_{n\to\infty} \frac{1}{n} \log \lim_{\epsilon \to 0} \frac{|\mathcal{F}^n(x\pm\epsilon)- x_n^\pm|}{\epsilon}.
$$
Since $\mathcal{F}$ is topologically expanding, we have $\lambda(x^\pm) \geq 0$.
We say that $x^+$ (respectively, $x^-$) is {\em parabolic} if $\lambda(x^+) = 0$ (resp. $\lambda(x^-) = 0$). 
Similarly, we say that $x^+$ (respectively, $x^-$) is {\em hyperbolic} if $\lambda(x^+) > 0$ (resp. $\lambda(x^-) > 0$).

Since $x_n^\pm$ are eventually periodic, $x^\pm$ is parabolic or hyperbolic if and only if the periodic tail of the sequence $x_n^\pm$ is a parabolic or hyperbolic cycle.
If $x^\pm$ is parabolic, we define its {\em parabolic multiplicity}, denoted by $N(x^\pm)$, as the parabolic multiplicity of the periodic cycle it is mapped to (cf. \cite[\S 10]{Mil06}).

We say that the break-point $x$ is {\em symmetric} if 
\begin{itemize}
    \item $\lambda(x^+) = \lambda(x^-)$,
    \item in addition, if  $x^\pm$ are parabolic, then 
    $N(x^+) = N(x^-)$.
\end{itemize} 
Note that in the symmetric case, we will call the break-point $x$ \emph{symmetrically parabolic/hyperbolic}.

We now recall the notion of David homeomorphisms.
\begin{defn}\label{david_def}
	An orientation-preserving homeomorphism $H: U\to V$ between domains in the Riemann sphere $\widehat{\C}$ is called a \emph{David homeomorphism} if it lies in the Sobolev class $W^{1,1}_{\textrm{loc}}(U)$ and there exist constants $C,\alpha,\varepsilon_0>0$ with
	\begin{align}\label{david_cond}
		\sigma(\{z\in U: |\mu_H(z)|\geq 1-\varepsilon\}) \leq Ce^{-\alpha/\varepsilon}, \quad \varepsilon\leq \varepsilon_0.
	\end{align}
\end{defn}
Here $\sigma$ is the spherical measure, and
$\mu_H= \frac{\partial H/ \partial\overline{z}}{\partial H/\partial z}$
is the Beltrami coefficient of $H$. We refer the reader to  \cite[Chapter~20]{AIM09}, \cite[\S 2]{LMMN} for background on David homeomorphisms; including the David integrability theorem, removability properties of David maps, and David extension of circle homeomorphisms.

In \cite[Theorem 4.9]{LMMN}, the existence of quasi-symmetric or David conjugacies between appropriate Markov maps was established under the additional assumption that the Markov maps induce a continuous map on $\mathbb{S}^1$. 
See also \cite[Theorem 4.1]{LN24a} for a generalization.
We now show that the same result holds even if one drops the assumption of continuity on the whole circle.

\begin{prop}\label{prop:qsmarkovmap}
    Let $\mathcal{F}$ and $\cG$ be two topologically expanding conformal Markov maps on $\mathbb{S}^1$ with puzzle structures.
    Suppose that they are combinatorially conjugate, and suppose that all break-points for $\mathcal{F}$ and $\cG$ are symmetric.
    \begin{enumerate}[leftmargin=8mm]
        \item\label{HH} If the combinatorial conjugacy is type-preserving, then $\mathcal{F}$ and $\cG$ are quasi-symmetrically conjugate.
        \item\label{HP} If the combinatorial conjugacy from $\cF$ to $\cG$ does not send parabolic break-points of $\cF$ to hyperbolic break-points of $\cG$, then $\mathcal{F}$ and $\cG$ are David conjugate. More precisely, there is a homeomorphism $H: \mathbb{S}^1 \longrightarrow \mathbb{S}^1$ that conjugates $\cF$ to $\cG$, and extends continuously to a David homeomorphism of $\D$.
    \end{enumerate}
\end{prop}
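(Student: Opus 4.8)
\textbf{Proof plan for Proposition~\ref{prop:qsmarkovmap}.}

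The plan is to reduce the discontinuous case to the known continuous case of \cite[Theorem 4.9]{LMMN} (and its David-regularity refinement) by a blow-up construction at the finitely many break-points. First I would observe that, since $\mathrm{bk}(\mathcal{A})$ is finite and $\mathcal{F}$ is Markov, the forward orbits of break-points under $\mathcal{F}$ stay inside $\mathrm{bk}(\mathcal{A})$, so the set $E$ of all points whose $\mathcal{F}$-orbit (taking either one-sided branch) hits a break-point is countable and the union of the grand orbits of the finitely many periodic tails. At each such point $x$, the two one-sided limits $\mathcal{F}(x^+)$ and $\mathcal{F}(x^-)$ may differ; the idea is to formally split $x$ into two points $x^-$ and $x^+$ (and iterate this splitting equivariantly along the grand orbit), obtaining a new compact space $\widetilde{\mathbb{S}^1}$ — still a topological circle, since we are only doubling countably many points of a one-dimensional set — on which $\mathcal{F}$ lifts to a genuinely continuous Markov map $\widetilde{\mathcal{F}}$. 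Concretely one realizes $\widetilde{\mathbb{S}^1}$ as an honest circle by inserting a ``gap interval'' at each doubled point, rescaled so the total length is finite; the puzzle structure transports because the pinched neighborhoods $P_i$ were already pinched exactly at the break-points, so the conformal extensions of the $f_i$ are unaffected on the interiors. The same construction applied to $\cG$ yields $\widetilde{\cG}$ on (a homeomorphic) $\widetilde{\mathbb{S}^1}$, and the combinatorial conjugacy, being defined on partition pieces and break-points with equal Markov matrices, lifts to a combinatorial conjugacy between $\widetilde{\mathcal{F}}$ and $\widetilde{\cG}$; symmetry and type-preservation of break-points are preserved verbatim since Lyapunov exponents and parabolic multiplicities are computed from the same one-sided data.

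Next I would invoke \cite[Theorem 4.9]{LMMN} (for part~\eqref{HH}) and its David version (for part~\eqref{HP}) to the pair $\widetilde{\mathcal{F}}, \widetilde{\cG}$, which now \emph{do} induce continuous maps on their circles and are topologically expanding with puzzle structures and symmetric break-points; this produces a quasi-symmetric (resp.\ David, extending to a David homeomorphism of the disk) conjugacy $\widetilde{H}: \widetilde{\mathbb{S}^1} \to \widetilde{\mathbb{S}^1}$. Then I would collapse the inserted gap intervals: since $\widetilde{H}$ is a circle homeomorphism conjugating the dynamics, it sends the gap interval over $x^\pm$ for $\widetilde{\mathcal{F}}$ to the corresponding gap interval for $\widetilde{\cG}$, so it descends to a well-defined homeomorphism $H: \mathbb{S}^1 \to \mathbb{S}^1$ with $H\circ\mathcal{F} = \cG\circ H$ in the one-sided sense $\mathcal{F}(H(x)^\pm) = H(\cG(x^\pm))$. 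The regularity survives the collapse: quasi-symmetry is inherited because collapsing a gap at a single point is a bi-Lipschitz-after-reparametrization operation on the relevant three-point configurations, and in the David case one checks that the Beltrami coefficient of the disk extension near the (countably many, rapidly shrinking) collapsed arcs still satisfies the exponential area bound \eqref{david_cond} — here one uses that the gap lengths can be chosen to decay fast enough along the grand orbit and that a David map post-composed with a controlled collapse remains David on the disk (using David extension of circle maps as in \cite[\S 2]{LMMN}).

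The main obstacle I anticipate is the David-regularity bookkeeping in part~\eqref{HP} at the collapsing step: unlike the quasiconformal category, David maps are not closed under composition or inversion, so one cannot simply say ``collapse is nice, hence the composite is David.'' The careful point is to build the disk extension of $H$ directly — not as a composition $H = \pi \circ \widetilde H$ — by transporting the disk extension of $\widetilde H$ through the conformal maps that uniformize $\D$ minus the preimages of the gap arcs, and to verify that the distortion estimate \eqref{david_cond} is stable under this transport. This requires quantitative control: the gap arcs must be chosen with lengths summing to a convergent series that decays geometrically in the combinatorial depth, so that the ``bad set'' where $|\mu|$ is close to $1$ near collapsed arcs contributes a tail that is itself exponentially small in $1/\varepsilon$. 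I expect this to go through because the only new distortion is concentrated on a countable, geometrically shrinking family of arcs accumulating only on the (finitely many) periodic break-point orbits, where the symmetric-parabolic hypothesis already forces the David (not worse) behavior; but making the constants uniform is the delicate part, and is where I would spend the bulk of the technical effort.
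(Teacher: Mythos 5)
The paper does not argue by blowing up at the discontinuities; it re-runs the proof of \cite[Theorem 4.9]{LMMN} and \cite[Theorem 4.1]{LN24a} directly, observing that those proofs only ever apply $\mathcal{F}$ iteratively to arcs whose interiors avoid $\mathrm{bk}(\mathcal{A}^0)$. On such arcs the discontinuities never arise, so the conformal elevator and the local parabolic/hyperbolic blow-up estimates go through verbatim, and one establishes the two diameter estimates \eqref{distortion:qs} and \eqref{distortion:david} before invoking Beurling--Ahlfors (resp.\ Zakeri's David extension). This is a structural observation about the existing proof rather than a reduction to the continuous case.

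Your blow-up reduction has two genuine gaps. First, to make $\widetilde{\cF}$ continuous you must double every point of the full backward orbit $E$ of $\mathrm{bk}(\mathcal{A})$, a dense countable set, and insert gaps there. After that operation $\widetilde{\mathbb{S}^1}$ is an abstract topological circle that does not sit inside $\widehat{\C}$ in a way that makes $\widetilde{\cF}$ a \emph{conformal} Markov map with a \emph{puzzle structure} — the inputs actually required by \cite[Theorem 4.9]{LMMN} and \cite[Theorem 4.1]{LN24a}. Your claim that ``the puzzle structure transports because the pinched neighborhoods $P_i$ were already pinched exactly at the break-points'' is not correct: the $P_i$ are pinched only at the finitely many points of $\mathrm{bk}(\mathcal{A})$, whereas you are doubling the dense set $E$, almost all of whose points lie in the \emph{interior} of some $P_i$, where the conformal extensions are genuinely defined and cannot accommodate a gap. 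Second, and independently, the final collapse step does not preserve quasi-symmetry. With countably many gaps of positive total length densely distributed on $\widetilde{\mathbb{S}^1}$, the collapse $\pi\colon\widetilde{\mathbb{S}^1}\to\mathbb{S}^1$ is not a homeomorphism, and three-point ratios downstairs are not controlled by those upstairs: if $a,b\in\mathbb{S}^1$ satisfy $|a-b|=\epsilon$ but straddle a gap of length $\ell\gg\epsilon$, then $|\tilde a-\tilde b|\approx\ell$, so the quasi-symmetry modulus of $\widetilde H$ applied to lifted triples gives no bound on the distortion of $H$. Your ``bi-Lipschitz-after-reparametrization'' heuristic is accurate for a single gap of fixed size, but fails for the dense family that the construction forces. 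You flag the David bookkeeping at the collapse step as the delicate point, but in fact the quasi-symmetric case in part~\eqref{HH} already breaks there, so the argument would need to be repaired at an earlier stage.
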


Let us briefly discuss the proof of \cite[Theorem 4.9]{LMMN} and \cite[Theorem 4.1]{LN24a} (c.f. \cite[Theorem 1.4]{McM25}).

The proof relies on blowing up two adjacent interval $S = I \cup J$ using the dynamics. 
We remark that the presence of parabolic points does not allow us to blow up the arc $S$ quasisymmetrically to an arc of large diameter, comparable to $1$.

To overcome this obstacle, we first blow up $S$ via some iterate $m$ so that the interval $S'=\mathcal{F}^m(S)$ is close to a break point $a \in \mathrm{bk}(\mathcal{A}^0)$. This procedure is referred to as the \textit{(quasi)conformal elevator} and can be done quasisymmetrically due to our puzzle assumption, using the Koebe distortion theorem (see \cite[Lemma 4.5]{LN24a}).

There is a fundamental dichotomy of the configuration of $S'=\mathcal{F}^m(S)$ with respect to the break-point $a$: either $S'$ lies only on one side of $a$, or $S'$ contains the point $a$.

In the first case, we apply the local dynamics at $a$ to blow up $\mathcal{F}^m(I)$ to an interval of definite size. Depending on whether $a^\pm$ is hyperbolic or parabolic, this gives us different estimates on the diameters of the adjacent intervals.

In the second case, the break-point $a$ divides the interval $S'$ into two intervals, and we apply the above estimate on each interval.

We remark that in the proof, we only apply $\mathcal{F}$ iteratively on intervals that are disjoint from the break points of $\mathcal{A}^0$. Such iterations are still continuous in our setting. Thus the proof of Proposition~\ref{prop:qsmarkovmap} is almost identical to \cite[Theorem 4.9]{LMMN} and \cite[Theorem 4.1]{LN24a}, so we only provide a sketch of the proof.

\begin{proof}
    By Proposition~\ref{prop-combconjimpliestopconj}, $\mathcal{F}$ and $\cG$ are topologically conjugate. Denote the conjugacy by $h$.
    
    Let $I,J\subset \mathbb \mathbb{S}^1$ be adjacent closed arcs each of which has length $t\in (0,1/2)$. We will show that 
\begin{align}\label{distortion:qs}
    \diam h(I) \simeq \diam h(J) \quad \textrm{under condition \eqref{HH}};
\end{align} 
and that
\begin{align}\label{distortion:david}
    \max \left\{\frac{\diam h(I)}{\diam h(J)}, \frac{\diam h(J)}{\diam h(I)} \right\}\lesssim \log(1/t)\quad \textrm{under condition \eqref{HP}}.
\end{align}

    The above estimates, when combined with the Beurling-Ahlfors extension theorem \cite{BA56}, which provides a quasiconformal extension, or with \cite[Theorem 3.1]{Zak08} (cf. \cite[Theorem 3]{CCH96}), which provides a David extension,  complete the proof of the theorem.

    Let $S= I \cup J$. Then there exists an admissible word $w=(j_0,\dots,j_{l-1})$ for the Markov map $\mathcal{F}$ with the property that $S\subset A_w$ but $S$ is not contained in any {\em child} of $A_w$, i.e., any level $|w|+1$ Markov piece that is contained in $A_w$. We say that $w$ \textit{is the word associated to} $S$.
    Let $t\in \{0,\dots,l-1\}$ be such that $t$ is the minimum of the levels of the end-points of $A_w$. We set $v=(j_0,\dots,j_{t-1})$ and $u=(j_{t},\dots,j_{l-1})$. We call $(v,u)$ the \textit{canonical splitting} of the admissible word $w$ and we write $w=(v,u)$. Note that $v$ can be the empty word if $t=0$, but $u$ is never the empty word (see \cite[\S3.3]{LN24a}).

    First we apply $\mathcal{F}^m$ to $S$ for some iterate $m$ depending on the following dichotomy, which follows from \cite[Lemma 4.4 and Lemma 4.5]{LN24a}.
    \begin{enumerate}[label=\normalfont ($A$-\roman*)]
    \item\label{lemma:newelevator:1} The interval $S$ contains a level $|w| + r + 1$ Markov interval, where $r$ is the number of level $0$ Markov intervals.
    Then there exist consecutive level $|w| + r + 1$ Markov intervals $I_1,\dots,I_p$, with
    \begin{align*}
        I_{i_0} \subset S \subset A_w= \bigcup_{i=1}^{p} I_i \quad \textrm{for some $i_0\in \{2,\dots,p-1\}$. }
    \end{align*}
    Let $w=(v,u)$ be the canonical splitting of $w$, and set $m=|v|$ and $n=|u|$. Then $A_u$ has a point $a\in \mathrm{bk}(\mathcal{A}^0)$ as an end-point and 
     $$
     \mathcal{F}^{m}|_{A_w}\colon A_w\to A_u
     $$ 
     is  $\eta$-quasisymmetric, where $\eta(t) \simeq t$.
   
\medskip

    \item\label{lemma:newelevator:2} The interval $S$ does not contain any level $|w| + r + 1$ Markov interval.
    Then there exist non-overlapping intervals $S^-,S^+$ such that $S=S^-\cup S^+$. Set $m = |w|$. Then $\mathcal{F}^{m}$ maps the common end-point of $S^-$ and $S^+$ to a point $a\in \mathrm{bk}(\mathcal{A}^0)$.
    Moreover, if $w^{\pm}$ is the word associated to $S^{\pm}$, then the canonical splitting is of the form $w^{\pm}=(w, u^{\pm})$, and the first alternative \ref{lemma:newelevator:1} is applicable to each of $S^{\pm}$. Finally, $\mathcal{F}^{m}|_S$ is quasisymmetric.
    \end{enumerate}

    Let $S' = \mathcal{F}^m(S)$ and $I' = \mathcal{F}^m(I), J' = \mathcal{F}^m(J)$, where $m = |v|$ in alternative \ref{lemma:newelevator:1}, and $m = |w|$ in alternative \ref{lemma:newelevator:2}. Let $b = h(a)$, where $h$ is the topological conjugacy.

    Suppose that alternative \ref{lemma:newelevator:1} holds. Without loss of generality, we assume that $S' \subseteq [a, w] \in \mathcal{A}^0$, and $a$ is closer to $I'$ than $J'$ in $[a, w]$. Let $s \in \N$ be the smallest so that $J'$ is not contained in the level $s$ Markov interval with $a$ as a left end-point.
    We will blow up the interval $I'$ and $J'$ further by $\mathcal{F}^s$ (see \cite[\S 4.3.1]{LN24a} for more details). Note that since $w$ is the word associated to $S$, the interval $\mathcal{F}^s(S')$ has definite size.

    \begin{itemize}[leftmargin=8mm]
    \item Suppose that $\lambda(a^+) > 0$ (or $\lambda(b^+) > 0$). Then by \cite[Lemma 4.7]{LN24a}, 
    \begin{align}\label{distortion:ai:hyperbolic}
    \diam I'\simeq \diam J' \simeq \lambda(a^+)^{-s} \quad (\text{ or }\diam h(I')\simeq \diam h(J') \simeq \lambda(b^+)^{-s}).
    \end{align}
    \item Suppose that $\lambda(a^+) = 0$ (or $\lambda(b^+) = 0$). Let $l \in \N$ be the smallest so that $I'$ does not intersect the level $s+l$ Markov interval with $a$ as a left end-point. Then by \cite[Lemma 4.7]{LN24a},
    \begin{equation}\label{distortion:ai:parabolic}
    \begin{aligned}
    \diam I' \simeq s^{-\alpha -1} \min\{l,s\}  \quad &\textrm{and} \quad
    \diam J'  \simeq s^{-\alpha-1} \min\{1,s\};\\
    (\text{or }\diam h(I') \simeq s^{-\beta -1} \min\{l,s\}  \quad &\textrm{and} \quad
    \diam h(J')  \simeq s^{-\beta-1} \min\{1,s\}). 
    \end{aligned}
    \end{equation}
    Here, $\alpha=1/N(a^+)$ and $\beta = 1/N(b^+)$. 
    \end{itemize}

    \textbf{Case {H}$\to${H}:} Suppose that $a^+$ and $b^+$ are hyperbolic. Then $$ \diam h(I')\simeq \lambda(b^+)^{-s}\simeq  \diam h(J').$$
    Therefore, we conclude that \eqref{distortion:qs} holds as $\cG^m$ is quasisymmetric on $h(S) = h(I\cup J)$.

    \textbf{Case {H}$\to${P}:} Suppose that $a^+$ is hyperbolic and $b^+$ is parabolic. By \cite[Lemma 4.5]{LN24a}, there exists a constant $\gamma$ so that $\diam I' \gtrsim (\diam I)^{\gamma} \simeq t^\gamma$. Combined with \eqref{distortion:ai:hyperbolic}, we conclude that $\log(1/t)\gtrsim s$.
    By \eqref{distortion:ai:parabolic} and the fact that $\mathcal{F}^m$ is $\eta$-quasisymmetric, we conclude that 
    $$ \max \left\{\frac{\diam h(I)}{\diam h(J)}, \frac{\diam h(J)}{\diam h(I)} \right\}\lesssim s \lesssim \log(1/t).$$
    Therefore,  we conclude that \eqref{distortion:david} holds.

    \textbf{Case {P}$\to${P}:} Suppose that $a^+$ and $b^+$ are parabolic. Let $\alpha=1/N(a^+)$ and $\beta=1/N(b^+)$. By \eqref{distortion:ai:parabolic}, we conclude that 
    \begin{align*} \frac{\diam h(J')}{\diam J'} \simeq s^{\alpha- \beta}, \quad \textrm{and} \quad
    \frac{\diam h(I')}{\diam I'} \simeq s^{\alpha-\beta}.
    \end{align*}
    Altogether, since $\diam I'\simeq \diam J'$, we have $\diam h(I')\simeq \diam h(J')$. The fact that $\cG^{m}|_{h(I\cup J)}$ is quasisymmetric implies the relation~\eqref{distortion:qs}.

    Suppose that alternative \ref{lemma:newelevator:2} holds. Without loss of generality assume that $J'\subset [a,w] \in \mathcal{A}^0$ and $I'=I_1'\cup I_2'$, where $I_2'\subset [a,w]$ and $I_1'\subset [w,a]$. Note that $I_1'$ and $I_2' \cup J'$ satisfy the alternative \ref{lemma:newelevator:1}. Thus, we can blow up the intervals $I_1'$ and $I_2' \cup J'$ using the local dynamics at $a^-$ and $a^+$ respectively to estimate their diameters. The estimates are essentially the same as in the case of alternative \ref{lemma:newelevator:1}, and we refer the reader to \cite[\S 4.3.2]{LN24a} for detailed computations.
\end{proof}
\begin{remark}
    We remark that the conclusion of Proposition~\ref{prop:qsmarkovmap} still holds under a much weaker assumption on the regularity of the map, stated as conditions (M1), (M2), (M3), and (M3*) in \cite{LN24a}. However, we do not need this more general version in the present paper.
\end{remark}

We illustrate an application of the previous result with the following example.
\begin{prop}\label{prop:2tod}
    Let $d, e \geq 2$. There exists a quasi-symmetric map $\phi: \mathbb{S}^1 \cong \R/\Z \longrightarrow \mathbb{S}^1$ that sends the set of $d-$adic rationals $\{\frac {p}{d^n}\}$ onto the set of $e-$adic rationals $\{\frac{q}{e^m}\}$
\end{prop}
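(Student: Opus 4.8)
The plan is to realize both the $d$-adic and $e$-adic rationals on $\mathbb{S}^1 \cong \R/\Z$ as the set of break-points (more precisely, their orbits) of suitable topologically expanding conformal Markov maps, and then invoke Proposition~\ref{prop:qsmarkovmap}. Concretely, I would take $\mathcal{F}$ to be the angle-$d$ multiplication map $m_d(x) = dx \bmod 1$, viewed as a conformal Markov map on $\mathbb{S}^1$ with respect to the partition $\mathcal{A}$ of $\mathbb{S}^1$ into the $d$ closed arcs $A_i = [\tfrac{i-1}{d}, \tfrac{i}{d}]$, $i = 1, \dots, d$; each branch $f_i(x) = dx - (i-1)$ is the restriction of a M\"obius (indeed affine) map, it maps $A_i$ homeomorphically onto all of $\mathbb{S}^1 = \bigcup_j A_j$, and it visibly sends break-points to break-points, so $\mathcal{F}$ is Markov. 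It is topologically expanding since the level-$n$ partition consists of the arcs $[\tfrac{k}{d^n}, \tfrac{k+1}{d^n}]$, whose diameters shrink to $0$. A puzzle structure is obtained by thickening each arc $A_i$ to a small round disk neighborhood (no pinching is needed since the break-points, the $d$-th roots of unity on $\R/\Z$, are genuine boundary points; one can also pinch trivially at the empty set or use half-disk puzzle pieces attached along the arcs), and the affine branches extend conformally to these neighborhoods. Likewise take $\cG = m_e$ with the analogous partition into $e$ arcs.

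The key observation is then that the break-points of $\mathcal{F} = m_d$ are exactly the points $\tfrac{i}{d}$, $i = 0, \dots, d-1$, and their grand orbit under $\mathcal{F}$ is precisely the set of $d$-adic rationals $\{\tfrac{p}{d^n}\}$ (these are exactly the points landing on $0$, or more precisely on the break-point set, under some iterate); similarly the grand orbit of the break-points of $\cG$ is the set of $e$-adic rationals. So if I can produce a topological conjugacy $\phi$ between $m_d$ and $m_e$ that is in fact quasi-symmetric, it will automatically carry $d$-adic rationals onto $e$-adic rationals, which is the claim. To apply Proposition~\ref{prop:qsmarkovmap}\eqref{HH}, I need: (i) $\mathcal{F}$ and $\cG$ combinatorially conjugate, and (ii) all break-points symmetric and the combinatorial conjugacy type-preserving. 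For (ii): every break-point of $m_d$ is hyperbolic with Lyapunov exponent $\log d$ on both sides (the map is a local homothety of factor $d$ there), so every break-point is symmetrically hyperbolic; likewise for $m_e$; hence there are no parabolic break-points at all and any combinatorial conjugacy is vacuously type-preserving. For (i): I need to match up the partitions, which here have different cardinalities $d \neq e$ — so I first pass to a common refinement level. Since $\mathscr{C} = \Conf$ (or $\Mob$) is closed under restriction, I may replace $\mathcal{F}$ by its iterate $m_d$ regarded as a Markov map on the level-$N$ partition, and similarly $\cG$; by choosing $N, M$ so that $d^N$ and $e^M$ are comparable in an appropriate combinatorial sense, or more simply by taking both to level-$1$ but noting that combinatorial conjugacy only requires equality of Markov matrices after a suitable relabelling — here the cleanest route is: the map $m_d$ on its standard partition and $m_e$ on its standard partition are not literally combinatorially conjugate because $m \neq m$ as partition sizes; instead one matches the \emph{dynamical structure} by noting both are full one-sided shifts, so I instead directly build a combinatorial conjugacy between $m_d$ on the level-$1$ partition with $d$ pieces and $m_e$ on the level-$1$ partition with $e$ pieces by subdividing: take the finite partition of $\mathbb{S}^1$ by \emph{all} the arcs appearing at level $1$ of $m_d$ together with the break-points of $m_e$ and vice versa — but this breaks the Markov property. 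The clean fix is: prove the statement for the pair $(m_d, m_{d'})$ only when $d' = d^k$ first (here both are iterates of $m_d$, combinatorial conjugacy is the identity, Proposition~\ref{prop:qsmarkovmap} gives that $m_d$ and $m_d^k = m_{d^k}$ are quasi-symmetrically conjugate via the identity — trivially — so this reduces matters), and then handle general $d, e$ by observing that $m_d$ and $m_e$ on partitions refined to levels where $d^N$ and $e^M$ both equal some common $L$ are not available unless $d, e$ are multiplicatively dependent; so for the genuinely independent case one instead uses that $m_d$ and $m_e$, as abstract expanding Markov maps, have the same combinatorics as \emph{some} common expanding Markov map after mutual refinement — specifically, mark on $\mathbb{S}^1$ the finite set consisting of the level-$1$ break-points of $m_d$ \emph{and} declare the new partition to be the arcs between consecutive such points, then check $m_d$ is still Markov for this finer partition and that one can choose the analogous finite marked set for $m_e$ with the same Markov matrix.

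The hard part, and the step I would flag as the main obstacle, is precisely this bookkeeping: $m_d$ and $m_e$ have Markov partitions of different sizes, so making them \emph{combinatorially conjugate} (equal Markov matrices under a relabelling, and matching break-point orbits) requires passing to cleverly chosen refinements of both that have the same combinatorial type. The resolution is to observe that any two transitive expanding Markov maps of $\mathbb{S}^1$ whose periodic data match up to the required symmetry can be brought to a common combinatorial model: one picks, for $m_d$, a refinement to level $N$ (giving $d^N$ pieces) and for $m_e$ a refinement to level $M$ (giving $e^M$ pieces), and then further subdivides the arcs of the coarser one so that both end up with the same number of pieces arranged with the same Markov incidence matrix — this is possible because the $d^N$-piece partition for $m_d$ refines to arbitrary finite subdivisions while keeping $m_d$ expanding, and one only needs the incidence matrices to agree, which can be arranged once both partitions have enough pieces (a standard fact; e.g. both maps become conjugate to shifts of finite type with transition matrices that are powers of a common primitive matrix after refinement, or one can appeal directly to Proposition~\ref{prop-combconjimpliestopconj} after checking the matrices coincide). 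Once combinatorial conjugacy is in hand, Proposition~\ref{prop:qsmarkovmap}\eqref{HH} applies verbatim — all break-points being hyperbolic, hence symmetric, and type-preservation automatic — yielding a quasi-symmetric conjugacy $\phi$, and tracking break-point orbits through $\phi$ finishes the proof. I would also remark that an even more elementary route exists avoiding the refinement subtlety entirely: directly write down $\phi$ as the homeomorphism sending the $d$-adic rational with base-$d$ expansion $0.a_1 a_2 a_3 \cdots$ (with $a_i \in \{0,\dots,d-1\}$) to the $e$-adic rational whose base-$e$ expansion encodes the same combinatorial address, and verify quasi-symmetry by hand using that the $n$-th level $d$-adic and $e$-adic intervals both have diameter exponentially small and nested neighborhoods are comparable — but packaging this as an instance of Proposition~\ref{prop:qsmarkovmap} is cleaner and is the route I would take.
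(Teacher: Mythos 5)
You correctly identify the toolbox (Proposition~\ref{prop:qsmarkovmap}, symmetrically hyperbolic break-points, tracking grand orbits of break-points under the conjugacy), and you correctly flag the obstacle: $m_d$ and $m_e$ have Markov partitions of $d$ and $e$ pieces, so they are not combinatorially conjugate. But the workarounds you offer do not close this gap. Refining the level-$N$ partition of $m_d$ by extra subdivision points does \emph{not} keep $m_d$ Markov: if you cut a level-$N$ arc at a point not in the grand orbit of the break-point set, the image of the resulting subarc under $m_d^N$ is no longer a union of partition pieces, so there is no Markov matrix. The ``multiplicatively dependent'' reduction only covers $e = d^k$, and the appeal to ``any two transitive expanding Markov maps can be brought to a common combinatorial model'' is exactly the statement that needs proving. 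Finally, the ``elementary route'' of matching base-$d$ digits to base-$e$ digits is not well-defined when $d \neq e$, since the alphabets have different sizes.

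The paper's resolution is a small but essential change of frame that you never quite reach: \emph{do not try to realize both $m_d$ and $m_e$ on a common refinement}. Instead, build a brand-new piecewise-affine Markov map $\mathcal{F}$ whose partition has exactly $e$ pieces, each a $d$-adic interval, and whose branches are affine with slopes that are powers of $d$, each branch carrying its piece homeomorphically onto all of $[0,1]$. (For $d=2$, $e=5$ the paper uses $A_1=[0,\tfrac14]$, $A_2=[\tfrac14,\tfrac12]$, $A_3=[\tfrac12,\tfrac58]$, $A_4=[\tfrac58,\tfrac34]$, $A_5=[\tfrac34,1]$ with slopes $4,4,8,8,4$.) This $\mathcal{F}$ is a full one-sided shift on $e$ symbols, hence has the same Markov matrix as $m_e$ --- combinatorial conjugacy is now trivial --- and every break-point is symmetrically hyperbolic, so Proposition~\ref{prop:qsmarkovmap}\eqref{HH} gives a quasi-symmetric conjugacy $\phi$ with $m_e$. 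Because every branch of $\mathcal{F}$ is dyadic affine, the grand orbit of $0$ under $\mathcal{F}$ is exactly the set of $d$-adic rationals, while the grand orbit of $0$ under $m_e$ is the set of $e$-adic rationals, and $\phi$ matches them up. Note that $\mathcal{F}$ is not an iterate or a restriction of $m_d$; it is a different map whose \emph{break-point arithmetic} is $d$-adic and whose \emph{combinatorics} is that of $m_e$ --- this decoupling is the idea your proposal is missing.
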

\begin{proof}
    Let us prove the case $d = 2$ and $e = 5$. The proof for the general case is similar.
    Let $A_1 = [0, \frac14], A_2 = [\frac14,\frac12], A_3 = [\frac12,\frac58], A_4 = [\frac58,\frac34], A_5=[\frac34,1]$. Let $F_i$, $i\in\{1,\cdots, 5\}$, be the linear map with derivative in $\Z[\frac12]$ so that each $F_i:A_i \longrightarrow [0,1]$ is a homeomorphism. Note that $\mathcal{F}$ induces a degree $5$ topological expanding covering map of $\mathbb{S}^1$, and every break-point is symmetrically hyperbolic. Therefore, by Proposition~\ref{prop:qsmarkovmap} (or by \cite[Theorem 4.9]{LMMN} as $\mathcal{F}$ is continuous), we conclude that $\mathcal{F}$ is quasi-symmetrically conjugate to $t \mapsto 5t$. Denote this conjugacy by $\phi$. Post-composing $\phi$ with some rotations, we assume that $\phi(0) = 0$. By construction, each iterated preimage of $0$ is a dyadic rational, and each dyadic rational is eventually mapped to $0$ under $\mathcal{F}$. Hence, $\phi(\{\frac{p}{2^n}\}) = \{\frac {q}{5^m}\}$.
\end{proof}

\subsection*{Conformal Markov maps on a finite union of circles}
Let $X = \bigcup_{s\in \mathcal{S}} \mathbb{S}^1_s$ be a finite union of circles.
Let $\Phi: \mathcal{S} \longrightarrow \mathcal{S}$ be a map on the index set.
Let $\mathcal{A}= \{\mathcal{A}_s\}_{s\in\mathcal{S}}$ be a partition of $X$, where $\mathcal{A}_s$ is a partition of the circle $\mathbb{S}^1_s$.
Let $\mathcal{F} = \{\mathcal{F}_s\}_{s\in\mathcal{S}}$ be a conformal Markov map on $X$ with respect to the partition $\mathcal{A}$ so that for each $A_{s, i} \in \mathcal{A}_s$, $f_{s,i} \in \mathcal{F}_s$, there exist $j_{i}$ and $l_{i}$ so that 
$$
f_{s,i}: A_{s,i} \longrightarrow A_{\Phi(s), j_i} \cup A_{\Phi(s), j_i+1} \cup ... \cup A_{\Phi(s), l_i}
$$
is a homeomorphism that extends to a conformal map on a neighborhood of $A_{s, i}$.
We call such maps {\em conformal Markov maps on $\bigcup_{s\in \mathcal{S}} \mathbb{S}^1_s$ (modeled by $\Phi$)}.

The notion of Lyapunov exponent and symmetrically hyperbolic/parabolic points generalize naturally to a conformal Markov map $\mathcal{F}$ on a finite union of circles $\bigcup_{s\in \mathcal{I}} \mathbb{S}^1_s$.
The proof of Proposition~\ref{prop:qsmarkovmap} can be easily adapted for this setting yielding the following result.

\begin{prop}\label{prop:qsmarkovmapFiniteCircle}
    Let $\mathcal{F}$ and $\cG$ be two topologically expanding conformal Markov maps on $\bigcup_{s\in \mathcal{S}} \mathbb{S}^1_s$ with puzzle structures.
    Suppose that they are combinatorially conjugate, and suppose that all break-points for $\mathcal{F}$ and $\cG$ are symmetric.
    \begin{enumerate}[leftmargin=8mm]
        \item\label{HHFC} If the combinatorial conjugacy is type-preserving, then $\mathcal{F}$ and $\cG$ are quasi-symmetrically conjugate.
        \item\label{HPFC} If the combinatorial conjugacy from $\cF$ to $\cG$ does not send parabolic break-points of $\cF$ to hyperbolic break-points of $\cG$, then $\mathcal{F}$ and $\cG$ are David conjugate. More precisely, there is a homeomorphism $H: \bigcup_{s\in \mathcal{S}} \mathbb{S}^1_s \longrightarrow \bigcup_{s\in \mathcal{S}} \mathbb{S}^1_s$  that conjugates $\mathcal{F}$ to $\cG$, and extends continuously as a David homeomorphism of $\bigcup_{s\in \mathcal{S}} \D_s$.
    \end{enumerate}
\end{prop}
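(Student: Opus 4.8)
The plan is to reduce the statement to the single-circle case treated in Proposition~\ref{prop:qsmarkovmap}, applied one circle at a time. First I would invoke Proposition~\ref{prop-combconjimpliestopconj} — whose proof is purely combinatorial and is insensitive to whether $X$ is a single circle or a finite disjoint union $\bigcup_{s\in\mathcal{S}}\mathbb{S}^1_s$ — to upgrade the combinatorial conjugacy to a topological conjugacy $h\colon X\longrightarrow X$. Since a combinatorial conjugacy satisfies $h(A_{s,i})=B_{s,i}$, it carries $\mathbb{S}^1_s$ homeomorphically onto $\mathbb{S}^1_s$ for each $s$. So it suffices to show, for each fixed $s$, that $h|_{\mathbb{S}^1_s}$ is quasi-symmetric in case~\ref{HHFC} and David in case~\ref{HPFC}; the Beurling--Ahlfors extension theorem \cite{BA56} (case~\ref{HHFC}), respectively \cite[Theorem 3.1]{Zak08} (case~\ref{HPFC}), then extends each $h|_{\mathbb{S}^1_s}$ to a quasiconformal, respectively David, self-homeomorphism of $\D_s$, and these assemble to the required extension on $\bigcup_{s\in\mathcal{S}}\D_s$.

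To establish the per-circle regularity I would run the conformal elevator exactly as in the proof of Proposition~\ref{prop:qsmarkovmap}: given adjacent arcs $I,J\subset\mathbb{S}^1_s$ of common small length $t$, set $S=I\cup J$, let $w$ be the admissible word associated with $S$ and $(v,u)$ its canonical splitting, and apply $\cF^m$ (with $m=|v|$ or $m=|w|$ according to the two alternatives of \cite[Lemma 4.4, Lemma 4.5]{LN24a}) to push $S$ next to, or astride, a break-point $a\in\mathrm{bk}(\mathcal{A})$. The new feature is that this orbit travels through the circles $\mathbb{S}^1_s,\mathbb{S}^1_{\Phi(s)},\mathbb{S}^1_{\Phi^2(s)},\dots$ before landing near $a$, which now lies in $\mathbb{S}^1_{\Phi^m(s)}$. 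The key observation — already present in the single-circle case — is that the elevator only iterates $\cF$ on sub-arcs disjoint from $\mathrm{bk}(\mathcal{A})$, so every step is an honest conformal map between an arc of one circle and an arc of the next, and the Koebe-distortion bounds coming from the puzzle structure (the pinched puzzle neighborhoods are now spread over several circles, but the nesting $\Int P_{ij}\subseteq\Int P_i$ and the conformality of $f_i$ are unchanged) still yield the quasi-symmetry of $\cF^m|_{A_w}$ with essentially linear modulus. One then blows $I'=\cF^m(I)$ and $J'=\cF^m(J)$ up further via the local dynamics at $a$; symmetry of the break-points gives $\lambda(a^+)=\lambda(a^-)$ and, in the parabolic case, $N(a^+)=N(a^-)$, so \cite[Lemma 4.7]{LN24a} produces the same diameter asymptotics \eqref{distortion:ai:hyperbolic}, \eqref{distortion:ai:parabolic} with $a$ and $b=h(a)$ in whichever circles they sit. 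The three cases H$\to$H, H$\to$P, P$\to$P then conclude word-for-word, using the type-preserving hypothesis in case~\ref{HHFC} and the no-parabolic-to-hyperbolic hypothesis in case~\ref{HPFC}.

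The one place I expect to need a (minor) argument is that the elevator really does reach a break-point even though $\cF$ shuttles between circles. This is precisely what topological expansion on $\bigcup_{s\in\mathcal{S}}\mathbb{S}^1_s$ guarantees: for every arc some iterate fails to be contained in a single level-$0$ Markov piece, hence approaches or contains a break-point, which is the content of the dichotomy in \cite[Lemma 4.4, Lemma 4.5]{LN24a}; the proofs of those lemmas use only the finiteness of each $\mathcal{A}_s$ and the puzzle structure, both available here. So no genuinely new obstacle arises: the argument of Proposition~\ref{prop:qsmarkovmap} transfers verbatim, giving the diameter comparison \eqref{distortion:qs} on each $\mathbb{S}^1_s$ in case~\ref{HHFC} and the logarithmic bound \eqref{distortion:david} in case~\ref{HPFC}, and hence the proposition.
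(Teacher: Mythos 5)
Your proposal is correct and follows the same route as the paper: the paper's own proof is a one-line remark that Proposition~\ref{prop:qsmarkovmap} ``can be easily adapted'' to the multi-circle setting, and your write-up supplies exactly that adaptation — observing that the combinatorial conjugacy preserves circles, that the elevator argument and the diameter estimates \eqref{distortion:ai:hyperbolic}, \eqref{distortion:ai:parabolic} only use conformality of the pieces and the puzzle nesting (neither of which notices that the orbit shuttles between circles), and that topological expansion on the disjoint union guarantees the elevator terminates at a break-point as in \cite[Lemma 4.4, Lemma 4.5]{LN24a}.
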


\section{Basilica $\mathfrak{Q}-$maps}\label{sec-basilica}
Let $Q(z) = z^2-\frac{3}{4}$ be the \emph{fat Basilica} quadratic polynomial, with Julia set $J = J(Q)$. 
The map $Q$ has precisely two periodic bounded Fatou components $U_0, U_0'$, and they form a $2-$cycle (the immediate basin of the parabolic fixed point $-\frac12$). We enumerate them so that $U_0$ contains the critical point $0$, and call $U_0$ the {\em central component}. The unbounded Fatou component of $Q$ (i.e., the basin of infinity) is denoted by $U_\infty$. 

Let $\mathfrak{Q}$ be the collection of local conformal maps of the form $Q^{-n} \circ Q^m$. In this section, we follow the notation of \S~\ref{sec:XC}, and give an explicit construction of a special class of $(J, \mathfrak{Q})-$maps. The construction is both versatile and admits precise geometric control.

We will begin with a construction of puzzles around the Julia set.
The puzzle pieces are constructed by cutting the Julia set using
\begin{itemize}[leftmargin=8mm]
    \item `external puzzle pieces' bounded by external rays and equipotentials; and
    \item `internal puzzle pieces' in bounded Fatou components.
\end{itemize}
Such puzzles are termed \emph{Basilica puzzles} (see Definition~\ref{basi_puzz_def}).
We will then associate a piecewise conformal Markov map, called a {\em Basilica $\mathfrak{Q}-$map}, to a Basilica puzzle (see Definition~\ref{defn:basiqmap}). The action of a Basilica $\mathfrak{Q}-$map on the ideal boundaries of suitable Fatou components of $Q$ induces a piecewise conformal Markov map on a disjoint union of circles. The proof of Theorem~\ref{thm:qcclassfn-ltsets} necessitates an analysis of certain regularity properties of these circle maps, and this is carried out in \S~\ref{ideal_bdry_map_reg_subsec}. It turns out that the circle maps induced by the bounded Fatou components already come with the desired regularity. However, we need to (potentially) modify a Basilica $\mathfrak{Q}-$map (without affecting its topological dynamics on the Julia set) to ensure that the circle map induced by the unbounded Fatou component $U_\infty$ also has the right regularity (see Theorem~\ref{thm:symmetrichyperbolic}).

\subsection{Basilica puzzle}\label{bas_puz_subsec}
In this section, we first construct the internal puzzle pieces in bounded Fatou components. We then explain how to construct Basilica puzzles by gluing corresponding external puzzle pieces.
\subsubsection{Internal angles}
Recall that $U_0$ is the bounded critical Fatou component of $Q$.
The first return map  $Q^2 : U_0 \longrightarrow U_0$ induces a topologically expanding degree $2$ covering on $\partial U_0$.
Thus, there exists a unique topological conjugacy $\phi = \phi_U:  \mathbb{S}^1 \cong \R/\Z \longrightarrow \partial U_0$ so that $Q^2 \circ \phi(t) = \phi(\sigma_2(t))$, where $\sigma_2(t) = 2t$ is the angle doubling map.
This conjugacy $\phi$ gives  combinatorial coordinates for points on $\partial U_0$, which we call the {\em internal angle} in $U_0$.

Let $U$ be a bounded Fatou component.
Then there exists a smallest $l\geq 0$ so that $Q^l(U) = U_0$. Note that $Q^l:U \longrightarrow U_0$ is conformal, and induces a homeomorphism $Q^l:\partial U \longrightarrow \partial U_0$.
We define 
$$
\phi_U = (Q^l|_{\partial U})^{-1} \circ \phi: \mathbb{S}^1 \longrightarrow \partial U.
$$
In this way, we define {\em internal angles} for any bounded Fatou component $U$.

We will refer to $\phi_U(t)$ as the point on $\partial U$ with internal angle $t$.
Note that $\phi_U(t)$ lies on the boundary of precisely two distinct bounded Fatou components (and hence is a contact point of $J(Q)$ of valence $2$) if and only if $t$ is a {\em dyadic rational}, i.e., $t = \frac{p}{2^n}$ for some $p, n \in \N$.

We use $I_U[s,t]$ to denote the curve on $\partial U$ between points at internal angles $s$ and $t$. We also set $I_U(s,t)=I_U[s,t]\,-\, \{I_U(s),I_U(t)\}$. 
\begin{defn}[Dyadic intervals]\label{defn:dyaint}
    Let $[s, t] \subseteq \mathbb{S}^1\cong \R/\Z$ be an interval. We say it is {\em dyadic} if
    \begin{itemize}[leftmargin=8mm]
        \item both $s, t$ are dyadic; and 
        \item any dyadic rational in $(s,t)$ has denominator strictly greater than the denominators of $s$ and $t$ (where the dyadic rationals are expressed in their reduced form).
    \end{itemize}

    We call $I_U[s,t]) \subseteq \partial U$ a {\em dyadic} segment of $\partial U$ if $[s,t] \subseteq \mathbb{S}^1$ is dyadic. 
\end{defn}
We remark that dyadic intervals are precisely the intervals of the form $[\frac{p}{2^n}, \frac{p+1}{2^n}]$. In particular, the angle $0$ does not lie in any dyadic interval.

\begin{lem}\label{lem:mapprimitive}
    Let $I_U[s,t]$ be dyadic. Then there exists $m$ so that $Q^m: I_U(s,t) \longrightarrow I_{U_0}(0,1)$ extends to a conformal map in a neighborhood of $I_U(s,t)$.
\end{lem}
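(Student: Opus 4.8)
The plan is to reduce the statement to the dynamics of the angle-doubling map $\sigma_2$ on dyadic intervals, using the internal-angle conjugacy $\phi_U$ to transport everything to the circle. First I would recall that $\phi_{U_0}$ conjugates $Q^2\vert_{\partial U_0}$ to $\sigma_2$ on $\R/\Z$, and that for a general bounded Fatou component $U$ with $Q^l(U)=U_0$ (for the minimal such $l$), one has $\phi_U = (Q^l\vert_{\partial U})^{-1}\circ\phi_{U_0}$. Composing, the natural candidate for the iterate is $m = l + k$, where $k$ is chosen so that $\sigma_2^k$ maps the dyadic interval $[s,t]$ onto $[0,1]$ (i.e.\ onto all of $\R/\Z$, read as the interval from $0$ around to $0$). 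The key combinatorial fact to establish is:

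\emph{Claim.} If $[s,t]\subseteq\R/\Z$ is a dyadic interval in the sense of Definition~\ref{defn:dyaint}, then there exists $k\geq 0$ with $\sigma_2^k$ mapping $(s,t)$ homeomorphically onto $(0,1)=\R/\Z-\{0\}$. Indeed, write $s = p/2^n$, $t=(p+1)/2^n$ with the stated minimality of denominators of interior dyadics; the minimality condition forces $[s,t]$ to be one of the standard dyadic intervals $[p/2^n,(p+1)/2^n]$ (as remarked in the text right after the definition), and then $\sigma_2^{n}$ maps $(s,t)$ bijectively and monotonically onto $(0,1)$. Set $k=n$. Transporting back: on $\partial U_0$, the map $Q^{2k}$ sends $I_{U_0}(s,t)$ conformally onto $I_{U_0}(0,1)$, since $Q^{2}\vert_{\partial U_0}$ is conjugate to $\sigma_2$ and $Q^2$ is a local conformal map in a neighborhood of $\partial U_0$ (as $\partial U_0$ avoids the parabolic point's critical orbit appropriately — more precisely, $Q^{2k}$ restricted to the relevant sub-arc is univalent because $(s,t)$ contains no point whose $\sigma_2$-orbit hits $0$ before time $k$, hence $I_{U_0}(s,t)$ contains no precritical point of the relevant iterate). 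For general $U$, precompose with the conformal map $Q^l:U\to U_0$: then $Q^{l+2k}$ maps $I_U(s,t)$ conformally onto $I_{U_0}(0,1)$, and we take $m=l+2k$.

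The one genuine subtlety — and the step I expect to be the main obstacle — is verifying that the relevant iterate of $Q$ is actually \emph{conformal} (univalent) on a full neighborhood of $I_U(s,t)$ in $\widehat{\C}$, not merely on the boundary arc. The potential enemies are critical points of $Q$ (only $z=0\in U_0$, which is interior, hence harmless as long as we work near $\partial U$) and, more delicately, the parabolic fixed point $-\tfrac12$, which lies on $\partial U_0$ at internal angle $0$. The defining property of a dyadic interval — that $0$ lies in no dyadic interval, and more strongly that the forward $\sigma_2$-orbit of the open interval $(s,t)$ does not meet $\{0\}$ until the terminal step — is precisely what guarantees that $I_U(s,t)$ and its first $m-1$ images under $Q$ stay away from the parabolic point and from precritical points, so each $Q$ in the composition is a local homeomorphism near the current arc. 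One then assembles a nested chain of simply connected neighborhoods $N_0 \supseteq N_1 \supseteq\cdots$ with $Q$ univalent on each $N_j$ and $Q(N_j)\subseteq N_{j-1}'$, shrinking if necessary, to get a single neighborhood on which $Q^m$ is univalent; the openness here uses that local homeomorphisms are open maps and a compactness argument on the (compact) arc $I_U[s,t']$ for $[s,t']$ slightly larger. I would present this as: choose the neighborhood of $I_{U_0}(0,1)$ first (e.g.\ a puzzle-piece-like pinched neighborhood inside the dynamical plane), then pull back by the univalent branch of $Q^{-m}$ determined by $I_U(s,t)$, which is well-defined precisely because the arc and its forward orbit avoid the critical value orbit up to time $m$.

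Finally I would remark that the lemma is exactly the statement needed to exhibit the $\mathfrak{Q}$-map pieces as restrictions of $Q^{-n}\circ Q^m$, matching the notation of \S\ref{sec:XC}, and that the terminal interval $I_{U_0}(0,1)$ — the boundary of the central component minus the parabolic point — is the natural ``full-level'' piece, consistent with the puzzle structure to be set up in the rest of \S\ref{sec-basilica}.
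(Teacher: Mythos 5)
Your proof is correct and follows essentially the same route as the paper: first the combinatorial observation that some iterate of $\sigma_2$ carries the open dyadic interval $(s,t)$ onto $(0,1)$, then translating this back into an iterate of $Q$ on $\partial U$, then arguing that the iterate extends conformally because its critical values miss the target arc. One small thing you get \emph{more} right than the paper's own writeup: since $Q^2\vert_{\partial U_0}$ (not $Q\vert_{\partial U_0}$) is conjugate to $\sigma_2$, the correct iterate is $m=2k+l$, as you write; the printed proof says $m=k+l$, which appears to be a slip (try $U=U_0$, $[s,t]=[\tfrac12,1]$: then $k=1$, $l=0$, but $Q^1$ carries $\partial U_0$ into $\partial U_0'$, not back to $\partial U_0$, so $m=1$ cannot work — one needs $m=2$).

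A minor stylistic comment on the conformality step. Your middle paragraph treats the parabolic fixed point $-\tfrac12$ and "precritical points" as potential obstructions, which is a detour: the parabolic point lies on $J(Q)$, is not a critical value, and plays no role in univalence of inverse branches. The clean argument — which you do reach in your last sentence of that paragraph and which is exactly what the paper does — is simply that the critical values of $Q^m$ lie in the Fatou set, hence are disjoint from $I_{U_0}(0,1)\subset J(Q)$; one then chooses a simply connected open set $\Omega\supseteq I_{U_0}(0,1)$ avoiding these finitely many points and pulls back by the inverse branch of $Q^m$ along $I_U(s,t)$. The chain-of-neighborhoods construction you sketch also works, but the single-pullback argument is sharper and is the one used elsewhere in \S~\ref{sec-basilica} (e.g.\ Lemma~\ref{lem:conformalBlowUp} and Proposition~\ref{prop:MarkovMapRefine}), so it is worth stating it in that form.
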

\begin{proof}
    Since $s, t$ are dyadic, there exists a smallest $k$ so that $\sigma_2^k(s) = \sigma_2^k(t) = 0$. Since $[s,t]$ is dyadic, $\sigma_2^k(x) \neq 0$ for any point $x \in (s, t)$.
    Therefore, $\sigma_2^k$ is a homeomorphism between $(s,t)$ and $(0,1)$.

    Now let $l$ be the smallest non-negative integer so that $Q^l(U) = U_0$. Let $m = k+l$. Then $Q^m: I_U(s,t) \longrightarrow I_{U_0}(0,1)$ is a homeomorphism. Since the critical values of $Q^m$ are disjoint from the Julia set, we can choose a simply connected open set $\Omega$ containing $I_{U_0}(0,1)$ which avoids the critical values of $Q^m$. The lemma follows by pulling back this open set $\Omega$ by $Q^m$.
\end{proof}

\begin{cor}
    Let $I_U[s,t], I_{U'}[s',t']$ be dyadic. Then there exist $n, m$ so that $Q^{-n} \circ Q^m: I_U(s,t) \longrightarrow I_{U'}(s',t')$ extends to a conformal map in a neighborhood of $I_U(s,t)$, where $Q^{-n}$ is an appropriate inverse branch of $Q^n$.
\end{cor}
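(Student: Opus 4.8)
The corollary is an immediate consequence of Lemma~\ref{lem:mapprimitive}. The plan is to apply the lemma twice: once to the dyadic segment $I_U[s,t]$ and once to $I_{U'}[s',t']$, and then to glue the resulting conformal maps together.

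More precisely, first I would apply Lemma~\ref{lem:mapprimitive} to $I_U[s,t]$ to obtain an integer $m$ and a conformal extension of $Q^m$ to a neighborhood of $I_U(s,t)$ with $Q^m\colon I_U(s,t)\longrightarrow I_{U_0}(0,1)$. Next I would apply Lemma~\ref{lem:mapprimitive} a second time to $I_{U'}[s',t']$ to obtain an integer $n$ and a conformal extension of $Q^n$ to a neighborhood of $I_{U'}(s',t')$ with $Q^n\colon I_{U'}(s',t')\longrightarrow I_{U_0}(0,1)$. Since $Q^n$ restricts to a homeomorphism from a neighborhood $N'$ of $I_{U'}(s',t')$ onto a neighborhood of $I_{U_0}(0,1)$, there is a well-defined inverse branch $Q^{-n}$ of $Q^n$ defined on $Q^n(N')$ that maps $I_{U_0}(0,1)$ back to $I_{U'}(s',t')$; shrinking the neighborhoods if necessary, we may assume the image $Q^m(N)$ of a neighborhood $N$ of $I_U(s,t)$ is contained in the domain of this inverse branch. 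Then the composition $Q^{-n}\circ Q^m$ is conformal on $N$ and carries $I_U(s,t)$ onto $I_{U'}(s',t')$ homeomorphically, which is exactly the assertion.

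There is really no obstacle here beyond bookkeeping: the only point that requires a word of care is that the inverse branch $Q^{-n}$ is single-valued on the relevant neighborhood. This is guaranteed because $Q^n$ is already a homeomorphism on a neighborhood of $I_{U'}(s',t')$ (as produced by Lemma~\ref{lem:mapprimitive}), so we may simply take $Q^{-n}$ to be the inverse of that homeomorphism restricted to a suitably small simply connected neighborhood of $I_{U_0}(0,1)$; composing with $Q^m$ on a correspondingly small neighborhood of $I_U(s,t)$ completes the construction.
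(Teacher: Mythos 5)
Your proof is correct and is exactly the argument the paper intends: apply Lemma~\ref{lem:mapprimitive} to both dyadic segments, invert the second conformal map on a small neighborhood of $I_{U_0}(0,1)$, and compose after shrinking neighborhoods. The paper omits the proof precisely because the composition is routine, and your handling of the single-valuedness of the inverse branch is the right level of care.
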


\subsubsection{Internal puzzle}\label{subsubsec:internalpuzzle}
In this subsection, we will first canonically assign an internal puzzle piece $P_U[s,t]$ to any dyadic segment $I_U(s,t)$.
We use it to define internal puzzles.

We start with $U_0$.
Recall that $Q^2:U_0 \longrightarrow U_0$ is conformally conjugate to the parabolic quadratic Blaschke product $B: \D \longrightarrow \D$ (normalized to have its critical point at $0$ and parabolic fixed point at $1$).
By Schwarz reflection, $B$ extends to the rational map $B(z)=\frac{3z^2+1}{z^2+3}$ on $\widehat\C$.
Let $\PC(B) \subseteq \widehat\C$ be the post-critical set; i.e., the closure of the orbits of the critical values of $B$.
Fix an $\epsilon > 0$, and let $N$ be an $\epsilon-$neighborhood of $\mathbb{S}^1-\{1\}$ with respect to the hyperbolic metric on $\widehat\C - \PC(B)$.
Let $n \geq 0$ and let $N'$ be any component of $B^{-n}(N)$.
Since $B$ is expanding with respect to the hyperbolic metric, we conclude that $N' \subseteq N$.

\begin{defn}[Internal puzzle piece]\label{defn:internalpuzzle}
    We define the {\em internal puzzle piece} 
    $$
    P_{U_0}[0,1] \subseteq \overline{U_0}
    $$ 
    associated with $I_{U_0}(0,1)$ as $\overline{\Phi^{-1}(N \cap \D)}$, where $\Phi: U_0 \longrightarrow \D$ is the conformal conjugacy between $Q^2$ and the Blaschke product $B$.

    Let $I_U(s,t)$ be dyadic. We define the corresponding {\em internal puzzle piece} 
    $$
    P_{U}[s,t] \subseteq \overline{U}
    $$ 
    as the pull-back of $P_{U_0}[0,1]$ under a conformal restriction of $Q^m$ that carries $I_U(s,t)$ to $I_{U_0}(0,1)$. More precisely, $P_{U}[s,t]$ is the closure of the component of $(Q^m)^{-1}(\Int (P_{U_0}[0,1]))$ that has $I_U(s,t)$ on its boundary.
\end{defn}

Note that the parabolic fixed point $-\frac12$ necessarily lies on the boundary of two internal puzzle pieces contained in $\overline{U_0}$.
We also remark that the inner boundaries of these internal puzzle pieces play the role of the union of external rays and equipotentials for standard puzzle pieces for polynomials.
In particular, we have the following nested property.
\begin{lem}
    Let $I_U[s,t], I_U[s',t']$ be dyadic. Suppose $I_U(s,t) \subseteq I_U(s',t')$. Then $P_U[s,t] \subseteq P_{U}[s',t']$.
\end{lem}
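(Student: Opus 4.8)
The plan is to reduce everything to the single ``base case'' $P_{U_0}[\sigma,\tau]\subseteq P_{U_0}[0,1]$ for a dyadic interval $[\sigma,\tau]$, where the inclusion is immediate from the backward invariance of the neighbourhood $N$ recorded just before Definition~\ref{defn:internalpuzzle}: every component $N'$ of $B^{-n}(N)$ satisfies $N'\subseteq N$. First I would dispose of the trivial case $I_U(s,t)=I_U(s',t')$ (where $P_U[s,t]=P_U[s',t']$) and assume $[s,t]\subsetneq[s',t']$. Let $g=Q^{m'}$ be the conformal iterate furnished by Lemma~\ref{lem:mapprimitive} and used in Definition~\ref{defn:internalpuzzle} to build $P_U[s',t']$, so that $g$ carries $I_U(s',t')$ onto $I_{U_0}(0,1)$ and restricts to a conformal isomorphism $\Int P_U[s',t']\to\Int P_{U_0}[0,1]$. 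Since the internal-angle conjugacies are set up precisely so that the relevant iterates of $Q$ become iterates of the doubling map, and $g$ is injective on $I_U(s',t')$, in internal-angle coordinates $g$ is an orientation-preserving affine homeomorphism of $[s',t']$ onto $[0,1]$. As $[s',t']$ and $[s,t]$ are of the form $[p'/2^{n'},(p'{+}1)/2^{n'}]$ and $[p/2^{n},(p{+}1)/2^{n}]$ with $n\geq n'$ (the remark following Definition~\ref{defn:dyaint}), this map carries $[s,t]$ onto a dyadic interval $[\sigma,\tau]\subseteq[0,1]$; hence $g\big(I_U(s,t)\big)=I_{U_0}(\sigma,\tau)$ and $P_{U_0}[\sigma,\tau]$ is defined.

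The second step is to identify $P_U[s,t]$ as a $g$-pull-back of $P_{U_0}[\sigma,\tau]$. Let $Q^{\mu}$ be the iterate of Lemma~\ref{lem:mapprimitive} defining $P_{U_0}[\sigma,\tau]$, carrying $I_{U_0}(\sigma,\tau)$ onto $I_{U_0}(0,1)$. Then $Q^{\mu}\circ g$ carries $I_U(s,t)$ conformally onto $I_{U_0}(0,1)$, and I would argue that the iterate of $Q$ carrying a prescribed dyadic subarc of $\partial U_0$ homeomorphically onto $I_{U_0}(0,1)$ is unique (otherwise a nontrivial iterate $Q^{b-a}$ would carry $I_{U_0}(0,1)$ homeomorphically onto itself, contradicting topological expansion of the first-return dynamics on $\partial U_0$). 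Hence $Q^{\mu}\circ g$ is exactly the iterate $Q^{m}$ used in Definition~\ref{defn:internalpuzzle} to build $P_U[s,t]$. Taking throughout the connected component whose closure contains the relevant boundary arc, and using that $\Int P_{U_0}[\sigma,\tau]$ is the component of $(Q^{\mu})^{-1}(\Int P_{U_0}[0,1])$ adjacent to $I_{U_0}(\sigma,\tau)=g\big(I_U(s,t)\big)$ while $g$ is a conformal isomorphism in a neighbourhood of $I_U(s,t)$, I conclude that $\Int P_U[s,t]$ is the component of $g^{-1}\big(\Int P_{U_0}[\sigma,\tau]\big)$ adjacent to $I_U(s,t)$.

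To finish, I would transport the base case to the Blaschke model through $\Phi\colon U_0\to\D$, which conjugates $Q^{2}|_{U_0}$ to $B$. Since $Q^{\mu}$ maps the arc $I_{U_0}(\sigma,\tau)$ onto an arc in $\partial U_0$ while $Q^{\mu}(U_0)$ is again a bounded periodic Fatou component, we must have $Q^{\mu}(U_0)=U_0$, so $\mu=2j$ is even and $Q^{\mu}|_{U_0}$ corresponds to $B^{j}$. Then $P_{U_0}[0,1]$ corresponds to $\overline{N\cap\D}$ and $P_{U_0}[\sigma,\tau]$ to $\overline{N'\cap\D}$, where $N'$ is the component of $B^{-j}(N)$ whose closure carries the subarc of $\mathbb{S}^{1}$ matching $(\sigma,\tau)$; the expansion statement gives $N'\subseteq N$, so $P_{U_0}[\sigma,\tau]\subseteq P_{U_0}[0,1]$ and hence $\Int P_{U_0}[\sigma,\tau]\subseteq\Int P_{U_0}[0,1]$. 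Applying $g^{-1}$, the component $\Int P_U[s,t]$ from the previous paragraph is a connected subset of $g^{-1}\big(\Int P_{U_0}[0,1]\big)$ whose closure contains $I_U(s,t)\subseteq I_U(s',t')$, hence lies in the component of $g^{-1}\big(\Int P_{U_0}[0,1]\big)$ adjacent to $I_U(s',t')$, namely $\Int P_U[s',t']$. Taking closures yields $P_U[s,t]\subseteq P_U[s',t']$.

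The step I expect to be most delicate is the bookkeeping of the second and third paragraphs rather than any analysis: one must line up the three instances of Definition~\ref{defn:internalpuzzle} (for $P_U[s,t]$, $P_U[s',t']$ and $P_{U_0}[\sigma,\tau]$), verify that ``the component adjacent to a given boundary arc'' is unambiguous and behaves functorially under composition of the defining iterates --- in particular that near $I_U(s',t')$ the preimage $g^{-1}\big(\Int P_{U_0}[0,1]\big)$ lies only on the $U$-side of $\partial U$, so that it has a well-defined inner component --- and keep track of the parities of the iterates involved so that they correspond to honest iterates of $B$ on the Blaschke side. The genuinely geometric content, namely that pulling a puzzle piece back shrinks it, is already packaged in the expansion of $B$ and needs no further argument.
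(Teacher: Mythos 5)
Your proof is correct, and the engine driving it is the same as in the paper: pass to the Blaschke model and use that every component $N'$ of $B^{-n}(N)$ satisfies $N'\subseteq N$. Where you diverge is in the \emph{reduction}. The paper applies the single homeomorphism $Q^l\colon\overline{U}\to\overline{U_0}$ (the minimal iterate landing $U$ on $U_0$), which by construction preserves internal angles, and observes at once that $Q^l(P_U[s,t])=P_{U_0}[s,t]$ and $Q^l(P_U[s',t'])=P_{U_0}[s',t']$; this reduces the problem to the central component in one stroke, and the Blaschke expansion finishes it. You instead normalize the \emph{outer} interval to $[0,1]$ via the defining iterate $g=Q^{m'}$, which is a harder normalization: it forces you to prove the compatibility statement that $g$ carries $P_U[s,t]$ to $P_{U_0}[\sigma,\tau]$, for which you need the uniqueness of the iterate appearing in Lemma~\ref{lem:mapprimitive} (your degree/parity argument for that is fine, since $Q^{b-a}|_{\partial U_0}$ would be a degree-$\geq 2$ cover or would land on $U_0'$) and a careful tracking of ``the component adjacent to the boundary arc''. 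You also correctly flag the parity point ($\mu$ must be even so that $Q^\mu|_{U_0}$ corresponds to an honest iterate of $B$), a detail the paper suppresses. The net effect is that your argument is longer and spends effort verifying functoriality of the pull-back construction under composition of iterates, which the paper's choice of reduction map ($Q^l$ applied simultaneously to both puzzle pieces) renders unnecessary. Both routes are sound; the paper's is just the minimal one.
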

\begin{proof}
    Let $l$ be the smallest non-negative integer so that $Q^l(U) = U_0$. Then $Q^l(P_U[s,t]) = P_{U_0}[s,t]$ and $Q^l(P_U[s',t']) = P_{U_0}[s',t']$. By conjugating the first return map $Q^2$ on $U$ to the parabolic Blaschke product, we see that $P_{U_0}[s,t] \subseteq P_{U_0}[s',t']$. Therefore, $P_U[s,t] \subseteq P_{U}[s',t']$ as $Q^l:\overline{U}\to \overline{U_0}$ is a homeomorphism.
\end{proof}

We define internal puzzles as collections of internal puzzle pieces that satisfy certain connectedness conditions as follows. 
\begin{defn}[Internal puzzle]\label{defn:intpuz}
Let $U_0, U_1,\ldots, U_r$ be a collection of bounded Fatou components which include the central component $U_0$ such that $\bigcup \overline{U_i}$ is connected.
Let $I_{U_i}[s_{i,j}, s_{i,j+1}]$ be a collection of partitions of $\partial U_i$ so that each piece is dyadic.
Let $P_{U_i}[s_{i,j}, s_{i,j+1}]$ be the corresponding internal puzzle pieces.
We call any finite collection of internal puzzle pieces that arise in this way an {\em internal puzzle} (see Figure~\ref{fig:BasPuz}), and denote it by 
$$
(\{U_i\}, P_{U_i}[s_{i,j}, s_{i,j+1}]).
$$
\end{defn}

 \begin{figure}[htp]
 \captionsetup{width=0.96\linewidth}
    \centering
    \includegraphics[width=0.9\textwidth]{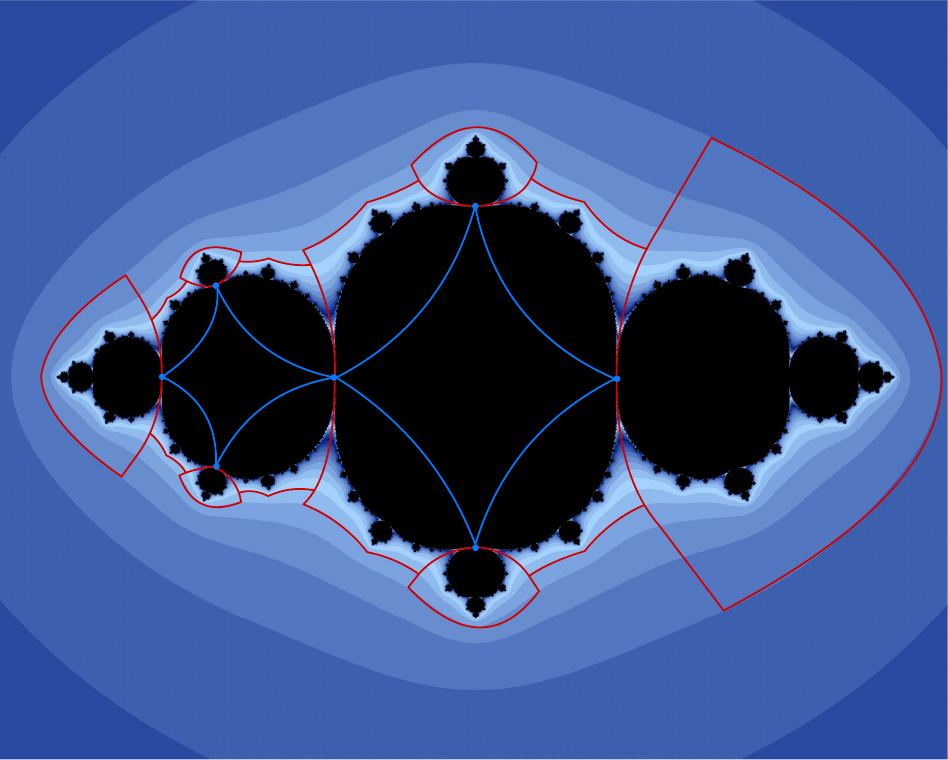}
    \caption{An illustration of the Basilica puzzle. The blue arcs correspond to the inner boundary of the internal puzzle pieces. The external equipotentials are selected according to Lemma~\ref{lem:conformalBlowUp}. For example, the equipotential for the right most limb type puzzle piece has level $\frac{1}{2}$. The red curves are composed of parts of equipotential lines and external rays. The Basilica puzzle pieces are the closed bounded regions cut off by the union of the blue and red curves.}
    \label{fig:BasPuz}
\end{figure}

\subsubsection{From internal puzzle to Basilica puzzle}\label{subsubsec:indbp}
In this subsection, we define and construct a Basilica puzzle from an internal puzzle.

Recall that $U_\infty$ is the unbounded Fatou component of $Q$. Then, there exists a conformal map $\phi_\infty:\D\to U_\infty$ conjugating $z^2$ to $Q$. Since $\partial U_\infty=J(Q)$ is locally connected, the Carath\'eodory theorem implies that $\phi_\infty$ extends to a continuous semi-conjugacy $\phi_\infty:\mathbb{S}^1\to J(Q)$ between $z^2$ and $Q$. Via the Riemann uniformization $\phi_\infty$, the \emph{ideal boundary} $\partial^I U_\infty$ of $U_\infty$ (i.e., the set of prime ends of $U_\infty$) is identified with $\mathbb{S}^1$, the set of external angles. The image of the radial line in $\D$ at angle $t$ under the conformal map $\phi_\infty$ is called the \emph{external ray} at angle $t$, and is denoted by $R_\infty(t)$.
We call $\phi_\infty(t)$ (the landing point of $R_t$) the point on $J(Q)$ with external angle $t$. We also use $I_\infty[s,t]$ to denote the interval on $\partial^I U_\infty$ between $s$ and $t$.

Let $(\{U_i\}, P_{U_i}[s_{i,j}, s_{i,j+1}])$ be an internal puzzle.
Let $\{t_k\}$ be the collection of external angles landing at $\phi_{U_i}(s_{i,j})$ for $i, j$. We call the collection $\{t_k\}$ the external angles corresponding to $(\{U_i\}, P_{U_i}[s_{i,j}, s_{i,j+1}])$.
Since $s_{i,j}$ is dyadic, there are exactly two external rays of $Q$ landing at $\phi_{U_i}(s_{i,j})$.

The external rays $R_\infty(\frac{1}{3})$ and $R_\infty(\frac{2}{3})$ land at the parabolic fixed point $-\frac12$ of $Q$. The angles $\frac13, \frac23$ belong to the collection $\{t_k\}$.
Denote by $W_0$ the component of $\C - \left(\overline{R_\infty(\frac{1}{3})} \cup \overline{R_\infty(\frac{2}{3})}\right)$ that contains the critical point $0$.

\begin{lem}\label{lem:conformalBlowUp}
Let $(\{U_i\}, P_{U_i}[s_{i,j}, s_{i,j+1}])$ be an internal puzzle with corresponding external angles $\{t_k\}$.
Let $R_\infty(t_k)$ and $R_\infty(t_{k+1})$ be two adjacent external rays.

Suppose that $R_\infty(t_k), R_\infty(t_{k+1})$ land at the same point. Let $W$ be the component of $\C - \left(\overline{R_\infty(t_k)} \cup \overline{R_\infty(t_{k+1})}\right)$ that is disjoint from $U_0,\ldots, U_r$. Then there exists a unique integer $m \geq 0$ so that 
$$
Q^m : W \longrightarrow W_0
$$
is conformal.

Suppose that $R_\infty(t_k), R_\infty(t_{k+1})$ land at two different points on $\partial U_i$. Let $W$ be the component of $\C - \left(\overline{R_\infty(t_k)} \cup \overline{R_\infty(t_{k+1})} \cup \overline{U_i}\right)$ that is disjoint from $U_0,\ldots, U_r$.
Then there exists a unique integer $m \geq 0$ so that 
$$
Q^m : W \longrightarrow W_0 - \overline{U_0}
$$
is conformal.
\end{lem}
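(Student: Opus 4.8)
The plan is to treat the two cases uniformly: in both, the region $W$ is bounded by (pieces of) two external rays $R_\infty(t_k), R_\infty(t_{k+1})$, possibly together with an arc of $\partial U_i$, and we want to show that $W$ maps conformally by some iterate $Q^m$ onto the ``model'' region $W_0$ (resp. $W_0 - \overline{U_0}$). First I would recall that the angles $t_k, t_{k+1}$ arise as external angles landing at points of the form $\phi_{U_i}(s_{i,j})$, where $s_{i,j}$ is dyadic; so after applying $\sigma_2$ finitely many times, both $t_k$ and $t_{k+1}$ are carried to the pair $\{\frac13,\frac23\}$ landing at the parabolic fixed point $-\frac12$ (using that the first-return dynamics on $\partial U_0$ is conjugate to angle doubling, and that $l$ iterates carry $U_i$ conformally to $U_0$). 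Let $m$ be the total number of such iterates. The key point is that along this orbit the relevant arc $(t_k,t_{k+1})$ of $\partial^I U_\infty$ never hits the angle $0$ (equivalently, the corresponding arc of $\partial U_i$ never hits the parabolic point $-\frac12$, i.e., the internal-angle interval is \emph{dyadic} in the sense of Definition~\ref{defn:dyaint}); this is exactly the hypothesis that the $I_{U_i}[s_{i,j},s_{i,j+1}]$ form partitions into dyadic segments. Consequently $Q^m$ restricted to the open arc $(t_k,t_{k+1})$ is a homeomorphism onto $(\frac13,\frac23)$, and $Q^m$ is a local homeomorphism at every point of the closed arc.

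Next I would upgrade this boundary statement to the region $W$ itself. The region $W$ is a Jordan domain (in $\widehat\C$) whose boundary consists of the two rays and either their common landing point or an arc of $\partial U_i$; since $W$ is disjoint from $U_0,\dots,U_r$ and lies on the ``outer'' side, it contains no critical value of $Q$ and indeed no point of the forward orbit of the critical value through step $m$ (here one uses that the critical orbit of $Q$ lands in the cycle $U_0,U_0'$ and that $W$ avoids these). Therefore $Q^m|_W$ is a proper, unramified, hence covering map onto its image; since $Q^m$ is injective on $\partial W$ (by the boundary analysis above, noting the ray-pieces and the Fatou-boundary arc are each mapped injectively and have disjoint images in the model) and $W$ is simply connected, $Q^m|_W$ is a conformal isomorphism onto the component of the complement of $Q^m(\partial W)$ that it covers. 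In the first case $Q^m(\partial W) = \overline{R_\infty(\tfrac13)}\cup\overline{R_\infty(\tfrac23)}$ and $Q^m(W)$ is the component on the appropriate side, which is $W_0$ by the definition of $W_0$ (one checks the ``side'' by tracking that $W$ is the component disjoint from $U_0,\dots,U_r$, which is carried to the component of $\C - (\overline{R_\infty(\frac13)}\cup\overline{R_\infty(\frac23)})$ \emph{not} containing $0$ — wait, no: one must verify it is carried to $W_0$; this is pinned down by orientation and by noting that the inner boundary arc, if present, is carried to an arc of $\partial U_0$, forcing the image to be the component abutting $\partial U_0$, namely $W_0$). In the second case $Q^m(\partial W)$ additionally contains $\partial U_0$ (the image of the arc of $\partial U_i$, since $Q^l$ maps $\partial U_i$ homeomorphically onto $\partial U_0$ and the subsequent $\sigma_2$-iterates sweep out all of $\partial U_0$), so the image component is $W_0 - \overline{U_0}$.

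Finally, uniqueness of $m$: if $Q^{m}|_W$ and $Q^{m'}|_W$ were both conformal onto $W_0$ (resp. $W_0-\overline{U_0}$) with $m<m'$, then $Q^{m'-m}$ would fix $W_0$ setwise and conformally, which is impossible because $Q^{2}$ acts on $\overline{U_0}$ as an expanding degree-two map (conjugate to the Blaschke product $B$), so no nontrivial iterate of $Q$ can be a conformal self-map of $W_0$; alternatively, $Q^{m'-m}$ would then be a conformal automorphism of $W_0$ commuting with the parabolic dynamics at $-\frac12$, forcing $m'=m$. I expect the main obstacle to be the bookkeeping in the boundary analysis — precisely identifying which complementary component $Q^m(W)$ equals, and verifying injectivity of $Q^m$ on $\partial W$ when the ray-arcs and the Fatou-boundary arc are reassembled — rather than any deep analytic input; the only genuinely substantive ingredient is the dyadicity hypothesis, which is exactly what guarantees the orbit of the bounding arc avoids the parabolic point and hence keeps $Q^m$ unramified and injective on $W$.
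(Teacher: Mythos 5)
Your overall approach is the same as the paper's: pass to external angles, observe that $\sigma_2^m$ carries the relevant arc homeomorphically onto $(\tfrac23,\tfrac13)$ (resp.\ that $Q^m$ carries the dyadic segment of $\partial U_i$ onto $\partial U_0\setminus\{-\tfrac12\}$ via Lemma~\ref{lem:mapprimitive}), and then promote this boundary statement to a conformal isomorphism $Q^m:W\to W_0$ (resp.\ $W_0\setminus\overline{U_0}$). The paper compresses this promotion into a single word ``Thus''; you expand it, and that expansion is where a genuine gap appears.

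The gap is in your argument that $Q^m|_W$ is unramified. You reason that $W$ ``contains no critical value of $Q$ and indeed no point of the forward orbit of the critical value through step $m$,'' citing that the critical orbit lies in $U_0\cup U_0'$ and that $W$ avoids these. Two problems. First, $U_0'$ need not belong to $\{U_0,\ldots,U_r\}$ (the internal puzzle is only required to contain the central component $U_0$), so $W$ may well contain $U_0'$ and hence the critical value $-\tfrac34$: this happens exactly when $W$ is the sector at the parabolic fixed point opposite to the torso. Second, and more fundamentally, even if $W$ did avoid the forward critical orbit, that is irrelevant to whether $Q^m|_W$ is unramified: the critical points of $Q^m$ are the \emph{iterated preimages} $\bigcup_{j=0}^{m-1}Q^{-j}(0)$ of the critical point, not the forward images of the critical value. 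To run the unramified/covering argument one must show $Q^j(W)\not\ni 0$ for all $j<m$, which is true (the angular span of $Q^j(W)$ is $\tfrac{2}{3\cdot 2^{m-j}}<\tfrac23$ for $j<m$, too small to contain all external angles of $\partial U_0$, whose hull is the full arc of length $\tfrac23$), but this requires a separate argument you have not supplied. A cleaner route avoids ramification entirely: $Q^m|_W:W\to W_0$ is a proper holomorphic map, and since $Q^m$ is already injective on the dense-in-the-escaping-part set $W\cap U_\infty$ (by injectivity of $\sigma_2^m$ on $(t_k,t_{k+1})$ in the B\"ottcher coordinate) and $(Q^m)^{-1}(U_\infty)=U_\infty$, a generic point of $W_0\cap U_\infty$ has exactly one $W$-preimage, so the degree is $1$. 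Minor additional point: in the second case $Q^m$ is not injective on $\partial W$ (both endpoints of the $\partial U_i$-arc map to $-\tfrac12$), so the ``injective on $\partial W$'' step should be replaced by the degree count above or by the fact that a covering onto the simply connected $W_0\setminus\overline{U_0}$ is trivial.
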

\begin{remark}
    As every contact point of $J(Q)$ eventually maps to the parabolic fixed point $-\frac12$, and since the external rays $R_\infty(\frac13), R_\infty(\frac23)$ landing at $-\frac12$ participate in the collection $\{R_\infty(t_k)\}$, it follows that the two possibilities treated in Lemma~\ref{lem:conformalBlowUp} are exhaustive.
\end{remark}
\begin{proof}
    Suppose that $R_\infty(t_k)$ and $R_\infty(t_{k+1})$ land at the same point. Then the interval $(t_k, t_{k+1}) \subseteq \mathbb{S}^1$ contains all the angles of external rays in $W$. Then there exists $m$ so that $\sigma_2^m: (t_k, t_{k+1}) \longrightarrow (\frac{2}{3}, \frac{1}{3})$ is a homeomorphism. Thus, the corresponding map $Q^m: W \longrightarrow W_0$ is conformal.

    Suppose that $R_\infty(t_k)$ and $R_\infty(t_{k+1})$ land at two different points on $\partial U_i$. Then the landing points of $R_\infty(t_k)$ and $R_\infty(t_{k+1})$ bound a dyadic segment $I_{U_i}[s, t] \subseteq \partial U_i$. By Lemma~\ref{lem:mapprimitive}, there exists $m$ so that $Q^m:I_{U_i}(s,t) \longrightarrow I_{U_0}(0,1)$ is a homeomorphism. It easily follows from this observation that $Q^m : W \longrightarrow W_0 - \overline{U_0}$ is conformal.
\end{proof}

We now define a canonical way to extend an internal puzzle to the exterior of the filled Julia set.

Let $E_k$ be the part of the equipotential at level $\frac{1}{2^m}$ between the two adjacent external rays $R_\infty(t_k)$ and $R_\infty(t_{k+1})$, where $m$ is the unique integer in Lemma~\ref{lem:conformalBlowUp}.
We will call $E_k$ the {\em canonical equipotential} between $R_\infty(t_k)$ and $R_\infty(t_{k+1})$.
We call the union
$$
\Gamma:= \bigcup_k \overline{R_\infty(t_k)} \cup \bigcup_k E_k \cup \bigcup_{i,j} (\partial P_{U_i}[s_{i,j}, s_{i,j+1}]) \cap U_i)
$$
the {\em canonical extension graph} for $(\{U_i\}, P_{U_i}[s_{i,j}, s_{i,j+1}])$.
Note that $\Gamma$ cuts the Riemann sphere $\widehat\C$ into finitely many pieces.
(See Figure~\ref{fig:BasPuz}, where the external rays $R_\infty(t_k)$ and $R_\infty(t_{k+1})$ are only drawn up to equipotential level $\frac{1}{2^m}$).

\begin{defn}[Basilica puzzle]\label{basi_puzz_def}
Let $(\{U_i\}, P_{U_i}[s_{i,j}, s_{i,j+1}])$ be an internal puzzle with corresponding external angles $\{t_k\}$, and canonical extension graph $\Gamma$.

We call the closure of a complementary component of $\Gamma$ that intersects the Julia set a {\em Basilica puzzle piece}. 
We call the collection of all Basilica puzzle pieces a {\em Basilica puzzle} (induced by $(\{U_i\}, P_{U_i}[s_{i,j}, s_{i,j+1}])$), and denote it by $\{P_i\}_{i\in \mathcal{I}}$, where $\mathcal{I}$ is some index set.

Let $P_i$ be a Basilica puzzle piece. It is said to be of {\em limb type} or  called a \emph{limb puzzle piece} if $\Int P_i$ does not intersect any bounded Fatou components in the list $U_0,\ldots, U_r$. It is said to be of {\em torso type} or called a \emph{torso puzzle piece} otherwise. We say that it is of torso type for $U_j$ if it intersects $U_j$.
\end{defn}
\begin{rmk}
1) We think of the union $\bigcup_{i=0}^r U_i$ as the `torso' of the filled Julia set. The other bonded Fatou components are thought of as `limbs'. Thus, the nomenclature torso/limb type tells us whether or not a puzzle piece intersects the torso of the filled Julia set.

   2)  If a Basilica puzzle piece $P$ is of limb type, then $\partial P \cap J(Q)$ consists of a single point. Otherwise, $P$ is of torso type for some bounded Fatou component $U_j$, and $\partial P \cap J(Q)$ consists of exactly two points on $\partial U_j$.
\end{rmk}

By our choice of equipotentials, we have the following dynamical compatibility.
\begin{lem}[Canonical maps between puzzle pieces]\label{lem:TypePreservingPuzzleMap}
    Let $\mathfrak{P}=\{P_i\}_{i\in \mathcal{I}}$, $\mathfrak{S}=\{S_j\}_{j\in \mathcal{J}}$ be two Basilica puzzles.
    Suppose that $P_i$ and $S_j$ are of the same type.
    Then there exist unique $m, n \geq 0$ so that $F_{i,j} = Q^{-n} \circ Q^m : \Int S_j \longrightarrow \Int P_i$ is conformal.
\end{lem}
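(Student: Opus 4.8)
\medskip

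The plan is to reduce the statement to the two model cases identified in Lemma~\ref{lem:conformalBlowUp}, namely the limb case (where the relevant region is a $Q^m$-preimage of $W_0$) and the torso case (where it is a $Q^m$-preimage of $W_0-\overline{U_0}$), and then use transitivity of the dynamics of $Q$ on its bounded Fatou components and on the contact points. First I would treat the two types separately. Suppose $P_i$ and $S_j$ are both of limb type. By Definition~\ref{basi_puzz_def}, $P_i$ is the closure of a complementary component of the canonical extension graph of $\mathfrak{P}$ whose interior misses all the bounded Fatou components in the list, and $\partial P_i\cap J(Q)$ is a single point. Unwinding the construction of the canonical extension graph, $\Int P_i$ is exactly the component $W$ of $\widehat\C$ minus two adjacent external rays $\overline{R_\infty(t_k)}, \overline{R_\infty(t_{k+1})}$ (landing at a common point) and the canonical equipotential $E_k$ between them. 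By Lemma~\ref{lem:conformalBlowUp}, there is a unique $m_i\geq 0$ with $Q^{m_i}$ conformal on the ``ray sector'' component of $\widehat\C-(\overline{R_\infty(t_k)}\cup\overline{R_\infty(t_{k+1})})$ onto $W_0$; since $E_k$ was chosen at equipotential level $1/2^{m_i}$ precisely so that $Q^{m_i}$ carries $E_k$ onto the equipotential bounding $W_0$ at level $1/2$ (the standard equipotential cutting $W_0$), $Q^{m_i}$ restricts to a conformal map of $\Int P_i$ onto the ``model limb piece'' $P_0^{\mathrm{mod}}:=W_0\cap\{\text{level}<1/2\}$. Similarly there is a unique $m_j$ with $Q^{m_j}$ conformal from $\Int S_j$ onto the same model piece $P_0^{\mathrm{mod}}$.

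The next step is to compose: I would like to write $F_{i,j}=(Q^{m_i}|_{\Int P_i})^{-1}\circ Q^{m_j}:\Int S_j\to\Int P_i$, which is visibly of the form $Q^{-n}\circ Q^m$ with $n=m_i$ and $m=m_j$, and is conformal as a composition of conformal maps, where $(Q^{m_i}|_{\Int P_i})^{-1}$ is the inverse branch of $Q^{m_i}$ mapping $P_0^{\mathrm{mod}}$ back into $\Int P_i$ (this branch is single-valued and conformal because $P_0^{\mathrm{mod}}$ is simply connected and disjoint from the critical values of $Q^{m_i}$, these being points in the bounded Fatou components while $P_0^{\mathrm{mod}}$ lies in $U_\infty$ except for its single Julia boundary point). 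For the torso case the argument is parallel, with $W_0$ replaced by $W_0-\overline{U_0}$: if $P_i$ and $S_j$ are both of torso type, say of torso type for $U_a$ and $U_b$ respectively, then $\Int P_i$ is the component of $\widehat\C$ minus two external rays landing at two distinct points of $\partial U_a$, the canonical equipotential between them, $\overline{U_a}$, and the inner boundary arcs of the two flanking internal puzzle pieces inside $U_a$. Here I would use (i) transitivity of $Q$ on bounded Fatou components to send $U_a$ and $U_b$ both to $U_0$ by appropriate conformal iterates (the existence and conformality being Lemma~\ref{lem:mapprimitive} and the lemma after Definition~\ref{defn:internalpuzzle} on nesting of internal puzzle pieces), (ii) Lemma~\ref{lem:conformalBlowUp} for the external side, and (iii) the nesting/conformality of the internal puzzle pieces $P_{U_0}[0,1]$ to pin down that the ``model torso piece'' $S_0^{\mathrm{mod}}$ (cut out by $R_\infty(1/3),R_\infty(2/3)$, the level-$1/2$ equipotential, $\overline{U_0}$, and the two inner boundaries of $P_{U_0}[\cdot]$ adjacent to $0$) is the common image, again yielding $F_{i,j}=(Q^{n})^{-1}\circ Q^m$ conformal on $\Int S_j$.

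For uniqueness, I would argue as follows. If $Q^{-n}\circ Q^m$ and $Q^{-n'}\circ Q^{m'}$ both give conformal maps $\Int S_j\to\Int P_i$, then their composition (one with the inverse of the other) is a conformal self-map of $\widehat\C$, i.e.\ a M\"obius map, that is of the form $Q^{-k}\circ Q^l$ near $\Int P_i$ and fixes $\Int P_i$; combined with the fact that $Q$ is a finite branched cover and the puzzle pieces have nonempty interior, this forces $l=k$ and the M\"obius map to be the identity, hence $m=m'$, $n=n'$ after matching up how many times the external angle is doubled. More carefully: the external angles of the rays bounding $\Int P_i$ and $\Int S_j$ are dyadic-adjacent pairs, and $\sigma_2^m$ (resp.\ $\sigma_2^{m'}$) must carry the angle-interval of $S_j$ onto that of $P_i$ (or of the model piece), so $m$ and the choice of inverse branch are determined by the combinatorics, exactly as in the uniqueness clause of Lemma~\ref{lem:conformalBlowUp}. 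I expect the main obstacle to be bookkeeping rather than conceptual: one must verify carefully that the canonical equipotential levels chosen in the construction of $\Gamma$ (Lemma~\ref{lem:conformalBlowUp} plus the paragraph defining $E_k$) make the boundary arcs of $P_i$ match up, under the iterate $Q^{m_i}$, with the boundary arcs of the model piece — i.e.\ that the ``external equipotential'' pieces and the ``internal inner-boundary'' pieces are transported to each other correctly and simultaneously. Once this matching of boundary data is checked, conformality of $F_{i,j}$ on the interiors and its form $Q^{-n}\circ Q^m$ are immediate, and uniqueness follows from the rigidity of M\"obius maps together with the dyadic combinatorics of external angles.
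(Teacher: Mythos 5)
Your construction is exactly the paper's: transport $\Int S_j$ and $\Int P_i$ to a common model piece by forward iterates of $Q$ via Lemma~\ref{lem:conformalBlowUp}, then compose. The paper's models are $P_{limb}$ (the region of $W_0$ below the equipotential at level $1$) and $P_{imm}=(P_{limb}-U_0)\cup P_{U_0}[0,1]$; two slips in your bookkeeping are harmless but worth noting: the model limb piece is cut at equipotential level $1$, not $1/2$ (the canonical equipotential $E_k$ sits at level $1/2^m$, and $Q^m$ carries it to level $1$, since Green's function doubles under $Q$), and a torso piece for $U_a$ meets $U_a$ in a single internal puzzle piece $P_{U_a}[s,t]$, not two flanking ones. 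On uniqueness, your opening move is wrong as stated: the composition of one putative map with the inverse of the other is a conformal self-bijection of the simply connected domain $\Int P_i$, not of $\widehat{\C}$, hence not automatically M\"obius or the identity; your subsequent fallback to dyadic angle combinatorics is the right ingredient, but to conclude one must track the induced boundary correspondence (the self-map must fix the bounding rays, the equipotential arc, and the internal geodesic arc pointwise, plus the Julia boundary points of $\partial P_i$). Note also that the paper's proof does not address uniqueness of $(m,n)$ at all; in fact, the pair can fail to be unique without a normalization such as $\min(m,n)=0$, since if $\sigma_2^{m-1}$ already sends the angle interval of $S_j$ to the same interval that $\sigma_2^{n-1}$ produces for $P_i$, then $(m-1,n-1)$ yields the same map. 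What is truly canonical is the conformal map $F_{i,j}$ itself.
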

\begin{remark}
    We will call $F_{i,j}$ the {\em canonical map} between $P_i, S_j$.
\end{remark}
\begin{proof}
    Suppose that $S$ is a limb puzzle piece. Then $S$ is bounded by two external rays $R_{\infty}(s), R_{\infty}(t)$ with the canonical equipotential. Let $P_{limb}$ be the closure of the component bounded by $R_{\infty}(1/3), R_{\infty}(2/3)$ and the equipotential of level $1$ that contains $U_0$. Then by Lemma~\ref{lem:conformalBlowUp}, there exists $m$ so that $Q^m: \Int(S) \longrightarrow \Int(P_{limb})$ is conformal. Similarly, there exists $n$ so that $Q^n: \Int(P) \longrightarrow \Int(P_{limb})$ is conformal, and the lemma follows in this case.

    Suppose that $S$ is a torso puzzle piece. Then $\partial S$ intersect the Julia set at two points, which bounds a dyadic segment $I_{U_i}[s,t]$ for some bounded Fatou component $U_i$. Let $P_{imm}:= \left(P_{limb} - U_0\right) \cup P_{U_0}[0,1]$. Then by Lemma~\ref{lem:conformalBlowUp} and the definition of the internal puzzle (Definition~\ref{defn:internalpuzzle}), there exists $m$ so that $Q^m: \Int(S) \longrightarrow \Int(P_{imm})$ is conformal. Similarly, there exists $n$ so that $Q^n: \Int(P) \longrightarrow \Int(P_{imm})$ is conformal, and the lemma follows in this case as well.
\end{proof}

\subsection{Nested puzzles and conformal Markov map}
In this subsection, we introduce special types of refinements of Basilica puzzles and naturally extract conformal Markov maps associated with such Basilica puzzles.

\begin{defn}[(Markov) refinement of Basilica puzzles]\label{defn:refineBp}
Let $\mathfrak{P} = \{P_i\}_{i\in \mathcal{I}}$ and $\mathfrak{S} = \{S_j\}_{j\in \mathcal{J}}$ be two Basilica puzzles. 
We say that $\mathfrak{S}$ is a {\em refinement} of $\mathfrak{P}$ if for each $i \in \mathcal{I}$, there exists a subset $\mathcal{J}_i \subseteq \mathcal{J}$ so that
    $$
    P_i \cap J(Q) = \bigcup_{j \in \mathcal{J}_i} S_{j} \cap J(Q).
    $$
We say that a map $\Psi: \mathcal{J} \longrightarrow \mathcal{I}$ (between the index sets) is {\em type preserving} if $S_j$ and $P_{\Psi(j)}$ are of the same type for all $j \in \mathcal{J}$. 
We call a refinement $\mathfrak{S}$ of $\mathfrak{P}$ together with a type preserving map $\Psi:\mathcal{J} \longrightarrow \mathcal{I}$ a {\em Markov refinement} of $\mathfrak{P}$.
\end{defn}

\begin{remark}
By construction, if two Basilica puzzle pieces satisfy $S\cap J(Q) \subseteq P\cap J(Q)$, then $S \subseteq P$.
    Therefore, if $\mathfrak{S}$ is a refinement of $\mathfrak{P}$, then each puzzle piece $S_j, j\in \mathcal{J}$, is contained in $P_i$ for some $i = i(j) \in \mathcal{I}$.
    
We also remark that it is convenient to use the following notation to record a sub-class of Markov refinements.

Let $\mathfrak{P} =\mathfrak{P}^0$, and $  \mathfrak{P}^1$ be two Basilica puzzles with index sets $\mathcal{I}$ and $\mathcal{I}^1 \subseteq \mathcal{I} \times \mathcal{I}$ (called the set of {\em admissible} words of length $2$) for $\mathfrak{P}^0$ and $\mathfrak{P}^1$, respectively, so that 
for each $(ij) \in \mathcal{I}^1$, we have that
\begin{itemize}
    \item (nested) $P_{ij} \subseteq P_i$;
    \item (type-preserving) $P_{ij}$ and $P_j$ are of the same type.
\end{itemize}
Then $\mathfrak{P}^1$ is a Markov refinement of $\mathfrak{P}^0$; indeed, in this case, the type preserving map $\Psi: \mathcal{I}^1 \longrightarrow\mathcal{I}$ sends $(ij)$ to $j$.

\end{remark}
    The nomenclature `Markov refinement' and `admissible word' above are justified by the following result that allows one to associate a piecewise conformal Markov map $\cF$ to the pair $(\mathfrak{P}^1,\mathfrak{P}^0)$ such that $\mathfrak{P}^0,\mathfrak{P}^1$ become level $0$ and level $1$ puzzles for $\cF$ (cf. \S~\ref{subsec:markovp}).
\begin{prop}\label{prop:MarkovMapRefine}
Let $\mathfrak{P} = \{P_i\}_{i\in \mathcal{I}}$, and 
let $\mathfrak{P}^1=\{P_{ij}\}_{(ij)\in \mathcal{I}^1}$ be a Markov refinement of $\mathfrak{P}$.
Then for each $(ij) \in \mathcal{I}^1$, there exists an induced homeomorphism $F_{ij}$ where
    $$
    F_{ij} = Q^{-n}\circ Q^m: P_{ij} \longrightarrow P_{j}
    $$
for some $n = n(i,j), m = m(i,j)$ so that $F_{ij}$ is conformal in the interior.
\end{prop}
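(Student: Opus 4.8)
The plan is to construct the map $F_{ij}$ by composing the canonical maps of Lemma~\ref{lem:TypePreservingPuzzleMap} with a suitable intermediary, exploiting the nesting $P_{ij} \subseteq P_i$. First I would observe that since $\mathfrak{P}^1$ is a Markov refinement of $\mathfrak{P}$, the piece $P_{ij}$ is itself a Basilica puzzle piece (as it belongs to a Basilica puzzle), and by the type-preserving hypothesis it has the same type (limb or torso, and for the same Fatou component in the torso case) as $P_j$. Therefore Lemma~\ref{lem:TypePreservingPuzzleMap} applies directly to the pair $(P_j, P_{ij})$: there exist unique $m, n \geq 0$ so that $F_{ij} := Q^{-n} \circ Q^m : \Int P_{ij} \longrightarrow \Int P_j$ is conformal, where $Q^{-n}$ is the appropriate inverse branch. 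This is essentially the whole statement for the interiors; the content that remains is (a) checking that $F_{ij}$ extends as a homeomorphism on the closed piece $P_{ij}$ (i.e. including the finitely many boundary Julia-set points), and (b) confirming that the formula genuinely has the form $Q^{-n}\circ Q^m$ as claimed.

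For step (a): in the limb case, $\partial P_{ij} \cap J(Q)$ is a single point $x$, which is eventually mapped by some $Q^k$ to the parabolic fixed point $-\tfrac12$; near $x$, the piece $P_{ij}$ is a pinched neighborhood, and the relevant forward iterate $Q^m$ is univalent on a neighborhood of $\Int P_{ij}$ with the pinch point on its boundary, so it extends continuously to $x$, and likewise the inverse branch $Q^{-n}$ extends continuously to the image pinch point (here I would invoke the construction of the canonical equipotentials in Lemma~\ref{lem:conformalBlowUp} and the fact that the external rays $R_\infty(t_k)$ land). In the torso case, $\partial P_{ij}\cap J(Q)$ consists of two points bounding a dyadic segment $I_{U}(s,t)$; here I would use Lemma~\ref{lem:mapprimitive} and its corollary, which already give that $Q^{-n}\circ Q^m$ restricts to a homeomorphism $I_U(s,t) \to I_{U'}(s',t')$ extending conformally to a neighborhood of $I_U(s,t)$ inside the plane, hence in particular extending to the two endpoints. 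Combining the interior conformality with these boundary extensions and the pasting lemma yields that $F_{ij}$ is a homeomorphism $P_{ij} \to P_j$, conformal on the interior.

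For step (b): the canonical map from Lemma~\ref{lem:TypePreservingPuzzleMap} is constructed by passing through a reference piece ($P_{limb}$ or $P_{imm}$) via a forward iterate $Q^m$ on the source and a forward iterate $Q^n$ on the target; inverting the second gives the composite $Q^{-n}\circ Q^m$, and $\mathfrak{Q}$ (the collection of local conformal maps of the form $Q^{-n}\circ Q^m$) is by definition closed under such compositions, so $F_{ij} \in \mathfrak{Q}$ and the proposition follows. I expect the main obstacle — really the only delicate point — to be verifying the continuous extension of the inverse branch $Q^{-n}$ across the pinch points of the target piece $P_j$: one must know that the chosen inverse branch of $Q^n$ is well-defined and univalent on a full (pinched) neighborhood of $\Int P_j$ and that distinct prime ends of $\Int P_{ij}$ at the pinch map to distinct prime ends at the image, which is exactly what the careful choice of canonical equipotentials in \S\ref{subsubsec:indbp} is designed to guarantee; once that is in hand the rest is routine bookkeeping with the Markov combinatorics, and uniqueness of $m,n$ is inherited verbatim from Lemma~\ref{lem:TypePreservingPuzzleMap}.
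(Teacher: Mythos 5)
Your proof is correct and follows essentially the same route as the paper, which simply observes that the statement is an immediate consequence of Lemma~\ref{lem:TypePreservingPuzzleMap} applied to the type-preserving pair $(P_{ij}, P_j)$. Your additional remarks on boundary extension at pinch points and the form $Q^{-n}\circ Q^m$ are details the paper treats as routine, but they are accurate and consistent with the constructions in Lemmas~\ref{lem:mapprimitive} and~\ref{lem:conformalBlowUp}.
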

\begin{proof}
 This is an immediate consequence of Lemma~\ref{lem:TypePreservingPuzzleMap}.
\end{proof}

\begin{defn}[Basilica $\mathfrak{Q}-$map]\label{defn:basiqmap}
    Let $\mathfrak{P} = \{P_i\}_{i\in \mathcal{I}}$ and let $\mathfrak{P}^1=\{P_{ij}\}_{(ij)\in \mathcal{I}^1}$ be a Markov refinement of $\mathfrak{P}$.
    We call the collection of maps $\mathcal{F} = \{F_{ij}: (ij) \in \mathcal{I}^1\}$ in Proposition~\ref{prop:MarkovMapRefine} a {\em Basilica $\mathfrak{Q}-$map}, and write it as 
    $$
    \mathcal{F}: \mathfrak{P}^1 \rightarrow \mathfrak{P}.
    $$
\end{defn}

We will call $\mathfrak{P}$ and $\mathfrak{P}^1$ the level $0$ and level $1$ puzzle, respectively.
One can define the level $n$ puzzle by pulling $\mathfrak{P}$ back by $\cF^n$, and we denote it by $\mathfrak{P}^n = \{P_w\}_{w \in \mathcal{I}^n}$, where $\mathcal{I}^n$ consists of {\em admissible words} of length $n+1$.
Note that 
$$
\mathcal{F}^n: \mathfrak{P}^n \rightarrow \mathfrak{P}^0 = \mathfrak{P}
$$
is also a Basilica $\mathfrak{Q}-$map.
\begin{rmk}
    We remark that  a Basilica $\mathfrak{Q}-$map $\mathcal{F}$ is a Markov $(J, \mathfrak{Q})-$map  in the sense of \S~\ref{sec:XC} that admits a puzzle structure.

    Recall that a Basilica $\mathfrak{Q}-$map is topologically expanding if it is topologically expanding as a Markov $(J, \mathfrak{Q})-$map. It is easy to verify that this happens if and only if every nested intersection $\bigcap_w P_{w}$ is a singleton set.
\end{rmk}

In the next subsection, we will alter Markov refinements of Basilica puzzles and the associated Basilica $\mathfrak{Q}-$maps appropriately to guarantee that they are $C^1$ at periodic break-points. To streamline the exposition, we formalize the alteration steps below. The first step (refinement) only changes the puzzles, not the maps. On the other hand, the second step (modification) does not alter the level $0$ puzzle, but only changes the level $1$ puzzle and the associated Basilica $\mathfrak{Q}-$map.

\begin{defn}\label{def-modfn}
  Let  $
    \mathcal{F}: \mathfrak{P}^1 \rightarrow \mathfrak{P} 
    $ and 
    $
    \widehat{\mathcal{F}}: \widehat{\mathfrak{P}}^{1} \rightarrow \widehat{\mathfrak{P}}
    $
    be Basilica $\mathfrak{Q}-$maps.  We say that 
    $
    \widehat{\mathcal{F}}$ is a \emph{refinement} of $\mathcal{F}$ if
    \begin{itemize}
        \item $\widehat{\mathfrak{P}}^{1}$ and $\widehat{\mathfrak{P}}$ are refinements of $\mathfrak{P}^1$ and $\mathfrak{P}$ respectively; and
        \item for each puzzle piece $P \in \widehat{\mathfrak{P}}^1$, we have $\widehat{\mathcal{F}}|_P = \mathcal{F}|_P$.
    \end{itemize}
    We say that $\widehat{\mathcal{F}}$ is a \emph{modification} of $\mathcal{F}$ if
    \begin{itemize}
        \item $\widehat{\mathfrak{P}} = \mathfrak{P}$; and
        \item $\widehat{\mathcal{F}}$ is topologically conjugate to $\mathcal{F}$ on the Julia set.
    \end{itemize}
    We say that $\widehat{\mathcal{F}}$ is a \emph{modified refinement} of $\mathcal{F}$ if
    \begin{itemize}
        \item $\widehat{\mathfrak{P}}^{1}$ and $\widehat{\mathfrak{P}}$ are refinements of $\mathfrak{P}^1$ and $\mathfrak{P}$ respectively; and
        \item $\widehat{\mathcal{F}}$ is topologically conjugate to $\mathcal{F}$ on the Julia set.
    \end{itemize}
\end{defn}
\begin{remark}
    We remark that a modified refinement $\widehat{\mathcal{F}}$ can be obtained from $\mathcal{F}$ via a refinement and a modification. Also note that the above notion generalizes naturally to any $(X, \mathscr{C})$ Markov maps.
\end{remark}

\subsection{Induced Markov map on ideal boundaries}\label{ideal_bdry_map_reg_subsec}
In this subsection, we study induced Markov maps on the ideal boundaries of Fatou components, and prove the key theorem for Basilica $\mathfrak{Q}-$maps (Theorem~\ref{thm:symmetrichyperbolic}).

\begin{lem}
Let $\mathcal{F}: \mathfrak{P}^1 \rightarrow \mathfrak{P}$ be a Basilica $\mathfrak{Q}-$map.
Let $U_0,\ldots, U_r$ be the chain of bounded Fatou components associated to $\mathfrak{P}$, and let $U_\infty$ be the unbounded Fatou component.
Then $\mathcal{F}$ induces a conformal Markov map 
$$
\bigcup_{s\in \{0,\ldots, r, \infty\}} \mathcal{F}_s : \bigcup_{s\in \{0,\ldots, r, \infty\}}  \partial^I U_s \longrightarrow \bigcup_{s\in \{0,\ldots, r, \infty\}}  \partial^I U_s
$$ 
with puzzle structure, where $\partial^I U_s \cong \mathbb{S}^1$ is the ideal boundary of $U_s$.

Moreover, if $\mathcal{F}$ is topologically expanding, then $\bigcup_s \mathcal{F}_s$ is topological expanding.
\end{lem}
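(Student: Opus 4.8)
The plan is to read off, on each circle $\partial^I U_s\cong\mathbb{S}^1$, the induced partition and the induced maps directly from the Basilica puzzle, and then to exhibit the puzzle structure through the two uniformizations already built into the construction: the parabolic Blaschke model $B(z)=\frac{3z^2+1}{z^2+3}$ for the bounded torso components, and the $z\mapsto z^2$ model for the basin of infinity, each extended to $\widehat{\C}$ by Schwarz reflection. For the partitions, realize $\partial^I U_k\cong\mathbb{S}^1$ ($0\le k\le r$) via internal angle coordinates and $\partial^I U_\infty\cong\mathbb{S}^1$ via external angles. By the remark following Definition~\ref{basi_puzz_def}, each Basilica puzzle piece $P$ meets $\partial^I U_\infty$ in exactly one arc $I_\infty[t_k,t_{k+1}]$ and, if $P$ is torso for $U_k$, meets $\partial^I U_k$ in exactly one dyadic arc; letting $\mathcal{A}^0$ (resp.\ $\mathcal{A}^1$) be the collection of all such arcs from $\mathfrak{P}^0$ (resp.\ $\mathfrak{P}^1$) gives finite partitions of $X:=\bigsqcup_s\partial^I U_s$ with $\mathcal{A}^1$ refining $\mathcal{A}^0$, because $\mathfrak{P}^1$ refines $\mathfrak{P}^0$. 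For a level-$1$ piece $P_{ij}$ with $F_{ij}=Q^{-n}\circ Q^m$ (Proposition~\ref{prop:MarkovMapRefine}): since $U_\infty$ is completely $Q$-invariant and $\phi_\infty$ conjugates $z\mapsto z^2$ to $Q$, in external-angle coordinates $F_{ij}$ acts on the arc $I_\infty[\cdot,\cdot]$ as a branch of $\sigma_2^{-n}\circ\sigma_2^{m}$, i.e.\ an affine map $t\mapsto(2^{m}t+j_0)/2^{n}$, which is conformal between subarcs of $\mathbb{S}^1$; and if $P_{ij}$ is torso for $U_k$ then, being a homeomorphism onto the torso piece $P_j$ — torso for some $U_{k'}$ with $k'\le r$, since level-$0$ torso pieces occur only for $U_0,\dots,U_r$ — it carries a dyadic arc of $\partial^I U_k$ onto a dyadic arc of $\partial^I U_{k'}$, conformally on a neighborhood by the corollary following Lemma~\ref{lem:mapprimitive}. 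Collecting these over all level-$1$ pieces yields $\bigcup_s\mathcal{F}_s\colon\mathcal{A}^1\to\mathcal{A}^0$, which is Markov because $F_{ij}(P_{ij}\cap J)=P_j\cap J$ and $\mathcal{F}$ sends break points of $\mathfrak{P}$ to break points, so each level-$1$ arc goes onto a level-$0$ arc and each endpoint of a level-$0$ arc (some $t_k$, or some internal angle $s_{k,j}$) onto such an endpoint.

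For the puzzle structure I would argue separately on the two kinds of circles. On $\partial^I U_k$, the internal puzzle pieces $P_{U_k}[s,t]$ of Definition~\ref{defn:internalpuzzle} are, via the Blaschke conjugacy extended to $B$ on $\widehat{\C}$, the pull-backs $B^{-k}(N)$ ($k\ge0$) of the pinched collar $N$ of $\mathbb{S}^1-\{1\}$; these are genuine complex neighborhoods of the arcs of $\partial^I U_k$, pinched exactly at the break points (preimages of the parabolic fixed point $1$ of $B$), nested compatibly with $\mathcal{A}^1\subseteq\mathcal{A}^0$, and $B$ — hence the induced map — is conformal on their interiors, which is precisely the required structure. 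On $\partial^I U_\infty$ I would pass to the $z\mapsto z^2$ model of the basin, extended by Schwarz reflection across $\mathbb{S}^1$ to $z\mapsto z^2$ on $\widehat{\C}$: the external rays $R_\infty(t_k)$ become radial segments, the canonical equipotentials of Lemma~\ref{lem:conformalBlowUp} become round circles, and as the pinched neighborhood of $I_\infty[t_k,t_{k+1}]$ I would take the sector between the two radial segments inside the relevant round circle, together with its mirror image under $z\mapsto 1/\bar z$. This ``bowtie'' is pinched exactly at $e^{2\pi i t_k},e^{2\pi i t_{k+1}}$, contains the open arc in its interior, nests with $\mathcal{A}^1\subseteq\mathcal{A}^0$, and — because the equipotentials in Lemma~\ref{lem:conformalBlowUp} were chosen so that the relevant iterate of $Q$ maps one puzzle region conformally onto the reference region — $z\mapsto z^2$ and its inverse branches permute level-$1$ bowties conformally into level-$0$ ones. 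This completes the puzzle structure.

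For the ``moreover'', assume $\mathcal{F}$ is topologically expanding, so by the remark following Definition~\ref{defn:basiqmap} every nested intersection $\bigcap_n P_{w_n}$ of level-$n$ puzzle pieces is a single point. Along a nested sequence of level-$n$ arcs $A_{w_0}\supseteq A_{w_1}\supseteq\cdots$ in some $\partial^I U_s$, coming from nested pieces $P_{w_n}$ — the torso/limb type, hence the circle carrying the arcs, is constant along the sequence because $\mathcal{F}^n\colon\mathfrak{P}^n\to\mathfrak{P}$ is again a Basilica $\mathfrak{Q}$-map — were $\bigcap_n A_{w_n}$ to contain a nondegenerate subarc $\alpha$, every $P_{w_n}$ would contain the nondegenerate continuum $\phi_{U_s}(\alpha)\subseteq\partial U_s$ (for $s\le r$), resp.\ $\bigcup\{\overline{R_\infty(\theta)}\cap J(Q):\theta\in\alpha\}$ (for $s=\infty$), contradicting $\bigcap_n P_{w_n}=\{\text{pt}\}$. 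Hence $\bigcap_n A_{w_n}$ is a single point, and since each point of $X$ lies in at most two level-$n$ arcs, the criterion in Definition~\ref{def:top_expand} gives topological expansion of $\bigcup_s\mathcal{F}_s$.

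I expect the main obstacle to be the puzzle structure on $\partial^I U_\infty$: unlike the internal puzzle pieces, which Definition~\ref{defn:internalpuzzle} already packages as genuine pinched complex neighborhoods, the external puzzle pieces of the basin sit entirely on one side of the ideal circle and must be reflected across it, so one must verify that the resulting bowtie regions are simultaneously pinched at the correct break points, nested, and conformally permuted by the branches of $z\mapsto z^2$ — this is exactly where the canonical-equipotential choice of Lemma~\ref{lem:conformalBlowUp} is used. A secondary, more routine point is the constancy of torso/limb type along the puzzle tree, which follows from the already-recorded fact that $\mathcal{F}^n\colon\mathfrak{P}^n\to\mathfrak{P}$ is again a Basilica $\mathfrak{Q}$-map.
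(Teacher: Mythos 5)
Your proof is correct and follows essentially the same route as the paper: pull back the puzzle pieces via the uniformizations $\Phi_s$, Schwarz-reflect across $\mathbb{S}^1 = \partial\D$ to obtain pinched two-sided neighborhoods and conformal extensions (your ``bowtie'' picture on $\partial^I U_\infty$ is precisely what the paper's terse ``by Schwarz reflection'' encodes), and deduce topological expansion from the fact that singleton nested intersections of puzzle pieces force singleton nested intersections of the induced arcs. The paper's own proof merely cites Schwarz reflection and Proposition~\ref{prop:MarkovMapRefine} without elaboration, so your write-up is a more explicit version of the same argument.
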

\begin{remark}
    We remark that here the map is defined on a finite disjoint union of circles, which we can regard as some (disconnected) closed subset of $\widehat{\C}$.
    The notion of conformality, topological expansion, puzzle structures (introduced in \S~\ref{sec:XC}) generalize naturally to maps on a finite union of circles.
\end{remark}
\begin{proof}
    Let $\Phi_s: \D \longrightarrow U_s$ be the uniformization map. Then for each Basilica puzzle piece $P_i \in \mathfrak{P}$ or $P_{ij} \in \mathfrak{P}^1$, we can pull back $P_i \cap U_s$ or $P_{ij}\cap U_s$ by the uniformization map $\Phi_s$. Thus, $\mathfrak{P}$ and $\mathfrak{P}^1$ induce two partitions of $\bigcup_s \partial^I U_s$, which we denote by $\mathcal{A}_s$ and $\mathcal{A}_s^1$. The Basilica $\mathfrak{Q}-$map $\mathcal{F}$ is conjugated via $\Phi_s$ to a Markov map on $\bigcup_s \partial^I U_s$.
    By Schwarz reflection, the induced map can be extended to a conformal map on a neighborhood of each interval in $\mathcal{A}^1_s$. The fact that it has a puzzle structure follows from the fact that $\mathfrak{P}^1$ is a refinement of $\mathfrak{P}$ and Proposition~\ref{prop:MarkovMapRefine}. The moreover part follows from the fact that if $\bigcap_w P_w$ is a singleton, then the corresponding nested intersection of the Markov pieces in $\mathcal{A}_s^n$ is a singleton set.
\end{proof}

It is easy to see that the induced Markov map on the union of circles decomposes into two parts:
\begin{align}
    \mathcal{F}_{\bdd}:=\bigcup_{s\in \{0,\ldots, r\}} \mathcal{F}_s &: \bigcup_{s\in \{0,\ldots, r\}}  \partial^I U_s \longrightarrow \bigcup_{s\in \{0,\ldots, r\}}  \partial^I U_s, \text{ and } \label{f_bdd}\\
    \mathcal{F}_{\infty} &:\partial^I U_\infty\longrightarrow \partial^I U_\infty. \label{f_infty}
\end{align}
We study the break-points of these two maps in \S~\ref{sss:bdd} and \S~\ref{sss:infinity}.

\subsubsection{Bounded Fatou component}\label{sss:bdd}
\begin{prop}\label{prop:bdd_fatou_symm_para}
    Suppose that $\mathcal{F}$ is topologically expanding.
    Then all break-points of the induced Markov map $\mathcal{F}_{\bdd}$ in \eqref{f_bdd} are symmetrically parabolic.
\end{prop}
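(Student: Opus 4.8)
The plan is to observe that, after transporting everything to the model disk via the uniformizations $\Phi_s\colon\D\to U_s$, the induced map $\mathcal{F}_{\bdd}$ is, piecewise and up to conformal conjugacy, given by iterates (and inverse branches of iterates) of the single parabolic Blaschke product $B$ of Definition~\ref{defn:internalpuzzle}; the whole statement then reduces to the elementary fact that $B$ is parabolic at its boundary fixed point $1$ with exactly two petals.

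I would begin by pinning down the break-points. By construction the Markov partition that $\mathcal{F}_{\bdd}$ induces on $\bigsqcup_{s=0}^{r}\partial^I U_s$ is the $\Phi_s$-pull-back of the internal puzzle pieces $P_{U_s}[s_{s,j},s_{s,j+1}]$, so each break-point of $\mathcal{F}_{\bdd}$ is the point of $\partial^I U_s$ with dyadic internal angle; since $\phi_{U_0}$ conjugates $t\mapsto 2t$ to $Q^{2}|_{\partial U_0}$ and $\phi_{U_0}(0)=-\tfrac12$, this point, viewed via $\Phi_s$ in the disk model, is an iterated $B$-preimage of the parabolic fixed point $1\in\partial\D$. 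Normalizing $\Phi_s:=(Q^{l_s})^{-1}\circ\Phi_0$ (the conclusion does not depend on this choice, as different normalizations differ by rotations), I would then check that every piece $F_{ij}=Q^{-n}\circ Q^{m}$ of $\mathcal{F}_{\bdd}$ becomes, in these coordinates, the germ of a single integer power $B^{K}$ of $B$, where $B^{K}$ for $K<0$ denotes an unramified inverse branch; this reduction is the main technical point, and it relies on $Q^{2}|_{U_s}\cong B$ together with the fact that the forward $B$-orbit of any $B$-preimage of $1$ never meets the critical point $0\in\D$, so that all branches in sight are unramified and the composition telescopes.

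With this in hand, fix a break-point $x$ and a side. By the Markov property its forward $\mathcal{F}_{\bdd}$-orbit is a sequence of break-points, hence eventually periodic; let $g=\mathcal{F}_{\bdd}^{\,p}$ be the one-sided return germ at a periodic break-point $y_0$ on the cycle. By the previous step $g$ is, up to conformal conjugacy, the germ at $y_0$ of $B^{\bar K}$ for some $\bar K\in\Z$; moreover $\bar K\neq 0$, since otherwise $g$ is the identity germ, contradicting the topological expansion of $\mathcal{F}_{\bdd}$ (which holds by the preceding lemma, as $\mathcal{F}$ is topologically expanding). Because $g$ fixes $y_0$ we get $B^{|\bar K|}(y_0)=y_0$, so $y_0$ is a $B$-periodic iterated preimage of the $B$-fixed point $1$; as $B(1)=1$, this forces $y_0=1$. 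Hence the return germ at $y_0$ is the germ of $B^{|\bar K|}$ at $1$, or of its inverse germ when $\bar K<0$. Finally, a one-line computation ($B(z)-z=-\tfrac14(z-1)^{3}+O((z-1)^{4})$ near $z=1$, which is precisely the statement that $-\tfrac12$ is a parabolic fixed point of $Q$ of multiplier $-1$, i.e.\ a simple period-$2$ parabolic cycle) shows that $B^{k}$, for every $k\ge 1$, is parabolic at $1$ with multiplier $1$ and parabolic multiplicity $2$, and the same holds for the inverse germ. Therefore $\lambda(x^{+})=\lambda(x^{-})=0$ and $N(x^{+})=N(x^{-})=2$, so every break-point of $\mathcal{F}_{\bdd}$ is symmetrically parabolic.

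The step I expect to be the real work is the reduction in the second paragraph: carefully conjugating each $U_s$ to $\D$ and each $Q^{2}|_{U_s}$ to $B$, keeping track of the parities $l_s$ and of the inverse branches, and checking that composing the maps $Q^{-n}\circ Q^{m}$ along an orbit telescopes to a single power of $B$ with all branches unramified. Everything downstream — forcing $y_0=1$, ruling out the identity germ, and the local analysis of $B$ at $1$ — is then routine.
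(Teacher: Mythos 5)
Your proposal is correct and takes essentially the same route as the paper: the paper also reduces to the return germ at a periodic break-point, writes the piece as $Q^{-m}\circ Q^{n-m}\circ Q^m$ (so $Q^m$ is the conjugacy to $Q^{n-m}$ near the parabolic fixed point $-\tfrac12$), and reads off multiplicity $2$; you do the identical local analysis after first transporting everything to the model disk, where $Q^{n-m}$ near $-\tfrac12$ becomes $B^{(n-m)/2}$ near $1$. The paper's version is marginally lighter on bookkeeping (it avoids tracking the normalizations and parities of the $\Phi_s$ by using $Q^m$ itself as the conjugating map), but the substance — reduce to a one-sided periodic return germ, exhibit it as conjugate to a nonzero power of the model parabolic germ, use topological expansion to rule out the identity germ, and conclude $N=2$ from the cubic tangency — is the same.
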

\begin{proof}
    Let $a$ be a break-point of $\mathcal{F}_{\bdd}$.
    We will show that $\lambda(a^\pm) = 0$ and $N(a^\pm) = 2$.
    Suppose that the tail of the orbit $a_n^+$ consists of a periodic orbit. For simplicity of  presentation, we assume that this periodic orbit is a fixed point, denoted by $x$. Let $I^+ \subseteq \bigcup_{s\in \{0,\ldots, r\}}  \partial^I U_s$ be the Markov interval having $x$ on its boundary and lying on the right of $x$, and let $S$ be the corresponding level $1$ Basilica puzzle piece for $I^+$. Let $p \in \partial S$ be the point associated to $x$.
    Note that $\mathcal{F}|_{S} = Q^{-m} \circ Q^n$ for some $m, n \geq 0$. Since $p$ is fixed and $\mathcal{F}$ is topologically expanding, $n > m$ and $\mathcal{F}|_{S} = Q^{-m} \circ Q^{n-m} \circ Q^m$, where $Q^{-m}$ is the inverse of $Q^m|_{S}$ when restricting on $Q^m(S)$. Moreover, $Q^m(p)$ is the parabolic fixed point of $Q$. Thus $\mathcal{F}|_{S}$ is conjugate to $Q^{n-m}$ near $p$. Therefore, the induced map $\mathcal{F}_{\bdd}$, restricted to $I^+$, has a parabolic fixed point at $x$ with multiplicity $2$.
    Therefore, $\lambda(a^+) = 0$ and $N(a^+) = 2$. The same is true for $a_n^-$. We conclude that all break-points are symmetrically parabolic.
\end{proof}

\subsubsection{Unbounded Fatou component}\label{sss:infinity}
The following result is one of the key technical ingredients in the proof of our main quasiconformal equivalence theorem (Theorem~\ref{thm:qcclassfn-ltsets}).

\begin{theorem}\label{thm:symmetrichyperbolic}
    Suppose that $\mathcal{F}$ is topologically expanding.
    Then there exists a modified refinement $\widetilde{\mathcal{F}}: \widetilde{\mathfrak{P}}^1 \rightarrow \widetilde{\mathfrak{P}}$ of $\mathcal{F}: \mathfrak{P}^{1} \rightarrow \mathfrak{P}$ (in the sense of Definition~\ref{def-modfn}) whose induced Markov map $\widetilde{\mathcal{F}}_\infty$ is symmetrically hyperbolic.
    Note that in particular, $\widetilde{\mathcal{F}}$ is topologically conjugate to $\mathcal{F}$ on the Julia set.
\end{theorem}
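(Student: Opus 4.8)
The plan is to carry out the entire analysis on the induced circle map $\mathcal{F}_\infty$ of \eqref{f_infty} in the external-angle coordinate of $\partial^I U_\infty$, in which $Q|_{U_\infty}$ becomes the doubling map $\sigma_2$. In this coordinate every piece of $\mathcal{F}_\infty$ is of the form $Q^{-n}\circ Q^m$ restricted to an arc (Proposition~\ref{prop:MarkovMapRefine}), hence an affine map $t\mapsto 2^{\,m-n}t+c$ with $m,n\geq 0$. Because the level-$1$ arcs refine the level-$0$ arcs and are carried onto unions of the latter, topological expansion forces the composed return map over any periodic cycle of break-points of $\mathcal{F}_\infty$ to have slope $2^{d}$ with $d\geq 1$ (were $d\leq 0$ the return map would fail to expand at the periodic point, so the corresponding nested puzzle pieces would not shrink to a point). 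Consequently \emph{every} break-point $x$ of $\mathcal{F}_\infty$ is automatically hyperbolic: each of its one-sided $\mathcal{F}_\infty$-orbits is preperiodic, landing on a cycle of period $p^{\pm}$ over which the net degree is some $d^{\pm}\geq 1$, so $\lambda(x^{\pm})=\frac{d^{\pm}}{p^{\pm}}\log 2>0$. Thus the whole content of the theorem is to achieve, after a modified refinement, the symmetry relation $\lambda(x^{+})=\lambda(x^{-})$ at every break-point; by the displayed formula it suffices to arrange that $\widetilde{\mathcal{F}}_\infty$ has no corner at any periodic break-point and that all of its periodic cycles share a common ratio $d/p$.

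Next I would pin down the periodic break-points. Every break-point of $\mathcal{F}_\infty$ is the external angle of a ray landing at a point of $\mathrm{bk}(\mathcal{A}^0)$, which is a contact point of $J(Q)$ and hence is eventually carried by $Q$ to the parabolic fixed point $-\frac12$; moreover $-\frac12=\phi_{U_0}(0)$, and since the angle $0$ lies in no dyadic interval (the remark after Definition~\ref{defn:dyaint}), $-\frac12$ is always a meeting point of internal puzzle pieces in $\overline{U_0}$, so the rays $R_\infty(1/3),R_\infty(2/3)$ landing there are always break-points of $\mathcal{F}_\infty$. Using that the pieces $Q^{-n}\circ Q^m$ send external angles of contact points to external angles of contact points, one checks that after replacing $\mathcal{F}$ by a deep enough refinement (which alters nothing on $J(Q)$) the periodic break-points of $\mathcal{F}_\infty$ are described explicitly in terms of the $\sigma_2$-orbit of $\{1/3,2/3\}$ together with finitely many limb-tip configurations, and in particular their combinatorics becomes stable under further refinement.

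The heart of the argument is then the modification. At a periodic break-point $y$ of the (refined) map, the one-sided return maps along the right and left orbits of $y$ are, in the angle coordinate, affine with power-of-$2$ slopes fixing $y$; a priori these two slopes differ, producing a corner, and distinct periodic cycles can carry distinct ratios $d/p$. Both defects are purely arithmetic, and I would cure them by replacing, for the finitely many level-$1$ puzzle pieces that enter a periodic orbit of $\mathcal{F}_\infty$, the canonical map $F_{ij}=Q^{-n}\circ Q^m$ by another map $Q^{-n'}\circ Q^{m'}$ of the same topological type on $J(Q)$ but with its net degree $m'-n'$ adjusted so that (i) the one-sided return maps on the two sides of each periodic break-point acquire equal slope, hence (being affine with equal slope and a common fixed point) equal germ, so that $\widetilde{\mathcal{F}}_\infty$ is real-analytic across each periodic break-point, and (ii) every periodic cycle acquires the same ratio $d/p$; the cleanest realization is to normalize the relevant pieces to net degree one, which gives $\lambda(x^{+})=\lambda(x^{-})=\log 2$ at every break-point. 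Realizing the adjusted exponents $(m',n')$ forces passage to a deeper puzzle so that the required descendants $P_w$ are available, which is exactly why the output is a \emph{modified refinement} in the sense of Definition~\ref{def-modfn} rather than a bare modification. It then remains to check that $\widetilde{\mathcal{F}}$ is still a topologically expanding Basilica $\mathfrak{Q}$-map topologically conjugate to $\mathcal{F}$ on $J(Q)$: the replacements are supported on puzzle pieces meeting $U_\infty$ and leave the bounded Fatou components untouched, so Proposition~\ref{prop:bdd_fatou_symm_para} applies verbatim to $\widetilde{\mathcal{F}}_{\bdd}$; the combinatorics on $J(Q)$ is unchanged, so by Proposition~\ref{prop-combconjimpliestopconj} the map $\widetilde{\mathcal{F}}$ is topologically conjugate to $\mathcal{F}$ on $J(Q)$, whence it also inherits topological expansion.

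The main obstacle I anticipate is precisely the coherence of this modification: the adjusted iterate counts and inverse branches must be selected near \emph{all} periodic break-points of $\mathcal{F}_\infty$ simultaneously, so that no two local adjustments conflict along a shared forward orbit, so that every periodic cycle ends up with the same multiplier, and so that the pieces still assemble into a genuine Markov refinement of the prescribed level-$0$ puzzle realizing the prescribed topological dynamics on $J(Q)$. This is a finite but delicate combinatorial bookkeeping problem, and it is where the bulk of the work lies; by contrast the analytic ingredients (Koebe distortion, already built into the puzzle structure) and the hyperbolicity come essentially for free.
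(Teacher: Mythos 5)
Your high-level strategy is the one the paper uses: work in the external-angle coordinate, observe that break-points of $\mathcal{F}_\infty$ are automatically hyperbolic, and then try to equalize the one-sided Lyapunov exponents by adjusting the net degrees $m-n$ of the pieces $Q^{-n}\circ Q^m$ through a modified refinement. Your preliminary reductions (return slopes are powers of $2$ with positive exponent, periodic break-points are tied to external angles landing on the grand orbit of $-\tfrac12$) are sound and match the paper's Lemmas~\ref{lem:strictpre} and~\ref{lem:2k}. But there are two genuine gaps.

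First, your ``cleanest realization'' of normalizing ``the relevant pieces to net degree one'' is not achievable. By Lemma~\ref{lem:2k}, the return multiplier of $\mathcal{F}_\infty$ at a periodic break-point of period $q$ is always $2^{2k}$: the total net degree along any periodic cycle is \emph{even}, because each change of internal dyadic depth in $U_0$ costs two iterates of $Q$ (the return map of $U_0$ is $Q^2$). This parity constraint survives any modified refinement. Hence ``net degree one per step'' is impossible whenever $q$ is odd. The paper fixes a single large \emph{even} integer $N$ and targets return multiplier $2^{Nq^{\pm}}$, which is compatible with the parity constraint.

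Second, and more seriously, the ``delicate combinatorial bookkeeping problem'' you flag as the main obstacle is not a side issue --- it is the bulk of the proof, and your proposal does not resolve it. You cannot change $m$ and $n$ in a piece independently: by Lemma~\ref{lem:TypePreservingPuzzleMap} the canonical map between two puzzle pieces of a given type is \emph{unique}, so the only way to change an exponent is to change the puzzle pieces themselves. The paper supplies exactly the missing coherence argument via three lemmas: Lemmas~\ref{lem:fac2} and~\ref{lem:primitivedyadicdecomp} quantify how redecomposing a torso piece into $M$ dyadic sub-arcs lets the boundary exponent range over an arithmetic progression of common difference $2$ and length $M-1$ (Lemma~\ref{lem:localrefine}); Lemma~\ref{lem:onesideperiodic} shows a deep enough refinement makes every boundary-periodic piece one-sided, so local adjustments at distinct periodic points do not collide along shared forward orbits; and Lemma~\ref{lem:LargeM} arranges at least $M$ torso sub-pieces inside each boundary-periodic piece, giving enough freedom to spread the target total $Nq^{\pm}$ over the cycle with each one-step exponent in $\{N-1,N,N+1\}$. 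Your sketch identifies the right target but stops exactly where the paper's argument begins.
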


For the rest of this section, we will prove the above theorem.
We fix a topologically expanding Basilica $\mathfrak{Q}-$map $\mathcal{F}: \mathfrak{P}^1 \rightarrow \mathfrak{P}$.
Let 
$$
\mathcal{F}_\infty: \mathbb{S}^1 \cong \partial^I U_\infty \longrightarrow \mathbb{S}^1 \cong \partial^I U_\infty
$$ 
be the induced Markov map on $\partial^I U_\infty$.
Each puzzle piece $P_i \in \mathfrak{P}$ (or $P_{ij} \in \mathfrak{P}^1$) induces a closed interval, denoted by $A_i$ (or $A_{ij}$), on $\mathbb{S}^1 \cong \partial^I U_\infty$.
Denote this corresponding partition of $\mathbb{S}^1$ for $\mathfrak{P}$ by $\mathcal{A} = \mathcal{A}_\infty = \{A_i\}_{i\in \mathcal{I}}$ and for $\mathfrak{P}^1$ by $\mathcal{A}^1 = \mathcal{A}^1_\infty = \{A_{ij}\}_{(ij) \in \mathcal{I}^1}$.
We say that $A_i$ (or $A_{ij}$) is of limb type or torso type if the corresponding puzzle piece is so.
Similarly, we denote the corresponding induced partition for $\mathfrak{P}^n$ by $\mathcal{A}^n$.

We remark that for any $A_{ij} \in \mathcal{A}^1$ with the corresponding puzzle piece $P_{ij}$, we have that $\mathcal{F}|_{P_{ij}} = F_{ij} = Q^{-n}\circ Q^m$ for some $m,n \geq 0$.
We denote the difference by
$$
N(A_{ij}) = N(P_{ij}) = m-n.
$$
Then the derivative 
$$
D\mathcal{F}_\infty|_{A_{ij}} = 2^{N(A_{ij})}.
$$ 

Since the induced map $\mathcal{F}_\infty$ maps break-points to break-points, every break-point is pre-periodic, and periodic break-points are contained in  $\bigcup_{i\in\mathcal{I}} \partial A_i$.
We first show that the boundary points of a limb type Markov piece $A \in \mathcal{A}$ are never periodic.
\begin{lem}\label{lem:strictpre}
    Suppose $A = [x,y] \in \mathcal{A}$ is of limb type. Then the right orbit $x_n^+$ of $x$ and the left orbit $y_n^-$ of $y$ are strictly pre-periodic.
\end{lem}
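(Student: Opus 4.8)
The plan is to argue by contradiction, assuming that $x^{+}$ (the right-hand orbit of the left endpoint $x$ of the limb interval $A$) is \emph{periodic} under the induced Markov map $\mathcal{F}_\infty$ on $\partial^I U_\infty\cong\mathbb{S}^1$, and to derive a violation of topological expansion of $\mathcal{F}$. Strict pre-periodicity then follows, since the orbit $x_n^{+}$ is automatically pre-periodic: each $x_n^{+}$ sits at a break-point of $\mathcal{A}$, a finite set, and $\mathcal{F}_\infty$ carries break-points to break-points by the Markov property. The left orbit $y_n^{-}$ of the right endpoint $y$ is handled by the identical argument, reading ``rightmost child'' for ``leftmost child''. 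So suppose $\mathcal{F}_\infty^{k}(x^{+})=x^{+}$ with $k\geq 1$. Write $A=A_{i_0}$, and let $(i_0 i_1 i_2\cdots)$ be the itinerary of $x^{+}$, so that for every $n$ the level-$n$ Markov interval $A_{i_0 i_1\cdots i_n}\in\mathcal{A}^{n}$ is the unique one containing $x^{+}$; note that $x$, being a break-point of $\mathcal{A}=\mathcal{A}^{0}$, is also a break-point of $\mathcal{A}^{n}$, and it is the left endpoint of $A_{i_0 i_1\cdots i_n}$.

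By construction of the pull-back partitions, $\mathcal{F}_\infty^{k}$ maps $A_{i_0 i_1\cdots i_k}$ homeomorphically (and orientation-preservingly, as it is conformal) onto $A_{i_k}$. Since $\mathcal{F}_\infty^{k}(x^{+})=x^{+}$ returns to $x$ approached from within $A$, and $A$ is the unique element of $\mathcal{A}$ lying immediately to the right of the break-point $x$, we conclude $A_{i_k}=A=A_{i_0}$. On the other hand, $\mathcal{F}$ is topologically expanding, hence so is $\mathcal{F}_\infty$, and Condition~\eqref{topexp:cond1} gives $\bigcap_n A_{i_0 i_1\cdots i_n}=\{x\}$; in particular, since $k\geq 1$,
$$A_{i_0 i_1\cdots i_k}\subsetneq A_{i_0}.$$

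Now $\mathfrak{P}^{k}$ is a Markov refinement of $\mathfrak{P}=\mathfrak{P}^{0}$ whose type-preserving map sends the admissible word $(i_0 i_1\cdots i_k)$ to its last letter $i_k$ (cf.\ the remark following Definition~\ref{defn:refineBp}). Hence the level-$k$ Basilica puzzle piece $P_{i_0 i_1\cdots i_k}$ has the same type as $P_{i_k}=P_{i_0}$, which is of limb type by hypothesis. Therefore $P_{i_0 i_1\cdots i_k}$ is of limb type, so $\partial P_{i_0 i_1\cdots i_k}\cap J(Q)$ is a single point of $J(Q)$; as $\phi_\infty(x)$ lies in this set (the left endpoint $x$ of $A_{i_0 i_1\cdots i_k}$ corresponds to $\phi_\infty(x)\in\partial P_{i_0 i_1\cdots i_k}$), this single point equals $\phi_\infty(x)$, and the two external rays bounding $P_{i_0 i_1\cdots i_k}$ both land at $\phi_\infty(x)$. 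The two external angles of the contact point $\phi_\infty(x)$ are precisely the two endpoints of $A_{i_0}$ (again because $P_{i_0}$ is limb with $\partial P_{i_0}\cap J(Q)=\{\phi_\infty(x)\}$), so $A_{i_0 i_1\cdots i_k}$, being the arc of $\partial^I U_\infty$ on the limb side of $\phi_\infty(x)$ cut out by these two rays, coincides with $A_{i_0}$. This contradicts $A_{i_0 i_1\cdots i_k}\subsetneq A_{i_0}$, and the lemma follows. The crux of the argument is the tension between two facts: topological expansion forces the nested Markov intervals $A_{i_0 i_1\cdots i_n}$ to shrink to $x$, whereas a limb puzzle piece meets $J(Q)$ on its boundary in a single point and is therefore pinned down on $\partial^I U_\infty$ by the two external rays landing at that point, so a limb interval admits no proper limb sub-interval sharing an endpoint. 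The step requiring the most care is verifying that limb type propagates back along the itinerary via the type-preserving map (so that $P_{i_0 i_1\cdots i_k}$ is limb because $P_{i_k}=P_{i_0}$ is), together with the orientation bookkeeping that makes a returning periodic orbit force $A_{i_k}=A_{i_0}$.
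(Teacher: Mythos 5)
Your proof is correct and follows essentially the same route as the paper: use topological expansion to get strict nesting $A_{i_0\cdots i_k}\subsetneq A_{i_0}$, use the type-preserving map of the Markov refinement to conclude the level-$k$ piece is of limb type, and derive a contradiction from the fact that the single point $\phi_\infty(x)$ in $\partial P_{i_0\cdots i_k}\cap J(Q)$ pins down the two external rays and hence the interval. You spell out the final step — why a limb interval admits no proper limb sub-interval sharing $x$ — more explicitly than the paper, which simply asserts that $A$ is the only limb Markov piece of any level whose boundary contains $x$; that is a welcome clarification, not a deviation.
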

\begin{proof}
    Suppose that the right orbit of $x$ is periodic, with period $p$. 
    Let $B \in \mathcal{A}^p$ be the partition piece having $x$ on its boundary and lying on the right of $x$. Then $\mathcal{F}_\infty^p|_{B}$ fixes $x$.
    Since $\mathcal{F}$ is topologically expanding, $B \subsetneq A$.
    Note that by Proposition~\ref{prop:MarkovMapRefine} and construction, $\mathcal{F}_\infty$ maps an interval of $\mathcal{A}^1$ to an interval of $\mathcal{A}$.
    Therefore, $\mathcal{F}_\infty^p: B \longrightarrow A$ is a homeomorphism. In particular, $B$ is of limb type. But this is not possible as the only limb type Markov piece of $\bigcup_n \mathcal{A}^n$ whose boundary contains $x$ is $A$.
    The same argument also applies to the left orbit of $y$, and the lemma follows.
\end{proof}

We now turn to the torso puzzle pieces.

\begin{lem}\label{lem:2k}
   Suppose $A_i = [x,y] \in \mathcal{A}$ is of torso type, and the right orbit $x_n^+$ is periodic with period $q$. Then $D\mathcal{F}_\infty^q(x^+) = 2^{2k}$ for some $k \in \N$.
\end{lem}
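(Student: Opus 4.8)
The plan is to exploit the fact that $\mathcal{F}^q$ is again a Basilica $\mathfrak{Q}-$map and to read off the parity of the exponent in its canonical form from the $2-$periodicity of the central component $U_0$.

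I would begin by fixing notation. Let $P_i\in\mathfrak{P}$ be the puzzle piece whose interval on $\partial^I U_\infty$ is $A_i=[x,y]$, let $U$ be the bounded Fatou component for which $P_i$ is of torso type, and let $p=\phi_U(s)$ be the landing point of $R_\infty(x)$, so that $s$ is dyadic and $P_i$ carries the dyadic segment $I_U[s,t_i]$. Since $x^+$ is periodic, Lemma~\ref{lem:strictpre} shows $x$ is not an endpoint of any limb piece; hence, arguing inductively (the child of a torso piece for $U$ whose closure meets $\partial U$ at $\phi_U(s)$ is again of torso type for $U$), the level $q$ puzzle piece $P_w$ with $x^+\in A_w$ is of torso type for the same $U$ and carries a dyadic segment $I_U[s,t_w]\subsetneq I_U[s,t_i]$. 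By periodicity $\mathcal{F}^q$ maps $P_w$ onto $P_i$, so Lemma~\ref{lem:TypePreservingPuzzleMap} identifies $\mathcal{F}^q|_{P_w}$ with the canonical map $Q^{-N}\circ Q^M$; since $Q$ acts on $\partial^I U_\infty$ by angle doubling, $D\mathcal{F}_\infty^q(x^+)=2^{M-N}$, and the task reduces to showing that $M-N$ is even.

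Next I would unwind the construction of this canonical map (Lemma~\ref{lem:mapprimitive} and the torso case of Lemma~\ref{lem:conformalBlowUp}): $Q^M$ carries the arc $I_U(s,t_w)$ homeomorphically onto $I_{U_0}(0,1)=\partial U_0-\{-\tfrac12\}$, and $Q^N$ carries the larger arc $I_U(s,t_i)$ homeomorphically onto $I_{U_0}(0,1)$. Since $P_w$ carries a strictly smaller dyadic segment than $P_i$, blowing it up onto the full ideal boundary of $U_0$ requires strictly more iterates of $Q$, so $M>N$; consequently $Q^N$ maps $I_U(s,t_w)$ onto a proper sub-arc $I_{U_0}(0,\theta)\subseteq\partial U_0$ having $-\tfrac12$ as an endpoint, and since $Q^M=Q^{M-N}\circ Q^N$ on $I_U(s,t_w)$, the iterate $Q^{M-N}$ maps the arc $I_{U_0}(0,\theta)\subseteq\partial U_0$ homeomorphically onto the arc $I_{U_0}(0,1)\subseteq\partial U_0$.

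To conclude, I would observe that $Q^{M-N}$ then sends infinitely many points of $\partial U_0$ back into $\partial U_0$; as $Q^{M-N}(U_0)$ is a Fatou component and two distinct Fatou components of $Q$ can share only finitely many boundary points, this forces $Q^{M-N}(U_0)=U_0$. Since $U_0,U_0'$ form a $2-$cycle of $Q$, $M-N$ is divisible by $2$, and hence $D\mathcal{F}_\infty^q(x^+)=2^{M-N}=2^{2k}$ with $k=(M-N)/2\in\N$. The step I expect to be most delicate is the bookkeeping in the first paragraph: verifying that the $\mathcal{F}-$orbit of $P_i$ stays among torso pieces attached to the same component $U$, and that $\mathcal{F}^q|_{P_w}$ is precisely the canonical map onto $P_i$, requires careful use of the definitions of Markov refinement and of the canonical maps between puzzle pieces; once that identification is secured, the parity of $M-N$ is dictated by the dynamics of $U_0$.
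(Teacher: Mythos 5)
Your proof is correct and shares the paper's overall scaffolding (identify the canonical map $\cF^q|_{P_w}=Q^{-N}\circ Q^{M}$, reduce to showing $M-N$ is even), but the final parity step is argued differently, and your version is arguably cleaner. The paper deduces $m-n=2k$ by bookkeeping: both $Q^{M}$ and $Q^{N}$ factor as $Q^{l}$ (the conformal isomorphism $U\to U_0$) followed by a power of $Q^2$ that doubles the width of the dyadic segment, so the difference is automatically twice the logarithm of the ratio of dyadic lengths — a computation the paper merely asserts with the phrase ``the factor $2$ comes from the fact that the first return map of $U_0$ has period $2$.'' You sidestep that computation entirely: after observing $M>N$ and writing $Q^{M}=Q^{M-N}\circ Q^{N}$ on $I_U(s,t_w)$, you note that $Q^{M-N}$ carries a non-degenerate sub-arc of $\partial U_0$ onto an arc in $\partial U_0$, so $\partial Q^{M-N}(U_0)\cap\partial U_0$ is infinite; since two distinct bounded Fatou components of $Q$ share at most one boundary point, $Q^{M-N}(U_0)=U_0$, and the exact period $2$ of $U_0$ gives $2\mid(M-N)$ with $M-N\geq 2$, hence $k\geq 1$. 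This replaces the paper's interval-width calculation with a global dynamical argument (a Fatou component returning to itself) and makes the role of the $2$-cycle $U_0\leftrightarrow U_0'$ structurally transparent; the cost is that you must invoke the extra geometric fact about distinct Fatou components sharing finitely many boundary points, which holds here because $J(Q)$ is a Basilica. Your first-paragraph bookkeeping (propagating torso type for $U$ down to $P_w$ via Lemma~\ref{lem:strictpre}) is also correct and spells out a step the paper's proof takes for granted in the sentence ``Assume that both $P_w, P_i$ are of torso type for a bounded Fatou component $U$.''
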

\begin{proof}
    Let $P_w \in \mathfrak{P}^q$ and $P_i \in \mathfrak{P}$ be the level $q$ and level $0$ puzzle pieces having $x$ on their boundary and lying on the right of $x$.
    Then 
    $$
    F_w = \mathcal{F}^q|_{P_w} = Q^{-n} \circ Q^m: P_w \longrightarrow P_i.
    $$
    Assume that both $P_w, P_i$ are of torso type for a bounded Fatou component $U$, and $P_w \cap U = P_U[a,b], P_i \cap U = P_U[a,c]$. Let $k \in \N$ so that $2^k = |[a,c]|/|[a,b]|$. Then $m-n = 2k$, where the factor $2$ comes from the fact that the first return map of $U_0$ has period $2$ (i.e., the angular width of an arc on $\partial U_0$ is doubled under $Q^2$). Therefore, $D\mathcal{F}_\infty^q(x^+) = 2^{2k}$.
\end{proof}

\begin{lem}\label{lem:fac2}
    Let $P$ be a Basilica puzzle piece of torso type. Let $S_1, S_2$ be two Basilica puzzle pieces of torso type for $U$ that are both mapped to $P$ under $\mathcal{F}$. Denote $S_{i} \cap U = P_{U}[s_i, t_i]$. 
    Suppose that $|[s_1, t_1]| = 2^k |[s_2, t_2]|$. Then
    $$
    N(S_{2}) - N(S_{1}) = 2k.
    $$
\end{lem}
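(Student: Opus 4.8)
The plan is to imitate the proof of Lemma~\ref{lem:2k}: track the numbers of iterates of $Q$ appearing in the two canonical maps and read the difference off from internal angle coordinates. First I would apply Lemma~\ref{lem:TypePreservingPuzzleMap} to the torso puzzle pieces $S_1,S_2$ and the common target $P$, writing $\mathcal{F}|_{S_i}=F_{S_i}=Q^{-n_i}\circ Q^{m_i}\colon S_i\longrightarrow P$, where (following the proof of that lemma) $Q^{m_i}\colon\Int S_i\to\Int P_{imm}$ and $Q^{n_i}\colon\Int P\to\Int P_{imm}$ are conformal, with $P_{imm}=(P_{limb}-U_0)\cup P_{U_0}[0,1]$. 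The key observation is that the inverse branch $Q^{-n_i}$ only records how $P$ sits over $P_{imm}$, and is in particular independent of $i$ by the uniqueness clause of Lemma~\ref{lem:TypePreservingPuzzleMap}; hence $n_1=n_2=:n$ and
$$
N(S_2)-N(S_1)=(m_2-n)-(m_1-n)=m_2-m_1 .
$$

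Next I would compute $m_i$ in terms of the internal width $|[s_i,t_i]|$. Since $Q^{m_i}$ carries $S_i$ conformally onto $P_{imm}$, it carries $\overline{U}\cap S_i$ onto $\overline{U_0}\cap P_{imm}$, and in particular restricts to a homeomorphism $Q^{m_i}\colon I_U(s_i,t_i)\to I_{U_0}(0,1)$. Exactly as in Lemma~\ref{lem:2k} (cf.\ Lemma~\ref{lem:mapprimitive}), this forces $m_i=l+2c_i$, where $l$ is the smallest integer with $Q^l(U)=U_0$ and $c_i\in\N$ is determined by $|[s_i,t_i]|=2^{-c_i}$: the map $Q^l\colon\partial U\to\partial U_0$ preserves internal angles by definition of $\phi_U$, after which $Q^{m_i-l}$ must return $U_0$ to itself (so $m_i-l$ is even) and acts on internal angles of $\partial U_0$ as $\theta\mapsto 2^{(m_i-l)/2}\theta$, because the \emph{first return} map $Q^2|_{U_0}$ — not $Q$ itself — is the one conjugate to angle doubling. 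Carrying the level-$c_i$ dyadic arc $I_U(s_i,t_i)$ onto the full circle $I_{U_0}(0,1)$ thus requires $(m_i-l)/2=c_i$. Combining with the first display, $N(S_2)-N(S_1)=m_2-m_1=2(c_2-c_1)$, and the hypothesis $|[s_1,t_1]|=2^{k}|[s_2,t_2]|$ reads $2^{-c_1}=2^{k}\cdot 2^{-c_2}$, i.e.\ $c_2-c_1=k$; hence $N(S_2)-N(S_1)=2k$.

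I do not expect a serious obstacle: the argument is essentially bookkeeping. The two points that need care are (i) the observation that the iterate count $n$ is shared by $S_1$ and $S_2$ because they have the same target $P$ (so that the difference $N(S_2)-N(S_1)$ is insensitive to $n$), and (ii) the factor $2$ relating internal-angle levels to iterate counts, which comes from the period-$2$ cycle $\{U_0,U_0'\}$ — precisely the mechanism already used in the proof of Lemma~\ref{lem:2k}. Alternatively, one can phrase the whole computation in terms of the derivative $D\mathcal{F}_\infty|_{A_{S_i}}=2^{N(S_i)}$ on $\partial^I U_\infty$ versus the (affine) internal-angle slope of the induced map $\mathcal{F}_{\bdd}$ on $\partial^I U$, the point being that the former is a fixed constant times the square of the latter; this makes the factor $2$ transparent but requires the same structural input.
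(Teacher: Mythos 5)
Your proof is correct and takes essentially the same route as the paper's: write the canonical maps as $F_i=Q^{-n_i}\circ Q^{m_i}$, note $n_1=n_2$ since both target the same $P$, and deduce $m_2-m_1=2k$ from the fact that the first return $Q^2|_{U_0}$ (not $Q$) is conjugate to angle doubling. You have merely filled in the bookkeeping details that the paper's proof leaves implicit.
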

\begin{proof}
    Let $F_{i} = Q^{-{n_i}}\circ Q^{m_i}: S_{i} \longrightarrow P$ be the canonical map. Then $N(S_i) = m_i - n_i$.
    By construction of the canonical map, $Q^{m_i}(I_{U}(s_i, t_i) = I_{U_0}(0,1)$, and $n_1 = n_2$. If $|[s_1, t_1]| = 2^k |[s_2, t_2]|$, then $m_2=m_1+2k$. Note there is a factor of $2$ for $k$ as the first return map of $U_0$ has period $2$.
    Therefore, $N(S_2) - N(S_1) = 2k$.
\end{proof}

We need the following combinatorial lemma.
\begin{lem}\label{lem:primitivedyadicdecomp}
    Let $A = [s,t]$ be a dyadic interval. Suppose that it is decomposed into $M\geq 2$ dyadic intervals $A = B_1 \cup B_2 \cup \ldots \cup B_M$.
    Then 
    $$
    |B_1|/|A| \in \{\frac{1}{2^{j}}: j = 1,\ldots, M-1\}.
    $$
    Moreover, for any $a \in \{\frac{1}{2^{j}}: j = 1,\ldots, M-1\}$, there is a decomposition $A = B_1 \cup B_2 \cup \ldots \cup B_M$ with each $B_i$ dyadic and $|B_1|/|A| = a$.
\end{lem}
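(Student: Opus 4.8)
The plan is to exploit the complete binary-tree structure carried by the dyadic subintervals of $A$. First recall, as noted right after Definition~\ref{defn:dyaint}, that every dyadic interval has the form $[\frac{p}{2^n},\frac{p+1}{2^n}]$ and hence has length $2^{-n}$; consequently $|B_1|/|A|$ is automatically a power of $\frac12$, and it only remains to control the exponent. Organize the dyadic subintervals of $A$ into the rooted infinite binary tree $T_A$ with root $A$, where the two children of a node are its two halves. A decomposition $A=B_1\cup\cdots\cup B_M$ into dyadic intervals with pairwise disjoint interiors corresponds exactly to a finite spanning antichain of $T_A$ — equivalently, to the leaf set of a finite subtree of $T_A$ containing the root — since every point of $A$ that is not a dyadic rational lies in the interior of a unique $B_i$, and that $B_i$ is a node on the branch determined by the point. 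Because $M\ge 2$, no $B_i$ equals $A$.

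For the first assertion, let $B_1$ be the piece containing the left endpoint $s$ of $A$, and let $d\ge 1$ be its depth in $T_A$, so that $|B_1|/|A|=2^{-d}$. I would look at the leftmost path $A=v_0\succ v_1\succ\cdots\succ v_d=B_1$, where $v_{i+1}$ is the left child of $v_i$, and for $i=0,\dots,d-1$ let $w_i$ denote the right child of $v_i$. The intervals $w_0,\dots,w_{d-1}$ have pairwise disjoint interiors, are disjoint from $B_1$, and are covered by the $B_j$'s; hence each $w_i$ contains at least one $B_j$ among its descendants in $T_A$. Together with $B_1$ itself this forces $M\ge d+1$, i.e. $d\le M-1$, so $|B_1|/|A|\in\{2^{-1},\dots,2^{-(M-1)}\}$. (The symmetric argument gives the same bound for the rightmost piece.)

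For the ``moreover'' part, fix $a=2^{-j}$ with $1\le j\le M-1$. I would take $B_1$ to be the leftmost dyadic subinterval of $A$ of length $2^{-j}|A|$; running the leftmost path $A=v_0\succ\cdots\succ v_j=B_1$ and setting $w_i$ to be the right child of $v_i$ as above yields the decomposition $A=B_1\cup w_{j-1}\cup\cdots\cup w_0$ into $j+1$ dyadic intervals, with $|B_1|/|A|=a$. Since $j+1\le M$, I would then repeatedly bisect any piece other than $B_1$: each bisection preserves dyadicity of all pieces, leaves $B_1$ untouched, and increases the number of pieces by exactly one, so after $M-(j+1)$ such steps one reaches a decomposition into exactly $M$ dyadic intervals with $|B_1|/|A|=a$, as desired.

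The argument is entirely elementary; the hard part, such as it is, will only be the bookkeeping along the leftmost path in $T_A$ that produces the lower bound $M\ge d+1$, and I do not expect any genuine obstacle.
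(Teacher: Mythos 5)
Your proof is correct, and the argument is genuinely different in organization from the paper's. The paper proves the lemma by induction on $M$: since $A$ and every $B_i$ are dyadic and $B_i\subsetneq A$, the midpoint $(s+t)/2$ must be a break-point, so the pieces lying in the left half $A'=[s,(s+t)/2]$ form a dyadic decomposition of $A'$ into some number $M'<M$ of pieces; one then applies the inductive hypothesis to $A'$ and rescales by $\tfrac12$. Your argument instead unrolls the same binary-tree structure into a direct counting bound: if $B_1$ sits at depth $d$ on the leftmost branch, the $d$ right-sibling intervals $w_0,\dots,w_{d-1}$ are pairwise disjoint from each other and from $B_1$, and each must absorb at least one distinct $B_j$ (here the antichain property is used to rule out any $B_j$ being a strict ancestor of some $w_i$, since such a $B_j$ would also contain $B_1$), giving $M\ge d+1$ directly. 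For the ``moreover'' half, the paper again recurses (realize any admissible ratio inside $A'$, then rescale), whereas you give the explicit decomposition $A=B_1\cup w_{j-1}\cup\cdots\cup w_0$ and pad by bisections. Both are elementary; the paper's induction is shorter to state, while your version makes the combinatorics of the counting bound more transparent and the realization construction fully explicit. The only place where your write-up is terse is the step ``hence each $w_i$ contains at least one $B_j$ among its descendants,'' which does rely on the antichain property in the way sketched above, but this is a small bookkeeping point and not a gap.
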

\begin{proof}
    The proof is by induction (see Figure~\ref{fig:dyaint}).
    Since $A$ and each $B_i$ are dyadic, we conclude that the mid point $(s+t)/2$ must be a boundary point of some $B_i$.
    
    The base case when $M=2$ follows, as in this case, $B_1 = [s, (s+t)/2]$ and $B_2 = [(s+t)/2, t]$.

    For the induction step, let $A' = [s, (s+t)/2]$. Then $A'$ is dyadic and is decomposed into $A' = B_1 \cup \ldots \cup B_{M'}$ for some $M' < M$. If $M' = 1$, then $|B_1|/|A| = \frac{1}{2}$.
    Otherwise, by the induction hypothesis, we conclude that $|B_1|/|A'| \in \{\frac{1}{2^{j}}: j = 1,\ldots, M'-1\}$, and any number in this list is realizable by some decomposition. It is also easy to see that one can realize any number $M' \in \{1,\ldots, M-1\}$. Therefore, the lemma follows.
\end{proof}

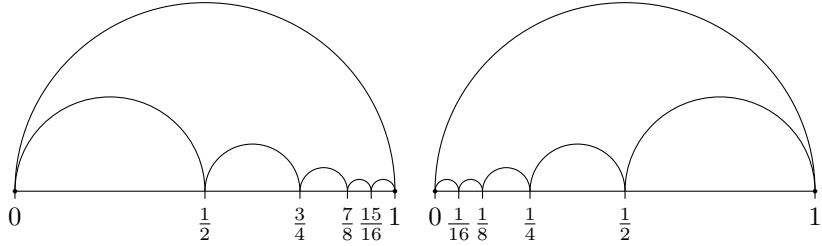
\begin{figure}[htp]
\captionsetup{width=0.96\linewidth}
\centering
\begin{tikzpicture}[x=5cm,y=5cm]
  \draw (0,0) -- (1,0);
  \fill (0,0) circle (0.006);
  \fill (1,0) circle (0.006);
  \foreach \x/\lab in {
      0/0,
      0.5/{\tfrac12},
      0.75/{\tfrac34},
      0.875/{\tfrac78},
      0.9375/{\tfrac{15}{16}},
      1/1
  }{
    \draw (\x,0) -- (\x,-0.02);
    \node[below] at (\x,-0.02) {$\lab$};
  }
  \draw (0,0) arc [start angle=180, end angle=0, radius=0.5];
  \draw (0,0) arc [start angle=180, end angle=0, radius=0.25];
  \draw (0.5,0) arc [start angle=180, end angle=0, radius=0.125];
  \draw (0.75,0) arc [start angle=180, end angle=0, radius=0.0625];
  \draw (0.875,0) arc [start angle=180, end angle=0, radius=0.0625/2];
  \draw (0.9375,0) arc [start angle=180, end angle=0, radius=0.0625/2];  
\end{tikzpicture}
\begin{tikzpicture}[x=5cm,y=5cm]
  \draw (0,0) -- (1,0);

  \fill (0,0) circle (0.006);
  \fill (1,0) circle (0.006);

  \foreach \x/\lab in {
      0/0,
      0.0625/{\tfrac{1}{16}},
      0.125/{\tfrac18},
      0.25/{\tfrac14},
      0.5/{\tfrac12},
      1/1
  }{
    \draw (\x,0) -- (\x,-0.02);
    \node[below] at (\x,-0.02) {$\lab$};
  }

  \draw (0,0) arc [start angle=180, end angle=0, radius=0.5];

  \draw (0,0) arc [start angle=180, end angle=0, radius=0.0625/2];
  
  \draw (0.0625,0) arc [start angle=180, end angle=0, radius=0.0625/2];

  \draw (0.125,0) arc [start angle=180, end angle=0, radius=0.0625];

  \draw (0.25,0) arc [start angle=180, end angle=0, radius=0.125];

  \draw (0.5,0) arc [start angle=180, end angle=0, radius=0.25]; 
\end{tikzpicture}
\caption{Under suitable iteration of the multiplication by $2$ map, the dyadic interval $A$ is mapped homeomorphically to $[0,1]$. The left and right figures give the two extreme configurations of $B_1,\ldots, B_5$ for the case $M = 5$: $|B_1|/|A| = \frac12$ on the left and $|B_1|/|A| = \frac1{2^{5-1}}$ on the right.}
\label{fig:dyaint}
\end{figure}

\begin{figure}[htp]
\captionsetup{width=0.96\linewidth}
    \centering
     \includegraphics[width=0.9\textwidth]{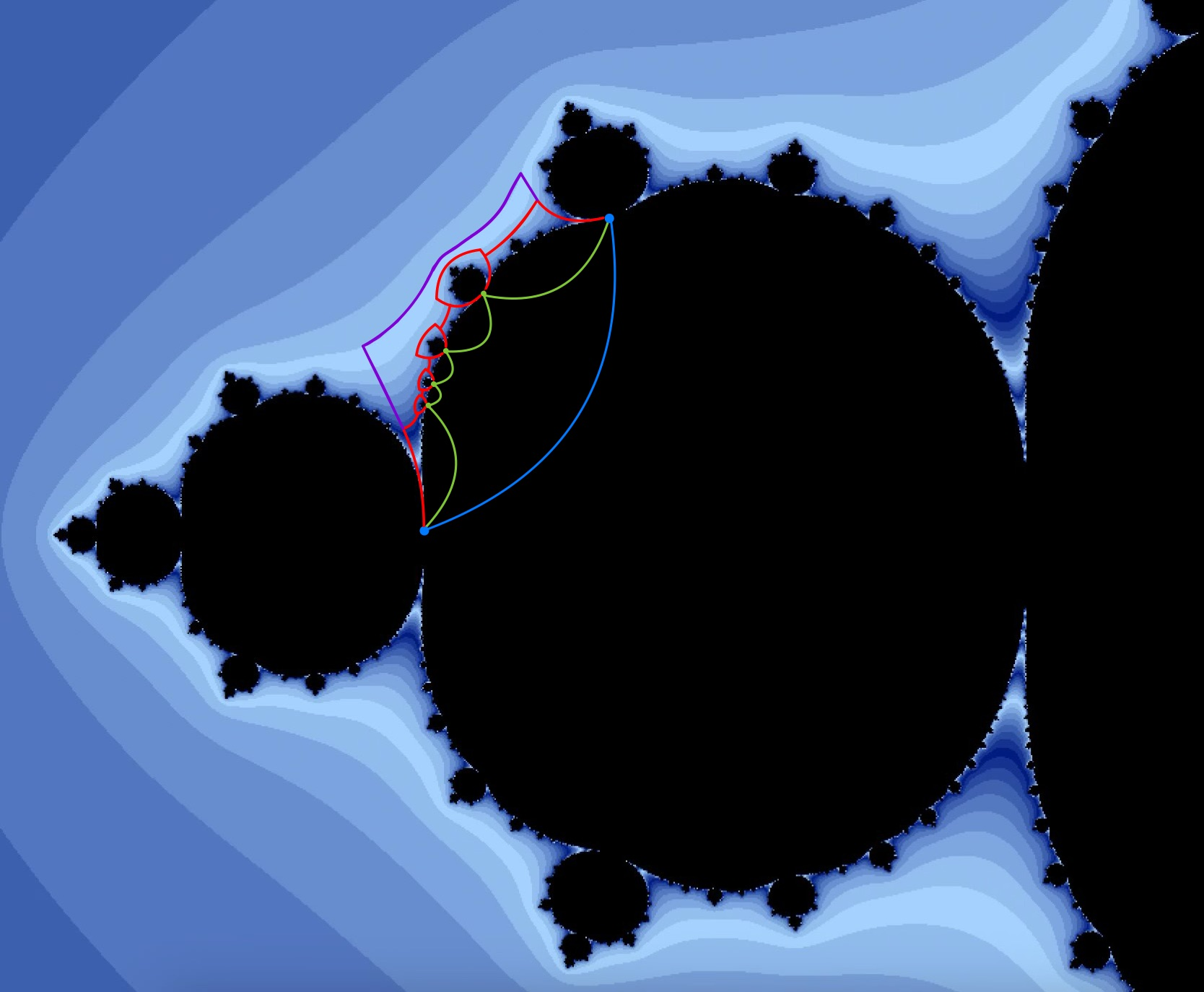}
    \includegraphics[width=0.9\textwidth]{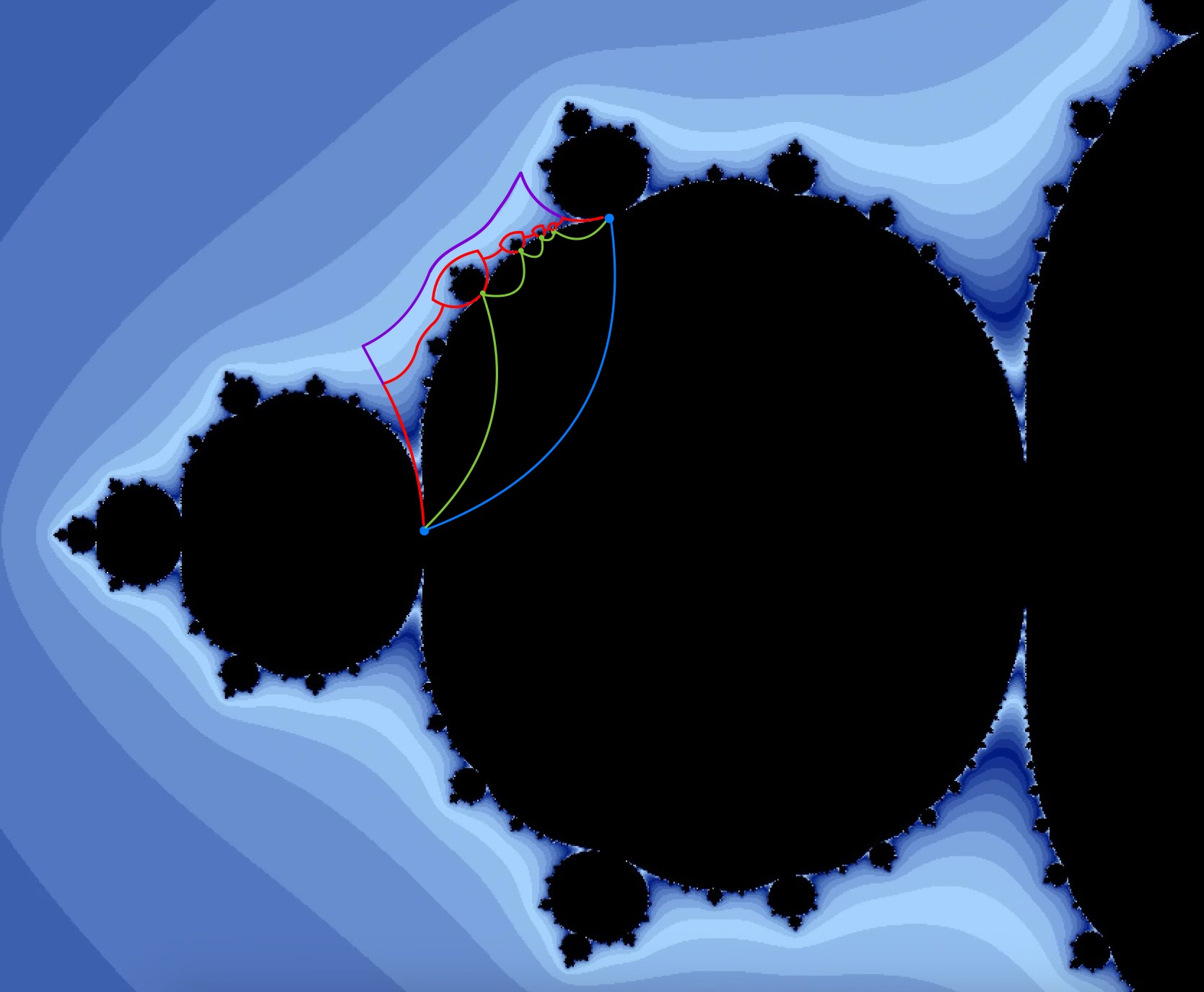}
    \caption{An illustration of the modification of a torso type Basilica puzzle piece $P$ carried out in Lemma~\ref{lem:localrefine}, with $\mathfrak{S}$ on the top and $\mathfrak{S}'$ on the bottom. It contains $9$ puzzle pieces of $\mathfrak{S}$, where $5$ of them are of torso type. The modification is similar to the two extreme cases in Figure~\ref{fig:dyaint}, and gives that $N(S_1') - N(S_1) = 2\times 3$.}
    \label{fig:mod}
\end{figure}
Recall that the derivative on a puzzle piece $A_{ij} \in \mathcal{A}^1$ is $D\mathcal{F}_\infty|_{A_{ij}} = 2^{N(A_{ij})}$. 
The following lemma allows us to modify the derivative.
\begin{lem}\label{lem:localrefine}
    Let $P\in \mathfrak{P}$ be a torso type Basilica puzzle piece for the Fatou component $U$ with $P\cap J(Q) = \left( S_1\cup \ldots\cup S_K\right) \cap J(Q), S_j \in \mathfrak{P}^1$, with index chosen so that $S_1$ shares a boundary point with $P$.
    Suppose that $M$ of the pieces $S_1,\ldots, S_K$ are of torso type for $U$.
    Then there exists
    \begin{itemize}[leftmargin=8mm]
        \item an integer $L$, and
        \item for any integer $l$ in $[0, M-1]$, there exists a refinement of $P$ by $S_1',\ldots, S_K'$ 
    \end{itemize} 
    so that the following hold. 
    \begin{itemize}[leftmargin=8mm]
        \item $N(S_1) \in [L, L+2(M-2)]$.
        \item If $\widetilde{\mathfrak{P}}^1$ is the puzzle obtained by replacing $S_j\in\mathfrak{P}^1$ with $S_j'$, and if $\mathcal{F}': \widetilde{\mathfrak{P}}^1\longrightarrow \mathfrak{P}$ is defined by 
        \begin{align*}
           & \mathcal{F}'|_S =\mathcal{F}|_S,\quad \textrm{if}\quad S \notin \{S_1',\ldots, S_K'\},\quad \textrm{and}\\
         & \mathcal{F}': S_j' \longrightarrow \mathcal{F}(S_j)\quad \textrm{is the canonical map of Lemma~\ref{lem:TypePreservingPuzzleMap};}  
        \end{align*} 
        then
        \begin{enumerate}[leftmargin=8mm]
            \item\label{lem:refine:item1} $\mathcal{F}': \widetilde{\mathfrak{P}}^1\longrightarrow \mathfrak{P}$ is topologically conjugate to $\mathcal{F}: \mathfrak{P}^1\longrightarrow \mathfrak{P}$ on $J(Q)$; i.e., $\mathcal{F}'$ is a modification of $\mathcal{F}$ in the sense of Definition~\ref{def-modfn}, and 
        \item\label{lem:refine:item3} $N(S_1') = L+2l$.
        \end{enumerate}
    \end{itemize}
\end{lem}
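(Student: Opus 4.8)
The plan is to translate the statement into the internal-angle coordinates of $U$ and then exploit two pieces of combinatorial flexibility already in place: the freedom in choosing dyadic decompositions (Lemma~\ref{lem:primitivedyadicdecomp}) and the formula for how $N$ reacts to resizing a torso piece (Lemma~\ref{lem:fac2}). Since $P$ is of torso type for $U$, Definition~\ref{basi_puzz_def} and Lemma~\ref{lem:TypePreservingPuzzleMap} give $P\cap U = P_U[a,c]$ for a dyadic interval $[a,c]\subseteq\mathbb S^1$, and the children $S_1,\dots,S_K$ of $P$ of torso type for $U$ are exactly those covering a sub-arc of $\partial U$; each such child meets $\partial U$ in a dyadic segment, so together they realize a partition of $[a,c]$ into $M$ dyadic intervals $[s_1,t_1],\dots,[s_M,t_M]$ (indexed so that $[s_1,t_1]$ carries the boundary point shared by $P$ and $S_1$). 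The remaining $K-M$ children cover the parts of $J(Q)$ hanging off the interior contact points $\phi_U(t_1),\dots,\phi_U(t_{M-1})$ of $I_U[a,c]$.

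First I would pin down $N$. Writing $\mathcal{F}(S_i)\in\mathfrak P$ for the (necessarily torso-type) target and $\mathcal{F}|_{S_i}=Q^{-n_i}\circ Q^{m_i}$ for the canonical map, $n_i$ is determined solely by $\mathcal{F}(S_i)$, while $m_i$ increases by $2$ each time $|[s_i,t_i]|$ is halved, the factor $2$ being the period of the cycle on $U_0$ — this is exactly the content of Lemma~\ref{lem:fac2}. Comparing $S_1$ with the abstract torso-for-$U$ piece of width $|[a,c]|$ mapping onto $\mathcal{F}(S_1)$ by its canonical map, Lemma~\ref{lem:fac2} yields
\[
N(S_1)=L+2(j_0-1),\qquad |[s_1,t_1]|/|[a,c]|=2^{-j_0},
\]
where $L$ is $N$ of that width-$|[a,c]|$ piece plus $2$, a constant depending only on $P$ and $\mathcal{F}(S_1)$. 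Since Lemma~\ref{lem:primitivedyadicdecomp} forces $j_0\in\{1,\dots,M-1\}$, this gives $N(S_1)\in[L,L+2(M-2)]$, the first bullet.

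Next I would build the modification. Given the target $l$, Lemma~\ref{lem:primitivedyadicdecomp} furnishes a partition of $[a,c]$ into $M$ dyadic intervals whose first piece $[s_1',t_1']$ satisfies $|[s_1',t_1']|/|[a,c]|=2^{-(l+1)}$; let $S_1',\dots,S_M'$ be the associated torso-for-$U$ Basilica puzzle pieces inside $P$. For the remaining children $S_j'$ ($j>M$) I would transplant the cutting pattern of the original hanging-off pieces $S_j$ from the old interior contact points $\phi_U(t_i)$ to the new ones $\phi_U(t_i')$; this is legitimate because $Q$ is transitive on bounded Fatou components and on contact points of $J(Q)$, so the sub-Julia-sets hanging off any two dyadic contact points of $\partial U$ are combinatorially identified in a way that respects limb/torso types and the canonical maps. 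One thereby obtains a refinement $\widetilde{\mathfrak P}^1$ of $\mathfrak P$ (equal to $\mathfrak P^1$ away from $P$) with a bijection $S_j\leftrightarrow S_j'$ satisfying $S_j'\subseteq P$, same types, and $\mathcal{F}'(S_j')=\mathcal{F}(S_j)$; Proposition~\ref{prop:MarkovMapRefine} and Lemma~\ref{lem:TypePreservingPuzzleMap} then make $\mathcal{F}'\colon\widetilde{\mathfrak P}^1\to\mathfrak P$ a genuine Basilica $\mathfrak Q$-map. Because $\mathcal{F}'$ and $\mathcal{F}$ share the level-$0$ puzzle $\mathfrak P$ and the bijection $S_j\leftrightarrow S_j'$ preserves every combinatorial datum — containment in a level-$0$ piece, the image level-$0$ piece, and break-point adjacencies — the two maps have the same Markov matrix, hence are combinatorially conjugate, and topological expansion survives since only finitely many pieces change size; Proposition~\ref{prop-combconjimpliestopconj} then gives topological conjugacy on $J(Q)$, which is item~\eqref{lem:refine:item1}. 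Finally, rerunning the computation above for $S_1'$ in place of $S_1$ (with the same constant $L$, since $\mathcal{F}'(S_1')=\mathcal{F}(S_1)$) gives $N(S_1')=L+2\big((l+1)-1\big)=L+2l$, which is item~\eqref{lem:refine:item3}.

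The main obstacle is the middle step: showing that after resizing the dyadic pieces one can re-place the pieces covering the hanging-off parts so that the \emph{entire} Markov structure, not merely the torso pieces, is reproduced and the bijection $S_j\leftrightarrow S_j'$ is combinatorics-preserving. This is precisely where the self-similarity of the fat Basilica — transitivity of $Q$ on bounded Fatou components and on contact points — is indispensable; granting it, the remainder is bookkeeping of the period-$2$ factor coming from the cycle $U_0\mapsto U_0'\mapsto U_0$ and of the Koebe-type control already packaged into the puzzle formalism.
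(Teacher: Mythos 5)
Your argument is the paper's own: pass to the dyadic segment $I_U[a,c]$ associated to $P$, invoke Lemma~\ref{lem:primitivedyadicdecomp} to choose the first sub-interval's length ratio freely in $\{2^{-j}: j=1,\ldots,M-1\}$, convert that freedom into an even shift of $N(S_1)$ via Lemma~\ref{lem:fac2}, and upgrade the resulting combinatorial conjugacy to a topological one by Proposition~\ref{prop-combconjimpliestopconj}. The paper dispatches the extension from the $M$ resized torso pieces to the full collection $\{S_1',\ldots,S_K'\}$ in one line; your explicit transplant of the limb-type children via transitivity of $Q$ on bounded Fatou components and contact points is a correct fleshing out of what that line suppresses, and you have rightly identified it as the one non-routine step. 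One caveat, inherited from the paper's own wording: since Lemma~\ref{lem:primitivedyadicdecomp} only permits $j\in\{1,\ldots,M-1\}$, the construction with $|[s_1',t_1']|/|[a,c]|=2^{-(l+1)}$ realizes $l\in\{0,\ldots,M-2\}$ rather than the full $[0,M-1]$; this off-by-one is harmless for the downstream application in Theorem~\ref{thm:symmetrichyperbolic} (where $M\geq 2N$ supplies ample slack), but if you intend to quote the exact interval you should tighten it.
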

\begin{proof}
    Since $P$ is of torso type, we can associate to it a dyadic segment $I_{U}[s,t]$. Let $I_{U}[s, s_1]$ be the dyadic segment for $S_1$.
    By Lemma~\ref{lem:primitivedyadicdecomp}, 
    $$
    \frac{|s_1-s|}{|t-s|} \in \left\{\frac{1}{2^{j}}: j = 1,\ldots, M-1\right\},
    $$ 
    and we can construct a decomposition of $I_{U}[s,t]$ into $M$ dyadic segments so that the ratio $|s_1'-s|/|t-s|$ realizes any number in $\{\frac{1}{2^{j}}: j = 1,\ldots, M-1\}$, where $I_{U}[s,s_1']$ is the first segment in the decomposition (see Figure~\ref{fig:mod}).
    Note that any such decomposition gives a collection of $M$ torso type Basilica puzzle pieces $\{S_1',\cdots, S_K'\}$ for $U$.
    If $|s_1'-s|/|s_1-s| = 2^k$, then $N(S_1') = N(S_1) + 2k$ by Lemma~\ref{lem:fac2}. Therefore, there exists $L$ so that $N(S_1) \in [L, L+2(M-2)]$ and by choosing different decompositions of $I_{U}[s,t]$, $N(S_1')$ realizes any number $L+2l$, for $l \in [0, M-1]$.
    It is not hard to extend this collection of $M$ torso type Basilica puzzle pieces for $U$ to $\{S_1',\ldots, S_K'\}$ so that $\mathcal{F}'$ is combinatorially conjugate to $\mathcal{F}$. Thus, by Proposition~\ref{prop-combconjimpliestopconj}, Property \ref{lem:refine:item1} is satisfied. The lemma now follows.
\end{proof}

Let $P \in \mathfrak{P}^n$. We say that it is {\em boundary periodic} if at least one point $x\in \partial P \cap J(Q)$ is periodic under $\cF\vert_P$. We say that it is {\em one-sided boundary periodic} if exactly one point $x\in \partial P \cap J(Q)$ is periodic.
Note that by Lemma~\ref{lem:strictpre}, if $P$ is boundary periodic, then it is of torso type.
Finally, $P$ is said to be \emph{boundary periodic associated to $U$} if it is of torso type for $U$.

\begin{lem}\label{lem:onesideperiodic}
    There exists $n$ such that the following holds.
    If $S \in \mathfrak{P}^n$ is boundary periodic with period $q$ associated to some bounded Fatou component $U$, then 
    \begin{itemize}
        \item $S$ is one-sided boundary periodic; and
        \item $\mathcal{F}^q(S) \in \mathfrak{P}^{n-q}$ contains at least one torso type Basilica puzzle piece of $\mathfrak{P}^n$ (for $U$) that is not boundary periodic.
    \end{itemize}
\end{lem}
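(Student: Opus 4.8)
The plan is to pass to the induced map $\mathcal{F}_\infty$ on $\partial^I U_\infty\cong\mathbb{S}^1$ and to choose the level $n$ large enough that both assertions reduce to topological expansion together with the finiteness of the periodic break-points, plus the parity information already recorded in Lemma~\ref{lem:2k}.

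First I would set up the bookkeeping. The break-points of $\mathcal{F}_\infty$ form the finite set $\mathrm{bk}(\mathcal{A}^0)$, and, since every level-$0$ break-point is also a break-point of every finer partition $\mathcal{A}^m$, every point of $\mathrm{bk}(\mathcal{A}^0)$ lies on the boundary of any puzzle piece containing it, and $\mathcal{F}_\infty$ maps $\mathrm{bk}(\mathcal{A}^0)$ into itself; hence the set $\mathcal{P}$ of periodic break-points is finite, it contains every periodic boundary point of every puzzle piece, and each of its points has period at most $q_{\max}:=|\mathrm{bk}(\mathcal{A}^0)|$. Since $\mathcal{F}$ is topologically expanding, the diameters of level-$m$ puzzle pieces tend to $0$ uniformly in $m$ (by compactness of $J(Q)$). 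Fix $\varepsilon>0$ smaller than the pairwise distances between the (finitely many) points of $\mathcal{P}$, and choose $n$ so large that $n\geq q_{\max}$ and every level-$(n-q_{\max})$ piece has diameter $<\varepsilon$. For assertion~(i): a piece $S\in\mathfrak{P}^n$ has diameter $<\varepsilon$, so it cannot contain two distinct points of $\mathcal{P}$; hence at most one boundary point of $S$ is periodic, and if $S$ is boundary periodic it is one-sided boundary periodic. By Lemma~\ref{lem:strictpre} such an $S$ is of torso type, say for $U$; write its unique periodic boundary point as $a=\phi_U(s_1)$, let $q\leq q_{\max}$ be its period, and let $b$ denote the other (non-periodic) boundary point.

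The key step for assertion~(ii) is to show that $P:=\mathcal{F}^q(S)\in\mathfrak{P}^{n-q}$ is again of torso type for the \emph{same} component $U$ and contains $S$. Let $P_0\in\mathfrak{P}^0$ be the level-$0$ piece containing $S$; then $P_0$ is of torso type for $U$ and $a\in\partial P_0$. Applying Lemma~\ref{lem:2k} to $P_0$ shows that the return multiplier of $\mathcal{F}_\infty^q$ at $a$ equals $2^{2k}$ for some $k\in\mathbb{N}$; writing $\mathcal{F}^q|_S=Q^{-N}\circ Q^M$, and using that $a$ is a contact point whose forward $Q$-orbit reaches the parabolic fixed point $-\tfrac12$ and stays there, one checks that $Q^M(a)=Q^N(a)=-\tfrac12$ and $M-N=2k$. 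Now $-\tfrac12$ is parabolic with immediate basin the $2$-cycle $U_0\leftrightarrow U_0'$, so the germ of $Q$ at $-\tfrac12$ interchanges the germs of $U_0$ and $U_0'$; since $M-N$ is \emph{even}, $Q^{M-N}$ fixes $-\tfrac12$ and fixes each of these two germs. Consequently $Q^M$ and $Q^N$ carry the germ of $U$ at $a$ to the same germ at $-\tfrac12$, and the inverse branch $Q^{-N}$ carries it back, so $\mathcal{F}^q$ preserves the germ of $U$ at $a$. Therefore $\Int P$ lies on the $U$-side of $a$, i.e.\ $P$ is of torso type for $U$; and since $a$ is fixed by $\mathcal{F}^q$, the nesting of puzzle pieces forces $S\subseteq P$.

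Finally I would conclude by a separation argument. As $P\in\mathfrak{P}^{n-q}$ with $n-q\geq n-q_{\max}$, it has diameter $<\varepsilon$, so the only point of $\mathcal{P}$ in $\overline{P}\cap J(Q)$ is $a$. The torso-type-for-$U$ level-$n$ sub-pieces of $P$ are precisely those cut out by the level-$n$ dyadic subdivision of the internal arc $I_U[s_1,t_1]=\partial P\cap J(Q)$; since a torso piece for $U$ is determined by its internal arc and $S\subsetneq P$ strictly (because $q\geq1$), this subdivision contains at least two pieces, of which $S$ is the one adjacent to $a$. Every other such sub-piece has both boundary points different from $a$, hence outside $\mathcal{P}$, hence is not boundary periodic. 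This gives the required piece, proving~(ii). The main obstacle is the middle step: relating the even exponent $M-N=2k$ coming from Lemma~\ref{lem:2k} to the period-$2$ structure of the parabolic basin at $-\tfrac12$ is exactly what rules out $\mathcal{F}^q$ swapping $U$ with the Fatou component attached at $a$, and hence keeps $\mathcal{F}^q(S)$ associated to $U$.
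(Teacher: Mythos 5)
Your argument is correct and in fact proves slightly more than the paper records, but the route differs from the paper's in ways worth noting. For assertion (i), where you appeal to uniform shrinking of puzzle pieces against the finiteness of the set $\mathcal{P}$ of periodic break-points, the paper argues purely combinatorially: it takes $n$ strictly greater than the lcm of the periods of the level-$0$ boundary points; if both ends of $S\in\mathfrak{P}^n$ were periodic, then $\mathcal{F}^L$ (with $L$ the lcm of their two periods, so $L<n$) fixes both ends, hence $\mathcal{F}^L(S)$ has the same two $J(Q)$-boundary points as $S$, and iterating produces a nested chain of pieces all sharing two distinct boundary points, contradicting topological expansion. No diameter estimate is invoked. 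For assertion (ii), the paper exhibits only \emph{some} other torso sub-piece of $\mathcal{F}^q(S)$ and, when that sub-piece happens to itself be boundary periodic, resorts to a pull-back step (passing to level $n+q$ and replacing the final constant by $2n$); your separation bound does better in one stroke, since $\overline{\mathcal{F}^q(S)}$ meets the finite periodic set only at $a$, whence \emph{every} torso sub-piece other than $S$ is non-boundary-periodic. Two remarks on the exposition. First, the germ/parity detour through Lemma~\ref{lem:2k} (evenness of $M-N=2k$ forcing $Q^{M-N}$ to preserve the two parabolic germs at $-\tfrac12$) is correct but superfluous: since $\mathcal{F}_\infty^q$ fixes $a^+$, it carries the level-$n$ Markov interval immediately on the $+$ side of $a$ onto the level-$(n-q)$ interval on that same side, so nesting of Markov intervals already gives $S\subseteq\mathcal{F}^q(S)$, which in turn forces $\mathcal{F}^q(S)$ to be torso type for the same $U$. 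Second, be consistent about where distances are measured: your $\mathcal{P}$ lives on $\partial^I U_\infty\cong\mathbb{S}^1$, while puzzle pieces and their $J(Q)$-boundary points live in $\widehat{\C}$; the cleanest fix is to replace $\mathcal{P}$ by the finite set of periodic boundary points of level-$0$ pieces inside $J(Q)$ itself (these are pairwise distinct points of the sphere and so have positive minimum separation) and take $\varepsilon$ to be that separation.
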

\begin{proof}
    Let $n$ be an integer that is strictly bigger than the least common multiple of the periods of the boundary points of level $0$ puzzle pieces.
    Since each boundary point of a level $n$ puzzle piece is eventually mapped to a boundary point of a level $0$ puzzle piece, we conclude that periodic points in $\partial S \cap J(Q)$ are boundary points of some level $0$ puzzle pieces.
    Suppose that both points in $\partial S \cap J(Q)$ are periodic. Let $L$ be their least common multiple. Then $L \leq n$.
    Hence, $\mathcal{F}^L: S \longrightarrow \mathcal{F}^L(S)$ is a homeomorphism and fixes the two boundary points. Therefore, $\mathcal{F}^L(S) = S$, which is a contradiction to the fact that $\mathcal{F}$ is topologically expanding.

    For the second claim, we note that since $\mathcal{F}$ is topologically expanding, $\mathcal{F}^q(S) \in \mathfrak{P}^{n-q}$ must contain another torso type Basilica puzzle piece of $P \in \mathfrak{P}^n$ for $U$. 
    If $P$ is not boundary periodic, then we are done. Otherwise, we apply the following pull-back argument.
    Let $S' \in \mathfrak{P}^{n+q}$ be contained in $S$, sharing the periodic boundary point. Then $\mathcal{F}^{q}(S') = S$ must contain some non boundary periodic Basilica puzzle piece in $\mathfrak{P}^{n+q}$ that is mapped to $P$ under $\mathcal{F}^q$. Therefore, by enlarging the constant $n$ to $2n$, the lemma follows.
\end{proof}

\begin{lem}\label{lem:LargeM}
    For any $M>1$, there exists a refinement $\widehat{\mathcal{F}}: \widehat{\mathfrak{P}}^{1} \rightarrow \widehat{\mathfrak{P}}$ of $\mathcal{F}: \mathfrak{P}^{1} \rightarrow \mathfrak{P}$ (in the sense of Definition~\ref{def-modfn}) so that the following holds.
    \begin{enumerate}[leftmargin=8mm]
        \item Each boundary periodic Basilica puzzle piece $P \in\widehat{\mathfrak{P}}$ is one-sided boundary periodic.
        \item For each boundary periodic Basilica puzzle piece $P \in \widehat{\mathfrak{P}}$ associated to some bounded Fatou component $U$, there are at least $M$ Basilica puzzle pieces $S \in \widehat{\mathfrak{P}}^1$ of torso type for $U$ with $S \subseteq P$.
    \end{enumerate}
\end{lem}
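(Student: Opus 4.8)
The plan is to build the level $0$ puzzle $\widehat{\mathfrak P}$ by first passing to a deep iterate of $\mathfrak P$ and then performing a controlled, \emph{non-uniform} refinement concentrated near the periodic break-points. Note first that once $\widehat{\mathfrak P}$ is fixed the level $1$ puzzle is not free: since $\widehat{\mathcal F}|_P=\mathcal F|_P$ and $\widehat{\mathcal F}\colon\widehat{\mathfrak P}^1\to\widehat{\mathfrak P}$ has to be a Basilica $\mathfrak Q$-map, one is forced to take $\widehat{\mathfrak P}^1=\mathcal F^*(\widehat{\mathfrak P})$, the pull-back of $\widehat{\mathfrak P}$ by $\mathcal F$. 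So it suffices to produce a Basilica puzzle $\widehat{\mathfrak P}$ refining $\mathfrak P$ for which (1)--(2) hold for the pair $\bigl(\mathcal F^*(\widehat{\mathfrak P}),\widehat{\mathfrak P}\bigr)$.

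Next I would record two finiteness facts. The two points of $\partial P\cap J(Q)$ attached to a torso piece $P$ of any Basilica puzzle are contact points, hence break-points; combined with topological expansion this forces every $\mathcal F$-periodic contact point to be a break-point of $\mathfrak P^n$ for $n$ large, and indeed every periodic break-point of $\mathcal F$ lies on the (finite) union of the $\mathcal F$-cycles through boundary points of level $0$ pieces. In particular there are finitely many $\mathcal F$-periodic contact points, hence finitely many bounded Fatou components carrying one on their boundary. Fix $N$ exceeding the constant $n_0$ of Lemma~\ref{lem:onesideperiodic} and large enough that all these Fatou components lie in the chain of $\mathfrak P^{N-1}$ (equivalently, all $\mathcal F$-periodic contact points are break-points of $\mathfrak P^{N-1}$). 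Since $\mathcal F\colon\mathfrak P^{N}\to\mathfrak P^{N-1}$ is again a Basilica $\mathfrak Q$-map and a refinement of $\mathcal F\colon\mathfrak P^1\to\mathfrak P$ in the sense of Definition~\ref{def-modfn}, and refinements compose, I may simply assume from the outset that $\mathfrak P$ has all periodic Fatou components in its chain and that the finitely many periodic contact points are break-points of $\mathfrak P$. With this normalization condition (1) is already handled: by Lemma~\ref{lem:strictpre} a boundary periodic piece is of torso type, by Lemma~\ref{lem:onesideperiodic} it is one-sided boundary periodic, and if in the refinement below we only introduce \emph{new} break-points at strictly pre-periodic contact points (possible, since the periodic ones are already break-points), then the boundary periodic pieces of $\widehat{\mathfrak P}$ are precisely those `at' the periodic contact points and each remains one-sided.

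For (2) the idea is: for each periodic contact point $x$ on $\partial W$ (and each of the two sides), let $I_x\subset\partial W$ be the first level $1$ interval with endpoint $x$; the associated piece of $\mathfrak P^1$ is mapped by $\mathcal F$ conformally onto a level $0$ piece $R_x$, again of torso type for a periodic Fatou component $W'$ with $\mathcal F(x)$ an endpoint of its $\partial W'$-segment. I would take the puzzle piece over $I_x$ to be a piece of $\widehat{\mathfrak P}$ (this becomes the boundary periodic piece at $x$), and, using Lemma~\ref{lem:primitivedyadicdecomp}, subdivide the $\partial W'$-segment of $R_x$ into $\ge M$ dyadic torso sub-segments \emph{whose first sub-segment is $I_{\mathcal F(x)}$}; away from the finitely many intervals $I_x$ and from these prescribed subdivisions I would refine $\mathfrak P$ arbitrarily (say to a high iterate $\mathfrak P^N$) so that the result stays topologically expanding and a genuine Basilica puzzle. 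Because $\mathcal F$ carries the boundary periodic piece at $x$ conformally onto $R_x$, and $R_x$ then contains $\ge M$ torso sub-pieces of $\widehat{\mathfrak P}$ for $W'$, the pull-back $\widehat{\mathfrak P}^1=\mathcal F^*(\widehat{\mathfrak P})$ subdivides the boundary periodic piece at $x$ into $\ge M$ torso pieces for $W$, which is exactly (2).

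The one point needing real care, and which I expect to be the main obstacle, is the \emph{consistency of this prescription around each periodic cycle} $x\mapsto\mathcal F(x)\mapsto\cdots$: the requirement ``first sub-segment of $R_x$ equals $I_{\mathcal F(x)}$'' must be realizable simultaneously for all the finitely many periodic contact points, with all the intervals $I_x$ genuinely dyadic and of compatible dyadic lengths (and, if necessary, after one further passage to a deeper level so that each $R_x$ properly contains $I_{\mathcal F(x)}$). Here I would exploit that the circle map $\mathcal F_\infty$ on $\partial^I U_\infty$ is net-expanding along the cycle: by Lemma~\ref{lem:2k} one has $D\mathcal F_\infty^{q}(x^{\pm})=2^{2k}$ with $k\ge 1$, and, through the factor-$2$ relation of Lemma~\ref{lem:fac2}, the product around the cycle of boundary periodic pieces of the internal-angle expansion factors of $\mathcal F$ equals $2^{k}\ge 2>1$. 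This strict inequality is precisely the slack that lets one choose the sizes $|I_x|$ (all powers of $2$) around the cycle so that each $R_x$ strictly contains $I_{\mathcal F(x)}$ and still admits a dyadic decomposition into $\ge M$ torso sub-pieces with $I_{\mathcal F(x)}$ as its first piece. The remaining ingredients---the finiteness statements, one-sidedness via Lemma~\ref{lem:onesideperiodic}, stability of topological expansion under refinement, and the check that $\widehat{\mathfrak P}$ is a valid Basilica puzzle---are routine.
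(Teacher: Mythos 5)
Your approach diverges from the paper's in a way that, as written, leaves a real gap. The paper's proof does not prescribe any dyadic subdivision near the periodic cycles at all. It passes to a level $n$ where boundary periodic pieces are one-sided, then performs a \emph{non-uniform} refinement: the one-sided boundary periodic pieces of $\mathfrak{P}^n$ are kept as they are, while every other level-$n$ piece is replaced by its level-$(n+N)$ descendants. Since the resulting break-point set is still a union of iterated $\mathcal F$-preimages of break-points of $\mathfrak P$, it is automatically forward-invariant, so the pair $(\widehat{\mathfrak P}^{n+1}_N,\widehat{\mathfrak P}^{n}_N)$ is a legitimate Markov refinement with no consistency to check. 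Condition~(2) then comes from the \emph{second} conclusion of Lemma~\ref{lem:onesideperiodic} -- which you do not invoke -- namely that $\mathcal F^q(P)$ contains a torso piece $S\in\mathfrak P^n$ for $U$ that is \emph{not} boundary periodic; $S$ is then deeply refined, and pulling back under $\mathcal F^q$ forces arbitrarily many torso pieces for $U$ inside $P$ as $N\to\infty$.

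Your construction instead hand-picks, using Lemma~\ref{lem:primitivedyadicdecomp}, a dyadic decomposition of each $R_x$ into $\ge M$ pieces with $I_{\mathcal F(x)}$ first, and refines ``arbitrarily to $\mathfrak P^N$'' away from these. This has two unaddressed problems. First, the break-points you introduce in the middle of the $\partial W'$-segment of $R_x$ are generally not iterated preimages of break-points of $\mathfrak P$ and are not among the level-$N$ break-points; you never check that their forward $\mathcal F$-orbits land in the break-point set of $\widehat{\mathfrak P}$, so it is unclear that $\mathcal F^*(\widehat{\mathfrak P})$ refines $\widehat{\mathfrak P}$. Second, your resolution of the cyclic consistency via ``expansion $\ge 2$'' is not a proof: if one actually chooses the lengths $|I_{x_i}|$ freely, the constraints $|I_{x_{i+1}}|=|I_{x_i}|\,2^{c_i-j_i}$ with $j_i\ge 1$ force $\sum_i j_i = \sum_i c_i = k$, hence $k\ge q$, whereas Lemma~\ref{lem:2k} only guarantees $k\ge 1$. (If instead one fixes $I_{x_i}$ to be the existing level-$1$ interval of $\mathfrak{P}^1$, the $j_i$ are forced, the ``slack'' plays no role, and one is back to a uniform refinement, which does \emph{not} increase the number of torso sub-pieces inside a boundary periodic piece -- that count is self-similar and bounded independently of depth.) The paper's non-uniform refinement plus Lemma~\ref{lem:onesideperiodic}(ii) is precisely the device that sidesteps both difficulties, and is the missing idea here.
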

\begin{proof}
    Let $M > 1$ be given. 
    We construct $\widehat{\mathfrak{P}}$ and $\widehat{\mathfrak{P}}^1$ via a sequence of refinements.

    First, by Lemma~\ref{lem:onesideperiodic}, we let $n$ be sufficiently large so that every boundary periodic Basilica puzzle piece $P \in \mathfrak{P}^n$ is one-sided boundary periodic. We consider $\mathcal{F}: \mathfrak{P}^{n+1} \longrightarrow\mathfrak{P}^n$, which is topologically conjugate to $\mathcal{F}: \mathfrak{P}^1 \longrightarrow \mathfrak{P}$ on the Julia set.

    Let $(\mathfrak{P}^n)' \subset \mathfrak{P}^n$ be the collection of one-sided boundary periodic Basilica puzzle pieces.
    Let $P \in \mathfrak{P}^n - (\mathfrak{P}^n)'$. Let $N \in \N$. We denote by $\mathfrak{P}^{n}_N(P)$ the collection of Basilica puzzle pieces in $\mathfrak{P}^{n+N}$ that are contained in $P$.
    We define
    $$
    \widehat{\mathfrak{P}}^{n}_N = (\mathfrak{P}^n)' \cup \bigcup_{P \in \mathfrak{P}^n - (\mathfrak{P}^n)'} \mathfrak{P}^{n}_N(P).
    $$
    Since a one-sided boundary periodic Basilica puzzle piece in $\mathfrak{P}^{n+1}$ is mapped by $\mathcal{F}$ to a one-sided boundary periodic puzzle piece in $\mathfrak{P}^{n}$, we conclude that
    $$
    \mathcal{F}: \widehat{\mathfrak{P}}^{n+1}_N \longrightarrow \widehat{\mathfrak{P}}^{n}_N
    $$
    is a refinement of $\mathcal{F}: \mathfrak{P}^1 \longrightarrow \mathfrak{P}$. 
    
    Let $P \in (\mathfrak{P}^n)' \subseteq \widehat{\mathfrak{P}}^{n}_N$ be boundary periodic of period $q$ associated to some bounded Fatou component $U$. 
    We claim that the number of Basilica puzzle pieces of $\widehat{\mathfrak{P}}^{n+1}_N$ of torso type for $U$ that are contained in $P$ go to infinity as $N \to \infty$.
    To see this, by Lemma~\ref{lem:onesideperiodic}, $\mathcal{F}^q(P)$ must contain at least one Basilica puzzle piece $S$ in $\mathfrak{P}^n - (\mathfrak{P}^n)'$ of torso type for $U$. Since $S \notin (\mathfrak{P}^n)'$, the number of Basilica puzzle pieces of $\widehat{\mathfrak{P}}^{n}_N$ of torso type for $U$ contained in $S$ is the same as the number of the corresponding ones in $\widehat{\mathfrak{P}}^{n+N}$, and the latter goes to infinity as $N \to \infty$. By pulling back under $\mathcal{F}^q$, we conclude the claim.

    The lemma now follows by choosing $\widehat{\mathfrak{P}} = \widehat{\mathfrak{P}}^{n}_N$ and $\widehat{\mathfrak{P}}^1 = \widehat{\mathfrak{P}}^{n+1}_N$ for some sufficiently large $N$.
\end{proof}

\begin{proof}[Proof of Theorem~\ref{thm:symmetrichyperbolic}]
    Let $N \geq \max N(S)$ be an even integer where the maximum is taken over all boundary periodic $S \in \mathfrak{P}^1$.
    Let $M \geq 2N$. Let $\widehat{\mathcal{F}}: \widehat{\mathfrak{P}}^1 \longrightarrow \widehat{\mathfrak{P}}$ be the Basilica $\mathfrak{Q}-$map in Lemma~\ref{lem:LargeM} with constant $M$.
    By Lemma~\ref{lem:localrefine}, we can locally modify the refinement of each boundary periodic Basilica puzzle piece of $\widehat{\mathfrak{P}}$, and obtain $\widetilde{\mathcal{F}}: \widetilde{\mathfrak{P}}^1\longrightarrow \widetilde{\mathfrak{P}} = \widehat{\mathfrak{P}}$ so that it is topologically conjugate to $\widehat{\mathcal{F}}: \widehat{\mathfrak{P}}^1 \longrightarrow \widehat{\mathfrak{P}}$ on the Julia set, and if the right/left orbit $x_n^\pm$ of a break-point $x$ is periodic with period $q^\pm$ under the induced map $\widetilde{\mathcal{F}}_\infty$, then we have 
    $$
    D\widetilde{\mathcal{F}}_\infty^{q^\pm}(x^\pm) = 2^{Nq^\pm}.
    $$
    This is possible by Lemma~\ref{lem:2k}, as the derivative is always an even power of $2$, and for each $j\in\{0,1..., q^\pm-1\}$, we can choose the derivative $D\widetilde{\mathcal{F}}_\infty(\widetilde{\mathcal{F}}_\infty^j(x^\pm)) \in \{2^{N-1}, 2^{N}, 2^{N+1}\}$ by Lemma~\ref{lem:localrefine} (Property~\ref{lem:refine:item3}).
    This implies that the Lyapunov exponent of the induced map $\widetilde{\mathcal{F}}_\infty$ at any break-point is $N > 0$, and the theorem follows.
\end{proof}

\section{Basilica Bowen-Series maps}\label{section:BasBS}
In this section, we explicate a natural construction of a Markov $(\Lambda, G)-$map for any geometrically finite Bers boundary group $G$. We call such a map a {\em Basilica Bowen-Series map}, and study some basic properties of the map.

\subsection{Geometrically finite Bers boundary groups}\label{geom_fin_bers_bdry_subsec}
Let $\Sigma$ be a two-dimensional orbifold with {\em signature} $(g, n; \nu_1,\ldots, \nu_n)$, where $g$ is the genus, $n$ is the number of cusps and torsion points, and $\nu_i \in \Z_{\geq 2}\cup \{\infty\}$ is the order of the torsion at the $i$th torsion point.
It is {\em hyperbolic}, {\em parabolic} or {\em elliptic} if the orbifold Euler characteristic 
$$\chi(\Sigma) = 2-2g-n +\sum_{i=1}^n\frac{1}{\nu_i}$$ 
is negative, zero or positive, respectively.
Note that $\Sigma$ is compact if and only if $\nu_1,\ldots, \nu_n \neq \infty$.
Abusing terminology, we will simply call $\Sigma$ a surface.

Let $\Sigma$ be a hyperbolic surface.
Let $\mathcal{C} = \{\alpha_1, \ldots, \alpha_{k_0}\}$ be a non-empty collection of disjoint, pairwise non-homotopic simple closed curves on $\Sigma$.
We will call such a collection of simple closed curves a {\em pinching multicurve}.
We say that $\mathcal{C}$ is {\em strictly separating} if every curve in $\mathcal{C}$ is separating.

By pinching the simple closed curves in the collection $\mathcal{C}$ to nodes, we degenerate the original surface into finitely many compact (connected) surfaces $\overline{\Sigma_1},\ldots, \overline{\Sigma_{j_0}}$ glued along  nodal singularities (see \cite[\S6]{Mas70}). We denote by $\Sigma_i$ the \emph{non-compact} surfaces corresponding to the components of $\Sigma - \mathcal{C}$, and by $\overline{\Sigma_i}$ the union of ${\Sigma_i}$ with all the nodal singularities on it. 
Note that $\overline{\Sigma_i}$ may not be compact, as $\Sigma_i$ may contain cusps of the original surface $\Sigma$.
Thus, the cusps of ${\Sigma_i}$ correspond
to the union of  the nodal singularities of 
$\overline{\Sigma_i}$ and the cusps of $\Sigma$ on $\Sigma_i$.
We will write the degenerate surface as
$$
(\Sigma, \mathcal{C}) = \Sigma_1+\Sigma_2+\ldots+\Sigma_{j_0}.
$$
Note that $\mathcal{C}$ is strictly separating if and only if $j_0= k_0+1$.

We will call $\Sigma_i$ a {\em nodal component} of $(\Sigma, \mathcal{C})$.
We denote the signature of the nodal component $\Sigma_i$ by $(g_i, n_i; \nu_{i,1},\ldots, \nu_{i, n_i})$.
By construction, the nodal components $\Sigma_i$ are in one-to-one correspondence with the connected components of $\Sigma - \bigcup_i \alpha_i$.
Thus, each component $\Sigma_i$ is non-compact. Further, 
 $\Sigma_i$ is either hyperbolic or parabolic. In the latter case, $\Sigma_i$ must have signature $(0, 3; 2, 2, \infty)$.
Thus $\Sigma_i$ is parabolic if and only if there is a curve $\alpha_i\in\mathcal{C}$ enclosing exactly two orbifold points of order $2$.
The parabolic nodal component $\Sigma_i$ corresponds precisely to the region bounded by this curve.

To describe the topological structure of the limit set, we want to introduce a notion of adjacency between various hyperbolic nodal components.
Let $\Sigma_{i_1}$ and $\Sigma_{i_2}$ be two (potentially same) hyperbolic nodal components.
We say that a simple closed curve $\alpha_i$ is a {\em connector} between $\Sigma_{i_1}$ and $\Sigma_{i_2}$ if 
\begin{itemize}
    \item either the two sides of $\alpha_i$ in $\Sigma$ correspond to $\Sigma_{i_1}$ and $\Sigma_{i_2}$ (where $i_1$ may be equal to $i_2$); or
    \item $\Sigma_{i_1} = \Sigma_{i_2}$ and the two sides of $\alpha_i$ in $\Sigma$ correspond to $\Sigma_{i_1}(= \Sigma_{i_2})$ and a parabolic nodal component.
\end{itemize}

The following structural result follows from \cite[Theorem 5, 6]{Mas70}.
\begin{prop}\label{prop:structgfbersboundary}
    Let $\Sigma$ be a surface with $\chi(\Sigma) < 0$ and let $\mathcal{C}$ be a pinching multicurve, with $(\Sigma, \mathcal{C}) = \Sigma_1+\Sigma_2+\ldots+\Sigma_{l_0} + \Sigma_{l_0+1} + \ldots + \Sigma_{j_0}$, where $\Sigma_1, \ldots, \Sigma_{l_0}$ are hyperbolic and $\Sigma_{l_0+1}, \ldots, \Sigma_{j_0}$ are parabolic (thus have signature $(0, 3; 2, 2, \infty)$).
    
    Let $\sigma \in \Teich(\Sigma)$ and $\sigma_i \in \Teich(\Sigma_i),\ i\in\{1,\ldots, l_0\}$, with the corresponding Riemann surfaces $S_\sigma$ and $S_{\sigma_i},\ i\in\{1,\ldots, l_0\}$.
    Then there exists a geometrically finite Kleinian group $G$ on the Bers boundary $\partial \Ber_\sigma(\Sigma)$ such that the following hold.
    \begin{itemize}[leftmargin=8mm]
        \item There is a $G-$invariant component $\Delta_\infty$ of $\Omega(G)$ so that $\Delta_\infty/G$ is conformally equivalent to $S_\sigma$.
        \item For each $\alpha_i \in \mathcal{C}$, there exists an arc $\widetilde{\alpha}_i \subseteq \Delta_\infty$, whose stabilizer in $G$ is an infinite cyclic group $<g_i>$ generated by an accidental parabolic element $g_i$. Moreover, the projection of $\widetilde{\alpha}_i$ to $S_\sigma$ is the geodesic representative of the curve $\alpha_i$ and the two ends of the arc $\widetilde{\alpha}_i$ accumulate to the (unique parabolic) fixed point of $g_i$.
        \item The quotient $(\Omega(G) - \Delta_\infty)/ G$ is conformally equivalent to the disjoint union $S_{\sigma_1} \sqcup \ldots \sqcup S_{\sigma_l}$.
        \item Each component  $\Delta$ of $\Omega(G) - \Delta_\infty$ is a Jordan domain, and $\Delta/ G_\Delta$ is conformally equivalent to $S_{\sigma_i}$ for some $i \in\{ 1,\ldots, l_0\}$.
        (Here, $G_\Delta$ denotes the stabilizer of $\Delta$ in $G$.)
        \item For any pair of distinct components $\Delta, \Delta'$ of $\Omega(G) - \Delta_\infty$, either they have disjoint closures, or their closures intersect at exactly one point.
        In the second case, if $\Delta, \Delta'$ correspond to $\Sigma_i$ and $\Sigma_{i'}$ respectively, then $\partial \Delta \cap \partial \Delta'$ is a parabolic fixed point associated to the curve $\alpha_m$ that is a connector between $\Sigma_{i}$ and $\Sigma_{i'}$
    \end{itemize}
    Moreover, any geometrically finite Bers boundary group $G \in \partial \Ber_\sigma(\Sigma)$ is obtained in this way.
\end{prop}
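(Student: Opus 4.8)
The plan is to derive Proposition~\ref{prop:structgfbersboundary} from the Bers--Maskit structure theory of boundary groups, so the proof is largely a translation between Maskit's language in \cite{Mas70} and the statement above, carried out in both directions. I would begin by fixing the setup: with $\sigma\in\Teich(\Sigma)$ fixed, realize the Bers slice $\Ber_\sigma(\Sigma)$ as the family of quasi-Fuchsian groups whose invariant (``top'') component has quotient $S_\sigma$, the other (``bottom'') component then running over a copy of Teichm\"uller space. A point of $\partial\Ber_\sigma(\Sigma)$ is a regular $b$-group $G$ with a simply connected invariant component $\Delta_\infty$, and the key input is Maskit's structure theorem (\cite[Theorems~5 and~6]{Mas70}; see also work of Bers, Abikoff, and Kra): $G$ carries a canonical finite set of conjugacy classes of \emph{accidental parabolics}, these are represented by disjoint, pairwise non-homotopic, non-peripheral simple closed curves on the bottom surface, and $G$ is assembled from the stabilizers of its non-invariant components (and, in general, totally degenerate factors) by iterated Klein--Maskit combination over the infinite cyclic subgroups generated by the accidental parabolics.

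For the existence direction, given $(\Sigma,\mathcal C)$ together with the Teichm\"uller data $\sigma$ and $\sigma_1,\dots,\sigma_{l_0}$, I would produce $G$ by a pinching deformation: choose $Q\in\Ber_\sigma(\Sigma)$ and deform its bottom conformal structure along a path that pinches the curves of $\mathcal C$ to zero length while the induced structures on the hyperbolic pieces $\Sigma_i$ of $(\Sigma,\mathcal C)$ converge to $\sigma_1,\dots,\sigma_{l_0}$. Maskit's theory identifies the limit group $G\in\partial\Ber_\sigma(\Sigma)$ as the one whose accidental parabolics are precisely the classes $[g_i]$ of $\mathcal C$; because only simple closed curves are pinched, no degenerate ends are created and $G$ is geometrically finite. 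Equivalently, I would build $G$ directly as the iterated Klein--Maskit amalgam of the geometrically finite pieces attached to the hyperbolic nodal components $\Sigma_1,\dots,\Sigma_{l_0}$ (the parabolic $(0,3;2,2,\infty)$ pieces being elementary, contributing an accidental parabolic but no extra component), glued along the $\langle g_i\rangle$ according to the incidence pattern of $\mathcal C$ in $\Sigma$; the combination theorems then guarantee that the result is Kleinian, geometrically finite, with components of $\Omega(G)$ meeting in at most one point, while Maskit's (or Abikoff's/Kra's) surjectivity statement ensures that every geometrically finite boundary group with the prescribed accidental-parabolic combinatorics is obtained this way.

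I would then verify the six bulleted conclusions. The identity $\Delta_\infty/G\cong S_\sigma$ is the invariance of the top conformal structure along the Bers slice and its boundary. That each $g_i$ is represented by an arc $\widetilde\alpha_i\subset\Delta_\infty$ whose two ends land at the single parabolic fixed point of $g_i$, and whose projection to $S_\sigma$ is the geodesic representative of $\alpha_i$, is exactly the defining feature of an accidental parabolic: the invariant annulus around $\alpha_i$ has been degenerated, so a lift of its core geodesic becomes an arc biasymptotic to the parabolic point. The description $(\Omega(G)-\Delta_\infty)/G\cong S_{\sigma_1}\sqcup\cdots\sqcup S_{\sigma_{l_0}}$, with each non-invariant $\Delta$ a Jordan domain and $\Delta/G_\Delta\cong S_{\sigma_i}$, follows from the combination picture together with Maskit's planarity theorem for $b$-groups (each $\Delta$ is simply connected, and since $G$ has no degenerate ends $\partial\Delta$ is a quasicircle, hence a Jordan curve). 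Finally, for the tangency statement: in a geometrically finite group two components of $\Omega(G)$ can touch only at a rank-one parabolic fixed point (a cusp lying on both boundaries), the cusps of $G$ beyond the cusps of $\Sigma$ are precisely the $g_i$, and the combination pattern shows $\overline\Delta\cap\overline{\Delta'}\neq\emptyset$ exactly when the pieces $\Sigma_i,\Sigma_{i'}$ are joined by a connector $\alpha_m$, the touching point being the fixed point of $g_m$. For the converse, I would start from a geometrically finite $G\in\partial\Ber_\sigma(\Sigma)$, read off its accidental parabolics as a multicurve $\mathcal C$ on $\Sigma$ (disjoint and non-homotopic by Maskit), use geometric finiteness to identify $(\Omega(G)-\Delta_\infty)/G$ with the disjoint hyperbolic nodal components of $(\Sigma,\mathcal C)$ equipped with their induced structures $\sigma_i$, and observe that $G$ is then the group manufactured above.

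I expect the main obstacle to be the existence direction: guaranteeing that the pinching deformation (or the iterated combination) stays geometrically finite with no accidental further degeneration, \emph{and} that an arbitrary tuple $(\sigma_i)$ of conformal structures on the nodal components is realized simultaneously in the limit. Controlling the limiting bottom structure needs either surjectivity of the pinching/grafting map onto the relevant boundary stratum or a careful simultaneous application of the combination theorems, and the bookkeeping for non-separating curves (the HNN steps) and for the rigid $(0,3;2,2,\infty)$ pieces is where the argument is most delicate.
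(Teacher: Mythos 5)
The paper gives no independent proof of this proposition: it states only that ``the following structural result follows from [Theorems 5, 6][Mas70]'' and leaves the derivation implicit. Your proposal takes exactly this route---reading the statement off Maskit's structure theory for regular $b$-groups, using pinching deformations or iterated Klein--Maskit combination for the existence direction, and the classification of accidental parabolics for the converse---so it matches the paper's intended argument; you have simply filled in the translation the authors left to the reader.

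One remark worth making on your final paragraph: the worry about simultaneously realizing an arbitrary tuple $(\sigma_1,\dots,\sigma_{l_0})$ is the right thing to flag, but it is already handled by Maskit's theory rather than being an open gap. The geometrically finite stratum of $\partial\Ber_\sigma(\Sigma)$ associated with a fixed pinching multicurve $\mathcal{C}$ is parametrized precisely by $\prod_i \Teich(\Sigma_i)$, and the iterated combination construction realizes any such tuple: each nodal piece $\Sigma_i$ is realized by a geometrically finite function group with the prescribed $\sigma_i$, and the combination theorems (amalgamation for separating connectors, HNN for non-separating ones) apply over the parabolic cyclic subgroups without constraint on the moduli of the pieces. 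Likewise, the rigid $(0,3;2,2,\infty)$ nodal components contribute only a parabolic cyclic subgroup to the combination and no extra component of $\Omega(G)$, consistent with the proposition's restriction of the conformal identification to $i\in\{1,\dots,l_0\}$. With these citations in place, your argument is complete and agrees with the paper.
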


\subsection{Core polygon for non co-compact Fuchsian groups}\label{ss:idealcorpolygon}
In this subsection, we associate a special ideal polygon to each non co-compact Fuchsian group. This polygon is later used in \S~\ref{subsec:pinchedcore} for the construction of a pinched polygon for any geometrically finite Bers boundary group.

Let $\Sigma$ be a non-compact hyperbolic surface with signature $(g, n; \nu_1,\ldots, \nu_n)$.
Let us arrange so that $\nu_1 = \nu_2 = \ldots = \nu_k = \infty$, and $\nu_{k+1}, \ldots, \nu_{n} < \infty$.
Since $\Sigma$ is non-compact, we have that $k \geq 1$.

Let $G$ be a Fuchsian group with the given signature. This means that the quotient $S = \D / G$ is a hyperbolic surface homeomorphic to $\Sigma$.
We define a {\em core polygon} $\cP$ for $G$ as follows (see Figure~\ref{fig:cuspBM}). For $x,y\in\mathbb{S}^1$, we denote the hyperbolic geodesic in $\D$ connecting $x, y$ by $\arc{x,y}$.
 \begin{figure}[htp]
 \captionsetup{width=0.96\linewidth}
    \centering
    \includegraphics[width=0.98\textwidth]{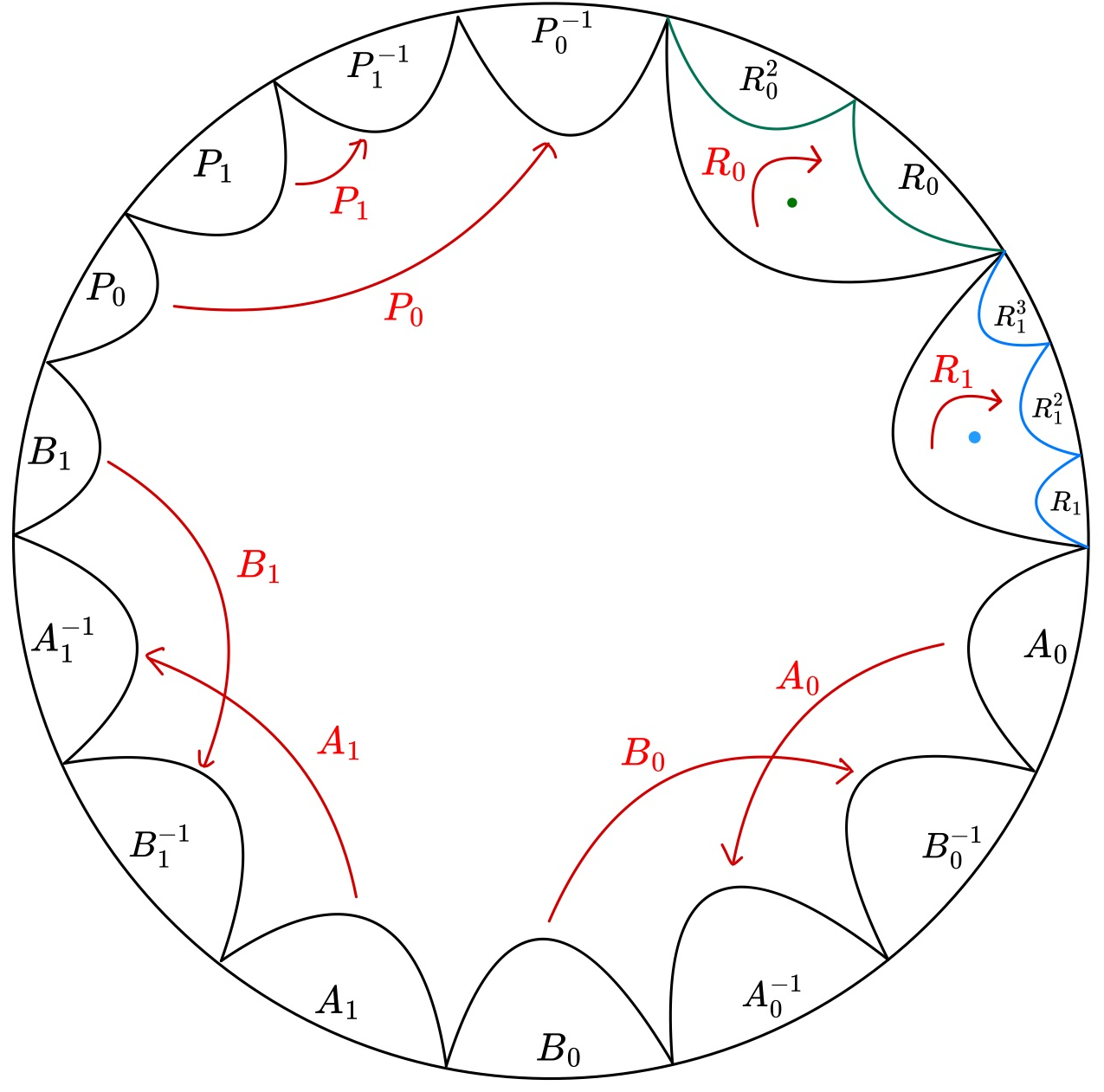}
    \caption{The Bowen-Series map for a Fuchsian group of type $(2, 5; \infty, \infty, \infty, 3,4)$.}
    \label{fig:cuspBM}
\end{figure}

\begin{itemize}[leftmargin=8mm]
    \item $\cP$ is an ideal hyperbolic $N-$gon in $\D$, where $N = 4g+2(k-1)+\sum_{i=k+1}^n(\nu_i-1)$.
    \item Denote the vertices of $\cP$ by $a_0,\ldots, a_{N-1} \in \mathbb{S}^1$. Then each edge $\gamma_i,\ i\in\{0,\ldots, N-1\}$, is a hyperbolic geodesic in $\D$ with end-points $a_i, a_{i+1}$ (with indices mod $N$).
    \item ($g$ handles) There exist $2g$ loxodromic elements $A_i, B_i \in G,\  i\in\{0,\ldots, g-1\}$, such that for $i\in\{0,\cdots,g-1\}$, 
    $$
    A_i(\arc{a_{4i},a_{4i+1}}) = \arc{a_{4i+3}, a_{4i+2}}, \text{ and } B_i(\arc{a_{4i+3}, a_{4i+4}}) = \arc{a_{4i+2}, a_{4i+1}}.
    $$
    \item ($k$ punctures) There exist $k-1$ elements $P_0, \ldots, P_{k-2} \in G$ such that
    $$
    P_i(\arc{a_{4g+i}, a_{4g+i+1}}) = \arc{a_{4g+2(k-1)-i}, a_{4g+2(k-1)-i-1}}.
    $$
    Moreover, if $k\geq 4$, then $P_1,\ldots, P_{k-3}$ are loxodromic and $P_{k-2}$ is parabolic. The element $P_0$ is parabolic if $(g,n) = (0, k)$, i.e., if the surface is a punctured sphere, and is loxodromic otherwise. For $i\in\{0,\ldots, k-3\}$, the group element $P^{-1}_{i+1} \circ P_i$ is parabolic. (The cases $k\in\{1,2,3\}$ can be described similarly.)
    \item ($n-k$ orbifold points) There exist $n-k$ elliptic elements $R_0,\ldots, R_{n-k-1} \in G$ such that for $i\in\{0,\ldots, n-k-1\}$, we have $R_i^{\nu_{k+1+i}} = \text{id}$, and
    $$
    R_i(\arc{a_{i,j}, a_{i,j+1}}) = \arc{a_{i,j+1}, a_{i,j+2}},
    $$
    where $j\in\{0,\ldots, \nu_{k+1+i}-1\}\ \left(\textrm{mod}\ \nu_{k+1+i}\right)$. Here,
    \begin{align}
      a_{i,j} = 
      \begin{cases}
          a_{4g+2(k-1)+\sum_{s=k+1}^{k+i}(\nu_s-1)+j}\quad \mathrm{for}\ i\geq 1,\\
          a_{4g+2(k-1)+j}\hspace{2.4cm} \mathrm{for}\ i=0.
      \end{cases}  
      \label{a_i_j_formula}
    \end{align}
\end{itemize}

Note that if $G$ is torsion-free, then $\cP$ is a fundamental polygon for the action of $G$ on $\D$.

The vertex set induces a partition $\mathcal{A} = \{[a_i,a_{i+1}]:i\in\{0,\ldots, N-1\}\}$  of $\mathbb{S}^1$.
We define the {\em Bowen-Series Markov map} associated to the core polygon $\cP$ as a Markov map with respect to the partition $\mathcal{A}$:
$$
\mathcal{F} = \mathcal{F}_{G, \cP} = \{F_i:=\mathcal{F}|_{[a_i, a_{i+1}]}: i\in\{0,\ldots, N-1\}\},
$$ 
where $\mathcal{F}|_{[a_i, a_{i+1}]}$ is defined as follows (see Figure~\ref{fig:cuspBM}, cf. \cite{BS79}).

\begin{itemize}
    \item For $i\in\{0,\ldots, g-1\}$, 
    \begin{align*}
        \mathcal{F}|_{[a_{4i},a_{4i+1}]} &= A_i,\\
        \mathcal{F}|_{[a_{4i+1},a_{4i+2}]} &= B_i^{-1},\\
        \mathcal{F}|_{[a_{4i+2},a_{4i+3}]} &= A_i^{-1},\\
        \mathcal{F}|_{[a_{4i+3},a_{4i+4}]} &= B_i.
        \end{align*}
    \item For $i\in\{0,\ldots, k-1\}$,
    \begin{align*}
        \mathcal{F}|_{[a_{4g+i},a_{4g+i+1}]} &= P_i,\\
        \mathcal{F}|_{[a_{4g+2(k-1)-i-1},a_{4g+2(k-1)-i}]} &= P_i^{-1}.
    \end{align*}
    \item For $i\in\{0,\ldots, n-k-1\}$, $j\in\{0,1,\ldots, \nu_{k+1+i}-2\}$,
    $$
    \mathcal{F}|_{[a_{i,j}, a_{i,j+1}]} = R_i^{\nu_{k+1+i}-1-j} 
    $$
    where $a_{i,j}$ is defined by Formula~\eqref{a_i_j_formula}.
\end{itemize}
We remark that $\mathcal{F}$ is a conformal Markov map on $\mathbb{S}^1$ in the sense of \S~\ref{sec:cmm}. It induces a continuous map on the circle if and only if $\Sigma$ has signature $(0,n;\infty,\cdots,\infty)$ or $(0,n;\infty,\cdots,\infty,2)$; i.e., the surface is a punctured sphere possibly with an order $2$ orbifold point (cf. \cite[\S 3]{MM1}).

It follows from the construction that we have the following.
\begin{lem}\label{lem:symFs}
    The Bowen-Series Markov map $\mathcal{F} = \mathcal{F}_{G, \cP}$ is topologically expanding, conformal, and admits a natural puzzle structure: for each Markov interval $[a_i, a_{i+1}] \in \mathcal{A}$, the corresponding level 0 puzzle $P_i$ is the disk bounded by the circle that meets $\mathbb{S}^1$ orthogonally at $a_i, a_{i+1}$. 
    
    Moreover, every break-point is symmetrically parabolic.
    More precisely, let $a$ be a periodic break-point. Then the stabilizer of $a$ in $G$ is generated by a parabolic element $g \in G$ so that $g$ is repelling on the circular arc $[a, a+\epsilon]$ and attracting on $[a-\epsilon, a]$.
    Further, if $q^\pm$ is the period of the right/left orbit $a_n^\pm$, respectively, then $\mathcal{F}^{q^\pm}|_{[a, a\pm \epsilon]} = g^{\pm 1}$ for all sufficiently small $\epsilon$.
\end{lem}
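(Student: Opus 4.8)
The plan is to verify the lemma's four assertions — conformality, the natural puzzle structure, topological expansion, and the structure at break-points — by combining elementary M\"obius geometry with the classical theory of Bowen--Series maps and the side-pairing relations recorded in the construction of $\cF = \cF_{G,\cP}$.

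\emph{Conformality and the puzzle structure.} Each piece $\cF|_{[a_i,a_{i+1}]}$ is the restriction to a sub-arc of $\mathbb{S}^1$ of an element of $G\subset\PSL_2(\R)$, hence extends to a M\"obius map of $\widehat{\C}$; so $\cF$ is a conformal Markov map on $\mathbb{S}^1$ in the sense of \S\ref{sec:cmm}. For the puzzle structure I would take $P_i$ to be the closed disk bounded by the circle $C_i$ that meets $\mathbb{S}^1$ orthogonally at $a_i,a_{i+1}$, on the side for which $P_i\cap\mathbb{S}^1=[a_i,a_{i+1}]=A_i$; since $a_i,a_{i+1}\in\partial P_i$ while every other point of $A_i$ lies in $\Int P_i$, this is a pinched neighborhood of $A_i$ pinched exactly at $A_i\cap\mathrm{bk}(\mathcal{A})=\{a_i,a_{i+1}\}$. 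Because $\cF|_{[a_i,a_{i+1}]}\in\PSL_2(\R)$ preserves $\mathbb{S}^1$ and carries circles orthogonal to $\mathbb{S}^1$ to circles orthogonal to $\mathbb{S}^1$, the preimage of $\Int P_j$ under $\cF|_{[a_i,a_{i+1}]}$ is, for each level-$1$ piece $A_{ij}=\cF|_{[a_i,a_{i+1}]}^{-1}(A_j)\subseteq A_i$, the interior of the orthogonal disk $P_{ij}$ over $A_{ij}$; and $\Int P_{ij}\subseteq\Int P_i$ holds because the geodesic $\gamma_i=\arc{a_i,a_{i+1}}$ and the geodesic over $A_{ij}$ are disjoint (as $A_{ij}\subseteq A_i$), hence so are the inversion-symmetric full circles $C_i,C_{ij}$, which forces the disks to nest the right way. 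This is the asserted natural puzzle structure.

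\emph{Topological expansion.} A nested sequence of Markov pieces has the form $A_{w_1\cdots w_n}=g_n(A_{w_n})$, equivalently $P_{w_1\cdots w_n}=g_n(P_{w_n})$, for group elements $g_n\in G$; geometrically $g_n(\gamma_{w_n})$ is an edge of the tessellation of $\D$ by $G$-translates of $\cP$, and along a nested sequence these edges leave every compact subset of $\D$ — by proper discontinuity of the $G$-action when $g_n$ is unbounded, and by horocyclic convergence of $h^{\pm n}(\gamma)$ to a parabolic fixed point at vertices fixed by a parabolic $h\in G$. Hence the disks $P_{w_1\cdots w_n}$ shrink to points, so every nested intersection of Markov pieces is a singleton; since any point of $\mathbb{S}^1$ lies in at most two level-$n$ pieces, this gives topological expansion in the sense of Definition~\ref{def:top_expand}. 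I would cite \cite{BS79} and the surrounding literature for the details of the escape statement.

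\emph{Break-points.} The break-points are precisely the vertices $a_0,\dots,a_{N-1}$, and since $\cF$ maps break-points to break-points and the partition is finite, each one-sided orbit $a_n^\pm=\cF^n(a^\pm)$ is pre-periodic. Because $\Sigma$ is a finite-type hyperbolic orbifold all of whose ends are cusps (no funnels), the classical theory of Bowen--Series maps for cofinite Fuchsian groups shows that every periodic vertex $a$ is a parabolic fixed point of $G$ — it lies in the $G$-orbit of a cusp of the quotient — and inspecting the defining formulas for $A_i,B_i,P_i,R_i$ and for $\cF|_{[a_i,a_{i+1}]}$ shows that the first-return map of $\cF$ along the right (resp.\ left) one-sided orbit of $a$ is the primitive parabolic generator $g$ (resp.\ $g^{-1}$) of $\mathrm{Stab}_G(a)=\langle g\rangle$, i.e.\ $\cF^{q^+}|_{[a,a+\epsilon]}=g$ and $\cF^{q^-}|_{[a,a-\epsilon]}=g^{-1}$ for all sufficiently small $\epsilon$. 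A parabolic M\"obius map on $\mathbb{S}^1$ is, near its fixed point, $C^1$-conjugate to $w\mapsto w-w^2+\cdots$, so $\lambda(a^\pm)=0$, $N(a^\pm)=1$, and $a$ is symmetrically parabolic; the orientation statement ($g$ repelling on $[a,a+\epsilon]$, attracting on $[a-\epsilon,a]$) follows from the already-established topological expansion, which forces $\cF^{q^+}=g$ to push $[a,a+\epsilon]$ away from $a$. Vertices that are not periodic — e.g.\ those internal to an elliptic cluster — are strictly pre-periodic to such parabolic cycles by the same orbit computation, hence are symmetrically parabolic as well.

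The main obstacle is the combinatorial book-keeping in the last step: following the one-sided orbits $a_n^\pm$ through the handle-, puncture- and (especially) elliptic-clusters of $\cP$, and verifying that the return maps are exactly the \emph{primitive} parabolic generators with the stated orientation rather than, say, $g^{-1}$ or a proper power. This is a finite but delicate case-analysis on the side-pairing relations — routine in principle, but the place where nearly all of the genuine work lies.
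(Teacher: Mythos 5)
Your proof is correct and fills in details the paper itself omits: the paper states Lemma~\ref{lem:symFs} with the single preface ``It follows from the construction that we have the following,'' so there is no written argument there to compare against. The four-part decomposition you give (conformality from $\cF|_{[a_i,a_{i+1}]}\in\PSL_2(\R)$; orthogonal-disk puzzle pieces with nesting because M\"obius maps preserve $\mathbb{S}^1$ and hence the family of geodesics; topological expansion via tessellation edges escaping compacta, including horocyclic convergence at parabolic vertices; and identification of the one-sided return maps along periodic vertex orbits as $g^{\pm1}$ for the primitive parabolic generator $g$) is the right way to verify the claim. The derivation of the orientation statement (repelling on $[a,a+\epsilon]$, attracting on $[a-\epsilon,a]$) from the already-established topological expansion is clean and avoids a redundant case analysis.

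One small slip: you write $N(a^\pm)=1$, but in the paper's convention (following Milnor, \S10, as used e.g.\ in the proof of Proposition~\ref{prop:bdd_fatou_symm_para}) a simple parabolic such as $w\mapsto w-w^2+\cdots$ has multiplicity $N=2$, since $N$ counts the order of vanishing of $f(w)-w$. This does not affect your conclusion, because symmetry only requires $N(a^+)=N(a^-)$ and both one-sided return maps are primitive parabolics of the same multiplicity, but the numerical value should be corrected for consistency with the rest of the paper.
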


\subsection{Pinched core polygon and puzzles}\label{subsec:pinchedcore}
In this subsection, we give a natural puzzle structure for the limit set of any geometrically finite Bers boundary group.

As in \S~\ref{geom_fin_bers_bdry_subsec}, let $\Sigma$ be a surface with $\chi(\Sigma) < 0$ and let $\mathcal{C}$ be a pinching multicurve, with $(\Sigma, \mathcal{C}) = \Sigma_1+\Sigma_2+ \ldots + \Sigma_{j_0}$.
For simplicity of  presentation, let us first assume that all nodal components are hyperbolic. We will discuss necessary modifications to handle parabolic nodal components at the end.

Let $G$ be a geometrically finite Bers boundary group associated to $(\Sigma, \mathcal{C})$.
Let $\Delta_1$ be a component of $\Omega(G) - \Delta_\infty$, where $\Delta_\infty$ is the $G-$invariant component.
Then $\Delta_1$ correspond to some component $\Sigma_i$ which is hyperbolic. Without loss of generality, we assume that $\Delta_1$ corresponds to $\Sigma_1$.

Since $\Sigma_1$ is non-compact, we can construct a core polygon $\cP_1 \subseteq \Delta_1$ following \S~\ref{ss:idealcorpolygon}.
Let $G_{\Delta_1} \subseteq G$ be the stabilizer of $\Delta_1$.
Note that the $G_{\Delta_1}-$orbit of the ideal vertices of $\cP_1$ are in one-to-one correspondence with the cusps of $\Sigma_1$.
Thus, if $j_0>1$, there exists at least one component $\Delta_2 \left(\neq\Delta_1\right)$  of $\Omega(G) - \Delta_\infty$ associated to some surface in $\{\Sigma_2, \ldots, \Sigma_{j_0}\}$ that is adjacent to $\Delta_1$ at some vertex of $\cP_1$. Without loss of generality, we assume that $\Delta_2$ corresponds to the surface $\Sigma_2$.
Let $\cP_2 \subseteq \Delta_2$ be a core polygon.
Inductively, we can construct 
\begin{itemize}[leftmargin=8mm]
    \item a tree of components $\Delta_1,\ldots, \Delta_{j_0}$ corresponding to $S_1,\ldots, S_{j_0}$ (respectively) such that $\bigcup_{i=1}^{j_0} \overline{\Delta_i}$ is connected, and
    \item a collection of core polygons $\cP_1,\ldots,\cP_{j_0}$ so that $\cP_i \subseteq \Delta_i$ and $\bigcup_{i=1}^{j_0} \overline{\cP_i}$ is connected.
\end{itemize}
We will call this collection $\{\cP_1,\ldots, \cP_{j_0}\}$ a {\em pinched core polygon} for $G$ (see Figure~\ref{fig:PinchedPolygon}).
We remark that the pinched core polygon plays a similar role as the internal puzzle in Definition~\ref{defn:intpuz}.

 \begin{figure}[htp]
 \captionsetup{width=0.96\linewidth}
    \centering
    \includegraphics[width=0.9\textwidth]{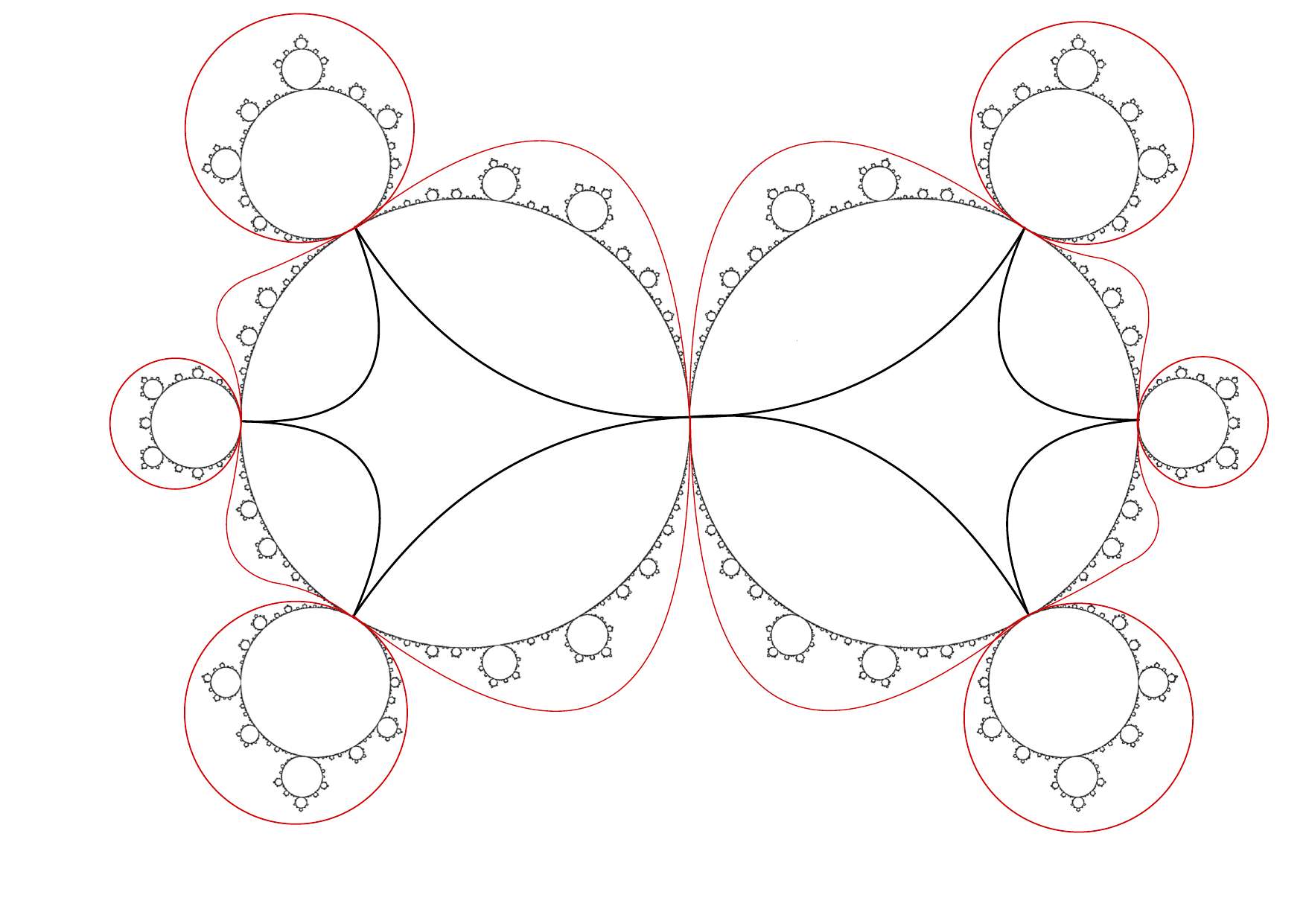}
    \caption{The puzzle structure for a genus $2$ Bers boundary group. The union of the two black ideal quadrilaterals $\overline{\cP_1} \cup \overline{\cP_2}$ is the pinched core polygon for $G$. The associated ideal polygon $\cP_\infty \subseteq \Delta_\infty$ is bounded by the union of red geodesic edges. The closure of connected components of $\widehat{\C} - \left(\overline{\cP_1}\cup \overline{\cP_2} \cup \overline{\cP_\infty}\right)$ form the level $0$ puzzle pieces. A limb type puzzle piece is bounded by a red edge, while a torso type puzzle piece is bounded by the union of one black edge and one red edge.}
    \label{fig:PinchedPolygon}
\end{figure}

We remark that if $G$ is torsion free, then one can realize this collection of ideal polygons as the limit of some fundamental polygon for the Fuchsian model $G_0$ of $\Sigma$. Specifically, pinching the corresponding multicurve $\mathcal{C}$ on $\Sigma$ yields a sequence of quasi-Fuchsian deformations of $G_0$ that degenerates to $G$.

Recall that we denote the ideal boundary of $\Delta_\infty$ by $\partial^I \Delta_\infty$.
Each vertex $x$ of $\cP_i$ corresponds to a cusp of $\Sigma_i$ which is either a nodal singularity of $\overline{\Sigma_i}$ or a cusp of $\Sigma_i$ coming from the original surface $\Sigma$.
In the former case, $x$ corresponds to $2$ ideal boundary points for $\Delta_\infty$, and there is a component of $\Omega(G) - \Delta_\infty$ attached to $\Delta_i$ at~$x$.
In the latter case, $x$ corresponds to a unique ideal boundary point for $\Delta_\infty$.
We consider the ideal polygon $\cP_\infty \subseteq \Delta_\infty$ whose vertices are the ideal boundary points (for $\Delta_\infty$) associated to the vertices of $\bigcup_{i=1}^{j_0}\cP_i$. (In Figure~\ref{fig:PinchedPolygon}, the polygon $\cP_\infty$ in $\Delta_\infty$ is shown in red. Here, $\Sigma$ is a compact surface of genus $2$, and the multi-curve $\mathcal{C}$ has a single component; hence, pinching $\mathcal{C}$ produces two genus $1$ surfaces $\Sigma_1$ and $\Sigma_2$ each of which has a unique node and no other cusp. The sides of the polygon $\cP_\infty$ are lifts of the curve $\mathcal{C}$. In particular, the second possibility discussed above does not arise in this example.)

The complement $\widehat\C - \left(\bigcup_{i=1}^{j_0}\overline{\cP_i} \cup \overline{\cP_\infty}\right)$ consists of finitely many Jordan domains, whose closures are denoted by $P_1,\ldots, P_r$. We will refer to these as level 0 puzzle pieces, and denote the collection of such puzzle pieces by
$$
\mathfrak{P}^0 := \{P_1, \ldots, P_r\}.
$$

Note that there are two types of puzzle pieces.
We say that a puzzle piece $P_i$ is of {\em limb type} if $\Int P_i \cap \bigcup_{i=1}^{j_0}\Delta_i = \emptyset$, and call it {\em torso type} otherwise. We remark that the current setup is analogous to that of Basilica puzzles, see Definition~\ref{basi_puzz_def}.
If $P_i$ is of limb type, then it is bounded by some geodesic $\widetilde{\alpha}_i \subseteq \Delta_\infty$ together with its accumulation point, which is the fixed point of an accidental parabolic element in $G$.
If $P_i$ is of torso type, then it is bounded by two geodesics and their accumulation points, with one geodesic in $\Delta_\infty$, and the other in $\Delta_s$ for some $s \in \{1,\ldots, j_0\}$ (see Figure~\ref{fig:PinchedPolygon}).

\subsubsection{Modification for parabolic nodal components}\label{subsec:parabolicnodal}
Suppose that 
$$
(\Sigma, \mathcal{C}) = \Sigma_1+\Sigma_2+ \ldots + \Sigma_{l_0} + \Sigma_{l_0+1} + \ldots + \Sigma_{j_0},
$$
where $\Sigma_i$ is hyperbolic for $i \leq l_0$ and parabolic for $i \geq l_0+1$.
Note that each parabolic nodal component is attached to a unique hyperbolic nodal component via a nodal singularity.
These parabolic nodal components do not correspond to any components in the domain of discontinuity.
Thus, the same construction gives  a connected pinched core polygon
$$
\overline{\cP_1} \cup \overline{\cP_2} \cup \ldots \cup \overline{\cP_{l_0}},
$$
where $\cP_i \subseteq \Delta_i$ is the core polygon associated to $\Sigma_i$.
The level $0$ puzzle pieces $\mathfrak{P}^0 = \{P_1,\ldots, P_r\}$ are constructed exactly the same as the closure of components $\widehat{\C} - \left( \overline{\cP_1} \cup \overline{\cP_2} \cup \ldots \cup \overline{\cP_{l_0}} \cup \overline{\cP_\infty}\right)$.

\subsection{The construction of Basilica Bowen-Series maps}\label{subsec:constructionBBS}
In this section, we explain how the natural puzzle constructed in \S~\ref{subsec:pinchedcore} induces a Markov $(G,\Lambda(G))-$map.

Let $\mathfrak{P}^0 = \{P_1,\ldots, P_r\}$ be the level $0$ puzzle associated to a pinched core polygon $\{\cP_1,\ldots, \cP_{j_0}\}$.
For each $\cP_i$, we choose a subset $\mathcal{PV}_i$ of the vertex set of $\cP_i$ so that it contains exactly one element in the $G_{\Delta_i}-$orbit of each vertex of $\cP_i$, where $G_{\Delta_i}$ is the stabilizer of $\Delta_i$.
We call $\mathcal{PV}_i$ the {\em representative vertex set}.

We are going to define a collection of maps
$$
\mathcal{F} = \{f_i: i\in\{1,\ldots, r\}\}
$$
where $f_i : P_i \longrightarrow \widehat{\C}$.

Suppose that $P_i$ is of limb type. Let $a$ be the unique point in $\partial P_i \cap \Lambda(G)$, and $\Delta \subseteq P_i$ be the component of $\Omega(G)-\Delta_\infty$ with $a \in \partial \Delta$.
Then $\Delta$ corresponds to some hyperbolic nodal component, say $\Sigma_s$.
Let $\Delta_s$ be the component of $\Omega(G)$ associated to $\Sigma_s$ that contains $\cP_s$ in the pinched core polygon.
Let $g \in G$ be an element so that $g(\Delta) = \Delta_s$ and $g(a) \in \mathcal{PV}_s$.
We define the map $f_i: P_i \longrightarrow \widehat{\C}$ as $f_i(x) = gx$. 
We remark that the choice of the element $g$ is not unique: any two choices differ by post-composition with an element in the stabilizer of $g(a)$, which is isomorphic to $\Z$. Note that this group is generated by the (primitive) accidental parabolic element with its fixed point at $g(a)$.

Suppose that $P_i$ is torso type. Let $s \in \{1,\ldots, j_0\}$ be the unique index such that $\Int P_i \cap \Delta_s \neq \emptyset$.
Note that $\cP_s$ is the corresponding core polygon for $\Sigma_s$ as defined in \S~\ref{ss:idealcorpolygon}, and hence $P_i\cap \partial \Delta_s$ is some Markov piece $[a,b]$ for the Bowen-Series Markov map on $\Delta_s$ associated to $\cP_s$ as defined in \S~\ref{ss:idealcorpolygon}. This Bowen-Series map is defined by some $h \in G_{\Delta_s} \subseteq G$ on $[a,b]$. We define the map $f_i: P_i \longrightarrow \widehat{\C}$ as $f_i(x) = hx$.

\begin{defn}
    Let $G$ be a geometrically finite Bers boundary group, with pinched core polygon $\{\cP_1, \ldots, \cP_{j_0}\}$ and representative vertex set $\mathcal{PV}_i$.
    Let $\mathfrak{P}^0 = \{P_1,\ldots, P_r\}$ be the corresponding level $0$ puzzle.
    We call $\mathcal{F} = \{f_1,\ldots, f_r\}$ constructed above a \emph{Basilica Bowen-Series map} for $G$.
\end{defn}

\begin{remark}
As we will see in the next section, Basilica Bowen-Series maps are closely related to Basilica $\mathfrak{Q}-$maps (see Definition~\ref{defn:basiqmap}).
\end{remark}
\begin{remark}
    The construction of a Basilica Bowen-Series map is not canonical. There are several steps where choices need to be made.
    \begin{enumerate}
        \item The pinched core polygon $\{\cP_1,\ldots, \cP_{j_0}\}$ requires a choice. Note that the level $0$ puzzle depends only on such a choice.
        \item The subset $\mathcal{PV}_i$ of the vertex set of $\cP_i$ requires choices of representatives from the $G_{\Delta_i}-$orbit of the vertex set.
        \item Finally, the map on a limb type puzzle piece requires a choice of the element in the stabilizer of the corresponding point.
    \end{enumerate}
\end{remark}

\subsubsection{Markov structure}
\begin{prop}\label{prop:MarkovProp}
    A Basilica Bowen-Series map $\mathcal{F} = \{f_1,\ldots, f_r\}$ induces a Markov $(\Lambda,G)-$map on the limit set $\Lambda(G)$.
    More precisely, for each $i$, there exists a subset $I_i \subseteq \{1,\ldots, r\}$ so that
    $$
    f_i(P_i \cap \Lambda(G)) = \bigcup_{j\in I_i} P_j \cap \Lambda(G).
    $$
    Further, $\bigcup_{j\in I_i} P_j \subseteq f_i(P_i)$.
\end{prop}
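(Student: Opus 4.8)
The plan is to reduce everything to two inputs: each $f_i$ is the restriction of a global Möbius transformation in $G$, and the Bowen--Series map attached to each core polygon $\cP_s$ is Markov with respect to its edge partition.

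First I would record the elementary facts. By construction $f_i$ extends to an element $g_i\in G$; thus $f_i\colon P_i\to g_i(P_i)$ is a homeomorphism, $g_i$ carries $\Lambda(G)$ onto $\Lambda(G)$, fixes the invariant component $\Delta_\infty$ setwise, and permutes the components of $\Omega(G)-\Delta_\infty$. Every edge of the extension graph $\Gamma:=\overline{\cP_\infty}\cup\bigcup_{s}\overline{\cP_s}$ is a hyperbolic geodesic (or a parabolic-invariant geodesic arc) inside some component of $\Omega(G)$, so $\Gamma\cap\Lambda(G)$ is the finite set $V$ of vertices of $\cP_\infty,\cP_1,\dots,\cP_{j_0}$, all of which are parabolic fixed points; hence $g_i(\Gamma)\cap\Lambda(G)=g_i(V)$ is again finite. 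Since the level-$0$ pieces $P_1,\dots,P_r$ are exactly the closures of the components of $\widehat\C-\Gamma$, it suffices to prove that $g_i$ carries the bounding geodesic(s) of $P_i$ onto a union of bounding geodesics of level-$0$ pieces with matching endpoints: once this is known, $g_i(P_i)$ and $\bigcup_{j\in I_i}P_j$ have the same limit points for a suitable index set $I_i$, and the inclusion $\bigcup_{j\in I_i}P_j\subseteq g_i(P_i)$ follows because $g_i(P_i)$ is the closed side of $g_i(\partial P_i)$ containing all of them.

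The geodesic statement is then checked by type. If $P_i$ is of torso type for $\Delta_s$, with $P_i\cap\partial\Delta_s=[a_j,a_{j+1}]$ an edge of $\cP_s$ bounded by the geodesic $\gamma_j\subseteq\Delta_s$, then $f_i=h\in G_{\Delta_s}$ is the restriction of the Bowen--Series map $\mathcal{F}_{G,\cP_s}$ to $[a_j,a_{j+1}]$. The edge-pairing relations defining $\mathcal{F}_{G,\cP_s}$ in \S~\ref{ss:idealcorpolygon} (cf. \cite{BS79} and Lemma~\ref{lem:symFs}) show that $h$ carries $[a_j,a_{j+1}]$ onto a union of consecutive edges $[a_k,a_{k+1}]\cup\dots\cup[a_l,a_{l+1}]$ of $\cP_s$; hence $h(\gamma_j)$ is the geodesic with endpoints $a_k,a_{l+1}$, and by equivariance $h$ carries the edge of $\cP_\infty$ bounding $P_i$ onto the corresponding chain of $\cP_\infty$-edges. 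One reads off $I_i$ as the set of indices of the level-$0$ pieces lying between the chain $\gamma_k\cup\dots\cup\gamma_l$ and $\partial\Delta_s$ on the outer side. If $P_i$ is of limb type, with $\partial P_i\cap\Lambda(G)=\{a\}$, bounding arc $\widetilde\alpha_i\subseteq\Delta_\infty$ and attached component $\Delta$, then $f_i=g$ with $g(\Delta)=\Delta_s$ and $g(a)\in\mathcal{PV}_s$; since $g$ fixes $\Delta_\infty$ and sends lifts of the pinching multicurve to lifts of the pinching multicurve, $g(\widetilde\alpha_i)$ is a lift of the same curve accumulating at the vertex $g(a)$ of $\cP_s$, i.e.\ an edge of $\cP_\infty$, so $g(P_i)$ is the closure of the complementary component of $\Gamma$ on the far side of that edge, which the tree structure of \S~\ref{subsec:pinchedcore}--\S~\ref{subsec:constructionBBS} displays as a union of finitely many level-$0$ pieces whose indices form $I_i$.

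The main obstacle is the verification in the torso case that cutting $\Lambda(G)$ along the single geodesic $h(\gamma_j)$ yields the same subset of $\Lambda(G)$ as cutting along the finer chain $\gamma_k,\dots,\gamma_l$ together with the parallel chain of $\cP_\infty$-edges: one must check that the ``lens'' regions trapped between the chain and $h(\gamma_j)$ carry no limit points beyond the vertices $a_{k+1},\dots,a_l$ and the finitely many level-$0$ pieces attached there. This is exactly where the combinatorics of \S~\ref{subsec:pinchedcore} enters: each such vertex is either an original cusp of $\Sigma$ on $\partial\Delta_\infty$ with no attached component, or a nodal singularity at which every attached component other than the one carrying a core polygon lies inside a single level-$0$ piece. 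Granting this bookkeeping, the identities $f_i(P_i\cap\Lambda(G))=\bigcup_{j\in I_i}P_j\cap\Lambda(G)$ and $\bigcup_{j\in I_i}P_j\subseteq f_i(P_i)$ follow at once.
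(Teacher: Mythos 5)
Your proposal is broadly correct and reaches the result, but it takes a genuinely different and considerably heavier route than the paper's. You track $g_i$-images of bounding geodesics explicitly, invoking the Bowen--Series edge-pairing combinatorics to show $h(\gamma_j)$ spans a chain of $\cP_s$-edges, and then must reconcile the single geodesic $h(\gamma_j)$ (and the single $\Delta_\infty$-geodesic $h(\delta)$) with the finer chain bounding the level-$0$ pieces — the ``lens'' argument you defer. The paper instead works purely topologically: it observes that $f_i$ extends to an element of $G$, hence a homeomorphism of $\Lambda(G)$, so $f_i(P_i\cap\Lambda(G))$ is the closure of a connected component of $\Lambda(G)-\{f_i(a),f_i(b)\}$ (resp.\ $\Lambda(G)-\{f_i(a)\}$ in the limb case), and since the image cut points are again vertices of some $\cP_s$, each such closure is automatically a union of level-$0$ pieces intersected with $\Lambda(G)$. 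This sidesteps the lens bookkeeping entirely and also handles the at-most-four-component case (nodal singularity vs.\ cusp of $\Sigma$) uniformly. What your approach buys is a more explicit description of the index set $I_i$ via the Bowen--Series Markov matrix, which is useful downstream but not needed for this proposition.

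One sentence in your argument is strictly false as written: ``by equivariance $h$ carries the edge of $\cP_\infty$ bounding $P_i$ onto the corresponding chain of $\cP_\infty$-edges.'' A M\"obius map sends a single geodesic to a single geodesic, not to a chain; $h(\delta)$ is not a union of $\cP_\infty$-edges. You implicitly correct this in the next paragraph by introducing the lens comparison, but that lens comparison is precisely the deferred work, and as you acknowledge it is where the combinatorics of \S~\ref{subsec:pinchedcore} (nodal singularities with or without attached core polygons, original cusps of $\Sigma$, the possibility of up to four complementary components of $\Lambda(G)$ at a nodal point) must be carried out case by case. Without filling this in, your write-up establishes the torso case only modulo a non-trivial gap. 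The paper's cut-point argument is the cleaner choice exactly because it never sees these lenses.
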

\begin{proof}
    The fact that $\mathcal{F}$ is a $(\Lambda,G)-$map follows directly from the construction.
    It remains to verify that it has the Markov property.
    
    Suppose $P_i$ is of limb type. Let $a$ be the unique point in $\partial P_i \cap \Lambda(G)$. By construction, $f_i(P_i \cap \Lambda(G))$ is the closure of one of the two connected components of $\Lambda(G) - \{f_i(a)\}$. Since $f_i(a)$ is a vertex for $\cP_s$ of some $s$, we have a decomposition $\{1,\ldots, r\} = A \sqcup B$ so that $\bigcup_{j \in A} P_j \cap \Lambda(G)$ and $\bigcup_{j \in B} P_j \cap \Lambda(G)$ correspond to the closure of the two connected components of $\Lambda(G) - \{f_i(a)\}$. Therefore, $f_i(P_i \cap \Lambda(G)) = \bigcup_{j\in I_i} P_i \cap \Lambda(G)$ for some index set $I_i \subset \{1,\ldots, r\}$.
    
    Suppose $P_i$ is of torso type. Let $a, b$ be the two points in $\partial P_i \cap \Lambda(G)$. Note that $\Lambda(G) - \{f_i(a), f_i(b)\}$ contains at most $4$ components. The number of components depends on whether $f_i(a)$ (or $f_i(b)$) corresponds to a cusp of $\Sigma$ or a nodal singularity on a nodal component.
    By construction, $f_i(P_i \cap \Lambda(G))$ is the closure of a connected component of $\Lambda(G) - \{f_i(a), f_i(b)\}$. Since both $f_i(a), f_i(b)$ are vertices of $\cP_s$ for some $s$, the closure of each connected component of $\Lambda(G) - \{f_i(a), f_i(b)\}$ is the intersection of $\Lambda(G)$ with the union of some level $0$ puzzle pieces.
    Therefore, $f_i(P_i \cap \Lambda(G)) = \bigcup_{j\in I_i} P_j \cap \Lambda(G)$ for some index set $I_i \subset \{1,\ldots, r\}$.
    
    The moreover part follows from the facts that $\partial P_i \cap \Delta_i$ is a hyperbolic geodesic of $\Delta_i$, and that each $h\in G_{\Delta_i}$ is a hyperbolic isometry of $\Delta_i$.
\end{proof}

By Proposition~\ref{prop:MarkovProp}, we inductively define level $n$ puzzle pieces 
$$
\mathfrak{P}^n = \{P_w\}_{w\in \mathcal{I}^n}
$$ 
by pulling back the level $n-1$ puzzle piece.
The following lemma follows from Proposition~\ref{prop:MarkovProp} by induction.
\begin{lem}
    Let $\mathcal{F} = \{f_1,\ldots, f_r\}$ be a Basilica Bowen-Series map.
    For each $P_{w} \in \mathfrak{P}^n$, there exists $P_{w'} \in \mathfrak{P}^{n-1}$ so that $P_{w} \subseteq P_{w'}$.

    Suppose that $P_{w}\in \mathfrak{P}^n$ with $P_{w} \subseteq P_s \in \mathfrak{P}^0$. There exists $P_{w''} \in \mathfrak{P}^{n-1}$ so that
    $$
    f_s: P_{w} \longrightarrow P_{w''}
    $$
    is a homeomorphism and is conformal in its interior.
\end{lem}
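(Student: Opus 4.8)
The plan is to induct on $n$, with the case $n=1$ handed to us directly by Proposition~\ref{prop:MarkovProp}. The one preliminary remark I would record is that each $f_i$ is, by construction, the restriction to $P_i$ of an element $g_i\in G$, hence of a M\"obius transformation; since $g_i$ is injective and $P_i$ is the closure of a Jordan domain, $g_i$ maps $P_i$ homeomorphically onto $g_i(P_i)=f_i(P_i)$ and is conformal on the interior. At level $n=1$, Proposition~\ref{prop:MarkovProp} gives $f_i(P_i\cap\Lambda(G))=\bigcup_{j\in I_i}P_j\cap\Lambda(G)$ together with $\bigcup_{j\in I_i}P_j\subseteq f_i(P_i)$; thus for each $j\in I_i$ the level $1$ piece $P_{(i,j)}:=g_i^{-1}(P_j)$ satisfies $P_{(i,j)}\subseteq P_i$, and $f_i\colon P_{(i,j)}\to P_j$ is a homeomorphism, conformal in the interior. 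This is exactly the pair of assertions at level $1$, with both $P_{w'}$ and $P_{w''}$ in $\mathfrak{P}^0$.

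For the inductive step I would unwind the construction of $\mathfrak{P}^n$ as the pullback of $\mathfrak{P}^{n-1}$ under $\mathcal{F}$: every $P_w\in\mathfrak{P}^n$ has the form $P_w=g_i^{-1}(P_v)$ for a unique level $0$ piece $P_i$ (uniqueness because $P_w$ has nonempty interior while distinct level $0$ pieces have disjoint interiors) and some $P_v\in\mathfrak{P}^{n-1}$ with $P_v\subseteq f_i(P_i)$, and automatically $P_w\subseteq P_i$. With $s=i$ this immediately yields the second assertion: $f_s|_{P_w}$ is the restriction of $g_i$, which carries $P_w=g_i^{-1}(P_v)$ homeomorphically onto $P_{w''}:=P_v\in\mathfrak{P}^{n-1}$ and conformally on the interior. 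For the first assertion I would apply the inductive hypothesis to $P_v$ to produce a level $n-2$ piece $P'$ with $P_v\subseteq P'$; then $g_i^{-1}(P')$ is a level $n-1$ puzzle piece containing $P_w=g_i^{-1}(P_v)$, provided $P'$ is itself accessible from $P_i$, i.e.\ $P'\subseteq f_i(P_i)$.

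The only nonroutine point — and the step I expect to be the main obstacle — is precisely this last inclusion: showing that accessibility passes from a puzzle piece to its parent. To handle it I would invoke the Markov property from Proposition~\ref{prop:MarkovProp}, namely $f_i(P_i)\cap\Lambda(G)=\bigcup_{k\in I_i}P_k\cap\Lambda(G)$ with $\bigcup_{k\in I_i}P_k\subseteq f_i(P_i)$, together with the combinatorial fact that distinct level $0$ pieces meet only along the finite set $\mathrm{bk}(\mathcal{A})$. Since $P_v$ and its parent $P'$ lie in a common level $0$ piece $P_k$ (iterate the inductive hypothesis down to level $0$, using uniqueness of the level $0$ piece containing a given piece), and since $P_v\cap\Lambda(G)$ is infinite — it is carried homeomorphically by $\mathcal{F}^{n-1}$ onto the intersection of a level $0$ piece with $\Lambda(G)$, which is a perfect set — and is contained in $\bigcup_{k\in I_i}P_k\cap\Lambda(G)$, the index $k$ must belong to $I_i$; hence $P'\subseteq P_k\subseteq f_i(P_i)$, and the induction closes. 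The argument is uniform in whether the pieces involved are of limb or torso type, since Proposition~\ref{prop:MarkovProp} already treats both cases on the same footing.
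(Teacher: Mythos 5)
Your argument is correct and follows the approach the paper indicates: the paper supplies no written proof beyond asserting that the lemma follows from Proposition~\ref{prop:MarkovProp} by induction, which is exactly what you carry out. The one nonroutine step you identify---that accessibility from $P_i$ passes from a level $n-1$ piece $P_v$ to its level $n-2$ parent $P'$---is the content the paper leaves implicit, and your argument via the infinitude of $P_v\cap\Lambda(G)$ (a homeomorphic copy of a level $0$ piece intersected with the perfect set $\Lambda(G)$) against the finiteness of the intersection in $\Lambda(G)$ of distinct level $0$ pieces closes the induction correctly.
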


\subsubsection{Topological expansion}
\begin{lem}\label{lem:tp}
    The Basilica Bowen-Series map $\mathcal{F}$ is topologically expanding. More precisely, let $(P_{w})_w$ be a nested infinite sequence of level $n$ puzzle pieces. Then 
    $$
    \bigcap_w P_{w}
    $$
    is a singleton set.
\end{lem}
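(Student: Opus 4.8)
The plan is to show directly that $K:=\bigcap_w P_w$ is a single point; as usual, the content is a shrinking‑of‑diameters estimate driven by the dynamics of $G$ on its limit set.

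First one records that, being a nested intersection of non‑empty compact connected sets, $K$ is non‑empty, compact and connected, so it suffices to rule out $\diam K>0$. Writing $g_n\in G$ for the Möbius map with $\mathcal{F}^n|_{P_{w_n}}=g_n$, Proposition~\ref{prop:MarkovProp} gives that $g_n$ carries $P_{w_n}$ onto a level‑$0$ piece $P_{j_n}\in\mathfrak{P}^0$; passing to a subsequence we may take $P_{j_n}=P$ fixed, so $P_{w_n}=g_n^{-1}(P)$ and $g_n(K)\subseteq P$. I would then argue that the sequence $\{g_n\}$ leaves every finite subset of $G$: if $g_n=g_m$ for $n<m$ then $P_{w_n}=P_{w_m}$ (a puzzle piece is determined by its trace on $\Lambda(G)$, exactly as in the $\mathfrak{Q}$‑setting), whence $\mathcal{F}^{m-n}$ restricts to the identity on the infinite set $P\cap\Lambda(G)$; but $\mathcal{F}^{m-n}|_P$ is a non‑trivial element of $G$ (it is a word in the Bowen–Series/limb generators, and the construction of the level‑$0$ puzzle from the pinched core polygon makes each $f_i(P_i\cap\Lambda(G))$ — a closure of a component of $\Lambda(G)$ minus one or two vertices — strictly larger than $P_i\cap\Lambda(G)$, so no level‑$0$ piece is $\mathcal{F}$‑periodic with trivial return map), and a non‑trivial Kleinian element fixes at most two points of $\widehat{\C}$. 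Hence $\{g_n\}$ is discrete and unbounded in $G$.

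Now I invoke the convergence property of the $G$‑action on $\widehat{\C}$: after a further subsequence there are points $a,b\in\Lambda(G)$ with $g_n^{-1}\to b$ uniformly on compact subsets of $\widehat{\C}-\{a\}$. If $a\notin\overline P$, then $P\cap\Lambda(G)$ is a compact subset of $\widehat{\C}-\{a\}$, so $\diam\big(g_n^{-1}(P\cap\Lambda(G))\big)=\diam\big(P_{w_n}\cap\Lambda(G)\big)\to 0$, and $K\cap\Lambda(G)$ is a single point. The remaining case $a\in\overline P$ is the main obstacle. Here one first observes that $b=\lim_n g_n^{-1}(z)$ for any $z\in\overline P\setminus\{a\}$, so $b\in P_{w_n}$ for every $n$ and hence $b\in K$; the task is to show that $K$ contains nothing else. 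This is where geometric finiteness of $G$ enters: $a$ is either a conical limit point of $G$ or a bounded parabolic fixed point, and in both cases one controls how $g_n$ acts near $a$ — conically through nested shadows of a fixed ball, parabolically through horoball dynamics — in order to show that the part of $g_n^{-1}(P)$ not collapsing to $b$ cannot survive in every $P_{w_m}$; concretely, one works with the geodesic convex hull $C(K)\subseteq\mathbb{H}^3$ (a fixed positive‑dimensional convex set if $\diam K>0$), notes $g_n(C(K))\subseteq C(\overline P)$ — a fixed proper convex subset of $\mathbb{H}^3$ — and derives, via the thick–thin decomposition of $\mathbb{H}^3/G$, that the orbit of a fixed interior point of $C(K)$ would remain bounded, contradicting unboundedness of $\{g_n\}$.

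Finally, the estimates above pin down $K\cap\Lambda(G)$; a point $z_0\in K$ lying in some Fatou component $\Delta$ is disposed of separately. After one application of $\mathcal{F}$ the component $\Delta$ is carried to some $\Delta_s$, and the traces $P_{w_n}\cap\partial\Delta$ are carried to a nested sequence of Markov intervals of the Bowen–Series map on $\partial\Delta_s$, whose diameters tend to $0$ by the topological expansion of that map (Lemma~\ref{lem:symFs}); since $K$ is connected this forces $K=\{z_0\}$. Combining the two cases, $K$ is a singleton, which is the assertion of the lemma (and, together with the uniform bound on the number of level‑$n$ pieces meeting at any point, yields topological expansion in the sense of Definition~\ref{def:top_expand}).
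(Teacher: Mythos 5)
Your proposal takes a genuinely different route from the paper's. The paper's proof of Lemma~\ref{lem:tp} is short and combinatorial: every contact point of $\Lambda(G)$ is eventually mapped to a vertex of the pinched core polygon and hence lies on $\partial P_w$ for some level-$n$ piece; since contact points are dense in $\Lambda(G)$, the nested intersection $\bigcap P_{w}\cap\Lambda(G)$ must be a singleton; and since all puzzle boundaries consist of hyperbolic geodesics in $\Omega(G)$ together with their accumulation points, the shrinking of the $\Lambda$-trace forces $\bigcap P_w\cap\Omega(G)=\emptyset$. You instead set up the standard convergence-group machinery ($g_n^{-1}\to b$ locally uniformly off $\{a\}$) plus geometric finiteness and convex-hull geometry. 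That machinery is natural for Kleinian groups, but in this instance it is both heavier and, as written, incomplete.

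Two concrete gaps. First, the unboundedness of $\{g_n\}$ in $G$: you reduce to ruling out $\mathcal{F}^{m-n}|_{P}=\mathrm{id}$, and justify this by saying that each $f_i$ strictly enlarges $P_i\cap\Lambda(G)$. But $\mathcal{F}^{m-n}|_{P}$ is the composition taken through the \emph{intermediate-level} pieces $P=\mathcal{F}^0(P)\subseteq P_{i_1}$, $\mathcal{F}(P)\subseteq P_{i_2}$, \dots, not through level-$0$ pieces at every step, so strict expansion of each $f_i$ on its full level-$0$ domain does not by itself prohibit the return word from collapsing to the identity (in degenerate partitions, say two pieces interchanged by inverse generators, the return map can well be trivial). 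You would need to invoke the specific combinatorics of the Bowen--Series construction here, which is exactly the ingredient the paper's proof uses directly via contact points. Second, and more seriously, the case $a\in\overline{P}$: the argument that $g_n(C(K))\subseteq C(\overline{P})$ forces the orbit $g_n(x_0)$ to stay bounded does not go through. The convex hull $C(\overline{P})$ of the ideal set $\overline{P}\cap\Lambda(G)$ is \emph{unbounded} in $\mathbb{H}^3$ (it runs out to every point of $\overline{P}\cap\Lambda(G)$), so the containment $g_n(x_0)\in C(\overline{P})$ is perfectly compatible with $g_n(x_0)\to\partial\mathbb{H}^3$, and the thick--thin decomposition does not by itself supply the missing compactness. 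Indeed if both ideal endpoints of a geodesic $\ell\subset C(K)$ are different from $b$, then $g_n(\ell)$ simply collapses towards $a$ inside $C(\overline{P})$ with no contradiction in sight. This is precisely the obstruction that the paper's density-of-contact-points argument circumvents: it proves shrinking on $\Lambda(G)$ by locating, in any non-degenerate candidate $\bigcap P_{w}\cap\Lambda(G)$, a contact point that is forced to be a boundary break-point of the partition at every sufficiently deep level, rather than appealing to convergence dynamics. Your last paragraph (handling a point $z_0$ in a Fatou component via the expansion of the induced boundary Bowen--Series map) is correct and is essentially the paper's step~3, but it is predicated on first establishing the singleton property on $\Lambda(G)$, which is where the gaps lie.
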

\begin{proof} 
    Let $a$ be a contact point of $\Lambda(G)$; i.e.,  the point of intersection of the boundaries of two connected components of $\Omega(G) - \Delta_\infty$. Then there exists some $n \geq 0$ and a level $n$ puzzle piece $P_{w} \in \mathfrak{P}^n$ so that $a \in \partial P_{w}$. Indeed, suppose $a$ is on the boundary of $\Delta_i$ for some $i = 1,\ldots, J$. Then by construction, $a$ is eventually mapped to some vertex of $\cP_i$. Otherwise, one can easily verify that $a$ is eventually mapped to $\partial \Delta_i$ for some $i$.

    Since the set of all such contact points are dense on $\Lambda(G)$, we conclude that each intersection $\bigcap_{n = 0}^\infty P_{w} \cap \Lambda(G)$ is a singleton set.
    On the other hand, since all puzzle pieces are bounded by hyperbolic geodesics in components of $\Omega(G)$ along with their accumulation points, the shrinking of $\bigcap_w P_{w} \cap \Lambda(G)$ to points implies that $\bigcap_w P_{w} \cap \Omega(G)$ is empty.
    The lemma follows.
\end{proof}

\subsection{The induced Markov map on $\partial^I\Delta_s$, and the pull-back Bowen-Series maps}\label{pullback_bs_subsec}

A Basilica Bowen-Series map $\cF$ associated with $(\Sigma,\mathcal{C})$ induces piecewise conformal, Markov maps on the ideal boundaries of the components $\Delta_s$ of $\Omega(G)$. In this section, we now study certain regularity properties of these Markov maps. For simplicity of exposition, we assume that none of the nodal components $\Sigma_i$ are parabolic; i.e., $l_0=j_0$. The results can be extended to the general case using the discussion in \S~\ref{subsec:parabolicnodal}.

Let $s\in \{1,\ldots, j_0\} \cup \{\infty\}$.
Let $\Phi_s: \D \longrightarrow \Delta_s$ be a Riemann uniformization.
Let $\mathcal{F}|_{\Delta_s}$ be the restriction of the Basilica Bowen-Series map on $\Delta_s$ (recall that strictly speaking, $\cF$ is only defined on a subset of $\Delta_s$), and let $\mathcal{F}_s = \{f_{1,s},\ldots, f_{r_s, s}\}$ be the conjugate of $\mathcal{F}|_{\Delta_s}$ via $\Phi_s$.
By the Schwarz reflection principle, $\mathcal{F}_s$ extends to a conformal Markov map on $\mathbb{S}^1 = \partial \D$ (see \S~\ref{sec:cmm}).

Note that $\mathbb{S}^1$ is identified with the ideal boundary $\partial^I \Delta_s$. If $s \neq \infty$, then $\Delta_s$ is a Jordan domain, and hence $\partial^I \Delta_s$ can be identified with $\partial \Delta_s$ in this case.
We will call $\mathcal{F}_s = \{f_{1,s}, \ldots, f_{r_s,s}\}$ on $\mathbb{S}^1$ the induced Markov map on $\partial^I\Delta_s \cong \mathbb{S}^1$.

We give the induced Markov map a special name if $s = \infty$, and call $\mathcal{F}_\infty$ on $\partial^I\Delta_\infty$ the {\em pull-back Bowen-Series map}. This map will play an important role in our mating results in a sequel paper.

\begin{prop}\label{prop:basilicabssymmetric}
    Let $\mathcal{F}$ be a Basilica Bowen-Series map for a geometrically finite Bers boundary group.
    Let $s\in \{1,\ldots, j_0\} \cup \{\infty\}$ and let $\mathcal{F}_s = \{f_{1,s}, \ldots, f_{r_s,s}\}$ be the induced Markov map on $\partial^I\Delta_s \cong \mathbb{S}^1$.
    Then $\mathcal{F}_s$ is topologically expanding, conformal, and admits a puzzle structure. 
    
    Moreover, all break-points are symmetric. More precisely, we have the following.
    \begin{itemize}
        \item If $s \in \{1,\ldots, j_0\}$, then any break-point is symmetrically parabolic.
        \item If $s = \infty$, then a break-point is symmetrically parabolic if it corresponds to a cusp of the surface $\Sigma$; and it is symmetrically hyperbolic otherwise.
    \end{itemize}
    In particular, if $\Sigma$ is compact, then any break-point for $\mathcal{F}_\infty$ is symmetrically hyperbolic.
\end{prop}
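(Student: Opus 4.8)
The plan is to reduce everything to the structure of the pieces of a Basilica Bowen-Series map, which are M\"obius transformations in $G$, together with the geometry of the polygons $\cP_s$ and $\cP_\infty$. I would first dispose of the easy structural assertions (topological expansion, conformality, puzzle structure) and then concentrate on the statement about break-points, which is the real content.

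\textbf{Step 1: Topological expansion, conformality, puzzle structure.} For $s\in\{1,\ldots,j_0\}$, the restriction $\mathcal{F}|_{\Delta_s}$ consists of hyperbolic elements of $G_{\Delta_s}$ acting on the ideal polygon $\cP_s$, so $\mathcal{F}_s$ is (conjugate to) the Bowen-Series Markov map of the non co-compact Fuchsian group $G_{\Delta_s}$ associated to the core polygon $\cP_s$. Hence Lemma~\ref{lem:symFs} applies verbatim: it is topologically expanding, conformal, carries a natural puzzle structure given by the round disks orthogonal to $\mathbb{S}^1$ at the vertices, and every break-point is symmetrically parabolic with the sharp description of the local dynamics (a primitive parabolic $g\in G_{\Delta_s}$, repelling on one side and attracting on the other, with $\mathcal{F}_s^{q^\pm}=g^{\pm1}$ near the break-point). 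For $s=\infty$: the induced map $\mathcal{F}_\infty$ on $\partial^I\Delta_\infty$ is obtained by conjugating the pieces $f_i$ by the Riemann map $\Phi_\infty$; each $f_i$ is M\"obius, so $\mathcal{F}_\infty$ is conformal on each piece by the Schwarz reflection principle, and the puzzle structure comes from the level $0$ puzzle pieces $\mathfrak{P}^0$ pulled back by $\Phi_\infty$ (the relevant nesting and refinement properties being Proposition~\ref{prop:MarkovProp}). Topological expansion of $\mathcal{F}_\infty$ follows from Lemma~\ref{lem:tp}: a nested sequence of level $n$ puzzle pieces shrinks to a point, so the corresponding nested intervals on $\partial^I\Delta_\infty$ shrink to a point.

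\textbf{Step 2: Classification of break-points of $\mathcal{F}_\infty$.} A break-point of $\mathcal{F}_\infty$ corresponds to a vertex of the polygon $\cP_\infty$, i.e.\ to an ideal boundary point of $\Delta_\infty$ associated to a vertex of some $\cP_i$, which in turn corresponds to a cusp of $\Sigma_i$. By the structure in \S~\ref{subsec:pinchedcore}, such a cusp is of exactly one of two kinds: a nodal singularity of $\overline{\Sigma_i}$ (coming from a curve in the pinching multicurve $\mathcal{C}$), or a genuine cusp of the original surface $\Sigma$ lying on $\Sigma_i$. In the first case the relevant boundary point is the fixed point of an \emph{accidental} parabolic $g_i\in G$ (by Proposition~\ref{prop:structgfbersboundary}); since $\mathcal{F}_\infty$ near such a break-point is realized, after finitely many conformal iterates, by $g_i^{\pm1}$ acting on its parabolic fixed point, the Lyapunov exponent is $0$ on both sides and the parabolic multiplicities agree (rank one parabolic, multiplicity $1$ on each side), so the break-point is symmetrically parabolic. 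In the second case, one needs to check that the relevant boundary point is \emph{not} fixed by any parabolic of $G$ stabilizing $\Delta_\infty$: the point is a lift of a cusp of $\Sigma$, whose stabilizer in $G$ is a rank one parabolic subgroup, but the two external-ray-type directions at that point lie in $\partial^I\Delta_\infty$ and the induced dynamics there is that of a hyperbolic Bowen-Series piece of the Fuchsian model (a loxodromic element), giving a positive Lyapunov exponent on each side with equal values by the symmetry of the Bowen-Series construction of \S~\ref{ss:idealcorpolygon} --- hence symmetrically hyperbolic. Finally, if $\Sigma$ is compact then $\mathcal{C}$ is the only source of cusps in the $\Sigma_i$ and there are no cusps of $\Sigma$; but wait --- the break-points of $\mathcal{F}_\infty$ in the compact case correspond to vertices of $\cP_\infty$ which are lifts of curves in $\mathcal{C}$, so one must verify that these are \emph{hyperbolic} for $\mathcal{F}_\infty$ even though the corresponding group element is an accidental parabolic. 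This is the subtle point.

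\textbf{Main obstacle.} The crux --- and where I expect to spend the most effort --- is reconciling the two ``types'' of a contact point of a Basilica limit set: as seen from inside a bounded component $\Delta_i$ (where the first-return dynamics is parabolic, since $g_i$ is a parabolic stabilizing a horoball) versus as seen from $\Delta_\infty$ (where, in the compact case, the induced Bowen-Series dynamics must be hyperbolic so that $\mathcal{F}_\infty$ is quasi-symmetrically conjugate to angle-doubling via Proposition~\ref{prop:qsmarkovmap}). The resolution is that the pieces $f_i$ of the Basilica Bowen-Series map on torso puzzle pieces intersecting $\Delta_\infty$ are \emph{not} the elements $g_i$ themselves but rather the loxodromic Bowen-Series generators of the Fuchsian model $G_0$ of $\Sigma$ --- more precisely, the Basilica Bowen-Series map restricted to $\partial^I\Delta_\infty$ should be shown to coincide combinatorially with (a refinement of) the classical Bowen-Series map of the co-compact Fuchsian group $G_0$, whose break-points are all hyperbolic because $\Sigma$ is compact and has no cusps. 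The key computation is therefore to identify, for each torso-type puzzle piece $P_i$ meeting $\Delta_\infty$, the element $h\in G$ defining $f_i$ on $P_i\cap\partial\Delta_s$, trace through the conjugation by $\Phi_\infty$, and verify that the resulting Lyapunov exponent along any periodic break-orbit is the translation length of a loxodromic element of $G$ (hence positive), with the left and right exponents equal by the reflection symmetry built into the core-polygon construction (the involution swapping the two sides of each edge). Once this identification is in place, the parabolic case (cusps of $\Sigma$) follows by the same bookkeeping applied to the non co-compact pieces of \S~\ref{ss:idealcorpolygon}, exactly as in Lemma~\ref{lem:symFs}, and the final ``in particular'' is immediate.
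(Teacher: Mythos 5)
Your Step~1 is correct and matches the paper. Your Step~2, however, has the two cases swapped: you conclude that nodal break-points (accidental parabolics) give \emph{symmetrically parabolic} points of $\mathcal{F}_\infty$ while genuine cusps of $\Sigma$ give \emph{symmetrically hyperbolic} points, which is precisely the opposite of the statement (and of what the paper proves). The source of the error is that you compute the Lyapunov exponent of $\mathcal{F}_\infty$ at $a\in\partial^I\Delta_\infty$ in the wrong frame: you use the local dynamics of the M\"obius element $g_i\in G$ on $\widehat{\C}$ near its parabolic fixed point $\Phi_\infty(a)\in\Lambda(G)$, which is indeed parabolic, rather than the dynamics of the \emph{conjugate} $\Phi_\infty^{-1}g_i\Phi_\infty\in\Aut(\D)$ acting on $\mathbb{S}^1\cong\partial^I\Delta_\infty$, which is what $\mathcal{F}_\infty$ actually is. The whole content of the dichotomy is that this conjugation by the Riemann map of $\Delta_\infty$ \emph{changes type}: an accidental parabolic of $G$, being freely homotopic to a pinched curve of nonzero length on $\Sigma=\Delta_\infty/G$, becomes a hyperbolic element of the Fuchsian group $\Phi_\infty^{-1}G\Phi_\infty$, whereas a persistent parabolic (representing a cusp of $\Sigma$) stays parabolic. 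You then \emph{do} catch the inconsistency in your ``Main obstacle'' paragraph, but your proposed resolution --- ``the pieces $f_i$ are not the elements $g_i$ themselves but rather the loxodromic Bowen-Series generators of the Fuchsian model $G_0$'' --- is a category error: the pieces $f_i$ \emph{are} M\"obius elements of $G$, defined in \S~\ref{subsec:constructionBBS}, and the $g_i$ at nodal vertices are exactly what appears; what changes is the dynamics of their \emph{conjugates} by $\Phi_\infty^{-1}$.

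Two further points where the paper's argument is more precise than yours. First, the paper never needs the global claim that $\mathcal{F}_\infty$ coincides with (a refinement of) the classical Bowen-Series map of a Fuchsian model of $\Sigma$ --- which, while plausible, would require identifying combinatorics across a geometric-limit argument; instead it does a purely local analysis at each periodic break-point $a$, using that both one-sided restrictions $\mathcal{F}^{q^\pm}$ generate the same cyclic stabilizer $H$ of $\Phi_\infty(a)$ in $G$, and only then distinguishing whether $\Phi_\infty^{-1}H\Phi_\infty$ is parabolic or hyperbolic. Second, the reason the two one-sided exponents agree is not a ``reflection symmetry built into the core-polygon construction.'' At a node, the two sides of $a$ in $\partial^I\Delta_\infty$ correspond to \emph{two different} components $\Delta_{t^+},\Delta_{t^-}$ of $\Omega(G)-\Delta_\infty$ touching at $\Phi_\infty(a)$, and Lemma~\ref{lem:symFs} applied to each $\cF_{t^\pm}$ gives the \emph{same} primitive generator $\gamma\in\Aut(\D)$ on both sides (not $\gamma^{\pm1}$), yielding equal exponents $>0$. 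At a cusp of $\Sigma$ the two sides give $\gamma$ and $\gamma^{-1}$ for a parabolic $\gamma$, which is symmetrically parabolic. If you re-derive your Step~2 in the $\Phi_\infty$-conjugated frame, keeping track of the stabilizer subgroup $H$ as the paper does, the cases come out in the right order and the ``in particular'' for compact $\Sigma$ is immediate.
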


\begin{proof}
    By Lemma~\ref{lem:tp}, $\mathcal{F}$ is topologically expanding. Thus, for each $s \in \{1,\ldots, j_0\} \cup \{\infty\}$, the induced Markov map $\mathcal{F}_s$ is topologically expanding as well.
    
    Consider the intersection of the level $0$ puzzle pieces in $\mathfrak{P}^0$ with $\Delta_s$, and pull them back under the Riemann uniformization $\Phi_s:\D\to\Delta_s$ to obtain the sets $\widetilde{P}_{1,s},\ldots, \widetilde{P}_{r_s,s}$. Let $r: \widehat{\C} \longrightarrow \widehat{\C}$ be the reflection in the unit circle $\mathbb{S}^1$, and $P_{i,s}:= \overline{\widetilde{P}_{i,s} \cup r(\widetilde{P}_{i,s}})$. Then $f_{i,s}$ extends to a conformal map on $\Int (P_{i,s})$ by the Schwarz reflection principle. Therefore, $\mathcal{F}_s$ is conformal.
    
    The collection $\{P_{1,s}, \ldots, P_{r_s, s}\}$ forms the level $0$ puzzle pieces for $\mathcal{F}_s$. By Proposition~\ref{prop:MarkovProp}, the preimage of $P_{i,s}$ is contained in some level $0$ puzzle pieces. Therefore, $\mathcal{F}_s$ admits a puzzle structure.

    For the last part, note that if $s \in \{1,\ldots, j_0\}$, then $\mathcal{F}_s$ is the Bowen-Series Markov map associated to the core polygon $\Phi_s^{-1}(\cP_s)$. Thus by Lemma~\ref{lem:symFs}, every break-point is symmetrically parabolic.

    Let us now consider the case $s = \infty$. Let $a$ be a periodic break-point, and let $q^\pm$ be the period of the right/left orbit $a_n^\pm$ under the induced map $\cF_\infty$. Further, let $I_{a^\pm, n} \subseteq \mathbb{S}^1$ be the level $n$ Markov intervals of $\mathcal{F}_\infty$ associated with the right/left orbit $a_n^\pm$. Note that $\mathcal{F}^{q^\pm}_\infty(I_{a^\pm, q^\pm}) = I_{a^\pm, 0}$.
    
    Note that $\Phi_\infty(a) \in \Lambda(G)$ is a vertex of the pinched core polygon. Let $P_{a^\pm, n} \in \mathfrak{P}^{n}$ be the level $n$ puzzle pieces associated to $I_{a^\pm, n}$.
    Let $t^\pm \in \{1,\ldots, j_0\}$ be such that $\Phi_\infty(a) \in \partial \Delta_{t^\pm}$ and $P_{a^\pm, 0} \cap \Delta_{t^\pm} \neq \emptyset$.
    Let $I^{t^\pm}_{a^\pm, n} \subseteq \partial^I\Delta_{t^\pm}$ be the corresponding Markov intervals for $\mathcal{F}_{t^\pm}$.
    By Lemma~\ref{lem:symFs}, the restriction $\mathcal{F}^{q^\pm}_{t^\pm} |_{I^{t^\pm}_{a^\pm, q^\pm}}$ is repelling on the corresponding sides of $\Phi_{t^\pm}^{-1}(\Phi_\infty(a))$, and generates the stabilizer of $\Phi_{t^\pm}^{-1}(\Phi_\infty(a))$ in the Fuchsian group $\Phi_{t^\pm}^{-1}\ G_{\Delta_{t^\pm}}\ \Phi_{t^\pm}$, where $G_{\Delta_{t^\pm}}$ is the stabilizer subgroup of $\Delta_{t^\pm}$ in $G$.
    Therefore, each of the restrictions $\mathcal{F}^{q^+}|_{P_{a^+, q^+}}$, and $\mathcal{F}^{q^-}|_{P_{a^-, q^-}}$ is repelling at $\Phi_\infty(a)$, and generates the subgroup
    $$
    H= \{g \in G_{\Delta_{t^\pm}}: g(\Phi_\infty(a)) = \Phi_\infty(a)\}\subseteq G.
    $$ 
    In particular, these M{\"o}bius maps are primitive elements in $G$.
    Note that $H$ is simply the stabilizer of $\Phi_\infty(a)$ (in $G$) if there are no parabolic nodal components. Otherwise, the stabilizer of $\Phi_\infty(a)$ may be isomorphic to $H \times \Z/2\Z$, where the factor $\Z/2\Z$ comes from the group element that interchanges two components of $\Omega(G) - \Delta_\infty$ that are attached to $\Phi_\infty(a)$.
    Therefore, there exists $\gamma \in \Aut(\D)$ so that
    $$
  \mathcal{F}^{q^\pm}_\infty|_{I_{a^\pm, q^\pm}} =
  \begin{cases}
       \gamma^{\pm 1}\quad \textrm{if $\Phi_\infty(a)$ corresponds to a cusp of the surface $\Sigma$,}\\
       \gamma\qquad \textrm{otherwise}.
  \end{cases}
  $$
  In the former case, $a$ is symmetrically parabolic, and in the latter case, $a$ is a symmetrically hyperbolic periodic point for $\cF_\infty$. The proposition follows.
\end{proof}

\section{Quasiconformal model for limit set and Julia set}\label{sec:qc_conj}
In this section, we prove our mains results regarding quasiconformal geometry of limit and Julia sets (Theorem~\ref{thm:qcclassfn-ltsets}, Theorem~\ref{thm-basilica-stdpolymodel} and Theorem~\ref{thm:infKleinianRational}).

In \S~\ref{subsec:topconj}, we first show that for any Basilica Bowen-Series map $\mathcal{F}_G$ of a Bers boundary group $G$ (or a rational map with fat Basilica Julia set), we can construct a Basilica $\mathfrak{Q}$-map $\widetilde{\mathcal{F}}$ that is topologically conjugate to $\mathcal{F}_G$ on the Julia set.
In \S~\ref{subsuec:quasicon}, we show that we can modify $\widetilde{\mathcal{F}}$ via Theorem~\ref{thm:symmetrichyperbolic} and obtain a new Basilica $\mathfrak{Q}$-map $\mathcal{F}$ which is quasiconformally conjugate to $\mathcal{F}_G$.
In \S~\ref{subsec:QuasiInv}, we introduce quasiconformal invariants that distinguish various Basilica limit sets and Julia sets.
Finally, we assemble the ingredients
to prove the main theorems in \S~\ref{proof_three_main_thm_subsec}.

\subsection{Topological conjugacy}\label{subsec:topconj}
Recall that in \S~\ref{sec-basilica}, we introduced Basilica $\mathfrak{Q}-$maps associated with the fat Basilica polynomial $Q(z) = z^2-\frac{3}{4}$,   and in \S~\ref{section:BasBS}, we defined analogous objects in the Kleinian group world, termed as Basilica Bowen-Series maps.
In the following, we construct Basilica $\mathfrak{Q}-$maps that serve as topological models both for Basilica Bowen–Series maps and for any polynomial whose Julia set is a fat Basilica.

\subsubsection{Topological model for compact Basilica Bowen-Series maps}\label{basi_bs_model_subsec}

We employ the notation of \S~\ref{section:BasBS}, and assume that none of the nodal components $\Sigma_i$ are parabolic. As explained in \S~\ref{subsec:parabolicnodal}, this is not a loss of generality as far as the action of the associated Basilica Bowen-Series map is concerned.

\begin{prop}\label{prop:topmodelbowenseries}
    Let $\Sigma$ be a hyperbolic compact surface, and let $G$ be a geometrically finite Bers boundary group associated to $(\Sigma, \mathcal{C})$. Let $\mathcal{F}_G$ be a Basilica Bowen-Series map.
    Then there exists a Basilica $\mathfrak{Q}-$map $\mathcal{F}: \mathfrak{P}^1\longrightarrow \mathfrak{P}$ whose restriction to $J(Q)$ is topologically conjugate to the restriction of $\mathcal{F}_G$ on  $\Lambda(G)$.
\end{prop}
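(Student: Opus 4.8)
The plan is to build a combinatorial conjugacy between the Markov dynamics of $\mathcal{F}_G$ on $\Lambda(G)$ and that of a suitable Basilica $\mathfrak{Q}$-map $\mathcal{F}$ on $J(Q)$, and then invoke Proposition~\ref{prop-combconjimpliestopconj} (combinatorial $+$ topologically expanding $\Rightarrow$ topologically conjugate). The first step is to read off the combinatorial data of $\mathcal{F}_G$. From \S~\ref{subsec:pinchedcore} and \S~\ref{subsec:constructionBBS}, the level $0$ puzzle $\mathfrak{P}^0_G = \{P_1,\dots,P_r\}$ is indexed by the complementary regions of the pinched core polygon $\bigcup\overline{\cP_i}\cup\overline{\cP_\infty}$; each piece is either limb type or torso type (for some $\Delta_s$), and the Markov transitions $f_i(P_i\cap\Lambda(G)) = \bigcup_{j\in I_i} P_j\cap\Lambda(G)$ are recorded by Proposition~\ref{prop:MarkovProp}. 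The associated contact tree (vertices $=$ components $\Delta_i$ of $\Omega(G)-\Delta_\infty$, edges $=$ tangency points) is a finite bi-colored-type combinatorial object; since $\Sigma$ is compact, every tangency point of $\Lambda(G)$ is a persistent accidental parabolic, so the tree carries no extra coloring to match.

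**Realizing the combinatorics by a Basilica $\mathfrak{Q}$-map.**
The second step is to construct, by hand, a Markov refinement $\mathfrak{P}^1$ of some Basilica puzzle $\mathfrak{P}^0$ for $Q$ whose combinatorial data (the index sets $\mathcal{I}$, the type assignment limb/torso, the Markov matrix $M_{\mathcal F}$, and the action of break-points) matches that of $\mathcal{F}_G$. Here the torso $\bigcup_{i=0}^r U_i$ of the fat Basilica plays the role of the union of the $\Delta_i$'s: one chooses a chain of bounded Fatou components $U_0,\dots,U_r$ of $Q$ realizing the same contact-tree pattern, then chooses dyadic segments on each $\partial U_i$ and the corresponding internal puzzle pieces (Definition~\ref{defn:intpuz}), extends to a Basilica puzzle via the canonical equipotentials (Lemma~\ref{lem:conformalBlowUp}, Definition~\ref{basi_puzz_def}), and defines $\mathcal{F}$ on each piece by the canonical map of Lemma~\ref{lem:TypePreservingPuzzleMap}. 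The key point is that the required combinatorial flexibility is available: the transitivity of $Q$ (all bounded Fatou components in one grand orbit, all contact points in one grand orbit, remarked in the introduction) guarantees that any finite limb/torso transition pattern and any prescribed break-point behavior (parabolic, multiplicity $2$) can be matched — indeed Lemma~\ref{lem:primitivedyadicdecomp} supplies precisely the freedom needed to split $\partial U_i$ into dyadic pieces with arbitrary prescribed relative sizes. One must also check that the resulting $\mathcal{F}$ is topologically expanding, which amounts to verifying that nested intersections of puzzle pieces shrink to points; this follows as in Lemma~\ref{lem:tp} from density of contact points in $J(Q)$ together with the geodesic/equipotential nature of the puzzle boundaries.

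**Matching break-points and concluding.**
The third step is to verify the hypotheses of the combinatorial conjugacy carefully: $H(P_i^G) = P_i^{\mathfrak Q}$ piece by piece, $M_{\mathcal{F}_G} = M_{\mathcal F}$, and agreement of the two maps on break-points. By Lemma~\ref{lem:symFs}, every break-point of $\mathcal{F}_G$ on $\partial^I\Delta_s$ is symmetrically parabolic of multiplicity determined by the local parabolic element, and by Proposition~\ref{prop:bdd_fatou_symm_para} every break-point of a topologically expanding Basilica $\mathfrak{Q}$-map on a bounded Fatou component is symmetrically parabolic with $N(a^\pm)=2$; since the accidental parabolics in the compact-surface case are simple, the multiplicities agree, so the break-point data can be matched. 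With all three conditions of Definition~\ref{defn:topcom} verified and both maps topologically expanding, Proposition~\ref{prop-combconjimpliestopconj} yields a homeomorphism $H:\widehat{\C}\to\widehat{\C}$ with $H\circ\mathcal{F}_G = \mathcal{F}\circ H$ on the limit set, hence $H(\Lambda(G)) = J(Q)$ conjugating the dynamics.

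**Main obstacle.**
I expect the hard part to be the purely combinatorial realization step: showing that an \emph{arbitrary} finite contact-tree-with-Markov-structure arising from a geometrically finite Bers boundary group of a compact surface can be realized by some Basilica puzzle for $Q$. One must argue that the chain of bounded Fatou components of $Q$ together with the dyadic-segment choices has enough degrees of freedom to encode an arbitrary such pattern — in particular that the cyclic/adjacency structure around each $\Delta_i$ (the way limbs hang off a torso piece) is reproducible around the corresponding $U_i$, and that the genus/cusp data of each $\Sigma_i$, which dictates the valence pattern at the node, imposes no obstruction on the $Q$ side. This is essentially a bookkeeping argument organized around the contact tree, but it is where the real content lies; the subsequent appeals to Proposition~\ref{prop-combconjimpliestopconj} and to the break-point lemmas are then routine.
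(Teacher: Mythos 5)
Your proposal follows essentially the same route as the paper: read off the Markov combinatorics of $\mathcal{F}_G$ from the pinched core polygon, realize it by a Basilica puzzle on $J(Q)$ using the dyadic-decomposition freedom, and invoke Proposition~\ref{prop-combconjimpliestopconj} together with topological expansion from Lemma~\ref{lem:tp}. Two small remarks. First, the detour through Lemma~\ref{lem:symFs} and Proposition~\ref{prop:bdd_fatou_symm_para} to compare parabolic multiplicities is out of place here: for a \emph{topological} conjugacy one only needs the break-point condition $H\circ f_i(x) = g_i\circ H(x)$ of Definition~\ref{defn:topcom}, which holds automatically once the piece-by-piece homeomorphism respects the Markov structure — the multiplicity data is invoked later, in Proposition~\ref{prop:quasisurg}, for the quasiconformal upgrade. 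Second, the ``main obstacle'' you flag (realizing an arbitrary finite contact-tree-with-Markov-data by a Basilica puzzle) is handled in the paper by an explicit inductive construction: fix the first vertex $v_1$ of $\cP_1$ and send it to the fixed point of $Q$ on $\partial U_0$, then add vertices $v_{n+1}$ one at a time adjacent to the already-constructed set, choosing the image $x_{n+1}$ on the appropriate $\partial U_m$ so that the arcs to its counterclockwise and clockwise neighbours remain dyadic — which is always possible because a dyadic arc can be split into two dyadic arcs. You correctly identified Lemma~\ref{lem:primitivedyadicdecomp} as the enabling tool but left this mechanism unspecified; the paper makes that induction explicit, and also observes that compactness of $\Sigma$ ensures every vertex of the pinched core polygon is a touching point of two $\Delta_i$'s, which is what keeps the induction purely on dyadic intervals (the full generality of generalized dyadic intervals of \S~\ref{gen_dyad_int_subsec} is only needed for non-compact surfaces).
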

\begin{proof}
    Following the notation of \S~\ref{subsec:pinchedcore}, let $\{\cP_1,\ldots, \cP_{j_0}\}$ be a pinched core polygon for $G$, and let $\Delta_1,\ldots, \Delta_{j_0}$ be the corresponding components in the domain of discontinuity with $\cP_i \subseteq \Delta_i$. Let $\cP_\infty$ be the corresponding ideal polygon in $\Delta_\infty$, where $\Delta_\infty$ represents the $G-$invariant component.
    Let $\mathcal{F}_G$ be a Basilica Bowen-Series map associated to this pinched core polygon.
    Recall that the closure of the complementary components of $\overline{\cP_\infty} \cup \bigcup_i\overline{\cP_i}$ are level $0$ puzzle pieces, which are denoted by $\{P_1,\ldots, P_r\}$. We denote the level $1$ puzzle pieces by $\{S_1,\ldots, S_s\}$, so that for each $S_i$, there exists some $P_{\sigma(i)} \in \{P_1,\ldots, P_r\}$ with 
    $$
    \mathcal{F}_G: \Int(S_i) \longrightarrow \Int(P_{\sigma(i)})
    $$
    a conformal isomorphism.
    Note that the complementary components of the union of the level $1$ puzzle pieces give a collection of ideal polygons in $\Omega(G)$, and we denote them by $\{\mathcal{S}_\infty, \mathcal{S}_1,\ldots, \mathcal{S}_{j_0}, \mathcal{S}_{j_0+1},\ldots \mathcal{S}_{j_1}\}$.
    As $\Sigma$ is compact, every vertex of $\{\cP_1,\ldots, \cP_{j_0}\}$ is a touching point of two components of $\Omega(G) - \Delta_\infty$ (as all parabolics of $G$ are accidental).

    We can construct Basilica puzzles $\mathfrak{P}$ and $\mathfrak{P}^1$ on $J(Q)$ that are homeomorphic to the level $0$ and level $1$ puzzles for $\mathcal{F}_G$ as follows.
    Start with the vertex $v_1$ of  $\cP_1 \subseteq \Delta_1$. Let $U_0$ be the Fatou component of $Q$ containing the critical point (cf. \S\ref{sec-basilica}). We define the corresponding point $x_1$ as the fixed point of $Q$ on $\partial U_0$.
    
    Inductively, let $v_1, \ldots, v_n$ be a collection of vertices that form a connected set when all open edges of $\cP_1,\ldots, \cP_{j_0}$ are added.
    Suppose that their corresponding vertices $x_1,\ldots, x_n$ are constructed with the following property: if $x_i, x_j \in \partial U_m$ is a pair such that the arc $(x_i, x_j) \subseteq \partial U_m$ is disjoint from $\{x_1,\ldots, x_n\}$, then $[x_i, x_j]$ is dyadic.
    
    Let $v_{n+1}$ be some vertex of the pinched core polygon for $G$ adjacent to a vertex in $\{v_1,\cdots,v_n\}$. 
    Suppose that $v_{n+1} \in \partial \Delta_m$. Let $v_{i_1}, v_{i_2} \in \{v_1, \ldots, v_n\} \cap \partial \Delta_m$ be the closest vertices to $v_{n+1}$ in counterclockwise and clockwise order, respectively. Note that $v_{i_1}$ could be the same as $v_{i_2}$.
    We can choose $x_{n+1} \in \partial U_m$ so that the arcs connecting $x_{i_1}, x_{n+1}$ and $x_{i_2}, x_{n+1}$ are dyadic. This is possible as the arc between $x_{i_1}$ and $x_{i_2}$ is dyadic.
    
    

    This set of points induces a Basilica puzzle $\mathfrak{P}$. The construction for $\mathfrak{P}^1$ is similar.
    Note that $\mathcal{F}_G$ induces a map on the index set of $\mathfrak{P}^1$ to the index set of $\mathfrak{P}$, and we obtain a Basilica $\mathfrak{Q}-$map $\mathcal{F}:\mathfrak{P}^1 \longrightarrow\mathfrak{P}$.
    Since the Basilica Bowen-Series map is topologically expanding by Lemma~\ref{lem:tp}, we conclude that $\mathcal{F}_G$ and $\mathcal{F}$ are topologically conjugate between the limit set and the Julia set by Proposition~\ref{prop-combconjimpliestopconj}.
\end{proof}

\subsubsection{Topological model for polynomials with fat Basilica Julia set}\label{subsubsec:topmodelpoly}
Recall that the Julia set $J(g)$ of a rational map $g$ is a {\em Basilica} if it is homeomorphic to $J(Q)$, where $Q(z) = z^2-\frac{3}{4}$. After possibly conjugating $g$ with a M{\"o}bius map, we can assume that the unique non-Jordan Fatou component of $g$ is unbounded.
The Julia set $J(g)$ is a {\em fat Basilica} if it is a Basilica and
\begin{enumerate}
    \item\label{fatbasi:2} each bounded Fatou component is a quasi-disk; and
    \item\label{fatbasi:3} if any two bounded Fatou components touch, they touch tangentially.
\end{enumerate}
Here we say that two disjoint Jordan domain $U,V\subset \widehat\C$ are \textit{tangent to each other at a point} $z_0\in \partial U\cap \partial V$ if there exists $\theta_0\in [0,2\pi)$ such that for each $\varepsilon>0$ there exists $\delta>0$ with the property that $\{z_0+ re^{i(\theta_0+\theta)}:  r\in (0,\delta), \,\, |\theta|<\pi/2-\varepsilon\}\subset U$ and $\{z_0+ re^{i(\theta_0+\theta)}:  r\in (-\delta,0), \,\, |\theta|<\pi/2-\varepsilon\}\subset V$.

It is easy to see that
\begin{lem}
    Let $g$ be a rational map whose Julia set is a Basilica. Then the unbounded Fatou component $U_\infty$ is invariant under $g$.
\end{lem}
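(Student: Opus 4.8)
The plan is to show that $g$ carries the unique non-Jordan complementary component of $J(g)$ onto a complementary component with the same property, and then to invoke uniqueness. Recall that the Fatou components of $g$ are precisely the complementary components of the Basilica $J(g)$, and that among them $U_\infty$ is the only one that is not a Jordan domain, all the others being Jordan domains. The single topological input I will need is that $\partial U_\infty = J(g)$, equivalently $\overline{U_\infty} = U_\infty \cup J(g)$. In the model $J(Q)$ this is the classical fact that the boundary of the basin of infinity of a polynomial is its entire Julia set, and for a general Basilica Julia set it is part of the planar description of a Basilica (cf. \S\ref{subsubsec:topmodelpoly}): the non-Jordan complementary component is exactly the one whose boundary is all of $J(g)$, while every Jordan complementary component has as its boundary a proper Jordan subcurve of $J(g)$.

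First I would set $V := g(U_\infty)$. Since $g$ is a non-constant rational map, it maps Fatou components onto Fatou components, so $V$ is again a Fatou component of $g$. As $g$ is continuous on the compact sphere $\widehat{\C}$, it is a closed map, whence $g(\overline{U_\infty}) = \overline{g(U_\infty)} = \overline{V}$. Combining this with $\overline{U_\infty} = U_\infty \cup J(g)$ and the total invariance $g(J(g)) = J(g)$ of the Julia set yields
\[
\overline{V} \;=\; g(\overline{U_\infty}) \;=\; g(U_\infty)\cup g(J(g)) \;=\; V\cup J(g).
\]
Since $V$ is open and $V\cap J(g)=\emptyset$, it follows that $\partial V = \overline{V}\setminus V = J(g)$.

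To finish, observe that $\partial V = J(g)$ is a Basilica, which is not homeomorphic to $\mathbb{S}^1$; hence $\partial V$ is not a Jordan curve and $V$ is not a Jordan domain. Since $U_\infty$ is the unique non-Jordan Fatou component of $g$, this forces $V=U_\infty$, i.e.\ $g(U_\infty)=U_\infty$. I expect the only delicate point to be the identity $\partial U_\infty = J(g)$: it cannot be extracted from soft properties of proper holomorphic maps alone (non-Jordanness of a simply connected domain is \emph{not} preserved by proper holomorphic surjections in general — a Riemann map already exhibits a domain with wild boundary as a degree-one proper image of the disk), so it must come from the topological description of Basilicas recorded earlier. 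Once that is granted, the short computation above is the entire proof.
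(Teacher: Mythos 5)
The paper states this lemma without proof --- it is preceded only by ``It is easy to see that'' --- so there is no argument in the text to compare against. Your argument is correct. The two inputs are: $g$ is a closed map on the compact sphere, so $g(\overline{U_\infty}) = \overline{g(U_\infty)}$; and the planar description of a Basilica, which gives $\partial U_\infty = J(g)$ for the unique non-Jordan complementary component. Combining these with the total invariance $g(J(g)) = J(g)$ yields $\partial\bigl(g(U_\infty)\bigr) = J(g)$, which is not a Jordan curve, forcing $g(U_\infty)$ to be non-Jordan and hence equal to $U_\infty$ by uniqueness. Your remark that the identity $\partial U_\infty = J(g)$ cannot be replaced by soft properties of proper holomorphic maps is apt --- this is exactly where the topological structure of the Basilica, tacitly assumed in \S\ref{subsubsec:topmodelpoly}, enters the argument.
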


\begin{lem}\label{lem:noJuliaCrit}(c.f. \cite[Lemma 3.2]{LZ25})
    Let $g$ be a rational map whose Julia set is a fat Basilica. Then $J(g)$ does not contain any critical points.
\end{lem}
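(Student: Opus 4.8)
The plan is to argue by contradiction, playing the local degree of $g$ at a putative critical point of $J=J(g)$ against the coarse local topology of the fat Basilica. After the normalization above, $U_\infty$ is the unique Fatou component that is not a Jordan domain, $J=\partial U_\infty$, $U_\infty$ is invariant under $g$ (by the preceding lemma), and every other Fatou component is a quasidisk. The first step is purely topological: since $J$ is homeomorphic to $J(Q)$ and satisfies the fatness hypotheses, one records that (a) the \emph{contact points} --- points on the common boundary of two distinct bounded Fatou components --- are dense in $J$, and exactly two bounded components (together with $U_\infty$) meet at each; (b) for $x\in J$ let $\nu(x)$ be the number of germs at $x$ of complementary components (connected components of $(\widehat\C-J)\cap B(x,\delta)$, for $\delta$ small, whose closure contains $x$); then $\nu(x)=4$ at contact points, $\nu(x)=2$ when $x$ lies on the boundary of a single bounded component, and $\nu(x)=1$ at the remaining points (which lie only on $\partial U_\infty$); (c) $J$ is nowhere locally an arc --- at a contact point three distinct complementary components have $x$ on their boundary, while an arc has only two sides, and elsewhere contact points accumulate.

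Now let $c\in J$ be critical with $k:=\deg_c g\geq2$, and choose a conformal coordinate near $c$ in which $g$ is $\zeta\mapsto\zeta^k$; for small $\delta$, $g$ is a $k$-to-$1$ cover of $B(c,\delta)\setminus\{c\}$ onto $B(g(c),\delta')\setminus\{g(c)\}$. Each germ $\mathfrak h$ of $\widehat\C-J$ at $g(c)$ is simply connected and misses $g(c)$, so its preimage in $B(c,\delta)$ consists of exactly $k$ germs at $c$, each mapped homeomorphically onto $\mathfrak h$; summing over the germs at $g(c)$ gives $\nu(c)=k\,\nu(g(c))$. With $k\geq2$ and $\nu\in\{1,2,4\}$ this forces $\nu(g(c))\le2$ and $\nu(c)\ge2$: in particular $g(c)$ is \emph{not} a contact point, so $U_\infty$ has a unique germ $\mathfrak h_0$ at $g(c)$, and the part of $\partial\mathfrak h_0$ lying in $J$ is \emph{all} of $J$ near $g(c)$, hence not an arc by (c). To finish, we produce a germ $\mathfrak g$ of $\widehat\C-J$ at $c$ with $g(\mathfrak g)\subseteq U_\infty$ and with $J$-boundary an arc: then $g(\mathfrak g)=\mathfrak h_0$, and $g$ restricts to a homeomorphism $\mathfrak g\to\mathfrak h_0$ which extends to a continuous bijection --- hence, by compactness, a homeomorphism --- of the $J$-boundary of $\mathfrak g$ onto that of $\mathfrak h_0$, contradicting that the latter is not an arc. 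If $c$ is a contact point, take $\mathfrak g$ to be one of the two cusps of $U_\infty$ at $c$: its boundary is a piece of one quasidisk boundary joined at $c$ to a piece of the other, hence an arc, and $g(\mathfrak g)\subseteq g(U_\infty)=U_\infty$. If $c$ is not a contact point, then $\nu(c)=2$, $k=2$, $\nu(g(c))=1$, so the only Fatou germ at $g(c)$ is $\mathfrak h_0$; hence the bounded quasidisk $V$ with $c\in\partial V$ satisfies $g(V)=U_\infty$, and we take $\mathfrak g$ to be the germ of $V$ at $c$, whose $J$-boundary is a subarc of the Jordan curve $\partial V$.

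The main obstacle is making the topological facts (a)--(c) precise, i.e. identifying the local structure of the fat Basilica at each of its three types of points; here both the homeomorphism with $J(Q)$ and the fatness hypotheses enter (the quasidisk property supplies the Jordan boundary arcs used for the germs $\mathfrak g$, and the tangential-touching hypothesis ensures that the $U_\infty$-cusps at a contact point have arc boundary rather than a degenerate shape and that $g$ is injective on the closure of the chosen germ, so that the extension to boundaries is genuinely injective). The dynamical input, by contrast, reduces to the elementary counting identity $\nu(c)=k\,\nu(g(c))$ together with the invariance of $U_\infty$.
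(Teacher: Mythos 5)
Your germ--counting identity $\nu(c)=k\,\nu(g(c))$ is a correct and clean way to make precise what the paper waves at (``the map is locally $\deg(x){:}1$''), and it rightly forces $\nu(c)\geq 2$ (so $c$ lies on $\partial V$ for some bounded quasidisk $V$) and $\nu(g(c))\leq 2$ (so $g(c)$ is not a contact point). The trouble is in the final arc-versus-non-arc step, and the error is concrete: the $J$-boundary of a cusp germ of $U_\infty$ at a contact point $c$ is \emph{not} an arc. Contact points are dense on $\partial V_1$, so arbitrarily close to $c$ there are bounded Fatou components attached to $\partial V_1$ from the cusp side, together with their decorations; all of these lie on $\partial\mathfrak g$ as well. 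Indeed, this is the very same mechanism by which you want $\partial\mathfrak h_0\cap J$ to fail to be an arc at $g(c)$ --- a germ of $U_\infty$ always carries the surrounding sub-Basilicas in its $J$-boundary --- so the two halves of your argument use inconsistent pictures of what a $U_\infty$-germ looks like. (Separately, the assertion that $\partial\mathfrak h_0\cap J$ is ``all of $J$ near $g(c)$'' is the kind of local-connectivity statement about $\overline{U_\infty}$ that needs an argument, not just a remark.)

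The fix is to pick germs of the bounded quasidisks themselves, whose $J$-boundaries genuinely are arcs (subarcs of the quasicircles $\partial V_i$), and to replace the abstract non-arc contradiction by a geometric one. In the contact case, $c\in\partial V_1\cap\partial V_2$; tangential touching means $\partial V_1$ enters $c$ inside a double cone of aperture $2\epsilon$ for every $\epsilon>0$ (at scale depending on $\epsilon$), so $\zeta\mapsto\zeta^k$ maps it into a cone of aperture $2k\epsilon$ at $g(c)$. If $g(V_1)$ is a bounded Fatou component $W$, this puts a genuine cusp on $\partial W=g(\partial V_1)$ at $g(c)$, violating the quasicircle (bounded-turning) condition; this is exactly ``conditions (1) and (2)'' in the paper's one-line proof. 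In the remaining case $g(V)=U_\infty$, use the invariance $g(U_\infty)=U_\infty$: at a non-critical contact point $z'\in\partial V$ (such exists since contact points are dense and critical points are finite) the local homeomorphism $g$ sends the four cyclically ordered germs $V,\mathrm{cusp},V',\mathrm{cusp}$ at $z'$ to the four germs at $g(z')$, and the germ of $V$ going to $U_\infty$ forces a cusp of $U_\infty$ to go to a bounded component --- impossible. With these two replacements your degree-count framing does yield a complete proof.
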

\begin{proof}
    It is easy to see that a non-contact point $x$ of $J(g)$ cannot be a critical point as the map is locally $\deg(x):1$, and the Julia set is fully invariant under $g$. It is also easy to see that the boundary of a bounded Fatou component cannot contain a critical point by conditions~\eqref{fatbasi:2} and \eqref{fatbasi:3}.
\end{proof}

By Lemma~\ref{lem:noJuliaCrit}, we conclude that there exists a hyperbolic post-critically finite polynomial $g_0$ which is topologically conjugate to $g$ on the Julia sets (cf. \cite[Corollary~9.7]{LMMN}).
Let $T_0$ be the Hubbard tree of the $g_0$. Then the dynamics of $g_0: T_0 \longrightarrow T_0$ is simplicial, i.e., $g_0$ sends an edge of $T_0$ to an edge of $T_0$.

Recall that a contact point is the intersection point of two distinct bounded Fatou components.
\begin{lem}\label{lem:cutpnteventuallypara}(c.f. \cite[Lemma 3.4]{LZ25})
    Let $g$ be a rational map whose Julia set is a fat Basilica. Then every contact point is eventually mapped to a unique fixed point, which is parabolic with multiplicity $3$.
\end{lem}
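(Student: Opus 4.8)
The plan is to reduce everything to a combinatorial statement about the Hubbard tree $T_0$ of the post-critically finite model $g_0$ of $g$, which is available by Lemma~\ref{lem:noJuliaCrit}. First I would identify which points of $J(g)$ are contact points in purely combinatorial terms: a point $z$ is a contact point if and only if, in the post-critically finite model $g_0$, the corresponding point is a branch point or cut point of $T_0$ separating two bounded Fatou components, equivalently a point whose external-angle set (or internal-angle set, under the identification of \S~\ref{subsubsec:internalpuzzle}) exhibits the valence-$2$ behavior described for dyadic internal angles. Since $J(g)$ is a fat Basilica, exactly one Fatou component $U_\infty$ is non-Jordan (it is invariant by the first Lemma of \S~\ref{subsubsec:topmodelpoly}), and all bounded Fatou components form a tree-like chain; the combinatorics of this chain is exactly that of the Basilica $J(Q)$. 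So the set of contact points is forward-invariant, and $g$ permutes it.

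Next I would show that the contact points, as a $g$-invariant set inside the finite combinatorial model $T_0$, are all eventually mapped to a single cycle. This is where the simplicial structure of $g_0 : T_0 \to T_0$ enters: the contact points correspond to a finite set of vertices/edges of $T_0$ (the ``spine'' of the Basilica), $g_0$ acts on this finite set, and by the pull-back/expansion structure of the Basilica combinatorics the action has a unique periodic cycle to which every contact point is preperiodic. In fact, comparing with the model $J(Q)$ — where the unique fixed contact point is the parabolic fixed point $-\tfrac12$ at which $U_0$ and $U_0'$ touch, and every contact point is eventually mapped to it — and using that $g$ is topologically conjugate on its Julia set to $g_0$ and to a standard polynomial model, I would conclude the cycle is a single fixed point $p$, the touching point of the two Fatou components in the immediate chain around the critical one. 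Concretely: the two Fatou components adjacent to $p$ must be swapped or fixed by $g$; if the associated Fatou cycle had period $>1$ one gets a contradiction with the Basilica topology (there is only one contact point of the smallest ``generation''), so $p$ is fixed.

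Finally I would analyze the local behavior of $g$ at the fixed contact point $p$. Three bounded-Fatou-component boundary arcs meet at $p$ — more precisely, $p$ lies on the boundary of exactly two bounded Fatou components $U, U'$, and since these touch tangentially (condition~\eqref{fatbasi:3}), the local picture at $p$ consists of two cusps, giving the Julia set three ``prongs'' at $p$ (two from the tangency and the Julia-set direction). If the multiplier $g'(p)$ had modulus $\neq 1$ it would be attracting or repelling; attracting is impossible since $p\in J(g)$, and a repelling fixed point on the common boundary of two tangentially-touching quasidisks with this local configuration is incompatible with the first-return dynamics on $\partial U$ and $\partial U'$ (which by conditions~\eqref{fatbasi:2},~\eqref{fatbasi:3} must itself fix $p$ with a single prong-direction on each side); hence $|g'(p)|=1$. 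A rational multiplier of modulus $1$ at a point where three Fatou prongs meet forces $p$ to be parabolic, and the number of attracting petals must match the number of distinct bounded-Fatou prongs emanating at $p$, together with the condition that two of those prongs are tangent. Counting: two bounded Fatou components are attached at $p$, each contributing one petal, and tangency forces these petals to be ``fat'' in the way that exactly corresponds to parabolic multiplicity $3$ (the model $Q(z)=z^2-\tfrac34$ at $-\tfrac12$ has multiplier $1$ and multiplicity $2$ as a quadratic, but the first-return map $Q^2$ has multiplicity... — one must be careful here; the claim is multiplicity $3$ for the fixed point $p$ of $g$ itself, reflecting the three Fatou prongs). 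I would pin this down by appealing to the local holomorphic fixed-point theory (\cite[\S 10]{Mil06}): the multiplicity $N$ equals the number of attracting directions, and the tangential-touching hypothesis plus the Basilica topology at $p$ forces $N=3$.

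The main obstacle is the last step — converting the tangential-touching condition and the prong count at $p$ into the precise parabolic multiplicity $3$, rather than merely ``$|g'(p)|=1$''. This requires a careful local analysis: ruling out irrationally indifferent (Siegel/Cremer) behavior at $p$ (which is automatic once we know $p$ is a limit of contact points, hence the boundary of a bounded Fatou component passes through it, so it cannot bound a Siegel disk or be Cremer given the quasidisk hypothesis), and then matching the Leau–Fatou flower at $p$ to the three Fatou-component directions so that exactly three attracting petals appear. I expect this to mirror the corresponding analysis in \cite{LZ25} (which the lemma cites), so the strategy is to import and adapt their argument for the gasket/Basilica setting.
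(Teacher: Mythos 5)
Your proposal is on the right track in appealing to the post-critically finite model $g_0$ and its Hubbard tree, and in using the tangency condition to analyze the local dynamics at a periodic contact point. But there is a genuine gap in the step you treat most lightly: the uniqueness of the periodic contact point. You argue this by ``comparing with the model $J(Q)$'' and invoking that $g$ is ``topologically conjugate on its Julia set \ldots to a standard polynomial model.'' That conjugacy is not available here: a fat Basilica Julia set is \emph{homeomorphic} to $J(Q)$ as a set, but the dynamics need not be conjugate to $Q$ (indeed the cubic examples on $\partial\cH$ in Appendix~\ref{sec:cubic_poly} are not), and the fact that there is a single periodic cycle of contact points is precisely what this lemma is establishing, so appealing to the model at this point is circular. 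Your fallback remark about ``only one contact point of the smallest generation'' gestures at the correct topological fact but does not rule out the existence of a second \emph{distinct} periodic contact point elsewhere in the tree, which is the thing that actually needs to be excluded.

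The paper closes this gap with a short and self-contained argument that you would do well to recover. Suppose $a,b$ are two periodic contact points; pass to an iterate $g^k$ fixing both. Since $g_0$ acts simplicially on the Hubbard tree $T_0$, the arc $[a,b]\subset T_0$ is fixed pointwise-up-to-isotopy by $g_0^k$, which produces a chain $V_1,\dots,V_s$ of bounded Fatou components all fixed by $g^k$, together with the intermediate contact points $v_0=a$, $v_i=\partial V_i\cap\partial V_{i+1}$, $v_s=b$, each of which is now a fixed contact point and hence parabolic of multiplicity $3$ by the tangency condition. But then the fixed Fatou component $V_1$ contains an attracting petal of $v_0$ and an attracting petal of $v_1$, forcing $V_1$ to be the immediate parabolic basin of two distinct parabolic fixed points of $g^k$ simultaneously, which is impossible. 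This contradiction gives uniqueness, and since the orbit of a periodic contact point consists of periodic contact points, the unique one must be a fixed point.

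Conversely, the part of your proposal that worries the most — ruling out repelling, Siegel, and Cremer behavior at a periodic contact point and then matching the petal count to multiplicity $3$ — is where the paper is terse because the tangency hypothesis does almost all the work: near a periodic contact point $p$, the two tangentially touching quasi-disks fill a neighborhood of $p$ outside two cusps, so any attracting petal of $p$ must lie inside one of them, and a fixed Jordan Fatou component containing an attracting petal of $p$ must have orbits converging to $p$, i.e.\ be a parabolic basin. Combined with the fact that an invariant quasi-circle through a repelling fixed point cannot produce a tangency (and that $p\in J(g)$ rules out $|\lambda|<1$), one lands at a parabolic point with exactly two attracting petals, i.e.\ multiplicity $3$. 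Your more elaborate case analysis arrives at the same place, so this part is fine if expanded, but the structural lemma you should not skip is the basin-uniqueness contradiction above.
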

\begin{proof}
    Since there are no critical points on the Julia set, contact points are pre-periodic.
    By condition~\eqref{fatbasi:3}, every periodic contact point is parabolic with multiplicity $3$.

    Now suppose that there are two periodic contact points $a, b$. Let $g_0 : T_0 \longrightarrow T_0$ be the corresponding simplicial map on the Hubbard tree. After passing to some iterate, we have that $a, b$ correspond to two fixed points of $g_0^k$ on $T_0$. Since $g_0$ is simplicial on $T_0$, the arc connecting $a, b$ is fixed under $g_0^k$. Therefore, there exists a chain of bounded Fatou components $V_1, V_2, ..., V_s$ which are fixed under $g^k$. Denote $v_0 = a \in \partial V_1$ and $v_s = b \in \partial V_s$ and $v_i = \partial V_i \cap \partial V_{i+1}$. Then $v_i$ is parabolic with multiplicity $3$. This means that $V_1$ is the basin of attraction for both $v_0$ and $v_1$, which is a contradiction. The lemma follows.
\end{proof}

\begin{prop}\label{prop:topmodelJulia}
    Let $g$ be a rational map whose Julia set is a fat Basilica.
    Then there exists a Basilica $\mathfrak{Q}-$map $\mathcal{F}: \mathfrak{P}^1\longrightarrow \mathfrak{P}$ which is topologically conjugate to $g$ on $J(Q)$.
\end{prop}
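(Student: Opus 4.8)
The plan is to mirror the construction of Proposition~\ref{prop:topmodelbowenseries}, replacing the Kleinian pinched core polygon by the Hubbard tree of the post-critically finite model. By Lemma~\ref{lem:noJuliaCrit} and the subsequent discussion, there is a hyperbolic post-critically finite polynomial $g_0$ topologically conjugate to $g$ on the Julia set, with a simplicial Hubbard tree $T_0$ on which $g_0:T_0\to T_0$ sends edges to edges. By Lemma~\ref{lem:cutpnteventuallypara}, every contact point of $J(g)$ is eventually mapped to a single fixed parabolic point of multiplicity $3$; since the corresponding combinatorial object for $Q(z)=z^2-\frac34$ has exactly one such fixed parabolic point ($-\frac12$) to which all contact points flow, the grand-orbit structure of contact points of $J(g)$ and $J(Q)$ agree.

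First I would fix a finite invariant set of bounded Fatou components of $g$: take the components $U_0,\dots,U_r$ whose closures meet the arc of $T_0$ joining the periodic contact points (after passing to an iterate, a single fixed contact point by Lemma~\ref{lem:cutpnteventuallypara}), together with enough further components so that, under $g$, this collection is eventually mapped onto itself in a Markov fashion. On each such component, transport the internal-angle coordinates of \S~\ref{subsubsec:internalpuzzle}: using the conjugacy and the fact that the first-return map on each bounded periodic component is a degree-two (parabolic) covering of its ideal boundary, choose the dyadic combinatorics so that contact points carry dyadic angles, exactly as in the inductive choice of the points $x_n$ in the proof of Proposition~\ref{prop:topmodelbowenseries}. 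Then cut $J(g)$ by rays and equipotentials in the basin of infinity (which exists and is a quasicircle-type uniformization, $U_\infty$ being invariant by the preceding lemma) together with internal puzzle-piece boundaries in $U_0,\dots,U_r$, obtaining a puzzle $\mathfrak{P}_g^0$ and its pullback $\mathfrak{P}_g^1$, with a canonically induced Markov map $g^{-n}\circ g^m$ on pieces. This puzzle is topologically expanding because $J(g)$ is locally connected (hyperbolic, or quasiconformally equivalent to a post-critically finite polynomial) and the puzzle pieces shrink.

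Next, transfer the combinatorics to the model: exactly as in Proposition~\ref{prop:topmodelbowenseries}, one builds a Basilica puzzle $\mathfrak{P}$ and a Markov refinement $\mathfrak{P}^1$ on $J(Q)$ by assigning to each vertex/contact-datum of $\mathfrak{P}_g^0$ a point on the boundary of the corresponding bounded Fatou component of $Q$, inducting on a spanning tree of the adjacency graph of $U_0,\dots,U_r$ and using Lemma~\ref{lem:primitivedyadicdecomp} (together with the remark that any arc between two already-placed dyadic contact points is dyadic) to keep all chosen arcs dyadic. Because the contact-point grand orbits, the adjacency patterns, and the local degrees all match (local degree $1$ at contact points and the correct combinatorics at non-contact points, again from Lemma~\ref{lem:noJuliaCrit} and the quasiconformal-with-$g_0$ description), the Basilica $\mathfrak{Q}$-map $\mathcal{F}:\mathfrak{P}^1\to\mathfrak{P}$ so produced is combinatorially conjugate to the puzzle map of $g$, and type-preserving. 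Since both maps are topologically expanding, Proposition~\ref{prop-combconjimpliestopconj} upgrades this to a topological conjugacy between $\mathcal{F}|_{J(Q)}$ and $g|_{J(g)}$, which is the assertion.

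The main obstacle is the bookkeeping needed to extract a genuinely Markov finite puzzle for $g$ whose combinatorial type (contact-point orbits, adjacency tree, angular/dyadic data) is an exact match for a Basilica $\mathfrak{Q}$-map: one must verify that the chosen finite family $U_0,\dots,U_r$ can be enlarged to be forward-Markov, that the dyadic internal-angle choices on $J(g)$ are consistent under the dynamics (this uses that the first-return maps on periodic bounded components are parabolic degree-$2$ coverings, which comes from tangency condition~\eqref{fatbasi:3}), and that no hidden discrepancy arises from the fact that $g$ may have large degree or critical points in bounded Fatou components. All of these are handled by the same inductive scheme as in Proposition~\ref{prop:topmodelbowenseries}, but this is where the argument requires care; the conclusion itself is then immediate from Proposition~\ref{prop-combconjimpliestopconj}.
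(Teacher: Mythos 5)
Your proposal follows the same strategy as the paper: pick a finite, forward-invariant collection of bounded Fatou components with marked contact points, record the induced Markov decomposition of the Julia set, mirror this combinatorics on $J(Q)$ by placing the corresponding marks at dyadic internal angles as in Proposition~\ref{prop:topmodelbowenseries}, and then invoke Proposition~\ref{prop-combconjimpliestopconj}. That is precisely how the paper proceeds, except for two places where your justification either overreaches or is false.

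First, you claim that the first-return map on each periodic bounded Fatou component of $g$ is a degree-two parabolic covering of its ideal boundary, and you use this to transport dyadic internal-angle coordinates to $J(g)$. This is not implied by the fat Basilica hypothesis: Lemmas~\ref{lem:noJuliaCrit} and~\ref{lem:cutpnteventuallypara} rule out Julia critical points and give a single multiplicity-$3$ parabolic fixed point, but they do not bound the degree of the first return on a bounded cycle. If several of the $\deg(g)-1$ critical points not absorbed by $U_\infty$ lie on a single bounded periodic Fatou cycle, the first return there has degree strictly greater than two and the circle map on its ideal boundary is $\sigma_d$ with $d>2$; the $\sigma_2$-dyadic coordinates you want to pull back are simply not there. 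The paper sidesteps this entirely: it never puts coordinates on $J(g)$ at all. It chooses finite invariant marked sets $(V_i,X_i)$ satisfying explicit connectivity and invariance conditions, passes to the $g$-preimage data $(W_j,Y_j)$, observes that each component of $J(g)-\bigcup_j Y_j$ maps homeomorphically onto a component of $J(g)-\bigcup_i X_i$, and only then builds the dyadic Basilica puzzle on $J(Q)$ realizing the same Markov pattern, using Lemma~\ref{lem:primitivedyadicdecomp} to produce dyadic refinements with whatever branching numbers $g$ dictates. Dyadicity lives exclusively on $J(Q)$, never on $J(g)$.

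Second, and relatedly, your use of ``rays and equipotentials in the basin of infinity'' of $g$ is not justified in the stated generality: $g$ is a rational map and $U_\infty$ need not be (super-)attracting, so B\"ottcher or linearizing coordinates may not exist; if $U_\infty$ is parabolic there is no equipotential foliation in the usual sense. This is not fatal (one can always describe the Markov structure through the marked sets and the combinatorial decomposition of $J(g)$, as the paper does), but as phrased it is a gap: you are implicitly importing polynomial structure that the hypothesis does not provide. Dropping the degree-two claim, dropping the explicit puzzle on $J(g)$, and working directly with the invariant finite marked sets brings your argument in line with the paper's; the rest of the construction, including the inductive dyadic placement on $J(Q)$ and the appeal to Proposition~\ref{prop-combconjimpliestopconj}, is as you describe.
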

\begin{proof}
    Let $(V_i, X_i)_{i\in \mathcal{I}}$ be a finite collection so that 
    \begin{itemize}
        \item each $V_i$ is a bounded Fatou component of $g$;
        \item $\overline{\bigcup_{i\in \mathcal{I}} V_i}$ is connected, invariant under $g$ and contains the critical values;
        \item $X_i \subseteq \partial V_i$ consists of finitely many contact points on $\partial V_i$;
        \item $|X_i|\geq 2$ for each $i$;
        \item $\partial V_{i} \cap \partial V_{j} \subseteq X_i \cap X_j$ for $i \neq j$; and
        \item $\bigcup X_i$ is invariant under $g$.
    \end{itemize}
    Let $(W_j, Y_j)_{j \in \mathcal{J}}$ be the pull-back of $(V_i, X_i)_{i\in \mathcal{I}}$ under $g$.
    Then by construction, each connected component of $J(g) - \bigcup_j Y_j$ is mapped homeomorphically under $g$ to a connected component of $J(g) - \bigcup_i X_i$.
    Now similar to the proof of Proposition~\ref{prop:topmodelbowenseries}, we can construct $\mathfrak{P}$ and $\mathfrak{P}^1$ according to the data $(V_i, X_i)$ and $(W_j, Y_j)$, and a Basilica $\mathfrak{Q}-$map $\mathcal{F}: \mathfrak{P}^1 \longrightarrow \mathfrak{P}$ which is topologically conjugate to $g$ on the Julia sets.
\end{proof}

\subsection{Quasiconformal conjugacy}\label{subsuec:quasicon}
In Proposition~\ref{prop:topmodelbowenseries} and Proposition~\ref{prop:topmodelJulia}, we constructed Basilica $\mathfrak{Q}-$maps that model Basilica Bowen-Series maps associated with compact surfaces and polynomials with fat Basilica Julia sets, respectively.
In this subsection, we first apply Theorem~\ref{thm:symmetrichyperbolic} to obtain modified refinements of these Basilica $\mathfrak{Q}-$maps, and then employ quasiconformal surgery to show that the resulting topological conjugacies are quasi-symmetric.

\subsubsection{Quasiconformal and David removability}
We need the following lemma for quasiconformal and David removability of Julia sets. 

\begin{lem}\label{lem:eccentricity}
\noindent\begin{enumerate}[leftmargin=8mm]
    \item (Quasiconformal removability) Let $J\subset \widehat\C$ be the Julia set of a geometrically finite polynomial. Suppose that $J$ is connected. Let $\psi\colon \widehat \C\to \widehat\C$ be a homeomorphism.
    If $\psi$ is quasiconformal on $\widehat\C- J$, then $\psi$ is quasiconformal on~$\widehat\C$.
    \item (David removability) Let $J\subset \widehat\C$ be the Julia set of a hyperbolic polynomial $p$. Suppose that $J$ is connected. Let $\psi\colon \widehat \C\to \widehat\C$ be a homeomorphism.
    If $\psi$ is a David map on $\widehat\C- J$, then $\psi$ is a David map on $\widehat\C$.
    \end{enumerate}
\end{lem}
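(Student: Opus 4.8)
The plan is to deduce both statements from known removability results in the literature, which apply to Julia sets that arise as boundaries of John domains (for the quasiconformal case) or that satisfy suitable geometric regularity together with a thickness condition (for the David case).

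\emph{Part (1): quasiconformal removability.} Let $J$ be the connected Julia set of a geometrically finite polynomial. The key geometric input is that the basin of infinity $\Omega_\infty = \widehat{\C} - K(p)$ of a geometrically finite polynomial with connected Julia set is a John domain; this is the theorem of Carleson--Jones--Yoccoz \cite{CJY94}. In the fat Basilica case, where $J$ has nonempty interior for the filled Julia set, one argues as follows: every bounded Fatou component is (by geometric finiteness and the classification of Fatou components) either an attracting/superattracting basin or a parabolic basin, and in all cases its boundary is locally connected and the domain satisfies an interior corkscrew (John) condition. First I would observe that boundaries of John domains are quasiconformally removable --- this is a classical result of Jones \cite{Jon95} (see also the discussion in \cite[\S 2.4]{LMMN}). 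Next I would note that $\widehat\C - J$ is a countable union of Fatou components, each of which is a John domain; since $\psi$ is quasiconformal on each such component and $J$ has zero area (a consequence of geometric finiteness, by \cite{CJY94} and the absence of parabolic basins of positive measure, or directly since a geometrically finite rational map has Julia set of zero area unless $J = \widehat\C$), removability of each component boundary combined with the Koebe distortion estimates that control $\psi$ across $J$ yields that $\psi \in W^{1,2}_{\mathrm{loc}}(\widehat\C)$ with the Beltrami coefficient supported off $J$ and bounded; the analytic definition of quasiconformality then gives that $\psi$ is quasiconformal on all of $\widehat\C$.

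\emph{Part (2): David removability.} Here $p$ is hyperbolic, so $J$ is a hyperbolic (in fact uniformly perfect, and for the fat Basilica, locally connected) Julia set. The mechanism is the one recorded in \cite[\S 2.4]{LMMN}: the David image of a John domain boundary is conformally removable, and more relevantly, a David homeomorphism defined off the boundary of a John domain extends to a David homeomorphism. For a hyperbolic polynomial with connected Julia set, the basin at infinity is a John domain by \cite{CJY94}, and each bounded Fatou component is a quasidisk (hyperbolicity forces attracting/superattracting basins with locally connected, and in the fat Basilica case quasicircle, boundaries). Thus I would invoke the David removability of John domain boundaries (or equivalently of quasicircles and finite unions thereof, pinched at finitely many points as in our setting) to conclude that $\psi$, being David on each Fatou component and continuous across $J$, is David on $\widehat\C$; the key technical point, that the David condition \eqref{david_cond} is preserved because $J$ has zero area and the exponential decay estimate for $\{|\mu_\psi| \geq 1-\varepsilon\}$ is unaffected by a null set, follows directly from the definition.

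\emph{Main obstacle.} The main difficulty is not in citing the removability theorems but in verifying that the relevant domains genuinely satisfy the John (or quasidisk) hypotheses in the full generality of a \emph{geometrically finite} polynomial: parabolic basins are John domains, but one must be careful near the parabolic cusps, where the boundary has inward-pointing cusps and the John constant degenerates unless one uses the sharper statement that parabolic basins of geometrically finite polynomials are still John (this is part of the Carleson--Jones--Yoccoz circle of results). A secondary subtlety is that $\widehat\C - J$ is an \emph{infinite} union of Fatou components, so one cannot remove their boundaries one at a time; instead the correct formulation is that the countable union of John domain boundaries, together with the common accumulation set, is removable, which requires that $J$ itself (the accumulation set of the boundaries) has zero area and that the Sobolev bound obtained on each piece patches globally. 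Both of these points are handled by the references \cite{Jon95, CJY94, LMMN} but should be stated carefully. I expect the write-up to be short, essentially assembling these ingredients.
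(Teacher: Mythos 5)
Your proposal invokes the right circle of results (Carleson--Jones--Yoccoz John property, Jones's removability theorem, and the Sobolev/David removability machinery of \cite[\S 2]{LMMN}), and the statement does follow from them, so the approach is essentially sound. But the route you sketch is more complicated than the paper's, and the ``main obstacle'' you flag about patching across a countable union of Fatou component boundaries is a self-inflicted difficulty that does not actually arise if one uses the right domain. You reason Fatou component by Fatou component, treating each bounded component as a John domain whose boundary would be removed separately. The cleaner observation is that for a polynomial with connected Julia set, $J$ is exactly the boundary of the single unbounded component $\Omega_\infty$, and $\Omega_\infty$ is a John domain by \cite{CJY94}; Jones's theorem then gives $W^{1,2}$-removability of $J=\partial\Omega_\infty$ directly, with no countable union to worry about. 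Since $J$ has zero area, a homeomorphism that is quasiconformal on $\widehat\C-J$ lies in $W^{1,2}_{\loc}(\widehat\C)$ with essentially bounded Beltrami coefficient, hence is globally quasiconformal; the paper compresses this to a single citation of conformal removability of geometrically finite connected Julia sets \cite[Theorem~9.2]{LMMN}. The bounded Fatou components and their quasidisk or John geometry play no role. For part (2) the same remark applies: the substantive step is the $W^{1,1}$-removability of $J=\partial\Omega_\infty$ (Jones, \cite{JS00}) together with \cite[Lemma~2.9]{LMMN}, which promotes $W^{1,1}$-removable sets to David-removable sets. Your claim that the David exponential integrability condition is ``unaffected by a null set'' is true but is not where the content lies; the real issue is propagating Sobolev regularity of $\psi$ across $J$, which is exactly what the removability theorem supplies, so that part of your write-up should be reorganized around the $W^{1,1}$-removability statement rather than a direct measure-zero argument on the Beltrami coefficient.
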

\begin{proof}
   The first part follows from the fact that the connected Julia set of a geometrically finite polynomial is conformally removable (see \cite[Theorem~9.2]{LMMN}). For the second part, we note that the basin of infinity of $p$ is a John domain, and hence $J(p)$ is $W^{1,1}-$removable (cf. \cite{Jon95}, \cite[Theorem~4]{JS00}). According to \cite[Lemma~2.9]{LMMN}, such sets are removable for David maps.
\end{proof}

\subsubsection{Quasiconformal surgery}
\begin{prop}\label{prop:quasisurg}
    Let $\Sigma$ be a hyperbolic compact surface, and let $G$ be a geometrically finite Bers boundary group associated to $(\Sigma, \mathcal{C})$. Let $\mathcal{F}_G$ be a Basilica Bowen-Series map. Then there exists a Basilica $\mathfrak{Q}-$map $\mathcal{F}: \mathfrak{P}^1\longrightarrow \mathfrak{P}$ which is quasiconformally conjugate to $\mathcal{F}_G$, restricted to the respective Julia and limit sets. More precisely, this means that there exists a quasiconformal map $H: \widehat{\C} \longrightarrow \widehat{\C}$ so that
    $$
    H \circ \mathcal{F}(z) = \mathcal{F}_G \circ H(z)
    $$
    for all $z \in J(Q)$.

    Similarly, let $g$ be a rational map whose Julia set is a fat Basilica. Suppose that the unbounded Fatou component $U_\infty$ is attracting or super-attracting.
    Then there exists a Basilica $\mathfrak{Q}-$map $\mathcal{F}: \mathfrak{P}^1\longrightarrow \mathfrak{P}$ which is quasiconformally conjugate to $g$, restricted to the respective Julia sets.
\end{prop}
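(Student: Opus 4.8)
The plan is to upgrade the topological conjugacies of Proposition~\ref{prop:topmodelbowenseries} and Proposition~\ref{prop:topmodelJulia} to quasiconformal ones, by first arranging the correct boundary regularity with Theorem~\ref{thm:symmetrichyperbolic} and then invoking the circle-map criterion Proposition~\ref{prop:qsmarkovmapFiniteCircle} followed by a quasiconformal surgery. I treat the Kleinian case; the rational case is parallel, with Proposition~\ref{prop:topmodelJulia} replacing Proposition~\ref{prop:topmodelbowenseries}, the pinched core polygon puzzle of $G$ replaced by the puzzle attached to the data $(V_i,X_i)$ of the proof of Proposition~\ref{prop:topmodelJulia}, and the Bowen-Series circle maps replaced by the maps induced by $g$ on the ideal boundaries of its Fatou components. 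Starting from the Basilica $\mathfrak{Q}$-map $\widetilde{\mathcal{F}}$ and the topological conjugacy $\Phi\colon J(Q)\to\Lambda(G)$ of Proposition~\ref{prop:topmodelbowenseries}, I note that $\widetilde{\mathcal{F}}$ and $\mathcal{F}_G$ are built from combinatorially identical puzzles, so $\Phi$ is in fact a combinatorial conjugacy. Applying Theorem~\ref{thm:symmetrichyperbolic}, I replace $\widetilde{\mathcal{F}}$ by a modified refinement $\mathcal{F}\colon\mathfrak{P}^1\to\mathfrak{P}$ whose induced circle map $\mathcal{F}_\infty$ on $\partial^I U_\infty$ is symmetrically hyperbolic. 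As a modified refinement (Definition~\ref{def-modfn}), $\mathcal{F}$ remains topologically conjugate to $\mathcal{F}_G$; its refinement part is a passage to a higher-level puzzle, which I mirror on the $G$-side by the corresponding higher-level puzzle, and its modification part is the local surgery of Lemma~\ref{lem:localrefine}, which is a combinatorial conjugacy (Lemma~\ref{lem:localrefine}\,\eqref{lem:refine:item1}) and so preserves the Markov matrix; hence $\mathcal{F}$ and the refined $\mathcal{F}_G$ stay combinatorially conjugate.

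I then compare the conformal Markov maps induced on $\bigsqcup_{s}\partial^I U_s$ and $\bigsqcup_{s}\partial^I\Delta_s$. On the $\mathfrak{Q}$-side, every break-point of $\mathcal{F}_{\bdd}$ is symmetrically parabolic (Proposition~\ref{prop:bdd_fatou_symm_para}), while $\mathcal{F}_\infty$ is symmetrically hyperbolic (Theorem~\ref{thm:symmetrichyperbolic}). On the $G$-side, since $\Sigma$ is compact, every break-point of the induced map on $\partial^I\Delta_s$ with $s$ a bounded component is symmetrically parabolic and every break-point of the induced map on $\partial^I\Delta_\infty$ is symmetrically hyperbolic (Proposition~\ref{prop:basilicabssymmetric}). (In the rational case, the analog of Proposition~\ref{prop:bdd_fatou_symm_para} holds for $g$ on the ideal boundary of each bounded Fatou component---these components make up the parabolic basin of the multiplicity-$3$ fixed point of Lemma~\ref{lem:cutpnteventuallypara}---while $g|_{\partial^I U_\infty}$ is uniformly expanding because $U_\infty$ is attracting or super-attracting, so its induced Markov map is symmetrically hyperbolic.) Thus both families are topologically expanding conformal Markov maps on a finite union of circles with puzzle structures, they are combinatorially conjugate, all their break-points are symmetric, and the combinatorial conjugacy is type-preserving: parabolic break-points occur only on the bounded circles on each side, and hyperbolic ones only on $\partial^I U_\infty$ and $\partial^I\Delta_\infty$ (any mismatch of parabolic multiplicities is harmless for the conclusion invoked). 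By Proposition~\ref{prop:qsmarkovmapFiniteCircle}\,\eqref{HHFC}, there are quasi-symmetric conjugacies $h_s\colon\partial^I U_s\to\partial^I\Delta_s$ intertwining the two Markov maps.

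It remains to assemble a global quasiconformal $H\colon\widehat{\C}\to\widehat{\C}$ with $H\circ\mathcal{F}=\mathcal{F}_G\circ H$ on $J(Q)$. Since $H|_{J(Q)}$ is forced to be the topological conjugacy $\Phi$ (already a dynamical conjugacy on $J(Q)$), it is enough to show that $\Phi$ is quasi-symmetric and extends to a homeomorphism of $\widehat{\C}$ that is quasiconformal off $J(Q)$. For quasi-symmetry of $\Phi$ I would run the quasiconformal elevator argument of the proof of Proposition~\ref{prop:qsmarkovmap} in the plane: given three points of $J(Q)$, blow up the smallest puzzle piece containing two of them to a piece of definite size by a bounded-distortion iterate of $\mathcal{F}$ (the puzzle pieces are nested pinched complex neighborhoods, so the Koebe distortion theorem applies), thereby reducing to one of finitely many unit-scale configurations, where quasi-symmetry follows from the $h_s$, from the fact that the bounded Fatou components and the components $\Delta_s$ ($s\ne\infty$) are quasi-disks (so the corresponding Riemann boundary maps are quasi-symmetric---this is where the fat Basilica hypothesis, respectively the compactness of $\Sigma$, enters), and from the observation that the remaining ``limb'' components are conformal images of the finitely many ``torso'' components with uniformly bounded distortion, which makes the estimate uniform. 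Granting $\Phi$ quasi-symmetric, I extend each $h_s$ across the quasi-disks $U_s,\Delta_s$ by the Beurling-Ahlfors extension and glue these with $\Phi$ on $J(Q)$ and with any quasiconformal homeomorphism on the regions disjoint from every puzzle piece; the pieces match along common boundaries because all of them descend from the $h_s$, producing a homeomorphism $H$ of $\widehat{\C}$ that is quasiconformal on $\widehat{\C}-J(Q)$. Finally, $Q(z)=z^2-\frac34$ is a geometrically finite polynomial with connected Julia set, so $J(Q)$ is quasiconformally removable by Lemma~\ref{lem:eccentricity}; hence $H$ is quasiconformal on all of $\widehat{\C}$, and by construction it conjugates $\mathcal{F}$ to $\mathcal{F}_G$ (resp. to $g$) on $J(Q)$.

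The hard part is this last step---converting quasi-symmetry of the finitely many induced circle maps into quasi-symmetry of the conjugacy $\Phi$ of the whole Julia/limit set, and then into a global quasiconformal map. This requires careful control of the planar geometry of the nested pinched puzzle pieces near the break-points, where the quasiconformal elevator has to pass through cusps and parabolic points, and it is precisely here that the quasi-disk hypothesis on the bounded complementary components is essential: it reconciles the a priori different cusp geometries at contact points (Fatou-component tangency on the rational side versus accidental parabolic cusps on the Kleinian side). A secondary, purely bookkeeping, difficulty is keeping the two puzzles combinatorially matched through the refinement and modification performed in Theorem~\ref{thm:symmetrichyperbolic}.
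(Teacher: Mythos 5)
Your outline coincides with the paper's route in all essential ingredients: pass from $\mathcal{F}_G$ (respectively $g$) to a topologically conjugate Basilica $\mathfrak{Q}$-map via Proposition~\ref{prop:topmodelbowenseries} (resp.\ Proposition~\ref{prop:topmodelJulia}), replace it by a modified refinement using Theorem~\ref{thm:symmetrichyperbolic} so that $\mathcal{F}_\infty$ becomes symmetrically hyperbolic, compare the induced circle maps using Propositions~\ref{prop:bdd_fatou_symm_para} and~\ref{prop:basilicabssymmetric}, invoke Proposition~\ref{prop:qsmarkovmapFiniteCircle}\,\eqref{HHFC}, extend via Beurling--Ahlfors, pull back to the remaining Fatou components by the dynamics, and finish with removability of $J(Q)$ (Lemma~\ref{lem:eccentricity}).

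The one substantive divergence is what you flag as ``the hard part'': you insert, as a preliminary step, a direct proof that the topological conjugacy $\Phi$ is quasi-symmetric on $J(Q)$ by running the conformal-elevator argument of Proposition~\ref{prop:qsmarkovmap} ``in the plane.'' This step is not needed, and the paper does not take it. The logical structure of the surgery is: (i) the finitely many circle conjugacies $h_s$ are quasi-symmetric; (ii) each $h_s$ is extended to a $K$-quasiconformal map $U_s\to\Delta_s$ by Beurling--Ahlfors; (iii) for any other bounded Fatou component $U$, use the minimal $n$ with $\cF^n(U)=U_s$ and pull back $H|_{\cF^n(U)}$ through the conformal iterates, keeping the same $K$; (iv) the resulting $H$ is a homeomorphism of $\widehat{\C}$ agreeing with $\Phi$ on $J(Q)$ and $K$-quasiconformal on $\widehat{\C}-J(Q)$; (v) removability of $J(Q)$ upgrades $H$ to a global quasiconformal map. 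No intermediate quasi-symmetry statement about $\Phi$ is required, and trying to establish one directly is both harder than the full proposition (it essentially \emph{is} the proposition, since quasi-symmetry of a planar homeomorphism is a consequence of, not a prerequisite for, quasiconformality) and requires a planar elevator estimate that you sketch but have not carried out. If you drop that detour and go straight from (iii) to (v), the proof closes. A minor additional point: in step (iii) the paper is explicit that the extension to the limb components must be taken by dynamical pullback (not merely ``bounded distortion conformal identification''), as this is what guarantees that the extension agrees with $\Phi$ on each $\partial U$ and hence glues continuously.
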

\begin{proof}
    We present the proof for Basilica Bowen-Series maps. The result for rational maps with fat Basilica Julia set can be proved similarly using Proposition~\ref{prop:topmodelJulia}.

    Let $\mathcal{F}_G$ be a Basilica Bowen-Series map. 
    Let $\Delta_s, s\in \{1,..., j_0\} \cup \{\infty\}$ be the components of $\Omega(G)$ associated to the pinched core polygon, and let $\Delta_s$, $ s\in \{j_0+1,..., j_1\}$ be the additional components of $\Omega(G)$ associated to the pull-back of the pinched core polygon via $\mathcal{F}_G$. They are characterized by the following.
    \begin{itemize}
        \item $\Delta_s, s\in \{1,..., j_0\} \cup \{\infty\}$, are the components of $\Omega(G)$ that intersect at least two level $0$ puzzle pieces;
        \item $\Delta_s, s\in \{1,..., j_0, j_0+1,..., j_1\} \cup \{\infty\}$, are the components of $\Omega(G)$ that intersect at least two level $1$ puzzle pieces.
    \end{itemize}
    Denote the induced Markov maps on the ideal boundaries by
    \begin{align*}
    (\mathcal{F}_G)_{\bdd} : \bigcup_{s=1}^{j_1} \partial^I \Delta_s &\longrightarrow \bigcup_{s=1}^{j_1} \partial^I \Delta_s, \text{ and }\\
    (\mathcal{F}_G)_\infty: \partial^I \Delta_\infty &\longrightarrow \partial^I \Delta_\infty.
    \end{align*}
    Note that by Proposition~\ref{prop:basilicabssymmetric}, all break-points of $(\mathcal{F}_G)_s$ are symmetrically parabolic if $s \neq \infty$ and are symmetrically hyperbolic if $s = \infty$.

    By Proposition~\ref{prop:topmodelbowenseries}, there exists a Basilica $\mathfrak{Q}-$map $\widetilde{\mathcal{F}}: \widetilde{\mathfrak{P}}^1\longrightarrow \widetilde{\mathfrak{P}}$ whose restriction to $J(Q)$ is topologically conjugate to $\mathcal{F}_G\vert_{\Lambda(G)}$.
    Note that since Basilica Bowen-Series maps are topologically expanding by Lemma~\ref{lem:tp}, the map $\widetilde{\mathcal{F}}$ is also topologically expanding.
    By Theorem~\ref{thm:symmetrichyperbolic}, there exists a Basilica $\mathfrak{Q}-$map $\mathcal{F}: \mathfrak{P}^1\longrightarrow \mathfrak{P}$ such that
    \begin{itemize}
        \item $\mathcal{F}\vert_{J(Q)}$ is topologically conjugate to $\widetilde{\mathcal{F}}\vert_{J(Q)}$ (and hence to $\mathcal{F}_G\vert_{\Lambda(G)}$); and 
        \item the induced Markov map $\mathcal{F}_\infty$ is symmetrically hyperbolic.
    \end{itemize}
    By Proposition~\ref{prop:bdd_fatou_symm_para}, all break-points of the induced Markov map 
    $$
    \mathcal{F}_{\bdd}: \bigcup_{s=1}^{j_1} \partial^I U_s \longrightarrow \bigcup_{s=1}^{j_1} \partial^I U_s
    $$
    are symmetrically parabolic, where $U_s$ are the Fatou components of $Q$ corresponding to $\Delta_s$.

    Denote the topological conjugacy between $\mathcal{F}$ and $\mathcal{F}_G$ by $H$.
    Note that for each $s \in \{1,..., j_0\} \cup \{\infty\}$, $(\mathcal{F}_G)_s$ is topologically conjugate to $\mathcal{F}_s$. By Proposition~\ref{prop:qsmarkovmapFiniteCircle}, they are quasi-symmetrically conjugate. Therefore, there exists $K>0$ so that we can extend $H|_{\partial U_s}$ $K-$quasiconformally to $H|_{U_s}: U_s \longrightarrow \Delta_s$.

    Let $U$ be a bounded Fatou component that is contained in some level $1$ puzzle piece. Then there exists a minimal integer $n$ such that $\mathcal{F}^n(U) = U_s$ for some $s\in \{1,..., j_1\}$. Note that $H\circ \mathcal{F}^n = \mathcal{F}^n_G \circ H$ on $\partial U$. As $\mathcal{F}_G$ is conformal in $U$, we can extend $H|_{\partial U}$ to $\overline{U}$ as a homeomorphism by pulling back $H|_{\mathcal{F}^n(U)}$.

    The above construction yields a homeomorphism $H$ of $\widehat{\C}$ which is $K-$quasiconformal on $\widehat{\C}- J(Q)$, and conjugates  $\mathcal{F}\vert_{J(Q)}$ to $\mathcal{F}_G\vert_{\Lambda(G)}$.
    By Lemma~\ref{lem:eccentricity}, the homeomorphism $H$ is quasiconformal on $\widehat{\C}$.
\end{proof}

\subsection{Quasiconformal invariant}\label{subsec:QuasiInv}
In this subsection, we introduce certain combinatorial invariants that allow us to quasiconformally distinguish Basilica limit sets or Julia sets.

\subsubsection{Purely accidental parabolic and persistently parabolic components}\label{pure_acci_para_subsec}
Let $G$ be a geometrically finite Bers boundary group associated to $(\Sigma, \mathcal{C})$.
Let $\Delta$ be a component of $\Omega(G) - \Delta_\infty$ associated to a nodal component $\Sigma_i$.
We say that $\Delta$ is {\em purely accidental parabolic} if $\Sigma_i$ contains no cusps of $\Sigma$, and {\em persistently parabolic} otherwise.

\begin{prop}\label{prop:qsinv}
    Let $G_i, i\in\{1,2\}$, be geometrically finite Bers boundary groups. Let $\Delta_i$ be a component of $\Omega(G_i) - \Delta_\infty(G_i)$.
    Suppose that $\Delta_1$ is purely accidental parabolic and $\Delta_2$ is persistently parabolic.
    Then there is no quasiconformal map $\Phi: \widehat{\C} \longrightarrow \widehat{\C}$ so that $\Phi(\Lambda(G_1)) = \Lambda(G_2)$ and $\Phi(\Delta_1)=\Delta_2$.

    In other words, quasiconformal maps send purely accidental parabolic components to purely accidental parabolic ones, and persistently parabolic components to persistently parabolic ones.
\end{prop}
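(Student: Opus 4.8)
\textit{The plan is to argue by contradiction via a ``zoom-in'' (weak tangent) invariant.} Suppose such a $\Phi$ exists, let $p\in\partial\Delta_2$ be the fixed point of a persistent parabolic element of $G_2$, and set $q:=\Phi^{-1}(p)\in\partial\Delta_1$. The key point is to compare, at $q$ and at $p$, the families of \emph{weak tangents}: subsequential Gromov--Hausdorff limits of the rescalings $\lambda_n(\Lambda(G_i)-x)$ with $\lambda_n\to\infty$. Since a quasiconformal self-map of $\widehat\C$ is (locally) quasisymmetric, rescaling $\Phi$ around $q$ and extracting a subsequential limit --- using the standard compactness of normalized, uniformly quasisymmetric maps --- produces a quasisymmetric homeomorphism carrying some weak tangent of $\Lambda(G_1)$ at $q$ onto a weak tangent of $\Lambda(G_2)$ at $p$. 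It then suffices to show that (a) \emph{every} weak tangent of $\Lambda(G_2)$ at the persistent cusp $p$ is a closed subset of a straight line, while (b) \emph{some} weak tangent of $\Lambda(G_1)$ at $q$ is not homeomorphic to any subset of $\R$ --- for instance, it contains a topological ``$X$''. As quasisymmetric maps are homeomorphisms, (a) and (b) are incompatible.

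For (a), I would conjugate $G_2$ by a M\"obius map so that $p=\infty$ and the persistent parabolic is $z\mapsto z+1$. Then $\Lambda(G_2)\setminus\{\infty\}$ is $\Z$-periodic, and, by the classical geometry of a rank-one cusp of a geometrically finite group (a precisely invariant horoball at $p$ in the sense of Bowditch and Marden), it lies in a horizontal strip $\{|\Im z|\le C\}$ for some $C<\infty$. Rescaling by $1/R$ with $R\to\infty$ shrinks this strip to the real line, so every weak tangent of $\Lambda(G_2)$ at $p$ is contained in a line; in particular it contains neither a topological ``$X$'' nor a Jordan curve.

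For (b), I would first locate $q$. By Proposition~\ref{prop:structgfbersboundary}, two distinct complementary components of $\Lambda(G_2)$ meet only at accidental (connector) parabolic fixed points, so the persistent cusp $p$ bounds exactly two complementary components, namely $\Delta_2$ and the invariant component $\Delta_\infty(G_2)$ (the latter being the unique complementary component whose boundary is not a Jordan curve, hence $\Phi$-invariant). Consequently $q$ bounds exactly two complementary components of $\Lambda(G_1)$; in particular $q$ is not a contact point. Since $\Delta_1$ is \emph{purely} accidental parabolic, $G_{\Delta_1}$ has no persistent parabolic, and an accidental parabolic fixed point on $\partial\Delta_1$ is a contact point (again Proposition~\ref{prop:structgfbersboundary}); a persistent parabolic fixed point of $G_1$ lying on $\partial\Delta_1$ would likewise force $q$ to bound at least three complementary components. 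Hence $q$ is not a parabolic fixed point of $G_1$, so --- $G_1$ being geometrically finite --- $q$ is a conical limit point. Conicality provides $\gamma_k\in G_1$ and radii $s_k\to0$ with $\gamma_k$ mapping the round ball $B(q,s_k)$ M\"obius-onto a round ball $B(w_k,\rho_k)$, $\rho_k\asymp1$; after passing to a subsequence with $w_k\to w_\infty\in\Lambda(G_1)$ and $\rho_k\to\rho_\infty>0$, one checks that the associated weak tangent $W$ of $\Lambda(G_1)$ at $q$ satisfies: $W\cap B(0,1)$ is a Euclidean similarity image of $\Lambda(G_1)\cap\overline{B(w_\infty,\rho_\infty)}$. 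Since the $G_1$-orbit of the fixed point of an accidental parabolic is dense in $\Lambda(G_1)$ (minimality of the action on the limit set), $\overline{B(w_\infty,\rho_\infty)}$ contains a contact point $c$ of $\Lambda(G_1)$ together with the two boundary arcs of the adjacent components meeting (tangentially) at $c$; this union is a topological ``$X$'' sitting inside $W$, so $W$ does not embed in $\R$.

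\textit{The hard part will be (a)} --- extracting the exact ``finite-width strip'' description of the limit set near a rank-one cusp with the uniformity needed; although this is essentially a restatement of geometric finiteness, I would isolate it as a clean lemma. A secondary nuisance is the bookkeeping in the weak-tangent transfer (choosing the rescaling constants and three reference points so that the limiting quasisymmetric map is non-degenerate), but this is routine. Everything else --- the valence count locating $q$, the density of contact points, and the final topological incompatibility between (a) and (b) --- is soft.
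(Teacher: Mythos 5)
Your proof is correct and follows the same high-level weak-tangent comparison strategy as the paper's (which adapts McMullen's argument in \cite{McM25}): every blowup of $\Lambda(G_2)$ at the persistent parabolic $p$ is contained in a line (since $\Lambda(G_2)$ lies between two tangent parabolas at $p$, equivalently in a horizontal strip after sending $p\to\infty$), while some blowup of $\Lambda(G_1)$ at $q=\Phi^{-1}(p)$ contains a topological cross, and these two features are incompatible under a quasisymmetric limit of rescaled quasiconformal maps. The one genuine difference is the mechanism producing the non-degenerate blowup at $q$. The paper uses the Basilica Bowen--Series map $\cF_{G_1}$ and the puzzle structure: it cuts compact annuli $X_i$ from the level-$0$ puzzle pieces, uses topological expansion to show the $\cF_{G_1}$-orbit of $q$ returns to $\bigcup X_i$ infinitely often, and applies Koebe distortion to the inverse branches $(\cF_{G_1}^{n_k})^{-1}$. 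You instead invoke the classical fact that every non-parabolic limit point of a geometrically finite Kleinian group is conical and work directly with the blow-up maps $\gamma_k\in G_1$ supplied by conicality. Since each branch of $\cF_{G_1}$ is an element of $G_1$, the two mechanisms are ultimately the same engine --- the annuli $X_i$ play the role of the compact set the conical orbit keeps revisiting --- but your packaging is more classical, sidesteps the puzzle machinery entirely, and would apply verbatim to any geometrically finite group, at the cost of quoting conicality as a black box. Your valence count locating $q$ and the purely-accidental hypothesis ruling out $q$ being parabolic match the paper's terser justification that $b\in\Int(P^n)$ for all $n$. The only place needing care, as you flag yourself, is the claim that the weak tangent is an honest Euclidean similarity image of $\Lambda(G_1)\cap\overline{B(w_\infty,\rho_\infty)}$: this requires the pole of $\gamma_k$ to be far from $B(q,s_k)$ in units of $s_k$, which is precisely what bounded geometry at a conical limit point provides, and is the same Koebe-type estimate the paper applies to $(\cF_{G_1}^{n_k})^{-1}$.
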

\begin{proof}
    The proof is an adaptation of \cite[Theorem 1.8]{McM25}.
    By way of contradiction, suppose that such a quasiconformal map $\Phi$ exists.

    For each $z \in \Lambda(G_i)$, we let $K(z)$ be the set of all possible limits (in the Hausdorff topology on closed subsets of $\widehat{\C}$) of blowups $\frac{1}{r}(\Lambda(G_i)-z)$ as $r\to 0$. Note that each set $A \in K(z)$ is a closed set containing $0$.
    
    Let $a \in \partial \Delta_2$ be a parabolic fixed point of $g \in G_2$ so that $a$ is not a contact point. Such a point exists as $\Delta_2$ is persistently parabolic.
    By construction, each $A\in K(a)$ is a line, as locally, the limit set $\Lambda(G_2)$ is contained in the region bounded by two tangent parabolas (see Figure~\ref{fig:persistantBasilica}).
    
    Let $b = \Phi^{-1}(a)$. Let $P^n$ be a level $n$ puzzle piece that contains $b$.
    Since $b$ is not a contact point and $\Delta_1$ is purely accidental parabolic, $b\in \Int(P^n)$.
    For each level $0$ puzzle $P_i$, we can choose $X_i \subseteq P_i$ so that $\Int(P_i) - X_i$ is an annulus and the orbit of $b$ passes through $\bigcup \Int(X_i)$ infinitely often.
    Indeed, by topological expansion of $\cF_{G_1}$, we can choose $X_i$ large enough so that the only non-escaping points in $\bigcup \left(P_i-X_i\right)$ are the boundary points $\bigcup \partial P_i\cap \Lambda(G_1)$; i.e., all other points in $\bigcup \left(P_i-X_i\right)$ escape these annuli under iterates of $\cF_{G_1}$.
    Let $n_k$ denote the sequence so that $\mathcal{F}_{G_1}^{n_k}: P^{n_k} \longrightarrow P_{i(n_k)}$ sends $b$ into $X_{i(n_k)}$.
    There exists an $\epsilon_0>0$ such that locally near $\mathcal{F}_{G_1}^{n_k}(b)$, the limit set is not contained in a cone formed by two straight lines meeting (at $\mathcal{F}_{G_1}^{n_k}(b)$) at an angle $\epsilon\in(0,\epsilon_0)$.
    Applying the Koebe distortion theorem to $(\mathcal{F}_{G_1}^{n_k})^{-1}$ on $X_{i(n_k)}$, we see that $(\mathcal{F}_{G_1}^{n_k})^{-1}$ only mildly distorts the limit set and the two lines. 
    This means that a limit $B \in K(b) $ contains copies of limbs of the Basilica, and is thus not a topological arc.

    By compactness of quasiconformal maps, for each $B \in K(b)$, there is some $A \in K(a)$ related to $B$ by a quasiconformal homeomorphism of $\C$ fixing $0$. But this is a contradiction as $A$ and $B$ are not homeomorphic.
\end{proof}

A similar proof also gives the following.
\begin{prop}\label{prop:inequivJulia}
\noindent\begin{enumerate}[leftmargin=8mm]
    \item\label{prop:inequivJulia:item1} Let $\Sigma$ be a hyperbolic non-compact surface, and let $G$ be a geometrically finite Bers boundary group associated to $(\Sigma, \mathcal{C})$.
    Then $\Lambda(G)$ is not quasiconformally homeomorphic to $J(Q)$.
    \item \label{prop:inequivJulia:item2} Let $g$ be a rational map with fat Basilica Julia set. Suppose that the unbounded Fatou component $U_\infty(g)$ is neither attracting nor super-attracting.
    Then $J(g)$ is not quasiconformally homeomorphic to $J(Q)$. Moreover, $J(g)$ is not quasiconformally homeomorphic to $\Lambda(G)$ for any geometrically finite Bers boundary group.
\end{enumerate}
\end{prop}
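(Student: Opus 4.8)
The plan is to adapt the blow-up argument of Proposition~\ref{prop:qsinv} (which is itself modeled on \cite[Theorem~1.8]{McM25}) to the present settings. The key dichotomy exploited there is the following: at a \emph{non-contact parabolic} fixed point, every Hausdorff blow-up of the fractal is a straight line (the fractal is locally trapped between two tangent parabolas), whereas at any contact point of a fat Basilica $J(Q)$ one can choose a sequence of zoom-ins along the grand orbit of the parabolic fixed point $-\frac12$ so that the blow-up contains copies of Basilica limbs and hence is \emph{not} a topological arc. Since the class of blow-up sets $K(z)$ is a quasiconformal quasi-invariant up to a quasiconformal homeomorphism of $\C$ fixing $0$ (by compactness of normalized quasiconformal maps), producing a point on one side whose blow-up class consists only of lines while the other side has no such point yields a contradiction.

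For part~\eqref{prop:inequivJulia:item1}: let $G$ be a geometrically finite Bers boundary group associated to a non-compact surface $\Sigma$. Then $\Sigma$ has a genuine cusp, so by Proposition~\ref{prop:structgfbersboundary} and the definition of \S~\ref{pure_acci_para_subsec}, $\Lambda(G)$ contains a persistently parabolic fixed point $a$ that is not a contact point; every $A\in K(a)$ is a line. On the $J(Q)$ side, the transitivity of $Q$ ensures that every point of $J(Q)$ is either a non-contact point (at which a suitable zoom-in along the grand orbit sees limbs, hence $K$ contains non-arcs) or a contact point eventually mapped to $-\frac12$, where again one can arrange a blow-up that is not an arc. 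Thus no point of $J(Q)$ has $K(z)$ consisting solely of lines, while $a$ does; by the quasiconformal quasi-invariance of the blow-up class, $\Lambda(G)$ and $J(Q)$ cannot be quasiconformally equivalent. (Concretely, one runs the argument exactly as in Proposition~\ref{prop:qsinv}, using the puzzle structure for the Basilica $\mathfrak{Q}$-map to realize the sequence of rescalings and the Koebe distortion theorem to control the inverse branches on the annular pieces $X_i$.)

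For part~\eqref{prop:inequivJulia:item2}: if $U_\infty(g)$ is neither attracting nor super-attracting, then (as $J(g)$ is a fat Basilica, so by Lemma~\ref{lem:noJuliaCrit} there are no Julia critical points and by Lemma~\ref{lem:cutpnteventuallypara} the contact points are eventually parabolic) the induced dynamics on $\partial^I U_\infty(g)$ must have a parabolic periodic point; moreover that parabolic point is \emph{not} a contact point of $J(g)$, since all contact points are in the grand orbit of the multiplicity-$3$ parabolic fixed point that lies on boundaries of bounded Fatou components, not on $\partial^I U_\infty$. Hence $J(g)$ has a non-contact parabolic point whose blow-up class consists only of lines. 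Comparing with $J(Q)$ (no such point, as above) gives $[J(g)]\neq[J(Q)]$; comparing with any $\Lambda(G)$ for a Bers boundary group $G$ requires one further observation: if $G$ corresponds to a compact surface then $\Lambda(G)$ is quasiconformally $J(Q)$ by Theorem~\ref{thm:qcclassfn-ltsets}, so we are reduced to the previous sentence; if $G$ corresponds to a non-compact surface, then the persistently parabolic point of $\Lambda(G)$ and the $U_\infty$-parabolic point of $J(g)$ both have line-only blow-up classes, so one instead distinguishes the two sets by exhibiting, near a suitable non-contact point of $\Lambda(G)$ coming from a \emph{purely accidental} parabolic component, a blow-up containing limbs, while arguing that the corresponding point of $J(g)$ (its image under a hypothetical quasiconformal map) would have to be a non-contact parabolic point on $\partial^I U_\infty$, whose blow-ups are all lines --- a contradiction. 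The main obstacle is precisely this last bookkeeping step: one must verify that a hypothetical quasiconformal equivalence is forced to send the relevant non-contact points of one fractal to non-contact parabolic points of the other (using that the blow-up class is a quasiconformal invariant and that ``line-only'' versus ``contains a limb'' are mutually exclusive), and that the puzzle/grand-orbit structure genuinely produces the limb-containing blow-ups at the required points; both are handled by the same Koebe-distortion-plus-puzzle machinery as in Proposition~\ref{prop:qsinv}, but the combinatorial case analysis (compact vs.\ non-compact $\Sigma$, which parabolic points are contact points) is where the care is needed.
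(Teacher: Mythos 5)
Your part~\eqref{prop:inequivJulia:item1} is essentially the paper's argument, and the implementation (Koebe distortion along a puzzle orbit that stays bounded away from the post-critical set) matches. Part~\eqref{prop:inequivJulia:item2} has a genuine gap.

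The parabolic fixed point $a$ of $U_\infty(g)$ is a \emph{tip point} of $J(g)$: by the fat Basilica conditions~\eqref{fatbasi:2intro}--\eqref{fatbasi:3intro} it cannot lie on the boundary of any bounded Fatou component, and it is not a contact point. Because the Julia set approaches $a$ from only one side (a single repelling direction into the filled Julia set), the Hausdorff blow-ups $K(a)$ consist of \emph{rays}, not lines. You assert lines, which is the blow-up at a persistently parabolic point on the boundary of a Jordan-domain component where the fractal threads through on both sides. Conflating the two breaks the argument: both $J(g)$ and a non-compact $\Lambda(G)$ have ``line-blow-up'' points, so the dichotomy you are running cannot separate them, and indeed you acknowledge the argument stalls there and resort to an unjustified step (claiming that a hypothetical quasiconformal map must carry a chosen point of $\Lambda(G)$ to the specific parabolic point on $\partial U_\infty(g)$, which nothing forces).

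The paper avoids this by working exclusively with tip points and rays. The key missing observation in your proposal is that a tip point $b$ of $\Lambda(G)$ is never a parabolic fixed point of $G$ (all parabolic fixed points of a geometrically finite Bers boundary group lie on the boundaries of the Jordan-domain components of $\Omega(G)-\Delta_\infty$), and consequently one can blow up $\Lambda(G)$ near $b$ to obtain a limit that is not a topological ray. Combined with the fact that a quasiconformal conjugacy must carry tip points to tip points (since it permutes the bounded Jordan complementary components), the tip point $a\in J(g)$ with ray-only blow-ups has no admissible image, giving the contradiction in both comparisons uniformly, without case analysis in $\Sigma$ and without invoking Theorem~\ref{thm:qcclassfn-ltsets} (which would be circular here, as Proposition~\ref{prop:inequivJulia} feeds into its proof; at best you could cite Proposition~\ref{prop:quasisurg}).
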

\begin{proof}
1)    Let $U$ be a bounded Fatou component of $Q$, and let $a \in \partial U$ be a non-contact point. Then $d(Q^n(a), \R) \asymp 1$ for infinitely many $n$. Since the post-critical set is contained in $\R$, we can blow up a neighborhood of $a$ to a definite size with bounded distortion for infinitely many $n$. Then the same proof as in Proposition~\ref{prop:qsinv} demonstrates the non-existence of a global quasiconformal homeomorphism that maps $U$ to a persistently parabolic component $\Delta$ of $\Omega(G)-\Delta_\infty$.
    
    Since $\Sigma$ is non-compact, at least one component of $\Omega(G) - \Delta_\infty$ is persistently parabolic.
    Therefore, $\Lambda(G)$ is not quasiconformally homeomorphic to $J(Q)$.

2)  A non-contact point of $J(g)$ either lies on the boundary of a unique bounded Fatou component of $g$, or does not lie on the boundary of any bounded Fatou component. A non-contact point of $J(g)$ of the latter kind is called a \emph{tip point}.  

    By Lemma~\ref{lem:noJuliaCrit}, the Julia set $J(g)$ contains no critical points.
    Since $U_\infty(g)$ is neither attracting nor super-attracting, $U_{\infty}(g)$ must be parabolic. The corresponding parabolic fixed point $a$ cannot be on the boundary of any bounded Fatou component by conditions~\eqref{fatbasi:2} and \eqref{fatbasi:3}.
    Therefore, $a$ must be a tip point of $J(g)$.
    Let $A \in K(a)$ be an accumulation point of the blowups of the Julia set near $a$ in the Hausdorff topology. Then $A$ is a ray.
    
    On the other hand, let $b \in J(Q)$ be a tip point. By the same argument as in the previous case,  there exist some $B \in K(b)$ which is not topologically a ray.
    Therefore, $J(g)$ is not quasiconformally equivalent to $J(Q)$.
    
    The same argument applies to any tip point of the limit set $\Lambda(G)$ of a geometrically finite Bers boundary group $G$. Indeed, a tip point $b$ of $\Lambda(G)$ is not a parabolic fixed point for any element of $G$, and hence by choosing an appropriate blow-up of $\Lambda(G)$ near $b$, we obtain some $B \in K(b)$ which is not topologically a ray. 
    Thus, $J(g)$ is not quasiconformally homeomorphic to $\Lambda(G)$. 
\end{proof}

\subsubsection{Bi-colored contact tree for Basilica limit set}\label{subsubsec:bicolct}
Let $G$ be a Bers boundary group, and let $\Lambda(G)$ be its limit set.
Let $\mathcal{T}$ be the contact graph for the components of $\Omega(G) - \Delta_\infty$, i.e., the vertices of $\mathcal{T}$ correspond to components of $\Omega(G) - \Delta_\infty$, and two vertices are connected via an edge if the two corresponding components touch.
Note that $\mathcal{T}$ is a tree with infinite valence at every vertex.
The planar structure of $\Lambda(G)$ gives a {\em ribbon structure} on $\mathcal{T}$, i.e., a cyclic ordering on the edges adjacent to each vertex.

We color each vertex by black and white depending on whether the corresponding component is purely accidental parabolic or not.
We call the infinite tree $\mathcal{T}$ together with the partition of its vertex set $\mathcal{V} = \mathcal{B} \sqcup \mathcal{W}$ the {\em bi-colored contact tree} for $G$.
Two bi-colored contact trees are equivalent if there is a homeomorphism that preserves the coloring and the ribbon structure.

By Proposition~\ref{prop:qsinv}, the bi-colored contact tree is a quasiconformal invariant, i.e., two quasiconformally equivalent Basilica limit sets have equivalent bi-colored contact trees.
It is easy to see that for two Basilica limit sets, there is a type-preserving homeomorphism between them if and only if their bi-colored contact trees are equivalent.
Therefore, Conjecture~\ref{conj:confdim1univer} in this setting is equivalent to the following.
\begin{conj}\label{conj:comqcinv}
    The bi-colored contact tree is a complete quasiconformal invariant for limit sets of geometrically finite Bers boundary groups, i.e., two Basilica limit sets are quasiconformally equivalent if and only if their bi-colored contact trees are isomorphic.
\end{conj}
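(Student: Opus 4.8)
\emph{Necessity} is immediate from results already available: if $\Phi\colon\widehat{\C}\to\widehat{\C}$ is quasiconformal with $\Phi(\Lambda(G_1))=\Lambda(G_2)$, then as a homeomorphism it carries $\Delta_\infty(G_1)$ (the unique non-Jordan-disk component) to $\Delta_\infty(G_2)$, hence the contact graph of $\Omega(G_1)-\Delta_\infty$ with its ribbon structure onto that of $\Omega(G_2)-\Delta_\infty$; and by Proposition~\ref{prop:qsinv} it preserves the black/white coloring. So $\mathcal{T}_1\cong\mathcal{T}_2$. (When the surfaces are compact the trees are all-black and the content of the conjecture is exactly Theorem~\ref{thm:qcclassfn-ltsets}.)

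For \emph{sufficiency}, suppose $\mathcal{T}_1\cong\mathcal{T}_2$; as noted in the text, this is equivalent to the existence of a type-preserving homeomorphism $\Lambda(G_1)\to\Lambda(G_2)$. The plan is to follow the scheme of \S\ref{sec:qc_conj}: fix Basilica Bowen--Series maps $\mathcal{F}_{G_i}$ with their puzzle structures, which by Lemma~\ref{lem:tp} and Proposition~\ref{prop:basilicabssymmetric} are topologically expanding conformal Markov $(\Lambda,G_i)$-maps with puzzle structures whose break-points are all symmetric --- symmetrically parabolic on the bounded ideal boundaries $\partial^I\Delta_s$, and on $\partial^I\Delta_\infty$ symmetrically parabolic or hyperbolic according as the break-point corresponds to a cusp of $\Sigma_i$ or to a node. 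The key step is to use the tree isomorphism to choose pinched core polygons and Markov refinements on the two sides so that $\mathcal{F}_{G_1}$ and $\mathcal{F}_{G_2}$ become combinatorially conjugate (Definition~\ref{defn:topcom}) via a type-preserving conjugacy. Since each $G_i$ is geometrically finite there are only finitely many orbits of complementary components, so each bi-colored contact tree is generated by a finite substitution rule; the isomorphism together with this finiteness should let one build, inductively and compatibly on both sides as in Proposition~\ref{prop:topmodelbowenseries}, level-$0$ puzzles with matching incidence, matching limb/torso dichotomy, matching ribbon order, and Markov refinements with equal Markov matrices. One verifies this bookkeeping respects the coloring --- a white vertex being exactly one carrying a non-contact (parabolic) limb break-point --- so the conjugacy is type-preserving.

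Granting this, Proposition~\ref{prop-combconjimpliestopconj} produces a topological conjugacy on the limit sets; the induced Markov maps on the ideal boundaries are then combinatorially conjugate, topologically expanding, conformal, with puzzle structures and symmetric break-points, and the type-preserving case of Proposition~\ref{prop:qsmarkovmapFiniteCircle} upgrades each to a quasisymmetric conjugacy with a uniform constant. Extending the boundary quasisymmetries quasiconformally into the Jordan domains $\Delta_s$ and into $\Delta_\infty$, and pulling back through the dynamics to every component of $\Omega(G_1)$ exactly as in Proposition~\ref{prop:quasisurg}, yields a homeomorphism $\Phi$ of $\widehat{\C}$, quasiconformal on $\Omega(G_1)$, with $\Phi(\Lambda(G_1))=\Lambda(G_2)$. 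Finally $\Lambda(G_1)$ has zero area and is conformally removable --- for non-compact $\Sigma$ it is a David image of the John-domain boundary $J(z^2-1)$ (Theorem~\ref{thm:davidhi}), and for compact $\Sigma$ it is a quasiconformal image of the removable set $J(Q)$ --- so $\Phi$ is globally quasiconformal, completing the proof.

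The main obstacle is the combinatorial matching step: turning an abstract isomorphism of infinite bi-colored ribbon trees into genuinely combinatorially conjugate finite Markov $(\Lambda,G)$-maps. Two distinct geometrically finite Bers boundary groups realizing the same tree may have structurally different nodal components, so their pinched core polygons do not line up directly; one must interleave refinements on the two sides until the finite symbolic data agree, while keeping track of the ribbon order, the limb/torso dichotomy, and the coloring so that type-preservation is maintained. This is the ``appropriate combinatorial modification'' anticipated in the paper; the analytic input is then the same as in the proof of Theorem~\ref{thm:qcclassfn-ltsets}.
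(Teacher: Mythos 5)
This item is a conjecture, and the paper does not prove it; the only parts of it established in the paper are the necessity direction (the bi-colored contact tree is a quasiconformal invariant, which is Proposition~\ref{prop:qsinv} plus the discussion in \S\ref{subsubsec:bicolct}) and the sufficiency direction in the special case where all vertices are black, which is the compact case of Theorem~\ref{thm:qcclassfn-ltsets}. Your necessity argument is correct and is the paper's. Your sufficiency outline is also in line with what the authors anticipate, and the one genuinely sensible modification you make is worth noting: in the non-compact case there is no single polynomial model that all these limit sets are quasiconformally equivalent to (Proposition~\ref{prop:inequivJulia} rules it out), so instead of routing both sides through a Basilica $\mathfrak{Q}$-map as in Proposition~\ref{prop:quasisurg}, you propose conjugating the two Basilica Bowen--Series maps directly to one another. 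That is the correct adaptation of the paper's scheme, and once a type-preserving combinatorial conjugacy is in hand, the chain Proposition~\ref{prop-combconjimpliestopconj} $\to$ Proposition~\ref{prop:qsmarkovmapFiniteCircle} $\to$ quasiconformal extension $\to$ removability would indeed close the argument.

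However, this is not a proof, and you say so yourself. The ``combinatorial matching step'' that you flag as the main obstacle is precisely the content of the conjecture. Making the two Bowen--Series maps combinatorially conjugate in the sense of Definition~\ref{defn:topcom} means producing, after modified refinements on both sides, level-$0$ and level-$1$ puzzles with \emph{identical} Markov matrices, while simultaneously respecting the ribbon structure, the limb/torso dichotomy, the coloring, and --- crucially --- the exact placement of the persistently parabolic points on each $\partial\Delta_s$, which the abstract bi-colored tree does not encode at all (those points are not contact points and so are invisible to the tree). The claimed equivalence ``tree isomorphism $\Leftrightarrow$ type-preserving homeomorphism'' is asserted in the paper without proof, and it already hides part of this issue. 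Nothing you have written shows that the finite symbolic data on the two sides can in fact be interleaved to agree while preserving the type of every break-point, and that is where the work lies. So while your outline correctly identifies the intended strategy and the relevant lemmas, the key inductive construction remains unwritten, and the statement should continue to be regarded as open.
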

With this formulation, Theorem~\ref{thm:qcclassfn-ltsets}, or more precisely, Proposition~\ref{prop:quasisurg} says that two Basilica limit sets with all black vertices are quasiconformally equivalent.

One can easily construct infinitely many nonequivalent bi-colored contact trees as follows.
Let $\Sigma$ be a genus $g$ surface with $2$ punctures. Let $\mathcal{C}$ be a strictly separating multi-curve so that
$$
\Sigma = \Sigma_1+...+\Sigma_g
$$
where each $\Sigma_i$ is a twice punctured torus so that they form a chain of nodal surfaces.
The two punctures of $\Sigma$ are contained in $\Sigma_1$ and $\Sigma_g$, and for $i\in\{2,..., g-1\}$, the nodal surfaces $\Sigma_i$ is connected to $\Sigma_{i-1}$ and $\Sigma_{i+1}$ via the two nodal singularities on $\Sigma_i$.
Therefore, we have a covering map from the bi-colored contact tree $\cT$ associated with $G$ to a bi-colored chain-tree $\widehat{\cT}$ consisting of $g$ vertices $v_1-v_2-v_3-\cdots-v_g$, where $v_1, v_g$ are white and $v_2,\cdots, v_{g-1}$ are black. We consider $\widehat{\cT}$ as the tree underlying a graph of spaces. An essential path is an edge-path in the graph of spaces  that corresponds to a word in normal form. 
Note that any essential path (possibly with backtracking) in the chain-tree $\widehat{\cT}$ can be lifted to a path in $\cT$ without backtracking.
Suppose that $g\geq 4$ is even. Then by construction, the distance between two white vertices of $\cT$ (with respect to the edge metric) is either an even integer, or an odd integer $\geq g-1$. Further, both possibilities are realized.
This (more precisely, the minimum odd distance $g-1$) is clearly an invariant of the equivalence class of bi-colored contact trees. Therefore, we have the following.
\begin{lem}\label{lem:infinKlein}
    There are infinitely many quasiconformally nonequivalent Basilica limit sets, i.e., there are infinitely many Kleinian points in $\mathfrak{B}$.
\end{lem}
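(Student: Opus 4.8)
The plan is to exhibit, for each even integer $g\geq 4$, a geometrically finite Bers boundary group $G_g$ whose bi-colored contact tree carries a numerical invariant that grows with $g$, and then to invoke the quasiconformal invariance of that tree established in Proposition~\ref{prop:qsinv}. Concretely, I would let $\Sigma^{(g)}$ be a genus $g$ surface with exactly two punctures, and choose a strictly separating pinching multicurve $\mathcal{C}^{(g)}$ consisting of $g-1$ curves so that $(\Sigma^{(g)},\mathcal{C}^{(g)})=\Sigma_1+\cdots+\Sigma_g$, where each $\Sigma_i$ is a twice-punctured torus and the nodal components form a chain: $\Sigma_1$ and $\Sigma_g$ each contain one of the two punctures of $\Sigma^{(g)}$, while for $2\leq i\leq g-1$ the component $\Sigma_i$ is attached to $\Sigma_{i-1}$ and $\Sigma_{i+1}$ along its two nodal singularities and contains no cusp of $\Sigma^{(g)}$. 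By Proposition~\ref{prop:structgfbersboundary} there is a geometrically finite Bers boundary group $G_g$ realizing this degeneration, and its limit set $\Lambda(G_g)$ is a Basilica, so $[\Lambda(G_g)]$ is a Kleinian point of $\mathfrak{B}$.

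Next I would analyze the bi-colored contact tree $\cT_g$ of $G_g$. The $G_g$-orbits of the components of $\Omega(G_g)-\Delta_\infty$ are indexed by the nodal components $\Sigma_1,\ldots,\Sigma_g$, and the incidence pattern is governed by the graph-of-spaces picture whose underlying graph is the finite chain $\widehat{\cT}_g$ with vertices $v_1-v_2-\cdots-v_g$; here $v_1,v_g$ are white (the components over $\Sigma_1,\Sigma_g$ are persistently parabolic, since these nodal components carry cusps of $\Sigma^{(g)}$) and $v_2,\ldots,v_{g-1}$ are black (purely accidental parabolic). There is a natural color-preserving covering map $\cT_g\to\widehat{\cT}_g$. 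The crucial combinatorial point is the standard fact that every essential edge-path in $\widehat{\cT}_g$ --- one corresponding to a word in normal form in the associated graph of groups --- lifts to $\cT_g$ without backtracking, and conversely every reduced edge-path in $\cT_g$ projects to an essential path. From this the set of edge-distances realized in $\cT_g$ between two white vertices is read off by projecting: such a path projects to a reduced word that starts and ends at an endpoint of $\widehat{\cT}_g$, hence has even length when the two endpoints project to the same end of $\widehat{\cT}_g$, and has length an odd integer $\geq g-1$ when they project to opposite ends; moreover both cases actually occur. Therefore the minimal odd edge-distance between white vertices of $\cT_g$ equals $g-1$.

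Finally, the integer $g-1$ is visibly invariant under any color-preserving isomorphism of bi-colored contact trees (the ribbon structure is irrelevant for this particular invariant), so $\cT_g\not\cong\cT_{g'}$ whenever $g\neq g'$ are even and $\geq 4$. By Proposition~\ref{prop:qsinv} the bi-colored contact tree is a quasiconformal invariant of a Basilica limit set; hence $[\Lambda(G_g)]\neq[\Lambda(G_{g'})]$ in $\mathfrak{B}$ for $g\neq g'$, which produces infinitely many distinct Kleinian points and proves the lemma.

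The hardest part is the combinatorial distance computation of the second step: one must make the ``essential paths lift without backtracking'' principle precise for the graph of groups coming from this pinching (using the normal-form description of elements of $G_g$), and then carefully verify that among white-to-white reduced paths in $\cT_g$ the realized lengths are exactly the even integers together with the odd integers $\geq g-1$ --- in particular that no odd distance below $g-1$ can occur. The one subtlety worth flagging is that moving between two distinct lifts of the same nodal component cannot shortcut the chain, which is precisely what the no-backtracking property guarantees; everything else (the existence of $G_g$, the coloring, the reduction to a numerical invariant) is immediate from Proposition~\ref{prop:structgfbersboundary} and Proposition~\ref{prop:qsinv}.
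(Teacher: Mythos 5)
Your proof is correct and takes essentially the same approach as the paper: the same genus-$g$, twice-punctured chain of twice-punctured tori, the same covering map from the bi-colored contact tree to a bi-colored chain tree, the same ``minimum odd distance between white vertices equals $g-1$'' invariant, and the same appeal to Proposition~\ref{prop:qsinv} for quasiconformal invariance. The only (mild) difference is that you state both directions of the normal-form lifting correspondence (essential paths lift without backtracking, and reduced paths project to essential paths), whereas the paper only records the first; your version is slightly more complete on the combinatorial step but the argument is otherwise identical.
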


\subsubsection{Bi-colored contact tree for Basilica Julia set}\label{subsubsec:bicolctrat}
A complete description of Basilica Julia sets up to type-preserving homeomorphisms (as in Conjecture~\ref{conj:confdim1univer}) is more involved. Instead of giving a complete quasiconformal invariant, here we consider the following rough invariants, which already allow us to see that there are infinitely many quasiconformally nonequivalent Basilica Julia sets.

Let $g$ be a rational map with Basilica Julia set,  and let $\mathcal{T}$ be its contact tree.
We color the vertices black or white depending on whether the corresponding component is a quasidisk or not.
It is easy to construct infinitely many non-equivalent bi-colored contact trees as follows.
Let $g_0$ be a critically fixed polynomial of degree $2d+1$ so that each critical point has local degree $3$, and the corresponding invariant Fatou components $U_1,..., U_{d}$ form a chain of length $d$.
Here for each $i\in\{2\cdots, d-1\}$, the component $U_i$ touches $U_{i-1}$ and $U_{i+1}$ at two repelling fixed points. Note that there are two repelling fixed points on $\partial U_1$ (and $\partial U_d$), but only one of them is a contact point.
Let $g$ be a parabolic polynomial obtained from $g_0$ by a simple pinching, so that each $U_i$, $i\in\{2,..., d-1\}$, remains superattracting, while $U_1, U_d$ become parabolic with orbits in $U_1, U_d$ converging to the non-contact fixed points on these components.
Suppose that $d\geq 4$ is even. Then by construction, as explained in \S~\ref{subsubsec:bicolct}, the distance (in the edge metric) between two white vertices of $\cT$ is either an even integer, or an odd integer $\geq d-1$.
Therefore,  we have the following.
\begin{lem}\label{lem:infinrat}
    There are infinitely many quasiconformally nonequivalent Basilica Julia sets, i.e., there are infinitely many rational points in $\mathfrak{B}$.
\end{lem}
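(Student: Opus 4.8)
The strategy is to mirror exactly the construction of infinitely many Kleinian points carried out in \S~\ref{subsubsec:bicolct}, transporting it to the rational-map setting using the bi-colored contact tree defined above (vertices colored white/black according to whether the corresponding bounded Fatou component is a non-quasidisk or a quasidisk). First I would fix an even integer $d \geq 4$ and construct a critically fixed polynomial $g_0$ of degree $2d+1$ whose $d+1$ critical points (counted appropriately) have local degree $3$, arranged so that the $d$ bounded invariant Fatou components $U_1,\ldots,U_d$ form a chain: $U_i$ touches $U_{i\pm 1}$ at repelling fixed points, $U_1$ and $U_d$ each carry an extra non-contact repelling fixed point, and no other bounded Fatou component of $g_0$ touches more than one $U_i$. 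Existence of such critically fixed polynomials is standard (their combinatorics is encoded by planar trees/Tischler-type data); I would then perform a parabolic pinching (a standard parabolic-implosion / quasiconformal surgery) that makes $U_1$ and $U_d$ parabolic, with the attracting orbit inside each converging to the non-contact fixed point, while $U_2,\ldots,U_{d-1}$ stay superattracting. Call the resulting parabolic polynomial $g = g_d$; its Julia set is a Basilica (homeomorphic to $J(Q)$) by a direct check that the bounded Fatou components are Jordan domains meeting tangentially at contact points and that the dynamics is that of a degree $2d+1$ Basilica-type branched cover.

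Next I would extract the combinatorial invariant. In the bi-colored contact tree $\mathcal{T}$ of $g_d$, the non-quasidisk (white) components are precisely $U_1$ and $U_d$: they are parabolic basins with a non-smooth cusp on the boundary, hence not quasidisks, whereas every other bounded Fatou component is a quasidisk. Exactly as in the Kleinian argument of \S~\ref{subsubsec:bicolct}, the quotient ``chain tree'' $v_1 - v_2 - \cdots - v_d$ (with $v_1,v_d$ white and the rest black) is covered by $\mathcal{T}$, lifts of essential edge-paths have no backtracking, and a parity count shows that the minimal \emph{odd} distance between two white vertices of $\mathcal{T}$ equals $d-1$ (while even distances between white vertices are also realized). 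This minimal odd distance is manifestly an isomorphism invariant of the bi-colored ribbon tree, so the trees of $g_d$ for distinct even $d \geq 4$ are pairwise non-isomorphic.

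Finally I would invoke the quasiconformal invariance of the bi-colored contact tree. Here I must justify the rational-map analog of Proposition~\ref{prop:qsinv}: a quasiconformal homeomorphism carrying one Basilica Julia set to another sends quasidisk Fatou components to quasidisk Fatou components and non-quasidisks to non-quasidisks, and preserves the contact relation and ribbon structure. Preservation of the contact relation and ribbon structure is topological. For the color: quasiconformal images of quasidisks are quasidisks, so a quasidisk component cannot map onto a non-quasidisk one; the reverse direction (a non-quasidisk $U_1$ cannot be the image of a quasidisk) follows by the same blow-up argument used in the proof of Proposition~\ref{prop:qsinv} and Proposition~\ref{prop:inequivJulia}, applied at the cusp point of $\partial U_1$: blowing up near the parabolic cusp produces, in the limit, configurations not quasiconformally equivalent to blow-ups at any boundary point of a quasidisk. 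Granting this, the bi-colored contact trees of the $g_d$ are pairwise non-isomorphic, hence $\{[J(g_d)]\}_{d \geq 4 \text{ even}}$ are infinitely many distinct rational points in $\mathfrak{B}$.

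The main obstacle I expect is the \emph{existence and control of the models $g_d$}: producing critically fixed polynomials of degree $2d+1$ with the prescribed chain-of-$d$ combinatorics and all critical points of local degree $3$, and then verifying that the parabolic pinching can be done keeping the middle components superattracting and producing exactly the desired non-contact parabolic fixed points on the two ends — so that $J(g_d)$ is genuinely a fat/ordinary Basilica with contact tree of the stated shape. The parity/tree-invariance bookkeeping, by contrast, is routine once the Kleinian argument of \S~\ref{subsubsec:bicolct} is in hand, and the quasiconformal invariance of the coloring is a mild adaptation of arguments already given in the paper.
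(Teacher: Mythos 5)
Your proposal follows essentially the same route as the paper: the same critically fixed degree-$(2d{+}1)$ models with $d$ local-degree-$3$ critical points along a chain, the same parabolic pinching of the two end components $U_1,U_d$, the same bi-colored contact tree with quasidisk/non-quasidisk coloring, and the same parity invariant (minimal odd distance $d{-}1$ between white vertices) inherited from the Kleinian argument of \S~\ref{subsubsec:bicolct}. Your added remarks on why the coloring is a quasiconformal invariant (quasiconformal images of quasidisks are quasidisks, plus a blow-up argument at the parabolic cusp in the spirit of Propositions~\ref{prop:qsinv} and~\ref{prop:inequivJulia}) are consistent with the paper's reasoning and fill in a step the paper leaves implicit; the only slip is the count ``$d+1$ critical points,'' which should be $d$.
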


\subsection{Proof of Theorem~\ref{thm:qcclassfn-ltsets}, Theorem~\ref{thm-basilica-stdpolymodel} and Theorem~\ref{thm:infKleinianRational}}\label{proof_three_main_thm_subsec}
We now assemble the ingredients and prove three of our main theorems.
\begin{proof}[Proof of Theorem~\ref{thm:qcclassfn-ltsets}]
    Suppose that $\Omega(G)$ consists of two components. Then $G$ is quasi-Fuchsian and $\Lambda(G)$ is a quasicircle.
    
    Otherwise, $G$ is a geometrically finite Bers boundary group. By Proposition~\ref{prop:structgfbersboundary} and \S~\ref{subsec:constructionBBS}, we can construct a Basilica Bowen-Series map $\mathcal{F}_G$ for $G$. 
    
    Suppose that $\Sigma=\Delta_\infty/G$ is compact.
    By Proposition~\ref{prop:quasisurg}, the limit set $\Lambda(G)$ is quasiconformally equivalent to $J(Q)$.

    Suppose that $\Sigma$ is non-compact. Suppose for contradiction that $\Lambda(G)$ is quasiconformally equivalent to $J(g)$ for some rational map $g$. Since $\Lambda(G)$ is a fat Basilica, $J(g)$ is a fat Basilica as well. 
    
    If $U_\infty$ is attracting or super-attracting, then by Proposition~\ref{prop:quasisurg}, $J(g)$ is quasiconformally equivalent to $J(Q)$. Thus $\Lambda(G)$ is quasiconformally equivalent to $J(Q)$. This is a contradiction to Proposition~\ref{prop:inequivJulia} \eqref{prop:inequivJulia:item1}.

    If $U_\infty$ is neither attracting nor super-attracting, then by Proposition~\ref{prop:inequivJulia} \eqref{prop:inequivJulia:item2}, $J(g)$ is not quasiconformally equivalent to $\Lambda(G)$, which is a contradiction.
    Therefore, $\Lambda(G)$ is not quasiconformally equivalent to any Julia set.
\end{proof}

\begin{proof}[Proof of Theorem~\ref{thm-basilica-stdpolymodel}]
    The `if' direction follows from Proposition~\ref{prop:quasisurg}, and the `only if' direction follows from Proposition~\ref{prop:inequivJulia} \eqref{prop:inequivJulia:item2}.
\end{proof}

\begin{proof}[Proof of Theorem~\ref{thm:infKleinianRational}]
    The theorem follows from Lemma~\ref{lem:infinKlein} and Lemma~\ref{lem:infinrat}.
\end{proof}

\section{David Hierarchy}\label{sec:davidHierarchy}
In this section, we establish Theorem~\ref{thm:davidhi} by showing that the Julia set of the postcritically finite quadratic polynomial $z^2-1$ sits atop the David hierarchy; i.e., $J(z^2-1)$ is the archbasilica in the world of basilicas arising from geometrically finite conformal dynamical systems.

\subsection{Generalized Basilica $\mathfrak{Q}_{\pc}-$maps}\label{subsec:geneBQmap}
Let us first discuss the subtleties that prevent us from using Basilica $\mathfrak{Q}-$maps to model Basilica Bowen-Series maps for non-compact surfaces, and then explain how to modify the construction to address these issues. The 
Basilica Bowen-Series maps for non-compact surfaces
will be referred to as \emph{non-compact Basilica Bowen-Series maps.}

\subsubsection{Persistent parabolics}
Let $G$ be a geometrically finite Bers boundary group associated with $(\Sigma, \mathcal{C})$. Recall that the unique non-Jordan component of the domain of discontinuity $\Omega(G)$ is denoted by $\Delta_\infty$.
Recall also the construction of the Basilica Bowen-Series map $\mathcal{F}_G$ from \S~\ref{section:BasBS}.
If the surface $\Sigma$ is non-compact, the intersection of the boundary of a puzzle piece $P$ with the limit set $\Lambda(G)$ may not be a contact point (see \S~\ref{subsec:pinchedcore}). Specifically, the representation $\rho:\pi_1(\Sigma)\to G$ sends the homotopy classes of curves freely homotopic to punctures of $\Sigma$ to parabolic elements of $G$, and the points of $\partial P\cap\Lambda(G)$ that are not contact points of $\Lambda(G)$ correspond to such \emph{persistently parabolic} elements of the group $G$.

On the other hand, each Basilica puzzle piece for a Basilica $\mathfrak{Q}-$map intersects the Julia set $J(Q)$ at contact points.
Thus, to model non-compact Basilica Bowen-Series maps as $(J, \mathfrak{Q})-$maps, we need to extend the definition of puzzle pieces.

\subsubsection{Generalized dyadic interval}\label{gen_dyad_int_subsec}
We start with a generalization of dyadic intervals given in Definition~\ref{defn:dyaint}.
\begin{defn}[Generalized dyadic intervals]\label{defn:gendyadic}
    Let $I = [s,t] \subseteq \mathbb{S}^1 \cong \R/\Z$ be an interval. We say that it is {\em generalized dyadic} if  there exist an integer $m$ and an interval $J$ in the collection $\{(0,1), (0, \frac23), (\frac13, \frac23), (\frac13, 1)\}$ so that  $\sigma_2^m$ is a homeomorphism between $\Int{I}=(s,t)$ and $J$. 
\end{defn}
Note that the intervals $[0,\frac13]$ and $[\frac23,1]$ are generalized dyadic as well, as their interiors are mapped under $\sigma_2$ to $(0, \frac23)$ and $(\frac13, 1)$ respectively.
Thus, any closed interval in $[0,1]$ with end-points in $\{0, \frac13, \frac23, 1\}$ is generalized dyadic.
More generally, it follows from the definition that we have the following characterization.
\begin{lem}
    An interval $I$ is generalized dyadic if and only if there exist non-negative integers $p, n$ such that $\partial I \subseteq \{\frac{p}{2^n}, \frac{p+\frac13}{2^n}, \frac{p+\frac23}{2^n}, \frac{p+1}{2^n}\}$.
\end{lem}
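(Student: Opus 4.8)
The plan is to prove both implications directly from Definition~\ref{defn:gendyadic}, tracking how the angle-doubling map $\sigma_2$ acts on intervals with endpoints in $\frac{1}{2^n}\Z + \{0, \frac13, \frac23\}$. First I would establish the forward direction: suppose $I = [s,t]$ is generalized dyadic, so there is an integer $m \geq 0$ and an interval $J \in \{(0,1), (0,\frac23), (\frac13,\frac23), (\frac13,1)\}$ with $\sigma_2^m : (s,t) \to J$ a homeomorphism. Since $\sigma_2^m$ multiplies lengths by $2^m$ and is locally a translation by an element of $\frac{1}{2^m}\Z$ (wherever it is continuous and injective on the interval), the preimage of each endpoint of $J$ under the relevant branch of $\sigma_2^{-m}$ has the form $\frac{k}{2^m} + \frac{c}{2^m}$ where $c \in \{0,\frac13,\frac23,1\}$ corresponds to the endpoints $0, \frac13, \frac23, 1$ appearing among the endpoints of the four allowed intervals $J$. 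Writing this as $\frac{p + c}{2^m}$ with $p = k \in \Z_{\geq 0}$ (after reducing mod $1$), we get $\partial I \subseteq \{\frac{p}{2^n}, \frac{p+1/3}{2^n}, \frac{p+2/3}{2^n}, \frac{p+1}{2^n}\}$ with $n = m$. The one subtlety is that $s$ and $t$ may a priori need different values of $p$ and $n$; but since $t - s = 2^{-m}|J|$ and $|J| \in \{1, \frac23, \frac13\}$, once $s$ is written in the required form the value of $t$ is forced to lie in the same family with the same $n$ (possibly a neighboring $p$, or the same $p$ with a shifted fractional part), which is exactly what the lemma asserts.

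For the reverse direction, I would assume $\partial I \subseteq \{\frac{p}{2^n}, \frac{p+1/3}{2^n}, \frac{p+2/3}{2^n}, \frac{p+1}{2^n}\}$ for some $n, p \geq 0$ and show $I$ is generalized dyadic. The idea is to apply $\sigma_2^n$ and track the image: $\sigma_2^n$ sends $\frac{p + c}{2^n}$ to $p + c \equiv c \pmod 1$ for $c \in \{0, \frac13, \frac23, 1\}$, so $\sigma_2^n(\partial I) \subseteq \{0, \frac13, \frac23\}$ (as $1 \equiv 0$). Provided $\sigma_2^n$ is injective on $\Int I$ — which holds because $|I| \leq 2^{-n}$ forces $\Int I$ to lie strictly inside a single dyadic interval of generation $n$, away from the points where consecutive dyadic intervals are glued — the image $\sigma_2^n(\Int I)$ is an open interval whose closure has endpoints in $\{0, \frac13, \frac23, 1\}$, hence is one of the intervals $(0,\frac13), (\frac13, \frac23), (\frac23, 1), (0, \frac23), (\frac13, 1), (0,1)$. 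The first three are not literally in the list $\{(0,1), (0,\frac23), (\frac13,\frac23), (\frac13,1)\}$, but $(0,\frac13)$ maps homeomorphically under $\sigma_2$ to $(0,\frac23)$ and $(\frac23,1)$ maps homeomorphically under $\sigma_2$ to $(\frac13,1)$ — exactly the remark made right after Definition~\ref{defn:gendyadic} — so after one further application of $\sigma_2$ (i.e.\ using $m = n+1$) the image becomes one of the four allowed intervals. Thus $I$ is generalized dyadic.

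The step I expect to require the most care is verifying injectivity of $\sigma_2^n$ (or $\sigma_2^m$) on the open interval $\Int I$ in both directions: this is where the precise structure of the endpoints matters, and one must rule out the degenerate situation where $I$ straddles a dyadic breakpoint at generation $n$ (equivalently, contains a point of the form $\frac{q}{2^n}$ in its interior), which would make $\sigma_2^n$ fail to be a homeomorphism onto its image. The resolution is the length bound $|I| = t - s \leq 2^{-n}$ together with the fact that both endpoints lie in $\frac{1}{2^n}(\Z + \{0,\frac13,\frac23\})$: this pins $I$ inside the closure of a single generation-$n$ dyadic interval, and since $\frac13, \frac23 \notin \{0\}$ the interior avoids the breakpoints, giving injectivity. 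Once this is in hand the rest is bookkeeping with the four model intervals and the single-step reductions $(0,\frac13) \to (0,\frac23)$, $(\frac23,1)\to(\frac13,1)$ under $\sigma_2$.
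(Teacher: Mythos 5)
The paper states this lemma without proof (it is introduced with the phrase ``it follows from the definition''), so there is no model argument to compare against. Your proof is correct and follows the natural, intended route: lift $\sigma_2^m$ to $x\mapsto 2^m x$ on $\R$ and track endpoints. Two small points are worth tightening. In the forward direction, the hedge ``possibly a neighboring $p$'' is both unnecessary and slightly off: since $\sigma_2^m\vert_{(s,t)}$ is a homeomorphism onto $J=(a,b)$, the lift $2^m[s,t]$ must be an interval of \emph{length} $b-a < 1$ projecting injectively onto $J$, which forces $2^m s = k+a$ and $2^m t = k+b$ with the \emph{same} integer $k$; thus $p=k$ and $n=m$ serve both endpoints simultaneously, and this one line replaces the hand-waving. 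In the reverse direction, your list of six candidate images is right, but ``the first three are not literally in the list'' overcounts: $(\tfrac13,\tfrac23)$ \emph{is} one of the four allowed intervals, so only $(0,\tfrac13)$ and $(\tfrac23,1)$ need the additional application of $\sigma_2$. Neither slip affects the validity of the argument.
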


We color a point on $\mathbb{S}^1$ \emph{red} if it is eventually mapped to $0$, and \emph{blue} if it is eventually mapped to the cycle $\frac13\leftrightarrow\frac23$.
A boundary point of a generalized dyadic interval $I$ is thus colored red or blue. We will refer to them as {\em dyadic boundary points}, or {\em non-dyadic boundary points}, respectively. These end-points will correspond to the contact points or cusps (respectively) on the boundaries of the components of $\Omega(G)-\Delta_\infty$.
With notation as in Definition~\ref{defn:gendyadic}, we call the image $J$ the {\em type} of the generalized dyadic interval $I$.
By definition, dyadic intervals are generalized dyadic of type $[0,1]$. For two generalized dyadic intervals $I_1, I_2$ of the same type, there exist $m,n$ so that $\sigma_2^{-n} \circ \sigma_2^m$ is a homeomorphism between $I_1$ and $I_2$.
\begin{remark}\label{remark:compatible}
   Note that the four types of generalized dyadic intervals correspond to the four different choices of colorings of the end-points of $I$.
\end{remark}

Generalized dyadic intervals share some good properties of dyadic intervals. One important feature is the following decomposition property.

\begin{lem}\label{lem:decomp}
    Let $I=[s,t]$ be a generalized dyadic interval. Then the following hold.
    \begin{itemize}[leftmargin=8mm]
        \item There exists a decomposition $I = [s,x] \cup [x, t]$ into generalized dyadic intervals such that $x$ has color red.
        \item There exists a decomposition $I = [s,x] \cup [x, t]$ into generalized dyadic intervals such that $x$ has color blue, provided that at least one of $s, t$ has color red.
    \end{itemize}
\end{lem}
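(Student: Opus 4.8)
The plan is to reduce the statement to finitely many ``model'' cases by pushing forward along the angle-doubling map $\sigma_2$, and then to exhibit the splitting point $x$ by hand in each model case. Two elementary observations drive the reduction. First (colour invariance): a point of $\mathbb{S}^1$ is red (resp.\ blue) exactly when its forward $\sigma_2$-orbit eventually hits $0$ (resp.\ the cycle $\frac13\leftrightarrow\frac23$), so the colour of $y$ agrees with the colour of $\sigma_2(y)$, hence with that of $\sigma_2^m(y)$ for every $m\ge 0$. Second (pull-back of generalized dyadic intervals): if $K$ is generalized dyadic and $\sigma_2$ restricts to a homeomorphism from $\Int K'$ onto $\Int K$ for some closed arc $K'$, then $K'$ is generalized dyadic, since composing the homeomorphism $\sigma_2^{m'}\colon \Int K\to L$ (with $L$ one of the four admissible types) with $\sigma_2\colon\Int K'\to\Int K$ yields a homeomorphism $\sigma_2^{m'+1}\colon\Int K'\to L$. (Alternatively one could argue throughout via the denominator characterization stated just before the lemma, but the $\sigma_2$-reduction keeps the bookkeeping lighter.)

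Given $I=[s,t]$ generalized dyadic, fix $m$ with $\sigma_2^m\colon\Int I\to J$ a homeomorphism onto a type $J\in\{(0,1),(0,\tfrac23),(\tfrac13,\tfrac23),(\tfrac13,1)\}$. Since $\sigma_2$ is orientation-preserving, $\sigma_2^m$ extends continuously to a map $\bar I\to \bar J$ sending $s,t$ to the left, resp.\ right, endpoint of $\bar J$, and by colour invariance it preserves the colours of these endpoints; in particular $s$ or $t$ is red if and only if $J\ne(\tfrac13,\tfrac23)$. Now suppose the model case is settled, i.e.\ $\bar J$ admits a decomposition $\bar J=[s',x']\cup[x',t']$ into generalized dyadic intervals with $x'$ of the prescribed colour. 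Then $x'\in\Int J$, so $x:=(\sigma_2^m|_{\Int I})^{-1}(x')\in\Int I$ has the same colour as $x'$, and $\sigma_2^m$ maps $(s,x)$ and $(x,t)$ homeomorphically onto $(s',x')$ and $(x',t')$; by the pull-back observation, $[s,x]$ and $[x,t]$ are generalized dyadic. Combined with the endpoint/colour correspondence above, this shows it suffices to prove the lemma for $I\in\{[0,1],\,[0,\tfrac23],\,[\tfrac13,\tfrac23],\,[\tfrac13,1]\}$.

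For the model cases I would check directly. For part (1) take $x=\tfrac12$ in all four cases: $\tfrac12$ is red, and each half is generalized dyadic because $\sigma_2$ carries $(0,\tfrac12)$ and $(\tfrac12,1)$ onto $(0,1)$, carries $(\tfrac12,\tfrac23)$ onto $(0,\tfrac13)=\Int[0,\tfrac13]$ and $(\tfrac13,\tfrac12)$ onto $(\tfrac23,1)=\Int[\tfrac23,1]$ (and $[0,\tfrac13]$, $[\tfrac23,1]$ are generalized dyadic by the remark following Definition~\ref{defn:gendyadic}), so one more application of the pull-back fact finishes these halves. For part (2): when $J=[0,1]$ or $J=[0,\tfrac23]$ take $x=\tfrac13$ (blue; $[0,\tfrac13]$ is generalized dyadic as above, and $[\tfrac13,1]$, $[\tfrac13,\tfrac23]$ are themselves types); when $J=[\tfrac13,1]$ take $x=\tfrac23$ (blue; $[\tfrac13,\tfrac23]$ is a type and $[\tfrac23,1]$ is generalized dyadic); and when $J=[\tfrac13,\tfrac23]$ both endpoints are blue, so the hypothesis ``$s$ or $t$ red'' fails and there is nothing to prove—consistently with the reduction.

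The only mildly delicate point is the bookkeeping in the reduction: matching endpoints and colours under $\sigma_2^m$ and recognizing the relevant halves (such as $[\tfrac12,\tfrac23]$ and $[\tfrac13,\tfrac12]$) as generalized dyadic even though their interiors are not literally one of the four listed types. None of this is substantial, so I do not expect a real obstacle; the argument is a short reduction followed by a finite verification.
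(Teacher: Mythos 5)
Your proposal is correct and follows essentially the same route as the paper: blow up $I$ by $\sigma_2^m$ to one of the four model types and verify the splitting by hand there, using that $\sigma_2$ preserves colours and that pulling back a generalized dyadic interval along a homeomorphic branch of $\sigma_2$ gives another generalized dyadic interval. The paper only spells out two of the model decompositions and leaves the rest as a routine check, whereas you tabulate all cases (including the observation that type $[\frac13,\frac23]$ has both endpoints blue, so the hypothesis of part (2) is vacuous there), but the argument is the same.
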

\begin{proof}
    We blowup the interval $I$ using $\sigma_2^m$ to one of the intervals in the list 
    $$
    \bigg\{[0,1], [0, \frac23], [\frac13, \frac23], [\frac13, 1]\bigg\}.
    $$
    We can explicitly verify the statement for each one of the four intervals; for instance, one has the decompositions $[0,\frac23]=[0,\frac12]\cup[\frac12,\frac23]$ and $[\frac13,\frac23]=[\frac13,\frac12]\cup[\frac12,\frac23]$ such that the intermediate point has color red.
\end{proof}

By applying Lemma~\ref{lem:decomp} inductively, we can construct a generalized dyadic decomposition realizing a given color sequence.
\begin{cor}\label{cor:alternating}
    Let $(c_i)_{i=1}^N$ be a sequence such that each $c_i\in\{blue, red\}$, and at least one $c_i$ is red.
    Then there exists a decomposition of 
    $$
    \mathbb{S}^1 = [s_1,s_2]\cup...\cup [s_N, s_1],
    $$
    so that each interval $[s_i, s_{i+1}]$ is generalized dyadic, and $s_i$ has color $c_i$.
\end{cor}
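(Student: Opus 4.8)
The plan is to deduce Corollary~\ref{cor:alternating} directly from Lemma~\ref{lem:decomp} by an inductive splitting argument. The key point is that Lemma~\ref{lem:decomp} lets us cut a generalized dyadic interval into two generalized dyadic pieces, controlling the \emph{color} of the new break-point (red always, blue as long as one of the two original endpoints is red). So I will build the desired decomposition of $\mathbb{S}^1$ one point at a time, keeping the invariant that every interval produced so far is generalized dyadic and that the ``remaining'' interval still has at least one red endpoint.

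First I would set up the base case. Since $\mathbb{S}^1 \cong \R/\Z$ with $0$ a red point (it is fixed by $\sigma_2$, hence trivially eventually mapped to $0$), the whole circle, cut at $0$, is the generalized dyadic interval $[0,1]$ of type $[0,1]$ — both its endpoints are the red point $0$. By relabeling, I may assume $c_1 = red$ (some $c_i$ is red by hypothesis, and the labeling of the cyclic sequence is free), and place $s_1 = 0$. Then I want to successively introduce break-points $s_2, s_3, \ldots, s_N$ so that $[s_i, s_{i+1}]$ is generalized dyadic and $s_i$ has color $c_i$. I would proceed by choosing $s_2, \ldots, s_N$ in order inside the arc $[s_1, s_1]$ (going counterclockwise), at each stage splitting the current ``last'' generalized dyadic interval $[s_{k}, s_1]$ (whose right endpoint $s_1 = 0$ is red) into $[s_k, s_{k+1}] \cup [s_{k+1}, s_1]$: if $c_{k+1} = red$ use the first bullet of Lemma~\ref{lem:decomp}; if $c_{k+1} = blue$ use the second bullet, which applies precisely because the right endpoint $s_1$ is red. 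Both resulting pieces are generalized dyadic, and crucially the new interval $[s_{k+1}, s_1]$ still has a red endpoint ($s_1$), so the induction continues. After $N-1$ such splits we obtain $\mathbb{S}^1 = [s_1,s_2] \cup \cdots \cup [s_{N-1}, s_N] \cup [s_N, s_1]$ with each interval generalized dyadic and $s_i$ of color $c_i$ for $i = 1, \ldots, N$ (note $s_1$ is red $= c_1$, and the last interval $[s_N, s_1]$ is automatically generalized dyadic from the final split).

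I expect the only mildly delicate point — the ``main obstacle,'' such as it is — to be bookkeeping the invariant that the interval being split always retains a red endpoint, which is needed to license the blue split. This is handled by always splitting off from the side \emph{away} from the fixed red anchor $s_1 = 0$, so that $0$ persists as an endpoint of the shrinking interval throughout the process; thus the hypothesis of the second bullet of Lemma~\ref{lem:decomp} is met at every step regardless of the colors $c_i$ chosen. A secondary routine check is that the relabeling of the cyclic color sequence to put a red entry first is harmless, since Corollary~\ref{cor:alternating} is a statement about cyclic decompositions of $\mathbb{S}^1$. With these observations the proof is a short induction on $N$, invoking Lemma~\ref{lem:decomp} at each stage.

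\begin{proof}
After a cyclic relabeling of $(c_i)_{i=1}^N$, we may assume $c_1 = red$ (this is possible since at least one $c_i$ is red, and a cyclic decomposition of $\mathbb{S}^1$ is unaffected by such relabeling). Identify $\mathbb{S}^1$ with $\R/\Z$ and set $s_1 = 0$; note that $0$ is a red point, so its color is $c_1$. Cutting $\mathbb{S}^1$ at $s_1$ yields the generalized dyadic interval $[0,1]$ (of type $[0,1]$), both of whose endpoints are the red point $s_1$.

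We construct $s_2, \ldots, s_N$ inductively so that for each $k \in \{1, \ldots, N\}$, the arc from $s_1$ to $s_1$ traversed counterclockwise decomposes as
\[
\mathbb{S}^1 = [s_1, s_2] \cup \cdots \cup [s_{k-1}, s_k] \cup [s_k, s_1],
\]
with every listed interval generalized dyadic and $s_i$ of color $c_i$ for $i \le k$. For $k=1$ this is the decomposition $\mathbb{S}^1 = [s_1, s_1] = [0,1]$ above. Assume it holds for some $k < N$. The interval $[s_k, s_1]$ is generalized dyadic and its right endpoint $s_1$ is red. If $c_{k+1} = red$, apply the first bullet of Lemma~\ref{lem:decomp} to split $[s_k, s_1] = [s_k, s_{k+1}] \cup [s_{k+1}, s_1]$ into generalized dyadic intervals with $s_{k+1}$ of color red. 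If $c_{k+1} = blue$, apply the second bullet of Lemma~\ref{lem:decomp} — valid because $s_1$ is red — to split $[s_k, s_1] = [s_k, s_{k+1}] \cup [s_{k+1}, s_1]$ into generalized dyadic intervals with $s_{k+1}$ of color blue. In either case the new interval $[s_{k+1}, s_1]$ is generalized dyadic and retains the red endpoint $s_1$, so the inductive invariant holds for $k+1$.

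Taking $k = N$ gives the decomposition $\mathbb{S}^1 = [s_1, s_2] \cup \cdots \cup [s_{N-1}, s_N] \cup [s_N, s_1]$ with each interval generalized dyadic and $s_i$ of color $c_i$ for all $i$, which is the assertion of the corollary.
\end{proof}
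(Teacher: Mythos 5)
Your proof is correct and takes essentially the same approach as the paper, which simply says "By applying Lemma~\ref{lem:decomp} inductively, we can construct a generalized dyadic decomposition realizing a given color sequence." You have supplied the routine details — anchoring at the red point $0$ after a harmless cyclic relabeling of the colors, and peeling off intervals one at a time while keeping $0$ as the red endpoint that licenses the blue splits — all of which are implicit in the paper's one-line argument.
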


\subsubsection{Basilica $\mathfrak{Q}_{\pc}-$maps}
Let $Q_{\pc}(z) = z^2-1$ be the quadratic \pcf polynomial with Basilica Julia set.
Let $\mathfrak{Q}_{\pc}$ be the collection of conformal maps of the form $Q_{\pc}^{-n} \circ Q_{\pc}^m$.
The construction of Basilica $\mathfrak{Q}-$maps in \S~\ref{sec-basilica} extends directly to produce Basilica $\mathfrak{Q}_{\pc}-$maps.

Let $U_0$ be the Fatou component of $Q_{\pc}$ containing $0$. Note that $Q_{\pc}^2\vert_{\overline{U_0}}$ is conformally conjugate to $z^2\vert_{\overline{\D}}$. As $z^2$ is expanding on $\mathbb{S}^1$, we can define internal puzzle pieces corresponding to the arcs $I_{U_0}[0,1]$, $I_{U_0}[0,\frac23]$, $I_{U_0}[\frac13,1]$, $I_{U_0}[\frac13,\frac23]\subset \partial U_0$ as in \S~\ref{subsubsec:internalpuzzle}.
With these internal puzzle pieces at our disposal, we define {\em generalized dyadic segments} $I_{U}[s,t]$ and {\em generalized internal puzzle pieces} $P_U[s,t]$ for any bounded Fatou component $U$ in the exact same way as in Definition~\ref{defn:internalpuzzle}.
By the above observations, for any two internal puzzle pieces $P_{U_i}[s_i,t_i],\ i\in\{1,2\}$, if $[s_i, t_i]$ are of the same type, then there exist $m,n$ so that $Q_{\pc}^{-m} \circ Q_{\pc}^n : P_{U_1}[s_1,t_1] \longrightarrow P_{U_2}[s_2,t_2]$ is a homeomorphism.

The definition of \emph{generalized internal puzzle}
$$
\left(\{U_i\}, P_{U_i}[s_{i,j}, s_{i,j+1}]\right); i \in\{0,\cdots, r\},\ j\in\{1,\cdots, n_i\},
$$
is the same as Definition~\ref{defn:intpuz} after replacing dyadic segments with generalized dyadic segments. 
It induces a \emph{generalized Basilica puzzle} $(P_i)_{i\in \mathcal{I}}$ as in Definition~\ref{basi_puzz_def} and \S~\ref{subsubsec:indbp}.

Let $P_i$ be a generalized Basilica puzzle piece. Then it is either of limb type or torso type.
If it is of torso type, then $P_i$ intersects some bounded Fatou component $U$ in a generalized internal puzzle piece $P_U[s,t]$ for some generalized dyadic interval $[s,t]$.
Note that if $s$ is not dyadic, then the corresponding point in $\partial P_i \cap J(Q_{\pc})$ is not a contact point. Observe that there are $5$ types of puzzle pieces in this setting: one limb type, and four different torso types depending on the colors of the end-points of the corresponding generalized dyadic interval (cf. Remark~\ref{remark:compatible}).
Given two puzzle pieces $P_i, P_j$ of the same type, there is a canonical map $F = Q_{\pc}^{-m}\circ Q_{\pc}^n$ which is a homeomorphism from $P_i$ to $P_j$.
With this convention of types, the definition of Markov refinements in Definition~\ref{defn:refineBp} also carries over naturally to generalized Basilica puzzles.

\begin{defn}[c.f. Definition~\ref{defn:basiqmap}]
    Let $\mathfrak{P} = \{P_i\}_{i\in \mathcal{I}}$ be a generalized Basilica puzzle and let $\mathfrak{P}^1=\{P_{ij}\}_{(ij)\in \mathcal{I}^1}$ be a Markov refinement of $\mathfrak{P}$.
    We call the collection of canonical maps $\mathcal{F} = \{F_{ij}: P_{ij} \longrightarrow P_j\}_{(ij) \in \mathcal{I}^1}$ a {\em generalized Basilica $\mathfrak{Q}_{\pc}-$map}, and write it as 
    $$
    \mathcal{F}: \mathfrak{P}^1 \rightarrow \mathfrak{P}.
    $$
\end{defn}
Since the map $Q_{\pc}$ has no parabolic fixed point, a similar argument gives the following analog of Theorem~\ref{thm:symmetrichyperbolic} and Proposition~\ref{prop:bdd_fatou_symm_para} (where the modification uses Lemma~\ref{lem:decomp} and Corollary~\ref{cor:alternating}).
\begin{theorem}\label{thm:symmetrichyperbolicgeneralizedpcf}
    Let $\mathcal{F}: \mathfrak{P}^1 \rightarrow \mathfrak{P}$ be a generalized Basilica $\mathfrak{Q}_{\pc}-$map.
    Suppose that $\mathcal{F}$ is topologically expanding. 
     Then there exists a modified refinement $\widetilde{\mathcal{F}}: \widetilde{\mathfrak{P}}^1 \rightarrow \widetilde{\mathfrak{P}}$ which is in particular topologically conjugate to $\mathcal{F}: \mathfrak{P}^1 \rightarrow \mathfrak{P}$ on the Julia set, and 
    \begin{itemize}[leftmargin=8mm]
        \item the induced Markov map $\widetilde{\mathcal{F}}_\infty$ is symmetrically hyperbolic;
        \item the induced Markov map $\widetilde{\mathcal{F}}_{\bdd}$ on $\displaystyle\bigcup_{s\in \{0,\ldots, r\}}  \partial^I U_s$ is symmetrically hyperbolic.
    \end{itemize}
\end{theorem}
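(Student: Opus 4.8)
\textbf{Proof proposal for Theorem~\ref{thm:symmetrichyperbolicgeneralizedpcf}.}

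The plan is to follow the blueprint of the proof of Theorem~\ref{thm:symmetrichyperbolic}, but now carry out the modification/refinement argument on both the unbounded component $U_\infty$ and the bounded components $U_0,\ldots,U_r$ simultaneously, using the generalized dyadic intervals of \S~\ref{gen_dyad_int_subsec} in place of ordinary dyadic intervals. The key structural point that makes this possible is that $Q_{\pc}(z)=z^2-1$ has \emph{no parabolic cycle}: on $\partial U_0$ the first-return map $Q_{\pc}^2$ is conjugate to $z^2$, which is hyperbolically expanding, so the natural candidate for a periodic break-point will automatically have positive Lyapunov exponent rather than the parabolic degeneracy forced by the parabolic fixed point of $Q(z)=z^2-\tfrac34$. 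This is why, in contrast to Proposition~\ref{prop:bdd_fatou_symm_para}, the induced bounded map $\widetilde{\mathcal F}_{\bdd}$ will come out symmetrically \emph{hyperbolic} after the same kind of derivative-matching surgery.

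First I would set up the analogues of Lemmas~\ref{lem:strictpre}, \ref{lem:2k}, \ref{lem:fac2}, \ref{lem:primitivedyadicdecomp}, \ref{lem:localrefine}, \ref{lem:onesideperiodic}, and \ref{lem:LargeM}. The combinatorial decomposition lemma is now Lemma~\ref{lem:decomp} and Corollary~\ref{cor:alternating}: given a torso-type generalized Basilica puzzle piece $P$ for a Fatou component $U$ associated to a generalized dyadic interval $I_U[s,t]$, and given $M$ pieces in a refinement, one can realize a prescribed color sequence at the subdivision points and, just as in Lemma~\ref{lem:primitivedyadicdecomp}, prescribe the ratio $|s_1'-s|/|t-s|$ among the admissible values; the derivative relation $N(S_1')=N(S_1)+2k$ of Lemma~\ref{lem:fac2} still holds because $Q_{\pc}^2$ still doubles arc-length on $\partial U_0$. (One must remember the type-compatibility constraint of Remark~\ref{remark:compatible} when extending a local refinement of a single torso piece to a full Markov refinement; Corollary~\ref{cor:alternating} provides exactly the flexibility needed, with the mild hypothesis that at least one end-point be red, which is automatically satisfied since $0$ lies in the grand orbit of every dyadic point.) With these in hand, one chooses an even integer $N$ larger than $\max N(S)$ over boundary-periodic level-$1$ pieces, takes $M\geq 2N$, invokes the generalized-puzzle analogue of Lemma~\ref{lem:LargeM} to arrange that every boundary-periodic piece is one-sided boundary periodic with at least $M$ torso children, and then applies the local modification (generalized Lemma~\ref{lem:localrefine}) to force $D\widetilde{\mathcal F}_\infty^{q^\pm}(x^\pm)=2^{Nq^\pm}$ at each periodic break-point of $\widetilde{\mathcal F}_\infty$, giving Lyapunov exponent $N>0$ and symmetry. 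The same surgery, applied to the break-points of $\widetilde{\mathcal F}_{\bdd}$, forces the derivative of the first-return map at each periodic bounded break-point to be an (even) power of $2$ bounded away from $1$ — here crucially there is no obstruction from a parabolic cycle, so one genuinely gets a hyperbolic periodic orbit on $\partial U_s$ rather than a parabolic one — hence $\widetilde{\mathcal F}_{\bdd}$ is symmetrically hyperbolic as well. Finally one notes that modification and refinement preserve the topological conjugacy class on $J(Q_{\pc})$ by Proposition~\ref{prop-combconjimpliestopconj}, so $\widetilde{\mathcal F}$ is topologically conjugate to $\mathcal F$ on the Julia set.

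The main obstacle I anticipate is bookkeeping the five puzzle-piece types (one limb, four torso) consistently through the refinement/modification steps: unlike the two-type situation in \S~\ref{sec-basilica}, when one subdivides a torso generalized dyadic interval the colors of the new endpoints must be chosen so that the resulting pieces remain of admissible types and so that the global Markov matrix is unchanged up to combinatorial conjugacy. This is precisely what Lemma~\ref{lem:decomp} and Corollary~\ref{cor:alternating} are designed to handle, but one has to check carefully that the hypothesis "at least one endpoint red" never fails along the induction — which follows because the generalized dyadic interval attached to any torso piece always contains a dyadic (red) point in the grand orbit of $0$ on its boundary after finitely many subdivisions, or can be arranged to by an initial refinement. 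A secondary technical point is verifying that Lemma~\ref{lem:2k} still produces an \emph{even} power of $2$ for the return-map derivative at a bounded periodic break-point, which again is immediate from the period-$2$ structure of the cycle $\{U_0,Q_{\pc}(U_0)\}$ and the fact that $Q_{\pc}^2$ is conjugate to $z^2$ on $\overline{U_0}$. Once these are in place the proof is a routine transcription of the argument in \S~\ref{ideal_bdry_map_reg_subsec}, run on both $U_\infty$ and the bounded components at once.
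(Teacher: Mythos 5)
Your proposal is correct and follows the paper's approach: the paper gives only a one-sentence pointer for this theorem, observing that $Q_{\pc}$ has no parabolic cycle and that the modification should use Lemma~\ref{lem:decomp} and Corollary~\ref{cor:alternating} in place of Lemma~\ref{lem:primitivedyadicdecomp}, and your outline fills in exactly those steps. One minor clarification worth recording: the surgery balancing $\widetilde{\mathcal{F}}_\infty$ automatically balances $\widetilde{\mathcal{F}}_{\bdd}$ as well (via the relation that the external return derivative is $2^{2k}$ while the internal one is $2^{k}$, so equal Lyapunov exponents on one circle propagate to the other), so no separate modification on the bounded circles is needed and it is immaterial whether the bounded return-map derivatives themselves are even powers of $2$.
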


\subsection{Model for general Basilica Bowen-Series maps}
We now model a (possibly non-compact) Basilica Bowen-Series map $\mathcal{F}_G$ associated with a geometrically finite Bers boundary group $G$ by an appropriate generalized Basilica $\mathfrak{Q}_{\pc}-$map. This will yield a David homeomorphism carrying $J(Q_{\pc})$ onto $\Lambda(G)$, as promised in Theorem~\ref{thm:davidhi}.

\begin{prop}\label{prop:quasisurggenpcf}
    Let $\Sigma$ be a hyperbolic surface, and let $G$ be a geometrically finite Bers boundary group associated to $(\Sigma, \mathcal{C})$. Let $\mathcal{F}_G$ be a Basilica Bowen-Series map. Then there exists a generalized Basilica $\mathfrak{Q}_{\pc}-$map $\mathcal{F}: \mathfrak{P}^1\longrightarrow \mathfrak{P}$ which is David conjugate to $\mathcal{F}_G$ on $J(Q_{\pc})$. More precisely, this means that there exists a David map $H: \widehat{\C} \longrightarrow \widehat{\C}$ so that
    $$
    H \circ \mathcal{F}(z) = \mathcal{F}_G \circ H(z)
    $$
    for all $z \in J(Q_{\pc})$.
\end{prop}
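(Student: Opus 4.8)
The strategy mirrors the proof of Proposition~\ref{prop:quasisurg}, but with generalized Basilica puzzles replacing Basilica puzzles, and with the quasi-symmetry input from Proposition~\ref{prop:qsmarkovmapFiniteCircle}\eqref{HHFC} upgraded to the David input from part~\eqref{HPFC}. First I would record, as in the compact case, that $\mathcal{F}_G$ is topologically expanding (Lemma~\ref{lem:tp}) and that its induced Markov maps on ideal boundaries $(\mathcal{F}_G)_s$ have all break-points symmetric, with $(\mathcal{F}_G)_s$ symmetrically parabolic for $s\neq\infty$ and, for $s=\infty$, symmetrically parabolic at break-points corresponding to cusps of $\Sigma$ and symmetrically hyperbolic otherwise (Proposition~\ref{prop:basilicabssymmetric}). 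Next, using the generalized dyadic intervals of \S~\ref{gen_dyad_int_subsec} and the color-realization statement Corollary~\ref{cor:alternating}, I would build a topological model: for the pinched core polygon of $G$ (with vertices colored red/blue according to whether they are contact points or cusp points of $\Lambda(G)$, exactly the red/blue dichotomy of Remark~\ref{remark:compatible}), produce a generalized internal puzzle on $J(Q_{\pc})$ with boundary points of matching colors, inducing a generalized Basilica $\mathfrak{Q}_{\pc}$-map $\widetilde{\mathcal{F}}\colon\widetilde{\mathfrak{P}}^1\to\widetilde{\mathfrak{P}}$ combinatorially conjugate to $\mathcal{F}_G$; by Proposition~\ref{prop-combconjimpliestopconj} this gives a topological conjugacy between $\widetilde{\mathcal{F}}|_{J(Q_{\pc})}$ and $\mathcal{F}_G|_{\Lambda(G)}$. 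This step is literally the analog of Proposition~\ref{prop:topmodelbowenseries}, the only change being that persistently parabolic components of $\Omega(G)-\Delta_\infty$ are now accommodated by torso puzzle pieces whose boundary points are blue (non-dyadic).

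Then I would apply Theorem~\ref{thm:symmetrichyperbolicgeneralizedpcf} to replace $\widetilde{\mathcal{F}}$ by a modified refinement $\mathcal{F}\colon\mathfrak{P}^1\to\mathfrak{P}$, still topologically conjugate to $\mathcal{F}_G$ on the Julia set, such that both induced Markov maps $\mathcal{F}_\infty$ on $\partial^I U_\infty$ and $\mathcal{F}_{\bdd}$ on $\bigcup_{s}\partial^I U_s$ are symmetrically hyperbolic. Let $H$ denote the resulting topological conjugacy. On each ideal boundary circle $\partial^I U_s$ (for $s$ finite), the circle map $\mathcal{F}_s$ is symmetrically hyperbolic while the corresponding $(\mathcal{F}_G)_s$ is symmetrically parabolic; on $\partial^I U_\infty$, $\mathcal{F}_\infty$ is symmetrically hyperbolic and $(\mathcal{F}_G)_\infty$ is symmetrically parabolic at cusps and hyperbolic elsewhere. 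In every case the combinatorial conjugacy from $\mathcal{F}_s$ to $(\mathcal{F}_G)_s$ sends hyperbolic break-points to (possibly) parabolic ones and never sends a parabolic break-point to a hyperbolic one (indeed $\mathcal{F}_s$ has no parabolic break-points at all after the modification), so Proposition~\ref{prop:qsmarkovmapFiniteCircle}\eqref{HPFC} provides a David conjugacy $H|_{\partial^I U_s}$ extending continuously to a David homeomorphism of the disk; transporting through the Riemann uniformizations $\Phi_s$, we extend $H$ over each $\overline{U_s}$ Davidly. For a bounded Fatou component $U$ inside a level $1$ puzzle piece, the minimal iterate $\mathcal{F}^n(U)=U_s$ is eventually one of these, and since $\mathcal{F}_G$ is conformal on the corresponding component of $\Omega(G)$, pulling back $H|_{\mathcal{F}^n(U)}$ gives a David extension of $H$ over $\overline{U}$ with the same David constants (conformal pullbacks preserve the David condition, cf.\ \cite[\S2]{LMMN}). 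This yields a homeomorphism $H$ of $\widehat{\C}$, David off $J(Q_{\pc})$, conjugating $\mathcal{F}|_{J(Q_{\pc})}$ to $\mathcal{F}_G|_{\Lambda(G)}$.

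Finally I would invoke David removability of $J(Q_{\pc})$: by Lemma~\ref{lem:eccentricity}(2), since $Q_{\pc}(z)=z^2-1$ is hyperbolic with connected Julia set, a homeomorphism of $\widehat{\C}$ that is David off $J(Q_{\pc})$ is David on all of $\widehat{\C}$. This gives the desired global David map $H$ with $H\circ\mathcal{F}=\mathcal{F}_G\circ H$ on $J(Q_{\pc})$. The main obstacle in this argument is the uniformity needed to glue the countably many local David extensions into a single global David map: the David constants $(C,\alpha,\varepsilon_0)$ produced by Proposition~\ref{prop:qsmarkovmapFiniteCircle} on the finitely many circles $\partial^I U_s$, $s\in\{1,\dots,j_1\}\cup\{\infty\}$, are uniform, but one must check that pulling back under the (conformal, but unboundedly iterated) branches $\mathcal{F}^n$ to the infinitely many remaining bounded Fatou components does not degrade these constants — this uses that the relevant inverse branches of $\mathcal{F}^n = Q_{\pc}^{-n}\circ Q_{\pc}^m$ are conformal on definite neighborhoods bounded away from the (finite, off-Julia) post-critical set, so Koebe distortion gives uniformly bi-Lipschitz control, and the David condition, being expressed in terms of the Beltrami coefficient, is invariant under conformal pre- and post-composition. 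A secondary subtlety, already flagged in Remark~\ref{david_hierarchy_rem}, is that David maps are not closed under composition, so one must be careful that the single pointwise-limit homeomorphism $H$ (not an a posteriori composition) carries the David structure; this is handled exactly as the David extension argument in Proposition~\ref{prop:qsmarkovmap}\eqref{HP}.
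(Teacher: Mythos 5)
Your plan matches the paper's actual argument quite closely: build a generalized Basilica $\mathfrak{Q}_{\pc}$-map topologically conjugate to $\mathcal{F}_G$ (the analog of Proposition~\ref{prop:topmodelbowenseries} using $R$-coloring and Corollary~\ref{cor:alternating}), then modify it via Theorem~\ref{thm:symmetrichyperbolicgeneralizedpcf} so that both $\mathcal{F}_\infty$ and $\mathcal{F}_{\bdd}$ are symmetrically hyperbolic, use Proposition~\ref{prop:qsmarkovmapFiniteCircle}\eqref{HPFC} to upgrade the circle conjugacies $\mathcal{F}_s\to(\mathcal{F}_G)_s$ to David extensions on the disk (this works because $\mathcal{F}_s$ has no parabolic break-points while $(\mathcal{F}_G)_s$ does, so the weakly type-preserving hypothesis holds), transport through Riemann uniformizations, lift to all bounded Fatou components by the dynamics, and finish with David removability of $J(Q_{\pc})$ via Lemma~\ref{lem:eccentricity}(2).

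One piece of reasoning you offer is not quite right and should be repaired. You write that ``conformal pullbacks preserve the David condition'' and later that the David condition ``being expressed in terms of the Beltrami coefficient, is invariant under conformal pre- and post-composition.'' This is misleading: pre-/post-composition with a Riemann map preserves the pointwise bound $|\mu_H|$, but the David inequality \eqref{david_cond} is an \emph{area} bound (spherical measure of the set $\{|\mu_H|\geq 1-\varepsilon\}$), and a Riemann map of a general Jordan domain can distort area arbitrarily near the boundary, so the David estimate does not automatically survive composition. What actually makes this work in the paper is that $U_s$ (and more generally every bounded Fatou component of $Q_{\pc}$) is a John domain; the paper cites \cite[Proposition~2.5]{LMMN} at exactly the two places where you appeal to ``conformal invariance'': when pushing the David extension of the circle homeomorphism through $\Phi_s:\D\to U_s$, and again when lifting $H|_{U_s}$ to the infinitely many remaining Fatou components by the conformal dynamics. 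So the uniformity concern you flag at the end is a real one, but the resolution is the John-domain geometry of the Fatou components (diameters decay exponentially, supports of the bad Beltrami sets are summable), not conformal invariance of the David condition per se. With that citation put in the right places, your proof is correct.
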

\begin{proof}
    As in the proof of Proposition~\ref{prop:quasisurg}, let $\Delta_s, s\in \{1,..., j_0\} \cup \{\infty\}$ be the components of $\Omega(G)$ associated to the pinched core polygon, and let $\Delta_s$, $ s\in \{j_0+1,..., j_1\}$ be the additional components of $\Omega(G)$ associated to the pull-back of the pinched core polygon via $\mathcal{F}_G$. They are characterized by the following.
    \begin{itemize}
        \item $\Delta_s, s\in \{1,..., j_0\} \cup \{\infty\}$ are the components of $\Omega(G)$ that intersect at least two level $0$ puzzle pieces;
        \item $\Delta_s, s\in \{1,..., j_0, j_0+1,..., j_1\} \cup \{\infty\}$ are the components of $\Omega(G)$ that intersect at least two level $1$ puzzle pieces.
    \end{itemize}
    Denote the induced Markov maps by
    \begin{align*}
    (\mathcal{F}_G)_{\bdd} : \bigcup_{s=1}^{j_1} \partial^I \Delta_s &\longrightarrow \bigcup_{s=1}^{j_1} \partial^I \Delta_s, \text{ and }\\
    (\mathcal{F}_G)_\infty: \partial^I \Delta_\infty &\longrightarrow \partial^I \Delta_\infty.
    \end{align*}
    (See \S~\ref{pullback_bs_subsec} for the definition of the induced Markov maps $(\cF_G)_s$.)
    Note that by Proposition~\ref{prop:basilicabssymmetric}, 
    \begin{itemize}
        \item for $s \neq \infty$, any break-point of $(\mathcal{F}_G)_s$ is symmetrically parabolic;
        \item for $s = \infty$, a break-point is symmetrically parabolic if it corresponds to a cusp of the surface $\Sigma$; and it is symmetrically hyperbolic otherwise.
    \end{itemize}

    By a similar construction as in Proposition~\ref{prop:topmodelbowenseries} we obtain a generalized Basilica $\mathfrak{Q}_{\pc}-$map $\widetilde{\mathcal{F}}: \widetilde{\mathfrak{P}}^1 \rightarrow \widetilde{\mathfrak{P}}$ which is topologically conjugate on $J(Q_{\pc})$ to the restriction of $\mathcal{F}_G$ on $\Lambda(G)$.

    Thus, by Theorem~\ref{thm:symmetrichyperbolicgeneralizedpcf}, there exists a generalized Basilica $\mathfrak{Q}_{\pc}-$map $\mathcal{F}: \mathfrak{P}^1\longrightarrow \mathfrak{P}$ such that
    \begin{itemize}[leftmargin=8mm]
        \item $\mathcal{F}\vert_{J(Q_{\pc})}$ is topologically conjugate to $\widetilde{\mathcal{F}}\vert_{J(Q_{\pc})}$ (and hence $\mathcal{F}_G\vert_{\Lambda(G)}$); 
        \item the induced Markov map $\mathcal{F}_\infty$ is symmetrically hyperbolic; and 
        \item the induced Markov map $\mathcal{F}_{\bdd}$ on $\bigcup_{s\in \{0,\ldots, r\}}  \partial^I U_s$ is symmetrically hyperbolic.
    \end{itemize}

    Let $H:J(Q_{\pc})\to\Lambda(G)$ be a homeomorphism conjugating $\mathcal{F}$ to $\mathcal{F}_G$.
    Note that for each $s \in \{1,..., j_1\} \cup \{\infty\}$, the map $H$ induces a circle homeomorphism that conjugates $\mathcal{F}_s$ to $(\mathcal{F}_G)_s$. Since this conjugacy does not send parabolic type to hyperbolic type, by Proposition~\ref{prop:qsmarkovmapFiniteCircle}, it extends to a David homeomorphism of the disk. Composing this David map with the Riemann maps of $U_s$ and $\Delta_s$, one obtains a David extension $H|_{U_s}: U_s \longrightarrow \Delta_s$ of $H|_{\partial U_s}:\partial U_s\to\partial \Delta_s$. We note that the composition of the David map of the disk with the Riemann maps of $U_s, \Delta_s$ is again a David homeomorphism because $U_s$ is a John domain (cf. \cite[Proposition~2.5]{LMMN}).

    Using the dynamics, we lift the maps $H|_{U_s}$ to all bounded Fatou components of $Q_{\pc}$, thus obtaining a homeomorphism $H: \widehat{\C} \longrightarrow \widehat{\C}$.  By \cite[Proposition~2.5]{LMMN}, the map $H$ is a David homeomorphism on the Fatou set.
    Hence, Lemma~\ref{lem:eccentricity} implies that $H$ is a global David homeomorphism.
\end{proof}

\subsection{Model for Basilica Julia sets}\label{basilica_julia_david_subsec}
Now suppose that $g$ is a geometrically finite rational map with a Basilica Julia set. Then the unique non-Jordan disk Fatou component $U_\infty$ is totally invariant under $g$.
Hence, there exists a $\pcf$ polynomial $g_0$ which is topologically conjugate to $g$ on the Julia sets (cf. \cite[Corollary~9.7]{LMMN}).

\begin{lem}\label{david_pcf_to_gf_lem}
There exists a David homeomorphism $H:\widehat{\C}\to\widehat{\C}$ with $H(J(g_0))=H(J(g))$.    
\end{lem}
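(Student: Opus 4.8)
The plan is to use the same machinery as in Proposition~\ref{prop:quasisurggenpcf}, but now in the purely rational/polynomial world, comparing a generalized Basilica $\mathfrak{Q}_{\pc}$-map modeling $g_0$ with one modeling $g$. First I would invoke Lemma~\ref{lem:noJuliaCrit} together with Lemma~\ref{lem:cutpnteventuallypara} (or rather, their analogs for Basilica — not necessarily fat — Julia sets, which already follow from \cite[Lemmas~3.2, 3.4]{LZ25} as cited): since $g$ is geometrically finite with a Basilica Julia set, its contact points are pre-periodic and eventually land on parabolic cycles, while $g_0$ is the postcritically finite model with the same topological dynamics on $J$. Applying the construction of Proposition~\ref{prop:topmodelJulia} (more precisely, its generalized version for $\mathfrak{Q}_{\pc}$-maps, built from the data $(V_i,X_i)$ of bounded Fatou components together with finite sets of contact/parabolic points on their boundaries), I would produce a generalized Basilica $\mathfrak{Q}_{\pc}$-map $\widetilde{\mathcal{F}}:\widetilde{\mathfrak{P}}^1\to\widetilde{\mathfrak{P}}$ topologically conjugate to $g_0$ on $J(Q_{\pc})$, and note that the same combinatorial data simultaneously encodes $g$ (since $g$ and $g_0$ are topologically conjugate on their Julia sets).

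Next I would apply Theorem~\ref{thm:symmetrichyperbolicgeneralizedpcf} to pass to a modified refinement $\mathcal{F}:\mathfrak{P}^1\to\mathfrak{P}$, still topologically conjugate to $g_0$ (hence to $g$) on $J(Q_{\pc})$, for which both induced ideal-boundary Markov maps $\mathcal{F}_\infty$ and $\mathcal{F}_{\bdd}$ are symmetrically hyperbolic. On the $g$-side, I would extract the induced Markov maps $(\mathcal{G})_\infty$ on $\partial^I U_\infty(g)$ and $(\mathcal{G})_{\bdd}$ on $\bigsqcup_s\partial^I U_s(g)$ from the corresponding $(J,\mathfrak{F})$-map attached to $g$; their break-points are symmetrically hyperbolic at the repelling contact cycles of $g_0$'s type and symmetrically parabolic at the parabolic cycles of $g$ (using that $g$ is geometrically finite, so each periodic break-point sits over a parabolic or repelling cycle, and the parabolic multiplicity is well-defined). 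The topological conjugacy $H:J(Q_{\pc})\to J(g)$ then induces circle conjugacies between $\mathcal{F}_s$ and $(\mathcal{G})_s$ for each Fatou component. Since $\mathcal{F}_s$ is symmetrically hyperbolic and the conjugacy cannot send a hyperbolic break-point to a parabolic one in the ``wrong'' direction — hyperbolic can only map to parabolic, which is exactly the allowed direction in case~\eqref{HP} of Proposition~\ref{prop:qsmarkovmapFiniteCircle} — that proposition gives a David extension of each $H|_{\partial U_s}$ to the disk. Composing with the Riemann maps of $U_s$ and $U_s(g)$ — and using that $U_\infty$ and each bounded $U_s$ are John domains (as $g_0$ is postcritically finite, so its Julia set is a John domain by \cite{CJY94}), so \cite[Proposition~2.5]{LMMN} keeps the composition David — produces David extensions over all periodic Fatou components. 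Pulling these back by the dynamics of $g_0$ (equivariantly, via the relation $H\circ\mathcal{F}^n=\mathcal{G}^n\circ H$) extends $H$ to a homeomorphism of $\widehat{\C}$ that is David on the Fatou set.

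Finally I would invoke Lemma~\ref{lem:eccentricity}\,(2): since $g_0$ is a hyperbolic (postcritically finite) polynomial with connected Julia set, $J(g_0)$ is David-removable, so the homeomorphism $H$, being David off $J(g_0)$, is globally David. This yields the desired David homeomorphism $H:\widehat{\C}\to\widehat{\C}$ with $H(J(g_0))=J(g)$, establishing the lemma. The main obstacle I anticipate is bookkeeping the ``type'' data correctly: one must check that the generalized $\mathfrak{Q}_{\pc}$-puzzle types (one limb type and four torso types, indexed by colorings of endpoints of generalized dyadic intervals as in Remark~\ref{remark:compatible}) are rich enough to realize the combinatorics of $g$'s contact points that are cusps versus genuine contact points — i.e., verifying that the coloring/decomposition flexibility of Lemma~\ref{lem:decomp} and Corollary~\ref{cor:alternating} suffices to build a topological model whose induced circle dynamics has precisely the right parabolic/hyperbolic break-point pattern to match $g$, and simultaneously to run the symmetrization of Theorem~\ref{thm:symmetrichyperbolicgeneralizedpcf}. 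Everything else is a routine transcription of the proof of Proposition~\ref{prop:quasisurggenpcf} with ``Kleinian group $G$'' replaced by ``rational map $g$''.
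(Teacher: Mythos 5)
Your proposal misses the target in two concrete ways, and the paper's actual argument is much simpler than yours.

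First, a scope issue: your construction aims to build a David map from $J(Q_{\pc})$ to $J(g)$, not from $J(g_0)$ to $J(g)$. You set up a generalized Basilica $\mathfrak{Q}_{\pc}$-map on $J(Q_{\pc})$ modeling $g_0$, run the symmetrization, and end with ``the topological conjugacy $H:J(Q_{\pc})\to J(g)$.'' That is a stronger and different statement; the lemma only compares $g_0$ with $g$. One cannot simply post-compose with a map $J(g_0)\to J(Q_{\pc})$, since the inverse of a David map need not be David and compositions of David maps need not be David (the paper flags exactly this subtlety in Remark~\ref{david_hierarchy_rem}). The paper deliberately separates ``$J(g_0)\to J(g)$ is David'' (this lemma) from ``$J(Q_{\pc})\to J(g_0)$ is quasiconformal'' (Proposition~\ref{prop:davidJulia}), then composes a David map with a quasiconformal one — which is legitimate — to get $J(Q_{\pc})\to J(g)$.

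Second, and more fatally, your first step is in general impossible. You propose to ``produce a generalized Basilica $\mathfrak{Q}_{\pc}$-map $\widetilde{\mathcal{F}}$ topologically conjugate to $g_0$ on $J(Q_{\pc})$.'' But the paper explicitly explains, in the very paragraph following this lemma, why this fails: a postcritically finite polynomial $g_0$ with a Basilica Julia set can have strictly pre-periodic simple critical points on $J(g_0)$, producing contact points that map to non-contact points (e.g.\ $z^3+\tfrac32z^2$, Figure~\ref{fig:cubicBasilica}), whereas every generalized Basilica $\mathfrak{Q}_{\pc}$-map preserves contact points since $Q_{\pc}$ has no Julia critical points. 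Consequently ``it is in general not possible to construct a generalized Basilica $\mathfrak{Q}_{\pc}$-map that is conjugate to $g_0$ on the Julia sets.'' Working around this requires the additional construction of a $(J(g_0),\mathfrak{G}_0)$-map and Lemma~\ref{lem:modifiedcontact}, which the paper develops precisely for Proposition~\ref{prop:davidJulia} and which you do not invoke. Your appeal to ``analogs'' of Lemmas~\ref{lem:noJuliaCrit} and~\ref{lem:cutpnteventuallypara} for non-fat Basilicas is misplaced: those statements are specifically about fat Basilicas, and the non-fat case is where Julia critical points can and do appear.

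By contrast, the paper's proof of this lemma never invokes $\mathfrak{Q}_{\pc}$-maps at all. It cites the David-surgery proof of \cite[Theorem~9.2]{LMMN} (which handles geometrically finite polynomial Basilicas directly by straightening the pcf model), and, when $g$ is not a polynomial, adds one further David surgery replacing $z^d\vert_\D$ on the basin of infinity of $g_0$ by the parabolic Blaschke product $B$ modeling $g\vert_{U_\infty}$, using \cite[Theorem~1.5]{LN24a} for the David extension of the $z^d$-to-$B$ circle conjugacy and the John property of the pcf basin to keep the composition David. That is the entire argument; the heavy Markov-circle-map machinery you bring in is neither needed here nor, in the form you deploy it, available.
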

\begin{proof}
If $g$ is a polynomial, then the existence of the desired David homeomorphism follows from the proof of \cite[Theorem~9.2]{LMMN}. Now let $g\vert_{U_\infty}$ be conformally conjugate to a degree $d$ parabolic Blaschke product $B$. Then, in addition to the surgery carried out in \cite[Theorem~9.2]{LMMN}, one needs to replace the action of $g_0$ on its basin of infinity (which is conformally modeled by $z^d\vert_{\D}$) with the Blaschke product $B$. This step can also be done using David surgery as 
\begin{enumerate}[leftmargin=8mm]
    \item the circle homeomorphism conjugating $z^d$ to $B$ extends to a David homeomorphism of the disk (cf. \cite[Theorem~1.5]{LN24a}), and
    \item the basin of infinity of a postcritically finite polynomial is a John domain.
\end{enumerate}
Together, this provides the required global David map carrying $J(g_0)$ onto $J(g)$.
\end{proof}

We continue to work with the above postcritically finite polynomial $g_0$.
Note that unlike the polynomial $z^2-1$, the map $g_0$ may contain critical points on the Julia set. However, any Julia critical point of $g_0$ is necessarily strictly pre-periodic.
In this setting, there is a new mechanism of creating contact points: a contact point of $J(g_0)$ can be a strictly pre-periodic simple critical point (or one of its iterated preimages) that eventually maps to a non-contact Julia point (see Figure~\ref{fig:cubicBasilica}).
On the other hand, a (generalized) Basilica $\mathfrak{Q}_{\pc}-$map preserves the contact points. Thus, it is in general not possible to construct a generalized Basilica $\mathfrak{Q}_{\pc}-$map that is conjugate to $g_0$ on the Julia sets. We surmount this difficulty in two steps.
\begin{itemize}[leftmargin=8mm]
    \item We first associate a topologically expanding $(J(g_0), \mathfrak{G}_0)-$map to $g_0$, where $\mathfrak{G}_0$ is the collection of local conformal maps of the form $g_0^{-m}\circ g_0^n$. This is done in such a way that the boundary of any internal puzzle piece associated with the $(J(g_0), \mathfrak{G}_0)-$map consists only of contact points of $J(g_0)$.
    \item We then construct a conjugacy between an appropriately chosen Basilica $\mathfrak{Q}_{\pc}-$map and the above $(J(g_0), \mathfrak{G}_0)-$map.
\end{itemize}  

The following lemma is important for the fist step mentioned above.

\begin{lem}\label{lem:modifiedcontact}
    Let $\sigma_d:\mathbb{S}^1 \cong \R/\Z \longrightarrow \mathbb{S}^1$ be the map $\sigma_d(t) = dt$.
    Let $X$ be a finite set that is invariant under $\sigma_d$, i.e., $\sigma_d(X) \subseteq X$.
    Let $X'\subsetneq X$ be a proper invariant subset. Let $N$ be a sufficiently large common multiple of the periods of the periodic points in $X$.
    
    Then there exists $Y^0, Y^1 \subseteq \mathbb{S}^1$ so that $X-X'\subseteq Y^0 \subseteq Y^1$ with $X'\cap Y^0=\emptyset$ and the following holds.
    The sets $Y^0, Y^1$ determine a Markov map 
    $$
    \mathcal{F}: \mathcal{A}^1 \longrightarrow \mathcal{A}^0,
    $$
    where $\mathcal{A}^i$ consists of the closure of components of $\mathbb{S}^1 - Y^i$ so that for each $I\in \mathcal{A}^1$, there exists $J \in \mathcal{A}^0$ satisfying the following.
    \begin{enumerate}
        \item\label{lem:modifiedcontact:item1} there exists $n, m$ so that
        $$
        F_I:=\sigma_d^{-n}\circ \sigma_d^{m}: I \longrightarrow J
        $$
        is a homeomorphism;
        \item\label{lem:modifiedcontact:item2} $\mathcal{F}$ is topologically expanding; and 
        \item\label{lem:modifiedcontact:item3} each break-point is symmetric.
    \end{enumerate}
\end{lem}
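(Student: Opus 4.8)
\textbf{Proof proposal for Lemma~\ref{lem:modifiedcontact}.}

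The plan is to build the sets $Y^0$ and $Y^1$ inductively, starting from $X-X'$ and closing up under controlled preimages, so that the resulting Markov map has the symmetric, topologically expanding structure required. First I would record the local picture at each point of $X-X'$. Since $X$ is $\sigma_d$-invariant and $X'$ is a proper invariant subset, every point of $X-X'$ is pre-periodic and its forward orbit eventually enters the periodic part of $X$, which lies inside $X-X'$ (because $X'$ is invariant, no point of $X-X'$ maps into $X'$). For a periodic point $a\in X-X'$ of period $p$, the branch $\sigma_d^{p}$ near $a$ on either side is multiplication by $d^{p}$ up to translation, hence is \emph{symmetrically hyperbolic} in the sense of \S~\ref{sec:cmm} (equal, positive Lyapunov exponents on the two sides, no parabolics to worry about). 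So starting the partition at $X-X'$ automatically gives symmetry at the periodic break-points, and by taking $N$ a large common multiple of the periods of periodic points in $X$ we have enough room to propagate this.

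Next I would define $Y^0$. Take the $\sigma_d$-forward-invariant closure of $X-X'$ inside itself — it is already invariant — and then set $Y^0$ to be the union of $X-X'$ together with, for each arc $I$ of $\mathbb{S}^1-(X-X')$ whose two endpoints have forward orbits landing on periodic points, \emph{one} additional point in the $\sigma_d$-grand orbit of $X-X'$ interior to $I$ chosen so that the two resulting subarcs are ``generalized dyadic'' relative to $\sigma_d$ — i.e.\ each maps homeomorphically under a suitable $\sigma_d^m$ onto one of finitely many model arcs determined by the colors (= eventual periodic destinations) of the endpoints. This is exactly the circle-of-ideas used in \S~\ref{gen_dyad_int_subsec} and Corollary~\ref{cor:alternating}: one uses the decomposition lemma (Lemma~\ref{lem:decomp}) to realize any prescribed endpoint-color pattern, so that one can always split an arc of the partition into subarcs each carrying a canonical map $\sigma_d^{-n}\circ\sigma_d^{m}$. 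The requirement $X'\cap Y^0=\emptyset$ is then a matter of never adding a point of $X'$ (its grand orbit meets $X-X'$ nowhere, so the chosen interior points can be taken off $X'$). Then $Y^1$ is obtained by pulling $Y^0$ back one step: $Y^1 = Y^0 \cup \{$preimages under the chosen branches of points of $Y^0$ lying in the relevant arcs$\}$, which by construction refines $\mathcal{A}^0$ to $\mathcal{A}^1$ and makes each piece of $\mathcal{A}^1$ map homeomorphically by a canonical $\sigma_d^{-n}\circ\sigma_d^m$ onto a piece of $\mathcal{A}^0$ — giving \eqref{lem:modifiedcontact:item1}.

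For \eqref{lem:modifiedcontact:item2}, topological expansion: since every branch of $\mathcal{F}$ is a power of $\sigma_d$ composed with an inverse branch and the combinatorics are modeled on the expanding map $\sigma_d$, the diameters of level-$n$ pieces tend to $0$; concretely, one checks that there is no nontrivial arc that stays inside a single Markov piece under all iterates, because such an arc would have to be disjoint from the grand orbit of $Y^0\supseteq X-X'$, which is dense (here one uses that the grand orbit of a periodic point of an expanding circle map is dense). For \eqref{lem:modifiedcontact:item3}, each periodic break-point was arranged to be symmetrically hyperbolic above, and every other break-point is strictly pre-periodic, hence its symmetry type is pulled back from a periodic one by the (locally invertible, hence locally affine) branches of $\sigma_d^m$; since these branches are $C^1$ (indeed affine in the linearizing coordinate) with matching one-sided derivatives, symmetry is preserved along the backward orbit. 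I expect the \textbf{main obstacle} to be the careful bookkeeping needed to choose the interior points of the arcs of $\mathbb{S}^1-(X-X')$ so that \emph{simultaneously} (i) the two subarcs are generalized dyadic of compatible types, (ii) no point of $X'$ is ever used, and (iii) the one-step pullback $Y^1$ stays consistent with these type constraints — i.e.\ making Lemma~\ref{lem:decomp}/Corollary~\ref{cor:alternating} do their job uniformly over all the finitely many arc-types that arise; the rest is a routine adaptation of the (generalized) Basilica puzzle machinery of \S~\ref{sec-basilica} and \S~\ref{subsec:geneBQmap} with $Q$ replaced by $z\mapsto z^d$ on the relevant ideal boundary.
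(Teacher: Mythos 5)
Your proposal rests on a claim that is false: you assert that ``$X'$ is invariant, no point of $X-X'$ maps into $X'$,'' and from this you conclude that $X-X'$ is forward invariant and that its orbits terminate on periodic points lying in $X-X'$. But invariance of $X'$ only means $\sigma_d(X')\subseteq X'$; it says nothing about where $X-X'$ maps. Indeed, for the intended application (Proposition~\ref{prop:davidJulia}), $X-X'$ consists of contact points whose forward orbits are \emph{designed} to enter $X'$: the whole difficulty the lemma is meant to resolve is that a pre-periodic contact point can map to a non-contact point, so $\sigma_d(X-X')$ does meet $X'$. A minimal example: $d=2$, $X=\{0,1/2\}$, $X'=\{0\}$; then $1/2\in X-X'$ but $\sigma_2(1/2)=0\in X'$, and $X-X'$ contains no periodic point at all. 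Since the lemma demands $X'\cap Y^0=\emptyset$, the break-point $1/2$ cannot have its $\sigma_d$-image in $Y^0$ at all, so no Markov map whose branches are just restrictions of $\sigma_d$ can even be defined here. Consequently, your base case (``starting the partition at $X-X'$ automatically gives symmetry at the periodic break-points'') has no leg to stand on, and the ``pull back symmetry from a periodic break-point'' step has nothing to pull back from.

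The paper's proof handles exactly this by \emph{not} letting the Markov branch on an arc $A=[a,b]$ touching $a\in X-X'$ be a power of $\sigma_d$: instead it uses $\sigma_d^{-n_a}\circ\sigma_d^{\,n_a+N}$ (with $n_a$ the pre-period of $a$, and the inverse branch chosen so the composition fixes $a$), which turns every point of $X-X'$ into a \emph{fixed} point of $\cF$ with one-sided derivatives $d^N$ on both sides — this is simultaneously what makes the break-points symmetrically hyperbolic, what keeps the orbit of every break-point inside $Y^0$ (hence off $X'$), and what forces $|D\cF|\ge d$ everywhere so that topological expansion holds. Your proposal never introduces inverse iterates of $\sigma_d$ into the branches, so it also cannot guarantee expansion: a branch $\sigma_d^{-n}\circ\sigma_d^m$ is only expanding when $m>n$, and that inequality is precisely what the explicit choice $m=n_a+N$, $n=n_a$ is engineered to provide. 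Finally, the appeal to Lemma~\ref{lem:decomp} and Corollary~\ref{cor:alternating} is misplaced here: those statements are specific to $\sigma_2$ and the coloring induced by the orbit of $\{0,\frac13,\frac23\}$, whereas the present lemma is for arbitrary $d$ and arbitrary finite invariant pairs $X'\subsetneq X$, and the actual proof never needs that coloring machinery.
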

Note that since $X'$ is invariant, $X' \cap Y^1 = \emptyset$. 
\begin{proof}
    Let $k$ be sufficiently large, and choose $X-X' \subseteq Y^0 \subseteq \sigma_d^{-k}(X) - X'$ which induces the partition $\mathcal{A}^0$ so that
    \begin{itemize}
        \item for $a \in Y^0 - X$, $\sigma_d(a) \in Y^0$;
        \item for $A \in \mathcal{A}^0$, $|\partial A \cap (X-X')| \leq 1$;
        \item if $\partial A = \{a, b\} \cap (X-X') \neq \emptyset$, say $a \in X- X'$, then $\sigma_d^n(b) = a$ for some $1\leq n \leq k$;
        \item if $\partial A \cap (X-X') = \emptyset$, then $\sigma_d$ is injective on $A$.
    \end{itemize}

    Let $A \in \mathcal{A}^0$. 
    Case (1): Suppose that $\partial A \cap (X-X') = \emptyset$. Then $\sigma_d: A \longrightarrow \mathbb{S}^1$ is injective and $\sigma_d(\partial A) \subseteq Y^0$. We define $Y^1 \cap A = (\sigma_d:A \to \sigma_d(A))^{-1}(Y^0)$ as the pull back of $Y^0$ by $\sigma_d$.

    Case (2): Suppose that $\partial A \cap (X-X') \neq \emptyset$. Say $A= [a,b]$ and $a \in X-X'$. Let $n_a$ be the pre-period of $a$ (i.e., $n_a$ is the smallest integer such that $\sigma_d^{n_a}(a)$ is periodic) and let $N \gg k$.
    Let us lift the dynamics of $\sigma_d:\mathbb{S}^1 \longrightarrow \mathbb{S}^1$ to the universal cover $\widetilde{\sigma_d}: \R \longrightarrow\R$.
    Since $\sigma_d^j(b) = a$ for some $j\in\{1,\cdots,k\}$, and $N \gg k$, we have that $\widetilde{\sigma_d}^{n_a+N}[a,b]$ is an interval of length dividing $d^{n_a}$. Consider a lift of the inverse branch $\widetilde{\sigma}_d^{-n_a}$ so that $\widetilde{\sigma}_d^{-n_a} \circ \widetilde{\sigma}_d^{n_a+N}$ fixes $a$.
    Note that the length of the interval $\widetilde{\sigma}_d^{-n_a} \circ \widetilde{\sigma}_d^{n_a+N}([a,b])$ is an integer, and that the projection $\sigma_d^{-n_a}\circ \sigma_d^{n_a+N}$ sends $a, b$ to $a$. Note that the image of $[a,b]$ may wrap around the circle multiple times.
    We define $Y^1 \cap A$ as the pull back of $Y^0$ by $\sigma_d^{-n_a} \circ \sigma_d^{n_a+N}$.

    This gives the construction of the Markov map $\mathcal{F}: \mathcal{A}^1 \longrightarrow\mathcal{A}^0$, where the map is either $\sigma_d$ or $\sigma_d^{-n} \circ \sigma_d^{n+N}$ for some $n$ depending on whether the interval $I \in \mathcal{A}^1$ is constructed in the first or the second case. 
    By construction, each periodic break-point is fixed and has left and right derivative $d^N$, so that each break-point is symmetric.
    The map $\mathcal{F}$ is topologically expanding as the derivative is at least $d > 1$.
\end{proof}

We will now construct, as in Proposition~\ref{prop:topmodelJulia}, a Basilica $\mathfrak{Q}_{\pc}-$map that topologically models a $(J(g_0),\mathfrak{G}_0)-$map associated to the postcritically finite polynomial $g_0$ with a Basilica Julia set. This will give rise to a quasiconformal conjugacy between their actions on the Julia sets.

We carry out the construction in a slightly more general setting.

\begin{prop}\label{prop:davidJulia}
    Let $\widetilde{g}$ be a subhyperbolic rational map (i.e., every Fatou critical point converges to an attracting cycle, and every Julia critical point is strictly pre-periodic) with a Basilica Julia set.
    Then there exist a topologically expanding $(J(\widetilde{g}), \widetilde{\mathfrak{G}})-$map $\cG$ and a Basilica $\mathfrak{Q}_{\pc}-$map $\mathcal{F}: \mathfrak{P}^1\longrightarrow \mathfrak{P}$ so that $\mathcal{F}$ is quasiconformally conjugate to $\cG$ on $J(Q_{\pc})$.
    
\end{prop}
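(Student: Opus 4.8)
The plan is to follow the two-step strategy outlined in the paragraph immediately preceding the statement, now in the general subhyperbolic setting. First I would handle the new source of contact points coming from Julia critical points. Since $\widetilde g$ is subhyperbolic with Basilica Julia set, the non-Jordan disk Fatou component $U_\infty$ is totally invariant and the bounded Fatou components are quasidisks; every Julia critical point is strictly pre-periodic, and every periodic contact point is parabolic-free (hyperbolic) so (by conditions \eqref{fatbasi:2}, \eqref{fatbasi:3} and the argument of Lemma~\ref{lem:cutpnteventuallypara}) every contact cycle is a repelling cycle of multiplicity $3$, while the ``new'' contact points are strictly pre-periodic points eventually landing at a non-contact Julia point. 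I would encode the dynamics on $\partial^I U_\infty\cong\mathbb{S}^1$ and on each $\partial^I U_s$ by circle maps of the form $\sigma_d^{-n}\circ\sigma_d^m$ and invoke Lemma~\ref{lem:modifiedcontact}: taking $X$ to be the (finite, $\sigma_d$-invariant) set of angles corresponding to the grand orbit of contact points together with the grand orbit of the critical angles, and $X'\subsetneq X$ the sub-collection one wishes to drop (e.g.\ the angles of the troublesome pre-critical contact points that should be ``opened up''), Lemma~\ref{lem:modifiedcontact} produces partition data $Y^0\subseteq Y^1$ and a topologically expanding Markov circle map all of whose break-points are symmetric. Pulling these partitions back through the Riemann uniformizations of $U_\infty$ and of the finitely many bounded Fatou components $U_0,\ldots,U_r$ whose closure carries the post-critical set, and bounding the resulting arcs by genuine hyperbolic geodesics in those Fatou components together with external rays/equipotentials in $U_\infty$ (the hybrid ``puzzle'' of \S~\ref{piecing_puz_subsec}), yields internal puzzle pieces whose boundary meets $J(\widetilde g)$ only at contact points, and hence a $(J(\widetilde g),\widetilde{\mathfrak G})$-map $\cG$ with $\widetilde{\mathfrak G}$ the germs $\widetilde g^{-m}\circ\widetilde g^n$; by construction $\cG$ is topologically expanding, its induced circle maps are conformal Markov maps with puzzle structures, and by Lemma~\ref{lem:modifiedcontact}\eqref{lem:modifiedcontact:item3} all break-points are symmetric — indeed symmetrically hyperbolic, since $\widetilde g$ has no parabolics.

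Second, I would build the Basilica $\mathfrak{Q}_{\pc}$-map side. Mimicking the proof of Proposition~\ref{prop:topmodelbowenseries} and Proposition~\ref{prop:topmodelJulia}, I would transport the combinatorial data of $\cG$ (the bi-colored pattern of contact versus non-contact boundary points, the cyclic/ribbon ordering, and the Markov matrix) to a generalized Basilica $\mathfrak{Q}_{\pc}$-map $\widetilde{\mathcal F}\colon\widetilde{\mathfrak P}^1\to\widetilde{\mathfrak P}$ using generalized dyadic intervals and the four torso types of \S~\ref{subsec:geneBQmap}: a boundary point that is a contact point of $J(\widetilde g)$ is matched with a red (dyadic) end-point, and a ``new'' non-contact pre-critical point is matched with a blue end-point, which is legitimate by Corollary~\ref{cor:alternating} (at least one end of any generalized dyadic interval we need can be taken red). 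Since $\cG$ is topologically expanding, $\widetilde{\mathcal F}$ is too, and Proposition~\ref{prop-combconjimpliestopconj} gives a topological conjugacy between $\widetilde{\mathcal F}|_{J(Q_{\pc})}$ and $\cG|_{J(\widetilde g)}$. Then I would apply Theorem~\ref{thm:symmetrichyperbolicgeneralizedpcf} to pass to a modified refinement $\mathcal F\colon\mathfrak P^1\to\mathfrak P$ whose induced Markov maps $\mathcal F_\infty$ and $\mathcal F_{\bdd}$ are both symmetrically hyperbolic, still topologically conjugate to $\cG$ on the Julia set.

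Third, I would upgrade the topological conjugacy $H\colon J(Q_{\pc})\to J(\widetilde g)$ to a global quasiconformal map exactly as in the proof of Proposition~\ref{prop:quasisurg}. On each $\partial^I U_s$ ($s\in\{0,\ldots,r,\infty\}$) the induced circle maps $\mathcal F_s$ and $\cG_s$ are topologically expanding conformal Markov maps with puzzle structures, combinatorially conjugate, with all break-points symmetric and — crucially — both sides symmetrically hyperbolic, so the type-preserving case \eqref{HHFC} of Proposition~\ref{prop:qsmarkovmapFiniteCircle} yields a uniformly quasi-symmetric conjugacy, hence a $K$-quasiconformal extension $H|_{U_s}$ into the corresponding Fatou component of $\widetilde g$. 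Lifting these extensions over the grand orbits of the bounded Fatou components (using that $\mathcal F$, $\cG$ are conformal in the interiors of puzzle pieces and the finite collection of model components exhausts the post-critical set) produces a homeomorphism $H\colon\widehat{\C}\to\widehat{\C}$ that is $K$-quasiconformal off $J(Q_{\pc})$ and conjugates $\mathcal F|_{J(Q_{\pc})}$ to $\cG|_{J(\widetilde g)}$; quasiconformal removability of the Basilica Julia set $J(Q_{\pc})$ (Lemma~\ref{lem:eccentricity}\,(1)) then makes $H$ globally quasiconformal.

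The main obstacle I anticipate is the first step: arranging the $(J(\widetilde g),\widetilde{\mathfrak G})$-map so that the boundaries of all internal puzzle pieces consist solely of contact points of $J(\widetilde g)$, while simultaneously keeping the map topologically expanding and all break-points symmetric. This is where the strictly pre-periodic Julia critical points bite — one must ``open'' the partition at pre-critical contact points without creating a piece whose boundary contains a critical value on the ``wrong'' side, and Lemma~\ref{lem:modifiedcontact} must be applied with a carefully chosen invariant pair $X'\subsetneq X$; verifying that the resulting Markov circle maps genuinely underlie a hybrid puzzle structure in the dynamical plane of $\widetilde g$ (i.e.\ that the pinched neighborhoods nest correctly and the canonical maps between puzzle pieces of equal type exist, as in Lemma~\ref{lem:TypePreservingPuzzleMap}) is the technically delicate part. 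Once the combinatorial model is in place, the transfer to $\mathfrak{Q}_{\pc}$-maps and the quasiconformal surgery are routine adaptations of \S~\ref{sec-basilica}, \S~\ref{subsec:geneBQmap}, and Proposition~\ref{prop:quasisurg}.
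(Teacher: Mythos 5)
Your overall route is the paper's route: use Lemma~\ref{lem:modifiedcontact} to build a topologically expanding $(J(\widetilde g),\widetilde{\mathfrak G})$-map $\cG$ with symmetrically hyperbolic break-points whose puzzle boundaries avoid the non-contact preimages of the Julia critical points, then transport the combinatorics to a Basilica $\mathfrak Q_{\pc}$-map $\cF$, make $\cF_\infty$ symmetrically hyperbolic by a modified refinement, and finish by the surgery of Proposition~\ref{prop:quasisurg} plus Lemma~\ref{lem:eccentricity}. That is the paper's proof.

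However, your step 2 is internally inconsistent with your step 1, and the inconsistency points to a real confusion. In step 1 you correctly assert that Lemma~\ref{lem:modifiedcontact} is used precisely so that the partition data $Y^0\subseteq Y^1$ are disjoint from $X'$ (the non-contact points), hence every boundary point of every internal puzzle piece of $\cG$ is a contact point of $J(\widetilde g)$. In step 2 you then say you will match a "new non-contact pre-critical point" of $\cG$ with a blue end-point of a \emph{generalized} Basilica $\mathfrak Q_{\pc}$-map, invoking Corollary~\ref{cor:alternating} and Theorem~\ref{thm:symmetrichyperbolicgeneralizedpcf}. But after step 1 there are no non-contact boundary points left to match; the whole purpose of $X'$ in Lemma~\ref{lem:modifiedcontact} is to eliminate them. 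The statement asks for a plain Basilica $\mathfrak Q_{\pc}$-map, and plain dyadic intervals (all end-points red) suffice exactly because of step 1; the generalized machinery of \S~\ref{subsec:geneBQmap} is for Kleinian groups with persistent parabolics (cusps on $\partial\Delta_s$), a phenomenon that does not occur for a subhyperbolic rational map. You should drop the blue markings, build an ordinary $\mathfrak Q_{\pc}$-puzzle as in Proposition~\ref{prop:topmodelJulia}, and modify it via Theorem~\ref{thm:symmetrichyperbolic}-type arguments (every break-point of $\cF_{\bdd}$ is automatically symmetrically hyperbolic since $Q_{\pc}^2\vert_{U_0}$ is conjugate to $z^2$, so only $\cF_\infty$ needs adjusting).

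Two smaller slips: "every contact cycle is a repelling cycle of multiplicity $3$" does not parse — multiplicity $3$ is a parabolic notion from the fat Basilica (Lemma~\ref{lem:cutpnteventuallypara}); for a subhyperbolic map the periodic contact points are simply repelling. And your choice of $X$ as "the grand orbit of contact points together with the grand orbit of the critical angles" cannot be literally a grand orbit (that set is infinite); Lemma~\ref{lem:modifiedcontact} requires a \emph{finite} $\sigma_d$-forward-invariant set, so one must take a finite forward-invariant set containing the relevant critical angles, as in the paper's $X_i$.
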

\begin{proof}
    Let $(V_i, X_i)_{i\in \mathcal{I}}$ be a finite collection so that 
    \begin{itemize}
        \item each $V_i$ is a bounded Fatou component of $\widetilde{g}$;
        \item $\overline{\bigcup_{i\in \mathcal{I}} V_i}$ is connected, invariant under $\widetilde{g}$ and contains the critical points;
        \item $X_i \subseteq \partial V_i$ consists of finitely many points on $\partial V_i$, and $X_i' \subseteq X_i$ consists of non-contact points in $X_i$;
        \item $2 \leq |X_i|$;
        \item $\partial V_{i} \cap \partial V_{j} \subseteq X_i \cap X_j$ for $i \neq j$; and
        \item $\bigcup X_i$ and $\bigcup X_i'$ are both invariant under $\widetilde{g}$.
    \end{itemize}
    Note that the map $\widetilde{g}: \bigcup \partial^I V_i \longrightarrow \bigcup \partial^I V_i$ is a topologically expanding covering on a finite union of circles.
    Thus by Lemma~\ref{lem:modifiedcontact}, we obtain $(V_i, Y^0_i)_{i\in \mathcal{I}}$ and $(V_i, Y^1_i)_{i\in \mathcal{I}}$ and the corresponding Markov map $\cG_{\mathrm{bdd}}: \bigcup \partial^I V_i \longrightarrow \bigcup \partial^I V_i$.
    By Lemma~\ref{lem:modifiedcontact} \eqref{lem:modifiedcontact:item3}, the map is symmetrically hyperbolic.
   Now, following the construction of Basilica $\mathfrak{Q}-$maps in \S~\ref{sec-basilica}, we cook up a topologically expanding $(J(\widetilde{g}), \widetilde{\mathfrak{G}})-$map $\cG$ from the map $\cG_{\mathrm{bdd}}$. 

    Finally, similar to the proof of Proposition~\ref{prop:topmodelbowenseries}, we can construct $\mathfrak{P}$ and $\mathfrak{P}^1$ according to the data $(V_i, Y^0_i)_{i\in \mathcal{I}}$ and $(V_i, Y^1_i)_{i\in \mathcal{I}}$, and a Basilica $\mathfrak{Q}_{\pc}-$map $\mathcal{F}: \mathfrak{P}^1 \longrightarrow \mathfrak{P}$ which is topologically conjugate 
    to $\cG$ on the Julia set. Possibly after passing to a modified refinement (Definition~\ref{def-modfn}), we may assume that each break-point of $\mathcal{F}$ is symmetrically hyperbolic (cf. Theorem~\ref{thm:symmetrichyperbolic}). Hence, the desired quasiconformal homeomorphism of $\widehat{\C}$ that conjugates $\cF\vert_{J(Q_{\pc})}$ to $\cG\vert_{J(\widetilde{g})}$ can be obtained by applying the arguments of Proposition~\ref{prop:quasisurg}.
\end{proof}

\subsection{Proof of Theorem~\ref{thm:davidhi}}
If $[K]$ is Kleinian, then by Proposition~\ref{prop:quasisurggenpcf}, we have $[J(Q_{\pc})] \succeq [K]$.

Now let $[K]$ be rational; i.e., $[K]=[J(g)]$ for some geometrically finite rational map. Then the required David homeomorphism is given by the composition of the David homeomorphism of Lemma~\ref{david_pcf_to_gf_lem} (carrying $J(g_0)$ onto $J(g)$) and the quasiconformal homeomorphism of Proposition~\ref{prop:davidJulia} (choosing $\widetilde{g}=g_0$, in which case the quasiconformal map carries $J(Q_{\pc})$ onto $J(g_0)$). Hence, $[J(Q_{\pc})]~\succeq~[K]$. 

Finally, if $[K]=[J(g)]$, and if $g$ has no parabolic cycle, then $g$ is subhyperbolic. In this case, the equality $[J(Q_{\pc})]=[K]$ is the content of Proposition~\ref{prop:davidJulia}.
\qed

\begin{remark}\label{david_hierarchy_rem}
The construction of a David map carrying $J(Q_{\pc})$ onto a Basilica Julia/limit set crucially uses the John property of the basin of infinity and $W^{1,1}-$removability of the Julia set of $Q_{\pc}$. These properties are not enjoyed by Basilica Julia/limit sets containing parabolics. In order to prove that $\succeq$ is a partial order (i.e.\ the possibility that a parabolic Basilica can be turned into a `more parabolic' one using a David homeomorphism), one needs to overcome this issue.
\end{remark}

\appendix

\section{Cubic polynomials}\label{sec:cubic_poly}
In this section, we explain how to modify the construction of fragmented dynamical systems for the purpose of Theorem~\ref{thm:cubicclass}. 

Recall that $\Per_1(0) := \{f_a(z)=z^3+\frac{3a}{2}z^2:a\in \C\}$. Let $\mathcal{H} \subseteq \Per_1(0)$ be the {\em main hyperbolic component}, i.e., the hyperbolic component that contains $z^3$.
The boundary $\partial \mathcal{H}$ is a Jordan curve (see \cite{Roe07}), Any geometrically finite polynomial $f_a \in \partial \mathcal{H}$ has a Basilica Julia set and is of one of the following two types (see Figure~\ref{fig:cubicBasilica}):
\begin{itemize}[leftmargin=8mm]
    \item (parabolic type) the free critical point $-a$ is in a parabolic basin, in which case $a \in \partial \mathcal{H}$ corresponds to a cusp;
    \item (Julia type) the free critical point $-a$ is on the Julia set $J(f_a)$.
\end{itemize}

\begin{proof}[Proof of Theorem~\ref{thm:cubicclass}]
If $f_a$ is of Julia type, then $f_a$ has no parabolic cycle. Thus by Proposition~\ref{prop:davidJulia}, we have $[J(f_a)] = [J(Q_{\pc})]$.

To model maps of parabolic type, we consider $F(z) = z^3+2iz^2$.
Note that $F$ has a parabolic fixed point at $-i$ (see Figure~\ref{fig:cubicBasilica} where the figure is rotated by $90$ degrees).
Let $U_0, U_1$ be the super-attracting and parabolic Fatou components of $F$. Note that both $U_0$ and $U_1$ are fixed, and any bounded Fatou component is eventually mapped to either $U_0$ or $U_1$.
For each $i\in\{0, 1\}$, the map $F|_{\partial U_i}$ is topologically conjugate to $\sigma_2: \mathbb{S}^1 \longrightarrow\mathbb{S}^1$.
Thus, we can define the internal puzzle pieces for $U_0$ and $U_1$ using dyadic intervals just as the internal puzzles for $Q(z)=z^2-\frac34$, and pull them back by $F$ to obtain internal puzzle pieces for each bounded Fatou component of $F$.
Thus, the construction of Basilica $\mathfrak{Q}$-maps can be carried out verbatim in the current setting, giving rise to Basilica $\mathfrak{F}$-maps (recall from Example~\ref{exm:xcmaps}(4) that such maps are piecewise conformal, where the pieces are of the form $F^{-n}\circ F^m$).
Similar to Proposition~\ref{prop:topmodelJulia}, we can manufacture a Basilica $\mathfrak{F}$-map $\widetilde{\mathcal{F}}$ topologically conjugate to $f_a$ on $J(F)$.
By the proof of Proposition~\ref{prop:quasisurg}, after possibly passing to a modified refinement of $\widetilde{\mathcal{F}}$, we obtain a Basilica $\mathfrak{F}$-map $\mathcal{F}$ which is quasisymmetrically conjugate to $f_a $ on $J(F)$. This proves that $[J(f_a)] = [J(F)]$.
\end{proof}

\section{Schwarz reflections}\label{sec:schwarz}
We now describe the construction of a fragmented dynamical system from a Schwarz reflection map leading to the proof of Theorem~\ref{thm:schwarz}.

\subsection{The family $\Sigma^*$}
We begin by recalling the space of Schwarz reflections that realize matings of $\bar z^d$ with the so-called \emph{necklace reflection groups} (see \cite{LMM22} for details). 
Consider the family of degree $d+1$ rational maps:
$$ 
\Sigma_d^* := \left\{ f(z)= z+\frac{a_1}{z} + \cdots -\frac{1}{d z^d} : f\vert_{\widehat{\C}-\overline{\D}} \textrm{ is univalent}\right\}.
$$
For $f\in \Sigma_d^*$,  let $\Omega=f(\widehat{\C}-\overline{\D})$.
Then the Schwarz reflection map associated to $f$ is defined as
\begin{align*}
    & S\equiv S_f:\Omega \longrightarrow \widehat{\C}\\
    z &\mapsto f\circ \eta \circ (f|_{\widehat{\C}-\overline{\D}})^{-1}(z),
\end{align*}
where $\eta(z) = \frac{1}{\bar z}$ is the reflection along the unit circle. 
Abusing notation, we say that $S \in \Sigma^*_d$ as well.
Note that $S$ extends continuously as the identity map on $\partial \Omega$.

\subsubsection{Invariant partition of Schwarz dynamical plane}
The dynamical plane of a Schwarz reflection map $S$ arising from $\Sigma_d^*$ naturally splits into two invariant subsets.
The {\em basin of infinity} of $S$ is defined as 
$$
\mathcal{B}_\infty(S) = \{z \in \overline{\Omega}: S^n(z) \to \infty\},
$$
and the {\em limit set} of $S$ is 
$$
\Lambda(S) = \partial \mathcal{B}_\infty(S).
$$
The {\em droplet} is defined as $T(S) = \widehat{\C} - \Omega$, and the {\em desingularized droplet/fundamental tile} $T^0(S)$ is obtained from $T(S)$ by deleting the finitely many singular points (conformal cusps or double points) on $\partial T(S) = f(\mathbb{S}^1)$.
The complement of the (closure of the) basin of infinity forms the {\em tiling set}
$$
\mathcal{T}_\infty(S) = \widehat{\C} - \overline{\mathcal{B}_\infty(S)} = \{z \in \widehat{\C}: S^n(z) \in T^0 \text{ for some } n\}.
$$
Note that the tiling set has a level structure.

\subsubsection{Necklace reflection groups}
We say that a Kleinian reflection group $G$ is a {\em necklace reflection group} if it can be generated by reflections in (oriented) round circles $C_1,..., C_d$ which bound closed disks $D_1,..., D_d$ so that
\begin{enumerate}
    \item the disks $D_i$ have pairwise disjoint interiors;
    \item each circle $C_i$ is tangent to $C_{i+1}$ (with $i+1$ mod $d$); and
    \item the boundary of the unbounded component of $\widehat{\C} - \bigcup C_i$ intersects each $C_i$.
\end{enumerate}
(This is equivalent to saying that the nerve of the circle packing $\{C_i\}$ is $2-$connected and outerplanar; see Figure~\ref{fig:sigma}, cf. \cite{LLM22}.)
The number $d$ is called the {\em rank} of $G$.
The Bowen-Series map for $G$ is defined as
\begin{align*}
    &\mathcal{F}: \bigcup_{i=1}^d D_i \longrightarrow \widehat{\C}\\
    z &\mapsto r_i(z) \text{ if } z\in D_i,
\end{align*}
where $r_i$ is the reflection along the circle $C_i$.

We say that a Jordan domain $V$ is an {\em ideal $n-$gon} if $\partial V$ consists of $n$ non-singular analytic arcs so that two adjacent arcs intersect tangentially and form an outward cusp of $V$. We call the cusps on $\partial V$ the {\em ideal vertices} of $V$.

Let $G$ be a necklace reflection group.
We denote the set of touching points of the circles $C_i$ by $\mathfrak{s}$.
Let $\cP_\infty(G)$ be the unbounded component of $\displaystyle \widehat{\C} - \left(\bigcup \Int{D_i}\cup\mathfrak{s}\right)$. Note that $\cP_\infty(G)$ is a fundamental domain for the $G-$action on $\Delta_\infty$, where $\Delta_\infty$ is the unbounded component in the domain of discontinuity $\Omega(G)$ of $G$.
Let $T^0(G)$ be the union of bounded components of $\displaystyle \widehat{\C} - \left(\bigcup \Int{D_i}\cup\mathfrak{s}\right)$. 
Note that $\Int{T^0(G)}$ is a union of ideal polygons. We call $T^0(G)$ the {\em fundamental tile} of $G$. In fact, $T^0(G)$ is a fundamental domain for the $G-$action on
the {\em tiling set} $\mathcal{T}_\infty(G)$ for $G$, defined as
$$
\mathcal{T}_\infty(G) = \Omega(G) - \Delta_\infty.
$$
(See Figure~\ref{fig:sigma}.)

\begin{figure}
\captionsetup{width=0.96\linewidth}
    \centering
    \includegraphics[width=0.66\linewidth]{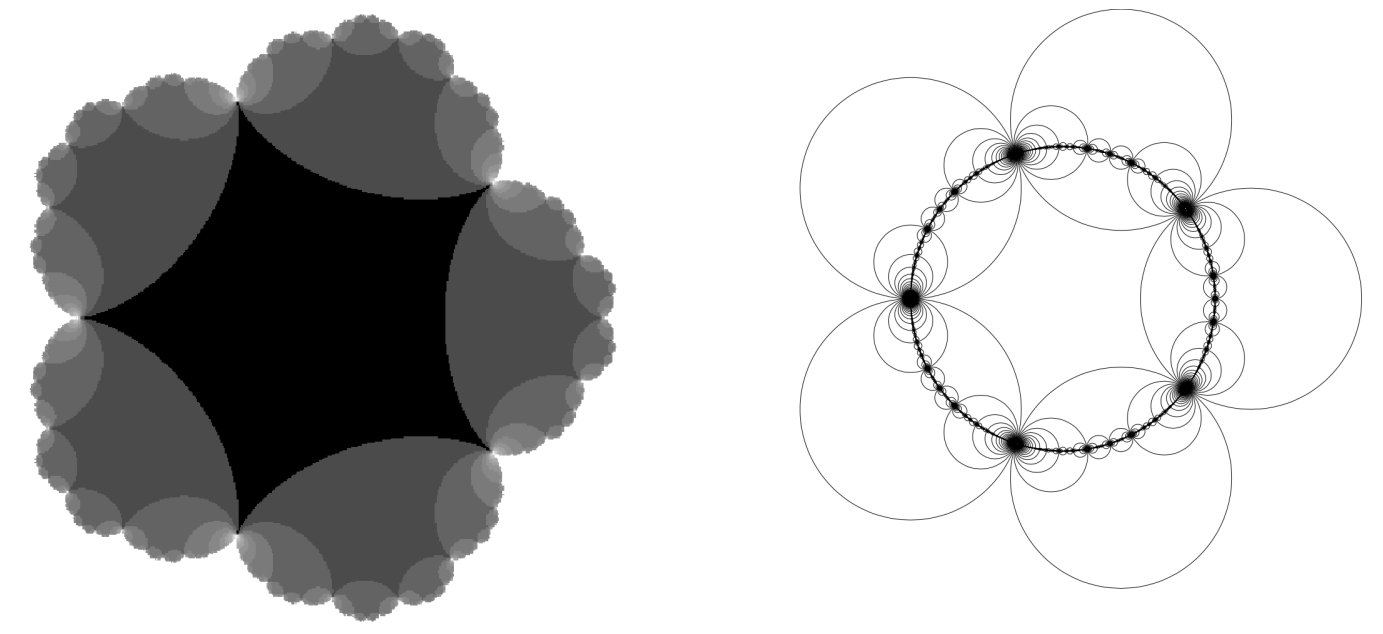}
    \includegraphics[width=0.8\linewidth]{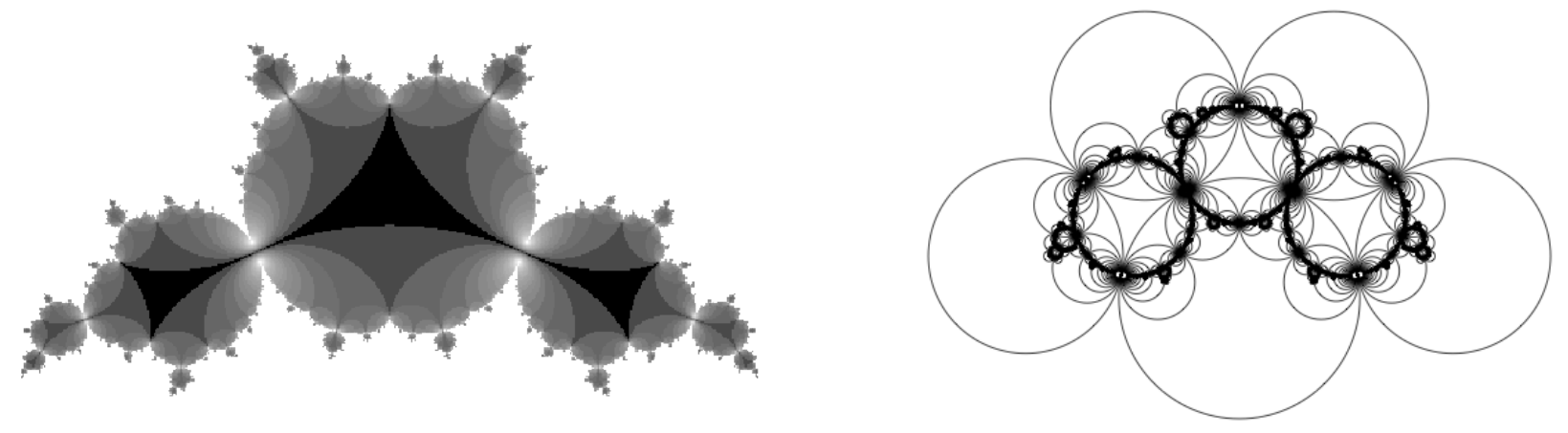}
    \caption{Displayed are the limit sets of two Schwarz reflections in $\Sigma_4^*$ (left) and those of the associated necklace reflection groups (right).}
    \label{fig:sigma}
\end{figure}

If $T^0(G)$ is connected, then $G$ is a quasi-Fuchsian reflection group and the limit set is a quasicircle.
Otherwise, $G$ is a Bers boundary reflection group whose limit set is a Basilica.

\subsubsection{Schwarz reflections as matings}
The following result states that the Schwarz reflections arising from $\Sigma_d^*$ are precisely the conformal matings between necklace reflection groups of rank $d+1$ and the anti-polynomial $\overline{z}^d$.

\begin{prop}\cite[Theorem~A]{LMM22}\label{prop:reflectionmating}
    Let $S \in \Sigma_d^*$. 
    Then there exists a necklace reflection group $G$ of rank $d+1$ so that $S$ is a mating of $\bar z^d$ with $G$.
    More precisely,
    \begin{enumerate}
        \item $S\vert_{\mathcal{B}_\infty(S)}$ is conformally conjugate to $\bar z^d\vert_{\D}$; and
        \item $S$ on $\overline{\mathcal{T}_\infty(S)}$ is conformally conjugate to $\mathcal{F}_G$ on $\overline{\mathcal{T}_\infty(G)}$.
    \end{enumerate}
    In particular, if $S \in \Int\Sigma_d^*$, the limit set is a Jordan curve. If $S \in \partial\Sigma_d^*$, the limit set is a Basilica.
    
    Conversely, for any necklace reflection group $G$ of rank $d+1$, there exists $S \in \Sigma_d^*$ so that $S$ is a mating of $\bar z^d$ with $G$.
\end{prop}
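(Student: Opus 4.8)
\textbf{Proof proposal for Proposition~\ref{prop:reflectionmating}.} The statement is quoted verbatim from \cite[Theorem~A]{LMM22}, so the honest plan is to reproduce the structure of that proof, adapted to the notation set up in this appendix. The forward direction (constructing the group from the Schwarz reflection) and the converse (constructing the Schwarz reflection from the group) are handled by different mechanisms, and I would organize the argument accordingly.

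For the forward direction, the plan is as follows. First I would analyze the action of $S$ on the tiling set $\mathcal{T}_\infty(S)$, using the level structure: every point of $\mathcal{T}_\infty(S)$ eventually lands in the desingularized droplet $T^0(S)$, and the first-return/first-escape dynamics organizes $\overline{\mathcal{T}_\infty(S)}$ into tiles indexed by words in the reflections $\eta\circ(f|_{\widehat{\C}-\overline{\mathbb{D}}})^{-1}$ conjugated by $f$. The key observation is that $S$ restricted to a neighborhood of $\partial\Omega$ acts, in the $f$-coordinate, as the genuine reflection $\eta$ in the unit circle, and the images $f(\mathbb{S}^1)$ of arcs of $\mathbb{S}^1$ between consecutive preimages of singular points become the tangency pattern of a circle packing after one applies a suitable quasiconformal straightening that turns the analytic boundary arcs of $T(S)$ into round circles $C_1,\dots,C_{d+1}$. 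This straightening is the technical heart of the forward direction: one builds a quasiconformal map conjugating $S|_{\overline{\mathcal{T}_\infty(S)}}$ to the Bowen--Series map $\mathcal{F}_G$ of a necklace group $G$, invoking the standard fact that a group of uniformly quasiconformal reflections with the right combinatorial (outerplanar, $2$-connected nerve) pattern is quasiconformally conjugate to a reflection group (Sullivan rigidity / the measurable Riemann mapping theorem applied to the $S$-invariant Beltrami coefficient supported on $\mathcal{T}_\infty(S)$). Simultaneously, on $\mathcal{B}_\infty(S)$ the map $S$ is a degree-$d$ orientation-reversing proper self-map of a simply connected domain with no critical points other than the one forced at the center, hence is conformally conjugate to $\bar z^d|_{\mathbb{D}}$ by the standard Böttcher-type/Riemann-map argument for anti-Blaschke products; this gives item (1). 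Gluing the two conjugacies along $\Lambda(S)=\partial\mathcal{B}_\infty(S)$ (which is where conformal removability of the limit set, or at least of the relevant boundary arcs, is used) produces the conformal mating and item (2).

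For the converse, the plan is to run the mating construction in reverse: given a necklace reflection group $G$ of rank $d+1$, one takes the fundamental tile $T^0(G)$, which is a union of ideal polygons, and realizes it as the complement $\widehat{\C}-f(\widehat{\C}-\overline{\mathbb{D}})$ for an appropriate $f\in\Sigma_d^*$. Concretely, one uses the uniformization of $\Delta_\infty(G)$ by $\mathbb{D}$ (equivariantly with respect to $G$ versus the cyclic group $\langle z\mapsto e^{2\pi i/d}z\rangle$ acting on the $\bar z^d$ side), transplants the ideal-polygon boundary structure to get the boundary curve $f(\mathbb{S}^1)$, and checks that the resulting univalent map has the normalized Laurent expansion $z+a_1/z+\cdots-1/(dz^d)$ demanded by the definition of $\Sigma_d^*$; the normalization is pinned down by the conjugacy to $\bar z^d$ near $\infty$ and by the hydrodynamic normalization at $\infty$. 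One then verifies directly that the associated Schwarz reflection $S_f$ is the conformal mating of $\bar z^d$ with $G$, i.e.\ that $S_f$ on $\overline{\mathcal{T}_\infty(S_f)}$ agrees with $\mathcal{F}_G$ on $\overline{\mathcal{T}_\infty(G)}$ under the transplantation map.

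The main obstacle, in both directions, is the \emph{gluing across the limit set}: one has a conformal model on the tiling-set side (the reflection group / Bowen--Series dynamics) and a separate conformal model on the basin-of-infinity side ($\bar z^d$), and to assemble them into a single globally defined conformal dynamical system one must know that the common boundary $\Lambda(S)$ does not support any nontrivial gluing obstruction --- this is precisely a removability statement for the Basilica (or quasicircle) limit set, together with a careful matching of the boundary parametrizations coming from the two sides. In the quasi-Fuchsian/Jordan-curve case the limit set is a quasicircle and removability is classical; in the Bers boundary case one invokes (the reflection-group analog of) the removability results cited in the paper. I would treat the verification that the two boundary extensions are compatible (same landing pattern of rays, same cyclic order of tangency points) as the delicate bookkeeping step, and everything else as an application of the measurable Riemann mapping theorem plus standard Böttcher coordinates.
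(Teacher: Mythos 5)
The paper does not prove this proposition; it is quoted verbatim from \cite[Theorem~A]{LMM22}, and you correctly note this. So I will treat your sketch as a proposed reconstruction of the [LMM22] argument and point out where it departs from what actually works.

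The main issue is the role you assign to quasiconformal straightening in the forward direction. You propose to first build a quasiconformal conjugacy from $S\vert_{\overline{\mathcal{T}_\infty(S)}}$ to $\mathcal{F}_G$ and then upgrade it by straightening an $S$-invariant Beltrami coefficient, citing Sullivan rigidity for groups of uniformly quasiconformal reflections. But the Schwarz reflection $S$ is already anti-holomorphic on every tile, so there is no Beltrami form to straighten on the tiling-set side; and the tiling dynamics of a single map $S$ is not a group, so Sullivan's theorem about uniformly quasi-M{\"o}bius group actions is not directly applicable. What actually delivers the conformal (not merely quasiconformal) conjugacy in item~(2) is more elementary and more rigid: the desingularized droplet $T^0(S)$ is a finite union of ideal polygons whose singular boundary points are $3/2$-cusps and simple double points (with count and incidence determined by the normalization of $\Sigma_d^*$), and one builds a \emph{conformal} isomorphism $T^0(S)\to T^0(G)$ onto the fundamental tile of a necklace group matching this vertex data. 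The extension of this map to all of $\mathcal{T}_\infty(S)$ is then automatic by the Schwarz reflection principle: the anti-conformal reflections in the analytic edges of the tiles are canonically attached to the edges themselves, so any conformal isomorphism of fundamental tiles sending edges to edges automatically intertwines the reflections, and hence conjugates $S$ on adjacent tiles to $\mathcal{F}_G$ on adjacent tiles, inductively over all levels. The genuine analytic content is precisely the part you do flag: the continuous extension to $\overline{\mathcal{T}_\infty(S)}$, which relies on local connectivity / John-type geometry of the limit set.

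Your treatment of item~(1) via an anti-B{\"o}ttcher coordinate is fine, and your high-level picture of the converse (uniformize $\Delta_\infty(G)$, transplant the fundamental-tile data, verify the $\Sigma_d^*$ normalization) is closer in spirit to [LMM22] — that is indeed where welding and David/quasiconformal surgery genuinely enter, rather than in the forward direction where you placed them. But as written, the forward argument would only give quasiconformal equivalence plus a promise to straighten, and the straightening step as described would not return the original $S$, only something in its hybrid class; the actual proof circumvents this entirely by never deforming the already-conformal tiling dynamics.
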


\subsubsection{Geometry of Schwarz tiling set, and infinitude of Schwarz Basilicas}
Let $S \in \partial \Sigma_d^*$ be a mating of $\bar z^d$ and the Bers boundary reflection group $G$.
A component $U$ of $\cT_\infty(S)$ is called {\em purely accidental parabolic} or {\em persistently parabolic} if the corresponding component $\Delta$ of $\Omega(G)$ is so (cf. \S~\ref{pure_acci_para_subsec}) Equivalently, a component $U$ is purely accidental parabolic if $\partial U$ does not intersect the set of iterated preimages of the conformal cusps of $\partial T(S)$.

Recall that two disjoint Jordan domains $U,V\subset \widehat\C$ are said to be \textit{tangent to each other at a point} $z_0\in \partial U\cap \partial V$ if there exists $\theta_0\in [0,2\pi)$ such that for each $\varepsilon>0$ there exists $\delta>0$ with the property that $\{z_0+ re^{i(\theta_0+\theta)}:  r\in (0,\delta), \,\, |\theta|<\pi/2-\varepsilon\}\subset U$ and $\{z_0+ re^{i(\theta_0+\theta)}:  r\in (-\delta,0), \,\, |\theta|<\pi/2-\varepsilon\}\subset V$.
Using quasiconformal surgery, we have the following proposition on the geometry of the boundary of bounded components of $\cT_\infty(S)=\widehat{\C} - \Lambda(S)$ (c.f. \cite[Theorem 1.7]{LN24a}).
\begin{prop}\label{prop:boundarySch}
    Let $S \in \partial \Sigma_d^*$. Let $U,V$ be two bounded components of $\cT_\infty(S)$. Then the following statements hold.
    \begin{enumerate}
    \item If $\partial U \cap \partial V \neq \emptyset$, then they touch tangentially. 
        \item The boundary $\partial U$ is a quasicircle if $U$ is purely accidental parabolic.
        \item The boundary $\partial U$ is quasiconformally equivalent to the cauliflower $J(z^2+\frac14)$ if $U$ is persistently parabolic.
    \end{enumerate}
\end{prop}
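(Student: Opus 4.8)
\textbf{Proof proposal for Proposition~\ref{prop:boundarySch}.}

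The plan is to realize the dynamics of $S$ restricted to a bounded component $U$ of $\cT_\infty(S)$ as a covering (or more precisely a degree-one first-return) map of the ideal boundary $\partial^I U\cong\mathbb{S}^1$, and then to compare this circle map with a model map via the conformal Markov map machinery of \S~\ref{sec:cmm} (Proposition~\ref{prop:qsmarkovmap}). First I would note, via Proposition~\ref{prop:reflectionmating}, that $S$ on $\overline{\cT_\infty(S)}$ is conformally conjugate to the Bowen-Series map $\mathcal{F}_G$ on $\overline{\cT_\infty(G)}$ of the associated Bers boundary reflection group $G$. Hence it suffices to prove all three statements for the components $\Delta$ of $\Omega(G)-\Delta_\infty$ instead of the components $U$ of $\cT_\infty(S)$; this reduces the problem to the Kleinian-group side, where we already have the Basilica Bowen-Series apparatus from \S~\ref{section:BasBS}.

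For statement~(1), I would use the structural description in Proposition~\ref{prop:structgfbersboundary}: any two distinct components $\Delta,\Delta'$ of $\Omega(G)-\Delta_\infty$ either have disjoint closures or meet at a single point, which is the parabolic fixed point of an accidental parabolic element $g\in G$ associated to a connector curve. Near such a parabolic fixed point the limit set $\Lambda(G)$ is squeezed between two mutually tangent parabolic-shaped horocyclic regions (the standard picture of a rank-one cusp), so $\partial\Delta$ and $\partial\Delta'$ approach the common point from opposite sides along a common tangent direction; this is precisely the tangential-touching condition, and it transports to $\partial U,\partial V$ under the conformal conjugacy of Proposition~\ref{prop:reflectionmating}. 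For statements~(2) and~(3), I would analyze the induced circle map $\mathcal{F}_s=\mathcal{F}|_{\Delta}$ on $\partial^I\Delta$ coming from the Basilica Bowen-Series construction (as in \S~\ref{pullback_bs_subsec}); by Proposition~\ref{prop:basilicabssymmetric} this is a topologically expanding conformal Markov map with a puzzle structure, and every break-point is symmetrically parabolic (multiplicity one, as it corresponds to an accidental parabolic of the Fuchsian group stabilizing $\Delta$). If $\Delta$ is purely accidental parabolic, then $\partial\Delta$ is a Jordan curve and the map $\mathcal{F}_s$ has only accidental parabolic break-points; I would compare it with the Bowen-Series/Nielsen map of a co-compact Fuchsian model whose induced boundary map is symmetrically hyperbolic (a round circle model), and by an accelerated/renormalized argument conclude the boundary is a quasicircle --- alternatively, I would note that a purely accidental parabolic $\Delta$ quotients by its stabilizer to a compact surface so $\partial\Delta$ is the limit set of a co-compact Fuchsian group, a quasicircle by classical Ahlfors--Bers theory. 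If $\Delta$ is persistently parabolic, then $\partial^I\Delta$ carries the Bowen-Series map of a non-co-compact Fuchsian group with at least one genuine cusp of $\Sigma$, and I would apply Proposition~\ref{prop:qsmarkovmap}\eqref{HH} to identify $\mathcal{F}_s$ with the model circle map for the cauliflower: the single-parabolic-break-point Jordan-curve type $\{(H,H),(H,P)\}$ discussed in \S~\ref{subsubsec:JC}, whose quasiconformal class is $[J(z^2+\tfrac14)]$ (cf. \cite{McM25}); quasiconformal removability of the cauliflower Julia set (or the John property of its basin) then promotes the boundary conjugacy to a global quasiconformal map.

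The main obstacle I expect is the matching of \emph{combinatorics and types} between the induced boundary map $\mathcal{F}_s$ and the chosen model circle map: one must check that the Markov partitions are combinatorially conjugate with a type-preserving bijection of break-points (all break-points on both sides hyperbolic in case~(2); in case~(3) exactly the break-points corresponding to genuine cusps of $\Sigma$ are parabolic, with matching parabolic multiplicity one, and all others hyperbolic). A secondary technical point is that $\cP_\infty$ and the pinched core polygon give a puzzle with possibly several break-points per component, whereas the cauliflower model has essentially one relevant parabolic orbit; reconciling these requires the modification/refinement steps (in the spirit of Lemma~\ref{lem:localrefine} and Theorem~\ref{thm:symmetrichyperbolic}) so that the two maps become combinatorially conjugate without altering the topological dynamics. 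Once the combinatorial-conjugacy-with-matching-types is in hand, Proposition~\ref{prop:qsmarkovmap} (or Proposition~\ref{prop:qsmarkovmapFiniteCircle}) and the relevant removability lemma finish the argument; the tangential touching of statement~(1) is comparatively soft and follows directly from the cusp geometry in Proposition~\ref{prop:structgfbersboundary}.
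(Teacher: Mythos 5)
Your reduction step is fatally flawed, and it leads to a contradiction with the very statement you are trying to prove. The conformal conjugacy of Proposition~\ref{prop:reflectionmating} is only a conjugacy between $S\vert_{\overline{\cT_\infty(S)}}$ and $\mathcal{F}_G\vert_{\overline{\cT_\infty(G)}}$; it is a map between closures of tiling sets, not an ambient conformal (or even quasiconformal) map of $\widehat{\C}$. On the other side of the Jordan curve $\partial U\subset\Lambda(S)$ sits the basin of infinity $\cB_\infty(S)$, where $S$ acts like $\bar{z}^d$, whereas on the other side of $\partial\Delta\subset\Lambda(G)$ sits $\Delta_\infty$, where $G$ acts as a Fuchsian group. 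These are genuinely different, and the quasiconformal type of a Jordan curve is determined by the \emph{welding} of the two sides, not by one side alone. In fact, if your reduction were legitimate, statement~(3) would be false: every Jordan-domain component of $\Omega(G)$ for a geometrically finite Kleinian group is a quasidisk (this is exactly what Corollary~\ref{cor:schwarz_klein_disjoint} uses to prove $\mathfrak{U}_{\partial\Sigma^*}\cap\mathfrak{U}_{\kle}=\emptyset$), so you would conclude $\partial U$ is a quasicircle in the persistently parabolic case as well, contradicting the claim that it is a cauliflower.

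The idea you are missing is the \emph{two-sided} ideal-boundary analysis. The paper works entirely on the Schwarz side and, for a principal component $U$ (to which the general case is reduced by a conformal blow-up), considers two induced conformal Markov maps: $g_{\mathrm{in}}$ on $\partial^I U$ (coming from the tiling-set dynamics) and $g_{\mathrm{out}}$ on $\partial^I(\widehat{\C}-\overline{U})$ (coming from the basin-of-infinity dynamics), with the welding homeomorphism $h$ conjugating them. The decisive point is the local behavior at singular points of $\partial T(S)$: at a double point, the local parabolic dynamics of $S$ makes the corresponding break-point symmetrically parabolic for \emph{both} $g_{\mathrm{in}}$ and $g_{\mathrm{out}}$, so all break-points are type $(P,P)$ in the purely accidental case and Proposition~\ref{prop:qsmarkovmap}\eqref{HH} gives a quasisymmetric welding, hence a quasicircle; whereas at a conformal cusp of type $(3,2)$, the break-point is symmetrically parabolic for $g_{\mathrm{in}}$ but symmetrically \emph{hyperbolic} for $g_{\mathrm{out}}$, so in the persistently parabolic case the welding carries some hyperbolic break-points of $g_{\mathrm{out}}$ onto parabolic break-points of $g_{\mathrm{in}}$ (and never the reverse), which is exactly the one-sided $\{(H,H),(H,P)\}$ situation giving the cauliflower by \cite{McM25}. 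You do eventually name the cauliflower type $\{(H,H),(H,P)\}$, but you try to detect it from the group-side map on $\partial^I\Delta$ alone, where the outer dynamics is still Fuchsian and the cusps are $(P,P)$; the $H$/$P$ asymmetry only appears once you keep the polynomial dynamics on the basin-of-infinity side. Finally, even statement~(1) is proved in the paper directly from the local dynamics of $S$ at a double point of $\partial T(S)$ (order of contact $1$, parabolic repelling directions pinching the non-escaping set), not by transporting tangency through the tiling-set conjugacy, which does not a priori have enough boundary regularity to preserve tangency.
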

\begin{proof}[Sketch of proof]
We call a component of $\cT_\infty(S)$ \emph{principal} if it intersects $T(S)$, and \emph{non-principal} otherwise.
If $U$ is a non-principal component of $\cT_\infty(S)$, then some iterate of $S$ maps an open neighborhood of $\overline{U}$ conformally onto an open neighborhood of $\overline{U}_0$, where $U_0$ is some principal component. Further, every contact point of $\Lambda(S)$ maps to a double point on $\partial T(S)$ under some iterate of $S$ (see \cite[Proposition~48]{LMM22}). Thus, for the proof of the proposition, we may assume that $U, V$ are principal components of $\cT_\infty(S)$, and that $\partial U\cap\partial V$ is a double point on $\partial T(S)$. 
\smallskip

1. By our assumption, $\partial U$ and $\partial V$ intersect at a double point $p$ of $\partial T(S)$. More precisely, two local non-singular arcs of $\partial T(S)$ intersect at $p$ with order of contact $1$ (cf. \cite[Proposition~2.10]{LMM21}). In particular, these two non-singular arcs have a common tangent line at $p$. The two normal directions $\ell^\pm$ to this common tangent line are repelling directions for $S$.
A standard argument involving the local parabolic dynamics of $S$ near a double point (having order of contact $1$) shows that locally near $p$, the non-escaping set $K(S):=\widehat{\C}-\cT_\infty(S)$ is contained in two sectors of arbitrarily small angles containing $\ell^\pm$ (see Figure~\ref{fig:sigma}, cf. \cite[\S~3.2]{MR25}). Hence, each of the components $U, V$ contains a sector of angle arbitrarily close to $\pi$ at $p$; i.e., they touch tangentially. 
\smallskip

2. and 3. Note that for a principal component $U$ of $\cT_\infty(S)$, we have $S(\partial U)=\partial U$. The map $S$ induces two topologically expanding, $C^1$, conformal Markov maps $g_{\mathrm{in}}$ and $g_{\mathrm{out}}$ on the ideal boundaries $\partial ^I U\cong\mathbb{S}^1$ and $\partial^I (\widehat{\C}-\overline{U})\cong\mathbb{S}^1$, respectively. These maps admit puzzle structures, and they are topologically conjugate by a circle homeomorphism $h$ that is the welding homeomorphism for the Jordan curve $\partial U$. The periodic break-points of these ideal boundary maps $g_{\mathrm{in}}$, $g_{\mathrm{out}}$ come from the double points or conformal cusps (of $\partial T(S)$) on $\partial U$. We make the following additional observations.
\begin{itemize}[leftmargin=8mm]
    \item If a periodic break-point of $g_{\mathrm{in/out}}$ comes from a double point of $\partial T(S)$, then the local parabolic dynamics of $S$ near double points implies that this break-point is symmetrically parabolic for $g_{\mathrm{in/out}}$.
    \item The conformal cusps of $\partial T(S)$ are of type $(3,2)$ (cf. \cite[Proposition~2.10]{LMM21}). If a periodic break-point of $g_{\mathrm{in/out}}$ comes from a cusp of $\partial T(S)$, then the local parabolic dynamics of $S$ near conformal cusps of type $(3,2)$ implies that this break-point is symmetrically parabolic for $g_{\mathrm{in}}$, but symmetrically hyperbolic for $g_{\mathrm{out}}$ (cf. \cite[Proposition~A.3]{LMM24}, \cite[\S~4.2.1.]{LLMM23}).
\end{itemize}

If $U$ is purely accidental parabolic, then all singular points of $\partial T(S)$ on $\partial U$ are double points. Thus in this case, all the periodic break-points of $g_{\mathrm{in}}, g_{\mathrm{out}}$ are symmetrically parabolic. By Proposition~\ref{prop:qsmarkovmap}, the conjugacy $h$ is quasi-symmetric, and hence the welding curve $\partial U$ is a quasicircle. (One can also employ the arguments of \cite[Theorem~6.1]{CJY94}, combined with the fact that $U$ contains a sector of angle arbitrarily close to $\pi$ at each double point, to prove that $\partial U$ is a quasicircle).

On the other hand, if $U$ is persistently parabolic, then at least one singular point of $\partial T(S)$ on $\partial U$ is a cusp. Thus in this case, the conjugacy $h$ (from $g_{\mathrm{out}}$ to $g_{\mathrm{in}}$) carries some hyperbolic fixed point to a parabolic point, but never the other way round.    By \cite{McM25}, the welding curve $\partial U$ associated with $h$ is quasiconformally equivalent to $J(z^2+\frac14)$.
\end{proof}

We can construct a {\em bi-colored contact tree} for $S\in \partial \Sigma_d^*$: vertices correspond to bounded components of $\widehat{\C} - \Lambda(S)$, and there is an edge between two vertices if the corresponding components touch. The vertex is colored black if it is purely accidental parabolic, and white otherwise. By Proposition~\ref{prop:boundarySch}, this bi-colored contact tree is a quasiconformal invariant.
One can easily construct, as in \S~\ref{subsubsec:bicolct}, infinitely many Bers boundary reflection groups with pairwise non-isomorphic bi-colored contact trees. By Proposition~\ref{prop:reflectionmating}, we have infinitely many Schwarz reflections in $\Sigma^* = \bigcup_d\Sigma_d^*$ with pairwise non-isomorphic bi-colored contact trees. Thus, we have the following lemma. Recall that $\mathfrak{U}_{\partial \Sigma^*} \subseteq \mathfrak{B}$ is the set of quasiconformal classes of Basilicas arising as limit sets of Schwarz reflections in $\partial\Sigma^*$.
\begin{lem}\label{lem:infinite_schwarz_basilica}
    There are infinitely many quasiconformally nonequivalent Basilica limit sets for Schwarz reflections $S \in \partial \Sigma^*$, i.e., $\mathfrak{U}_{\partial \Sigma^*}$ is an infinite set.
\end{lem}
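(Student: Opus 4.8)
The plan is to prove Lemma~\ref{lem:infinite_schwarz_basilica} by transporting the combinatorial construction of infinitely many quasiconformally distinct Basilica limit sets from the Kleinian world (carried out in \S~\ref{subsubsec:bicolct}, Lemma~\ref{lem:infinKlein}) to the Schwarz reflection world via the mating correspondence of Proposition~\ref{prop:reflectionmating}. The bi-colored contact tree (vertices $\leftrightarrow$ bounded components of $\widehat\C-\Lambda(S)$, edges $\leftrightarrow$ tangential contacts, coloring by purely accidental parabolic versus persistently parabolic) is, by Proposition~\ref{prop:boundarySch}, a quasiconformal invariant of $\Lambda(S)$. So it suffices to exhibit a sequence $S_n\in\partial\Sigma^*$ whose bi-colored contact trees are pairwise non-isomorphic, and then distinguish them by the same coarse invariant used in Lemma~\ref{lem:infinKlein} — namely the minimum odd distance (in the edge metric of the contact tree) between two white vertices.

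First I would record the observation, already noted in \S~\ref{subsubsec:bicolct}, that two Basilica limit sets are quasiconformally equivalent only if their bi-colored contact trees are equivalent (preserving coloring and ribbon structure); this is exactly Proposition~\ref{prop:qsinv}/Proposition~\ref{prop:boundarySch} applied to $\Sigma^*$. Next I would invoke the structure of necklace reflection groups: for a Bers boundary reflection group $G$ of rank $d+1$, the components of $\Omega(G)-\Delta_\infty$ and their tangencies recover a bi-colored contact tree exactly as for a general geometrically finite Bers boundary group, and the white/black dichotomy (persistently versus purely accidentally parabolic) matches the one for $S$ under the conjugacy of Proposition~\ref{prop:reflectionmating}(2). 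Thus the bi-colored contact tree of $S$ equals that of its associated necklace reflection group $G$.

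Then I would run the construction of \S~\ref{subsubsec:bicolct} in the reflection-group category: for each even $g\ge 4$, I want a Bers boundary \emph{reflection} group whose underlying surface-with-nodes is (orbifold-)modeled on a chain $\Sigma_1+\cdots+\Sigma_g$ where the two end pieces carry a genuine cusp (forcing white vertices) and the interior pieces are purely accidental (black vertices), arranged so that the minimum odd distance between white vertices is $g-1$. The point is that necklace reflection groups have Bers boundary groups whose tiling-set components realize an arbitrary finite outerplanar/chain combinatorial pattern — this is visible from the circle-packing description and is the reflection-group analog of the pinching construction in Lemma~\ref{lem:infinKlein}. Having produced such $G_g$, Proposition~\ref{prop:reflectionmating} (converse direction) yields $S_g\in\partial\Sigma^*$ realizing $\bar z^{d_g}$ mated with $G_g$, with $\Lambda(S_g)$ a Basilica and with bi-colored contact tree isomorphic to that of $G_g$. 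Since the minimum odd distance between white vertices is the quasiconformal invariant $g-1$, the $\Lambda(S_g)$ are pairwise quasiconformally inequivalent, so $\mathfrak{U}_{\partial\Sigma^*}$ is infinite.

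The main obstacle is the middle step: making precise that \emph{reflection} groups (as opposed to general Fuchsian/Kleinian groups) can realize the required chain combinatorics on the Bers boundary, i.e.\ that pinching an appropriate strictly separating multicurve on an orbifold uniformized by a necklace \emph{reflection} group produces a geometrically finite Bers boundary reflection group with the prescribed pattern of persistently and purely accidentally parabolic tiling components. This requires checking that the relevant orbifolds admit necklace-reflection-group uniformizations and that the pinching deformations stay within the reflection-group locus; once that combinatorial realizability is in hand, the rest is a direct application of Proposition~\ref{prop:boundarySch} and Proposition~\ref{prop:reflectionmating} together with the distance invariant, exactly paralleling Lemma~\ref{lem:infinKlein}. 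I would also remark that one may alternatively bypass reflection groups entirely and build the chain pattern directly in the Schwarz dynamical plane by choosing suitable parameters in $\partial\Sigma_{d}^*$ with $d$ growing, using the level structure of the tiling set; but the group-theoretic route reuses the most machinery.
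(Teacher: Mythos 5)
Your proposal follows the paper's proof in outline: use Proposition~\ref{prop:boundarySch} to make the bi-colored contact tree of $\Lambda(S)$ a quasiconformal invariant, realize many inequivalent bi-colored trees by Bers boundary necklace reflection groups, and push these to $\partial\Sigma^*$ via Proposition~\ref{prop:reflectionmating}. You correctly identify the one step that requires work (the paper dispatches it with ``one can easily construct, as in \S~\ref{subsubsec:bicolct}\ldots''), which is good. However, your primary plan for that step --- literally transplanting the genus-$g$ two-punctured surface of Lemma~\ref{lem:infinKlein}, pinched into twice-punctured tori --- will not go through as stated. A necklace reflection group of rank $d+1$ uniformizes $\Delta_\infty/G$ as a \emph{disk orbifold} with $d+1$ mirror boundary edges meeting at $d+1$ ideal points, not a closed surface of genus $g$ with punctures; pinching deformations that remain in the reflection-group locus are pinchings of \emph{this} orbifold, and its nodal pieces are again mirror-bordered disk orbifolds, never twice-punctured tori. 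So the chain $\Sigma_1+\cdots+\Sigma_g$ you describe cannot be the nodal decomposition of a Bers boundary necklace reflection group, and the minimum-odd-distance invariant as literally lifted from Lemma~\ref{lem:infinKlein} is not immediately available.

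The repair is what you hint at in your closing sentence, and it is simpler than the genus-$g$ route: work directly with the planar combinatorics of the circle packing $\{C_1,\dots,C_{d+1}\}$. The components of $T^0(G)$ are the bounded faces of the tangency graph (the original necklace cycle plus the chords coming from accidental tangencies), and the bi-coloring is determined combinatorially: a face is white iff its boundary contains an arc between two cyclically adjacent circles $C_i, C_{i+1}$ (equivalently, contains an original cusp, cf.\ the observation in the proof of Corollary~\ref{cor:onlypossintersect}), and black otherwise. It is then elementary to produce, for each $n$, a pattern of chords (e.g.\ $n$ nested inner cycles among odd-indexed circles, each inner cycle enclosing a purely accidental face) whose face-adjacency tree has a black chain of length $n$ terminating at white vertices. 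The depth of the deepest black vertex, or the minimum distance from a black vertex to the nearest white vertex, is a ribbon-tree invariant that grows with $n$, giving infinitely many pairwise inequivalent bi-colored contact trees realized by Bers boundary reflection groups. Proposition~\ref{prop:reflectionmating} and Proposition~\ref{prop:boundarySch} then finish the proof exactly as you and the paper propose.
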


\begin{cor}\label{cor:schwarz_klein_disjoint}
    The intersection $\mathfrak{U}_{\partial \Sigma^*} \cap \mathfrak{U}_{\kle} = \emptyset$.
\end{cor}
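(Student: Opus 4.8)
The plan is to show that no Basilica limit set of a geometrically finite Kleinian group can be quasiconformally equivalent to a Basilica limit set of a Schwarz reflection $S\in\partial\Sigma^*$, by exhibiting a quasiconformal invariant on which the two classes differ. The natural candidate is a refinement of the bi-colored contact tree carrying the \emph{valence} data at each vertex. Recall from \S~\ref{subsubsec:bicolct} that for a Bers boundary Kleinian group $G$ the contact tree $\mathcal{T}$ of $\Omega(G)-\Delta_\infty$ is a tree with \emph{infinite} valence at every vertex: each Jordan component $\Delta$ of $\Omega(G)-\Delta_\infty$ has infinitely many distinct parabolic fixed points on $\partial\Delta$ in the grand orbit of the accidental parabolics, hence infinitely many other components of $\Omega(G)-\Delta_\infty$ touching it. By contrast, for a Schwarz reflection $S\in\partial\Sigma^*$ the tiling set $\mathcal{T}_\infty(S)$ has the structure described via Proposition~\ref{prop:reflectionmating}: it is conformally a copy of the tiling set $\mathcal{T}_\infty(G)$ of a necklace reflection group $G$ of rank $d+1$, tiled by finitely many ideal polygons forming the fundamental tile $T^0(G)$. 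In such a configuration each component of $\mathcal{T}_\infty(S)$ touches only \emph{finitely many} other components — indeed, the nerve of a necklace circle packing is outerplanar, so each disk $D_i$ is tangent to exactly two neighbors, and pulling back under $\mathcal{F}_G$ (equivalently under $S$) keeps the local valence bounded by a constant depending only on $d$.

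First I would make precise that bounded valence of the contact tree is a quasiconformal (indeed topological) invariant of the pointed local structure: if $\Phi$ is a homeomorphism of $\widehat\C$ with $\Phi(\Lambda_1)=\Lambda_2$, then $\Phi$ induces a bijection between complementary components and preserves the incidence relation "closures meet at a point," so it induces an isomorphism of contact trees and in particular preserves the valence of every vertex. This is already enough; we do not even need the quasiconformal hypothesis here, only that both limit sets are Basilicas and that $\Phi$ is a homeomorphism. Second I would verify the two valence computations. For the Kleinian side: by Proposition~\ref{prop:structgfbersboundary}, each component $\Delta$ of $\Omega(G)-\Delta_\infty$ is a Jordan domain whose stabilizer $G_\Delta$ is a non-elementary Fuchsian group acting on $\Delta$, and the touching points on $\partial\Delta$ are precisely the parabolic fixed points associated to connector curves; since $G_\Delta$ is non-elementary it has infinitely many parabolic fixed points (or more precisely infinitely many points in the relevant grand orbit), and distinct such points support distinct touching components, so $\Delta$ has infinite valence. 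For the Schwarz side: via the conformal conjugacy of Proposition~\ref{prop:reflectionmating}(2) between $S$ on $\overline{\mathcal{T}_\infty(S)}$ and $\mathcal{F}_G$ on $\overline{\mathcal{T}_\infty(G)}$, each component of $\mathcal{T}_\infty(S)$ corresponds to a $G$-translate of one of the finitely many ideal polygons composing $\Int T^0(G)$; an ideal $n$-gon component touches other components only at its finitely many ideal vertices, and a short count (using that distinct $G$-orbits of ideal polygons share at most finitely many parabolic tangency points) bounds the valence by a constant $N=N(d)$.

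Assembling these: suppose for contradiction that $\Phi$ is a quasiconformal map with $\Phi(\Lambda(G))=\Lambda(S)$ for some geometrically finite Bers boundary Kleinian group $G$ with Basilica limit set and some $S\in\partial\Sigma^*$. Then $\Phi$ induces a contact-tree isomorphism, carrying a vertex of infinite valence (which exists on the Kleinian side) to a vertex of valence at most $N(d)$ on the Schwarz side — a contradiction. Hence $\mathfrak{U}_{\partial\Sigma^*}\cap\mathfrak{U}_{\kle}=\emptyset$. I expect the main obstacle to be the bookkeeping in the Schwarz-side valence bound: one must be careful that the identification in Proposition~\ref{prop:reflectionmating} is only a \emph{conformal conjugacy on} $\overline{\mathcal{T}_\infty(S)}$, so one should argue the incidence structure of $\mathcal{T}_\infty(S)$ really is combinatorially that of $\mathcal{T}_\infty(G)$, and then that the latter has uniformly bounded valence — which follows from outerplanarity of the necklace nerve together with the fact that pulling a fundamental tile back under the Bowen–Series map $\mathcal{F}_G$ (a piecewise reflection) only introduces tangencies along the finitely many edges of $T^0(G)$. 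An alternative route, avoiding valence entirely, is to use the local blow-up invariant of Proposition~\ref{prop:qsinv}/Proposition~\ref{prop:inequivJulia}: but since on \emph{both} sides persistently parabolic components exist, the blow-up dichotomy there does not immediately separate the classes, so the combinatorial valence argument seems cleaner and is the one I would carry out.
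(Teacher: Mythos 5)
Your proposal fails at its central step: the claim that the contact tree of a Schwarz Basilica limit set has bounded valence is false. The contact tree of $\widehat{\C}-\Lambda$ (vertices $=$ bounded complementary components, edges $=$ touching pairs) is a \emph{topological} invariant of the pair $(\widehat{\C},\Lambda)$, and by definition every set in $\mathfrak{B}$ is homeomorphic to $J(z^2-\frac34)$. For $J(z^2-\frac34)$, every bounded Fatou component touches infinitely many others (the contact points, being iterated preimages of the parabolic fixed point, are dense on the boundary of each bounded component). Hence \emph{every} Basilica, Kleinian or Schwarz, has a contact tree with infinite valence at every vertex, and the valence refinement carries no information at all.

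The specific error is a conflation of two unrelated graphs. The ``outerplanar nerve'' of the necklace packing $\{C_1,\ldots,C_{d+1}\}$ records tangencies among the $d+1$ \emph{generating} circles; this is a cycle with $d+1$ vertices. The contact tree of the Schwarz limit set records tangencies among the infinitely many components of the tiling set $\mathcal{T}_\infty(S)\cong\mathcal{T}_\infty(G)=\Omega(G)-\Delta_\infty$. These components are not the disks $D_i$: each such component $\Delta$ is a Jordan domain stabilized by a nontrivial subgroup $G_\Delta\subseteq G$, with $\Delta/G_\Delta$ a finite-type hyperbolic orbifold, so $\partial\Delta$ carries infinitely many $G_\Delta$-translates of any given tangency point and hence touches infinitely many other components. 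Equivalently, on the dynamical side, a principal component $U$ of $\mathcal{T}_\infty(S)$ satisfies $S(\partial U)=\partial U$ with the induced ideal-boundary map of degree $\geq 2$, so the iterated preimages of a double point of $\partial T(S)$ on $\partial U$ accumulate densely, each supporting a distinct neighboring component. The claim that ``pulling back under $\mathcal{F}_G$ keeps the local valence bounded'' is exactly backwards: pull-back creates the infinitely many tangency points.

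The invariant the paper actually uses is finer than the topology and is where the quasiconformality is essential: whether each Jordan component has quasicircle boundary. By Proposition~\ref{prop:boundarySch}, a persistently parabolic component of $\mathcal{T}_\infty(S)$ (which always exists, since the droplet $T^0(S)$ has conformal cusps) has boundary quasiconformally equivalent to the cauliflower $J(z^2+\frac14)$, hence is \emph{not} a quasicircle. On the Kleinian side, every bounded component of $\Omega(G)-\Delta_\infty$ for a geometrically finite Bers boundary group is a quasidisk. Since a quasiconformal homeomorphism of $\widehat{\C}$ sends quasicircles to quasicircles, no such map can carry $\Lambda(G)$ onto $\Lambda(S)$. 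That is the one-line argument; your contact-tree route cannot reach it because the relevant distinction is analytic (a local geometric property of the boundary curves), not combinatorial.
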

\begin{proof}
    Let $S \in \partial \Sigma^*$.
    Note that there always exists a persistently parabolic component for $S$. By Proposition~\ref{prop:boundarySch}, this persistently parabolic component is not a quasicircle. Since each Jordan domain complementary component of a Basilica limit set of a geometrically finite Kleinian group is a quasidisk, the result follows.
\end{proof}

\subsection{Cuspidal Basilica}
In this subsection, we will analyze the intersection of $\mathfrak{U}_{\partial \Sigma^*} \cap \mathfrak{U}_{\kle}$.
Recall that 
\begin{equation}\label{eqn:defnR}
    R(z) = az^4+\frac{1-4a}{3}z^3+\frac{2+a}{3}
\end{equation}
is the degree $4$ polynomial, where $a=\frac{1}{12}+\frac{i\sqrt{2}}{24}$ is chosen so that the critical point $1-\frac{1}{4a}$ is mapped to the parabolic fixed point $1$.
Its Julia set is a cuspidal Basilica, see Figure~\ref{fig:CuspidalBasilica}.
In the following, we will show that $\mathfrak{U}_{\partial \Sigma^*} \cap \mathfrak{U}_{\rat} = [J(R)]$.

\subsubsection{Necessary condition for quasiconformal equivalence}
We begin with a necessary criterion for Basilica Julia sets to lie in the intersection $\mathfrak{U}_{\partial \Sigma^*} \cap \mathfrak{U}_{\rat}$.
\begin{lem}\label{lem:strucCuspidalB}
    Let $g$ be a geometrically finite rational map with Basilica Julia set satisfying the following properties:
    \begin{itemize}[leftmargin=8mm]
        \item the boundary of each Jordan domain Fatou component is either a quasicircle, or is quasiconformally equivalent to the cauliflower $J(z^2+\frac14)$; and 
        \item any two adjacent Jordan domain Fatou components touch tangentially.
    \end{itemize}
    If some Fatou component of $g$ is a quasi-disk, then each Jordan domain Fatou component of $g$ is a quasi-disk, i.e., $J(g)$ is a fat Basilica.
\end{lem}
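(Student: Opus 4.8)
The statement asserts that for a geometrically finite rational map $g$ with Basilica Julia set whose Jordan domain Fatou components are either quasidisks or quasiconformally equivalent to the cauliflower, and such that adjacent components touch tangentially, the presence of one quasidisk component forces all of them to be quasidisks. The natural strategy is to use the transitivity of the dynamics on the grand orbit of Fatou components together with the structural constraints imposed by the contact points. I would argue by contradiction: suppose some Jordan domain Fatou component $U$ of $g$ is a quasidisk, but some other Jordan domain Fatou component $V$ is quasiconformally equivalent to $J(z^2+\frac14)$ (i.e., is \emph{not} a quasidisk, as the cauliflower boundary has an outward cusp).

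\textbf{Key steps.} First, I would reduce to the case of periodic (equivalently, by geometric finiteness, eventually periodic) Fatou components. Since $g$ is geometrically finite with connected Julia set, every bounded Fatou component is eventually mapped onto a periodic cycle of Fatou components, and the map from a Fatou component to its image is a branched covering; in particular, a component is a quasidisk (resp.\ cauliflower-equivalent) if and only if the periodic component at the end of its orbit is. So it suffices to analyze the finitely many cycles of periodic Fatou components. Second, I would observe that the non-Jordan-disk component $U_\infty$ is totally invariant (this is already noted in the excerpt for Basilica Julia sets), so all bounded Fatou components lie in finitely many cycles. Third — and this is the heart — I would use the contact-point structure: the Julia set is connected and Basilica, so the bounded Fatou components, together with the contact relation, form a connected configuration (a contact tree / nerve). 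The two dichotomous possibilities for a periodic component $W$ — quasidisk versus cauliflower — are distinguished by the \emph{parabolic multiplicity} of $g$ at the periodic contact points on $\partial W$: a periodic contact point on a quasidisk boundary must be a repelling (or at worst parabolic of a type compatible with tangential touching that still yields a quasicircle) fixed point of the first return map, whereas a cauliflower component has a parabolic fixed point with a petal opening into the component, giving the outward cusp. I would then show that if two periodic components $W_1, W_2$ touch at a point $p$, the local dynamics of the first-return map at $p$ constrains \emph{both} of them simultaneously — essentially, $p$ is either a repelling periodic point (both sides quasi-tangent, both components locally quasidisk-like) or parabolic with petals distributed among the adjacent components. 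The tangential-touching hypothesis rules out the configuration where one side is a full parabolic petal and the other side is also inside the Julia set in a non-degenerate way. Running this local rigidity along the connected contact tree, starting from the quasidisk component $U$ and propagating to all periodic components, forces every periodic Fatou component to be a quasidisk, hence (by the first reduction) every Jordan domain Fatou component is a quasidisk, i.e., $J(g)$ is a fat Basilica. (Here one also needs that a quasidisk component cannot be adjacent to a cauliflower component: if $p \in \partial W_1 \cap \partial W_2$ with $W_1$ a quasidisk and $W_2$ a cauliflower, then $p$ is a parabolic periodic point with a petal in $W_2$; the tangency condition forces the complementary sector at $p$ to lie in $W_1$, but then $W_1$ would contain a sector of angle $2\pi$ minus the petal angle at a point where its boundary is a quasicircle — and a quasicircle cannot have an inward cusp, which is exactly what the parabolic petal in $W_2$ would force on the $W_1$ side. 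This contradiction is the key local lemma.)

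\textbf{Main obstacle.} The delicate point is the local analysis at a periodic contact point shared by two periodic Fatou components, and in particular showing that a quasidisk component and a cauliflower component cannot be adjacent. One must carefully combine the tangential-touching hypothesis (which controls the \emph{angular} behavior of the two domains at the contact point — each contains a sector of angle arbitrarily close to $\pi$) with the classification of local parabolic/repelling dynamics of a rational map at a (pre)periodic point, and with the fact that a quasicircle admits no cusps. The subtlety is that a priori the first return map at $p$ could be parabolic with several petals distributed in a complicated way among $U_\infty$, $W_1$, $W_2$, and possibly other components meeting at $p$; I would need the Basilica topology (valence-$2$ contact points, as recalled in the excerpt) to limit the combinatorics, and then argue that a petal opening into a Jordan domain component whose boundary is a quasicircle is impossible. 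Once this local lemma is in place, the global propagation along the connected contact tree is routine, and the reduction from periodic to arbitrary Fatou components is standard pullback.
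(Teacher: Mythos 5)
Your high-level plan — reduce to periodic Fatou components, rule out adjacency between a quasidisk and a cauliflower, then propagate — is close in structure to the paper's proof, but the key local step you propose does not hold up. You argue that if a quasidisk $W_1$ touches a cauliflower $W_2$ tangentially at a periodic contact point $p$, then the petal of $W_2$ at $p$ "would force an inward cusp on the $W_1$ side" of $\partial W_1$, contradicting the quasicircle property. This relies on a mistaken local picture. Once you combine the tangential-touching hypothesis at the valence-$2$ contact point with periodicity, the fixed point $p$ is forced to be parabolic of multiplicity exactly $3$, with \emph{one attracting petal in $W_1$ and one in $W_2$} (and two repelling petals in the unbounded component $U_\infty$). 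In that configuration both $\partial W_1$ and $\partial W_2$ consist, near $p$, of two arcs approaching $p$ from opposite directions tangentially to a common line, and \emph{neither} boundary has a cusp at $p$ — this is precisely the local geometry at $-\frac12$ for $J(z^2-\frac34)$, whose two bounded periodic Fatou components are both quasidisks. So the quasicircle condition produces no contradiction at the shared contact point, and the local lemma as you argue it is not proved. (A multiplicity-$2$ picture, in which only $W_2$ receives a petal, is ruled out by the tangency hypothesis itself: the complementary repelling petal is a cusp of vanishing angle, so $W_1$ could not contain a sector of angle close to $\pi$ at $p$.)

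The actual obstruction, and the argument the paper uses, is a constraint on the \emph{interior} dynamics of a Fatou component rather than on boundary geometry at the contact point. After reducing to fixed components, the paper forms (via the Hubbard tree of the topologically conjugate post-critically finite model, as in Lemma~\ref{lem:cutpnteventuallypara}) a chain $U_0=U, U_1,\dots,U_k=V$ of \emph{fixed} bounded Fatou components from the quasidisk to the cauliflower, with each contact point $v_i=\partial U_i\cap\partial U_{i+1}$ a fixed parabolic point of multiplicity $3$ and with $v_k=b$ the non-contact parabolic fixed point of $\partial V$ supplying the cauliflower cusp. Then $U_1$ contains an attracting petal at $v_0$ and (since either $v_1$ is another multiplicity-$3$ contact point, or, when $k=1$, $v_1=b$) also an attracting petal at $v_1$; this makes $U_1$ the immediate parabolic basin of two distinct fixed points, which is impossible since all orbits in a fixed parabolic Fatou component converge to a single boundary fixed point. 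Here the cauliflower hypothesis on $V$ is used only to guarantee a second parabolic fixed point $b$ on $\partial V$ distinct from the contact point, and the quasidisk hypothesis on $U$ to guarantee that the chain starting from $U$ is non-trivial. You should replace your "cusp on the quasicircle" step with this "two parabolic basins in one component" contradiction; your pullback-and-propagate framing would then go through, though the paper dispenses with explicit propagation by locating the contradiction directly at $U_1$.
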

\begin{proof}
    Let $U$ be a quasi-disk Fatou component. Note that by our assumption, its image is also a quasi-disk (as it cannot be a cauliflower). Therefore, after passing to an iterate, we may assume that it is fixed under $g$.
    Suppose by contradiction that there exists a non-quasi-disk Fatou component $V$. Then by the same reasoning and after passing to an iterate, we may assume that $V$ is fixed. Let $b$ be the parabolic fixed point on $\partial V$. 
    
   The rest of the proof is similar to that of Lemma~\ref{lem:cutpnteventuallypara}.
    Let $U_0 = U, U_1, ..., U_k = V$ be the chain of Fatou components connecting $U$ to $V$. Then $g$ fixes $U_i$ for all $i$. 
    Denote $v_i = \partial U_i \cap \partial U_{i+1}$, and $v_k = b$. Then $v_i$ is parabolic with multiplicity $3$. This means that $U_1$ is the basin of attraction for both $v_0$ and $v_1$, which is a contradiction. The lemma follows.
\end{proof}

\begin{cor}\label{cor:onlypossintersect}
    Let $S\in \partial \Sigma^*_d$. Suppose that $\Lambda(S)$ is quasiconformally equivalent to the Julia set of a geometrically finite rational map $g$. Then each component of $\cT_\infty(S)$ is persistently parabolic.
\end{cor}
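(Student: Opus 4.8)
\textbf{Proof proposal for Corollary~\ref{cor:onlypossintersect}.}

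The plan is to argue by contrapositive, combining the structural dichotomy for components of $\cT_\infty(S)$ with the quasiconformal invariance of the local structure of the Basilica. Suppose for contradiction that $\Lambda(S)$ is quasiconformally equivalent (via a homeomorphism $\Phi$) to $J(g)$ for a geometrically finite rational map $g$, but that $\cT_\infty(S)$ has at least one component that is \emph{not} persistently parabolic, i.e., some component $U$ of $\cT_\infty(S)$ is purely accidental parabolic. By Proposition~\ref{prop:boundarySch}, $\partial U$ is then a quasicircle, so $U$ is a quasi-disk. Pushing forward by $\Phi$, the corresponding Fatou component $\Phi(U)$ of $g$ is a quasi-disk Fatou component of $g$, since quasi-disks are preserved by global quasiconformal homeomorphisms. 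Hence $J(g)$ has at least one quasi-disk Fatou component.

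Next I would verify that $J(g)$ satisfies the hypotheses of Lemma~\ref{lem:strucCuspidalB}. For this I transport the conclusions of Proposition~\ref{prop:boundarySch} across $\Phi$: the boundary of each bounded component of $\cT_\infty(S)$ is either a quasicircle or quasiconformally equivalent to $J(z^2+\tfrac14)$, and these two properties are preserved by the quasiconformal map $\Phi$ (a quasiconformal image of a quasicircle is a quasicircle; a quasiconformal image of a set quasiconformally equivalent to $J(z^2+\tfrac14)$ is again quasiconformally equivalent to $J(z^2+\tfrac14)$). Likewise, tangential touching of two Jordan domains is a condition that I would argue is preserved under quasiconformal maps of $\widehat\C$ — here one can invoke the blow-up/compactness argument in the spirit of the proof of Proposition~\ref{prop:qsinv}: at a contact point, the blow-up limits of $\Lambda(S)$ lie inside two tangent sectors of angle arbitrarily close to $\pi$, and a global quasiconformal map sends such a configuration to another configuration of the same topological type, so adjacent Fatou components of $g$ still touch tangentially. (Alternatively, one notes that $g$, being geometrically finite with Basilica Julia set and a quasi-disk Fatou component, is quasiconformally conjugate on $J(g)$ to a model whose bounded Fatou components are known to touch tangentially; but the blow-up argument is the cleanest self-contained route.) Thus $J(g)$ meets both bulleted hypotheses of Lemma~\ref{lem:strucCuspidalB}.

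With the hypotheses of Lemma~\ref{lem:strucCuspidalB} in place and at least one quasi-disk Fatou component present, the lemma forces \emph{every} Jordan domain Fatou component of $g$ to be a quasi-disk; that is, $J(g)$ is a fat Basilica and $g$ has no persistently parabolic Jordan domain components. Transporting back along $\Phi^{-1}$ — again using that quasiconformal maps take quasi-disks to quasi-disks — every bounded component of $\cT_\infty(S)$ must be a quasi-disk, hence purely accidental parabolic by Proposition~\ref{prop:boundarySch}(3) (a persistently parabolic component has boundary quasiconformally equivalent to the cauliflower, which is not a Jordan curve that bounds a quasi-disk). But for $S\in\partial\Sigma_d^*$, the droplet $\partial T(S)$ always carries a conformal cusp of type $(3,2)$, whose grand orbit meets the boundary of some principal component of $\cT_\infty(S)$; that component is persistently parabolic, contradicting what we just derived. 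This contradiction completes the proof.

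The main obstacle I anticipate is the tangential-touching transfer step: quasiconformal maps do not in general preserve the precise geometric notion of tangential contact (angles are not preserved). The way around this is to phrase the needed property purely topologically via Hausdorff blow-up limits, exactly as in Proposition~\ref{prop:qsinv} — the fact that adjacent Fatou components each contain a "sector of angle arbitrarily close to $\pi$" should be replaced by the quasiconformally invariant statement that the blow-up limit of the two Fatou components at a shared boundary point is a pair of complementary half-plane-like regions rather than, say, a cusp configuration — and then feed this into the proof of Lemma~\ref{lem:strucCuspidalB}, whose actual argument (the parabolic multiplicity $3$ / double basin of attraction contradiction) is purely combinatorial-dynamical and is insensitive to the precise geometry. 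A secondary point to be careful about is ensuring that the relevant Fatou components of $g$ really are Jordan domains (so that Lemma~\ref{lem:strucCuspidalB} applies); this follows since $J(g)$ is homeomorphic to the Basilica $J(Q)$, all of whose bounded Fatou components are Jordan domains, and $\Phi$ is a homeomorphism of $\widehat\C$.
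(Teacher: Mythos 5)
Your proof is correct and uses essentially the same ingredients as the paper's: Proposition~\ref{prop:boundarySch}, Lemma~\ref{lem:strucCuspidalB}, the observation that $\cT_\infty(S)$ always has at least one persistently parabolic (principal) component because $\partial T(S)$ has conformal cusps, and transport of the quasi-disk property across $\Phi$. You organize it as a proof by contradiction whereas the paper applies Lemma~\ref{lem:strucCuspidalB} directly in the contrapositive direction (some non-quasi-disk component forces no quasi-disk components), but the logical content is identical. The one place you are more careful than the paper is in verifying that the tangential-touching hypothesis of Lemma~\ref{lem:strucCuspidalB} is actually satisfied by $g$: the paper dismisses this with a single citation of Proposition~\ref{prop:boundarySch}, even though tangential contact is a statement about $\Lambda(S)$ and must be transported to $J(g)$ via the quasiconformal map $\Phi$, which is not angle-preserving. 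Your observation that the honest way to do this is via quasiconformally invariant blow-up limits (as in Proposition~\ref{prop:qsinv}), and that what the proof of Lemma~\ref{lem:strucCuspidalB} really needs is that periodic contact points of $g$ are parabolic of multiplicity $3$ rather than literal geometric tangency, is the right resolution of a genuine terseness in the paper's argument.
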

\begin{proof}
    Let $g$ be a geometrically finite rational map whose Julia set is quasiconformally equivalent to $\Lambda(S)$.
    By Proposition~\ref{prop:boundarySch}, $g$ satisfies the assumptions of Lemma~\ref{lem:strucCuspidalB}.
    Observe that $S$ has at least one persistently parabolic component. (Indeed, a peripheral component of $T^0(S)$ has a conformal cusp on its boundary, and hence the corresponding principal component of $\cT_\infty(S)$ is persistently parabolic.) Hence, $g$ has at least one Jordan domain Fatou component that is not a quasidisk. Therefore, by Lemma~\ref{lem:strucCuspidalB}, no Jordan domain Fatou component is a quasidisk, and the corollary follows.
\end{proof}

\subsubsection{$R-$intervals}
    We will now define \emph{$R-$intervals} as analogs of dyadic intervals (see Definition~\ref{defn:dyaint}) and generalized dyadic intervals (see Definition~\ref{defn:gendyadic}) for the current set-up. The modifications are dictated by the simultaneous existence of cusps and contact points in the grand orbit of the parabolic fixed point of $R$. The notion of $R-$intervals and its classification in terms of colorings of its end-points (which record the cusps and contact points) are given in Definition~\ref{r_int_def} and Definition~\ref{defn:alternating}. Subsequently, we study appropriate decompositions of $R-$intervals and the  ratios of the lengths of the associated sub-intervals in Lemma~\ref{lem:Rintdecomp} (which is the analog of Lemma~\ref{lem:primitivedyadicdecomp} for the present setting).
    
    Let $U_0$ be  the unique fixed bounded Fatou component of $R$. Note that $R: \partial U_0 \longrightarrow \partial U_0$ is topologically conjugate to $\sigma_3$ on $\mathbb{S}^1 \cong \R/\Z$, where we choose the conjugacy so that the parabolic fixed point corresponds to the angle $0$. Further, for any bounded Fatou component $U$, there exists $l \geq 1$ so that $R^l$ maps $U$ conformally to $U_0$. 
     We obtain, via this conformal map, a parametrization of the boundary of a bounded Fatou component by internal angles.
    Note that if $U \neq U_0$, then 
    \begin{itemize}[leftmargin=8mm]
        \item the contact points on $\partial U$ correspond to 
        $$
        \bigg\{\frac{p}{3^n} \mod 1: p \equiv 1 \mod 3,\ n \geq 1\bigg\} \cup \{0\};\ \mathrm{and}
        $$
        \item the cusps of $\partial U$ correspond to 
        $$
        \bigg\{\frac{p}{3^n} \mod 1: p \equiv 2 \mod 3,\ n \geq 1\bigg\}.
        $$
    \end{itemize}
    On the other hand,
    \begin{itemize}[leftmargin=8mm]
        \item the contact points on $\partial U_0$ correspond to $\{\frac{p}{3^n} \mod 1: p \equiv 1 \mod 3,\ n \geq 1\}$;
        \item the cusps of $\partial U_0$ correspond to $\{\frac{p}{3^n} \mod 1: p \equiv 2 \mod 3,\ n \geq 1\} \cup \{0\}$.
    \end{itemize}

    We color a non-zero 3-adic rational  $t = \frac{p}{3^n}$ \emph{red} or \emph{blue} depending on whether $p\equiv 1 \mod 3$ or $p\equiv 2 \mod 3$.
    For simplicity, in the following, let us assume that $0$ is \emph{blue}. This corresponds to the case of the Fatou component $U_0$. The case when $0$ is red is similar, with routine modifications.
    
    
    \begin{defn}\label{r_int_def}
    An interval $I=[s,t] \subseteq \mathbb{S}^1 \cong \R/\Z$ is called an {\em $R-$interval} if there exists $n\geq 0$ so that $\sigma_3^n$ maps $(s,t)$ homeomorphically onto $(0,1)$ or $(0, \frac23)$ or $(\frac13, 1)$.
    We call the unique point $x_I\in(s,t)$ with $\sigma_3^n(x_I) = \frac12$ the {\em marked point} of the $R-$interval $I$.
    We say that an $R-$interval is of type A, B or C depending on whether it is mapped to $[0,1], [0,\frac23]$ or $[\frac13, 1]$.
    \end{defn}
    Note that $[0,\frac13],\ [\frac13,\frac23],\ [\frac23,1]$ are also $R-$intervals.
    \begin{remark}\label{remark:incompatible}
1) We remark that unlike generalized dyadic intervals introduced in Definition~\ref{defn:gendyadic}, the type of an $R-$interval is not determined purely by the colorings of its end-points (see Remark~\ref{remark:compatible}). For instance, both end-points of the type A interval $[0,1]$ are blue; but for the type A interval $[0,\frac13]$, the left end-point is blue while the right end-point is red.
\smallskip

\noindent 2) The marked point $x_I$ of an $R$-interval $I$ will be important in defining canonical maps between various puzzle pieces in \S~\ref{canonical_map_cusp_basi_subsec}.
    \end{remark}

Similar to Lemma~\ref{lem:decomp}, we have the following weaker decomposition property for an $R-$interval.
\begin{lem}\label{lem:decomp-schwarz}
    Let $I=[s,t]$ be an $R-$interval. 
    Given two distinct colors $\{c_1, c_2\} = $ $\{blue\ , red\}$,
     there exists a decomposition $I = [s,x_1] \cup [x_1, x_2] \cup [x_2, t]$ into $R-$intervals so that $x_i$ is of color $c_i$.
\end{lem}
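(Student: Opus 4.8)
The statement to prove is Lemma~\ref{lem:decomp-schwarz}: given an $R$-interval $I=[s,t]$ and an ordered pair of distinct colors $(c_1,c_2)$, one wants a three-piece decomposition $I=[s,x_1]\cup[x_1,x_2]\cup[x_2,t]$ into $R$-intervals with $x_i$ of color $c_i$. The natural strategy is to reduce to the finitely many "base" $R$-intervals by applying $\sigma_3^n$ (which carries $(s,t)$ homeomorphically onto one of $(0,1),(0,\tfrac23),(\tfrac13,1)$ by Definition~\ref{r_int_def}), verify the statement there by an explicit finite check, and pull the decomposition back. The key point that makes the pullback legitimate is that $\sigma_3^n|_{(s,t)}$ is injective, so a $3$-adic point of $(0,1)$ (resp.\ $(0,\tfrac23)$, $(\tfrac13,1)$) at which $\sigma_3^n$ takes the value $\tfrac{p}{3^m}$ pulls back to a $3$-adic point whose color is determined by $p\bmod 3$ exactly as before; and a subinterval whose interior maps homeomorphically onto $(0,1)$, $(0,\tfrac23)$ or $(\tfrac13,1)$ pulls back to a subinterval with the same property, hence to an $R$-interval. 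So once the three base cases are settled, the general case follows by functoriality.

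\textbf{The finite check.} For each of the three base intervals and each of the six ordered pairs $(c_1,c_2)$ one must exhibit two intermediate $3$-adic points $x_1<x_2$ with prescribed colors such that the three pieces are $R$-intervals. For $I=[0,1]$: one can take $x_1\in\{\tfrac13,\tfrac23\}$ and $x_2$ a $3$-adic point inside the remaining subinterval, e.g.\ for $(c_1,c_2)=(red,blue)$ take $x_1=\tfrac13$ (red), $x_2=\tfrac23$ (blue), with pieces $[0,\tfrac13],[\tfrac13,\tfrac23],[\tfrac23,1]$, all $R$-intervals; for $(c_1,c_2)=(blue,red)$ take $x_1=\tfrac23$ (blue), then subdivide $[\tfrac23,1]$—which maps under $\sigma_3$ to $[0,1]$—choosing $x_2$ in $[\tfrac23,1]$ mapping to $\tfrac13$, i.e.\ $x_2=\tfrac79$, which is red since $7\equiv1\bmod3$; the remaining cases are obtained by applying $\sigma_3$ once more to whichever of the two halves $[0,\tfrac12]$-type piece needs a second point. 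For $I=[0,\tfrac23]$ (left end blue, right end red) and $I=[\tfrac13,1]$ (left end red, right end blue) one argues identically: first place $x_1$ at $\tfrac13$ (which is $R$-interval-compatible and red) or at $\tfrac12$ of the $\sigma_3$-image pulled back appropriately, then place $x_2$ in the remaining subinterval by descending one more level of the ternary subdivision. In every instance the pieces one writes down are of the form $\sigma_3^{-k}$ of $[0,\tfrac13],[\tfrac13,\tfrac23],[\tfrac23,1]$ along the relevant injective branch, hence $R$-intervals by definition; the color of each chosen $x_i$ is read off from the residue mod $3$ of its numerator, which is preserved under the branch of $\sigma_3^{-k}$ used. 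Since $\{c_1,c_2\}$ exhausts all available colors, there is no obstruction coming from, say, both $c_i$ being equal to the color of an endpoint.

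\textbf{Main obstacle.} The only genuine subtlety—and the place where the weaker conclusion of Lemma~\ref{lem:decomp-schwarz} compared with Lemma~\ref{lem:decomp} comes from—is that the \emph{type} of an $R$-interval is not determined by its endpoint colors (Remark~\ref{remark:incompatible}): when one subdivides, one must ensure that each of the three resulting pieces is genuinely an $R$-interval, i.e.\ its interior maps homeomorphically onto one of the three allowed model intervals, not merely that its endpoints have the right colors. This is why a \emph{two}-piece decomposition with both prescribed colors is in general impossible (one would be forcing a type that the endpoint data may not support), whereas inserting a third subdivision point always gives enough room: after placing the first point at a ternary breakpoint, each of the two halves is again a model interval (up to an iterate of $\sigma_3$), so the second point can be chosen freely among the three ternary sub-breakpoints of whichever half needs it, and one of those three necessarily has the required color. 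I would organize the write-up as: (i) a reduction lemma stating that $\sigma_3^n|_{\mathrm{int}\,I}$ transports $R$-subinterval decompositions and colors; (ii) the explicit table of choices for the three model intervals and six color pairs; (iii) conclude. The argument is elementary but the bookkeeping of which ternary breakpoint carries which color in each branch is the part that needs care.
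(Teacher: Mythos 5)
Your approach matches the paper's exactly — blow up $I$ by a suitable $\sigma_3^n$ to one of the three model intervals $[0,1]$, $[0,\frac23]$, $[\frac13,1]$, carry out a finite explicit check producing a three-piece $R$-interval decomposition for each ordered color pair, and pull back via the color-preserving injective branch of $\sigma_3^{-n}$; the paper's proof is precisely this, with all six decompositions written out. One small correction to your bookkeeping: the right end-point $\frac23$ of the model interval $[0,\frac23]$ has numerator $2\equiv 2\pmod 3$ and is therefore \emph{blue}, not red as your parenthetical claims — in fact both $[0,1]$ and $[0,\frac23]$ have blue end-points at both ends (so neither is alternating), which is part of why a two-piece decomposition cannot in general realize an arbitrary ordered color pair and a third break-point is genuinely needed.
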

\begin{proof}
    Let $m \geq 0$ so that $\sigma_3^m(I)$ is one of the intervals in the list
    $$[0,1], [0,\frac23], [\frac13, 1].
    $$
    For these intervals, we have the required decompositions:
    $$
    [0,1]=[0,\frac13]\cup [\frac13,\frac23]\cup [\frac23,1],\quad [0,1]=[0,\frac23]\cup[\frac23,\frac79]\cup[\frac79,1],
    $$
    $$
    [0,\frac23]=[0,\frac13]\cup [\frac13,\frac59]\cup [\frac59,\frac23],\quad [0,\frac23]=[0,\frac29]\cup[\frac29,\frac13]\cup[\frac13,\frac23],
    $$
    $$
    [\frac13,1]=[\frac13,\frac49]\cup [\frac49,\frac23]\cup [\frac23,1],\quad [\frac13,1]=[\frac13,\frac23]\cup[\frac23,\frac79]\cup[\frac79,1].
    $$
    As $\sigma_3^m$ is a color-preserving map for $3-$adic rational numbers in $\Int(I)$, we can pull back the above decompositions via $\sigma_3^m$ to produce the desired decomposition of $I$.
\end{proof}

By induction, we have the following.
\begin{cor}\label{cor:alternatingdecomposition}
    Let $(c_i)_{i=1}^{2N}$ be an alternating sequence of colors in blue and red (where indices are taken modulo $N$). Then there exists a decomposition
    $$
    \mathbb{S}^1 = [s_1,s_2]\cup[s_2, s_3]\cup ...\cup [s_{2N}, s_1],
    $$
    so that each interval $[s_i, s_{i+1}]$ is an $R-$interval, and $s_i$ has color $c_i$.
\end{cor}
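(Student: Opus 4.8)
## Proof Plan for Corollary~\ref{cor:alternatingdecomposition}

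The plan is to deduce this statement from Lemma~\ref{lem:decomp-schwarz} by an induction on $N$, treating the circle $\mathbb{S}^1 \cong \R/\Z$ as an $R$-interval after an initial cut. First I would observe that the whole circle can be split at the point $0$ together with one more point to produce two $R$-intervals whose union is $\mathbb{S}^1$: for instance $\mathbb{S}^1 = [0,\tfrac12]\cup[\tfrac12,1]$, where each of $[0,\tfrac12]$ and $[\tfrac12,1]$ is an $R$-interval (each maps homeomorphically to $[0,1]$ under $\sigma_3^0$ after noting $\tfrac12$ has no $3$-adic obstruction — more carefully, one picks the cut points among $3$-adic rationals realizing the prescribed colors $c_1$ and $c_{N+1}$, which is possible precisely because $0$ is blue and the $R$-intervals adjacent to a blue point may themselves be taken with a red or blue far endpoint). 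So the strategy reduces the circle problem to: decompose a single $R$-interval into $2N-1$ further $R$-intervals realizing a prescribed color sequence on the interior cut points, with prescribed colors on the two endpoints.

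The key inductive step is to repeatedly apply Lemma~\ref{lem:decomp-schwarz}, which lets us split any $R$-interval $[s,t]$ into three $R$-intervals $[s,x_1]\cup[x_1,x_2]\cup[x_2,t]$ with $x_1, x_2$ having any two prescribed distinct colors. Since the target sequence $(c_i)$ is alternating, consecutive interior cut points always differ in color, so Lemma~\ref{lem:decomp-schwarz} is exactly the tool needed: one peels off cut points two at a time from one end, each time applying the lemma to the remaining right-hand $R$-interval, and the alternating hypothesis guarantees that the two colors demanded at each application are indeed distinct. After $\lceil (2N-1)/2\rceil$ applications every required interior point has been introduced, and the final piece is an $R$-interval with the correct terminal color $c_1 = c_{2N+1}$ (indices mod $N$, so the sequence closes up consistently — one checks the parity works out precisely because the sequence has even length $2N$). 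Each sub-interval produced is an $R$-interval by construction, and the colors of the cut points are as prescribed, which is what the corollary asserts.

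The main obstacle I anticipate is the initial cut of the circle and the bookkeeping of colors at the seam. Unlike the interior steps, where Lemma~\ref{lem:decomp-schwarz} applies cleanly, the first decomposition must respect the fact that $0$ is already a distinguished (blue) point on $\mathbb{S}^1$ and that the type of an $R$-interval is \emph{not} determined by its endpoint colors alone (Remark~\ref{remark:incompatible}). So one has to verify that, for \emph{any} prescribed colors $c_1$ and $c_{N+1}$ (which, by the alternating hypothesis with even length $2N$, are actually forced to be opposite), there genuinely exist two points on the circle of those colors splitting $\mathbb{S}^1$ into a pair of $R$-intervals. This amounts to exhibiting explicit $3$-adic rationals of each color adjacent to $0$ — e.g.\ $\tfrac13$ (red) and $\tfrac23$ (blue) — and checking that $[0,\tfrac13]$, $[\tfrac13,1]$ (or the appropriate rotation) are $R$-intervals, which follows directly from Definition~\ref{r_int_def} since $[0,\tfrac13]$ maps to $[0,1]$ under $\sigma_3$ and $[\tfrac13,1]$ is listed explicitly as an $R$-interval. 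Once this seam is handled, the rest is a routine induction, and I would present it as such rather than spelling out all intermediate fractions.
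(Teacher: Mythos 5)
Your core strategy is correct and almost certainly what the paper has in mind: fix $s_1=0$ (which must be a break-point, since $0$ is fixed by $\sigma_3$ and so cannot lie in the interior of any $R$-interval), make one initial cut to split $\mathbb{S}^1$ into two $R$-intervals, and then iterate Lemma~\ref{lem:decomp-schwarz} to introduce the remaining break-points two at a time, the alternating hypothesis supplying the ``two distinct colors'' that each application of the lemma requires. With these ingredients, a clean induction on $N$ goes through (base case $\mathbb{S}^1=[0,\frac13]\cup[\frac13,1]$; inductive step: apply Lemma~\ref{lem:decomp-schwarz} to the last piece $[s_{2N-2},s_1]$ of the $(2N-2)$-piece decomposition, prescribing the colors $c_{2N-1},c_{2N}$, which are distinct by alternation).

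There are, however, two genuine errors in the write-up that you should fix rather than paper over with a self-correction. First, the opening example $\mathbb{S}^1=[0,\frac12]\cup[\frac12,1]$ is wrong: $\frac12$ is not a $3$-adic rational, so it cannot be an endpoint of an $R$-interval, and the parenthetical justification ``each maps homeomorphically to $[0,1]$ under $\sigma_3^0$'' is false ($\sigma_3^0$ is the identity and sends $(0,\frac12)$ to $(0,\frac12)$, not to $(0,1)$). Start directly with $[0,\frac13]\cup[\frac13,1]$, both of which are manifestly $R$-intervals (types $A$ and $C$). Second, the claim that $c_1$ and $c_{N+1}$ ``are actually forced to be opposite'' is false: in an alternating sequence of length $2N$ one has $c_{N+1}=c_1$ precisely when $N$ is even. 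This is not merely cosmetic, because the scheme it motivates---cut at $s_1$ and $s_{N+1}$, then decompose each half into $N$ $R$-intervals---cannot work when $N$ is even: each application of Lemma~\ref{lem:decomp-schwarz} replaces one $R$-interval by three, so iterating it on a single $R$-interval only ever produces an odd number of pieces. The cut you actually use in your final paragraph, at the \emph{adjacent} break-points $s_1=0$ and $s_2=\frac13$, with $[0,\frac13]$ kept whole and $[\frac13,1]$ decomposed into the odd number $2N-1$ of pieces, is the one that avoids this parity obstruction; present it from the start and drop the $s_{N+1}$ digression.
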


\begin{defn}\label{defn:alternating}
We call an $R-$interval $I$ {\em alternating} if the two end-points of $I$ are of different colors.
We define the {\em subtype} of an alternating $R-$interval as follows.
\begin{itemize}
    \item $A1$, if $I$ is of type $A$, and the left end-point is red;
    \item $A2$, if $I$ is of type $A$, and the left end-point is blue;
    \item $B$, if $I$ is of type $B$, in which case the right end-point is necessarily blue (and hence the left end-point is necessarily red);
    \item $C$, if $I$ is of type $C$, in which case the left end-point is necessarily red.
\end{itemize}

Let $I = [s,t]$ be an alternating $R-$interval.
We say $I = J_1 \cup J_2 \cup \ldots \cup J_{2M+1}$ is an {\em alternating decomposition} if each $J_i$ is an alternating $R-$interval and they are ordered linearly.
We say an alternating decomposition $I = J_1 \cup J_2 \cup \ldots \cup J_{2M+1}$ is {\em admissible} if there exists $J_i$ so that the marked point $x_{J_i}$ of $J_i$ and the marked point $x_I$ of $I$ are equal, and $J_i$ is of the same type as $I$. Note that we allow them to have different subtypes.
\end{defn}

\noindent Let 
$$
\mathcal{N}^+(I, M):=\{ |J_1|/|I|: I = J_1 \cup \ldots \cup J_{2M+1} \text{ is an alternating decomposition} \}.
$$
Similarly, we define 
$$
\mathcal{N}^-(I, M):= \{ |J_{2M+1}|/|I|: I = J_1 \cup \ldots \cup J_{2M+1} \text{ is an alternating decomposition} \}.
$$
We also define 
\begin{equation*}
\begin{split}
&\mathcal{N}^+_o(I, M):= \{ |J_1|/|I|: I = J_1 \cup \ldots \cup J_{2M+1} \text{ is an admissible}\\ &\hspace{5.5cm} \text{alternating decomposition} \};
\end{split}
\end{equation*}
and 
\begin{equation*}
\begin{split}
&\mathcal{N}^-_o(I, M):= \{ |J_{2M+1}|/|I|: I = J_1 \cup \ldots \cup J_{2M+1} \text{ is an admissible}\\ &\hspace{6cm} \text{alternating decomposition} \}.
\end{split}
\end{equation*}

The following lemma shows the flexibility in constructing alternating decompositions and admissible alternating decompositions (c.f. Lemma~\ref{lem:primitivedyadicdecomp} for dyadic decomposition).
\begin{lem}\label{lem:Rintdecomp}
    Let $I = [s,t]$ be an alternating $R-$interval. Let $M \geq 2$.
    Then the following holds.
    \begin{itemize}
        \item If $I$ is Type $A1$, then 
        $$
            \mathcal{N}^\pm(I, M)  = \mathcal{N}^\pm_o(I, M)\sqcup \bigg\{\frac23, \frac{2}{3^{M+1}}\bigg\} = \bigg\{\frac{2}{3^j}: j\in\{1,\ldots, M+1\}\bigg\}.
        $$
        \item If $I$ is Type $A2$, then
        $$
            \mathcal{N}^\pm(I, M)  = \mathcal{N}^\pm_o(I, M) =  \bigg\{\frac{1}{3^{j}}: j\in\{1,\ldots, M\}\bigg\};
        $$
        \item If $I$ is Type $B$, then
        \begin{align*}
            \mathcal{N}^+(I, M) = \mathcal{N}^+_o(I, M)\sqcup \bigg\{\frac{1}{3^M}\bigg\} &= \bigg\{\frac{1}{3^{j}}: j\in\{1,\ldots, M\}\bigg\};\\
            \mathcal{N}^-(I, M) = \mathcal{N}^-_o(I, M)\sqcup \bigg\{\frac{1}{2}, \frac13\bigg\} &= \bigg\{\frac12\bigg\} \cup \bigg\{\frac{1}{3^{j}}: j\in\{1,\ldots, M\}\bigg\}.
        \end{align*}
        \item If $I$ is Type $C$, then
        \begin{align*}
            \mathcal{N}^+(I, M) = \mathcal{N}^+_o(I, M)\sqcup \{\{\frac{1}{2}, \frac13\bigg\} &= \bigg\{\frac12\bigg\} \cup \bigg\{\frac{1}{3^j}: j\in\{1,\ldots, M\}\bigg\};\\
            \mathcal{N}^-(I, M) = \mathcal{N}^-_o(I, M)\sqcup \bigg\{\frac{1}{3^M}\bigg\} &= \bigg\{\frac{1}{3^{j}}: j\in\{1,\ldots, M\}\bigg\}.
        \end{align*}
    \end{itemize}
    In particular, if $M\geq 2$, then admissible alternating decomposition exists.
\end{lem}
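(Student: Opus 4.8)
The plan is to reduce the statement, exactly as in the proof of Lemma~\ref{lem:decomp-schwarz}, to a small number of model $R$-intervals, and then to run an induction on $M$ in which every alternating decomposition is realised as an iterated ternary split.

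\textbf{Reduction to model intervals.} If $I=[s,t]$ is an alternating $R$-interval of subtype $\tau$ and $m$ realises its type, then $\sigma_3^m$ maps $(s,t)$ homeomorphically onto one of $(0,1),(0,\tfrac23),(\tfrac13,1)$, multiplies all lengths on $I$ by the constant $3^m$, preserves the red/blue colour of every $3$-adic rational in $\Int(I)$ (multiplication by $3$ fixes the residue modulo $3$ of the numerator of a $3$-adic rational of level $\geq 1$), and, being built from the same iterates of $\sigma_3$ used to define marked points, matches marked points. Hence an alternating decomposition $I=J_1\cup\cdots\cup J_{2M+1}$ pushes forward to a decomposition of the model interval into $R$-subintervals with the same ratios $|J_k|/|I|$ whose interior break-points still alternate in colour; the colours of the two outer endpoints are precisely the data recorded by $\tau$. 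So it suffices to analyse, for each subtype $A1,A2,B,C$, decompositions of the corresponding model interval into $2M+1$ $R$-subintervals with alternating interior break-points whose first and last pieces are again alternating — a requirement that, by the alternation of the interior break-points, reduces to a condition on the colours of $b_1$ and $b_{2M}$ depending only on $\tau$. (Care is needed here: unlike for (generalized) dyadic intervals, the type of an $R$-interval is not determined by its endpoint colours, cf.\ Remark~\ref{remark:incompatible}, and the single level-one point $\tfrac13$ changes colour under $\sigma_3$; this forces the subtype to be carried along as extra bookkeeping.)

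\textbf{Induction on $M$.} The key combinatorial observation is that the alternating decompositions of $I$ into $2M+1$ alternating $R$-intervals are exactly those obtained from $I$ by $M$ successive \emph{ternary splits}, each replacing one alternating $R$-interval by three alternating $R$-intervals as furnished by Lemma~\ref{lem:decomp-schwarz}. The ratio $|J_1|/|I|$ changes only when the current first piece is split, getting multiplied by a ``left-third ratio''. One therefore computes, for each subtype $\tau$, the finite set of possible pairs (left-third ratio, subtype of the new first piece) under a split, and separately the restricted set of such pairs under an \emph{admissible} split, i.e.\ one whose middle piece inherits the marked point and type of the piece being split (these are exactly the splits in the concrete list in the proof of Lemma~\ref{lem:decomp-schwarz}; e.g.\ $[0,1]=[0,\tfrac13]\cup[\tfrac13,\tfrac23]\cup[\tfrac23,1]$ is admissible since the middle piece $[\tfrac13,\tfrac23]$ has marked point $\tfrac12$ and is again of type $A$). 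Multiplying these ratios along chains of first-piece-splits of length between $1$ and $M$ yields precisely the geometric-series sets $\{2/3^{j}\}$ and $\{1/3^{j}\}$ in the statement, while the exceptional values $\tfrac12,\tfrac13$ (and $\tfrac23$ for $A1$) are exactly the left-third ratios produced by the \emph{non}-admissible first split, explaining the decomposition $\mathcal{N}^{\pm}=\mathcal{N}^{\pm}_o\sqcup\{\dots\}$. The sets $\mathcal{N}^{-},\mathcal{N}^{-}_o$ are obtained by the mirror analysis of the last piece / right-third ratios; the asymmetry between $\mathcal{N}^{+}$ and $\mathcal{N}^{-}$ for subtypes $B,C$ is just the asymmetry of the two endpoints of those subtypes. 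Finally, for $M\geq 2$ one may always perform the first split admissibly and a further split arbitrarily, so $\mathcal{N}^{\pm}_o(I,M)\neq\emptyset$, which is the last assertion.

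\textbf{Main obstacle.} The heart of the argument is the finite but intricate transition analysis of the second paragraph: enumerating, for all four subtypes, the left- and right-third ratios of every ternary split together with the resulting subtypes, recording which splits are admissible, and checking that the resulting products close up to exactly the claimed sets. A secondary technical point is justifying the ``decomposition $\Leftrightarrow$ sequence of ternary splits'' correspondence, since a union of consecutive $R$-intervals is not obviously an $R$-interval; this has to be settled using the structure of $\sigma_3$ on $3$-adic rationals. The need to treat $\mathcal{N}^{+}$ and $\mathcal{N}^{-}$ in tandem and to control the marked point throughout the recursion (so as to separate $\mathcal{N}^{\pm}_o$ from $\mathcal{N}^{\pm}$) are the other delicate aspects.
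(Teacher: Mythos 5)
Your proposal follows essentially the same route as the paper: blow up to a model interval via $\sigma_3^m$, enumerate the $M=1$ decompositions, and induct. The paper's own proof is extremely terse -- it blows up to one of $\{[0,1],[0,\tfrac23],[\tfrac13,1]\}$, points to the two figures for the $M=1$ (alternating and admissible) decompositions, and then asserts the induction -- so your more explicit reformulation as $M$ successive ``ternary splits'' and a transition table (left-third ratio, resulting subtype, admissibility flag) is a reasonable expansion of the same idea, not a different approach.

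One remark on the gap you flag yourself: the correspondence ``alternating decomposition into $2M+1$ pieces $\Longleftrightarrow$ sequence of $M$ ternary splits'' is indeed not automatic, and the paper does not prove it either. But notice that for the purposes of the lemma you do not actually need the full tree structure: to get the upper bound on $\mathcal{N}^{\pm}(I,M)$, a direct analysis of the possible first piece $J_1=[s,b_1]$ suffices. After blowing up, the left endpoint of $J_1$ is $0$, so $J_1$ cannot be of type $C$ (no type-$C$ $R$-interval has left endpoint mapping to $0$), and the colour of $b_1$ (determined by the subtype of $I$ and parity) forces $J_1$ to be type $A$ for subtype $A2$, $B$ or $C$, and type $B$ for subtype $A1$. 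This pins down $|J_1|/|I|$ to the geometric families $1/3^j$ or $2/3^j$ directly, while the generative (tree-split) argument is only needed for the lower bound, i.e., to realize each ratio. Separating these two roles would let you drop the ``decompositions are exactly trees'' claim and thereby close the hole you identified.
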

\begin{proof}
    We can first blow up $I$ by some iterate of $\sigma_3$ to one of the intervals in the list $\{[0,1], [0, \frac23], [\frac13,0]\}$.
    All possible alternating decompositions with $M = 1$ are listed in Figure~\ref{fig:altdecom}. The set $\mathcal{N}^\pm(I, M)$ can be computed simply by inductively decomposing one of the alternating intervals into alternating $R-$intervals.

    The computation for $\mathcal{N}^\pm_o(I, M)$ is similar, the smallest admissible alternating decompositions are listed in Figure~\ref{fig:admaltdecom}
\end{proof}

\begin{figure}
\captionsetup{width=0.96\linewidth}
    \centering
    \includegraphics[width=0.9\linewidth]{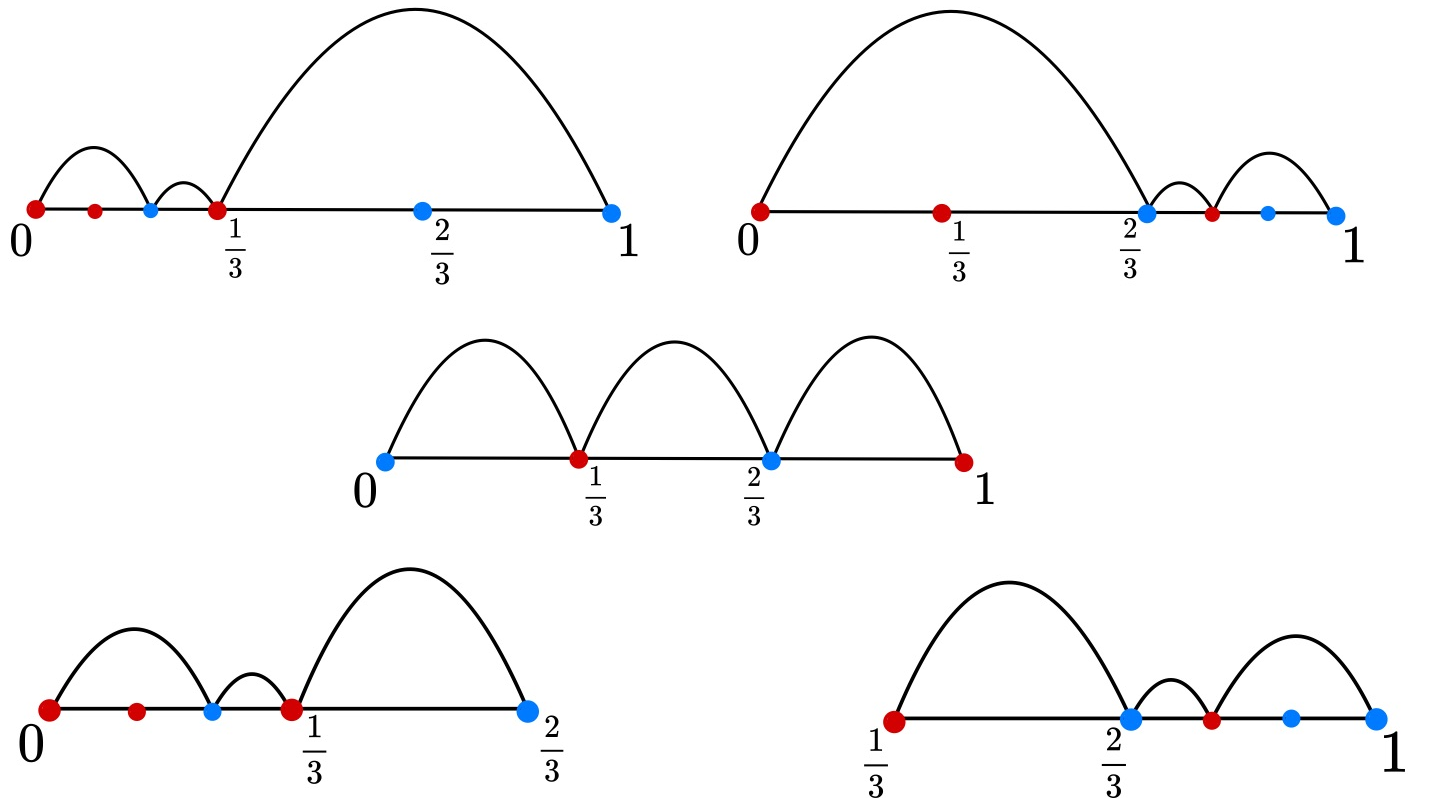}
    \caption{The list of all possible alternating decompositions with $M = 1$. The top two correspond to two different alternating decompositions for subtype $A1$. The middle one corresponds to subtype $A2$, and the bottom two correspond to type $B$ and type $C$.}
    \label{fig:altdecom}
\end{figure}

\begin{figure}
\captionsetup{width=0.96\linewidth}
    \centering
    \includegraphics[width=0.9\linewidth]{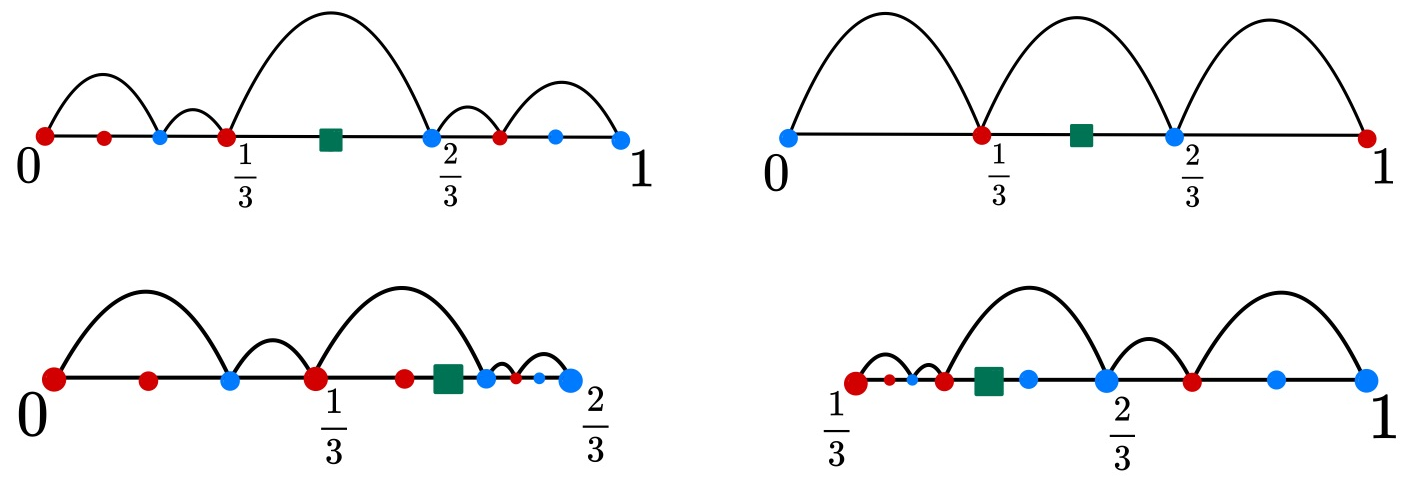}
    \caption{The list of smallest admissible alternating decompositions for the four different subtypes. The green vertex corresponds to the marked point $\frac12$.}
    \label{fig:admaltdecom}
\end{figure}

\subsubsection{Canonical maps}\label{canonical_map_cusp_basi_subsec}
Our next goal is to model color-preserving Markov maps. 
    By Remark~\ref{remark:incompatible}, we need to consider maps between different types of $R-$intervals. One can construct such maps by introducing an additional break-point at an iterated preimage of the fixed point $\frac12$ of $\sigma_3$ as follows.
    
    Let $q^+:[0,\frac23] \longrightarrow [0,1]$ be the map
    $$
    q^+(x) = \begin{cases}
        x\;\qquad \text{ if }x \in [0,\frac12],\\
        \sigma_3(x)\;\ \text{ if }x \in [\frac12,\frac23].
    \end{cases}
    $$
    Similarly, let $q^-:[\frac13,1] \longrightarrow [0,1]$ be the map
    $$
    q^-(x) = \begin{cases}
        \sigma_3(x)\;\ \text{ if }x \in [\frac13,\frac12],\\
        x\;\qquad \text{ if }x \in [\frac12,1].
    \end{cases}
    $$
    It follows from our construction that any two $R-$intervals are related by iterates of $\sigma_3$, $q^\pm$ and their inverses. We summarize this as follows.
    \begin{lem}\label{lem:correspondence}
        Let $I_1 = [s_1, t_1], I_2 = [s_2, t_2]$ be two $R-$intervals.
        \begin{itemize}
            \item If they are of the same type, then there exist $n, m$ so that $\sigma_3^{-n}\circ \sigma_3^m$ is a homeomorphism between $I_1$ and $I_2$.
            \item If $I_1$ is of type $B$ and $I_2$ is of type $A$, then there exist $n, m$ so that $\sigma_3^{-n}\circ q^+ \circ \sigma_3^m$ is a homeomorphism between $I_1$ and $I_2$.
            \item If $I_1$ is of type $C$ and $I_2$ is of type $A$, then there exist $n, m$ so that $\sigma_3^{-n}\circ q^- \circ \sigma_3^m$ is a homeomorphism between $I_1$ and $I_2$.
            \item If $I_1$ is of type $B$, and $I_2$ is of type $C$, then there exist $n, m$ so that $\sigma_3^{-n}\circ (q^-)^{-1}\circ q^+ \circ \sigma_3^m$ is a homeomorphism between $I_1$ and $I_2$.
        \end{itemize}
        We call the map above (or its inverse) the {\em canonical map} between two $R-$intervals.
        Note that the canonical map sends the marked point $x_{I_1}$ to the marked point $x_{I_2}$.
    \end{lem}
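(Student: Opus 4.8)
The plan is to treat Lemma~\ref{lem:correspondence} as a bookkeeping statement: it records that the three model intervals $[0,1]$, $[0,\tfrac23]$, $[\tfrac13,1]$ are permuted among themselves by the explicit maps $\sigma_3$, $q^+$, $q^-$ and their inverses, and then to transport this to arbitrary $R$-intervals via the semiconjugacies furnished by Definition~\ref{r_int_def}.

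First I would reduce to the model case. By Definition~\ref{r_int_def}, for each $R$-interval $I_i=[s_i,t_i]$ there is an integer $m_i\geq 0$ such that $\sigma_3^{m_i}$ restricts to a homeomorphism of $\overline{I_i}$ onto the model interval $\widehat I_i\in\{[0,1],[0,\tfrac23],[\tfrac13,1]\}$ determined by the type of $I_i$, and by construction $\sigma_3^{m_i}(x_{I_i})=\tfrac12$. (The exponent $m_i$ realizing this is in fact unique, since applying one more $\sigma_3$ destroys injectivity on each of the three model intervals; this is not strictly needed but clarifies that the marked point is well defined.) Hence it suffices to produce, for every ordered pair of model intervals whose pair of types appears in the statement, a homeomorphism $\psi\colon\widehat I_1\to\widehat I_2$ built from $\sigma_3,q^\pm$ and their inverses with $\psi(\tfrac12)=\tfrac12$. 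Then $(\sigma_3^{m_2}|_{I_2})^{-1}\circ\psi\circ(\sigma_3^{m_1}|_{I_1})$ is a homeomorphism $I_1\to I_2$ of the form $\sigma_3^{-m_2}\circ(\cdots)\circ\sigma_3^{m_1}$ asserted in the lemma (with $\sigma_3^{-m_2}$ read as the inverse branch $(\sigma_3^{m_2}|_{I_2})^{-1}$), and the marked-point claim follows at once by chasing $x_{I_1}\mapsto\tfrac12\mapsto\tfrac12\mapsto x_{I_2}$.

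Next I would verify the model maps directly. A one-line check of the mod-$1$ arithmetic at the break-point $\tfrac12$ (a fixed point of $\sigma_3$) shows that $q^+\colon[0,\tfrac23]\to[0,1]$ and $q^-\colon[\tfrac13,1]\to[0,1]$ are each continuous, strictly increasing, and surjective -- hence homeomorphisms -- and that $q^\pm(\tfrac12)=\tfrac12$. Now: for two model intervals of the same type take $\psi=\mathrm{id}$ (yielding the plain $\sigma_3^{-m_2}\circ\sigma_3^{m_1}$); for type $B$ to type $A$ take $\psi=q^+$; for type $C$ to type $A$ take $\psi=q^-$; for type $B$ to type $C$ take $\psi=(q^-)^{-1}\circ q^+$; and the remaining three cases ($A$ to $B$, $A$ to $C$, $C$ to $B$) are handled by the inverses of the previous three. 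This exhausts all nine ordered pairs of types and matches the four bullet points of the lemma together with their inverses; in each case $\psi(\tfrac12)=\tfrac12$.

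There is no genuine obstacle here. The only points demanding care are (i) confirming that $q^+$ and $q^-$ really are homeomorphisms of the closed intervals onto $[0,1]$ -- this is exactly where the choice of the fixed point $\tfrac12$ of $\sigma_3$ as the extra break-point is used, so that the two pieces match up continuously -- and (ii) keeping the inverse branches unambiguous, i.e.\ always reading $\sigma_3^{-m_2}$ as $(\sigma_3^{m_2}|_{I_2})^{-1}$, which is legitimate precisely because $I_2$ maps homeomorphically onto its model interval under $\sigma_3^{m_2}$. Granting these, the composition of homeomorphisms of closed intervals is a homeomorphism, and the marked point is carried through correctly, completing the proof.
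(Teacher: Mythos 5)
Your proposal is correct and amounts to a careful write-up of the argument the paper implicitly leaves to the reader (the paper simply states that the lemma ``follows from our construction'' without an explicit proof). The reduction to the three model intervals via $\sigma_3^{m_i}$, the direct verification that $q^\pm$ are homeomorphisms onto $[0,1]$ fixing $1/2$ (using that $1/2$ is a $\sigma_3$-fixed point so the two branches match continuously), the enumeration of the $\psi$'s covering all nine ordered type-pairs, and the chasing of the marked point through the composition are all exactly what is needed and all check out. The remark on uniqueness of $m_i$ is a nice clarification, and you correctly flag the notational convention that $\sigma_3^{-m_2}$ means the appropriate inverse branch.
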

    Note that the canonical map has a potential break-point at the marked point. However, if $I_1, I_2$ are of the same type, the canonical map is smooth at the marked point. This observation motivates the definition of admissible decomposition in Definition~\ref{defn:alternating} above, and this is used in the proof of Theorem~\ref{thm:symmetricR} below.

\subsubsection{Basilica $\mathfrak{R}-$maps}
Let $R(z)$ be as defined in \eqref{eqn:defnR}. Let $\mathfrak{R}$ be the collection of conformal maps of the form $R^{-n} \circ R^m$.

Recall that the unique fixed bounded Fatou component of $R$ is denoted by $U_0$. 
To account for the marked point in each $R-$interval, we define the level $0$ internal puzzle piece corresponding to $I_{U_0}[0,1]$ as a suitable pinched neighborhood of $\partial U_0$ in $\overline{U_0}$ that is pinched at the two fixed points of $R$ on $\partial U_0$ (cf. \S~\ref{subsubsec:internalpuzzle}). 
Thus, $\Int{P_{U_0}[0,1]}$ is the union of two  topological disks touching at the parabolic fixed point at internal angle $0$ and the repelling fixed point at internal angle $\frac12$ (the latter corresponds to the marked point of the $R-$interval $[0,1]$). The internal $R-$puzzle pieces $P_{U_0}[0,\frac23]$ and $P_{U_0}[\frac13,1]$ are defined similarly, with pinchings at the corresponding marked points. By pulling back under iterates of $R$, we obtain internal $R-$puzzle pieces $P_U[s,t]$ for any bounded Fatou component $U$ and $R-$interval $[s,t]$ (cf. Definition~\ref{defn:internalpuzzle}).
We shall think of these marked points as additional break-points.


Let $P_i, P_j$ be two internal $R-$puzzle pieces.
Then by Lemma~\ref{lem:correspondence}, there is a canonical map $F$ which is a homeomorphism from $P_i$ to $P_j$.
Moreover, the canonical map is of the form $Q_{\pc}^{-m}\circ Q_{\pc}^n$ on each of the two connected components of $P_i$.

The definition of internal $R-$puzzle
$$
\left(\{U_i\}, P_{U_i}[s_{i,j}, s_{i,j+1}]\right); i \in\{0,\cdots, r\},\ j\in\{1,\cdots, n_i\}
$$
is the same as Definition~\ref{defn:intpuz} after replacing dyadic segments with $R-$segments and with the additional assumption that the decomposition of 
$$
\partial U_i = \bigcup_j I_{U_i}[s_{i,j}, s_{i,j+1}]
$$ 
is alternating (see Definition~\ref{defn:alternating}). 
It induces a Basilica $R-$puzzle $(P_i)_{i\in \mathcal{I}}$ as in Definition~\ref{basi_puzz_def} and \S~\ref{subsubsec:indbp}.

The definition of Markov refinements in Definition~\ref{defn:refineBp} generalizes to this setting with the additional assumption that the corresponding alternating decomposition of $I_{U_i}[s_{i,j}, s_{i,j+1}] \subseteq \partial U_i$ is admissible (see Definition~\ref{defn:alternating}).

Let $\mathcal{F}: \mathfrak{P}^1 \rightarrow \mathfrak{P}$ be a Basilica $\mathfrak{R}-$map.
Then we have the induced Markov maps
$$
\mathcal{F}_\infty: \partial^I U_\infty \longrightarrow \partial^I U_\infty \text{ and } \mathcal{F}_{\bdd}: \bigcup \partial^I U_i \longrightarrow \bigcup \partial^I U_i.
$$
Note that the break-points of the induced Markov map consist of the boundary points and the marked points of the corresponding $R-$intervals.
We remark that by the admissibility condition (see Definition~\ref{defn:alternating}), each marked point of an $R-$interval is eventually fixed. Therefore, by Lemma~\ref{lem:correspondence}, any fixed marked point is a smooth point as the type of the level $1$ and level $0$ puzzle puzzle pieces containing the fixed marked point are the same.

With this preparation, and using Corollary~\ref{cor:alternatingdecomposition} and Lemma~\ref{lem:Rintdecomp} instead of Lemma~\ref{lem:primitivedyadicdecomp}, we have the following counterpart of Theorem~\ref{thm:symmetrichyperbolic} and Proposition~\ref{prop:bdd_fatou_symm_para}.
\begin{theorem}\label{thm:symmetricR}
    Let $\mathcal{F}: \mathfrak{P}^1 \rightarrow \mathfrak{P}$ be a Basilica $\mathfrak{R}-$map.
    Suppose that $\mathcal{F}$ is topologically expanding. 
    Then there exists a modified refinement $\widetilde{\mathcal{F}}: \widetilde{\mathfrak{P}}^1 \rightarrow \widetilde{\mathfrak{P}}$ which is in particular topologically conjugate to $\mathcal{F}: \mathfrak{P}^1 \rightarrow \mathfrak{P}$ on the Julia set and 
    \begin{itemize}[leftmargin=8mm]
        \item the induced Markov map $\widetilde{\mathcal{F}}_\infty$ is symmetrically hyperbolic; and
        \item for the induced Markov map $\widetilde{\mathcal{F}}_{\bdd}$ on $\displaystyle\bigcup_{s\in \{0,\ldots, r\}}  \partial^I U_s$, 
        \begin{itemize}
            \item the break-points associated to the marked points of the corresponding $R-$intervals are symmetrically hyperbolic;
            \item the break-points associated to the boundary points of the corresponding $R-$intervals are symmetrically parabolic.
        \end{itemize}
    \end{itemize}
\end{theorem}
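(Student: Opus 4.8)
The plan is to run the proof of Theorem~\ref{thm:symmetrichyperbolic} and Proposition~\ref{prop:bdd_fatou_symm_para} essentially verbatim, with the combinatorial engine Lemma~\ref{lem:primitivedyadicdecomp} replaced by its $R$-analogs Corollary~\ref{cor:alternatingdecomposition} and Lemma~\ref{lem:Rintdecomp}, and with an extra layer of bookkeeping for the marked points. First I would classify the periodic break-points of the induced maps. A break-point of $\mathcal{F}_{\bdd}$ or $\mathcal{F}_\infty$ arises either from a boundary point of one of the $R$-intervals used to build the puzzle, or from a marked point. Since every $3$-adic rational is eventually sent to $0$ by $\sigma_3$ --- whether it is a contact point ($p\equiv 1\bmod 3$) or a cusp ($p\equiv 2\bmod 3$) of some $\partial U_i$ --- every break-point of the first kind is pre-periodic onto a cycle mapping to the parabolic fixed point $1$ of $R$; and since $\tfrac12$ is a repelling fixed point of $\sigma_3$, every break-point of the second kind is pre-periodic onto a repelling cycle, and by the admissibility requirement built into Markov refinements of Basilica $\mathfrak{R}$-puzzles (Definition~\ref{defn:alternating}) a periodic marked point is a fixed point across which the canonical map of Lemma~\ref{lem:correspondence} is real-analytic.

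The parabolic break-points of $\mathcal{F}_{\bdd}$ then need no adjustment: for such a break-point $a$ (coming from a boundary point of an $R$-interval), the first-return maps $\mathcal{F}^{q^\pm}$ on its two sides factor, exactly as in Proposition~\ref{prop:bdd_fatou_symm_para}, through an iterate of $R$ conjugating the relevant side to a neighborhood of the parabolic fixed point of $R$; hence $\lambda(a^\pm)=0$ and $N(a^\pm)$ equals the multiplicity of that parabolic point on both sides, so $a$ is symmetrically parabolic, and this is untouched by subsequent modifications, which alter only the puzzle combinatorics and not the germ of $R$ at its parabolic point. Dually, the marked-point break-points are forced hyperbolic (their orbits reach the repelling fixed point at internal angle $\tfrac12$), and since admissibility makes the canonical map smooth across a periodic marked point, such a break-point is automatically symmetrically hyperbolic once the decompositions in play are admissible.

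The substance of the proof is, as in the $Q$-case, to make $\mathcal{F}_\infty$ symmetrically hyperbolic --- i.e.\ to equalize the right and left Lyapunov exponents at each periodic break-point of $\mathcal{F}_\infty$ and to give them a uniform positive value --- while staying inside the class of admissible Basilica $\mathfrak{R}$-puzzles so as not to disturb the marked points. I would first pass to a deep refinement, as in Lemma~\ref{lem:onesideperiodic} and Lemma~\ref{lem:LargeM}, so that each boundary-periodic torso piece is one-sided boundary periodic and admits arbitrarily many alternating sub-$R$-intervals. The scaling lemma Lemma~\ref{lem:Rintdecomp} then plays the role of Lemma~\ref{lem:2k} and Lemma~\ref{lem:localrefine}: for each of the four subtypes $A1,A2,B,C$, the sets $\mathcal{N}^\pm_o$ realized by \emph{admissible} alternating decompositions contain a full geometric scale $\{3^{-j}\}$, and the derivative of $\mathcal{F}_\infty$ (resp.\ $\mathcal{F}_{\bdd}$) across a sub-interval scales by the corresponding power of $3$; restricting to $\mathcal{N}^\pm_o$ rather than $\mathcal{N}^\pm$ is exactly what guarantees that the new decompositions remain admissible, so marked points stay periodic and smooth and no new break is created. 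Choosing $N$ large (and even relative to all $N(S)$ occurring), I would perform the local modification of Lemma~\ref{lem:Rintdecomp} along each periodic cycle of break-points of $\mathcal{F}_\infty$, arranging that each step of the cycle has derivative in $\{3^{N-1},3^N,3^{N+1}\}$, which forces the two Lyapunov exponents to coincide and equal $N\cdot(\text{period})\cdot\log 3>0$. By Proposition~\ref{prop-combconjimpliestopconj} the resulting $\widetilde{\mathcal{F}}$ is a modified refinement of $\mathcal{F}$ (in the sense of Definition~\ref{def-modfn}) topologically conjugate to it on $J(R)$, and it has the asserted break-point types.

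The main obstacle is the combinatorial bookkeeping of the last paragraph: one must chain the conclusions of Lemma~\ref{lem:Rintdecomp} along an entire periodic cycle so that, simultaneously, every intermediate decomposition stays alternating (keeping contact points and cusps on their prescribed colors), stays admissible (keeping marked points periodic and the induced map smooth there), and realizes the prescribed power-of-$3$ scaling at every step --- all while the parabolic break-points of $\mathcal{F}_{\bdd}$ are left intact. Lemma~\ref{lem:Rintdecomp} is designed to furnish precisely this flexibility, the four subtypes and the $\mathcal{N}^\pm$ versus $\mathcal{N}^\pm_o$ distinction tracking exactly these constraints; so once the chaining is carried out carefully, the argument reduces to the machinery already developed for Theorem~\ref{thm:symmetrichyperbolic}.
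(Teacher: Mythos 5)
Your proposal is correct and follows essentially the same approach as the paper, which states the theorem without a detailed proof and simply notes that Corollary~\ref{cor:alternatingdecomposition} and Lemma~\ref{lem:Rintdecomp} should replace Lemma~\ref{lem:primitivedyadicdecomp} in the arguments for Theorem~\ref{thm:symmetrichyperbolic} and Proposition~\ref{prop:bdd_fatou_symm_para}---which is exactly the strategy you spell out, including the crucial role of admissibility ($\mathcal{N}^\pm_o$ versus $\mathcal{N}^\pm$) in preserving smoothness across the periodic marked points. One cosmetic slip: the derivative of $\mathcal{F}_\infty$ (as opposed to $\mathcal{F}_{\bdd}$) scales by powers of $\deg R = 4$, not $3$, since the induced Markov map on $\partial^I U_\infty$ is conjugate to $\sigma_4$; this does not affect the argument, and in fact the absence of a period-$2$ parity constraint (unlike the $Q$-case, where the first return map to $U_0$ is $Q^2$) makes the Lyapunov-exponent equalization slightly cleaner here.
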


\subsubsection{Sufficient condition for quasiconformal equivalence}
\begin{prop}\label{prop:Rmodel}
     Let $S\in \partial \Sigma^*_d$. Suppose that each component of $\cT_\infty(S)$ is persistently parabolic. Then $\Lambda(S)$ is quasiconformally equivalent to $J(R)$.
\end{prop}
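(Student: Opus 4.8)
The plan is to run the same machinery that underlies Theorem~\ref{thm:qcclassfn-ltsets} and Theorem~\ref{thm-basilica-stdpolymodel}, now comparing a fragmented dynamical system built from the Schwarz reflection $S$ with a Basilica $\mathfrak{R}-$map modeled on the cuspidal Basilica polynomial $R$. By Proposition~\ref{prop:reflectionmating}, $S$ is a mating of $\bar z^d$ with a necklace reflection group $G$, and since $\Lambda(S)$ is a Basilica, $G$ is a Bers boundary reflection group. As in \S~\ref{subsec:constructionBBS}, the first step is to cook up a Markov $(\Lambda(S),\mathfrak{S})-$map — a ``Schwarz Basilica Bowen--Series map'' — whose pieces are local conformal maps of the form $S^{-n}\circ S^m$. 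Its domains of definition are the hybrid puzzle pieces of \S~\ref{piecing_puz_subsec}(3): on the ``group side'' they are cut out by hyperbolic geodesics in the components of $\cT_\infty(S)$ (transported from the Bowen--Series puzzle of $G$ via the conformal conjugacy of Proposition~\ref{prop:reflectionmating}(2)), and on the ``polynomial side'' they are cut out by external rays and equipotentials with respect to the $\bar z^d$-uniformization of $\mathcal{B}_\infty(S)$. One checks, exactly as in Lemma~\ref{lem:tp}, that this map is Markov and topologically expanding.

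Next I would construct the topological model. Because $R$ has a simple critical point on the boundary of each bounded Fatou component which maps to the parabolic fixed point, the grand orbit of the parabolic point contains both cusps and contact points; this is precisely the reason for the formalism of $R-$intervals, their colorings (red $=$ contact, blue $=$ cusp) and alternating decompositions in Definitions~\ref{r_int_def} and~\ref{defn:alternating}. The hypothesis that every component of $\cT_\infty(S)$ is persistently parabolic means that the boundary of each such component carries at least one cusp of $\partial T(S)$ in addition to its double points, so the combinatorial pattern of cusps and contacts on $\partial^I U$ matches an alternating decomposition; by Corollary~\ref{cor:alternatingdecomposition} and Lemma~\ref{lem:Rintdecomp} such decompositions and admissible refinements exist in abundance. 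Mimicking the proof of Proposition~\ref{prop:topmodelbowenseries} (using internal $R-$puzzle pieces in place of internal Basilica puzzle pieces), I would build a Basilica $\mathfrak{R}-$map $\widetilde{\mathcal{F}}\colon\widetilde{\mathfrak{P}}^1\to\widetilde{\mathfrak{P}}$ combinatorially, hence (by Proposition~\ref{prop-combconjimpliestopconj} and topological expansion) topologically, conjugate to the Schwarz fragmented dynamics: there is a homeomorphism $H_0$ with $H_0(J(R))=\Lambda(S)$ intertwining the two maps.

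To upgrade $H_0$ to a quasiconformal homeomorphism I would first apply Theorem~\ref{thm:symmetricR} to replace $\widetilde{\mathcal{F}}$ by a modified refinement $\mathcal{F}\colon\mathfrak{P}^1\to\mathfrak{P}$ whose induced map $\mathcal{F}_\infty$ on $\partial^I U_\infty$ is symmetrically hyperbolic and whose induced map $\mathcal{F}_{\bdd}$ on the ideal boundaries of the bounded Fatou components is symmetrically hyperbolic at the marked points and symmetrically parabolic at the remaining break-points. On the Schwarz side, the proof of Proposition~\ref{prop:boundarySch} shows that, for each component $U$ of $\cT_\infty(S)$, the induced circle maps $g_{\mathrm{in}}$ and $g_{\mathrm{out}}$ on $\partial^I U$ and $\partial^I(\widehat{\C}-\overline U)$ are topologically expanding conformal Markov maps with puzzle structures, all of whose break-points are symmetric: double points of $\partial T(S)$ give symmetrically parabolic break-points for both $g_{\mathrm{in}}$ and $g_{\mathrm{out}}$, while cusps of $\partial T(S)$ (of type $(3,2)$) give symmetrically parabolic break-points for $g_{\mathrm{in}}$ and symmetrically hyperbolic break-points for $g_{\mathrm{out}}$; and the induced map on $\partial^I\Delta_\infty$ is symmetrically hyperbolic because its relevant break-points all come from cusps. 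Under $H_0$, marked points of $R-$intervals correspond to cusps and boundary points of $R-$intervals to double points, so the conjugacy is type-preserving on every circle. Proposition~\ref{prop:qsmarkovmapFiniteCircle} \eqref{HHFC} then yields a uniform constant $K$ and $K-$quasi-symmetric conjugacies on each $\partial^I U_s$ and on $\partial^I U_\infty$. Extending these across $\mathcal{B}_\infty(S)$ and across the bounded Fatou components (pulling back along the dynamics, as in the proof of Proposition~\ref{prop:quasisurg}) produces a homeomorphism $H$ of $\widehat{\C}$ that is quasiconformal off $J(R)$. Since $R$ is a geometrically finite polynomial with connected Julia set, $J(R)$ is conformally, hence quasiconformally, removable (Lemma~\ref{lem:eccentricity}(1)), so $H$ is globally quasiconformal and $[\Lambda(S)]=[J(R)]$.

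\textbf{Main obstacle.} The crux is the hybrid construction and the combinatorial matching: one must set up the Schwarz fragmented dynamics so that its puzzle is genuinely Markov and topologically expanding across the seam between the droplet side and the basin side, and then arrange the $R-$model so that the boundary decompositions of the bounded-component ideal boundaries are alternating with admissible refinements — this is what forces the marked point of each $R-$interval (the avatar of the critical point of $R$ landing on the parabolic point) to be eventually fixed and smooth, and makes the cusp/contact combinatorics on the two sides agree. Once this bookkeeping is in place, the regularity of the induced circle maps is supplied by Theorem~\ref{thm:symmetricR} on the $R-$side and by the local parabolic analysis in the proof of Proposition~\ref{prop:boundarySch} on the Schwarz side, and the remainder is the by-now-standard surgery-plus-removability argument.
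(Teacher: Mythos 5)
Your overall architecture matches the paper's proof almost exactly: pass from $S$ to a Schwarz fragmented dynamical system with hybrid puzzles, build a combinatorially (hence topologically) conjugate Basilica $\mathfrak{R}$-map using alternating decompositions of $R$-intervals, upgrade via Theorem~\ref{thm:symmetricR}, then invoke Proposition~\ref{prop:qsmarkovmapFiniteCircle}~\eqref{HHFC}, quasiconformal surgery, and removability of $J(R)$. The paper's version, which you essentially reproduce, colors the ideal vertices of $T_\Delta = T^0(S)\cap\Delta$ red/blue according to whether they are double points or cusps of $\partial T(S)$, modifies to a polygon $\widetilde{T}_\Delta$ with alternating colors (this is exactly where the persistently parabolic hypothesis is used: it guarantees both colors actually occur on $\partial\Delta$), pulls back under $S^2$ for orientation, and matches this to the alternating $R$-interval decompositions of the corresponding $\partial^I U_s$ on the $J(R)$ side.

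There is, however, one concrete misstatement in your type-preserving step that would, if taken literally, break the argument. You write that ``marked points of $R$-intervals correspond to cusps and boundary points of $R$-intervals to double points.'' This is not the correspondence the construction produces. The boundary points of $R$-intervals are the $3$-adic angles, and it is these that split into two colors: the red ones ($p\equiv 1\bmod 3$) are contact points of $J(R)$ and go to the double points of $\partial T(S)$, while the blue ones ($p\equiv 2\bmod 3$, together with $0$ on $\partial U_0$) are cusps of $J(R)$ and go to the conformal cusps of $\partial T(S)$. The marked point $x_I$, by contrast, is a preimage of the repelling fixed point of $R\vert_{\partial U_0}$ at internal angle $1/2$; it is neither a cusp nor a contact point, and under the conjugacy it goes to an ordinary hyperbolic periodic Julia point of $\Lambda(S)$ (an auxiliary break-point refined into the Schwarz-side Markov partition). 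If marked points actually went to cusps, the conjugacy would fail to be type-preserving on $\partial^I U_s$: marked points are symmetrically hyperbolic for $\mathcal{F}_{\bdd}$ by Theorem~\ref{thm:symmetricR}, whereas cusps give symmetrically parabolic break-points for $g_{\mathrm{in}}$ by Proposition~\ref{prop:boundarySch}. Relatedly, your assertion that the break-points of the induced map on $\partial^I\Delta_\infty$ ``all come from cusps'' is also off: they come from both cusps and double points, and the reason both give symmetrically hyperbolic break-points there is that the Schwarz fragmented dynamics on $\partial^I\mathcal{B}_\infty(S)$ is, via the $\bar z^d$-uniformization, piecewise expanding with integer-power derivatives, not that double points are absent.
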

\begin{proof}
    The proof is similar to Theorem~\ref{thm:qcclassfn-ltsets}, Theorem~\ref{thm-basilica-stdpolymodel} and Proposition~\ref{prop:quasisurggenpcf}.
    We discuss the necessary modifications.

    Let $\Delta$ be a component of $\cT_\infty(S)$ intersecting the droplet $T^0(S)$ non-trivially (i.e., $\Delta$ is a principal component of $\cT_\infty(S)$ in the notation of Proposition~\ref{prop:boundarySch}). Let $T_\Delta = T^0(S) \cap \Delta$. Then $T_\Delta$ is an ideal polygon. Let us denote the ideal vertices of $T_\Delta$ by red or blue, depending on whether the vertex is a contact point of $\Lambda(S)$ or a cusp of $\partial \Delta$. 
    Let $T_\Delta^1 \subseteq \Delta$ be the pullback of $T_\Delta$ under $S$. Then, $T_\Delta^1$ is also an ideal polygon, and we color each ideal vertex of $T_\Delta^1$ red/blue depending on whether $S$ maps it to a red/blue ideal vertex of $T_\Delta$. Note that as $S$ has no critical points on $\Lambda(S)$, a red (respectively, blue) vertex of $T_\Delta^1$ is a contact point of $\Lambda(S)$ (respectively, a cusp of $\partial \Delta$). We will now modify the polygon $T_\Delta$ to ensure that its vertices have alternating colors. 
    
    To this end, let $V^0\subseteq V^1 \subseteq \partial \Delta$ be the set of ideal vertices of $T_\Delta$ and $T_\Delta^1$. Since $\Delta$ is persistently parabolic, both colors appear in $V^0$ and $V^1$.
    We choose a set $\widetilde{V}$ so that
    \begin{itemize}[leftmargin=8mm]
        \item $V^0 \subseteq \widetilde{V} \subseteq V^1$; and
        \item the coloring of the vertices in $\widetilde{V}$ is alternating with respect to the cyclic order on $\partial \Delta$.
    \end{itemize}
    Note that this is possible as $S$ sends the arc in $\partial \Delta$ bounded by two adjacent vertices of $V^0$ to its complement in $\partial \Delta$ (see Figure~\ref{fig:AltDecSvsR}).
    This alternating condition is crucial as we will apply Corollary~\ref{cor:alternatingdecomposition} and Lemma~\ref{lem:Rintdecomp} to construct the corresponding internal puzzles for $R$.
    
    Let $\widetilde{T}_\Delta$ be the convex hull of $\widetilde{V}$.
    Let $\widetilde{T}^1_\Delta$ be the pullback under $S^2$. Here, we use $S^2$ so that the map is orientation-preserving. Note that the coloring of the ideal vertices of $\widetilde{T}^1_\Delta$ is also alternating. By taking a higher iterate of $S^2$ if necessary, we may assume that each arc in $\partial \Delta$ bounded by two adjacent vertices of $\widetilde{V}$ is decomposed into at least $5$ arcs so that we can apply Lemma~\ref{lem:Rintdecomp}.

    \begin{figure}
\captionsetup{width=0.96\linewidth}
    \centering
    \includegraphics[width=0.4\linewidth]{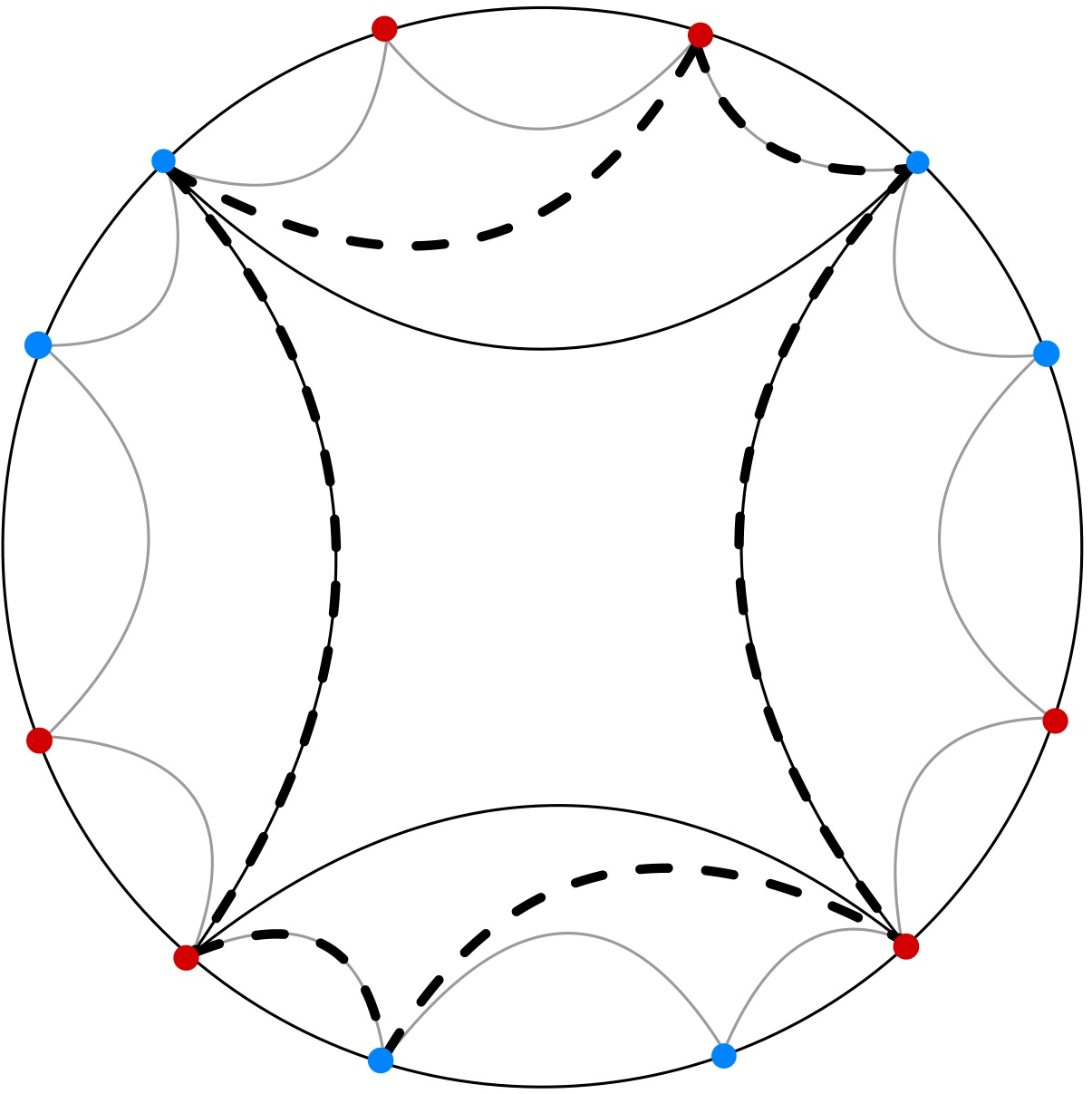}\quad
    \includegraphics[width=0.48\linewidth]{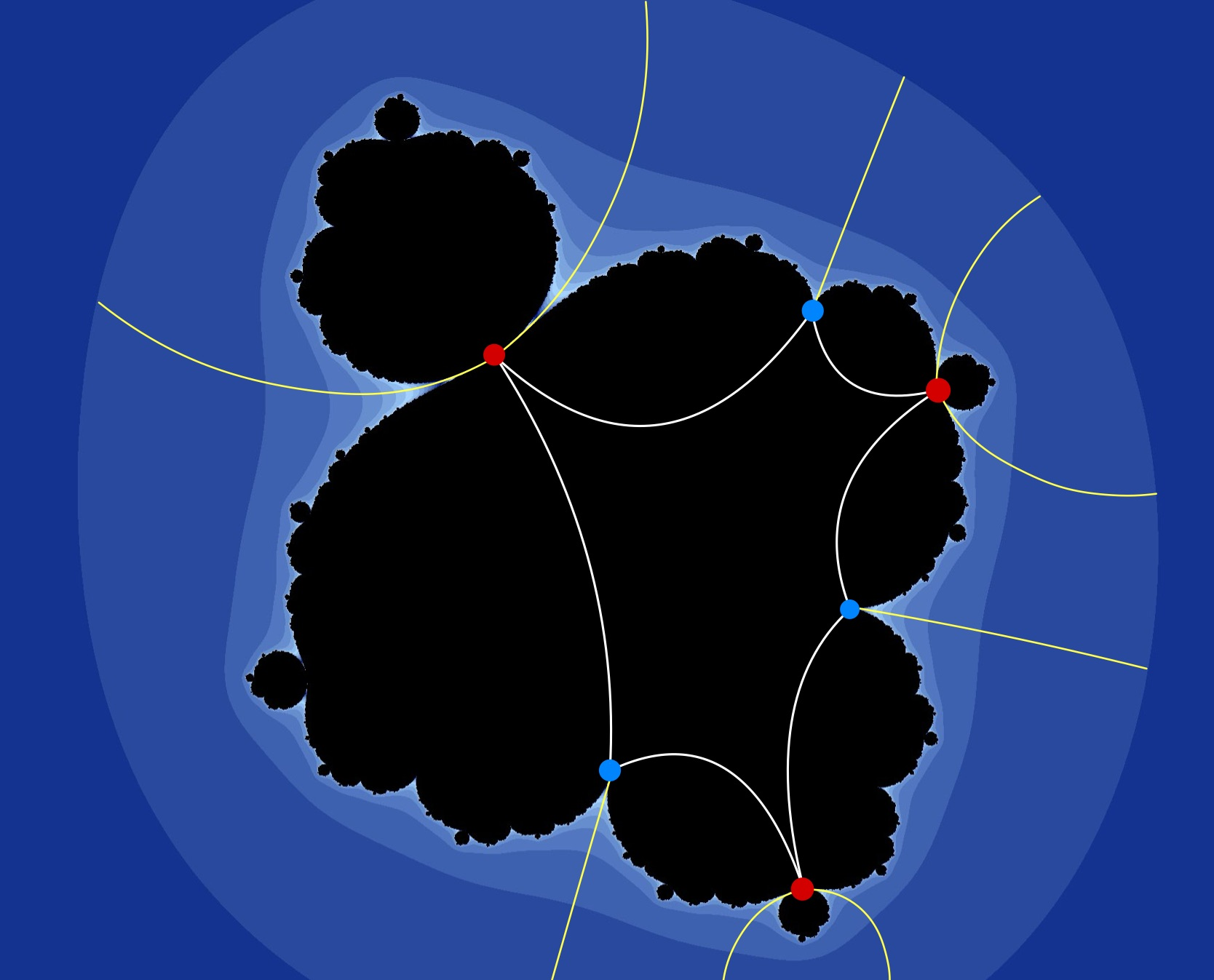}
    \caption{Left: Illustrated is a modification of $T_\Delta$ to $\widetilde{T}_\Delta$ ensuring that the vertices of the latter have alternating colors. Right: A schematic depiction of the corresponding $R-$puzzle pieces on the dynamical plane of $R$. For simplicity, we do not display the additional pinching at the marked points.}
    \label{fig:AltDecSvsR}
\end{figure}
    
    Let $\widetilde{T} = \bigcup_\Delta\widetilde{T}_\Delta$, where the union is over all principal components of $\cT_\infty(S)$. We call $\widetilde{T}$ a {\em pinched core polygon} for $S$.
    
    By a similar construction as in Proposition~\ref{prop:topmodelbowenseries} we obtain a  Basilica $\mathfrak{R}-$map $\widetilde{\mathcal{F}}: \widetilde{\mathfrak{P}}^1 \rightarrow \widetilde{\mathfrak{P}}$ such that $\widetilde{\cF}\vert_{J(R)}$ is topologically conjugate to $S\vert_{\Lambda(S)}$.
    Thus, by Theorem~\ref{thm:symmetricR}, there exists a Basilica $\mathfrak{R}-$map $\mathcal{F}: \mathfrak{P}^1\longrightarrow \mathfrak{P}$ such that
    \begin{itemize}[leftmargin=8mm]
        \item $\mathcal{F}$ is topologically conjugate to $\widetilde{\mathcal{F}}$ (and hence to $S$) on $J(R)$; 
        \item the induced Markov map $\mathcal{F}_\infty$ is symmetrically hyperbolic; and 
        \item for the induced Markov map $\mathcal{F}_{\bdd}$ on $\displaystyle\bigcup_{s\in \{0,\ldots, r\}}  \partial^I U_s$, 
        \begin{itemize}
            \item the break-points associated to the marked points of the corresponding $R-$intervals are symmetrically hyperbolic;
            \item the break-points associated to the boundary points of the corresponding $R-$intervals are symmetrically parabolic.
        \end{itemize}
    \end{itemize}

    Denote the topological conjugacy between $\mathcal{F}$ and $S$ by $H$.
    Note that for each each principal component $\Delta_s$ of $\cT_\infty$ as well as for the unbounded component $\Delta_\infty=\mathcal{B}_\infty(S)$, the restriction $S|_{\Delta_s}$ is topologically conjugate to $\mathcal{F}_s$. Since the conjugacy is type-preserving, Proposition~\ref{prop:qsmarkovmap} provides us with a quasiconformal extension $H|_{U_s}: U_s \longrightarrow \Delta_s$ of the conjugacy   $H|_{\partial U_s}$.

    By pulling back the maps $H|_{U_s}$ using dynamics, we obtain a homeomorphism $H: \widehat{\C} \longrightarrow \widehat{\C}$, which is quasiconformal outside of $J(R)$. 
    Since $J(R)$ is quasiconformally removable by Lemma~\ref{lem:eccentricity}, $H$ is a quasiconformal map of $\widehat{\C}$.
\end{proof}

\begin{proof}[Proof of Theorem~\ref{thm:schwarz}]
The fact that $\mathfrak{U}_{\sch}\subset\mathfrak{B}$ is infinite is the content of Lemma~\ref{lem:infinite_schwarz_basilica}. By Corollary~\ref{cor:schwarz_klein_disjoint}, the sets $\mathfrak{U}_{\sch}$ and $\mathfrak{U}_{\kle}$ are disjoint.

  Let us choose $S_0\in\Sigma_d^*$ ($d\geq 3$) such that $\partial T(S_0)$ has a unique double point (cf. \cite[Theorem~4.14]{LMM21}). Then each component of $T^0(S_0)$ has a conformal cusp on its boundary, and hence each component of $\cT_\infty(S_0)$ is persistently parabolic. By Proposition~\ref{prop:Rmodel}, the limit set $\Lambda(S_0)$ is quasiconformally equivalent to $J(R)$. Hence, $[J(R)]\in \mathfrak{U}_{\sch}\cap\mathfrak{U}_{\rat}$. Conversely, if $[\Lambda(S_1)]\in \mathfrak{U}_{\sch}\cap\mathfrak{U}_{\rat}$ for some $S_1\in\Sigma_d^*$, then by Corollary~\ref{cor:onlypossintersect}, each component of $\cT_\infty(S_1)$ is persistently parabolic. Once again, Proposition~\ref{prop:Rmodel} implies that $[\Lambda(S_1)]=[J(R)]$. Thus, $\mathfrak{U}_{\sch}\cap\mathfrak{U}_{\rat}= \{\left[J(R)\right]\}$.
\end{proof}

\end{document}